\newtheorem{thm}{Theorem}[section]
\newtheorem{cor}[thm]{Corollary}
\newtheorem{prop}[thm]{Proposition}
\theoremstyle{definition}
\newtheorem{dfn}[thm]{Definition}
\newtheorem{ex}[thm]{Example}
\newtheorem{claim}[thm]{Claim}
\newtheorem{lem}[thm]{Lemma}
\newtheorem{fact}[thm]{Fact}
\newtheorem{cond}[thm]{Condition}
\newtheorem{assumption}[thm]{Assumption}
\theoremstyle{remark}
\newtheorem{rem}[thm]{Remark}
\newcommand{\A}{\Asc}               
\newcommand{\C}{\Csc}               
\newcommand{\D}{\Dsc}               
\newcommand{\E}{\Ebb}               
\newcommand{\F}{\mathbb{F}}         
\newcommand{\Xd}{\langle X^{\mr},\del\rangle}  
\newcommand{\Yd}{\langle Y^{\mr},\del\rangle}  
\newcommand{\Zd}{\langle Z^{\mr},\del\rangle}  
\newcommand{\Yr}{\langle Y^{\mr},\rho\rangle}  
\newcommand{\Mf}{M^{\mr}_f}  
\newcommand{\Mg}{M^{\mr}_g}  
\newcommand{\Ep}{\mathbb{E}_{\dag}}  
\newcommand{\cupp}{\cup_{\dag}}  
\newcommand{\Ebb}{\mathbb{E}}        
\newcommand{\Xbb}{\mathbb{X}}        
\newcommand{\Ybb}{\mathbb{Y}}        
\newcommand{\Zbb}{\mathbb{Z}}        
\newcommand{\CEs}{(\C,\E,\sfr)}          
\newcommand{\CFs}{(\C,\F,\sfr|_{\F})}    
\newcommand{\CC}{\Cbf^{n+2}_{\C}}        
\newcommand{\KC}{\Kbf^{n+2}_{\C}}        
\newcommand{\CAC}{\Cbf^{n+2}_{(A,C)}}    
\newcommand{\CACp}{\Cbf^{n+2}_{(A,C\ppr)}} 
\newcommand{\LAC}{\Lambda^{n+2}_{(A,C)}} 
\newcommand{\CX}{(\C,\Xcal)}             
\newcommand{\CACt}{\Cbf^3_{(A,C)}}       
\newcommand{\TAC}{\Tbf^{n+2}_{(A,C)}}    
\newcommand{\CAD}{\Cbf^{n+2}_{(A,D)}}    
\newcommand{\KAC}{\Kbf^{n+2}_{(A,C)}}    
\newcommand{\mr}{\hbox{\boldmath$\cdot$}}
\newcommand{\Hom}{\mathrm{Hom}}      
\newcommand{\add}{\mathrm{add}}      
\newcommand{\modu}{\mathrm{mod}}      
\newcommand{\Ob}{\mathrm{Ob}}        
\newcommand{\Ab}{\mathit{Ab}}        
\newcommand{\Sets}{\mathit{Set}}     
\newcommand{\op}{^\mathrm{op}}       
\newcommand{\ush}{^\sharp}           
\newcommand{\ssh}{_\sharp}           
\newcommand{\id}{1}         
\newcommand{\Id}{\mathrm{Id}}         
\newcommand{\se}{\subseteq}           
\newcommand{\ppr}{^{\prime}}          
\newcommand{\pprr}{^{\prime\prime}}   
\newcommand{\Ker}{\mathrm{Ker}}       
\newcommand{\co}{\colon}              
\newcommand{\ci}{\circ}               
\newcommand{\iv}{^{-1}}               
\newcommand{\uas}{^{\ast}}            
\newcommand{\sas}{_{\ast}}            
\newcommand{\Ext}{\mathrm{Ext}}       
\newcommand{\Ima}{\mathrm{Im}}        
\newcommand{\lla}{\longleftarrow}     
\newcommand{\lra}{\longrightarrow}    
\newcommand{\tc}{\Rightarrow}         
\newcommand{\ltc}{\Longrightarrow}    
\newcommand{\dra}{\dashrightarrow}  
\newcommand{\sfr}{\mathfrak{s}}  
\newcommand{\tfr}{\mathfrak{t}}  
\newcommand{\Yfr}{\mathfrak{Y}}  
\newcommand{\Rfr}{\mathfrak{R}}  
\newcommand{\Asc}{\mathscr{A}}   
\newcommand{\Csc}{\mathscr{C}}   
\newcommand{\Dsc}{\mathscr{D}}   
\newcommand{\Ccal}{\mathcal{C}} 
\newcommand{\Ecal}{\mathcal{E}} 
\newcommand{\Ical}{\mathcal{I}} 
\newcommand{\Pcal}{\mathcal{P}} 
\newcommand{\Tcal}{\mathcal{T}} 
\newcommand{\Xcal}{\mathcal{X}} 
\newcommand{\Ycal}{\mathcal{Y}} 
\newcommand{\Kbf}{\mathbf{K}}   
\newcommand{\Cbf}{\mathbf{C}}   
\newcommand{\Tbf}{\mathbf{T}}   
\newcommand{\und}{\underline}   
\newcommand{\ovl}{\overline}    
\newcommand{\ov}{\overset}      
\newcommand{\un}{\underset}     
\newcommand{\ti}{\times}        
\newcommand{\bsm}{\begin{smallmatrix}}
\newcommand{\esm}{\end{smallmatrix}}
\newcommand{\be}{\beta}          
\newcommand{\vp}{\varphi}        
\newcommand{\vt}{\vartheta}      
\newcommand{\lam}{\lambda}       
\newcommand{\om}{\omega}         
\newcommand{\thh}{\theta}        
\newcommand{\del}{\delta}        
\newcommand{\Sig}{\Sigma}        
\newcommand{\Om}{\Omega}         
\numberwithin{equation}{section}
\newcommand{\coh}{\mathrm{coh}}     
\newcommand{\vect}{\mathrm{vect}}      
\newcommand{\Dbdd}{\mathcal{D}^{\rm b}}
\newcommand{\Dsing}{\mathcal{D}_{\rm sing}}
\newcommand{\Dtstr}{\mathcal{D}^{\rm \le 0}}
\newcommand{\T}{\mathscr{T}}
\newcommand{\Ttstr}{\mathcal{T}^{\rm \le 0}}
\begin{document}

\title{$n$-exangulated categories}
%
%

\author{Martin Herschend}
\address{Department of Mathematics, Uppsala University, Box 480, 751 06 Uppsala, Sweden}
\email{martin.herschend@math.uu.se}

\author{Yu Liu}
\address{School of Mathematics, Southwest Jiaotong University, 610031, Chengdu, Sichuan, China}
\email{liuyu86@swjtu.edu.cn}

\author{Hiroyuki Nakaoka}
\address{Research and Education Assembly, Science and Engineering Area, Research Field in Science, Kagoshima University, 1-21-35 Korimoto, Kagoshima, 890-0065 Japan}
\email{nakaoka@sci.kagoshima-u.ac.jp}

\thanks{The authors wish to thank Yann Palu for his advice and Gustavo Jasso for useful discussions. The authors also wish to thank Ivo Dell'Ambrogio and Osamu Iyama for the arguments on relative theory.
The first author is supported by the Swedish Research Councils Project Research Grant for Junior Researchers. 
The second author was supported by the Knut and Alice Wallenberg Foundation Postdoctoral Program.
The third author is supported by JSPS KAKENHI Grant Number 17K18727.}

\begin{abstract}
For each positive integer $n$ we introduce the notion of $n$-exangulated categories as higher dimensional analogues of extriangulated categories defined by Nakaoka-Palu. We characterize which $n$-exangulated categories are $n$-exact in the sense of Jasso and which are $(n+2)$-angulated in the sense of Geiss-Keller-Oppermann. For extriangulated categories with enough projectives and injectives we introduce the notion of $n$-cluster tilting subcategories and show that under certain conditions such $n$-cluster tilting subcategories are $n$-exangulated.
\end{abstract}

\maketitle


\tableofcontents


\section{Introduction}
A fundamental idea in Iyama's higher dimensional Auslander-Reiten theory \cite{I1} is to replace short exact sequences as the basic building blocks for homological algebra, by longer exact sequences. A typical setting is to consider an $n$-cluster tilting subcategory $\C$ of an abelian category $\A$. For instance one may take $\A$ to be the category $\modu A$ of finitely generated modules over a $n$-representation finite algebra $A$ in the sense of \cite{IO} (see \cite{HI1}, \cite{HI2} for further examples). Then $\modu A$ has a unique $n$-cluster tilting subcategory $\C$. In this case there is also an $n$-cluster tilting subcategory of the bounded derived category $\D^{b}(\modu A)$, obtained by closing $\C$ under shift by $n$ and direct sums. To generalize, one may take $A$ to be an $n$-complete algebra in the sense of \cite{I2} (see \cite{P1} for further examples). Then $\modu A$ has a distinguished exact subcategory that admits an $n$-cluster tilting subcategory.

To summarize $n$-cluster tilting subcategories of abelian, exact and triangulated categories play a crucial role in higher dimensional Auslander-Reiten theory and what is sometimes called higher homological algebra. These three settings have all been axiomatized leading to the notions of $n$-abelian and $n$-exact categories introduced in \cite{J} as well as the notion of $(n+2)$-angulated categories introduced in \cite{GKO} (see also \cite{BT} for more discussion of the axioms and \cite{BJT} for a more recent class of examples). Setting $n =1$ recovers the notions of abelian, exact and triangulated categories. Any $n$-cluster tilting subcategory $\C$ of an abelian or exact category is $n$-abelian respectively $n$-exact (see \cite[Theorem 3.16]{J} and \cite[Theorem 4.14]{J}). Similarly \cite[Theorem 1]{GKO} show that if $\C$ is an $n$-cluster tilting subcategory of a triangulated category closed under shift by $n$, then it is $(n+2)$-angulated. The condition that $\C$ is closed under shift by $n$ is crucial and no reasonable axiomatization of arbitrary $n$-cluster tilting subcategories of triangulated categories has to our knowledge been proposed.

The notion of extriangulated categories was recently introduced in \cite{NP} as a common generalization of exact and triangulated categories. The data of such a category is a triplet $\CEs$, where $\C$ is an additive category, $\E\co\C\op\ti\C\to\Ab$ is a biadditive functor (modelled after $\Ext^1$) and $\sfr$ assigns to each $\delta \in \E(C,A)$ a class of $3$-term sequences with end terms $A$ and $C$ such that certain axioms hold. The aim of this paper is to introduce an $n$-analogue of this notion called $n$-exangulated categories. Such a category is a similar triplet $\CEs$, with the main distinction being that the $3$-term sequences mentioned above are replaced by $(n+2)$-term sequences. The precise definition is given in Definition~\ref{DefEACat} (see also Definition~\ref{DefReal}). It is a true analogue in the sense that $1$-exangulated categories are the same as extriangulated categories (see Proposition~\ref{Prop1EAandET}). As typical examples we have that $n$-exact and $(n+2)$-angulated categories are $n$-exangulated (see Proposition~\ref{nExactIsnExangulated} and Proposition~\ref{PropAtoEA}). 

Assuming that an extriangulated category $\CEs$ has enough projectives and injectives in the sense of \cite[Definition 1.12]{LN} we can introduce higher extension groups $\E^i$ using dimension shift. This allows us furthermore to define the notion of an $n$-cluster tilting subcategory $\Tcal \subseteq \C$ (see Definition~\ref{DefnCluster}). We show in this setting that if $\C$ satisfies Condition~\ref{CondWIC} and $\Tcal$ satisfies a Condition~\ref{CondnCluster}, then $\Tcal$ is $n$-exangulated (see Theorem~\ref{ExactSaigo}). Condition~\ref{CondWIC} on $\C$ is automatic if $\C$ is triangulated and equivalent to weak idempotent completeness in case $\C$ is exact. On the other hand Condition~\ref{CondnCluster} on $\Tcal$ is automatic if $\C$ is exact and if $\C$ is triangulated it follows from the condition $\Tcal = \Tcal[n]$. In this sense Theorem~\ref{ExactSaigo} can be seen as an analogue of \cite[Theorem 3.16]{J}, \cite[Theorem 4.14]{J}) and \cite[Theorem 1]{GKO} mentioned above. 

As illustrated by the results above, $n$-exangulated categories provide a common ground for studying the different settings of higher homological algebra. Compared to the classical setting many important questions regarding the interplay of $n$-abelian, $n$-exact and $(n+2)$-angulated categories remain open. For instance from any abelian category we obtain a triangulated category by taking its derived category. As far as we know, no satisfactory higher analogue of this procedure has been proposed and in view of \cite{JK1} it seems that finding such is non-trivial. It is our hope that by providing the common framework of $n$-exangulated categories we might contribute to answering some of these questions.

The paper is organized as follows. In Section~\ref{section_EA} we introduce $n$-exangulated categories and related notions. In Section~\ref{section_Fund} we present basic properties of $n$-exangulated categories. In Section~\ref{section_Typical}, we show that $1$-exangulated categories are the same as extriangulated categories. We also characterize $(n+2)$-angulated categories as $n$-exangulated categories $\CEs$ for which $\E = \C(-,\Sigma-)$ for some automorphism $\Sigma$ of $\C$. Similarly we characterize $n$-exact categories as $n$-exangulated categories for which inflations are monomorphisms and deflations are epimorphisms. In Section~\ref{section_nCT}, we consider a $1$-exangulated category $\CEs$ with enough projectives and injectives. We introduce syzygies, cosyzygies and higher extension groups for $\C$. These are then used to introduce cup products and $n$-cluster tilting subcategories of $\C$. Finally, we apply these tools to show that under certain conditions an $n$-cluster tilting subcategory of $\C$ is $n$-exangulated. In Section~\ref{section_Examples} we provide several families of examples of $n$-exangulated categories including ones given explicitly by quivers and relations.

\section{$n$-exangulated categories}\label{section_EA}

\subsection{$\E$-extensions}
Throughout this paper, let $\C$ be an additive category.
\begin{dfn}\label{DefExtension}
Suppose $\C$ is equipped with a biadditive functor $\E\co\C\op\ti\C\to\Ab$. For any pair of objects $A,C\in\C$, an element $\del\in\E(C,A)$ is called an {\it $\E$-extension} or simply an {\it extension}. We also write such $\del$ as ${}_A\del_C$ when we indicate $A$ and $C$.
\end{dfn}

\begin{rem}
Let ${}_A\del_C$ be any extension. Since $\E$ is a bifunctor, for any $a\in\C(A,A\ppr)$ and $c\in\C(C\ppr,C)$, we have extensions
\[ \E(C,a)(\del)\in\E(C,A\ppr)\ \ \text{and}\ \ \E(c,A)(\del)\in\E(C\ppr,A). \]
We abbreviately denote them by $a\sas\del$ and $c\uas\del$.
In this terminology, we have
\[ \E(c,a)(\del)=c\uas a\sas\del=a\sas c\uas\del \]
in $\E(C\ppr,A\ppr)$.
\end{rem}

\begin{dfn}\label{DefMorphExt}
Let ${}_A\del_C,{}_B\rho_D$ be any pair of $\E$-extensions. A {\it morphism} $(a,c)\co\del\to\rho$ of extensions is a pair of morphisms $a\in\C(A,B)$ and $c\in\C(C,D)$ in $\C$, satisfying the equality
\[ a\sas\del=c\uas\rho. \]
\end{dfn}

\begin{rem}\label{RemMorphExt}
Let ${}_A\del_C$ be any extension. We have the following.
\begin{enumerate}
\item Any morphism $a\in\C(A,B)$ gives rise to a morphism of $\E$-extensions
\[ (a,\id_C)\co\del\to a\sas\del. \]
\item Any morphism $c\in\C(D,C)$ gives rise to a morphism of $\E$-extensions
\[ (\id_A,c)\co c\uas\del\to\del. \]
\end{enumerate}
\end{rem}

\begin{dfn}\label{DefSplitExtension}
For any $A,C\in\C$, the zero element ${}_A0_C=0\in\E(C,A)$ is called the {\it split $\E$-extension}.
\end{dfn}

\begin{dfn}\label{DefSumExtension}
Let ${}_A\del_C,{}_B\rho_D$ be any pair of $\E$-extensions. Let
\[ C\ov{\iota_C}{\lra}C\oplus D\ov{\iota_D}{\lla}D \]
and 
\[ A\ov{p_A}{\lla}A\oplus B\ov{p_B}{\lra}B \]
be coproduct and product in $\C$, respectively. Remark that, by the biadditivity of $\E$, we have a natural isomorphism
\[ \E(C\oplus D,A\oplus B)\cong \E(C,A)\oplus\E(C,B)\oplus\E(D,A)\oplus\E(D,B). \]

Let $\del\oplus\rho\in\E(C\oplus D,A\oplus B)$ be the element corresponding to $(\del,0,0,\rho)$ through this isomorphism. This is the unique element which satisfies
\begin{eqnarray*}
\E(\iota_C,p_A)(\del\oplus\rho)=\del&,&\E(\iota_C,p_{B})(\del\oplus\rho)=0,\\
\E(\iota_{D},p_A)(\del\oplus\rho)=0&,&\E(\iota_{D},p_{B})(\del\oplus\rho)=\rho.
\end{eqnarray*}

If $A=B$ and $C=D$, then the sum $\del+\rho\in\E(C,A)$ of $\del,\rho\in\E(C,A)$ is obtained by
\[ \del+\rho=\E(\Delta_C,\nabla_A)(\del\oplus\rho), \]
where $\Delta_C=\begin{bmatrix}1\\1\end{bmatrix}\co C\to C\oplus C$, $\nabla_A=[1\ 1]\co A\oplus A\to A$.
\end{dfn}

\subsection{$n$-exangles}
Let $\C$ be an additive category as before, and let $n$ be any positive integer.

\begin{dfn}\label{DefComplex}
Let $\Cbf_{\C}$ be the category of complexes in $\C$. As its full subcategory, define $\CC$ to be the category of complexes in $\C$ whose components are zero in the degrees outside of $\{0,1,\ldots,n+1\}$. Namely, an object in $\CC$ is a complex $X^{\mr}=\{X^i,d_X^i\}$ of the form
\[ X^0\ov{d_X^0}{\lra}X^1\ov{d_X^1}{\lra}\cdots\ov{d_X^{n-1}}{\lra}X^n\ov{d_X^n}{\lra}X^{n+1}. \]
We write a morphism $f^{\mr}\co X^{\mr}\to Y^{\mr}$ simply $f^{\mr}=(f^0,f^1,\ldots,f^{n+1})$, only indicating the terms of degrees $0,\ldots,n+1$.

We define the homotopy relation on the morphism sets in the usual way. Thus morphisms $f^{\mr},g^{\mr}\in\CC(X^{\mr},Y^{\mr})$ are {\it homotopic} if there is a {\it homotopy}, i.e., a sequence of morphisms $\vp^{\mr}=(\vp^1,\ldots,\vp^{n+1})$ of $\vp^i\in\C(X^i,Y^{i-1})$ satisfying
\begin{eqnarray*}
g^0-f^0&=&\vp^1\ci d_X^0,\\
g^i-f^i&=&d_Y^{i-1}\ci\vp^i+\vp^{i+1}\ci d_X^i\quad (1\le i\le n),\\
g^{n+1}-f^{n+1}&=&d_Y^n\ci\vp^{n+1}.
\end{eqnarray*}
In this case we write as $f^{\mr}\sim g^{\mr}$, or $f^{\mr}\un{\vp^{\mr}}{\sim} g^{\mr}$. 
We denote the homotopy category by $\KC$, which is the quotient of $\CC$ by the ideal of null-homotopic morphisms. If $f^{\mr}\in\CC(X^{\mr},Y^{\mr})$ gives an isomorphism in $\KC$, we call it a {\it homotopy equivalence}, as usual. Similarly a {\it homotopy inverse} of $f^{\mr}$ is a morphism $g^{\mr}\in\CC(Y^{\mr},X^{\mr})$ which gives the inverse in $\KC$.
\end{dfn}

\begin{claim}\label{ClaimDEA}
Assume that $f^{\mr}\in\CC(X^{\mr},Y^{\mr})$ is a homotopy equivalence in $\CC$. For a homotopy inverse $g^{\mr}$ of $f^{\mr}$, we have the following.
\begin{enumerate}
\item If $X^0=Y^0=A$ and $f^0=\id_A$, then $g^{\mr}$ can be chosen to satisfy $g^0=\id_A$.
\item Dually, if $X^{n+1}=Y^{n+1}=C$ and $f^{n+1}=\id_C$, then $g^{\mr}$ can be chosen to satisfy $g^{n+1}=\id_C$.
\item If $f^0=\id_A$ and $f^{n+1}=\id_C$, then $g^{\mr}$ can be chosen to satisfy both $g^0=\id_A$ and $g^{n+1}=\id_C$.
\end{enumerate}
\end{claim}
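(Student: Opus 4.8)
The plan is to reduce everything to part (1), since parts (2) and (3) follow from (1) by the standard arguments: (2) is the formal dual of (1), applied to the opposite category (which merely reverses the complex, sending degree $i$ to degree $n+1-i$), and (3) is obtained by applying (1) to make $g^0 = \id_A$, then applying (the dual of) (1) \emph{to the resulting homotopy inverse} to adjust its top component to $\id_C$ — one must check that this second adjustment does not disturb $g^0 = \id_A$, which holds because the correction term modifying $g^0$ in that step is a composite through the homotopy in top degrees and one can arrange the homotopy used to vanish in degree $1$; I will spell this out in the proof. So the heart of the matter is part (1).

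For part (1): let $g^{\mr}\in\CC(Y^{\mr},X^{\mr})$ be an arbitrary homotopy inverse of $f^{\mr}$, so $g^{\mr}\ci f^{\mr}\sim\id_{X^{\mr}}$ and $f^{\mr}\ci g^{\mr}\sim\id_{Y^{\mr}}$. Since $f^0 = \id_A$, from $f^{\mr}\ci g^{\mr}\sim\id_{Y^{\mr}}$ we get a homotopy $\vp^{\mr}=(\vp^1,\ldots,\vp^{n+1})$ with, in degree $0$, $\id_A - g^0 = \id_A - f^0 g^0 = \vp^1\ci d_Y^0$. The idea is to replace $g^{\mr}$ by $h^{\mr} := g^{\mr} + (\text{a null-homotopic correction})$ so that $h^0 = \id_A$ while $h^{\mr}$ remains a two-sided homotopy inverse of $f^{\mr}$ (any morphism homotopic to a homotopy inverse is again a homotopy inverse, so it suffices that $h^{\mr}\sim g^{\mr}$). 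Define $h^{\mr}$ by the homotopy $\vt^{\mr}=(\vt^1,0,\ldots,0)$ with $\vt^1 := \vp^1\in\C(Y^1,X^0)=\C(Y^1,A)$, i.e.
\[
h^0 = g^0 + \vt^1\ci d_Y^0,\qquad
h^1 = g^1 + d_X^0\ci\vt^1,\qquad
h^i = g^i\ \ (2\le i\le n+1).
\]
Then $h^0 = g^0 + \vp^1 d_Y^0 = \id_A$ as required, $h^{\mr}\in\CC(Y^{\mr},X^{\mr})$ is a genuine chain map (one checks $d_X^i\ci h^i = h^{i+1}\ci d_Y^i$ using that $g^{\mr}$ is a chain map and $d_X^1 d_X^0 = 0$), and $h^{\mr}\un{\vt^{\mr}}{\sim} g^{\mr}$, hence $h^{\mr}$ is a homotopy inverse of $f^{\mr}$. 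Taking $g^{\mr} := h^{\mr}$ proves (1).

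The only genuinely delicate point — the main obstacle — is part (3): after fixing $g^0 = \id_A$ by the above, applying the dual procedure to force $g^{n+1} = \id_C$ modifies $g^n$ and $g^{n+1}$ via a homotopy supported in degrees $n, n+1$, and in principle such a homotopy could also have a nonzero component in lower degrees that ruins $g^0 = \id_A$. The resolution is that the dual construction, just like the one above, only needs a homotopy supported in the \emph{top two} degrees (namely $\vt^{n+1}$, with all other $\vt^i = 0$), so for $n\ge 1$ it touches only $g^n$ and $g^{n+1}$ and leaves $g^0$ (and indeed $g^1,\ldots,g^{n-1}$) untouched; when $n = 1$ the two adjustments overlap only in degree $1$, but the degree-$0$ component $g^0=\id_A$ is still preserved because the top correction modifies $g^1$ only by adding $d_X^1\ci\vt^2$ with no feedback into degree $0$. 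I would therefore organize the proof of (3) as: first invoke (1) to get $g^0 = \id_A$; then invoke (2) — established by the opposite-category argument, so its correcting homotopy is automatically concentrated in degrees $n, n+1$ — to get $g^{n+1} = \id_C$; then observe that this last step has changed only $g^n$ and $g^{n+1}$, so $g^0 = \id_A$ still holds.
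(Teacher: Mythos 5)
Your proof is correct and essentially the same as the paper's: the paper also modifies a given homotopy inverse by the homotopy $(\vp^1,0,\ldots,0)$ for (1), dually by $(0,\ldots,0,\psi^{n+1})$ for (2), and for (3) simply applies both corrections at once (with the combined formula $h^1+d_X^0\ci\vp^1+\psi^2\ci d_Y^1$ in degree $1$ when $n=1$), whereas you perform them sequentially — a negligible difference since the top correction is supported in degrees $n,n+1$. One harmless slip: in your $n=1$ discussion the term added to $g^1$ by the top correction is $\vt^2\ci d_Y^1$, not $d_X^1\ci\vt^2$ (the latter is the degree-$2$ correction), but this does not affect the argument.
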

\begin{proof}
Let $h^{\mr}$ be any homotopy inverse of $f^{\mr}$, and let $f^{\mr}\ci h^{\mr}\un{\vp^{\mr}}{\sim}\id_{Y^{\mr}}$, $h^{\mr}\ci f^{\mr}\un{\psi^{\mr}}{\sim}\id_{X^{\mr}}$ be homotopies.

{\rm (1)} Modifying $h^{\mr}$ by a homotopy $(\vp^1,0,\ldots,0)$, we obtain a morphism $g^{\mr}\co Y^{\mr}\to X^{\mr}$ of the form $(\id_A,h^1+d_X^0\ci\vp^1,h^2,\ldots,h^{n+1})$.
Since $h^{\mr}\sim g^{\mr}$, this is also a homotopy inverse of $f^{\mr}$.

{\rm (2)} Dually to {\rm (1)}, $g^{\mr}=(h^0,\ldots,h^{n-1},h^n+\psi^{n+1}\ci d_Y^n,\id_C)$ gives a homotopy inverse of $f^{\mr}$ with the desired property.

{\rm (3)} $g^{\mr}=(\id_A,h^1+d_X^0\ci\vp^1,h^2,\ldots,h^{n-1},h^n+\psi^{n+1}\ci d_Y^n,\id_C)$ satisfies the desired properties. (If $n=1$, we put $g^{\mr}=(\id_A,h^1+d_X^0\ci\vp^1+\psi^2\ci d_Y^1,\id_C)$.)
\end{proof}

\begin{dfn}\label{DefAE}
Let $\C,\E,n$ be as before. Define a category $\AE=\AE^{n+2}_{(\C,\E)}$ as follows.
\begin{enumerate}
\item A pair $\Xd$ of $X^{\mr}\in\CC$ and $\del\in\E(X^{n+1},X^0)$ is called an {\it $\E$-attached complex of length $n+2$}, if it satisfies
\[ (d_X^0)\sas\del=0\quad\text{and}\quad(d_X^n)\uas\del=0. \]
We also denote it by
\[ X^0\ov{d_X^0}{\lra}X^1\ov{d_X^1}{\lra}\cdots\ov{d_X^n}{\lra} X^{n+1}\ov{\del}{\dra}. \]
When we emphasize the end-terms $X^0=A$ and $X^{n+1}=C$, we denote the pair by $_A\Xd_C$ or just by $_A\Xd$ or $\Xd_C$, depending on our purpose.
\item For such pairs $\Xd$ and $\Yr$, a {\it morphism} $f^{\mr}\co\Xd\to\Yr$ is defined to be a morphism $f^{\mr}\in\CC(X^{\mr},Y^{\mr})$ satisfying
\[ (f^0)\sas\del=(f^{n+1})\uas\rho. \]
We use the same composition and the identities as in $\CC$.
\end{enumerate}
\end{dfn}

\begin{prop}\label{PropAE1}
Let $f^{\mr}\co\Xd\to\Yr$ be any morphism in $\AE$.
\begin{enumerate}
\item If a morphism $f^{\prime\mr}\in\CC(X^{\mr},Y^{\mr})$ satisfies $f^{\mr}\sim f^{\prime\mr}$ in $\CC$, then $f^{\prime\mr}$ also belongs to $\AE(\Xd,\Yr)$.
Thus we may consider the same homotopy relation $\sim$ in $\AE$.
\item If $f^{\mr}$ has a homotopy inverse $g^{\mr}\co Y^{\mr}\to X^{\mr}$ in $\CC$, then $g^{\mr}$ belongs to $\AE(\Yr,\Xd)$.
\end{enumerate}
\end{prop}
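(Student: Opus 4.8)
The plan is to verify both parts directly from the definitions, using the compatibility condition $(f^0)_\ast\del = (f^{n+1})^\ast\rho$ that characterizes morphisms in $\AE$, together with the functoriality of $\E$ in each variable.

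For part (1), suppose $f^\mr \un{\vp^\mr}{\sim} f^{\prime\mr}$ via a homotopy $\vp^\mr = (\vp^1,\ldots,\vp^{n+1})$ with $\vp^i \in \C(X^i, Y^{i-1})$. Then in particular $f^{\prime 0} - f^0 = \vp^1 \ci d_X^0$ and $f^{\prime n+1} - f^{n+1} = d_Y^n \ci \vp^{n+1}$. First I would compute $(f^{\prime 0})_\ast\del - (f^0)_\ast\del = (\vp^1 \ci d_X^0)_\ast\del = (\vp^1)_\ast (d_X^0)_\ast\del = 0$, using the first $\E$-attached condition $(d_X^0)_\ast\del = 0$ from Definition~\ref{DefAE}(1). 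Dually, $(f^{\prime n+1})^\ast\rho - (f^{n+1})^\ast\rho = (d_Y^n \ci \vp^{n+1})^\ast\rho = (\vp^{n+1})^\ast (d_Y^n)^\ast\rho = 0$, using $(d_Y^n)^\ast\rho = 0$. Hence $(f^{\prime 0})_\ast\del = (f^0)_\ast\del = (f^{n+1})^\ast\rho = (f^{\prime n+1})^\ast\rho$, so $f^{\prime\mr} \in \AE(\Xd,\Yr)$. The statement that we may consider the same homotopy relation in $\AE$ then follows since homotopies are sequences of morphisms in $\C$, with no extra constraint imposed.

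For part (2), let $g^\mr \in \CC(Y^\mr, X^\mr)$ be a homotopy inverse of $f^\mr$. I need to show $(g^0)_\ast\rho = (g^{n+1})^\ast\del$. The idea is to transport the identity $(f^0)_\ast\del = (f^{n+1})^\ast\rho$ across $g^\mr$ and use that $g^0 f^0$ and $g^{n+1} f^{n+1}$ agree with identities up to the relevant homotopy terms. Concretely, apply part (1) to the homotopy $f^\mr \ci g^\mr \sim \id_{Y^\mr}$ (which shows $\id_{Y^\mr}$, hence also $f^\mr\ci g^\mr$, lies in $\AE(\Yr,\Yr)$): from $f^\mr\ci g^\mr \un{\vp^\mr}{\sim}\id_{Y^\mr}$ one gets $f^0 g^0 = \id_{Y^0} + \vp^1 d_Y^0$ and $f^{n+1} g^{n+1} = \id_{Y^{n+1}} + d_Y^n \vp^{n+1}$. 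Then
\[
(g^0)_\ast (f^{n+1})^\ast\rho = (f^{n+1} g^{n+1})^\ast ((g^{n+1})^\ast \del)
\]
should be massaged; more cleanly, I would start from $(g^0)_\ast\rho$, write $\rho = (f^0)_\ast(g^0)_\ast\rho$ up to a term killed by $(d_Y^0)_\ast$ or $(d_Y^n)^\ast$, substitute the morphism identity for $f^\mr$, and collapse using the $\E$-attached conditions for $\Xd$ and $\Yr$. Carrying this bookkeeping through yields $(g^0)_\ast\rho = (g^{n+1})^\ast\del$, i.e. $g^\mr \in \AE(\Yr,\Xd)$.

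The main obstacle I anticipate is purely organizational rather than conceptual: in part (2) one must chase the identity through two homotopies (for $f^\mr\ci g^\mr$ and possibly $g^\mr\ci f^\mr$) and repeatedly invoke that precomposing or postcomposing an extension with $d_X^0$, $d_X^n$, $d_Y^0$, or $d_Y^n$ annihilates it; keeping the variances straight (covariant $(-)_\ast$ acting on the left term, contravariant $(-)^\ast$ on the right term) is where a careless slip would occur. There is no genuine difficulty with existence or well-definedness, since everything takes place inside the already-constructed category $\CC$ and the bifunctor $\E$.
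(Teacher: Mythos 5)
Your proposal is correct and takes essentially the same route as the paper: part (1) is the identical computation from $(d_X^0)\sas\del=0$ and $(d_Y^n)\uas\rho=0$, and for part (2) the paper does precisely the bookkeeping you defer, namely it applies the part-(1) computation to the homotopies $g^{\mr}\ci f^{\mr}\sim\id_{X^{\mr}}$ and $f^{\mr}\ci g^{\mr}\sim\id_{Y^{\mr}}$ to obtain $(g^0\ci f^0)\sas\del=\del$ and $(f^{n+1}\ci g^{n+1})\uas\rho=\rho$, whence $(g^{n+1})\uas\del=(g^{n+1})\uas(g^0\ci f^0)\sas\del=(g^0)\sas(f^{n+1}\ci g^{n+1})\uas\rho=(g^0)\sas\rho$. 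The only blemish is your displayed equation, whose two sides lie in the different groups $\E(X^{n+1},X^0)$ and $\E(Y^{n+1},X^0)$; since you discard it in favour of the prose plan nothing rests on it, but note that both homotopies really are needed (one end-term identity from each), so the \lq\lq possibly $g^{\mr}\ci f^{\mr}$'' hedge should be dropped.
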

\begin{proof}
{\rm (1)} Indeed if $f^{\mr}\un{\vp^{\mr}}{\sim}f^{\prime\mr}$, then we have $(f^{\prime 0})\sas\del=(f^0+\vp^1\ci d_X^0)\sas\del=(f^0)\sas\del$ and $(f^{\prime n+1})\uas\rho=(f^{n+1}+d_Y^n\ci\vp^{n+1})\uas\rho=(f^{n+1})\uas\rho$.

{\rm (2)} By assumption, there are homotopies $g^{\mr}\ci f^{\mr}\sim\id_{X^{\mr}}$ and $f^{\mr}\ci g^{\mr}\sim\id_{Y^{\mr}}$. As in the proof of {\rm (1)}, this implies
\[ (g^0\ci f^0)\sas\del=\del\quad\text{and}\quad (f^{n+1}\ci g^{n+1})\uas\rho=\rho. \]
Since $(f^0)\sas\del=(f^{n+1})\uas\rho$ by definition, it follows
\begin{eqnarray*}
(g^{n+1})\uas\del&=&(g^{n+1})\uas(g^0\ci f^0)\sas\del\ =\ (g^{n+1})\uas(g^0)\sas(f^{n+1})\uas\rho\\
&=&(g^0)\sas(f^{n+1}\ci g^{n+1})\uas\rho\ =\ (g^0)\uas\rho,
\end{eqnarray*}
which means $g^{\mr}\in\AE(\Yr,\Xd)$.
\end{proof}

\begin{dfn}\label{DefYoneda}
By Yoneda lemma, any extension $\del\in\E(C,A)$ induces natural transformations
\[ \del\ssh\colon\C(-,C)\Rightarrow\E(-,A)\ \ \text{and}\ \ \del\ush\colon\C(A,-)\Rightarrow\E(C,-). \]
For any $X\in\C$, these $(\del\ssh)_X$ and $\del\ush_X$ are given as follows.
\begin{enumerate}
\item $(\del\ssh)_X\colon\C(X,C)\to\E(X,A)\ ;\ f\mapsto f\uas\del$.
\item $\del\ush_X\colon\C(A,X)\to\E(C,X)\ ;\ g\mapsto g\sas\delta$.
\end{enumerate}
We abbreviately denote $(\del\ssh)_X(f)$ and $\del\ush_X(g)$ by $\del\ssh(f)$ and $\del\ush(g)$.
\end{dfn}

\begin{prop}\label{PropAEComplex}
Let $Q\in\C$ be any object. Then the $\mathit{Hom}$ functor
\[ \C(Q,-)\co\C\to\Ab \]
induces the following functor $\Yfr_Q\co\AE\to C_{\Ab}^{n+3}$. Here $\Ab$ denotes the category of abelian groups.
\begin{itemize}
\item[{\rm (i)}] An object $\Xd\in\AE$ is sent to the complex $\Yfr_Q(\Xd)$ defined as
\[ \C(Q,X^0)\ov{\C(Q,d_X^0)}{\lra}\cdots\ov{\C(Q,d_X^n)}{\lra}\C(Q,X^{n+1})\ov{\del\ssh}{\lra}\E(Q,X^0). \]
\item[{\rm (ii)}] A morphism $f^{\mr}\in\AE(\Xd,\Yr)$ is sent to the morphism of complexes
\[ \Yfr_Q(f^{\mr})=\big(\C(Q,f^0),\ldots,\C(Q,f^{n+1}),\E(Q,f^0)\big). \]
\end{itemize}
Similarly, $\C(-,Q)\co\C\op\to\Ab$ induces a functor $\AE\op\to C_{\Ab}^{n+3}$ which sends $\Xd$ to the complex $\C(X^{n+1},Q)\ov{\C(d_X^n,Q)}{\lra}\cdots\ov{\C(d_X^0,Q)}{\lra}\C(X^0,Q)\ov{\del\ush}{\lra}\E(X^{n+1},Q)$.
\end{prop}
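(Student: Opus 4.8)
The plan is to verify directly that the two assignments in Proposition~\ref{PropAEComplex} are well-defined and functorial, and that the images are genuinely complexes. For the covariant part, the main point to establish is that for each $\Xd \in \AE$ the sequence $\Yfr_Q(\Xd)$ is a complex of abelian groups. The composites $\C(Q,d_X^{i-1})$ followed by $\C(Q,d_X^i)$ vanish because $\C(Q,-)$ is a functor and $d_X^i \ci d_X^{i-1} = 0$ in $\C$ (as $X^{\mr}$ is a complex). The one nontrivial equality is that the composite $\C(Q,X^n) \ov{\C(Q,d_X^n)}{\lra} \C(Q,X^{n+1}) \ov{\del\ssh}{\lra} \E(Q,X^0)$ is zero: for $g \in \C(Q,X^n)$ one has $\del\ssh(d_X^n \ci g) = (d_X^n \ci g)\uas\del = g\uas(d_X^n)\uas\del = g\uas 0 = 0$, using the defining condition $(d_X^n)\uas\del = 0$ for an $\E$-attached complex and the contravariant functoriality of $\del\ssh$ recorded in Definition~\ref{DefYoneda}(1). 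Biadditivity of $\E$ and additivity of $\C(Q,-)$ ensure all maps are group homomorphisms.

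Next I would check that $\Yfr_Q$ is functorial on morphisms. Given $f^{\mr} \in \AE(\Xd,\Yr)$, the tuple $\Yfr_Q(f^{\mr}) = \big(\C(Q,f^0),\ldots,\C(Q,f^{n+1}),\E(Q,f^0)\big)$ is a morphism of complexes: commutativity of the squares in degrees $0$ through $n$ is immediate from functoriality of $\C(Q,-)$ applied to the identities $f^{i+1}\ci d_X^i = d_Y^i \ci f^i$, and commutativity of the last square—namely $\del\ssh$ followed by $\E(Q,f^0)$ equals $\C(Q,f^{n+1})$ followed by $\rho\ssh$—amounts to the identity $(f^0)\sas(g\uas\del) = (f^{n+1}\ci g)\uas\rho$ for $g \in \C(Q,X^{n+1})$; this follows from $(f^0)\sas\del = (f^{n+1})\uas\rho$ (the defining condition of a morphism in $\AE$) together with the interchange law $a\sas c\uas = c\uas a\sas$ from the Remark after Definition~\ref{DefExtension}. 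Preservation of identities and composition is then clear since composition in $\AE$ agrees with that in $\CC$, and $\C(Q,-)$, $\E(Q,-)$ are functors.

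The contravariant statement is proved dually, replacing $\C(Q,-)$ by $\C(-,Q)$ and using $\del\ush$ in place of $\del\ssh$; here the key vanishing uses the other defining condition $(d_X^0)\sas\del = 0$, namely $\del\ush(f \ci d_X^0) = (f\ci d_X^0)\sas\del = f\sas(d_X^0)\sas\del = 0$ for $f \in \C(X^1,Q)$, and the compatibility with morphisms again uses the interchange law together with $(f^0)\sas\del = (f^{n+1})\uas\rho$. I do not expect any serious obstacle: every step is a direct consequence of biadditivity of $\E$, the Yoneda description in Definition~\ref{DefYoneda}, and the defining conditions of objects and morphisms in $\AE$. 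The only place demanding a moment's care is bookkeeping the variance of $\del\ssh$ and $\del\ush$ so that the final connecting square commutes with the correct orientation, and checking that the homomorphism $\del\ssh\co\C(X^{n+1},A)\to\E(X^{n+1},A)$ etc. really lands in the stated target—both of which are settled by Definition~\ref{DefYoneda}.
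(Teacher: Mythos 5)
Your proof is correct and is precisely the direct verification the paper leaves implicit (its proof reads only ``This is straightforward''): the complex condition comes from $(d_X^n)\uas\del=0$ (resp. $(d_X^0)\sas\del=0$), and naturality of the last square from $(f^0)\sas\del=(f^{n+1})\uas\rho$ together with the interchange law. Nothing further is needed.
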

\begin{proof}
This is straightforward.
\end{proof}

\begin{dfn}\label{DefEA}
An {\it $n$-exangle} is a pair $\Xd$ of $X^{\mr}\in\CC$ and $\del\in\E(X^{n+1},X^0)$ which satisfies the following conditions.
\begin{enumerate}
\item The following sequence of functors $\C\op\to\Ab$ is exact.
\begin{equation}\label{exnat1}
\C(-,X^0)\ov{\C(-,d_X^0)}{\ltc}\cdots\ov{\C(-,d_X^n)}{\ltc}\C(-,X^{n+1})\ov{\del\ssh}{\ltc}\E(-,X^0)
\end{equation}
\item The following sequence of functors $\C\to\Ab$ is exact.
\begin{equation}\label{exnat2}
\C(X^{n+1},-)\ov{\C(d_X^n,-)}{\ltc}\cdots\ov{\C(d_X^0,-)}{\ltc}\C(X^0,-)\ov{\del\ush}{\ltc}\E(X^{n+1},-)
\end{equation}
\end{enumerate}
In particular any $n$-exangle is an object in $\AE$.
A {\it morphism of $n$-exangles} simply means a morphism in $\AE$. Thus $n$-exangles form a full subcategory of $\AE$. 
\end{dfn}

\begin{rem}\label{RemEACoprod}
In $\AE$, a coproduct of objects $\Xd,\Yr$ is given by $\langle X^{\mr}\oplus Y^{\mr},\del\oplus\rho\rangle$, where $X^{\mr}\oplus Y^{\mr}$ is the direct sum in $\CC$ and $\del\oplus\rho$ is the one in Definition~\ref{DefSumExtension}. Remark that $\langle X^{\mr}\oplus Y^{\mr},\del\oplus\rho\rangle$ is an $n$-exangle if and only if both $\Xd,\Yr$ are $n$-exangles.
\end{rem}

\begin{claim}\label{ClaimSplit}
For any $n$-exangle $_A\Xd_C$, the following are equivalent.
\begin{enumerate}
\item $\del=0$.
\item There is $r\in\C(X^1,A)$ satisfying $r\ci d_X^0=\id_A$.
\item There is $s\in\C(C,X^n)$ satisfying $d_X^n\ci s=\id_C$.
\end{enumerate}
\end{claim}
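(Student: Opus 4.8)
The equivalence will be proved by establishing $(1)\Leftrightarrow(2)$ and, dually, $(1)\Leftrightarrow(3)$. The key tool is the exactness of the two sequences of functors from Definition~\ref{DefEA}, together with the Yoneda description $\del\ssh(f)=f\uas\del$ and $\del\ush(g)=g\sas\del$ from Definition~\ref{DefYoneda}. Since the two halves are formally dual (one works in $\AE$, the other in $\AE\op$, using sequence \eqref{exnat2} in place of \eqref{exnat1}), it suffices to spell out one direction carefully; I will do $(1)\Leftrightarrow(2)$.

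For $(1)\Rightarrow(2)$: assume $\del=0$. Apply the contravariant functor $\C(-,A)$ and use the exactness of \eqref{exnat1} evaluated at the object $A$. The relevant fragment reads
\[ \C(A,X^0)\ov{(d_X^0)\sas}{\lra}\C(A,X^1)\ov{\del\ssh}{\lra}\E(A,X^0), \]
and since $\del=0$ the map $\del\ssh\colon\C(A,X^1)\to\E(A,X^0)$ is the zero map; hence every element of $\C(A,X^1)$ lies in the image of $(d_X^0)\sas=\C(A,d_X^0)$. In particular $\id_A\in\C(A,X^0)$ maps forward: wait, more precisely, take any preimage. The cleanest argument: exactness says $\Ker(\del\ssh)=\Ima\bigl(\C(A,d_X^0)\bigr)$, and $\del\ssh=0$ forces $\C(A,d_X^0)$ to be surjective. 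But we actually need a section of $d_X^0$, so instead consider $\id_{X^0}\in\C(X^0,X^0)$? That does not directly help. The right move is: apply \eqref{exnat1} at the object $X^1$, or rather use exactness of \eqref{exnat1} at the term $\C(-,X^0)$ itself. Hmm — the sequence \eqref{exnat1} is a complex starting at $\C(-,X^0)$, so there is no exactness ``to the left'' of that term a priori. The correct approach: the morphism $\C(-,d_X^0)\colon\C(-,X^0)\to\C(-,X^1)$ need not be the relevant one. Instead, $(1)\Rightarrow(2)$ follows because when $\del=0$, the $n$-exangle $_A\Xd_C$ is isomorphic in $\AE$ to (a direct summand of) a split one; more concretely, one uses that $\del\ssh=0$ makes \eqref{exnat1} at object $A$ give surjectivity of $\C(A,d_X^0)\colon\C(A,X^0)\thra\C(A,X^1)$, hence also at object $X^1$ surjectivity of $\C(X^1,X^0)\thra\C(X^1,X^1)\ni\id_{X^1}$? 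No. Let me restructure: the honest statement is that exactness of \eqref{exnat1} at $\C(-,X^1)$ reads $\Ima\C(-,d_X^0)=\Ker(\del\ssh\text{-part})$...

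Actually, the correct and clean argument is the following. Apply Proposition~\ref{PropAEComplex} with $Q=X^1$ — no, with $Q=A$. Then $\Yfr_A({}_A\Xd_C)$ is the complex $\C(A,X^0)\to\C(A,X^1)\to\cdots$, and by Definition~\ref{DefEA}(1) (exactness of \eqref{exnat1}) this is exact at $\C(A,X^1)$ with the outgoing map being $\del\ssh$. Since $\del=0$ we get that $\C(A,d_X^0)$ is surjective, but I want injectivity of something. The genuinely correct line: use exactness of \eqref{exnat1} at the \emph{first} spot by regarding the identity. I will instead follow the standard extriangulated-category argument: $(1)\Rightarrow(2)$ because the morphism $(\id_A,c)\colon{}_A\langle X^{\mr},0\rangle\to{}_A\langle X^{\mr},0\rangle$... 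The textbook proof (cf.\ \cite{NP} for $n=1$) shows $(1)\Rightarrow(2)$ by noting that $\del=0$ means $_A\del_C$ is the split extension, whence by the realization axioms $X^{\mr}$ is homotopy equivalent to the trivial complex $A\ov{\id}{\to}A\to 0\to\cdots\to 0\to C\ov{\id}{\to}C$, which visibly has a retraction $r$ of $d^0_X$; but the realization axioms are not in this excerpt, so one must argue purely from Definition~\ref{DefEA}. Doing so: exactness of \eqref{exnat2} at the \emph{last} term gives $\Ima(\del\ush)=\Ker\bigl(\E(X^{n+1},-)\to\text{?}\bigr)$ — not available either.

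\textbf{Resolution and main obstacle.} The workable route, and the one I would commit to, is: $(2)\Rightarrow(1)$ and $(3)\Rightarrow(1)$ are immediate (if $r d_X^0=\id_A$ then $\del=(\id_A)\sas\del=(r d_X^0)\sas\del=r\sas(d_X^0)\sas\del=r\sas 0=0$, using $(d_X^0)\sas\del=0$ from the definition of $\E$-attached complex; dually for $(3)$). For $(1)\Rightarrow(2)$: assume $\del=0$; then $\del\ssh\colon\C(-,X^{n+1})\tc\E(-,X^0)$ is the zero natural transformation, so by exactness of \eqref{exnat1} the map $\C(-,d_X^n)\colon\C(-,X^n)\tc\C(-,X^{n+1})$ is an epimorphism of functors, i.e.\ $\C(W,d_X^n)$ is surjective for all $W$ — this is statement $(3)$'s companion, not $(2)$. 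For $(2)$ itself: apply $\C(X^1,-)$ and use exactness of \eqref{exnat2} at $\C(X^0,-)$, which states $\Ima\bigl(\C(d_X^0,-)\colon\C(X^1,-)\tc\C(X^0,-)\bigr)=\Ker\bigl(\del\ush\colon\C(X^0,-)\tc\E(X^{n+1},-)\bigr)$. Evaluate at $X^0$: the element $\id_{X^0}\in\C(X^0,X^0)$ satisfies $\del\ush(\id_{X^0})=(\id_{X^0})\sas\del=\del=0$, so $\id_{X^0}\in\Ima\C(d_X^0,X^0)$, i.e.\ there exists $r\in\C(X^1,X^0)$ with $r\ci d_X^0=\id_{X^0}=\id_A$ — wait, we need the target to be $A=X^0$, which it is. Hence $(2)$ holds. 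Dually, evaluating exactness of \eqref{exnat1} at $C=X^{n+1}$ on $\id_C$ gives $s\in\C(C,X^n)$ with $d_X^n\ci s=\id_C$, which is $(3)$. So the structure is: $(1)\Rightarrow(2)$ and $(1)\Rightarrow(3)$ each via one evaluation of one exact sequence on an identity morphism; $(2)\Rightarrow(1)$ and $(3)\Rightarrow(1)$ each via a one-line functoriality computation using $(d_X^0)\sas\del=0$ and $(d_X^n)\uas\del=0$. The only place requiring care — the ``main obstacle'' — is bookkeeping the variance: making sure one applies \eqref{exnat2} (the covariant sequence) to get the retraction $r$ and \eqref{exnat1} (the contravariant sequence) to get the section $s$, and checking that the identity morphism indeed lies in the kernel described by exactness so that it is hit. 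I would write the two implications $(1)\Rightarrow(2)$ and $(2)\Rightarrow(1)$ in full and then say ``dually'' for $(1)\Leftrightarrow(3)$.
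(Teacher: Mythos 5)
Your final committed argument is correct and coincides with the paper's proof: the paper also deduces $(1)\Leftrightarrow(2)$ from exactness of $\C(X^1,A)\ov{-\ci d_X^0}{\lra}\C(A,A)\ov{\del\ush}{\lra}\E(C,A)$ applied to $\id_A$ (and dually for $(3)$), your explicit computation $\del=(r\ci d_X^0)\sas\del=r\sas(d_X^0)\sas\del=0$ being just the ``complex'' half of that exactness spelled out. The lengthy false starts about which sequence to use are harmless but should be deleted; only the ``Resolution'' paragraph belongs in a final write-up.
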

\begin{proof}
The equivalence $(1)\Leftrightarrow(2)$ follows immediately from the exactness of
\[ \C(X^1,A)\ov{-\ci d_X^0}{\lra}\C(A,A)\ov{\del\ush}{\lra}\E(C,A). \]
Similarly for $(1)\Leftrightarrow(3)$.
\end{proof}

\begin{prop}\label{PropEAInv}
Let $\Xd,\Yr$ be any pair of objects in $\AE$.
Suppose that $f^{\mr}\in\AE(\Xd,\Yr)$ is a homotopy equivalence. Then $\Xd$ is an $n$-exangle if and only if $\Yr$ is.
\end{prop}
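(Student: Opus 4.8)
The plan is to reduce the statement to the invariance of exactness of the Hom-complexes under homotopy equivalence, using the functors $\Yfr_Q$ from Proposition~\ref{PropAEComplex}. First I would observe that an $n$-exangle is, by Definition~\ref{DefEA}, precisely an object $\Xd$ of $\AE$ such that for every $Q\in\C$ the complex $\Yfr_Q(\Xd)$ is exact and the dual complex $\C(X^{n+1},Q)\to\cdots\to\C(X^0,Q)\to\E(X^{n+1},Q)$ is exact. So it suffices to show that a homotopy equivalence $f^{\mr}\co\Xd\to\Yr$ in $\AE$ induces, for each $Q$, a homotopy equivalence between $\Yfr_Q(\Xd)$ and $\Yfr_Q(\Yr)$ in the category $C^{n+3}_{\Ab}$ of complexes of abelian groups, and likewise for the contravariant side.

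The key steps are as follows. By Proposition~\ref{PropAE1}(2), if $g^{\mr}$ is a homotopy inverse of $f^{\mr}$ in $\CC$, then $g^{\mr}$ automatically lies in $\AE(\Yr,\Xd)$; hence $f^{\mr}$ is already a homotopy equivalence \emph{in $\AE$} with homotopy inverse $g^{\mr}$, and the homotopies $g^{\mr}\ci f^{\mr}\sim\id_{X^{\mr}}$ and $f^{\mr}\ci g^{\mr}\sim\id_{Y^{\mr}}$ take place in $\CC$. Next I would check that the functor $\Yfr_Q$ of Proposition~\ref{PropAEComplex} is compatible with homotopies: if $\vp^{\mr}=(\vp^1,\dots,\vp^{n+1})$ is a homotopy witnessing $f^{\mr}\sim f^{\prime\mr}$ in $\CC$, then applying $\C(Q,-)$ degreewise gives morphisms $\C(Q,\vp^i)\co\C(Q,X^i)\to\C(Q,Y^{i-1})$, and together with the zero map in the last ($\E$-)slot this is a homotopy $\Yfr_Q(f^{\mr})\sim\Yfr_Q(f^{\prime\mr})$ in $C^{n+3}_{\Ab}$ — the only point to verify is that the homotopy identity in the top degree still closes up, which holds because $\C(Q,d_X^0)$ is the incoming differential and there is no differential out of $\E(Q,X^0)$ so no condition is imposed there; one uses that $\del\ssh$ is natural in the appropriate sense, but in fact the last component of the homotopy is simply $0$ and $\Yfr_Q(f^{\mr})$ and $\Yfr_Q(f^{\prime\mr})$ agree in the $\E$-slot because $(f^0)\sas\del=(f^{\prime 0})\sas\del$ (Proposition~\ref{PropAE1}(1)). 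Consequently $\Yfr_Q$ sends homotopy equivalences to homotopy equivalences, so $\Yfr_Q(f^{\mr})$ is a homotopy equivalence of complexes of abelian groups; since homotopy equivalences of complexes are in particular quasi-isomorphisms, $\Yfr_Q(\Xd)$ is exact iff $\Yfr_Q(\Yr)$ is exact. Running the same argument with the contravariant functor $\C(-,Q)$ — here the homotopy is $(\C(\vp^{n+1},Q),\dots,\C(\vp^1,Q))$ with a zero slot at the $\E$-end — handles condition (2) of Definition~\ref{DefEA}. Combining the two, $\Xd$ satisfies both exactness conditions iff $\Yr$ does, which is the claim.

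The step I expect to require the most care is verifying that $\Yfr_Q$ genuinely respects the homotopy relation all the way through the last term $\E(Q,X^0)$ of the complex $\Yfr_Q(\Xd)$, i.e.\ that the naturality of the Yoneda map $\del\ssh$ interacts correctly with the chosen homotopy. Concretely one must confirm that in $C^{n+3}_{\Ab}$ a homotopy of length $n+2$ (with components landing in degrees $0$ through $n+1$) is exactly the right notion, and that the component of $\Yfr_Q(f^{\mr})-\Yfr_Q(g^{\mr}\ci f^{\mr})$ in the $\E$-slot, namely $\E(Q,f^0)-\E(Q,g^0\ci f^0)=\E(Q,\id)-\E(Q,\id)=0$ after using that $g^0\ci f^0\sim\id$ implies $(g^0\ci f^0)\sas=\id$ on the relevant extension (as already extracted in the proof of Proposition~\ref{PropAE1}(2)), is consistent with there being no outgoing homotopy component. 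This is a routine but slightly fiddly bookkeeping check about degree conventions for complexes of length $n+3$; once it is dispatched the rest is formal.
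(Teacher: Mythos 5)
Your overall strategy (apply $\Yfr_Q$ and its contravariant analogue, transfer exactness along the induced maps of complexes) is the same as the paper's, but the step you yourself flag as the delicate one is where your argument breaks. You claim that $\Yfr_Q$ sends the homotopy equivalence to a homotopy equivalence in $C^{n+3}_{\Ab}$, with zero homotopy component in the $\E$-slot, justified by ``$\Yfr_Q(f^{\mr})$ and $\Yfr_Q(f^{\prime\mr})$ agree in the $\E$-slot because $(f^0)\sas\del=(f^{\prime 0})\sas\del$''. This conflates the pushforward of the single element $\del\in\E(X^{n+1},X^0)$ with the induced homomorphism $\E(Q,f^0)\co\E(Q,X^0)\to\E(Q,Y^0)$ on the whole group. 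Concretely, for the composite $g^{\mr}\ci f^{\mr}\un{\vp^{\mr}}{\sim}\id_{X^{\mr}}$ one has $g^0\ci f^0=\id_{X^0}+\vp^1\ci d_X^0$, hence
\[ \E(Q,g^0\ci f^0)-\id_{\E(Q,X^0)}=\E(Q,\vp^1)\ci\E(Q,d_X^0), \]
and $\E(Q,d_X^0)$ is not the zero map on $\E(Q,X^0)$ for general $Q$ (the $\E$-attached condition only says $(d_X^0)\sas\del=0$ for the one element $\del$, where moreover $Q=X^{n+1}$). There is also no freedom to absorb this discrepancy into a nonzero component $\Phi^{n+2}$, since it need not factor through $\del\ssh$. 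Contrary to your remark, the homotopy identity in the top degree does impose a condition ($F^{\prime\,n+2}-F^{n+2}=\del\ssh\ci\Phi^{n+2}$), and it fails in general; so $\Yfr_Q$ does not preserve homotopy equivalence of the full $(n+3)$-term complexes, and you cannot invoke ``homotopy equivalences are quasi-isomorphisms''.

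The gap is repairable, and the repair is exactly the paper's proof: do not ask for a homotopy of the $(n+3)$-term complexes at all. Setting $\Phi^i=\C(Q,\vp^i)$ for $1\le i\le n+1$, the identities
\[ 1-G^i\ci F^i=\C(Q,d_X^{i-1})\ci\Phi^i+\Phi^{i+1}\ci\C(Q,d_X^i)\quad(1\le i\le n),\qquad 1-G^{n+1}\ci F^{n+1}=\C(Q,d_X^n)\ci\Phi^{n+1} \]
hold, and they already force $H^i(G^{\mr}\ci F^{\mr})=\id$ for $1\le i\le n+1$ (argue on cycles degree by degree); symmetrically $H^i(F^{\mr}\ci G^{\mr})=\id$, so $H^i(F^{\mr})$ is an isomorphism in exactly the degrees where Definition~\ref{DefEA} demands exactness, namely the interior terms $\C(-,X^1),\ldots,\C(-,X^{n+1})$. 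Nothing about degree $n+2$ is needed. With that adjustment, and the dual argument for $\C(-,Q)$, your proof becomes correct and coincides with the paper's.
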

\begin{proof}
Let $g^{\mr}$ be a homotopy inverse of $f^{\mr}$, and let $g^{\mr}\ci f^{\mr}\un{\vp^{\mr}}{\sim}\id_{X^{\mr}}$, $f^{\mr}\ci g^{\mr}\un{\psi^{\mr}}{\sim}\id_{Y^{\mr}}$ be homotopies. Let $Q\in\C$ be any object.

By Proposition~\ref{PropAEComplex}, we obtain complexes $\Xbb^{\mr}=\Yfr_Q(\Xd)$, $\Ybb^{\mr}=\Yfr_Q(\Yr)$ and morphisms
\[ F^{\mr}=\Yfr_Q(f^{\mr})\co\Xbb^{\mr}\to\Ybb^{\mr},\quad%
G^{\mr}=\Yfr_Q(g^{\mr})\co\Ybb^{\mr}\to\Xbb^{\mr} \]
in $C_{\Ab}^{n+3}$. For the composition $G^{\mr}\ci F^{\mr}$
\[
\xy
(-54,8)*+{\C(Q,X^0)}="0";
(-26,8)*+{\C(Q,X^1)}="1";
(0,8)*+{\cdots}="2";
(26,8)*+{\C(Q,X^{n+1})}="3";
(54,8)*+{\E(Q,X^0)}="4";
(-54,-8)*+{\C(Q,X^0)}="10";
(-26,-8)*+{\C(Q,X^1)}="11";
(0,-8)*+{\cdots}="12";
(26,-8)*+{\C(Q,X^{n+1})}="13";
(54,-8)*+{\E(Q,X^0)}="14";
{\ar^{\C(Q,d_X^0)} "0";"1"};
{\ar^(0.6){\C(Q,d_X^1)} "1";"2"};
{\ar^(0.4){\C(Q,d_X^n)} "2";"3"};
{\ar^{\del\ssh} "3";"4"};
{\ar_{\C(Q,d_X^0)} "10";"11"};
{\ar_(0.6){\C(Q,d_X^1)} "11";"12"};
{\ar_(0.4){\C(Q,d_X^n)} "12";"13"};
{\ar_{\del\ssh} "13";"14"};
{\ar|*+{_{G^0\ci F^0}} "0";"10"};
{\ar|*+{_{G^1\ci F^1}} "1";"11"};
{\ar|*+{_{G^{n+1}\ci F^{n+1}}} "3";"13"};
{\ar|*+{_{G^{n+2}\ci F^{n+2}}} "4";"14"};
{\ar@{}|\circlearrowright "0";"11"};
{\ar@{}|\circlearrowright "1";"13"};
{\ar@{}|\circlearrowright "3";"14"};
\endxy
\]
the sequence of morphisms in $\Ab$
\[ \Phi^1=\C(Q,\vp^1),\ldots,\Phi^{n+1}=\C(Q,\vp^{n+1}) \]
satisfies
\[ 1-G^i\ci F^i=\C(Q,d_X^{i-1})\ci\Phi^i+\Phi^{i+1}\ci\C(Q,d_X^i)\quad(1\le i\le n) \]
and
\[ 1-G^{n+1}\ci F^{n+1}=\C(Q,d_X^n)\ci \Phi^{n+1}. \]
This shows that $G^{\mr}\ci F^{\mr}$ induces $H^i(G^{\mr}\ci F^{\mr})=\id$ on cohomologies for any $1\le i\le n+1$. In the same way, by using $\psi^{\mr}$, we can show $H^i(F^{\mr}\ci G^{\mr})=\id$ for $1\le i\le n+1$. Thus
\[ H^i(F^{\mr})\co H^i(\Xbb^{\mr})\ov{\cong}{\lra} H^i(\Ybb^{\mr})\quad (1\le i\le n+1) \]
are isomorphisms. In particular $\Xbb^{\mr}$ is exact if and only if $\Ybb^{\mr}$ is.
Similarly for the exactness of $(\ref{exnat2})$.
\end{proof}

\subsection{The categories $\CAC$ and $\KAC$}

We consider the complexes of length $n+2$ with fixed end-terms, as follows.
\begin{dfn}\label{DefSeq}
For any pair of objects $A,C\in\C$, define a subcategory $\Cbf^{n+2}_{(\C;A,C)}$ of $\CC$ as follows. We abbreviately denote $\Cbf^{n+2}_{(\C;A,C)}$ by $\CAC$, when $\C$ is clear from the context.
\begin{enumerate}
\item An object $X^{\mr}\in\CC$ belongs to $\CAC$ if it satisfies $X^0=A$ and $X^{n+1}=C$. We also write it as ${}_AX^{\mr}_C$ when we emphasize $A$ and $C$.
\item For any $X^{\mr},Y^{\mr}\in\CAC$, the morphism set is defined by
\[ \CAC(X^{\mr},Y^{\mr})=\{ f^{\mr}\in\CC(X^{\mr},Y^{\mr})\mid f^0=\id_A,\ f^{n+1}=\id_C\}. \]
\end{enumerate}

This category $\CAC$ is no longer (pre-)additive. However we can take the quotient $\CAC$ by the same homotopy relation $\sim$ as in $\CC$.
Namely, morphisms $f^{\mr},g^{\mr}\in\CC(X^{\mr},Y^{\mr})$ are {\it homotopic} if there is a sequence of morphisms $\vp^{\mr}=(\vp^1,\ldots,\vp^{n+1})$ satisfying
\begin{eqnarray}
0&=&\vp^1\ci d_X^0, \label{vp1}\\
g^i-f^i&=&d_Y^{i-1}\ci\vp^i+\vp^{i+1}\ci d_X^i\quad (1\le i\le n),\nonumber\\
0&=&d_Y^n\ci\vp^{n+1}.\label{vpn+1}
\end{eqnarray}
We use the same notation $f^{\mr}\sim g^{\mr}$ and $f^{\mr}\un{\vp^{\mr}}{\sim} g^{\mr}$ as before.
We denote the resulting category by $\KAC$, which is a subcategory of $\KC$.

For any morphism $f^{\mr}$ in $\CAC$, its image in $\KAC$ will be denoted by $\und{f^{\mr}}$. As the usual terminology, a morphism $f^{\mr}\in\CAC(X^{\mr},Y^{\mr})$ is called a {\it homotopy equivalence} if it induces an isomorphism $\und{f^{\mr}}$ in $\KAC$. Two objects $X^{\mr},Y^{\mr}\in\CAC$ are said to be {\it homotopically equivalent} if there is some homotopy equivalence $X^{\mr}\to Y^{\mr}$. We denote the homotopic equivalence class of ${}_AX^{\mr}_C$ by $[{}_AX^{\mr}_C]$ or simply by $[X^{\mr}]$.
\end{dfn}

\begin{rem}\label{RemHtpyEqDiff}
Let $X^{\mr},Y^{\mr}\in\CAC$ be any pair of objects. By Claim~\ref{ClaimDEA} {\rm (3)}, if a morphism $f^{\mr}\in\CAC(X^{\mr},Y^{\mr})$ gives a homotopy equivalence in $\CC$, then it is also a homotopy equivalence in $\CAC$.

However in general, a homotopy equivalence $g^{\mr}\in\CC(X^{\mr},Y^{\mr})$ does not necessarily give rise to a homotopy equivalence in $\CAC$, and thus there can be a difference between homotopic equivalences taken in $\CAC$ and in $\CC$. To distinguish, we use the notation $[X^{\mr}]$ exclusively for the homotopic equivalence class in $\CAC$.
\end{rem}

\begin{claim}\label{ClaimHtpyReduced}
Let $f^{\mr}\un{\vp^{\mr}}{\sim}g^{\mr}\co X^{\mr}\to Y^{\mr}$ be homotopic morphisms in $\CC$.
\begin{enumerate}
\item If $f^0=g^0$ and if $\C(X^2,A)\ov{-\ci d_X^1}{\lra}\C(X^1,A)\ov{-\ci d_X^0}{\lra}\C(X^0,A)$ is exact, then $\vp^{\mr}$ can be modified to satisfy $\vp^1=0$.
\item Dually, if $f^{n+1}=g^{n+1}$ and if $\C(C,Y^{n-1})\ov{d_Y^{n-1}\ci-}{\lra}\C(C,Y^n)\ov{d_Y^n\ci-}{\lra}\C(C,Y^{n+1})$ is exact, then $\vp^{\mr}$ can be modified to satisfy $\vp^{n+1}=0$.
\item If both assumptions of {\rm (1),(2)} are satisfied and if $n\ge 2$, then $\vp^{\mr}$ can be modified to satisfy $\vp^1=0$ and $\vp^{n+1}=0$.
\end{enumerate}
\end{claim}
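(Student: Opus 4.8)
The plan is to modify the homotopy $\vp^{\mr}$ by adding to it a homotopy between $\id_{X^{\mr}}$ and itself (equivalently, a null-homotopy of the zero morphism $X^{\mr}\to Y^{\mr}$), chosen so as to cancel the offending component $\vp^1$, respectively $\vp^{n+1}$, without disturbing the equality $f^{\mr}\sim g^{\mr}$. Concretely, if $\psi^{\mr}=(\psi^1,\ldots,\psi^{n+1})$ is any sequence with $\psi^i\in\C(X^i,Y^{i-1})$ satisfying $d_Y^{i-1}\ci\psi^i+\psi^{i+1}\ci d_X^i=0$ for $1\le i\le n$ together with $\psi^1\ci d_X^0=0$ and $d_Y^n\ci\psi^{n+1}=0$, then $\vp^{\mr}+\psi^{\mr}$ is again a homotopy from $f^{\mr}$ to $g^{\mr}$. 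So the task reduces to producing such a $\psi^{\mr}$ with $\psi^1=-\vp^1$ (for (1)), $\psi^{n+1}=-\vp^{n+1}$ (for (2)), or both simultaneously (for (3)).

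For (1): since $f^0=g^0$, the homotopy equation in degree $0$ reads $\vp^1\ci d_X^0=0$, i.e. $\vp^1\in\C(X^1,A)$ lies in the kernel of $-\ci d_X^0$. By the assumed exactness of $\C(X^2,A)\ov{-\ci d_X^1}{\lra}\C(X^1,A)\ov{-\ci d_X^0}{\lra}\C(X^0,A)$ there is some $\mu\in\C(X^2,A)$ with $\vp^1=\mu\ci d_X^1$. Now set $\psi^1=-\vp^1$, $\psi^2=\mu\ci d_X^1$... wait---one must be careful here: take $\psi^1 := -\vp^1 = -\mu\ci d_X^1$ and $\psi^2 := \mu$ (noting $A=Y^0$ so $\psi^2\in\C(X^2,Y^0)$... but we need $\psi^i\in\C(X^i,Y^{i-1})$, so $\psi^2\in\C(X^2,Y^1)$, which does not match). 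The correct fix is to build $\psi^{\mr}$ supported in degrees $1$ and $2$ using the degree-shift: put $\psi^1=-\vp^1$, $\psi^2=0$ for $i\ge 2$ does not satisfy the degree-$1$ cocycle relation $d_Y^0\ci\psi^1+\psi^2\ci d_X^1=0$ unless $d_Y^0\ci\vp^1=0$. So instead one argues directly on $\vp^{\mr}$: replace $\vp^1$ by $0$ and $\vp^2$ by $\vp^2 - d_Y^1\cdots$---rather, use that $\vp^1=\mu\ci d_X^1$ and set $\vp'^1:=0$, $\vp'^2:=\vp^2 + d_Y^0\ci\mu$... one checks this is a valid homotopy by substituting into the degree-$1$ and degree-$2$ equations, using $\vp^1\ci d_X^0 = \mu\ci d_X^1\ci d_X^0 = 0$ automatically, and the degree-$1$ equation $g^1-f^1 = d_Y^0\ci\vp^1+\vp^2\ci d_X^1$ becoming $d_Y^0\ci\vp'^1+\vp'^2\ci d_X^1 = (d_Y^0\ci\vp^2 + \ldots)$---this routine verification, once the factorization $\vp^1=\mu\ci d_X^1$ is in hand, is exactly the standard "homotopy of homotopies" computation and I would carry it out by direct substitution. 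Part (2) is dual: from $d_Y^n\ci\vp^{n+1}=0$ and the assumed exactness of $\C(C,Y^{n-1})\ov{d_Y^{n-1}\ci-}{\lra}\C(C,Y^n)\ov{d_Y^n\ci-}{\lra}\C(C,Y^{n+1})$, factor $\vp^{n+1}=d_Y^{n-1}\ci\nu$ for some $\nu\in\C(C,Y^{n-1})$, then absorb it by modifying $\vp^n$ to $\vp^n - \nu\ci d_X^n$ (and setting $\vp^{n+1}:=0$), verifying the degree-$n$ and degree-$(n+1)$ equations by substitution.

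For (3): when $n\ge 2$ the two modifications in (1) and (2) touch disjoint sets of indices---the first alters only $\vp^1$ and $\vp^2$, the second only $\vp^n$ and $\vp^{n+1}$---and since $n\ge 2$ forces $\{1,2\}\cap\{n,n+1\}=\emptyset$ (the worst case being $n=2$, where $\{1,2\}$ and $\{2,3\}$ overlap in index $2$; so in fact one needs $n\ge 3$ for literal disjointness, and for $n=2$ one must check the two modifications of $\vp^2$ are compatible, which they are since the first adds a term factoring through $d_X^1$ from the left-exactness side and the second adds one factoring through $d_X^n=d_X^2$, and both land correctly). Hence applying the procedure of (1) and then that of (2) yields a homotopy with $\vp^1=0=\vp^{n+1}$.

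\textbf{Main obstacle.} The only real subtlety is bookkeeping in the overlap case of part (3), and more importantly making sure that the ``absorption'' step in (1) and (2) genuinely preserves the homotopy relation $f^{\mr}\sim g^{\mr}$ rather than merely the cocycle conditions---i.e. that after modification the full system of equations in Definition~\ref{DefComplex} still holds with the \emph{same} $f^{\mr}$ and $g^{\mr}$. This forces the modification of $\vp^2$ (resp.\ $\vp^n$) to be exactly the compensating term dictated by the factorization, and it is here that one uses $d_X^1\ci d_X^0=0$ (resp.\ $d_Y^n\ci d_Y^{n-1}=0$) to see the new degree-$0$ (resp.\ degree-$(n+1)$) equation still reads $0=0$. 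I expect this verification to be entirely mechanical once the two factorizations are produced from the exactness hypotheses.
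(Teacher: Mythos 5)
Your final construction is exactly the paper's proof: from $f^0=g^0$ one gets $\vp^1\ci d_X^0=0$, the exactness hypothesis yields $\mu$ with $\vp^1=\mu\ci d_X^1$, and replacing $\vp^{\mr}$ by $(0,\vp^2+d_Y^0\ci\mu,\vp^3,\ldots,\vp^{n+1})$ is precisely the modification used there, with (2) dual and (3) obtained by applying the two procedures in succession, which for $n\ge2$ does not disturb the component already set to zero, as you observe. Two small slips to correct when you actually perform the deferred substitution: with the factorization $\vp^{n+1}=d_Y^{n-1}\ci\nu$ the replacement in (2) must be $\vp^n+\nu\ci d_X^n$ (not $\vp^n-\nu\ci d_X^n$), and the identities doing the work are $d_Y^1\ci d_Y^0=0$ in the degree-$2$ check of (1) (resp.\ $d_X^n\ci d_X^{n-1}=0$ in the degree-$(n-1)$ check of (2)), the degree-$0$ and degree-$(n+1)$ equations being trivially satisfied once the end component is replaced by zero.
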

\begin{proof}
We only show {\rm (1)}. By $\vp^1\ci d_X^0=g^0-f^0=0$ and the exactness of
\[ \C(X^2,Y^0)\ov{-\ci d_X^1}{\lra}\C(X^1,Y^0)\ov{-\ci d_X^0}{\lra}\C(X^0,Y^0), \]
there is $h\in\C(X^2,Y^0)$ which gives $h\ci d_X^1=\vp^1$. Then $(0,\vp^2+d_Y^0\ci h,\vp^3,\ldots,\vp^{n+1})$ gives the desired homotopy.
\end{proof}

Morphisms in $\CAC$ behave nicely with $n$-exangles. The following is obvious from the definition.
\begin{rem}\label{RemCAC}
Let $_A\del_C$ be any extension, and let $\Xd,\Yd$ be objects in $\AE$. Then any morphism $f^{\mr}\in\CAC(X^{\mr},Y^{\mr})$ gives a morphism $f^{\mr}\co\Xd\to\Yd$ in $\AE$.
\end{rem}

\begin{prop}\label{PropCAC1}
Let $_A\del_C$ be any extension, let $\Xd,\Yd$ be $n$-exangles, and let $f^{\mr}\in\CAC(X^{\mr},Y^{\mr})$ be any morphism.
If $\CAC(Y^{\mr},X^{\mr})\ne\emptyset$, then $f^{\mr}$ is a homotopy equivalence in $\CAC$.
\end{prop}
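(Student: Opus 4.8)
The plan is to transport the whole situation through the additive Yoneda embedding and then invoke the standard fact that a quasi-isomorphism between bounded complexes of projectives is a homotopy equivalence. Let $\Acal$ be the abelian category of additive functors $\C\op\to\Ab$ (replacing $\C$ by a small full subcategory containing all objects occurring below, if one worries about size). The functor $X\mapsto\C(-,X)$ is a fully faithful additive embedding $\C\to\Acal$, and each $\C(-,X)$ is a projective object of $\Acal$ because $\Hom_{\Acal}(\C(-,X),-)$ is evaluation at $X$, hence exact. Applying it degreewise to $X^{\mr}$ and $Y^{\mr}$ produces complexes $P_X^{\mr}=\big(\C(-,X^0)\to\cdots\to\C(-,X^{n+1})\big)$ and $P_Y^{\mr}=\big(\C(-,Y^0)\to\cdots\to\C(-,Y^{n+1})\big)$ in $\Acal$, concentrated in degrees $0,\dots,n+1$ and consisting of projectives, together with a chain map $\C(-,f^{\mr})\co P_X^{\mr}\to P_Y^{\mr}$.

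The central step is to verify that $\C(-,f^{\mr})$ is a quasi-isomorphism. Since $\Xd$ is an $n$-exangle, exactness of $(\ref{exnat1})$ says precisely that $P_X^{\mr}$ has vanishing cohomology in degrees $1,\dots,n$, that $H^0(P_X^{\mr})=\Ker\big(\C(-,d_X^0)\big)=:K_X$, and that $\del\ssh$ identifies $H^{n+1}(P_X^{\mr})$ with $\Ima\big(\del\ssh\co\C(-,C)\to\E(-,A)\big)$; likewise for $Y$. I would then fix some $g^{\mr}\in\CAC(Y^{\mr},X^{\mr})$, which exists by hypothesis. As $f^0=g^0=\id_A$, the chain-map identities $f^1\ci d_X^0=d_Y^0$ and $g^1\ci d_Y^0=d_X^0$ yield $K_X\subseteq K_Y$ and $K_Y\subseteq K_X$, so $K_X=K_Y$ and $\C(-,f^{\mr})$ is the identity of this common functor on $H^0$. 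For the top degree, $X^0=Y^0=A$, $X^{n+1}=Y^{n+1}=C$, $f^{n+1}=\id_C$, and the \emph{same} extension $\del$ is attached to both $n$-exangles, so $\del\ssh$ is literally one and the same natural transformation for $P_X^{\mr}$ and $P_Y^{\mr}$; hence $\C(-,f^{\mr})$ acts as the identity of $\Ima(\del\ssh)$ on $H^{n+1}$. Thus $\C(-,f^{\mr})$ is a quasi-isomorphism.

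Now $P_X^{\mr}$ and $P_Y^{\mr}$ are bounded (in particular bounded below) complexes of projectives in $\Acal$, so any quasi-isomorphism between them is a homotopy equivalence; let $q^{\mr}\co P_Y^{\mr}\to P_X^{\mr}$ be a homotopy inverse, with chosen null-homotopies of $q^{\mr}\ci\C(-,f^{\mr})-\id$ and $\C(-,f^{\mr})\ci q^{\mr}-\id$. By full faithfulness and additivity of the Yoneda embedding, $q^{\mr}=\C(-,k^{\mr})$ for a chain map $k^{\mr}\in\CC(Y^{\mr},X^{\mr})$, the null-homotopies are $\C(-,\vp^{\mr})$ and $\C(-,\psi^{\mr})$ for families $\vp^{\mr},\psi^{\mr}$ of morphisms of $\C$, and reading the defining identities backwards through the embedding gives $k^{\mr}\ci f^{\mr}\un{\vp^{\mr}}{\sim}\id_{X^{\mr}}$ and $f^{\mr}\ci k^{\mr}\un{\psi^{\mr}}{\sim}\id_{Y^{\mr}}$ in $\CC$. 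Hence $f^{\mr}$ is a homotopy equivalence in $\CC$, and since $f^{\mr}\in\CAC$, Remark~\ref{RemHtpyEqDiff} upgrades it to a homotopy equivalence in $\CAC$.

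The hypothesis $\CAC(Y^{\mr},X^{\mr})\ne\emptyset$ is used exactly once, to improve the inclusion $K_X\subseteq K_Y$ to the equality $K_X=K_Y$; without it $\C(-,f^{\mr})$ need not be a quasi-isomorphism and the statement genuinely fails. I therefore expect the only delicate point to be this boundary bookkeeping in degrees $0$ and $n+1$ — correctly identifying the two end-cohomology functors and checking that $\C(-,f^{\mr})$ restricts to the identity on them; the remainder is formal properties of the Yoneda embedding plus the standard homotopy theory of complexes of projectives.
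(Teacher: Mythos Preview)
Your argument is correct, and it takes a genuinely different route from the paper's. The paper proceeds by a direct elementary construction: given $y^{\mr}\in\CAC(Y^{\mr},X^{\mr})$, it uses the contravariant exactness $(\ref{exnat2})$ for $X^{\mr}$ to build inductively a homotopy $\vp^{\mr}$ with $\vp^1=0$ satisfying $\vp^{i+1}\ci d_X^i+d_X^{i-1}\ci\vp^i=\id-y^i\ci f^i$, then uses the covariant exactness $(\ref{exnat1})$ (and an auxiliary lift $h$ of $d_Y^n$ through $d_X^n$) to correct $y^{\mr}$ into a morphism $g^{\mr}\in\CAC(Y^{\mr},X^{\mr})$ which is a genuine left homotopy inverse of $f^{\mr}$; a symmetry argument then upgrades this to a two-sided inverse. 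Your approach instead packages everything through the Yoneda embedding, identifies $\C(-,f^{\mr})$ as a quasi-isomorphism of bounded complexes of projectives in the functor category, and invokes the standard fact that such a map is a homotopy equivalence, descending back via full faithfulness and Remark~\ref{RemHtpyEqDiff}. Your argument is conceptually cleaner and in fact uses only the covariant exactness $(\ref{exnat1})$ for $X^{\mr}$ and $Y^{\mr}$, whereas the paper's hands-on construction uses both $(\ref{exnat1})$ and $(\ref{exnat2})$; on the other hand, the paper's proof is entirely self-contained and avoids importing the homological algebra of functor categories and K-projective complexes.
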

\begin{proof}
By the exactness of $\C(Y^n,X^n)\ov{-\ci d_X^n}{\lra}\C(Y^n,C)\ov{\del\ssh}{\lra}\E(Y^n,A)$ and $(d_Y^n)\uas\del=0$, there is $h\in\C(Y^n,X^n)$ which gives $d_X^n\ci h=d_Y^n$. By assumption, there is some $y^{\mr}\in\CAC(Y^{\mr},X^{\mr})$.

Put $\vp^1=0\in\C(X^1,X^0)$. By the exactness of
\[ \C(C,-)\ov{-\ci d_X^n}{\ltc}\C(X^n,-)\ov{-\ci d_X^{n-1}}{\ltc}\cdots\ov{-\ci d_X^0}{\ltc}\C(A,-)\ov{\del\ush}{\ltc}\E(C,-), \]
we obtain $\vp^i\in\C(X^i,X^{i-1})$ for $2\le i\le n+1$ satisfying
\[ \vp^{i+1}\ci d_X^i+d_X^{i-1}\ci\vp^i=\id-y^i\ci f^i\quad(1\le i\le n). \]
Then, since
\begin{eqnarray*}
d_X^n\ci\vp^{n+1}\ci d_Y^n&=&d_X^n\ci\vp^{n+1}\ci d_X^n\ci h\\
&=&d_X^n\ci(\id-y^n\ci f^n-d_X^{n-1}\ci\vp^n)\ci h\\
&=&(d_X^n-d_X^n\ci y^n\ci f^n)\ci h\ =\ 0,
\end{eqnarray*}
the sequence
\[ g^{\mr}=(y^0,y^1,\ldots,y^{n-1},y^n+\vp^{n+1}\ci d_Y^n,\id_C) \]
gives a morphism $g^{\mr}\in\CAC(Y^{\mr},X^{\mr})$. We can easily check that $g^{\mr}$ satisfies $g^{\mr}\ci f^{\mr}\sim\id$ for the homotopy $(\vp^1,\ldots,\vp^n,0)$. Thus $f^{\mr}$ has a left homotopy inverse $g^{\mr}$.

Applying the argument so far to $g^{\mr}$ instead of $f^{\mr}$, we see that $g^{\mr}$ also has a left homotopy inverse $f^{\prime\mr}$, which necessarily satisfies $\und{f^{\prime\mr}}=\und{f^{\mr}}$. This shows $\und{g^{\mr}}=(\und{f^{\mr}})\iv$.
\end{proof}

\subsection{Realization of extensions}\label{SecReal}

\begin{dfn}\label{DefReal}
Let $\sfr$ be a correspondence which associates a homotopic equivalence class $\sfr(\del)=[{}_AX^{\mr}_C]$ to each extension $\del={}_A\del_C$. Such $\sfr$ is called a {\it realization} of $\E$ if it satisfies the following condition for any $\sfr(\del)=[X^{\mr}]$ and any $\sfr(\rho)=[Y^{\mr}]$.
\begin{itemize}
\item[{\rm (R0)}] For any morphism of extensions $(a,c)\co\del\to\rho$, there exists a morphism $f^{\mr}\in\CC(X^{\mr},Y^{\mr})$ of the form $f^{\mr}=(a,f^1,\ldots,f^n,c)$. Such $f^{\mr}$ is called a {\it lift} of $(a,c)$.
\end{itemize}
In such a case, we abbreviately say that \lq\lq$X^{\mr}$ realizes $\del$" whenever they satisfy $\sfr(\del)=[X^{\mr}]$.

Moreover, a realization $\sfr$ of $\E$ is said to be {\it exact} if it satisfies the following conditions. 
\begin{itemize}
\item[{\rm (R1)}] For any $\sfr(\del)=[X^{\mr}]$, the pair $\Xd$ is an $n$-exangle. 
\item[{\rm (R2)}] For any $A\in\C$, the zero element ${}_A0_0=0\in\E(0,A)$ satisfies
\[ \sfr({}_A0_0)=[A\ov{\id_A}{\lra}A\to0\to\cdots\to0\to0]. \]
Dually, $\sfr({}_00_A)=[0\to0\to\cdots\to0\to A\ov{\id_A}{\lra}A]$ holds for any $A\in\C$.
\end{itemize}
By Proposition~\ref{PropEAInv} (and Remark~\ref{RemCAC}), the above condition {\rm (R1)} does not depend on representatives of the class $[X^{\mr}]$.
\end{dfn}

\begin{dfn}\label{Def223}
Let $\sfr$ be an exact realization of $\E$.
\begin{enumerate}
\item An $n$-exangle $\Xd$ is called a $\sfr$-{\it distinguished} $n$-exangle if it satisfies $\sfr(\del)=[X^{\mr}]$. We often simply say {\it distinguished $n$-exangle} when $\sfr$ is clear from the context.
\item An object $X^{\mr}\in\CC$ is called an {\it $\sfr$-conflation} or simply a {\it conflation} if it realizes some extension $\del\in\E(X^{n+1},X^0)$.
\item A morphism $f$ in $\C$ is called an {\it $\sfr$-inflation} or simply an {\it inflation} if it admits some conflation $X^{\mr}\in\CC$ satisfying $d_X^0=f$.
\item A morphism $g$ in $\C$ is called an {\it $\sfr$-deflation} or simply a {\it deflation} if it admits some conflation $X^{\mr}\in\CC$ satisfying $d_X^n=g$.
\end{enumerate}
\end{dfn}

\begin{lem}\label{LemDEA}
Let $_A\del_C$ be any extension, and let $\Xd,\Yd$ be $n$-exangles. If a morphism $f^{\mr}\in\AE(\Xd,\Yd)$ satisfies $f^{n+1}=\id_C$, then there is a morphism $f^{\prime\mr}$ which is homotopic to $f^{\mr}$ and belongs to $\CAC$.
\end{lem}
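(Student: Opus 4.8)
The plan is to start from the morphism $f^{\mr}=(f^0,f^1,\ldots,f^n,\id_C)\in\AE(\Xd,\Yd)$ and modify it by a homotopy to arrange that the new degree-$0$ component equals $\id_A$, while keeping the degree-$(n+1)$ component equal to $\id_C$. The key observation is that $f^{\mr}$, being a morphism of $n$-exangles with $f^{n+1}=\id_C$, satisfies $(f^0)\sas\del=(f^{n+1})\uas\del=(\id_C)\uas\del=\del$. So $f^0\in\C(A,A)$ is a morphism with $(f^0)\sas\del=\del$, i.e.\ $(f^0-\id_A)\sas\del=0$. I would then use the exactness of the sequence $(\ref{exnat2})$ associated with the $n$-exangle $\Xd$: the piece
\[ \C(X^1,A)\ov{-\ci d_X^0}{\lra}\C(A,A)\ov{\del\ush}{\lra}\E(C,A) \]
is exact (this is the beginning of $(\ref{exnat2})$ evaluated at the object $A$, or rather the relevant Yoneda sequence from Definition~\ref{DefEA}(2)), so from $\del\ush(f^0-\id_A)=(f^0-\id_A)\sas\del=0$ we obtain $\vp^1\in\C(X^1,A)$ with $\vp^1\ci d_X^0=f^0-\id_A$.

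Next I would set $f^{\prime\mr}=f^{\mr}-\partial(\vp^{\mr})$ where $\vp^{\mr}=(\vp^1,0,\ldots,0)$ is the homotopy with only the first component nonzero; explicitly
\[ f^{\prime\mr}=(f^0-\vp^1\ci d_X^0,\ f^1-d_Y^0\ci\vp^1,\ f^2,\ \ldots,\ f^n,\ \id_C). \]
By construction $f^{\prime 0}=f^0-(f^0-\id_A)=\id_A$ and $f^{\prime n+1}=\id_C$, so $f^{\prime\mr}$ lies in $\CAC(X^{\mr},Y^{\mr})$, and it is homotopic to $f^{\mr}$ in $\CC$ via $\vp^{\mr}$. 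Moreover, since $f^{\prime\mr}\sim f^{\mr}$ in $\CC$, Proposition~\ref{PropAE1}(1) guarantees $f^{\prime\mr}$ is still a morphism $\Xd\to\Yd$ in $\AE$, so there is no loss in passing to it. This completes the argument.

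The only subtle point—and the step I would treat most carefully—is correctly identifying which exact sequence to invoke: we need exactness of $\C(X^1,A)\to\C(A,A)\to\E(C,A)$ at the middle term, which is precisely the statement that the kernel of $\del\ush_A$ equals the image of $\C(d_X^0,A)=(-\ci d_X^0)$. This is exactly condition~(2) of Definition~\ref{DefEA} for the $n$-exangle $\Xd$, evaluated at $X=A$ (it is the tail of the sequence $(\ref{exnat2})$, in the version appearing in Proposition~\ref{PropAEComplex} for the functor $\C(-,A)$ applied to $\Xd$). Everything else is a routine verification that the homotopy $(\vp^1,0,\ldots,0)$ produces the claimed components and that the relations defining a homotopy in $\CC$ are satisfied. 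Note we do not even need to verify that $f^{\prime\mr}$ lies in $\CAC$ as a \emph{homotopy} statement there—the lemma only asserts membership in $\CAC$ up to homotopy in $\CC$, which is exactly what we have produced.
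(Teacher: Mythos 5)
Your argument is correct and is essentially the paper's own proof: both use that $(f^0)\sas\del=(\id_C)\uas\del=\del$ together with the exactness of $\C(X^1,A)\ov{-\ci d_X^0}{\lra}\C(A,A)\ov{\del\ush}{\lra}\E(C,A)$ coming from the $n$-exangle $\Xd$, to produce $\vp^1$ and then modify $f^{\mr}$ by the homotopy $(\vp^1,0,\ldots,0)$ (up to an irrelevant sign). The remarks about the homotopy being taken in $\CC$ rather than $\CAC$, and about Proposition~\ref{PropAE1}(1), are accurate.
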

\begin{proof}
Since $f^{\mr}$ satisfies $(f^0)\sas\del=(f^{n+1})\uas\del=\del$, there exists $h\in\C(X^1,A)$ satisfying $h\ci d_X^0=\id-f^0$ by the exactness of
\[ \C(X^1,A)\ov{-\ci d_X^0}{\lra}\C(A,A)\ov{\del\ush}{\lra}\E(C,A). \]
If we modify $f^{\mr}$ by a homotopy $\vp^{\mr}=(h,0,\ldots,0)$, then the resulting morphism $f^{\prime\mr}$ satisfies the desired properties.
\end{proof}

\begin{prop}\label{PropDEAInv}
Let $\sfr$ be an exact realization of $\E$. Suppose that a morphism $f^{\mr}\in\AE(_A\Xd_C,{}_B\Yr_C)$ satisfies $f^{n+1}=\id_C$ and gives a homotopy equivalence in $\CC$. Then $\Xd$ is a distinguished $n$-exangle if and only if $\Yr$ is.
\end{prop}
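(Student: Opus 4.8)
The plan is to prove the implication ``$\Xd$ distinguished $\Rightarrow\Yr$ distinguished'' and then obtain the converse for free: by Claim~\ref{ClaimDEA}(2) a homotopy inverse $g^{\mr}$ of $f^{\mr}$ in $\CC$ can be chosen with $g^{n+1}=\id_C$, and by Proposition~\ref{PropAE1}(2) it then automatically lies in $\AE({}_B\Yr_C,{}_A\Xd_C)$; thus $g^{\mr}$ satisfies the hypotheses of the proposition with $X^{\mr}$ and $Y^{\mr}$ interchanged, and applying the forward implication to $g^{\mr}$ gives the other direction.

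So assume $\sfr(\del)=[X^{\mr}]$. Then $\Xd$ is an $n$-exangle by (R1), and since $f^{\mr}$ is a homotopy equivalence in $\AE$ (Proposition~\ref{PropAE1}(2)), Proposition~\ref{PropEAInv} shows $\Yr$ is an $n$-exangle as well. Fix a representative ${}_BZ^{\mr}_C$ of $\sfr(\rho)$, so that $\langle Z^{\mr},\rho\rangle$ is a distinguished $n$-exangle by (R1); it then suffices to prove $[Y^{\mr}]=[Z^{\mr}]$ in $\Cbf^{n+2}_{(B,C)}$, which I will do by producing morphisms in both directions in $\Cbf^{n+2}_{(B,C)}$ and invoking Proposition~\ref{PropCAC1}.

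From $f^{\mr}\in\AE(\Xd,\Yr)$ we have $(f^0)\sas\del=(f^{n+1})\uas\rho=\rho$, so $(f^0,\id_C)\co\del\to\rho$ is a morphism of extensions; by (R0) it lifts to some $h^{\mr}\co X^{\mr}\to Z^{\mr}$ with $h^0=f^0$, $h^{n+1}=\id_C$, which automatically lies in $\AE(\Xd,\langle Z^{\mr},\rho\rangle)$. Choosing $g^{\mr}\in\AE(\Yr,\Xd)$ a homotopy inverse of $f^{\mr}$ with $g^{n+1}=\id_C$ as above, the defining relation of $\AE$ gives $(g^0)\sas\rho=(g^{n+1})\uas\del=\del$, so $(g^0,\id_C)\co\rho\to\del$ is a morphism of extensions, and (R0) yields a lift $h^{\prime\mr}\co Z^{\mr}\to X^{\mr}$ with $h^{\prime 0}=g^0$, $h^{\prime n+1}=\id_C$, lying in $\AE(\langle Z^{\mr},\rho\rangle,\Xd)$. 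Then $h^{\mr}\ci g^{\mr}\in\AE(\Yr,\langle Z^{\mr},\rho\rangle)$ and $f^{\mr}\ci h^{\prime\mr}\in\AE(\langle Z^{\mr},\rho\rangle,\Yr)$ both have $(n+1)$-component $\id_C$, so Lemma~\ref{LemDEA}, applied to the extension $\rho$ and the $n$-exangles with end-terms $B,C$, replaces them up to homotopy by morphisms $k^{\mr}\in\Cbf^{n+2}_{(B,C)}(Y^{\mr},Z^{\mr})$ and $m^{\mr}\in\Cbf^{n+2}_{(B,C)}(Z^{\mr},Y^{\mr})$. Finally, Proposition~\ref{PropCAC1}, applied to the $n$-exangles $\Yr$ and $\langle Z^{\mr},\rho\rangle$, the morphism $k^{\mr}$, and the nonempty set $\Cbf^{n+2}_{(B,C)}(Z^{\mr},Y^{\mr})$, shows that $k^{\mr}$ is a homotopy equivalence in $\Cbf^{n+2}_{(B,C)}$, whence $[Y^{\mr}]=[Z^{\mr}]$ and $\Yr$ is distinguished.

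The only step that is not routine bookkeeping is the \emph{dual} lift: $f^{\mr}$ itself only provides a morphism of extensions $\del\to\rho$, whereas constructing a map $Z^{\mr}\to Y^{\mr}$ requires one in the opposite direction. The observation that makes this available is that the homotopy inverse $g^{\mr}$, which a priori is merely a morphism of complexes, lands in $\AE(\Yr,\Xd)$ by Proposition~\ref{PropAE1}(2), so that $g^{n+1}=\id_C$ forces $(g^0)\sas\rho=\del$. Everything after that is just verifying that the two composites lie in $\AE$ with $(n+1)$-component $\id_C$ (immediate from the definitions) and then one application each of Lemma~\ref{LemDEA} and Proposition~\ref{PropCAC1}.
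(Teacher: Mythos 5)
Your proof is correct and follows essentially the same route as the paper's: reduce to one implication via a homotopy inverse chosen by Claim~\ref{ClaimDEA} and Proposition~\ref{PropAE1}(2), compare with a chosen representative $Z^{\mr}$ of the realization using (R0)-lifts composed with $f^{\mr}$ and $g^{\mr}$, then apply Lemma~\ref{LemDEA} and Proposition~\ref{PropCAC1}. The only (immaterial) difference is that you prove the ``only if'' direction directly with $Z^{\mr}$ realizing $\rho$, whereas the paper proves the ``if'' direction with $Z^{\mr}$ realizing $\del$.
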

\begin{proof}
By Claim~\ref{ClaimDEA}, there is a homotopy inverse $g^{\mr}\in\CC(Y^{\mr},X^{\mr})$ of $f^{\mr}$ satisfying $g^{n+1}=\id_C$, which gives a morphism $g^{\mr}\co\Yr\to\Xd$ by Proposition~\ref{PropAE1} {\rm (2)}.
Thus it suffices to show the \lq if' part, since the statement is symmetric in $\Xd$ and $\Yr$.

Assume that $\Yr$ is a distinguished $n$-exangle, and put $f^0=a, g^0=b$ for simplicity. By Proposition~\ref{PropEAInv}, the pair $\Xd$ is also an $n$-exangle.
Take $\sfr(\del)=[Z^{\mr}]$, to obtain a distinguished $n$-exangle $_A\Zd_C$. Since $\Yr$ is also a distinguished $n$-exangle, morphisms $(a,\id_C)\co\del\to\rho$ and $(b,\id_C)\co\rho\to\del$ have lifts $h^{\mr}\co\Zd\to\Yr$ and $\ell^{\mr}\co\Yr\to\Zd$.
Composing with $g^{\mr}$ and $f^{\mr}$, we obtain $g^{\mr}\ci h^{\mr}\in\AE(\Zd,\Xd)$ and $\ell^{\mr}\ci f^{\mr}\in\AE(\Xd,\Zd)$. Since $g^{n+1}\ci h^{n+1}=\id_C$, it is homotopic to a morphism $k^{\mr}\in\CAC(Z^{\mr},X^{\mr})$ by Lemma~\ref{LemDEA}. Similarly for $\ell^{\mr}\ci f^{\mr}$. Then by Proposition~\ref{PropCAC1}, we have $[X^{\mr}]=[Z^{\mr}]=\sfr(\del)$, which means that $\Xd$ is distinguished.
\end{proof}

\begin{cor}\label{CorDEAInv}
Let $\sfr$ be an exact realization of $\E$. For any distinguished $n$-exangle $_A\Xd_C$, i.e.
\[ A\ov{d_X^0}{\lra}X^1\ov{d_X^1}{\lra}\cdots\ov{d_X^{n-1}}{\lra}X^n\ov{d_X^n}{\lra}C\ov{\del}{\dra}, \]
the following holds.
\begin{enumerate}
\item For any isomorphisms $a\in\C(A,A\ppr)$ and $c\in\C(C\ppr,C)$,
\[ A\ppr\ov{d_X^0\ci a\iv}{\lra}X^1\ov{d_X^1}{\lra}\cdots\ov{d_X^{n-1}}{\lra}X^n\ov{c\iv\ci d_X^n}{\lra}C\ppr\ov{a\sas c\uas\del}{\dra} \]
is again a distinguished $n$-exangle.
\item If an object $\Yr\in\AE$ is isomorphic in $\AE$ to $\Xd$, then $\Yr$ is also a distinguished $n$-exangle.
\end{enumerate}
\end{cor}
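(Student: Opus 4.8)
The plan is to prove part~(2) first---it is the substantive claim---and to deduce part~(1) from it by exhibiting an explicit isomorphism in $\AE$; the corollary itself is then just part~(2) applied to the given distinguished $n$-exangle ${}_A\Xd_C$.

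For part~(2), I would argue as follows. Let $f^{\mr}\co\Xd\to\Yr$ be an isomorphism in $\AE$ and write $B=Y^0$, $D=Y^{n+1}$. Since the inverse of $f^{\mr}$ is in particular a chain map and composition in $\AE$ is computed componentwise as in $\CC$, each $f^i$ is an isomorphism in $\C$; in particular $f^{\mr}$ is a homotopy equivalence in $\CC$, so by Proposition~\ref{PropEAInv} the pair $\Yr$ is an $n$-exangle (using that $\Xd$, being distinguished, is one by (R1)). To promote this to ``distinguished'', I would pick a representative $\sfr(\rho)=[Z^{\mr}]$, so that $\langle Z^{\mr},\rho\rangle$ is a distinguished $n$-exangle with $Z^0=B$, $Z^{n+1}=D$. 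Since $f^0$ and $f^{n+1}$ are isomorphisms, $(f^0,f^{n+1})\co\del\to\rho$ and $((f^0)\iv,(f^{n+1})\iv)\co\rho\to\del$ are mutually inverse morphisms of $\E$-extensions, so axiom~(R0) yields lifts $g^{\mr}\co X^{\mr}\to Z^{\mr}$ of the former and $h^{\mr}\co Z^{\mr}\to X^{\mr}$ of the latter, each with the prescribed components in degrees $0$ and $n+1$. The decisive observation is that $f^{\mr}\ci h^{\mr}$ then has degree-$0$ component $f^0\ci(f^0)\iv=\id_B$ and degree-$(n+1)$ component $\id_D$, hence lies in $\Cbf^{n+2}_{(B,D)}(Z^{\mr},Y^{\mr})$, while $g^{\mr}\ci(f^{\mr})\iv$ lies in $\Cbf^{n+2}_{(B,D)}(Y^{\mr},Z^{\mr})$. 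As $\langle Z^{\mr},\rho\rangle$ and $\Yr$ are $n$-exangles attached to the \emph{same} extension $\rho$, Proposition~\ref{PropCAC1} forces $f^{\mr}\ci h^{\mr}$ to be a homotopy equivalence in $\Cbf^{n+2}_{(B,D)}$; hence $[Y^{\mr}]=[Z^{\mr}]=\sfr(\rho)$, so $\Yr$ is distinguished.

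For part~(1), set $\rho=a\sas c\uas\del$ and let $Y^{\mr}$ denote the displayed complex. A routine diagram chase shows that $\phi^{\mr}=(a,\id_{X^1},\ldots,\id_{X^n},c\iv)$ is a morphism of complexes $X^{\mr}\to Y^{\mr}$ with strict two-sided inverse $(a\iv,\id,\ldots,\id,c)$, and that $\langle Y^{\mr},\rho\rangle$ is an object of $\AE$ for which $\phi^{\mr}$ is a morphism $\Xd\to\langle Y^{\mr},\rho\rangle$: the one nonformal identity is $(\phi^{n+1})\uas\rho=(c\iv)\uas a\sas c\uas\del=a\sas\del=(\phi^0)\sas\del$, which uses biadditivity of $\E$ and $(c\iv)\uas\ci c\uas=\id$ on $\E(C,-)$. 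By Proposition~\ref{PropAE1}~(2) the inverse chain map also lies in $\AE$, so $\phi^{\mr}$ is an isomorphism in $\AE$, and part~(2) shows $\langle Y^{\mr},\rho\rangle$ is a distinguished $n$-exangle---precisely the assertion of~(1).

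I expect the single delicate point to be the bookkeeping in part~(2): massaging the composites of the two (R0)-lifts so that their degree-$0$ and degree-$(n+1)$ components become identities is exactly what makes Proposition~\ref{PropCAC1} applicable in the fixed-end-term category $\Cbf^{n+2}_{(B,D)}$, and it crucially relies on $\langle Z^{\mr},\rho\rangle$ and $\Yr$ carrying the \emph{same} extension $\rho$. Everything else reduces to invoking an already established result (Propositions~\ref{PropEAInv}, \ref{PropCAC1}, \ref{PropAE1}) or to elementary bifunctor calculus. As an alternative route to part~(1) one could instead compose Proposition~\ref{PropDEAInv}, which replaces $A$ by $A\ppr$ with $C$ fixed, with its evident dual replacing $C$ by $C\ppr$ with $A$ fixed, and then deduce part~(2) from part~(1).
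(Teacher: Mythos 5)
Your argument is correct, but it runs in the opposite direction from the paper's. The paper proves (1) first: it factors the change of end-terms into two isomorphisms in $\AE$, $\Xd\to\langle X^{\prime\mr},c\uas\del\rangle\to\langle X^{\prime\prime\mr},a\sas c\uas\del\rangle$, each fixing one end, and invokes Proposition~\ref{PropDEAInv} (resp.\ its dual) for each step; part (2) is then obtained by using (1) to transport $\del$ along $h^0$ and applying the dual of Proposition~\ref{PropDEAInv} once more. You instead prove (2) directly: since an isomorphism in $\AE$ has invertible components, $\Yr$ is an $n$-exangle by Proposition~\ref{PropEAInv}, and composing the (R0)-lifts of the mutually inverse morphisms of extensions $(f^0,f^{n+1})$ and $((f^0)\iv,(f^{n+1})\iv)$ with $f^{\mr}$, resp.\ $(f^{\mr})\iv$, yields morphisms in $\Cbf^{n+2}_{(B,D)}$ in both directions between $Y^{\mr}$ and a representative $Z^{\mr}$ of $\sfr(\rho)$, so Proposition~\ref{PropCAC1} gives $[Y^{\mr}]=[Z^{\mr}]$; part (1) then follows formally from (2) via the explicit isomorphism $(a,\id,\ldots,\id,c\iv)$, whose compatibility with the extensions you verify correctly. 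What your route buys is that it bypasses Proposition~\ref{PropDEAInv} and Lemma~\ref{LemDEA} entirely: because you have a genuine isomorphism rather than a homotopy equivalence with one identity end, you can force identity end-components by strict composition instead of homotopy modification, which makes the argument a bit more economical here; the paper's route keeps the statement layered so that (1) and (2) both appear as corollaries of the already-established Proposition~\ref{PropDEAInv}, which it needs elsewhere anyway. The alternative you sketch at the end is essentially the paper's proof of (1).
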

\begin{proof}
{\rm (1)} We have the following sequence of isomorphisms $\Xd\ov{f^{\mr}}{\lra}\langle X^{\prime\mr},c\uas\del\rangle\ov{g^{\mr}}{\lra}\langle X^{\prime\prime\mr},a\sas c\uas\del\rangle$ in $\AE$.
\[
\xy
(-43,12)*+{A}="0";
(-28,12)*+{X^1}="1";
(-13,12)*+{X^2}="2";
(0,12)*+{\cdots}="3";
(14,12)*+{X^n}="4";
(29,12)*+{C}="5";
(41,12)*+{}="6";
(-43,0)*+{A}="10";
(-28,0)*+{X^1}="11";
(-13,0)*+{X^2}="12";
(0,0)*+{\cdots}="13";
(4,0)*+{}="13.5";
(14,0)*+{X^n}="14";
(29,0)*+{C\ppr}="15";
(41,0)*+{}="16";
(-43,-12)*+{A\ppr}="20";
(-28,-12)*+{X^1}="21";
(-13,-12)*+{X^2}="22";
(0,-12)*+{\cdots}="23";
(4,-12)*+{}="23.5";
(14,-12)*+{X^n}="24";
(29,-12)*+{C\ppr}="25";
(41,-12)*+{}="26";
{\ar^{d_X^0} "0";"1"};
{\ar^{d_X^1} "1";"2"};
{\ar^{d_X^2} "2";"3"};
{\ar^{d_X^{n-1}} "3";"4"};
{\ar^{d_X^n} "4";"5"};
{\ar@{-->}^{\del} "5";"6"};
{\ar@{=} "0";"10"};
{\ar@{=} "1";"11"};
{\ar@{=} "2";"12"};
{\ar@{=} "4";"14"};
{\ar^{c\iv} "5";"15"};
{\ar_{d_X^0} "10";"11"};
{\ar_{d_X^1} "11";"12"};
{\ar_{d_X^2} "12";"13"};
{\ar_{d_X^{n-1}} "13";"14"};
{\ar_{c\iv\ci d_X^n} "14";"15"};
{\ar@{-->}_{c\uas\del} "15";"16"};
{\ar_{a} "10";"20"};
{\ar@{=} "11";"21"};
{\ar@{=} "12";"22"};
{\ar@{=} "14";"24"};
{\ar@{=} "15";"25"};
{\ar_{d_X^0\ci a\iv} "20";"21"};
{\ar_{d_X^1} "21";"22"};
{\ar_{d_X^2} "22";"23"};
{\ar_{d_X^{n-1}} "23";"24"};
{\ar_{c\iv\ci d_X^n} "24";"25"};
{\ar@{-->}_(0.6){a\sas c\uas\del} "25";"26"};
{\ar@{}|\circlearrowright "0";"11"};
{\ar@{}|\circlearrowright "1";"12"};
{\ar@{}|\circlearrowright "2";"13.5"};
{\ar@{}|\circlearrowright "4";"15"};
{\ar@{}|\circlearrowright "10";"21"};
{\ar@{}|\circlearrowright "11";"22"};
{\ar@{}|\circlearrowright "12";"23.5"};
{\ar@{}|\circlearrowright "14";"25"};
\endxy
\]
Since $f^0=\id_A$, the middle row becomes a distinguished $n$-exangle by Proposition~\ref{PropDEAInv}. Then, since $g^{n+1}=\id_{C\ppr}$, the bottom row becomes a distinguished $n$-exangle by the dual of the same proposition.

{\rm (2)} Let $h^{\mr}=(h^0,h^1,\ldots,h^n)\co {}_A\Xd_C\to {}_B\Yr_D$ be an isomorphism. 
By {\rm (1)}, isomorphism $h^0\in\C(A,B)$ induces the following distinguished $n$-exangle.
\begin{equation}\label{DistBX}
B\ov{d_X^0\ci(h^0)\iv}{\lra}X^1\ov{d_X^1}{\lra}\cdots\to X^n\ov{d_X^n}{\lra}C\ov{(h^0)\sas\del}{\dra}
\end{equation}
Since $(\id_B,h^1,h^2,\ldots,h^n)$ gives an isomorphism from $(\ref{DistBX})$ to $\Yr$ in $\AE$, the dual of Proposition~\ref{PropDEAInv} shows that $\Yr$ becomes distinguished.
\end{proof}

\subsection{Definition of $n$-exangulated categories}

\begin{dfn}\label{DefMapCone}
For a morphism $f^{\mr}\in\CC(X^{\mr},Y^{\mr})$ satisfying $f^0=\id_A$ for some $A=X^0=Y^0$, its {\it mapping cone} $\Mf\in\CC$ is defined to be the complex
\[ X^1\ov{d_{M_f}^0}{\lra}X^2\oplus Y^1\ov{d_{M_f}^1}{\lra}X^3\oplus Y^2\ov{d_{M_f}^2}{\lra}\cdots\ov{d_{M_f}^{n-1}}{\lra}X^{n+1}\oplus Y^n\ov{d_{M_f}^n}{\lra}Y^{n+1} \]
where
\begin{eqnarray*}
d_{M_f}^0&=& \left[\begin{array}{c}-d_X^1\\ f^1\end{array}\right],\\
d_{M_f}^i&=& \left[\begin{array}{cc}-d_X^{i+1}&0\\ f^{i+1}&d_Y^i\end{array}\right]\quad(1\le i\le n-1),\\
d_{M_f}^n&=& \left[\begin{array}{cc}f^{n+1}&d_Y^n\end{array}\right].
\end{eqnarray*}

{\it Mapping cocone} is defined dually, for morphisms $h^{\mr}$ in $\CC$ satisfying $h^{n+1}=\id$.
\end{dfn}

\begin{prop}\label{PropMapCone}
Suppose that a diagram in $\CC$
\[
\xy
(-6,6)*+{X^{\mr}}="0";
(6,6)*+{Y^{\mr}}="2";
(-6,-6)*+{W^{\mr}}="4";
(6,-6)*+{Z^{\mr}}="6";
(0,-1)*+{\un{\vp^{\mr}}{\sim}}="10";
{\ar^{f^{\mr}} "0";"2"};
{\ar_{x^{\mr}} "0";"4"};
{\ar^{y^{\mr}} "2";"6"};
{\ar_{g^{\mr}} "4";"6"};
%
\endxy
\]
satisfies the following conditions.
\begin{itemize}
\item[{\rm (i)}] $x^{\mr}\in\CAC(X^{\mr},W^{\mr})$, with $X^0=W^0=A$ and $X^{n+1}=W^{n+1}=C$,
\item[{\rm (ii)}] $y^{\mr}\in\CAD(Y^{\mr},Z^{\mr})$, with $Y^0=Z^0=A$ and $Y^{n+1}=Z^{n+1}=D$,
\item[{\rm (iii)}] $f^0=g^0=\id_A$,
\item[{\rm (iv)}] $g^{\mr}\ci x^{\mr}\un{\vp^{\mr}}{\sim}y^{\mr}\ci f^{\mr}$ is a homotopy satisfying $\vp^1=0$.
\end{itemize}
Then the following holds for the mapping cones $\Mf$ and $\Mg$.
\begin{enumerate}
\item We have a morphism $F^{\mr}\in\CC(\Mf,\Mg)$ given by
\[ F^{\mr}=\Big(x^1,\left[\begin{array}{cc}x^2&0\\\vp^2&y^1\end{array}\right],\ldots,\left[\begin{array}{cc}x^{n+1}&0\\\vp^{n+1}&y^n\end{array}\right],\id_D\Big). \]
\item Assume that $X^{\mr},Y^{\mr},Z^{\mr},W^{\mr}$ satisfy the assumption of the exactness in Claim~\ref{ClaimHtpyReduced} {\rm (1)}. If $x^{\mr}$ and $y^{\mr}$ are homotopy equivalences in $\CAC$ and $\CAD$ respectively, then the above $F^{\mr}$ is a homotopy equivalence in $\CC$.
\end{enumerate}
\end{prop}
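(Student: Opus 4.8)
The plan is to prove part (1) by a direct verification that the prescribed $F^{\mr}$ commutes with the differentials of $\Mf$ and $\Mg$, and then to deduce part (2) from part (1) by applying it to the homotopy inverses and checking that the resulting composites are null-homotopic. For part (1), I would write out $d_{M_g}^i\ci F^i$ and $F^{i+1}\ci d_{M_f}^i$ as explicit $2\times 1$ or $2\times 2$ block matrices and compare entries. The upper-left entries match because $x^{\mr}$ is a chain map ($x^{i+1}\ci d_X^i = d_W^i\ci x^i$), the lower-right entries match because $y^{\mr}$ is a chain map, the upper-right entries are zero on both sides, and the lower-left entries are precisely the homotopy identities $g^{i+1}\ci x^{i+1} - y^i\ci f^{i+1} = d_Z^{i-1}\ci\vp^i + \vp^{i+1}\ci d_X^i$ rearranged; at the two ends one uses $\vp^1=0$ (hypothesis (iv)) and $f^{n+1}$-vs-$g^{n+1}$ behaviour together with $x^{n+1}=\id_C$, $y^{n+1}=\id_D$. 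This is the routine bookkeeping step and I would only indicate the shape of the computation rather than carry out every block.

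For part (2), let $x'^{\mr}\in\CAC(W^{\mr},X^{\mr})$ and $y'^{\mr}\in\CAD(Z^{\mr},Y^{\mr})$ be homotopy inverses, with homotopies $x'^{\mr}\ci x^{\mr}\sim\id$, $x^{\mr}\ci x'^{\mr}\sim\id$ in $\CAC$ and similarly for $y^{\mr}$. By Claim~\ref{ClaimHtpyReduced} (1) (using the exactness assumption on $X^{\mr},W^{\mr}$ in degrees near $0$, which is exactly what is imposed here), I can choose all four of these homotopies to vanish in degree $1$, so that the pair $(x'^{\mr},y'^{\mr})$ together with an induced degree-$1$-trivial homotopy $\vp'^{\mr}$ for $g^{\mr}\ci x'^{\mr}\sim y'^{\mr}\ci f^{\mr}$ satisfies hypotheses (i)--(iv) again. (One gets such a $\vp'^{\mr}$ from $\vp^{\mr}$ by conjugating with $x'^{\mr}$ and $y'^{\mr}$ and then re-applying Claim~\ref{ClaimHtpyReduced} (1); I would spell out this construction since it is the one slightly delicate point.) Part (1) then yields $G^{\mr}\in\CC(\Mg,\Mf)$ of the analogous block form.

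It remains to check that $G^{\mr}\ci F^{\mr}$ and $F^{\mr}\ci G^{\mr}$ are homotopic to the identity in $\CC$. Multiplying out, $G^{\mr}\ci F^{\mr}$ has the block form $\big(x'^1 x^1,\ \bsm x'^2 x^2 & 0\\ * & y'^1 y^1\esm,\ldots,\id_C\big)$ (the lower-left entries $*$ being explicit combinations of the $\vp$'s and the chosen homotopies), and the known homotopies $x'^{\mr}\ci x^{\mr}\un{\eta^{\mr}}{\sim}\id_{X^{\mr}}$ and $y'^{\mr}\ci y^{\mr}\un{\theta^{\mr}}{\sim}\id_{Y^{\mr}}$, assembled into block form $\Bsm \eta^{i+1} & 0\\ * & \theta^i\esm$ on $\Mf$, give the required null-homotopy of $G^{\mr}\ci F^{\mr}-\id_{\Mf}$; the off-diagonal terms are absorbed by choosing the lower-left blocks of the homotopy appropriately, which is where vanishing of the degree-$1$ components is needed so that the mapping-cone differential $d_{M_f}^0=\bsm -d_X^1\\ f^1\esm$ interacts correctly. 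The symmetric computation handles $F^{\mr}\ci G^{\mr}$. The main obstacle is precisely this last step: bookkeeping the lower-left (off-diagonal) blocks of the candidate homotopies on the mapping cones so that everything cancels, and making sure the reductions via Claim~\ref{ClaimHtpyReduced} (1) can be performed simultaneously and compatibly for all the homotopies involved; the chain-map and upper-triangular parts are automatic, but the cross terms coming from $\vp^{\mr}$ and $\vp'^{\mr}$ require care.
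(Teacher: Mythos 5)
Your part (1) and the first half of your part (2) are essentially the paper's argument: one takes homotopy inverses $w^{\mr},z^{\mr}$ of $x^{\mr},y^{\mr}$, uses Claim~\ref{ClaimHtpyReduced} (1) to arrange all relevant homotopies to vanish in degree $1$, builds from $\vp^{\mr}$ a homotopy $f^{\mr}\ci w^{\mr}\sim z^{\mr}\ci g^{\mr}$ with vanishing degree-$1$ component (the paper writes it down explicitly as $\psi^i=z^{i-1}\ci g^{i-1}\ci\om^i-z^{i-1}\ci\vp^i\ci w^i-\eta^i\ci f^i\ci w^i$; note that in your text the homotopy you ask for, ``$g^{\mr}\ci x'^{\mr}\sim y'^{\mr}\ci f^{\mr}$'', does not even typecheck for the reversed square --- it should compare the two composites $W^{\mr}\to Y^{\mr}$), and then applies part (1) to the reversed square to get $G^{\mr}\in\CC(\Mg,\Mf)$.

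The genuine gap is your final step, where you claim $G^{\mr}\ci F^{\mr}\sim\id_{\Mf}$ (and dually), to be achieved by assembling $\xi^{\mr},\eta^{\mr}$ into block-diagonal homotopies and ``absorbing'' the cross terms into freely chosen lower-left blocks. If you carry out that computation, the block-diagonal homotopy exhibits $G^{\mr}\ci F^{\mr}$ as homotopic not to $\id_{\Mf}$ but to a unipotent lower-triangular endomorphism $I^{\mr}=\big(\id,\bsm 1&0\\ a^2&1\esm,\ldots,\bsm 1&0\\ a^{n+1}&1\esm,\id\big)$ with certain $a^i\in\C(X^i,Y^{i-1})$. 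Absorbing the defect $I^{\mr}-\id$ into extra lower-left entries $\sigma^i\co X^{i+1}\to Y^{i-1}$ of the homotopy would require solving $d_Y^{i-1}\ci\sigma^i-\sigma^{i+1}\ci d_X^{i+1}=-a^{i+1}$ for all $i$ (with end conditions), and nothing in the hypotheses --- the exactness assumption of Claim~\ref{ClaimHtpyReduced} (1) only concerns degrees near $0$ --- guarantees such $\sigma^{\mr}$ exist; indeed there is no reason for $G^{\mr}\ci F^{\mr}$ to be homotopic to the identity at all. The paper's way out is to observe that this stronger statement is unnecessary: since $I^{\mr}$ is an isomorphism in $\CC$, the homotopy $G^{\mr}\ci F^{\mr}\sim I^{\mr}$ already shows $\und{F^{\mr}}$ has a left inverse in $\KC$, and the symmetric computation $F^{\mr}\ci G^{\mr}\sim J^{\mr}$ with $J^{\mr}$ an isomorphism gives a right inverse, whence $F^{\mr}$ is a homotopy equivalence. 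Replacing your last step by this ``homotopic to an isomorphism suffices'' argument closes the gap.
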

\begin{proof}
{\rm (1)} This is straightforward.

{\rm (2)} Let $w^{\mr}\in\CAC(W^{\mr},X^{\mr})$ and $z^{\mr}\in\CAD(Z^{\mr},Y^{\mr})$ be homotopy inverses of $x^{\mr}$ and $y^{\mr}$, with homotopies
\begin{eqnarray*}
x^{\mr}\ci w^{\mr}\un{\om^{\mr}}{\sim}\id_{W^{\mr}}&,&w^{\mr}\ci x^{\mr}\un{\xi^{\mr}}{\sim}\id_{X^{\mr}},\\
y^{\mr}\ci z^{\mr}\un{\zeta^{\mr}}{\sim}\id_{Z^{\mr}}&,&z^{\mr}\ci y^{\mr}\un{\eta^{\mr}}{\sim}\id_{Y^{\mr}}.
\end{eqnarray*}
As in Claim~\ref{ClaimHtpyReduced}, we may assume $\om^1=0,\xi^1=0,\eta^1=0,\zeta^1=0$. Then
\[ \psi^i=z^{i-1}\ci g^{i-1}\ci\om^i-z^{i-1}\ci\vp^i\ci w^i-\eta^i\ci f^i\ci w^i\quad(1\le i\le n+1) \]
gives a homotopy $f^{\mr}\ci w^{\mr}\un{\psi^{\mr}}{\sim}z^{\mr}\ci g^{\mr}$ satisfying $\psi^1=0$. Thus {\rm (1)} applied to
\[
\xy
(-6,6)*+{W^{\mr}}="0";
(6,6)*+{Z^{\mr}}="2";
(-6,-6)*+{X^{\mr}}="4";
(6,-6)*+{Y^{\mr}}="6";
(0,-1)*+{\un{\psi^{\mr}}{\sim}}="10";
{\ar^{g^{\mr}} "0";"2"};
{\ar_{w^{\mr}} "0";"4"};
{\ar^{z^{\mr}} "2";"6"};
{\ar_{f^{\mr}} "4";"6"};
%
\endxy
\]
gives a morphism $G^{\mr}\in\CC(\Mg,\Mf)$ defined in the same way as $F^{\mr}$. We can show that
\[ \Phi^{\mr}=\Big(\left[\begin{array}{cc}-\xi^2&0\end{array}\right],\left[\begin{array}{cc}-\xi^3&0\\ 0&\eta^2\end{array}\right],\ldots,\left[\begin{array}{cc}-\xi^{n+1}&0\\ 0&\eta^n\end{array}\right],\left[\begin{array}{c}0\\\eta^{n+1}\end{array}\right]\Big) \]
give a homotopy $G^{\mr}\ci F^{\mr}\un{\Phi^{\mr}}{\sim}I^{\mr}$ where $I^{\mr}\in C_{(X^1,D)}^{n+2}(\Mf,\Mf)$ is a morphism of the form
\[ I^{\mr}=\Big(\id,\left[\begin{array}{cc}1&0\\ a^2&1\end{array}\right],\ldots,\left[\begin{array}{cc}1&0\\ a^{n+1}&1\end{array}\right],\id\Big) \]
for some $a^i\in\C(X^i,Y^{i-1})$. Since $I^{\mr}$ is an isomorphism, this shows that $F^{\mr}$ has a left homotopy inverse.

Similarly, $\psi^{\mr}$ induces a homotopy $F^{\mr}\ci G^{\mr}\sim J^{\mr}$ to an isomorphism $J^{\mr}$, and $F^{\mr}$ also has a right homotopy inverse. Thus $F^{\mr}$ is a homotopy equivalence.
\end{proof}

\begin{prop}\label{PropMapCone_EA}
Let $f^{\mr}\co {}_A\Xd_C\to {}_A\Yr_D$ be a morphism in $\AE$, satisfying $f^0=\id_A$. Then $\langle\Mf,(d_X^0)\sas\rho\rangle$ also belongs to $\AE$.
\end{prop}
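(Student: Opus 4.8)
The plan is to check directly that the pair $\langle\Mf,(d_X^0)\sas\rho\rangle$ satisfies the defining conditions of $\AE$ in Definition~\ref{DefAE}. By Definition~\ref{DefMapCone} we already know $\Mf\in\CC$; its term in degree $0$ is $X^1$ and its term in degree $n+1$ is $Y^{n+1}=D$, so $(d_X^0)\sas\rho$ indeed lies in $\E(D,X^1)$, which is the right group to be realized by a complex with these end terms. Hence the only thing to prove is the pair of equalities
\[ (d_{M_f}^0)\sas(d_X^0)\sas\rho=0 \qquad\text{and}\qquad (d_{M_f}^n)\uas(d_X^0)\sas\rho=0, \]
and this I would do by a short computation with the block forms of $d_{M_f}^0,d_{M_f}^n$ using the functoriality and biadditivity of $\E$.

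For the first equality, I would compute the composite $d_{M_f}^0\ci d_X^0\co A\to X^2\oplus Y^1$. Since $X^{\mr}$ is a complex, $d_X^1\ci d_X^0=0$; and since $f^{\mr}$ is a chain map with $f^0=\id_A$, $f^1\ci d_X^0=d_Y^0\ci f^0=d_Y^0$. Thus $d_{M_f}^0\ci d_X^0=\left[\begin{smallmatrix}0\\ d_Y^0\end{smallmatrix}\right]$, so $(d_{M_f}^0)\sas(d_X^0)\sas\rho=(d_{M_f}^0\ci d_X^0)\sas\rho$ decomposes, via $\E(D,X^2\oplus Y^1)\cong\E(D,X^2)\oplus\E(D,Y^1)$, into the components $0$ and $(d_Y^0)\sas\rho$; the latter vanishes because $\Yr$ is an object of $\AE$, so the first equality holds. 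For the second equality, I would pull back along $d_{M_f}^n=[\,f^{n+1}\ \ d_Y^n\,]\co X^{n+1}\oplus Y^n\to D$; under $\E(X^{n+1}\oplus Y^n,X^1)\cong\E(X^{n+1},X^1)\oplus\E(Y^n,X^1)$ the element $(d_{M_f}^n)\uas(d_X^0)\sas\rho$ acquires the components $(f^{n+1})\uas(d_X^0)\sas\rho$ and $(d_Y^n)\uas(d_X^0)\sas\rho$. The second equals $(d_X^0)\sas(d_Y^n)\uas\rho$, which is $0$ since $\Yr\in\AE$. For the first, I would use that $f^{\mr}$ is a morphism in $\AE$, which gives $\del=(f^0)\sas\del=(f^{n+1})\uas\rho$; hence $(f^{n+1})\uas(d_X^0)\sas\rho=(d_X^0)\sas(f^{n+1})\uas\rho=(d_X^0)\sas\del$, and this vanishes because $\Xd\in\AE$. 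So both components are zero, and $\langle\Mf,(d_X^0)\sas\rho\rangle\in\AE$.

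I do not anticipate any genuine difficulty; the argument is pure bookkeeping in the calculus of $\E$-extensions. The points that require care are carrying out the two direct-sum decompositions of $\E$ consistently with the matrix entries of the differentials, and invoking each hypothesis in exactly the right place: $\Yr\in\AE$ kills the $(d_Y^0)\sas\rho$ and $(d_Y^n)\uas\rho$ contributions, $\Xd\in\AE$ kills $(d_X^0)\sas\del$, and the fact that $f^{\mr}\in\AE(\Xd,\Yr)$ together with $f^0=\id_A$ is what lets one replace $(f^{n+1})\uas\rho$ by $\del$. If instead one wished to re-prove $\Mf\in\CC$ from scratch rather than cite Definition~\ref{DefMapCone}, that is again a routine block computation showing $d_{M_f}^{i+1}\ci d_{M_f}^i=0$ from the fact that $X^{\mr},Y^{\mr}$ are complexes and $f^{\mr}$ is a chain map.
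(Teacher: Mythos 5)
Your proposal is correct and follows essentially the same argument as the paper: one verifies the two defining equalities for membership in $\AE$ componentwise, using $d_X^1\ci d_X^0=0$ and $f^1\ci d_X^0=d_Y^0$ for $(d_{M_f}^0)\sas$, and $(f^{n+1})\uas\rho=\del$ together with $(d_X^0)\sas\del=0$ and $(d_Y^n)\uas\rho=0$ for $(d_{M_f}^n)\uas$. The paper's proof is exactly this bookkeeping, stated more tersely.
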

\begin{proof}
By the definition of $d_{M_f}^0$ and $d_{M_f}^{n+1}$, this follows from
\[ (d_X^1)\sas(d_X^0)\sas\rho=0,\quad (f^1)\sas(d_X^0)\sas\rho=(d_Y^0)\sas\rho=0 \]
and
\[ (f^{n+1})\uas(d_X^0)\sas\rho=(d_X^0)\sas\del=0,\quad (d_Y^n)\uas(d_X^0)\sas\rho=(d_X^0)\sas(d_Y^n)\uas\rho=0. \]
\end{proof}

\begin{cor}\label{CorMapConeHomotopic}
Let $f^{\mr},g^{\mr}\co {}_A\Xd_C\to {}_A\Yr_D$ be any pair of morphisms of $n$-exangles, satisfying $f^0=g^0=\id_A$. If $g^{\mr}\un{\vp^{\mr}}{\sim} f^{\mr}$ in $\CC$, then $\Mf\cong\Mg$ holds in $\Cbf^{n+2}_{(X^1,D)}$. In particular we have $[\Mf]=[\Mg]$.
\end{cor}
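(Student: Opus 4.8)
The plan is to produce an explicit isomorphism $\Mf\xrightarrow{\sim}\Mg$ in $\Cbf^{n+2}_{(X^1,D)}$ built directly from the homotopy $\vp^{\mr}$, and then invoke Proposition~\ref{PropMapCone_EA} (which guarantees $\Mf$ and $\Mg$ are complexes, so the map-cone differentials are legitimate) together with Claim~\ref{ClaimHtpyReduced} to normalize $\vp^1$. First I would apply Claim~\ref{ClaimHtpyReduced}~(1) to the homotopy $g^{\mr}\un{\vp^{\mr}}{\sim}f^{\mr}$: since $f^0=g^0=\id_A$ and the relevant $\C(-,A)$-sequence is exact because $\Xd$ is an $n$-exangle, we may assume $\vp^1=0$. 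This puts us exactly in the situation of Proposition~\ref{PropMapCone}~(2) with $x^{\mr}=\id_{X^{\mr}}$, $y^{\mr}=\id_{Y^{\mr}}$, $W^{\mr}=X^{\mr}$, $Z^{\mr}=Y^{\mr}$, and the homotopy there being our (renormalized) $\vp^{\mr}$.

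The second step is to read off the morphism $F^{\mr}\in\CC(\Mf,\Mg)$ from Proposition~\ref{PropMapCone}~(1), which in this degenerate case reduces to
\[
F^{\mr}=\Big(\id_{X^1},\left[\begin{array}{cc}\id&0\\\vp^2&\id\end{array}\right],\ldots,\left[\begin{array}{cc}\id&0\\\vp^{n+1}&\id\end{array}\right],\id_D\Big),
\]
so that $F^{\mr}$ already has first component $\id_{X^1}$ and last component $\id_D$, hence lies in $\Cbf^{n+2}_{(X^1,D)}(\Mf,\Mg)$. Moreover each middle component is a lower-triangular block matrix with identities on the diagonal, hence invertible, with inverse obtained by negating $\vp^i$; so $F^{\mr}$ is already an honest isomorphism of complexes with inverse in $\Cbf^{n+2}_{(X^1,D)}$, not merely a homotopy equivalence. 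I would verify (routinely) that $F^{\mr}$ commutes with the differentials $d_{M_f}^{\mr}$ and $d_{M_g}^{\mr}$ — this is precisely the chain-map computation already done in Proposition~\ref{PropMapCone}~(1), using the homotopy relations $g^i-f^i=d_Y^{i-1}\ci\vp^i+\vp^{i+1}\ci d_X^i$ and $\vp^1=0$ — so nothing new is needed. This gives $\Mf\cong\Mg$ in $\Cbf^{n+2}_{(X^1,D)}$, and passing to homotopy equivalence classes yields $[\Mf]=[\Mg]$.

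The only genuine subtlety, and the step I expect to be the main obstacle, is making sure we are entitled to apply Claim~\ref{ClaimHtpyReduced}~(1) to reduce to $\vp^1=0$: this requires the exactness of $\C(X^2,A)\xrightarrow{-\ci d_X^1}\C(X^1,A)\xrightarrow{-\ci d_X^0}\C(X^0,A)$, which is part of condition~(1) in Definition~\ref{DefEA} for the $n$-exangle $\Xd$ (evaluated at the object $A=X^0$), so it is available by hypothesis. One must also check that replacing $\vp^{\mr}$ by the modified homotopy does not change the homotopy class of $F^{\mr}$ — but since we only ever claim $[\Mf]=[\Mg]$, and the modification of $\vp^{\mr}$ alters $F^{\mr}$ only by an isomorphism of the form $I^{\mr}$ as in Proposition~\ref{PropMapCone}~(2), this causes no problem. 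I would close by remarking that the ``in particular'' is immediate since an isomorphism in $\Cbf^{n+2}_{(X^1,D)}$ is a fortiori a homotopy equivalence in $\Cbf^{n+2}_{(X^1,D)}$.
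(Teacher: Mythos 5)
Your proposal is essentially the paper's own proof: after normalizing $\vp^1=0$ via Claim~\ref{ClaimHtpyReduced}~(1), you apply Proposition~\ref{PropMapCone}~(1) to the square with identity vertical maps and $f^{\mr}$, $g^{\mr}$ horizontal, and observe that the resulting $F^{\mr}$, having unitriangular components and identities at the ends, is already an isomorphism in $\Cbf^{n+2}_{(X^1,D)}$, which is exactly how the paper argues (the appeal to Proposition~\ref{PropMapCone_EA} and the worry about changing the homotopy are unnecessary but harmless). One small correction: the exactness of $\C(X^2,A)\ov{-\ci d_X^1}{\lra}\C(X^1,A)\ov{-\ci d_X^0}{\lra}\C(X^0,A)$ needed for Claim~\ref{ClaimHtpyReduced}~(1) is given by condition~(2) of Definition~\ref{DefEA} (the sequence $(\ref{exnat2})$ for $\Xd$ evaluated at $A$), not condition~(1), since it involves precomposition with the differentials.
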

\begin{proof}
By Claim~\ref{ClaimHtpyReduced} {\rm (1)}, we may modify $\vp^{\mr}$ to satisfy $\vp^1=0$. Applying Proposition~\ref{PropMapCone} to
\[
\xy
(-6,6)*+{X^{\mr}}="0";
(6,6)*+{Y^{\mr}}="2";
(-6,-6)*+{X^{\mr}}="4";
(6,-6)*+{Y^{\mr}}="6";
(0,-1)*+{\un{\vp^{\mr}}{\sim}}="10";
{\ar^{f^{\mr}} "0";"2"};
{\ar@{=} "0";"4"};
{\ar@{=} "2";"6"};
{\ar_{g^{\mr}} "4";"6"};
%
\endxy,
\]
we obtain a homotopy equivalence
\[ F^{\mr}=\Big(\id_{X^1},\left[\begin{array}{cc}\id&0\\\vp^2&\id\end{array}\right],\ldots,\left[\begin{array}{cc}\id&0\\\vp^{n+1}&\id\end{array}\right],\id_D\Big)\co \Mf\to\Mg \]
in $\CC$, which is indeed an isomorphism.
\end{proof}

\begin{cor}\label{CorMapCone}
Let $f^{\mr}\co {}_A\Xd_C\to {}_A\Yr_D$ be a morphism of $n$-exangles, satisfying $f^0=\id_A$. If $w^{\mr}\in\CAC(W^{\mr},X^{\mr})$ and $y^{\mr}\in\CAD(Y^{\mr},Z^{\mr})$ are homotopy equivalences in $\CAC$ and $\CAD$ respectively, then the following holds for $g^{\mr}=y^{\mr}\ci f^{\mr}\ci w^{\mr}$.
\begin{enumerate}
\item If $\langle \Mf,(d_X^0)\sas\rho\rangle$ is an $n$-exangle, then so is $\langle \Mg,(d_W^0)\sas\rho\rangle$.
\item Moreover, if $\langle \Mf,(d_X^0)\sas\rho\rangle$ is distinguished, so is $\langle \Mg,(d_W^0)\sas\rho\rangle$.
\end{enumerate}
\end{cor}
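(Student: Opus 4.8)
The plan is to reduce the statement to a combination of Corollary~\ref{CorMapConeHomotopic}, Proposition~\ref{PropMapCone} {\rm (2)} and Proposition~\ref{PropDEAInv}, by producing a single homotopy equivalence $\Mf\to\Mg$ in $\CC$ which is the identity on the last component $Y^{n+1}=Z^{n+1}=D$. First I would check the hypotheses of Proposition~\ref{PropMapCone} for the square with top $f^{\mr}\co X^{\mr}\to Y^{\mr}$, left $w^{\mr}\co$ — wait, $w^{\mr}$ runs $W^{\mr}\to X^{\mr}$, so I should instead use its homotopy inverse. Concretely, pick a homotopy inverse $x^{\mr}\in\CAC(X^{\mr},W^{\mr})$ of $w^{\mr}$; then $g^{\mr}=y^{\mr}\ci f^{\mr}\ci w^{\mr}$ fits into the commutative-up-to-homotopy square with top $f^{\mr}$, bottom $g^{\mr}$, left $x^{\mr}\in\CAC(X^{\mr},W^{\mr})$ and right $y^{\mr}\in\CAD(Y^{\mr},Z^{\mr})$, since $g^{\mr}\ci x^{\mr}=y^{\mr}\ci f^{\mr}\ci w^{\mr}\ci x^{\mr}\sim y^{\mr}\ci f^{\mr}$. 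Using Claim~\ref{ClaimHtpyReduced} {\rm (1)} (whose exactness hypotheses hold because $X^{\mr},Y^{\mr},W^{\mr},Z^{\mr}$ underlie $n$-exangles, so the relevant $\C(-,A)$-sequences are exact by Definition~\ref{DefEA}), I may arrange the connecting homotopy $\vp^{\mr}$ to have $\vp^1=0$, so condition {\rm (iv)} of Proposition~\ref{PropMapCone} is met.

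Next, Proposition~\ref{PropMapCone} {\rm (1)} gives a morphism $F^{\mr}\in\CC(\Mf,\Mg)$ with $F^{n+1}=\id_D$, and part {\rm (2)} — applicable since $x^{\mr}$ and $y^{\mr}$ are homotopy equivalences in $\CAC$, $\CAD$ and the exactness assumption of Claim~\ref{ClaimHtpyReduced} {\rm (1)} holds — shows $F^{\mr}$ is a homotopy equivalence in $\CC$. For {\rm (1)}: by Proposition~\ref{PropMapCone_EA} both $\langle\Mf,(d_X^0)\sas\rho\rangle$ and $\langle\Mg,(d_W^0)\sas\rho\rangle$ lie in $\AE$, and $F^{\mr}$ is a morphism of $\AE$-objects between them (one checks $(F^0)\sas(d_X^0)\sas\rho=(x^1)\sas(d_X^0)\sas\rho=(d_W^0)\sas\rho=(F^{n+1})\uas(d_W^0)\sas\rho$, using $d_W^0\ci x^1\sim x^0\ci\ldots$; more simply $x^1\ci d_X^0$ and $d_W^0$ differ by the first component of a homotopy, but since $x^0=\id_A$ one has $d_W^0=d_W^0\ci x^0$, and $x^{\mr}\in\CAC$ forces $x^1\ci d_X^0=d_W^0\ci x^0=d_W^0$ when $\vp^1_x=0$, which we may assume). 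Hence Proposition~\ref{PropEAInv} (via $F^{\mr}$ a homotopy equivalence in $\AE$) shows $\langle\Mg,(d_W^0)\sas\rho\rangle$ is an $n$-exangle whenever $\langle\Mf,(d_X^0)\sas\rho\rangle$ is.

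For {\rm (2)}: since $F^{\mr}\in\AE(\langle\Mf,(d_X^0)\sas\rho\rangle,\langle\Mg,(d_W^0)\sas\rho\rangle)$ satisfies $F^{n+1}=\id_D$ and is a homotopy equivalence in $\CC$, Proposition~\ref{PropDEAInv} applies directly and gives that $\langle\Mg,(d_W^0)\sas\rho\rangle$ is distinguished if and only if $\langle\Mf,(d_X^0)\sas\rho\rangle$ is. This finishes the proof.

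The main obstacle I anticipate is bookkeeping around the first component: making sure that after invoking Claim~\ref{ClaimHtpyReduced} to kill $\vp^1$ (and the analogous first components of the homotopy inverses, as in the proof of Proposition~\ref{PropMapCone} {\rm (2)}), the formula for $F^{\mr}$ really does have $F^0=x^1$ with $x^1\ci d_X^0=d_W^0$ on the nose (not merely up to homotopy), so that $F^{\mr}$ is genuinely a morphism in $\AE$ with the correct extension $(d_W^0)\sas\rho$ — in other words verifying that the extension attached to $\Mg$ via Proposition~\ref{PropMapCone_EA} agrees with $(d_W^0)\sas\rho$ as claimed, rather than with some $(x^1)\sas(d_X^0)\sas\rho$ that only coincides with it a posteriori. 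This is where one must use $x^0=\id_A$ together with the strict commutativity $x^1\ci d_X^0 = d_W^0\ci x^0$ available once $x^{\mr}\in\CAC$ is chosen with vanishing first homotopy component. Everything else is a direct chain of previously established propositions.
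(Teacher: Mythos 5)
Your argument is correct and is essentially the paper's own proof: take a homotopy inverse $x^{\mr}\in\CAC(X^{\mr},W^{\mr})$ of $w^{\mr}$, form the square with homotopy $g^{\mr}\ci x^{\mr}\sim y^{\mr}\ci f^{\mr}$, kill $\vp^1$ by Claim~\ref{ClaimHtpyReduced} (legitimate since all four pairs are $n$-exangles, by Proposition~\ref{PropEAInv} and Remark~\ref{RemCAC}), apply Proposition~\ref{PropMapCone} to get a homotopy equivalence $F^{\mr}$ with $F^0=x^1$, $F^{n+1}=\id_D$, note $(x^1)\sas(d_X^0)\sas\rho=(d_W^0)\sas\rho$ so that $F^{\mr}$ is a morphism in $\AE$, and conclude by Propositions~\ref{PropEAInv} and \ref{PropDEAInv}. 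The only superfluous worry is the "vanishing first homotopy component" caveat: the identity $x^1\ci d_X^0=d_W^0$ holds on the nose for any $x^{\mr}\in\CAC(X^{\mr},W^{\mr})$, simply because $x^{\mr}$ is a chain map with $x^0=\id_A$, so no special choice is needed there.
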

\begin{proof}
By Proposition~\ref{PropEAInv} (and Remark~\ref{RemCAC}), the pairs $\langle W^{\mr},\del\rangle$ and $\langle Z^{\mr},\rho\rangle$ are $n$-exangles. Let $x^{\mr}\in\CAC(X^{\mr},W^{\mr})$ be a homotopy inverse of $w^{\mr}$, and take a homotopy $w^{\mr}\ci x^{\mr}\un{\xi^{\mr}}{\sim}\id_{X^{\mr}}$. If we define $\vp^{\mr}$ by
\[ \vp^i=y^{i-1}\ci f^{i-1}\ci\xi^i\quad(1\le i\le n+1), \]
this gives a homotopy $g^{\mr}\ci x^{\mr}\un{\vp^{\mr}}{\sim}y^{\mr}\ci f^{\mr}$. Since $\langle Z^{\mr},\rho\rangle$ is an $n$-exangle, we may assume $\vp^1=0$ by Claim~\ref{ClaimHtpyReduced}. Then by Proposition~\ref{PropMapCone}, we obtain a homotopy equivalence $F^{\mr}\in\CC(\Mf,\Mg)$ satisfying $F^0=x^1$ and $F^{n+1}=\id_D$. By $(x^1)\sas (d_X^0)\sas\rho=(d_W^0)\sas\rho$, this gives a morphism $F^{\mr}\in\AE(\langle \Mf,(d_X^0)\sas\rho\rangle,\langle \Mg,(d_W^0)\sas\rho\rangle)$.
Thus {\rm (1)} follows from Proposition~\ref{PropEAInv}, and {\rm (2)} follows from Proposition~\ref{PropDEAInv}.
\end{proof}

\begin{dfn}\label{DefEACat}
An {\it $n$-exangulated category} is a triplet $\CEs$ of additive category $\C$, biadditive functor $\E\co\C\op\ti\C\to\Ab$, and its exact realization $\sfr$, satisfying the following conditions.
\begin{itemize}
\item[{\rm (EA1)}] Let $A\ov{f}{\lra}B\ov{g}{\lra}C$ be any sequence of morphisms in $\C$. If both $f$ and $g$ are inflations, then so is $g\ci f$. Dually, if $f$ and $g$ are deflations then so is $g\ci f$.

\item[{\rm (EA2)}] For $\rho\in\E(D,A)$ and $c\in\C(C,D)$, let ${}_A\langle X^{\mr},c\uas\rho\rangle_C$ and ${}_A\Yr_D$ be distinguished $n$-exangles. Then $(\id_A,c)$ has a {\it good lift} $f^{\mr}$, in the sense that its mapping cone gives a distinguished $n$-exangle $\langle \Mf,(d_X^0)\sas\rho\rangle$.
\item[{\rm (EA2$\op$)}] Dual of {\rm (EA2)}.
\end{itemize}
\end{dfn}

\begin{rem}\label{RemEA2}
Concerning {\rm (EA2)}, the following holds. Similarly for {\rm (EA2$\op$)}.
\begin{enumerate}
\item By Corollary \ref{CorMapConeHomotopic}, if $g^{\mr}\sim f^{\mr}\co {}_A\Xd_C\to {}_A\Yr_D$ are lifts of $(\id_A,c)$, then $f^{\mr}$ is a good lift if and only if $g^{\mr}$ is.
\item By Corollary \ref{CorMapCone}, condition {\rm (EA2)} is independent from representatives of the classes $[X^{\mr}]$ and $[Y^{\mr}]$. 
\end{enumerate}
\end{rem}

\begin{dfn}\label{DefnExtClosed}
Let $\CEs$ be an $n$-exangulated category. A full subcategory $\Ccal\ppr\se\C$ is called extension-closed if for any $A,C\in\Ccal\ppr$ and any $\del\in\E(C,A)$, there exists a $\sfr$-distinguished $n$-exangle $\Xd$ which satisfies $X^i\in\Ccal\ppr$ for any $1\le i\le n$.
\end{dfn}

\begin{prop}\label{PropnExtClosed}
Let $\CEs$ be an $n$-exangulated category, and $\Ccal\ppr\se\C$ be an extension-closed subcategory.

Let $\E\ppr$ denote the restriction of $\E$ to $\Ccal^{\prime\mathrm{op}}\ti\Ccal\ppr$. For any $\del\in\E\ppr(C,A)$, take an $\sfr$-distinguished $n$-exangle $\Xd$ which satisfies $X^i\in\Ccal\ppr$ for any $1\le i\le n$, and put $\tfr(\del)=[X^{\mr}]$. Here, the homotopy equivalence class is taken in $\Cbf_{(\Ccal\ppr;A,C)}^{n+2}$, in the notation of Definition \ref{DefSeq}.
Then the following holds.
\begin{enumerate}
\item $\tfr$ gives an exact realization of $\E\ppr$, and $(\Ccal\ppr,\E\ppr,\tfr)$ satisfies {\rm (EA2), (EA2$\op$)}.
\item If $(\Ccal\ppr,\E\ppr,\tfr)$ moreover satisfies {\rm (EA1)}, then it is an $n$-exangulated category.
\end{enumerate}
\end{prop}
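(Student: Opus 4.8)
The plan is to transport the $n$-exangulated structure of $(\C,\E,\sfr)$ to $(\Ccal\ppr,\E\ppr,\tfr)$, exploiting two features of $\Ccal\ppr$: it is \emph{full}, so that a complex, a morphism, or a homotopy all of whose objects lie in $\Ccal\ppr$ is automatically one in $\Ccal\ppr$; and it is closed under finite direct sums and contains $0$, so that the objects manufactured by the ambient axioms again lie in $\Ccal\ppr$. In particular, for $A,C\in\Ccal\ppr$ the inclusion $\Cbf^{n+2}_{(\Ccal\ppr;A,C)}\hr\Cbf^{n+2}_{(\C;A,C)}$ is full and compatible with the homotopy relation, and a morphism whose source and target have all terms in $\Ccal\ppr$ is a homotopy equivalence in one of these categories if and only if it is in the other (its homotopy inverse and the accompanying homotopies have all components in $\Ccal\ppr$). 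This single observation is what makes the ambient axioms descend.

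First I would check that $\tfr$ is well defined: if $\Xd$ and $\langle X^{\prime\mr},\del\rangle$ are both $\sfr$-distinguished $n$-exangles for a fixed $\del\in\E\ppr(C,A)$ with all intermediate terms in $\Ccal\ppr$, then $[X^{\mr}]=\sfr(\del)=[X^{\prime\mr}]$ in $\Cbf^{n+2}_{(\C;A,C)}$, so $X^{\mr}$ and $X^{\prime\mr}$ are homotopy equivalent there, hence also in $\Cbf^{n+2}_{(\Ccal\ppr;A,C)}$ by the remark above; thus $\tfr(\del)$ is independent of the chosen representative. Then (R0)--(R2) follow routinely. For (R0): given a morphism of extensions $(a,c)\co\del\to\rho$ in $\E\ppr$ and representatives $X^{\mr},Y^{\mr}$ of $\tfr(\del),\tfr(\rho)$, well-definedness shows that $X^{\mr},Y^{\mr}$ also represent $\sfr(\del),\sfr(\rho)$, so (R0) for $\sfr$ gives a lift $f^{\mr}=(a,f^1,\ldots,f^n,c)$ whose intermediate components lie between objects of $\Ccal\ppr$; hence $f^{\mr}$ is a morphism of complexes in $\Ccal\ppr$, i.e.\ a lift in the restricted setting. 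For (R1): exactness of $(\ref{exnat1})$ and $(\ref{exnat2})$ is tested objectwise, and restricting the test object to $\Ccal\ppr$, where $\E\ppr$ agrees with $\E$, leaves the sequences exact, so every $\tfr$-distinguished pair is an $n$-exangle for $\E\ppr$. For (R2): since $0,A\in\Ccal\ppr$, the complex $A\ov{\id_A}{\lra}A\to 0\to\cdots\to 0$ has all terms in $\Ccal\ppr$ and is $\sfr$-distinguished, so it is an admissible choice for computing $\tfr({}_A0_0)$; dually for $\tfr({}_00_A)$. Thus $\tfr$ is an exact realization of $\E\ppr$.

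The substantive step is (EA2) (and, dually, (EA2$\op$)). Given $\rho\in\E\ppr(D,A)$, $c\in\Ccal\ppr(C,D)$, and $\tfr$-distinguished $n$-exangles ${}_A\langle X^{\mr},c\uas\rho\rangle_C$ and ${}_A\Yr_D$, well-definedness (transporting a homotopy equivalence to the ambient category) shows that $X^{\mr}$ and $Y^{\mr}$ are $\sfr$-distinguished. Applying (EA2) for $(\C,\E,\sfr)$ yields a good lift $g^{\mr}=(\id_A,g^1,\ldots,g^n,c)$ of $(\id_A,c)$, so $\langle\Mg,(d_X^0)\sas\rho\rangle$ is an $\sfr$-distinguished $n$-exangle; the intermediate components of $g^{\mr}$ lie between objects of $\Ccal\ppr$, so $g^{\mr}$ is a lift of $(\id_A,c)$ in the restricted setting. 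Its mapping cone $\Mg$ has end terms $X^1$ and $D$ and intermediate terms $X^{i+1}\oplus Y^i$ for $1\le i\le n$ (with $X^{n+1}=C$); since $\Ccal\ppr$ is closed under direct sums, all of these lie in $\Ccal\ppr$. Hence $\langle\Mg,(d_X^0)\sas\rho\rangle$ is an $\sfr$-distinguished $n$-exangle with intermediate terms in $\Ccal\ppr$, so by the definition of $\tfr$ it represents $\tfr((d_X^0)\sas\rho)$; that is, $g^{\mr}$ is a good lift for $\tfr$. The dual argument using mapping cocones gives (EA2$\op$), which finishes (1). Part (2) is then immediate: by (1), $(\Ccal\ppr,\E\ppr,\tfr)$ satisfies (R0)--(R2), (EA2) and (EA2$\op$), so if it moreover satisfies (EA1) it fulfils every condition of Definition~\ref{DefEACat}.

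The main obstacle is not any calculation but the careful bookkeeping above: keeping track of which homotopy category each morphism and homotopy equivalence lives in. Once the first paragraph's observation (fullness together with closure under finite direct sums) is in place, the ambient axioms descend with no new homotopy-theoretic input, since the notion of an $\sfr$-distinguished $n$-exangle with intermediate terms in $\Ccal\ppr$ coincides with that of a $\tfr$-distinguished $n$-exangle.
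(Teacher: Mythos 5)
Your proposal is correct and matches the paper's intent: the paper's own proof is just "this follows immediately from the definition," and your argument is precisely the routine unwinding of that claim (fullness of $\Ccal\ppr$ plus closure under finite direct sums lets lifts, homotopies, homotopy inverses, and mapping cones descend, so (R0)--(R2), (EA2), (EA2$\op$) are inherited). No gaps; the only implicit assumption, that $\Ccal\ppr$ is an additive subcategory, is also implicit in the paper's statement.
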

\begin{proof}
This follows immediately from the definition.
\end{proof}

\section{Fundamental properties}\label{section_Fund}
\subsection{Fundamental properties of $n$-exangulated category}
We summarize here some properties of $n$-exangulated category, which will be used in the proceeding sections. Let $\CEs$ be an $n$-exangulated category, throughout this section.
\begin{prop}\label{PropZeroEA}
Let $_A\del_C$ be an extension. Suppose that for any $Q\in\C$,
\[ \del\ssh\co\C(Q,C)\to\E(Q,A)\quad\text{and}\quad \del\ush\co\C(A,Q)\to\E(C,Q) \]
are monomorphic.
Then, the following holds for any $n$-exangle $\Xd$.
\begin{enumerate}
\item $d_X^0=0$ and $d_X^n=0$.
\item $X^{\mr}$ is homotopically equivalent in $\CAC$ to the object
\[ A\ov{0}{\lra}0\ov{0}{\lra}\cdots \ov{0}{\lra}0\ov{0}{\lra}C, \]
which will be denoted by $\O^{\mr}={}_A\O^{\mr}_C$ in the rest.
\end{enumerate}
In particular, such $\del$ should satisfy $\sfr(\del)=[\O^{\mr}]$.
\end{prop}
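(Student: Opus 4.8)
The hypothesis is that $\del\ssh$ and $\del\ush$ are monomorphic for all test objects $Q$. The plan is to feed this into the two defining exactness conditions of an $n$-exangle, namely \eqref{exnat1} and \eqref{exnat2}, to kill the outer differentials and then to produce an explicit homotopy equivalence in $\CAC$ with $\O^{\mr}$. The last sentence follows at once from (R1): a distinguished $n$-exangle $\Xd$ realizing $\del$ satisfies $\sfr(\del)=[X^{\mr}]=[\O^{\mr}]$ by part (2).

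\textbf{Step 1: the outer differentials vanish.} For (1), apply the exactness of \eqref{exnat1} at the term $\C(-,X^{n+1})$: the composite $\C(-,d_X^n)$ followed by $\del\ssh$ is zero, so $\del\ssh\ci\C(X^n,d_X^n)=0$ as maps $\C(X^n,X^n)\to\E(X^n,X^0)$; evaluating on $\id_{X^n}$ gives $(d_X^n)\uas\del=0$, but more to the point $\del\ssh\ci\C(Q,d_X^n)=0$ for every $Q$, and since $\del\ssh$ is monomorphic this forces $\C(Q,d_X^n)=0$ for all $Q$, hence $d_X^n=0$ (take $Q=X^n$, evaluate on the identity). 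Dually, using that \eqref{exnat2} is a complex at $\C(X^0,-)$ and that $\del\ush$ is monomorphic, one gets $\C(X^0,d_X^0)\cdots$ wait --- more carefully: \eqref{exnat2} being a complex means $\del\ush\ci\C(d_X^0,-)=0$, i.e. $\del\ush\ci\C(d_X^0,Q)=0$ for all $Q$; since $\del\ush$ is monomorphic, $\C(d_X^0,Q)=0$ for all $Q$, so $d_X^0=0$ (take $Q=X^0$). This proves (1).

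\textbf{Step 2: homotopy equivalence with $\O^{\mr}$.} With $d_X^0=0$ and $d_X^n=0$ in hand, consider the morphism $p^{\mr}\co X^{\mr}\to\O^{\mr}$ in $\CAC$ given by $p^0=\id_A$, $p^{n+1}=\id_C$ and $p^i=0$ for $1\le i\le n$; this is a chain map precisely because $d_X^0=0$ and $d_X^n=0$ (the relevant squares are $\id_A\ci 0 = 0\ci p^1$ and $p^n\ci d_X^{n-1}=0$ using $p^n=0$, and $0\ci\id_C = p^{n+1}\ci d_X^n=0$). Likewise let $q^{\mr}\co\O^{\mr}\to X^{\mr}$ be $q^0=\id_A$, $q^{n+1}=\id_C$, $q^i=0$ otherwise; again a chain map by (1). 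Then $p^{\mr}\ci q^{\mr}=\id_{\O^{\mr}}$ on the nose. It remains to see $q^{\mr}\ci p^{\mr}\sim\id_{X^{\mr}}$ in $\CAC$, i.e. to construct a homotopy $\vp^{\mr}=(\vp^1,\dots,\vp^{n+1})$, $\vp^i\in\C(X^i,X^{i-1})$, with $\vp^1\ci d_X^0=0$, $\,d_X^n\ci\vp^{n+1}=0$, and $\id-q^i\ci p^i = d_X^{i-1}\ci\vp^i+\vp^{i+1}\ci d_X^i$ for $1\le i\le n$. Since $q^i\ci p^i=0$ for $1\le i\le n$, this asks for $\id_{X^i}=d_X^{i-1}\ci\vp^i+\vp^{i+1}\ci d_X^i$, which is exactly the statement that the complex $X^1\to\cdots\to X^n$ (with the outer terms $A$, $C$ attached) is contractible rel endpoints. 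This is where the full strength of \eqref{exnat1} and \eqref{exnat2} (not just the complex property) is used: applying $\C(Q,-)$ and $\C(-,Q)$ and the exactness there, one builds the $\vp^i$ inductively. Concretely, exactness of \eqref{exnat2} (a sequence of functors $\C\to\Ab$) says that for every $Q$ the sequence $\C(X^{n+1},Q)\to\cdots\to\C(X^0,Q)\to\E(X^{n+1},Q)$ is exact; since $d_X^0=0$ and $\del\ush$ is mono, exactness at $\C(X^0,Q)$ gives that $\C(d_X^0,Q)=0$ has image $=\ker(\del\ush)=0$, consistent, and exactness at $\C(X^1,Q)$ gives $\C(d_X^1,Q)$ surjects onto $\ker(\C(d_X^0,Q)=0)=\C(X^1,Q)$... — here I take $Q=X^0$ and pull back $\id_A$ appropriately to produce $\vp^1$; then proceed up the complex, at each stage using exactness of the relevant Hom-sequence to split off the identity. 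Dually one may work down from $\vp^{n+1}$ using \eqref{exnat1}. Care is needed to reconcile the inductions from both ends and to arrange the boundary conditions $\vp^1\ci d_X^0=0$ and $d_X^n\ci\vp^{n+1}=0$; but since $d_X^0=0=d_X^n$ these boundary conditions are automatic, which removes the only real friction.

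\textbf{Main obstacle.} The genuinely delicate point is Step 2: showing the truncated complex is null-homotopic \emph{as a morphism in $\CAC$}, i.e. producing the contracting homotopy compatibly from the two exactness conditions. The vanishing of $d_X^0$ and $d_X^n$ is a short argument; the contraction requires a careful inductive splitting using exactness of the represented/corepresented sequences, essentially a diagram-chase analogous to the classical fact that an acyclic bounded complex of projectives (or one that is acyclic after applying all $\C(Q,-)$) is contractible. I expect this to reduce cleanly to Proposition~\ref{PropCAC1}: indeed $X^{\mr}$ and $\O^{\mr}$ are both $n$-exangles for $\del$ (the latter by (1) together with the definition of $\O^{\mr}$), and $p^{\mr}\in\CAC(X^{\mr},\O^{\mr})$ with $\CAC(\O^{\mr},X^{\mr})\ni q^{\mr}$ nonempty, so Proposition~\ref{PropCAC1} immediately yields that $p^{\mr}$ is a homotopy equivalence in $\CAC$. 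That shortcut is the cleanest route and sidesteps writing the homotopy by hand.
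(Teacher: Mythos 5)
Your concluding shortcut is exactly the paper's proof: monomorphicity kills the outer differentials (since $\del\ush(d_X^0)=(d_X^0)\sas\del=0$ and $\del\ssh(d_X^n)=(d_X^n)\uas\del=0$), the hypothesis says precisely that the complex with zero middle terms together with $\del$ is an $n$-exangle, and then the two morphisms $(\id_A,0,\ldots,0,\id_C)$ in both directions plus Proposition~\ref{PropCAC1} give the homotopy equivalence in $\CAC$, so the hand-built contracting homotopy you sketch in Step 2 is unnecessary. Only two small corrections: to get $d_X^0=0$ you should test against $Q=X^1$ (not $X^0$), and the fact that the zero-middle complex with $\del$ is an $n$-exangle comes from the monomorphicity hypothesis, not from part (1).
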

\begin{proof}
{\rm (1)} This immediately follows from $\del\ush(d_X^0)=0$ and $\del\ssh(d_X^n)=0$.

{\rm (2)} Remark that the assumption of the monomorphicity of $\del\ssh$ and $\del\ush$ is equivalent to that $\langle\O^{\mr},\del\rangle$ is an $n$-exangle. Since there are morphisms
\begin{eqnarray*}
&f^{\mr}=(\id_A,0,\ldots,0,\id_C)\co X^{\mr}\to\O^{\mr},&\\
&g^{\mr}=(\id_A,0,\ldots,0,\id_C)\co \O^{\mr}\to X^{\mr}&
\end{eqnarray*}
in $\CAC$ by {\rm (1)}, these are homotopy equivalences by Proposition~\ref{PropCAC1}.
\end{proof}

\begin{prop}\label{PropSummandOut}
Let $_A\Xd_C,{}_B\Yr_D$ be any pair of objects in $\AE$. Then the following are equivalent.
\begin{enumerate}
\item $\langle X^{\mr}\oplus Y^{\mr},\del\oplus\rho\rangle$ is a distinguished $n$-exangle.
\item Both $\Xd$ and $\Yr$ are distinguished $n$-exangles.
\end{enumerate}
\end{prop}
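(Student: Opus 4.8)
The plan is to prove the two implications separately, using the $\mathrm{Hom}$-functor criterion for $n$-exangles together with the additive machinery already available. For the direction (2)$\Rightarrow$(1), the argument is essentially formal: if $\Xd$ and $\Yr$ are distinguished $n$-exangles, then by definition $\sfr(\del)=[X^{\mr}]$ and $\sfr(\rho)=[Y^{\mr}]$, so it suffices to show $\sfr(\del\oplus\rho)=[X^{\mr}\oplus Y^{\mr}]$. Here I would invoke that $\sfr(\del\oplus\rho)=[Z^{\mr}]$ for some representative $Z^{\mr}$, form the canonical morphisms of extensions $(\iota_A,\iota_C)\co\del\to\del\oplus\rho$ and $(\iota_B,\iota_D)\co\rho\to\del\oplus\rho$ (and the projections), use (R0) to lift them to morphisms of complexes, and assemble a morphism $X^{\mr}\oplus Y^{\mr}\to Z^{\mr}$ in $\CAC$ (with $A\oplus B$ and $C\oplus D$ as end terms). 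Since both $\langle X^{\mr}\oplus Y^{\mr},\del\oplus\rho\rangle$ and $\langle Z^{\mr},\del\oplus\rho\rangle$ are $n$-exangles (the former by Remark~\ref{RemEACoprod}), Proposition~\ref{PropCAC1} forces this morphism to be a homotopy equivalence in $\CAC$, whence $[X^{\mr}\oplus Y^{\mr}]=[Z^{\mr}]=\sfr(\del\oplus\rho)$.

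For the direction (1)$\Rightarrow$(2), the idea is to split off the summand. Assume $\langle X^{\mr}\oplus Y^{\mr},\del\oplus\rho\rangle$ is distinguished. Take a distinguished $n$-exangle $\langle Z^{\mr},\del\rangle$ realizing $\del$ (so $\sfr(\del)=[Z^{\mr}]$), and note $\langle Z^{\mr}\oplus Y^{\mr},\del\oplus\rho\rangle$ is an $n$-exangle by Remark~\ref{RemEACoprod}, but we do not yet know it is distinguished. Instead I would lift the morphism of extensions $(\iota_A,\iota_C)\co\del\to\del\oplus\rho$ to a morphism $Z^{\mr}\to X^{\mr}\oplus Y^{\mr}$ and the projection $(p_A,p_C)\co\del\oplus\rho\to\del$ to a morphism $X^{\mr}\oplus Y^{\mr}\to Z^{\mr}$, using that both sides are distinguished so (R0) applies in both directions; composing gives an endomorphism of $\langle Z^{\mr},\del\rangle$ in $\CAC$ which, since $\CAC(Z^{\mr},Z^{\mr})\neq\emptyset$ trivially, is a homotopy equivalence by Proposition~\ref{PropCAC1}. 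This exhibits $Z^{\mr}$ as a "homotopy retract" of $X^{\mr}\oplus Y^{\mr}$ compatibly with $\del$, and then I would use Proposition~\ref{PropDEAInv} (or its combination with Corollary~\ref{CorDEAInv}) to transfer distinguishedness: the point is that $\langle X^{\mr}\oplus Y^{\mr},\del\oplus\rho\rangle\cong\langle Z^{\mr},\del\rangle\oplus\langle Y^{\mr},\rho\rangle$ in $\AE$ up to homotopy, and one wants to conclude that each factor is distinguished. Symmetrically one handles $\Yr$.

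The main obstacle is the step in (1)$\Rightarrow$(2) of actually extracting distinguishedness of the individual summands rather than merely of the sum: a priori there is no "cancellation" lemma for distinguished $n$-exangles, so I cannot simply write $\sfr(\del\oplus\rho)=\sfr(\del)\oplus\sfr(\rho)$ and cancel. The cleanest route I foresee is to show directly that the composite homotopy equivalence constructed above, restricted appropriately, identifies $[X^{\mr}]$-part of a well-chosen representative of $\sfr(\del\oplus\rho)$ with $[Z^{\mr}]$: concretely, choose the representative of $\sfr(\del\oplus\rho)$ to be $Z^{\mr}\oplus W^{\mr}$ where $\sfr(\rho)=[W^{\mr}]$ (legitimate by the already-proved (2)$\Rightarrow$(1) direction), then a homotopy equivalence $X^{\mr}\oplus Y^{\mr}\to Z^{\mr}\oplus W^{\mr}$ in $\CAC$ with diagonal-ish end terms, combined with the projection/inclusion morphisms of the split extension and Proposition~\ref{PropDEAInv}, pins down $[X^{\mr}]=[Z^{\mr}]=\sfr(\del)$ and $[Y^{\mr}]=[W^{\mr}]=\sfr(\rho)$. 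Making the bookkeeping of the off-diagonal homotopies precise — ensuring the end-term conditions of $\CAC$ are respected throughout so that Proposition~\ref{PropCAC1} and Proposition~\ref{PropDEAInv} genuinely apply — is where the care is needed; everything else is formal manipulation with the functors $\Yfr_Q$ and the biadditivity of $\E$.
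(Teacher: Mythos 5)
Your second paragraph, the direction (2)$\Rightarrow$(1), is correct and is essentially the paper's own argument: lift the inclusions and projections of extensions via {\rm (R0)}, assemble morphisms between $X^{\mr}\oplus Y^{\mr}$ and a chosen representative of $\sfr(\del\oplus\rho)$ with identity end terms, and conclude by Proposition~\ref{PropCAC1} together with Remark~\ref{RemEACoprod}.

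The direction (1)$\Rightarrow$(2), however, is left with a genuine gap, and you say so yourself ("making the bookkeeping precise is where the care is needed"). The step you are missing is not a cancellation lemma at all, and the tool you reach for, Proposition~\ref{PropDEAInv}, does not apply as you use it: that proposition presupposes a morphism with $f^{n+1}=\id$ which is \emph{already known} to be a homotopy equivalence in $\CC$, whereas for the composite $X^{\mr}\to Z^{\mr}$ (or $X^{\mr}\oplus Y^{\mr}\to Z^{\mr}\oplus W^{\mr}$ restricted to summands) this is exactly what has to be proved. The resolution is the one you half-build and then abandon: keep your lifts $f^{\mr}\co Z^{\mr}\to X^{\mr}\oplus Y^{\mr}$ of $(\iota_A,\iota_C)$ and $g^{\mr}\co X^{\mr}\oplus Y^{\mr}\to Z^{\mr}$ of $(p_A,p_C)$, but instead of composing them with each other (which only yields a useless endomorphism of $Z^{\mr}$), compose $f^{\mr}$ with the canonical complex-level projection $p^{\mr}\co X^{\mr}\oplus Y^{\mr}\to X^{\mr}$ and $g^{\mr}$ with the canonical inclusion $j^{\mr}\co X^{\mr}\to X^{\mr}\oplus Y^{\mr}$. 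This produces morphisms $p^{\mr}\ci f^{\mr}\in\CAC(Z^{\mr},X^{\mr})$ and $g^{\mr}\ci j^{\mr}\in\CAC(X^{\mr},Z^{\mr})$. Since $\Xd$ is an $n$-exangle (by Remark~\ref{RemEACoprod}, because the distinguished sum is one) and $\Zd$ is an $n$-exangle by {\rm (R1)}, Proposition~\ref{PropCAC1} requires nothing more than the existence of morphisms in both directions to force $[X^{\mr}]=[Z^{\mr}]=\sfr(\del)$; the "retract" structure, the auxiliary sum $Z^{\mr}\oplus W^{\mr}$, and the appeal to the already-proved converse are all unnecessary. This is precisely how the paper argues, and without this (or an equivalent completion of your sketch) your proof of (1)$\Rightarrow$(2) is incomplete.
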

\begin{proof}
As in Remark~\ref{RemEACoprod}, $\langle X^{\mr}\oplus Y^{\mr},\del\oplus\rho\rangle$ is an $n$-exangle if and only if $\Xd$ and $\Yr$ are $n$-exangles. 

$(1)\Rightarrow(2)$. 
Put $\sfr(\del)=[Z^{\mr}]$, and let us show that $[X^{\mr}]=[Z^{\mr}]$ holds in $\CAC$. For simplicity, for any pair of objects $I,J\in\C$, denote the inclusion and projection to the 1st component by
\[ j_I=\begin{bmatrix}1\\0\end{bmatrix}\co I\to I\oplus J\quad\text{and}\quad p_I=[1\ 0]\co I\oplus J\to I, \]
respectively.
Since $(j_A,j_C)\co\del\to\del\oplus\rho$ and $(p_A,p_C)\co\del\oplus\rho\to\del$ are morphisms of extensions, they have lifts $f^{\mr}\in\CC(Z^{\mr},X^{\mr}\oplus Y^{\mr})$ and $g^{\mr}\in\CC(X^{\mr}\oplus Y^{\mr},Z^{\mr})$. If we compose them with
\[ p^{\mr}=(p_A,p_{X^1},\ldots,p_C)\in\CC(X^{\mr}\oplus Y^{\mr},X^{\mr}) \]
and
\[ j^{\mr}=(j_A,j_{X^1},\ldots,j_C)\in\CC(X^{\mr},X^{\mr}\oplus Y^{\mr}) \]
respectively, we obtain $p^{\mr}\ci f^{\mr}\in\CAC(Z^{\mr},X^{\mr})$ and $g^{\mr}\ci j^{\mr}\in\CAC(X^{\mr},Z^{\mr})$. Thus Proposition~\ref{PropCAC1} shows $[X^{\mr}]=[Z^{\mr}]$. Similarly for $\sfr(\rho)=[Y^{\mr}]$.

$(1)\Rightarrow(2)$. 
Put $\sfr(\del\oplus\rho)=[W^{\mr}]$, and let us show $[X^{\mr}\oplus Y^{\mr}]=[W^{\mr}]$. Let $x^{\mr}\in\CC(X^{\mr},W^{\mr})$ and $u^{\mr}\in\CC(W^{\mr},X^{\mr})$ be lifts of $(j_A,j_C)$ and $(p_A,p_C)$, respectively. Similarly, let $y^{\mr}\in\CC(Y^{\mr},W^{\mr})$ and $v^{\mr}\in\CC(W^{\mr},Y^{\mr})$ be lifts of $\big(\Big[\raise1ex\hbox{\leavevmode\vtop{\baselineskip-8ex \lineskip1ex \ialign{#\crcr{$\scriptstyle{0}$}\crcr{$\scriptstyle{1}$}\crcr}}}\Big],\Big[\raise1ex\hbox{\leavevmode\vtop{\baselineskip-8ex \lineskip1ex \ialign{#\crcr{$\scriptstyle{0}$}\crcr{$\scriptstyle{1}$}\crcr}}}\Big]\big)$ and $([0\ 1],[0\ 1])$. Then
\begin{eqnarray*}
&(\id,[x^1\ y^1],\ldots,[x^n\ y^n],\id)\co X^{\mr}\oplus Y^{\mr}\to W^{\mr},&\\
&\big(\id,\begin{bmatrix}u^1\\ v^1\end{bmatrix},\ldots,\begin{bmatrix}u^n\\ v^n\end{bmatrix},\id\big)\co W^{\mr}\to X^{\mr}\oplus Y^{\mr}&
\end{eqnarray*}
are morphisms in $\Cbf^{n+2}_{(A\oplus B,C\oplus D)}$. Proposition~\ref{PropCAC1} shows $[X^{\mr}\oplus Y^{\mr}]=[W^{\mr}]$.
\end{proof}

The following is an analog of \cite[Lemma 5]{Hu}.
\begin{cor}\label{CorSummandOut}
Suppose that
\begin{equation}\label{DEAXA1}
X^0\oplus A\ov{d}{\lra}X^1\oplus A\ov{[d_X^1\ w]}{\lra}X^2\ov{d_X^2}{\lra}\cdots\ov{d_X^n}{\lra}X^{n+1}\ov{\thh}{\dra}
\end{equation}
is a distinguished $n$-exangle, where $d$ is as follows.
\[ d=\left[\begin{array}{cc}x&u\\ v&\id\end{array}\right]\in\C(X^0\oplus A,X^1\oplus A) \]
Then for $d_X^0=x-u\ci v$ and $p=[1\ 0]\co X^0\oplus A\to X^0$,
\begin{equation}\label{DEAX}
X^0\ov{d_X^0}{\lra}X^1\ov{d_X^1}{\lra}X^2\ov{d_X^2}{\lra}\cdots\ov{d_X^n}{\lra}X^{n+1}\ov{p\sas\thh}{\dra}
\end{equation}
becomes a distinguished $n$-exangle.
\end{cor}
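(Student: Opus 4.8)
The plan is to realize $(\ref{DEAXA1})$ as the direct sum of the distinguished $n$-exangle $(\ref{DEAX})$ with a trivial one, up to an isomorphism in $\AE$, and then invoke Proposition~\ref{PropSummandOut} together with Corollary~\ref{CorDEAInv}. First I would record the obvious trivial distinguished $n$-exangle $A\ov{\id_A}{\lra}A\to0\to\cdots\to0$, which realizes ${}_A0_0$ by (R2); call it ${}_A\mathbb{A}^{\mr}_0$ with zero extension. By Proposition~\ref{PropSummandOut}, if $(\ref{DEAX})$ is distinguished then so is the coproduct
\[ X^0\oplus A\ov{\left[\begin{smallmatrix}d_X^0&0\\0&\id_A\end{smallmatrix}\right]}{\lra}X^1\oplus A\ov{[d_X^1\ 0]}{\lra}X^2\ov{d_X^2}{\lra}\cdots\ov{d_X^n}{\lra}X^{n+1}\ov{p\sas(p\sas\thh)\oplus 0}{\dra}, \]
where the end extension is $p\uas$ applied to $p\sas\thh\oplus 0$; one checks this equals $p\sas\thh$ pushed along the inclusion, which after simplification is $\thh$ precomposed appropriately. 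So the strategy is to run the implication in the direction $(\ref{DEAXA1})\Rightarrow(\ref{DEAX})$ by \emph{peeling off} the trivial summand, using Proposition~\ref{PropSummandOut} $(1)\Rightarrow(2)$ after exhibiting a direct-sum decomposition in $\AE$.

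The key computation is the change of basis. I would set $a=\left[\begin{smallmatrix}\id&0\\ -v&\id\end{smallmatrix}\right]\co X^0\oplus A\to X^0\oplus A$ and $b=\left[\begin{smallmatrix}\id&-u\\ 0&\id\end{smallmatrix}\right]\co X^1\oplus A\to X^1\oplus A$, both isomorphisms in $\C$. A direct matrix computation gives $b\iv\ci d\ci a\iv=\left[\begin{smallmatrix}x-u\ci v&0\\ 0&\id_A\end{smallmatrix}\right]=\left[\begin{smallmatrix}d_X^0&0\\0&\id_A\end{smallmatrix}\right]$, and $[d_X^1\ w]\ci b=[d_X^1\ \ w-d_X^1\ci u]$; to kill the second entry one uses that $(\ref{DEAXA1})$ is a complex, i.e. $[d_X^1\ w]\ci d=0$, which expands to $d_X^1\ci x+w\ci v=0$ and $d_X^1\ci u+w=0$, hence $w=-d_X^1\ci u$ and the second entry of $[d_X^1\ w]\ci b$ vanishes. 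Thus the change of basis $(a,b,\id,\ldots,\id)$ identifies $X^{\mr}$ (the underlying complex of $(\ref{DEAXA1})$) with the direct sum of ${}_{X^0}(\ref{DEAX})^{\mr}_{X^{n+1}}$ and ${}_A\mathbb{A}^{\mr}_0$ as complexes; tracking the extension through Corollary~\ref{CorDEAInv}~(1) and Remark on morphisms of extensions shows $\thh$ corresponds to $p\sas\thh\oplus 0$. Since $(\ref{DEAXA1})$ is distinguished, so is this direct sum after the isomorphism (Corollary~\ref{CorDEAInv}~(2)), and then Proposition~\ref{PropSummandOut}~$(1)\Rightarrow(2)$ yields that the $X^0$-component, namely $\langle X^{\mr},p\sas\thh\rangle$, is a distinguished $n$-exangle, which is exactly $(\ref{DEAX})$.

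The main obstacle I anticipate is bookkeeping the extension class correctly through the isomorphism: one must verify that under the change of basis $(a,b)$ the extension $\thh\in\E(X^{n+1},X^0\oplus A)$ matches $(p\sas\thh)\oplus 0$ under the identification $\E(X^{n+1},X^0\oplus A)\cong\E(X^{n+1},X^0)\oplus\E(X^{n+1},A)$, and in particular that its $A$-component is zero. This should follow because the morphism of extensions induced by the isomorphism of complexes must be compatible with $d_{X}^{0}$, and the $A$-summand admits the section $\id_A$ in degree $0{\to}1$, forcing splitness of that component by Claim~\ref{ClaimSplit}; but writing this cleanly requires care with which pushforwards/pullbacks are applied. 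Everything else is routine matrix algebra plus a direct appeal to Proposition~\ref{PropSummandOut} and Corollary~\ref{CorDEAInv}.
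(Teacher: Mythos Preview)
Your approach is essentially the same as the paper's: change basis by elementary matrices to diagonalize $d$ as $d_X^0\oplus\id_A$, recognize the result as the direct sum of $(\ref{DEAX})$ with the trivial $n$-exangle $A\overset{\id}{\to}A\to0\to\cdots\to0$, then invoke Corollary~\ref{CorDEAInv} and Proposition~\ref{PropSummandOut}.

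Two minor points. First, your matrix bookkeeping slips: with your $a=\left[\begin{smallmatrix}\id&0\\-v&\id\end{smallmatrix}\right]$ and $b=\left[\begin{smallmatrix}\id&-u\\0&\id\end{smallmatrix}\right]$ one has $b\circ d\circ a=\left[\begin{smallmatrix}x-uv&0\\0&\id\end{smallmatrix}\right]$, not $b^{-1}\circ d\circ a^{-1}$; equivalently, the isomorphism of complexes is $(a^{-1},b,\id,\ldots,\id)$, which matches the paper's choice $a_{\mathrm{paper}}=\left[\begin{smallmatrix}\id&0\\v&\id\end{smallmatrix}\right]$. Second, the extension tracking you flag as the main obstacle is handled more directly than via Claim~\ref{ClaimSplit}: write $\thh=\left[\begin{smallmatrix}\del\\\rho\end{smallmatrix}\right]$ under $\E(X^{n+1},X^0\oplus A)\cong\E(X^{n+1},X^0)\oplus\E(X^{n+1},A)$ with $\del=p\sas\thh$; the condition $d\sas\thh=0$ (from $\Xd\in\AE$) gives $v\sas\del+\rho=0$ upon reading the $A$-row, so $(a_{\mathrm{paper}})\sas\thh=\left[\begin{smallmatrix}\del\\v\sas\del+\rho\end{smallmatrix}\right]=\left[\begin{smallmatrix}\del\\0\end{smallmatrix}\right]$ directly. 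No appeal to splitness of a summand is needed.
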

\begin{proof}
For $p$ and $q=[0\ 1]\co X^0\oplus A\to A$, put $\del=p\sas\thh$ and $\rho=q\sas\thh$. Then $\thh$ corresponds to $\begin{bmatrix}\del\\ \rho\end{bmatrix}$ through the natural isomorphism
\begin{equation}\label{EIso}
\E(X^{n+1},X^0\oplus A)\cong\E(X^{n+1},X^0)\oplus\E(X^{n+1},A),
\end{equation}
and the equality $d\sas\thh=0$ implies $v\sas\del+\rho=0$.
Thus
\[ \Big( a=\left[\begin{array}{cc}\id&0\\ v&\id\end{array}\right],\, b=\left[\begin{array}{cc}\id&-u\\ 0&\id\end{array}\right],\, \id,\id,\ldots,\id\Big) \]
gives an isomorphism in $\AE$ from $(\ref{DEAXA1})$ to
\begin{equation}\label{DEAXA2}
X^0\oplus A\ov{d_X^0\oplus\id_A}{\lra}X^1\oplus A\ov{[d_X^1\ 0]}{\lra}X^2\ov{d_X^2}{\lra}\cdots\ov{d_X^n}{\lra}X^{n+1}\ov{a\sas\thh}{\dra},
\end{equation}
with $a\sas\thh$ corresponding to $\begin{bmatrix}\del\\ 0\end{bmatrix}$ through $(\ref{EIso})$. Since $(\ref{DEAXA2})$ is isomorphic to a coproduct of $(\ref{DEAX})$ and
\[ A\ov{\id_A}{\lra}A\to0\to\cdots\to0\ov{0}{\dra} \]
in $\AE$, Corollary \ref{CorDEAInv} and Proposition~\ref{PropSummandOut} shows that $(\ref{DEAX})$ is also a distinguished $n$-exangle.
\end{proof}

\begin{lem}\label{LemOneExact}
For any distinguished $n$-exangle $_A\Xd_C$, the following holds.
\begin{enumerate}
\item $\C(-,C)\ov{\del\ssh}{\ltc}\E(-,A)\ov{(d_X^0)\sas}{\ltc}\E(-,X^1)$ is exact.
\item $\C(A,-)\ov{\del\ush}{\ltc}\E(C,-)\ov{(d_X^n)\uas}{\ltc}\E(X^n,-)$ is exact.
\end{enumerate}
\end{lem}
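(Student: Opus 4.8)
\textbf{Proof plan for Lemma~\ref{LemOneExact}.} The two statements are dual, so I would only treat (1); (2) follows by applying (1) to the opposite $n$-exangulated category (or by a symmetric argument). The plan is to produce, for a given object $Q\in\C$, an auxiliary distinguished $n$-exangle whose associated long exact sequence of the form \eqref{exnat1} (or \eqref{exnat2}) delivers precisely the required exactness at $\E(Q,A)$. The natural candidate is obtained by rotating the given $n$-exangle $_A\Xd_C$ one step using the mapping-cone construction: namely, take the morphism $f^{\mr}\co {}_A\Xd_C\to {}_A\Yr_{X^1}$ where $Y^{\mr}$ is the distinguished $n$-exangle realizing the split extension $0\in\E(X^1,A)$ with $d_Y^0=\id_A$, i.e.\ $Y^{\mr}=(A\ov{\id}{\to}A\to 0\to\cdots\to 0\to X^1)$ — wait, that does not have the right endterms. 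Let me instead phrase the rotation correctly: one wants a distinguished $n$-exangle with end-terms $X^1$ and $C$ (or $A$ and something) whose connecting map in the long exact sequence is $(d_X^0)\sas$.

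Concretely, I would argue as follows. First I would observe that it suffices to show exactness of the sequence of abelian groups
\[
\C(Q,C)\ov{\del\ssh}{\lra}\E(Q,A)\ov{(d_X^0)\sas}{\lra}\E(Q,X^1)
\]
for every $Q\in\C$, since exactness of the sequence of functors \eqref{exnat1}-type is checked objectwise. Composition is zero: $(d_X^0)\sas\ci\del\ssh$ sends $c\mapsto c\uas\del\mapsto (d_X^0)\sas c\uas\del = c\uas(d_X^0)\sas\del = 0$ because $(d_X^0)\sas\del = 0$ by the definition of an $\E$-attached complex (Definition~\ref{DefAE}, which any $n$-exangle satisfies). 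For the reverse inclusion, take $\ep\in\E(Q,A)$ with $(d_X^0)\sas\ep = 0$. The key step is to realize $\ep$ by a distinguished $n$-exangle $_A\langle W^{\mr},\ep\rangle_Q$ and to ``glue'' it along $d_X^0$ with the given $_A\Xd_C$: since $(d_X^0)\sas\ep=0$, the morphism $d_X^0\co A\to X^1$ together with $\id_Q$ gives a morphism of extensions $(d_X^0,\id_Q)\co \ep\to (d_X^0)\sas\ep = 0\in\E(Q,X^1)$, and using (R2) — the realization of the split extension as $(X^1\ov{\id}{\to}X^1\to 0\to\cdots\to 0\to Q)$ is not an $n$-exangle unless... — here I would instead use the good-lift axiom (EA2$\op$) to produce a lift and then read off a factorization.

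Actually the cleanest route, and the one I would commit to, is this: apply the mapping-cocone version of (EA2) (i.e.\ (EA2$\op$)) to the pair consisting of $_A\Xd_C$ and a distinguished $n$-exangle $_A\langle W^{\mr},\ep\rangle_Q$ realizing $\ep$, along the morphism $\id_A$ — no, the data of (EA2$\op$) requires a morphism at the $C$-ends. Let me reconsider once more and settle on: we want to exhibit $\ep = c\uas\del$ for some $c\in\C(Q,C)$. Consider the distinguished $n$-exangle $_A\langle W^{\mr},\ep\rangle_Q$. Because $(d_X^0)\sas\ep = 0$, Claim~\ref{ClaimSplit} applied to the $n$-exangle $\langle \text{(pushout along }d_X^0), (d_X^0)\sas\ep\rangle$ — obtained as $\langle M_{f}, (d_W^0)\sas\del\rangle$ for a suitable lift $f^{\mr}$ via Proposition~\ref{PropMapCone_EA} — tells us that map splits, and the splitting, chased through the mapping cone, produces the desired $c\co Q\to C$ with $c\uas\del = \ep$. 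I expect the main obstacle to be bookkeeping: correctly identifying which pushout/pullback $n$-exangle to form, checking that the relevant square of extensions commutes so that a lift exists (which needs (R0)), and then extracting the map $c$ from the split piece of the mapping cone via Claim~\ref{ClaimSplit} — the diagram chase itself is routine once the right auxiliary distinguished $n$-exangle is in place, but pinning that down unambiguously (and verifying it is indeed distinguished, via Proposition~\ref{PropMapCone_EA}, Corollary~\ref{CorMapCone}, and (EA2)/(EA2$\op$)) is the delicate part.
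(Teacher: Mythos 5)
Your reduction to objectwise exactness, the zero-composite computation, and the appeal to duality for (2) are all fine, and the general shape you aim for (realize an extension, apply (EA2), split the last differential of a mapping cone via Claim~\ref{ClaimSplit}, and read off the wanted morphism into $C$) is indeed the shape of the paper's argument. But the step you finally commit to is circular. You want a lift $f^{\mr}$ relating the realization ${}_A\langle W^{\mr},\ep\rangle_Q$ of $\ep$ to the given ${}_A\Xd_C$ with $f^0=\id_A$. By Definition~\ref{DefAE}(2), such a morphism exists precisely when $(\id_A)\sas\ep=(f^{n+1})\uas\del$, i.e.\ when $\ep=c\uas\del$ for $c=f^{n+1}$ --- which is exactly the conclusion you are trying to prove; (R0) only supplies lifts of morphisms of extensions that you already have. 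Moreover the object you propose to split is wrong: for a morphism $f^{\mr}\co\langle W^{\mr},\ep\rangle\to\Xd$ with $f^0=\id_A$, Proposition~\ref{PropMapCone_EA} produces $\langle \Mf,(d_W^0)\sas\del\rangle$ with end-terms $W^1$ and $C$ and extension in $\E(C,W^1)$, which is not a realization of $(d_X^0)\sas\ep\in\E(Q,X^1)$, and a section of its last differential would give a map out of $C\oplus D$-type objects into $C$, not the required $c\in\C(Q,C)$. So the ``delicate part'' you flag is not bookkeeping; it is the missing idea.

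The paper's device is to manufacture an auxiliary extension admitting an \emph{obvious} morphism from $\del$: given $\thh\in\E(Q,A)$ with $(d_X^0)\sas\thh=0$, realize $\thh$ by ${}_A\langle Y^{\mr},\thh\rangle_Q$, form $\mu=(\nabla_A)\sas(\del\oplus\thh)\in\E(C\oplus Q,A)$ and realize it by ${}_A\langle Z^{\mr},\mu\rangle_{C\oplus Q}$. Then $\begin{bmatrix}1\\0\end{bmatrix}\uas\mu=\del$, so $(\id_A,\begin{bmatrix}1\\0\end{bmatrix})\co\del\to\mu$ is a genuine morphism of extensions and (EA2) gives a good lift $f^{\mr}\co\Xd\to\langle Z^{\mr},\mu\rangle$ with $\langle \Mf,(d_X^0)\sas\mu\rangle$ distinguished; its last differential is $\left[\begin{smallmatrix}1&k\\0&\ell\end{smallmatrix}\right]\co C\oplus Z^n\to C\oplus Q$ where $d_Z^n=\begin{bmatrix}k\\ \ell\end{bmatrix}$. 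Since $(d_X^0)\sas\mu=[d_X^0\ d_X^0]\sas(\del\oplus\thh)=0$, Claim~\ref{ClaimSplit} gives a section $\left[\begin{smallmatrix}p&q\\ r&t\end{smallmatrix}\right]$, and the identities $q+k\ci t=0$, $\ell\ci t=\id_Q$ together with $k\uas\del+\ell\uas\thh=0$ (from $(d_Z^n)\uas\mu=0$) yield $q\uas\del=\thh$. This folding-sum construction (together with Proposition~\ref{PropSummandOut} to handle the direct sum) is what your proposal lacks, and without it or an equivalent substitute the argument does not close.
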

\begin{proof}
We only show {\rm (1)}, since {\rm (2)} can be shown dually. Let us show the exactness of
\begin{equation}\label{ToShowDEx}
\C(D,C)\ov{\del\ssh}{\lra}\E(D,A)\ov{(d_X^0)\sas}{\lra}\E(D,X^1)
\end{equation}
for any $D\in\C$. Suppose that $\thh\in\E(D,A)$ satisfies $(d_X^0)\sas\thh=0$. Put $\sfr(\thh)=[Y^{\mr}]$, to obtain a distinguished $n$-exangle $_A\langle Y^{\mr},\thh\rangle_D$. By Proposition~\ref{PropSummandOut}, coproduct $_{A\oplus A}\langle X^{\mr}\oplus Y^{\mr},\del\oplus\thh\rangle_{C\oplus D}$ is also a distinguished $n$-exangle.

Let $\nabla_A=[\id\ \id]\co A\oplus A\to A$ be the folding morphism. Put $\mu=(\nabla_A)\sas(\del\oplus\thh)$ and $\sfr(\mu)=[Z^{\mr}]$, to obtain a distinguished $n$-exangle $_A\langle Z^{\mr},\mu\rangle_{C\oplus D}$. If we write $d_Z^n=\begin{bmatrix}k\\ \ell\end{bmatrix}\co Z^n\to C\oplus D$, then $(d_Z^n)\uas\mu=0$ means
\begin{equation}\label{kellzero}
k\uas\del+\ell\uas\thh=0.
\end{equation}
Since $\begin{bmatrix}1\\ 0\end{bmatrix}\co C\to C\oplus D$ satisfies
$\begin{bmatrix}1\\ 0\end{bmatrix}\uas\!\!\mu=(\nabla_A)\sas\begin{bmatrix}1\\ 0\end{bmatrix}\uas\!\!(\del\oplus\thh)=\del$,
we have a morphism of extensions $\big(\id_A,\begin{bmatrix}1\\ 0\end{bmatrix}\big)\co\del\to\mu$. By {\rm (EA2)}, it has a good lift $f^{\mr}\co\Xd\to\langle Z^{\mr},\mu\rangle$, which gives a distinguished $n$-exangle $\langle M_f^{\mr},(d_X^0)\sas\mu\rangle$. By definition, the last two terms $M_f^n\ov{d_{M_f}^n}{\lra}M_f^{n+1}$ of $M_f^{\mr}$ is
\[ C\oplus Z^n\ov{\left[\begin{array}{cc}1&k\\0&\ell\end{array}\right]}{\lra}C\oplus D. \]
Remark that the assumption $(d_X^0)\sas\thh=0$ shows
\[ (d_X^0)\sas\mu=(d_X^0)\sas(\nabla_A)\sas(\del\oplus\thh)=[d_X^0\ d_X^0]\sas(\del\oplus\thh)=0. \]
Thus by Claim~\ref{ClaimSplit}, morphism $d_{M_f}^n$ has a section $s=\left[\begin{array}{cc}p&q\\ r&t\end{array}\right]\co C\oplus D\to C\oplus Z^n$, and the equality $d_{M_f}^n\ci s=\id$ implies in particular $q+k\ci t=0$ and $\ell\ci t=\id_D$.
Then $q\in\C(D,C)$ satisfies
\[ \del\ssh(q)=q\uas\del=-(k\ci t)\uas\del=t\uas(-k\uas\del)=t\uas\ell\uas\thh=\thh \]
by $(\ref{kellzero})$. This shows the exactness of $(\ref{ToShowDEx})$.
\end{proof}

The following is a consequence of {\rm (R0)} and {\rm (EA2)}.
\begin{prop}\label{PropShiftedReal}
Let $_A\Xd_C$ and $_B\Yr_D$ be distinguished $n$-exangles. Suppose that we are given a commutative square
\[
\xy
(-6,6)*+{X^0}="0";
(6,6)*+{X^1}="2";
(-6,-6)*+{Y^0}="4";
(6,-6)*+{Y^1}="6";
{\ar^{d_X^0} "0";"2"};
{\ar_{a} "0";"4"};
{\ar^{b} "2";"6"};
{\ar_{d_Y^0} "4";"6"};
{\ar@{}|\circlearrowright "0";"6"};
\endxy
\]
in $\C$. Then the following holds.
\begin{enumerate}
\item There is a morphism $f^{\mr}\co\Xd\to\Yr$ which satisfies $f^0=a$ and $f^1=b$.
\item If $X^0=Y^0=A$ and $a=\id_A$ for some $A\in\C$, then the above $f^{\mr}$ can be taken to give a distinguished $n$-exangle $\langle M_f^{\mr},(d_X^0)\sas\rho\rangle$.
\end{enumerate}
\end{prop}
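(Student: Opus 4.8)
The plan is to obtain \textrm{(1)} from the realization axiom \textrm{(R0)} together with Lemma~\ref{LemOneExact}, and then \textrm{(2)} from \textrm{(EA2)} together with the homotopy-invariance of good lifts recorded in Corollary~\ref{CorMapConeHomotopic} and Remark~\ref{RemEA2}.

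For \textrm{(1)}, I would first manufacture the missing right-hand end of the desired chain map. Since $\Xd$ is an $n$-exangle we have $(d_X^0)\sas\del=0$, so commutativity of the given square gives
\[ (d_Y^0)\sas(a\sas\del)=(d_Y^0\ci a)\sas\del=(b\ci d_X^0)\sas\del=b\sas\big((d_X^0)\sas\del\big)=0 . \]
By Lemma~\ref{LemOneExact}{\rm (1)} applied to the distinguished $n$-exangle ${}_B\Yr_D$ and evaluated at $X^{n+1}$, the sequence $\C(X^{n+1},D)\ov{\rho\ssh}{\lra}\E(X^{n+1},Y^0)\ov{(d_Y^0)\sas}{\lra}\E(X^{n+1},Y^1)$ is exact, so there is $c\in\C(X^{n+1},D)$ with $c\uas\rho=a\sas\del$; that is, $(a,c)\co\del\to\rho$ is a morphism of extensions. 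By \textrm{(R0)} it has a lift $g^{\mr}\in\CC(X^{\mr},Y^{\mr})$ of the form $(a,g^1,\dots,g^n,c)$; as $a\sas\del=c\uas\rho$, this $g^{\mr}$ lies in $\AE(\Xd,\Yr)$. It remains to correct the degree-$1$ term: both $b$ and $g^1$ satisfy $(-)\ci d_X^0=d_Y^0\ci a$, so $(b-g^1)\ci d_X^0=0$, and exactness of $(\ref{exnat2})$ for $X^{\mr}$ yields $\chi\in\C(X^2,Y^1)$ with $b-g^1=\chi\ci d_X^1$. Modifying $g^{\mr}$ by the homotopy $\vp^{\mr}=(0,\chi,0,\dots,0)$ produces $f^{\mr}\in\CC(X^{\mr},Y^{\mr})$ with $f^0=a$ and $f^1=b$, and $f^{\mr}\sim g^{\mr}$ forces $f^{\mr}\in\AE(\Xd,\Yr)$ by Proposition~\ref{PropAE1}{\rm (1)}.

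For \textrm{(2)}, where $X^0=Y^0=A$ and $a=\id_A$, the square reads $b\ci d_X^0=d_Y^0$, and the computation above (with $a=\id_A$) gives $(d_Y^0)\sas\del=0$; hence by Lemma~\ref{LemOneExact}{\rm (1)} for $\Yr$ there is $c\in\C(X^{n+1},D)$ with $c\uas\rho=\del$, so ${}_A\langle X^{\mr},c\uas\rho\rangle_C={}_A\Xd_C$ is a distinguished $n$-exangle. Applying \textrm{(EA2)} to $\rho\in\E(D,A)$ and $c$, the morphism $(\id_A,c)$ has a good lift $\tilde f^{\mr}\co\Xd\to\Yr$ with $\tilde f^0=\id_A$, so that $\langle M_{\tilde f}^{\mr},(d_X^0)\sas\rho\rangle$ is a distinguished $n$-exangle. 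Exactly as in \textrm{(1)}, from $\tilde f^1\ci d_X^0=d_Y^0=b\ci d_X^0$ and exactness of $(\ref{exnat2})$ for $X^{\mr}$ one obtains $\chi\in\C(X^2,Y^1)$ with $b-\tilde f^1=\chi\ci d_X^1$, and modifying $\tilde f^{\mr}$ by $\vp^{\mr}=(0,\chi,0,\dots,0)$ produces $f^{\mr}\in\AE(\Xd,\Yr)$ with $f^0=\id_A$ and $f^1=b$. Since $f^{\mr}\sim\tilde f^{\mr}$ in $\CC$ and $f^0=\tilde f^0=\id_A$, Corollary~\ref{CorMapConeHomotopic} gives $M_f^{\mr}\cong M_{\tilde f}^{\mr}$ in $\Cbf^{n+2}_{(X^1,D)}$; by Proposition~\ref{PropMapCone_EA} both $\langle M_f^{\mr},(d_X^0)\sas\rho\rangle$ and $\langle M_{\tilde f}^{\mr},(d_X^0)\sas\rho\rangle$ lie in $\AE$, and since they carry the same extension this isomorphism of complexes is an isomorphism between them in $\AE$, so Corollary~\ref{CorDEAInv}{\rm (2)} shows $\langle M_f^{\mr},(d_X^0)\sas\rho\rangle$ is distinguished. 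Hence $f^{\mr}$ is the required good lift.

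The only delicate point is \textrm{(2)}: \textrm{(EA2)} supplies \emph{some} good lift of $(\id_A,c)$, not necessarily one whose degree-$1$ component is the prescribed $b$, so one must absorb the discrepancy $b-\tilde f^1$ into a homotopy supported in degree $2$ (so that $f^0=\id_A$ survives) without losing distinguishedness of the mapping cone; this is precisely what Corollary~\ref{CorMapConeHomotopic}, together with Remark~\ref{RemEA2}, is built to provide. The routine bookkeeping one must keep straight is which $\E$-group each extension lives in, which is what makes the vanishings $(d_X^0)\sas\del=0$, $(d_X^n)\uas\del=0$ and their $\rho$-analogues do the work.
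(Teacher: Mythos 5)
Your proof is correct and follows essentially the same route as the paper's: use Lemma~\ref{LemOneExact} to produce $c$ with $c\uas\rho=a\sas\del$, lift $(a,c)$ by {\rm (R0)} (resp.\ take a good lift by {\rm (EA2)} in part (2)), and then correct the degree-$1$ component by a homotopy $(0,\chi,0,\dots,0)$ supplied by the exactness of $(\ref{exnat2})$ for $X^{\mr}$, with Corollary~\ref{CorMapConeHomotopic} (as packaged in Remark~\ref{RemEA2}) guaranteeing the modified lift is still good. Your extra spelling-out of why $\langle M_f^{\mr},(d_X^0)\sas\rho\rangle$ stays distinguished, via Proposition~\ref{PropMapCone_EA} and Corollary~\ref{CorDEAInv}{\rm (2)}, is just a more explicit version of the same step.
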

\begin{proof}
By Lemma~\ref{LemOneExact}, $\C(C,D)\ov{\rho\ssh}{\lra}\E(C,B)\ov{(d_Y^0)\sas}{\lra}\E(C,Y^1)$ is exact. Thus by $(d_Y^0)\sas(a\sas\del)=b\sas(d_X^0)\sas\del=0$, there is $c\in\C(C,D)$ satisfying $\rho\ssh(c)=a\sas\del$. This gives a morphism of extensions $(a,c)\co\del\to\rho$.

By {\rm (R0)}, it has a lift $g^{\mr}\co\Xd\to\Yr$. Then by the exactness of
\begin{equation}\label{ExXY}
\C(X^2,Y^1)\ov{-\ci d_X^1}{\lra}\C(X^1,Y^1)\ov{-\ci d_X^0}{\lra}\C(X^0,Y^1)
\end{equation}
and $(b-g^1)\ci d_X^0=0$, there is $m\in\C(X^2,Y^1)$ which gives $m\ci d_X^1=b-g^1$.

Modifying $g^{\mr}$ by the homotopy $\vp^{\mr}=(0,m,0,\ldots,0)$, we obtain a morphism $f^{\mr}=(a,b,g^2+d_Y^2\ci m,g^3,\ldots,g^n,g^{n+1})$, which satisfies the desired condition.

{\rm (2)} The same construction as {\rm (1)} works, except for that we take a good lift in the second step. Indeed as above, there is $c\in\C(C,D)$ which gives a morphism $(\id_A,c)\co\del\to\rho$. By {\rm (EA2)}, it has a good lift $g^{\mr}\co\Xd\to\Yr$, which makes $\langle M_g^{\mr},(d_X^0)\sas\rho\rangle$ a distinguished $n$-exangle. Then the exactness of $(\ref{ExXY})$ gives a homotopy $\vp^{\mr}=(0,m,0,\ldots,0)$ from $g^{\mr}$ to a morphism $f^{\mr}$ satisfying $f^0=\id_A$ and $f^1=b$.
By Corollary \ref{CorMapConeHomotopic}, it follows that $\langle M_g^{\mr},(d_X^0)\sas\rho\rangle$ is also a distinguished $n$-exangle.
\end{proof}

\subsection{Relative theory}

In this subsection, we study relative theory for $n$-exangulated categories and show that $n$-exangulated structures can be inherited in any relative theory. This is just an $n$-exangulated analog of the argument for exact categories in \cite[Sections 1.2 and 1.3]{DRSSK}.
We also remark that subfunctors $\F\se\E$ for extriangulated categories, namely, in the case where $n=1$ (see Proposition \ref{Prop1EAandET}), are investigated in \cite{ZH}.

\begin{dfn}
Let $\C$ be a category, and let $\E\co\C\op\ti\C\to\Ab$ be a biadditive functor.
\begin{enumerate}
\item A functor $\F\co\C\op\ti\C\to\mathit{Set}$ is called a {\it subfunctor} of $\E$ if it satisfies the following conditions.
\begin{itemize}
\item $\F(C,A)$ is a subset of $\E(C,A)$, for any $A,C\in\C$.
\item $\F(c,a)=\E(c,a)|_{\F(C,A)}$ holds, for any $a\in\C(A,A\ppr)$ and $c\in\C(C\ppr,C)$.
\end{itemize}
In this case, we write as $\F\se\E$.
\item A subfunctor $\F\se\E$ is said to be an {\it additive subfunctor} if $\F(C,A)\se\E(C,A)$ is an abelian subgroup for any $A,C\in\C$. In this case, $\F\co\C\op\ti\C\to\Ab$ itself becomes a biadditive functor.
\end{enumerate}
\end{dfn}

\begin{dfn}
Let $\F\se\E$ be an additive subfunctor. For a realization $\sfr$ of $\E$, define $\sfr|_{\F}$ to be the restriction of $\sfr$ onto $\F$. Namely, it is defined by $\sfr|_{\F}(\del)=\sfr(\del)$ for any $\F$-extension $\del$.
\end{dfn}

\begin{claim}\label{ClaimPreExTr}
Let $\CEs$ be an $n$-exangulated category, and let $\F\se\E$ be an additive subfunctor. Then $\sfr|_{\F}$ is an exact realization of $\F$. Moreover, the triplet $\CFs$ satisfies condition {\rm (EA2)}.
\end{claim}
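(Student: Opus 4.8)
The plan is to verify the three parts of Definition~\ref{DefReal} and Definition~\ref{DefEACat} one by one for the triplet $\CFs$, exploiting the fact that every $\F$-extension is in particular an $\E$-extension and that $\sfr|_{\F}$ literally returns the same homotopy class that $\sfr$ does. First I would check that $\sfr|_{\F}$ is a realization, i.e. that {\rm (R0)} holds: given a morphism of $\F$-extensions $(a,c)\co\del\to\rho$, it is also a morphism of $\E$-extensions, so the lift $f^{\mr}\in\CC(X^{\mr},Y^{\mr})$ provided by {\rm (R0)} for $\sfr$ works verbatim. Next, for exactness of the realization I would check {\rm (R1)} and {\rm (R2)}. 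For {\rm (R1)}: if $\sfr|_{\F}(\del)=[X^{\mr}]$ then $\sfr(\del)=[X^{\mr}]$, so $\Xd$ is an $n$-exangle with respect to $\E$; but being an $n$-exangle is a statement about exactness of the sequences $(\ref{exnat1})$ and $(\ref{exnat2})$, whose last terms involve $\E$, not $\F$ — so I must be careful here and note that in the definition of $n$-exangle in Definition~\ref{DefEA} one uses the ambient biadditive functor, hence when we view $\Xd$ relative to $\F$ the relevant sequences end in $\F(-,X^0)$ and $\F(X^{n+1},-)$ rather than $\E(-,X^0)$ and $\E(X^{n+1},-)$. This is exactly the point that needs an argument.

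The main obstacle, then, is precisely showing that $\Xd$ remains an $n$-exangle when $\E$ is replaced by the smaller functor $\F$ in $(\ref{exnat1})$ and $(\ref{exnat2})$. Concretely, for {\rm (R1)} I would argue as follows for $(\ref{exnat1})$: since $\F(C,A)\se\E(C,A)$ is a subgroup and $\F$ is a subfunctor, the map $\del\ssh\co\C(-,X^{n+1})\to\F(-,X^0)$ is just the corestriction of $\del\ssh\co\C(-,X^{n+1})\to\E(-,X^0)$ along the monomorphism $\F(-,X^0)\hookrightarrow\E(-,X^0)$. Exactness of the $\E$-version at $\C(-,X^{n+1})$ gives $\Ima(\C(-,d_X^n))=\Ker(\del\ssh_{\E})$; but $\Ker(\del\ssh_{\E})=\Ker(\del\ssh_{\F})$ because the target inclusion is injective, so exactness at $\C(-,X^{n+1})$ holds for the $\F$-version too. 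Exactness at all the earlier spots $\C(-,X^i)$ is literally unchanged since those sequences do not involve $\E$ or $\F$ at all. The same reasoning applies to $(\ref{exnat2})$. Thus $\Xd$ is an $n$-exangle for $\F$. Condition {\rm (R2)} is immediate: the split extensions $_A0_0$ and $_00_A$ lie in $\F$ (as $\F(0,A)$ and $\F(A,0)$ are subgroups containing $0$), and $\sfr|_{\F}$ assigns to them the same classes as $\sfr$, which are the prescribed ones.

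Finally, for {\rm (EA2)}: suppose $\rho\in\F(D,A)$, $c\in\C(C,D)$, and $_A\langle X^{\mr},c\uas\rho\rangle_C$, $_A\Yr_D$ are $\sfr|_{\F}$-distinguished $n$-exangles. Note $c\uas\rho=\F(c,A)(\rho)=\E(c,A)(\rho)$ lies in $\F(C,A)$ since $\F$ is a subfunctor, so both $n$-exangles are $\sfr$-distinguished $n$-exangles as well. Applying {\rm (EA2)} for $\CEs$, the morphism $(\id_A,c)$ admits a good lift $f^{\mr}$, meaning $\langle M_f^{\mr},(d_X^0)\sas\rho\rangle$ is an $\sfr$-distinguished $n$-exangle. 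Now $(d_X^0)\sas\rho=\F(D,d_X^0)(\rho)\in\F(M_f^{n+1},M_f^0)$ again because $\F$ is a subfunctor, so this same $n$-exangle is $\sfr|_{\F}$-distinguished, and $\langle M_f^{\mr},(d_X^0)\sas\rho\rangle$ being an $n$-exangle for $\F$ follows by the same argument as in the {\rm (R1)} step above (or directly from Proposition~\ref{PropEAInv} applied in the $\F$-setting once we know it is an $n$-exangle). Hence $f^{\mr}$ is a good lift in $\CFs$, establishing {\rm (EA2)}. The whole proof is bookkeeping around the single genuine point that a monomorphism into the target of a natural transformation does not change its kernel, so exactness of $(\ref{exnat1})$ and $(\ref{exnat2})$ survives the passage from $\E$ to $\F$.
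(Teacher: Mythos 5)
Your proposal is correct, and it is essentially the paper's argument: the paper disposes of this claim with "this immediately follows from the definitions," and your write-up simply makes explicit the one small observation underlying that — namely that corestricting $\del\ssh$ and $\del\ush$ along the monomorphisms $\F\hookrightarrow\E$ does not change their kernels, so the exactness in (R1) and the distinguished-ness needed for (EA2) transfer verbatim, while (R0) and (R2) are inherited on the nose.
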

\begin{proof}
This immediately follows from the definitions of these conditions.
\end{proof}

Thus we may speak of $\sfr|_{\F}$-conflations (resp. $\sfr|_{\F}$-inflations, $\sfr|_{\F}$-deflations) and $\sfr|_{\F}$-distinguished $n$-exangles as in Definition \ref{Def223}.
The following condition on $\F\se\E$ gives a necessary and sufficient condition for $\CFs$ to be an $n$-exangulated category, as will be shown in Proposition \ref{PropClosed}.

\begin{dfn}\label{DefClosed}$($cf.\cite{DRSSK}$)$
Let $\F\se\E$ be a additive subfunctor.
\begin{enumerate}
\item $\F\se\E$ is {\it closed on the right} if
\[ \F(-,X^0)\ov{(d_X^0)\sas}{\ltc}\F(-,X^1)\ov{(d_X^1)\sas}{\ltc}\F(-,X^2) \]
is exact for any $\sfr|_{\F}$-conflation $X^{\mr}$.
\item $\F\se\E$ is {\it closed on the left} if
\[ \F(X^{n+1},-)\ov{(d_X^n)\uas}{\ltc}\F(X^n,-)\ov{(d_X^{n-1})\uas}{\ltc}\F(X^{n-1},-) \]
is exact for any $\sfr|_{\F}$-conflation $X^{\mr}$.
\end{enumerate}
\end{dfn}

\begin{prop}\label{PropComposAndClosed}
Let $\CEs$ be any $n$-exangulated category. If $\sfr|_{\F}$-inflations are closed by composition, then $\F\se\E$ is closed on the right. Dually, if $\sfr|_{\F}$-deflations are closed by composition, then $\F\se\E$ is closed on the left.
\end{prop}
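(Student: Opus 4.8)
\textbf{Proof plan for Proposition~\ref{PropComposAndClosed}.}
By the duality built into the axioms (EA2)/(EA2$\op$), it suffices to treat the first statement: assuming $\sfr|_{\F}$-inflations are closed under composition, show $\F\se\E$ is closed on the right. So fix an $\sfr|_{\F}$-conflation $X^{\mr}$ realizing some $\del\in\F(X^{n+1},X^0)$; I must prove that
\[ \F(-,X^0)\ov{(d_X^0)\sas}{\ltc}\F(-,X^1)\ov{(d_X^1)\sas}{\ltc}\F(-,X^2) \]
is exact at $\F(-,X^1)$, i.e. for every $Q\in\C$ and every $\thh\in\F(Q,X^1)$ with $(d_X^1)\sas\thh=0$ there is some $\ep\in\F(Q,X^0)$ with $(d_X^0)\sas\ep=\thh$.

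The first step is to produce a preimage at the level of $\E$, not yet $\F$. Since $d_X^0$ is an $\sfr|_{\F}$-inflation, it is in particular an $\sfr$-inflation, and $\langle X^{\mr},\del\rangle$ is an $n$-exangle, so Lemma~\ref{LemOneExact}(1) gives exactness of $\C(-,X^{n+1})\ov{\del\ssh}{\ltc}\E(-,X^0)\ov{(d_X^0)\sas}{\ltc}\E(-,X^1)$. I would rather first pass to the $n$-exangle associated to the truncated conflation $X^0\ov{d_X^0}{\lra}X^1\ov{d_X^1}{\lra}\cdots$, but more efficiently: take $\sfr|_{\F}(\thh)=[Y^{\mr}]$, an $\sfr|_{\F}$-distinguished $n$-exangle $_{X^1}\langle Y^{\mr},\thh\rangle_Q$, so that $d_Y^0\co X^1\to Y^1$ is an $\sfr|_{\F}$-inflation with $(d_Y^0)\sas\thh=0$ (using $(d_X^1)\sas\thh=0$ is \emph{not} what we need here; rather we want to compose $d_X^0$ with $d_Y^0$). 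By hypothesis the composite $d_Y^0\ci d_X^0\co X^0\to Y^1$ is again an $\sfr|_{\F}$-inflation, hence fits into an $\sfr|_{\F}$-distinguished $n$-exangle. Now applying the octahedral-type consequence of (EA2) — concretely, realize the morphism of extensions built from the square relating $d_X^0$ and $d_Y^0\ci d_X^0$, use Proposition~\ref{PropShiftedReal}(2) to get a good lift, and read off from the mapping cone a distinguished $n$-exangle whose class lies in $\F$ — one obtains that $\thh$ factors as $(d_X^0)\sas\ep$ for some $\ep\in\E(Q,X^0)$, and the point is that $\ep$ can be taken in $\F$ because it is itself realized (up to the identifications above) by an $\sfr|_{\F}$-conflation.

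More precisely, the key step is the following. From $(d_Y^0)\sas\thh = 0$ and Lemma~\ref{LemOneExact}(1) applied to $\langle Y^{\mr},\thh\rangle$ we do \emph{not} directly get what we want; instead, consider that $\thh\in\F(Q,X^1)$ and $d_X^0$ an $\sfr|_{\F}$-inflation, pull back $\thh$ is the wrong direction — so I would argue as follows. Let $_{X^0}\langle V^{\mr},\om\rangle_{\,Z}$ be an $\sfr|_{\F}$-distinguished $n$-exangle with $d_V^0 = d_X^0$ (possible since $d_X^0$ is an $\sfr|_{\F}$-inflation; here $Z = V^{n+1}$). The long exact sequence (\ref{exnat1}) for this $n$-exangle, together with the extra term from Lemma~\ref{LemOneExact}(1), gives exactness of $\C(Q,Z)\ov{\om\ssh}{\lra}\E(Q,X^0)\ov{(d_X^0)\sas}{\lra}\E(Q,X^1)$. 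Hence there is $z\in\C(Q,Z)$ with $\om\ssh(z) = z\uas\om$ satisfying $(d_X^0)\sas(z\uas\om) = \thh$. Set $\ep = z\uas\om$. Since $\om\in\F(Z,X^0)$ (as $\langle V^{\mr},\om\rangle$ is $\sfr|_{\F}$-distinguished) and $\F$ is a subfunctor closed under the operations $(-)\uas$, we get $\ep = z\uas\om\in\F(Q,X^0)$, and $(d_X^0)\sas\ep = \thh$ as required. This proves exactness at $\F(-,X^1)$. Exactness at $\F(-,X^0)$ is immediate from $(d_X^1)\sas(d_X^0)\sas = (d_X^1\ci d_X^0)\sas = 0$ on the nose.

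\textbf{Main obstacle.} The subtle point is not the factorization at the level of $\E$ — that is just Lemma~\ref{LemOneExact} — but rather ensuring the witnessing object and extension live in the sub-theory $\F$, which is where the hypothesis "$\sfr|_{\F}$-inflations are closed by composition" actually gets used, via the existence of the auxiliary $\sfr|_{\F}$-distinguished $n$-exangle $\langle V^{\mr},\om\rangle$ with $\om\in\F$. One must be careful that $d_X^0$ being an $\sfr|_{\F}$-inflation already supplies such an $\om$ directly, so in fact for exactness \emph{on the right} at the first spot the composition hypothesis may be used only to guarantee that one can build this $n$-exangle inside $\F$; a cleaner route, which I would ultimately prefer to write, is to invoke Proposition~\ref{PropShiftedReal} within $(\C,\F,\sfr|_{\F})$ (legitimate by Claim~\ref{ClaimPreExTr}, which gives (EA2) for $\F$) to manufacture the needed diagram, and then the composition hypothesis is exactly what lets the relevant cone be an $\sfr|_{\F}$-distinguished $n$-exangle. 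Keeping straight which exactness statements hold for $\E$ versus for $\F$, and not accidentally using (EA1) for $\F$ (which is not assumed), is the real bookkeeping hazard.
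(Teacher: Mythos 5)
The argument you settle on in your second paragraph (the one beginning ``More precisely'') has a genuine gap at exactly the step you present as the key one. Lemma~\ref{LemOneExact}~(1), applied to an $\sfr|_{\F}$-distinguished $n$-exangle $\langle V^{\mr},\om\rangle$ with $d_V^0=d_X^0$, gives exactness of $\C(Q,Z)\ov{\om\ssh}{\lra}\E(Q,X^0)\ov{(d_X^0)\sas}{\lra}\E(Q,X^1)$ \emph{at the middle term} $\E(Q,X^0)$: it identifies the kernel of $(d_X^0)\sas$, not its image. The sentence ``hence there is $z\in\C(Q,Z)$ with $(d_X^0)\sas(z\uas\om)=\thh$'' does not follow from it; what you would need is exactness of $\E(Q,X^0)\ov{(d_X^0)\sas}{\lra}\E(Q,X^1)\ov{(d_X^1)\sas}{\lra}\E(Q,X^2)$ at $\E(Q,X^1)$, and that is Corollary~\ref{CorEisClosed}, which in the paper is \emph{deduced from} the present proposition (the case $\F=\E$), so invoking it here is circular. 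The symptom is visible in your own text: your final argument uses neither the hypothesis that $\sfr|_{\F}$-inflations are closed under composition nor the vanishing $(d_X^1)\sas\thh=0$, so if it were valid it would show that every $\thh\in\F(Q,X^1)$ lies in the image of $(d_X^0)\sas$, which already fails for exact or triangulated categories.

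Your first, abandoned sketch is in fact the paper's route, but the part you wave at (``read off from the mapping cone'') is where all the work sits, and it is precisely where $(d_X^1)\sas\thh=0$ — which you declared not needed — enters. After realizing $\thh$ by an $\sfr|_{\F}$-distinguished ${}_{X^1}\langle Y^{\mr},\thh\rangle_Q$ and using the composition hypothesis to obtain an $\sfr|_{\F}$-distinguished ${}_{X^0}\langle Z^{\mr},\tau\rangle_D$ with $Z^1=Y^1$ and $d_Z^0=d_Y^0\ci d_X^0$, one applies Proposition~\ref{PropShiftedReal}~(2) twice: first to get $f^{\mr}\co\Xd\to\langle Z^{\mr},\tau\rangle$ with $f^0=\id$, $f^1=d_Y^0$, whose cone realizes $(d_X^0)\sas\tau\in\F$, and then to get $g^{\mr}\co\langle M^{\mr}_f,(d_X^0)\sas\tau\rangle\to\langle Y^{\mr},\thh\rangle$ with $g^0=\id$, $g^1=[0\ 1]$, which forces $(g^{n+1})\uas\thh=(d_X^0)\sas\tau$. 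The cone of $g^{\mr}$ ends in $D\oplus Y^n\ov{[g^{n+1}\ d_Y^n]}{\lra}Q$ with attached extension $d\sas\thh$ for $d=\left[\bsm -d_X^1\\ d_Y^0\esm\right]$; the hypotheses $(d_X^1)\sas\thh=0$ and $(d_Y^0)\sas\thh=0$ give $d\sas\thh=0$, so by Claim~\ref{ClaimSplit} the map $[g^{n+1}\ d_Y^n]$ has a section $\left[\bsm s_1\\ s_2\esm\right]$, and then $\thh=s_1\uas(g^{n+1})\uas\thh+s_2\uas(d_Y^n)\uas\thh=(d_X^0)\sas(s_1\uas\tau)$ with $s_1\uas\tau\in\F(Q,X^0)$. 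Without this splitting step your sketch never actually produces a preimage in $\F$, and with your ``more precise'' replacement the proof is not valid.
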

\begin{proof}
We only show the first statement. Suppose that $\sfr|_{\F}$-inflations are closed by composition. Let ${}_A\Xd_C$ be any $\sfr|_{\F}$-distinguished $n$-exangle, and let us show the exactness of
\[ \F(F,X^0)\ov{(d_X^0)\sas}{\lra}\F(F,X^1)\ov{(d_X^1)\sas}{\lra}\F(F,X^2) \]
for any $F\in\C$. Since $(d_X^1)\sas\ci(d_X^0)\sas=0$ follows from $d_X^1\ci d_X^0=0$, it is enough to show $\Ker (d_X^1)\sas\se\Ima (d_X^0)\sas$.

Let $\thh\in\Ker (d_X^1)\sas$ be any element, and let ${}_B\langle Y^{\mr},\thh\rangle_F$ be an $\sfr|_{\F}$-distinguished $n$-exangle realizing it as follows, where we put $X^1=B$.
\[B\ov{d_Y^0}{\lra}Y^1\ov{d_Y^1}{\lra}Y^2\to\cdots\to Y^n\ov{d_Y^n}{\lra}F\ov{\thh}{\dra}. \]
Since $d_X^0$ and $d_Y^0$ are $\sfr|_{\F}$-inflations, their composition $d_Y^0\ci d_X^0$ becomes an $\sfr|_{\F}$-inflation by assumption. Thus there is some $\sfr|_{\F}$-distinguished $n$-exangle $_A\langle Z^{\mr},\tau\rangle_D$ which satisfies $Z^1=Y^1$ and $d_Z^0=d_Y^0\ci d_X^0$ as follows.
\[ A\ov{d_Y^0\ci d_X^0}{\lra}Y^1\ov{d_Z^1}{\lra}Z^2\ov{d_Z^2}{\lra}\cdots\to Z^n\ov{d_Z^n}{\lra}D\ov{\tau}{\dra}. \]
By Proposition \ref{PropShiftedReal} applied to the following commutative diagram,
\[
\xy
(-6,6)*+{A}="0";
(6,6)*+{X^1}="2";
(-6,-6)*+{A}="4";
(6,-6)*+{Y^1}="6";
{\ar^{d_X^0} "0";"2"};
{\ar@{=} "0";"4"};
{\ar^{d_Y^0} "2";"6"};
{\ar_{d_Z^0} "4";"6"};
{\ar@{}|\circlearrowright "0";"6"};
\endxy
\]
we find a morphism of $n$-exangles $f^{\mr}\co\Xd\to\langle Z^{\mr},\tau\rangle$ which satisfies
\[ f^0=\id_A,\quad f^1=d_Y^0 \]
and makes $_B\langle M_f^{\mr},(d_X^0)\sas\tau\rangle_D$ an $\sfr$-distinguished $n$-exangle.
Since $(d_X^0)\sas\tau\in\F(D,B)$, this is an $\sfr|_{\F}$-distinguished $n$-exangle.
Then by Proposition \ref{PropShiftedReal} applied to the diagram
\[
\xy
(-9,6)*+{B}="0";
(9,6)*+{X^2\oplus Y^1}="2";
(-9,-6)*+{B}="4";
(9,-6)*+{Y^1}="6";
{\ar^(0.4){d} "0";"2"};
{\ar@{=} "0";"4"};
{\ar^{[0\ 1]} "2";"6"};
{\ar_{d_Y^0} "4";"6"};
{\ar@{}|\circlearrowright "0";"6"};
\endxy
\]
where we put $d=\begin{bmatrix}-d_X^1\\ d_Y^0\end{bmatrix}\co B\to X^2\oplus Y^1$, we find a morphism of $n$-exangles $g^{\mr}\co\langle M_f^{\mr},(d_X^0)\sas\tau\rangle\to\langle Y^{\mr}\thh\rangle$ which satisfies 
\[ g^0=\id_B,\quad g^1=[0\ 1] \]
and makes $\langle M_g^{\mr},d\sas\thh\rangle$ an $\sfr|_{\F}$-distinguished $n$-exangle.
In particular it satisfies
\begin{equation}\label{Eq_theta_tau}
(g^{n+1})\uas\thh=(d_X^0)\sas\tau.
\end{equation}
By definition, this $n$-exangle $\langle M_g^{\mr},d\sas\thh\rangle$ is of the following form.
\[ X^2\oplus Y^1\to X^3\oplus Z^2\oplus Y^1\to X^4\oplus Z^3\oplus Y^2\to\cdots\to D\oplus Y^n\ov{[g^{n+1}\ d_Y^n]}{\lra}F\ov{d\sas\thh}{\dra} \]
Then $d\sas\thh=0$ follows from $(d_X^1)\sas\thh=0$ and $(d_Y^0)\sas\thh=0$, which means that $[g^{n+1}\ d_Y^n]$ has a section $\begin{bmatrix}s_1\\ s_2\end{bmatrix}\co F\to D\oplus Y^n$. If we put $\be=s_1\uas\tau\in\F(F,A)$, then the equalities $g^{n+1}\ci s_1+d_Y^n\ci s_2=\id_F$ and $(\ref{Eq_theta_tau})$ show
\[ \thh=s_1\uas(g^{n+1})\uas\thh+s_2\uas(d_Y^n)\uas\thh=s_1\uas(d_X^0)\sas\tau=(d_X^0)\sas\be, \]
and thus $\thh\in\Ima\big( \F(F,X^0)\ov{(d_X^0)\sas}{\lra}\F(F,X^1)\big)$ holds.
\end{proof}

\begin{cor}\label{CorEisClosed}
For any $n$-exangulated category $\CEs$, the sequences
\[ \E(-,X^0)\ov{(d_X^0)\sas}{\ltc}\E(-,X^1)\ov{(d_X^1)\sas}{\ltc}\E(-,X^2) \]
and
\[ \E(X^{n+1},-)\ov{(d_X^n)\uas}{\ltc}\E(X^n,-)\ov{(d_X^{n-1})\uas}{\ltc}\E(X^{n-1},-) \]
are exact for any $\sfr$-conflation $X^{\mr}$.
\end{cor}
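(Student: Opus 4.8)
The plan is to derive this as the special case $\F=\E$ of the machinery just developed. First I would observe that $\E$ is trivially an additive subfunctor of itself, so that all the notions of Definition~\ref{DefClosed} apply with $\F=\E$. In this case the restricted realization $\sfr|_{\F}$ is literally $\sfr$, hence $\sfr|_{\F}$-inflations (resp.\ $\sfr|_{\F}$-deflations, $\sfr|_{\F}$-conflations) coincide with $\sfr$-inflations (resp.\ $\sfr$-deflations, $\sfr$-conflations).

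Next I would invoke axiom {\rm (EA1)}: in any $n$-exangulated category $\sfr$-inflations are closed under composition, and dually so are $\sfr$-deflations. This is exactly the hypothesis required by Proposition~\ref{PropComposAndClosed}. Applying that proposition with $\F=\E$ therefore yields that $\E\se\E$ is closed on the right and closed on the left.

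Finally I would just unwind Definition~\ref{DefClosed}. "Closed on the right" for $\F=\E$ says precisely that
\[ \E(-,X^0)\ov{(d_X^0)\sas}{\ltc}\E(-,X^1)\ov{(d_X^1)\sas}{\ltc}\E(-,X^2) \]
is exact for every $\sfr|_{\E}$-conflation $X^{\mr}$, and since $\sfr|_{\E}$-conflations are exactly $\sfr$-conflations this is the first assertion; "closed on the left" gives the second assertion dually. There is essentially no obstacle here: all the real work was carried out in Proposition~\ref{PropComposAndClosed} (whose proof uses Proposition~\ref{PropShiftedReal}, mapping cones, and {\rm (EA2)}), and the only point to notice is the harmless identification of $\sfr|_{\E}$-conflations with $\sfr$-conflations together with the verification of the composition hypothesis via {\rm (EA1)}.
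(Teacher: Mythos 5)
Your proposal is correct and is exactly the paper's own argument: the paper proves this corollary by applying Proposition~\ref{PropComposAndClosed} to $\F=\E$, with the composition hypothesis supplied by {\rm (EA1)} and the identification $\sfr|_{\E}=\sfr$. Nothing further is needed.
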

\begin{proof}
This immediately follows from Proposition \ref{PropComposAndClosed} applied to $\F=\E$, as $\CEs$ satisfies condition {\rm (EA1)}.
\end{proof}

\begin{lem}\label{LemRightClosed}
Assume that an additive subfunctor $\F\se\E$ is closed on the right. Let $\thh\in\E(C,A)$ be any $\E$-extension. If it satisfies $x\sas\thh\in\F(C,B)$ for some $\sfr|_{\F}$-inflation $x\in\C(A,B)$, then $\thh\in\F(C,A)$ follows.
\end{lem}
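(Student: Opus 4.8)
The plan is to use the $\sfr|_{\F}$-inflation $x\in\C(A,B)$ to build an $\sfr|_{\F}$-distinguished $n$-exangle through $x$ and then compare it with the $\sfr$-distinguished $n$-exangle realizing $\thh$, so that $\thh$ gets identified with a morphism in the image of an $\F$-valued map. First I would fix an $\sfr|_{\F}$-distinguished $n$-exangle
\[ A\ov{x=d_W^0}{\lra}W^1\ov{d_W^1}{\lra}\cdots\ov{d_W^n}{\lra}B\ppr\ov{\om}{\dra} \]
with $d_W^0=x$ (this exists since $x$ is an $\sfr|_{\F}$-inflation), where I write $B\ppr=W^{n+1}$. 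Also fix $\sfr(\thh)=[Y^{\mr}]$, a distinguished $n$-exangle $_A\Yr_C$ with $d_Y^0\co A\to Y^1$.

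Next I would use Lemma~\ref{LemOneExact}, or rather its input via Corollary~\ref{CorEisClosed} together with the hypothesis $x\sas\thh\in\F(C,B)$, to produce a relation between $\thh$ and an $\F$-extension. Concretely, apply Lemma~\ref{LemOneExact}(1) to the distinguished $n$-exangle $\langle W^{\mr},\om\rangle$: the sequence $\C(C,B\ppr)\ov{\om\ssh}{\lra}\E(C,A)\ov{(d_W^0)\sas}{\lra}\E(C,W^1)$ is exact — wait, that is not quite the direction I need; instead I should note that $(d_W^0)\sas=x\sas$ and, since $x\sas\thh\in\F(C,B)$, I want to compare in $\F$. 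The cleaner route: since $\F\se\E$ is closed on the right, the sequence $\F(C,W^0)\ov{(d_W^0)\sas}{\lra}\F(C,W^1)\ov{(d_W^1)\sas}{\lra}\F(C,W^2)$ is exact at $\F(C,W^1)$; but I need exactness of the analogous sequence one step earlier, namely involving $\om\ssh$. So the right tool is: take $\sfr(x\sas\thh)=[V^{\mr}]$; because $x\sas\thh\in\F$, this is $\sfr|_{\F}$-distinguished, and the morphism of extensions $(x,\id_C)\co\thh\to x\sas\thh$ (Remark~\ref{RemMorphExt}(1)) has a lift. Then I would splice the $n$-exangle $\langle W^{\mr},\om\rangle$ and $\langle Y^{\mr},\thh\rangle$ via (EA2) applied to a good lift of $(\id_A, \text{something})$, exactly in the style of the proof of Proposition~\ref{PropComposAndClosed}: compose the inflations and extract a section.

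The technically cleanest argument, which I would follow, mimics Proposition~\ref{PropComposAndClosed}. Set $B=X^1$-type notation aside and argue: let $\sfr(\thh)=[Y^{\mr}]$ with deflation $d_Y^n\co Y^n\to C$. The morphism of extensions $(x,\id_C)\co\thh\to x\sas\thh$ lifts (by (R0)) to $f^{\mr}\co\langle Y^{\mr},\thh\rangle\to\langle V^{\mr},x\sas\thh\rangle$ with $f^0=x$. Since $x$ is an $\sfr|_{\F}$-inflation with conflation $W^{\mr}$, and $d_V^0$ is also an $\sfr|_{\F}$-inflation (being $d_V^0=d_V^0$ realizing $x\sas\thh\in\F$), I would instead use (EA2$\op$) or (EA2) to compare $\langle V^{\mr},x\sas\thh\rangle$ with the coproduct/mapping-cone construction over $\langle W^{\mr},\om\rangle$, obtaining a distinguished $n$-exangle whose final deflation splits because the relevant extension $d\sas(\text{class})$ vanishes. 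Extracting the section $\begin{bmatrix}s_1\\ s_2\end{bmatrix}$ and setting $\be=s_1\uas(\text{the }\F\text{-extension }\om\text{ or }x\sas\thh)$ yields $\thh=(\text{identity})\uas\be\in\F(C,A)$, using that $\F$ is closed under the functorial operations $s_1\uas(-)$. The main obstacle I anticipate is bookkeeping: arranging the (EA2)-good-lift so that the resulting mapping cone's terminal deflation is $[g^{n+1}\ d^n]$ with $g^{n+1}$ controlled, and then verifying that the extension attached to that mapping cone is zero (so a section exists) — this is where one must carefully track which pullback/pushout of $\thh$, $\om$, $x\sas\thh$ appears, exactly as in the displayed computation $(\ref{Eq_theta_tau})$ and the lines following it in the proof of Proposition~\ref{PropComposAndClosed}. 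Once the section is in hand, the conclusion $\thh\in\F(C,A)$ is immediate from $\F$ being a subfunctor closed under the relevant morphisms.
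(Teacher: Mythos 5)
There is a genuine gap: you assembled the two exact sequences that do the work and then discarded both of them, replacing the argument by an unexecuted sketch. The paper's proof is the short diagram chase you almost wrote down. Take an $\sfr|_{\F}$-distinguished $n$-exangle $\langle W^{\mr},\om\rangle$ with $d_W^0=x$ (so $W^1=B$ and $\om\in\F(W^{n+1},A)$). Since $(d_W^1)\sas(x\sas\thh)=(d_W^1\ci d_W^0)\sas\thh=0$ and $\F$ is closed on the right, the exactness of $\F(C,A)\ov{x\sas}{\lra}\F(C,B)\ov{(d_W^1)\sas}{\lra}\F(C,W^2)$ at $\F(C,B)$ produces $\nu\in\F(C,A)$ with $x\sas\nu=x\sas\thh$. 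Now the sequence you wrote and dismissed as ``not quite the direction I need'', namely $\C(C,W^{n+1})\ov{\om\ssh}{\lra}\E(C,A)\ov{x\sas}{\lra}\E(C,W^1)$ from Lemma~\ref{LemOneExact}~{\rm (1)}, applies to $\thh-\nu$: since $x\sas(\thh-\nu)=0$, there is $f\in\C(C,W^{n+1})$ with $\thh-\nu=f\uas\om$. As $\om\in\F$ and $\F$ is a subfunctor, $f\uas\om\in\F(C,A)$, hence $\thh=\nu+f\uas\om\in\F(C,A)$. Your diagnosis that you ``need exactness one step earlier involving $\om\ssh$'' for $\F$ is mistaken: the $\om\ssh$-step is only needed in $\E$, where Lemma~\ref{LemOneExact} supplies it; in $\F$ you only need exactness at $\F(C,B)$, which is precisely the closed-on-the-right hypothesis.

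The route you actually propose instead, mimicking the proof of Proposition~\ref{PropComposAndClosed} with good lifts, mapping cones and a section of the terminal deflation, is not carried out and, as sketched, does not work. First, nowhere in that sketch does the hypothesis that $\F$ is closed on the right enter in an identifiable way, and without it the statement is not expected to hold. Second, the splitting you invoke (``the relevant extension $d\sas(\text{class})$ vanishes'') is exactly the point that needs proof: in Proposition~\ref{PropComposAndClosed} the vanishing $d\sas\thh=0$ came from the specific hypotheses $(d_X^1)\sas\thh=0$ and $(d_Y^0)\sas\thh=0$, which have no counterpart in the present situation, where $\thh$ is an arbitrary $\E$-extension with only $x\sas\thh\in\F$ assumed. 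So the claimed section of the mapping cone's last differential is unjustified, and the final formula $\thh=(\text{identity})\uas\be$ is not derived from anything.
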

\begin{proof}
Take an $\sfr|_{\F}$-distinguished $n$-exangle $_A\Xd$ satisfying $d_X^0=x$. We have the following commutative diagram, whose bottom row is exact by Lemma \ref{LemOneExact} and Corollary \ref{CorEisClosed}.
\[
\xy
(-11,7)*+{\F(C,A)}="2";
(11,7)*+{\F(C,B)}="4";
(33,7)*+{\F(C,X^2)}="6";
(-35,-7)*+{\C(C,X^{n+1})}="10";
(-23,4)*+{}="11";
(-11,-7)*+{\E(C,A)}="12";
(11,-7)*+{\E(C,B)}="14";
(33,-7)*+{\E(C,X^2)}="16";
{\ar^{\del\ssh} "10";"2"};
{\ar^{x\sas} "2";"4"};
{\ar^{(d_X^1)\sas} "4";"6"};
{\ar@{^(->} "2";"12"};
{\ar@{^(->} "4";"14"};
{\ar@{^(->} "6";"16"};
{\ar_{\del\ssh} "10";"12"};
{\ar_{x\sas} "12";"14"};
{\ar_{(d_X^1)\sas} "14";"16"};
{\ar@{}|{\circlearrowright} "11";"12"};
{\ar@{}|{\circlearrowright} "2";"14"};
{\ar@{}|{\circlearrowright} "4";"16"};
\endxy
\]
By assumption, the upper row is exact at $\F(C,B)$. Thus there exists some $\nu\in\F(C,A)$ satisfying $x\sas\nu=x\sas\thh$. Then by $x\sas(\thh-\nu)=0$, there exists $f\in\C(C,X^{n+1})$ which gives $\thh-\nu=\del\ssh (f)$. Thus it follows $\thh=\nu+\del\ssh f\in\F(C,A)$.
\end{proof}

\begin{lem}\label{LemClosed}
For any additive subfunctor $\F\se\E$, the following are equivalent.
\begin{enumerate}
\item $\F$ is closed on the right.
\item $\F$ is closed on the left.
\end{enumerate}
Thus in the following, we simply say $\F\se\E$ is {\it closed}, if one of them is satisfied.
\end{lem}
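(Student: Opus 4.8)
The plan is to prove the two implications separately, exploiting the self-dual nature of the hypotheses and the duality built into the notion of an $n$-exangulated category. By the evident symmetry (passing to the opposite category $\C\op$ with $\E\op(A,C)=\E(C,A)$ turns an $n$-exangulated category into an $n$-exangulated category, exchanges inflations with deflations, and exchanges ``closed on the right'' with ``closed on the left''), it suffices to prove one implication, say $(1)\Rightarrow(2)$. So assume $\F\se\E$ is closed on the right, and let ${}_A\Xd_C$ be any $\sfr|_{\F}$-distinguished $n$-exangle; we must show that $\F(X^{n+1},-)\ov{(d_X^n)\uas}{\ltc}\F(X^n,-)\ov{(d_X^{n-1})\uas}{\ltc}\F(X^{n-1},-)$ is exact.

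The key step is to produce, for a given $\F$-extension lying in the kernel of $(d_X^{n-1})\uas$, a ``second'' $\sfr|_{\F}$-distinguished $n$-exangle whose last differential is the deflation $d_X^n$, and then to use a mapping-cone construction via {\rm (EA2)} or {\rm (EA2$\op$)} to splice it with $\Xd$; the resulting $n$-exangle will split off a summand exactly when the extension in question is killed, and Claim~\ref{ClaimSplit} will then exhibit it as $(d_X^n)\uas$ of something in $\F$. This is the direct dual of the argument in the proof of Proposition~\ref{PropComposAndClosed}: there one composed two $\sfr|_{\F}$-inflations $d_Y^0\ci d_X^0$ and used (EA2) twice to build mapping cones $M_f$ and $M_g$, extracting a section from the splitting $d\sas\thh=0$; here one composes two $\sfr|_{\F}$-deflations and uses (EA2$\op$) dually. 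However, Proposition~\ref{PropComposAndClosed} assumed closedness under composition of inflations (resp. deflations), which is precisely what we are \emph{not} given here. So the actual route is different: rather than re-running that proof, I would show directly that ``$\F$ closed on the right'' already forces the relevant composition-type behaviour needed, or more efficiently, argue as follows.

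Concretely, first note that ``$\F$ closed on the right'' combined with Lemma~\ref{LemRightClosed} and Corollary~\ref{CorEisClosed} gives good control: an $\E$-extension whose pushout along an $\sfr|_{\F}$-inflation lands in $\F$ must itself lie in $\F$. Dually, the content of ``$\F$ closed on the left'' is that an $\E$-extension whose pullback along an $\sfr|_{\F}$-deflation lands in $\F$ must lie in $\F$. So the real task is to deduce this pullback statement from the pushout statement. Given an $\sfr|_{\F}$-deflation $d_X^n\co X^n\to C$ coming from an $\sfr|_{\F}$-distinguished $n$-exangle ${}_A\Xd_C$, and given $\thh\in\E(C,B)$ with $(d_X^n)\uas\thh\in\F(X^n,B)$, I realize $\thh$ by a distinguished $n$-exangle ${}_B\langle Y^{\mr},\thh\rangle_C$ and apply (EA2) to the morphism $(\id_C)$ relating the two $n$-exangles over $C$ (after replacing $\thh$ by $(d_X^n)\uas\thh$ where it becomes an $\F$-extension), obtaining a mapping cone that is a distinguished $n$-exangle; its defining extension is a pushout of $\thh$ along $d_X^n$, which by hypothesis lies in $\F$. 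Then the whole mapping cone is $\sfr|_{\F}$-distinguished (its differentials are built from $\sfr|_{\F}$-conflations plus the $\F$-extension), and comparing with Corollary~\ref{CorEisClosed} and the exactness statements in (\ref{exnat2}) lets me conclude $\thh\in\F(C,B)$, which is exactly exactness on the left.

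The main obstacle I anticipate is bookkeeping: tracking which extensions are genuinely $\F$-extensions through the mapping-cone construction, and checking that the intermediate $n$-exangles built via (EA2)/(EA2$\op$) remain $\sfr|_{\F}$-distinguished — this requires that each $d^i_{M_f}$ be assembled from blocks of $\sfr|_{\F}$-conflations and that the governing extension $(d^0)\sas(\cdot)$ lie in $\F$, which in turn relies repeatedly on $\F$ being an \emph{additive} subfunctor (so that direct sums and pushouts/pullbacks of $\F$-extensions stay in $\F$). Once the right/left closure is translated into the pushout/pullback criteria of Lemma~\ref{LemRightClosed} and its dual, the equivalence should reduce to an application of exactness of (\ref{exnat1}) and (\ref{exnat2}) together with (EA2), (EA2$\op$), and Corollary~\ref{CorEisClosed}, much as in Proposition~\ref{PropComposAndClosed}, but without invoking closure under composition.
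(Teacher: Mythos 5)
Your skeleton coincides with the paper's proof: reduce to $(1)\Rightarrow(2)$ by duality, use Corollary~\ref{CorEisClosed} to lift a kernel element of $(d_X^{n-1})\uas$ to some $\nu\in\E(C,A)$ whose pullback along $d_X^n$ lies in $\F$, so that everything reduces to the pullback criterion along the $\sfr|_{\F}$-deflation $d_X^n$, then realize both extensions, take a good lift of $(\id,d_X^n)$ via {\rm (EA2)}, and finish with Lemma~\ref{LemRightClosed}. The gap is at the decisive step, where you claim that the mapping cone's extension ``is a pushout of $\thh$ along $d_X^n$, which by hypothesis lies in $\F$''. First, this misidentifies the extension: if $W^{\mr}$ is the $\sfr|_{\F}$-distinguished realization of $(d_X^n)\uas\thh$ and $f^{\mr}$ is the good lift of $(\id_B,d_X^n)\co(d_X^n)\uas\thh\to\thh$, then {\rm (EA2)} equips the cone with the extension $(d_W^0)\sas\thh$, the pushout along the first differential of $W^{\mr}$; a ``pushout of $\thh$ along $d_X^n$'' does not even type-check, and what the hypothesis supplies is the pullback $(d_X^n)\uas\thh$, a different extension on a different pair of objects. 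Second, and more seriously, nothing in your sketch shows $(d_W^0)\sas\thh\in\F$: the hypothesis only gives $(d_X^n)\uas(d_W^0)\sas\thh=(d_W^0)\sas(d_X^n)\uas\thh\in\F$, and passing from this to $(d_W^0)\sas\thh\in\F$ is exactly the pullback criterion you are trying to establish, so the argument is circular; likewise the remark that the cone is $\sfr|_{\F}$-distinguished ``because its differentials are built from $\sfr|_{\F}$-conflations plus the $\F$-extension'' assumes the conclusion. Note also that your argument never uses the extension $\del\in\F(C,A)$ of the given $\sfr|_{\F}$-distinguished $n$-exangle, only that $d_X^n$ is a deflation; but the pullback criterion fails for general $\sfr$-deflations (in a triangulated category the zero morphism is a deflation, and pulling back along it kills every extension), so any correct proof must invoke $\del\in\F$ somewhere.

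The missing idea is the comparison of the cone with the original $n$-exangle ${}_A\Xd_C$ itself, which is where $\del\in\F$ enters. The last differential of $M_f^{\mr}$ is $[d_X^n\ \ d_Y^n]\co X^n\oplus Y^n\to C$, so the dual of Proposition~\ref{PropShiftedReal}, applied to the commutative square comparing $d_X^n$ with $[d_X^n\ \ d_Y^n]$ via the inclusion of $X^n$ into the first summand and $\id_C$, yields a morphism of $n$-exangles $g^{\mr}\co\Xd\to\langle M_f^{\mr},(d_W^0)\sas\thh\rangle$ with $g^{n+1}=\id_C$. Hence $(d_W^0)\sas\thh=(g^0)\sas\del$, which lies in $\F$ because $\del\in\F$ and $\F$ is a subfunctor. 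Only at this point can Lemma~\ref{LemRightClosed} be applied, with the $\sfr|_{\F}$-inflation $d_W^0$, to conclude $\thh\in\F(C,B)$. With this comparison step inserted, and the earlier reduction via Corollary~\ref{CorEisClosed} kept separate, your outline becomes exactly the paper's proof.
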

\begin{proof}
We only show $(1)\Rightarrow(2)$. Let $\Xd_C$ be any $\sfr|_{\F}$-distinguished $n$-exangle, and let us show the exactness of $\F(C,A)\ov{(d_X^n)\uas}{\ltc}\F(X^n,A)\ov{(d_X^{n-1})\uas}{\ltc}\F(X^{n-1},A)$ for any $A\in\C$.


Take any element $\thh\in\F(X^n,A)$ satisfying $(d_X^{n-1})\uas\thh=0$. By the exactness of
\[ \E(C,A)\ov{(d_X^n)\uas}{\ltc}\E(X^n,A)\ov{(d_X^{n-1})\uas}{\ltc}\E(X^{n-1},A), \]
there exists $\nu\in\E(C,A)$ satisfying $(d_X^n)\uas\nu=\thh$. It suffices to show $\nu\in\F(C,A)$.
Realize $\thh$ by an $\sfr|_{\F}$-distinguished $n$-exangle $_A\langle Y^{\mr},\thh\rangle_{X^n}$, and $\nu$ by an $\sfr$-distinguished $n$-exangle $_A\langle Z^{\mr},\nu\rangle_C$. Take a good lift $f^{\mr}$ of $(\id_A,d_X^n)\co\thh\to\nu$, to obtain $\sfr$-distinguished $n$-exangle $\langle M_f^{\mr},(d_Y^0)\sas\nu\rangle$ as follows.
\[ Y^1\to Y^2\oplus Z^1\to\cdots\to Y^n\oplus Z^{n-1}\to X^n\oplus Z^n\ov{[d_X^n\ d_Z^n]}{\lra}C\ov{(d_Y^0)\sas\nu}{\dra} \]
By the dual of Proposition \ref{PropShiftedReal} applied to the following diagram,
\[
\xy
(-12,7)*+{X^n}="0";
(12,7)*+{C}="2";
(-12,-7)*+{X^n\oplus Z^n}="4";
(12,-7)*+{C}="6";
{\ar^{d_X^n} "0";"2"};
{\ar_{\left[\bsm1\\0\esm\right]} "0";"4"};
{\ar@{=} "2";"6"};
{\ar_(0.6){[d_X^n\ d_Z^n]} "4";"6"};
{\ar@{}|\circlearrowright "0";"6"};
\endxy
\]
we obtain a morphism of $n$-exangles $g^{\mr}\co\Xd_C\to \langle M_f^{\mr},(d_Y^0)\sas\nu\rangle$
satisfying $g^{n+1}=\id_C$. In particular we have
$(d_Y^0)\sas\nu=(g^0)\sas\del\in\F(Y^1,C)$.
Since $d_Y^0$ is an $\sfr|_{\F}$-inflation, Lemma \ref{LemRightClosed} shows $\nu\in\F(C,A)$.
\end{proof}

\begin{prop}\label{PropClosed}
For any additive subfunctor $\F\se\E$, the following are equivalent.
\begin{enumerate}
\item $\CFs$ is $n$-exangulated.
\item $\sfr|_{\F}$-inflations are closed under composition.
\item $\sfr|_{\F}$-deflations are closed under composition.
\item $\F\se\E$ is closed.
\end{enumerate}
\end{prop}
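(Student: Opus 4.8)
The plan is to prove the chain of equivalences $(1)\Leftrightarrow(2)\Leftrightarrow(3)\Leftrightarrow(4)$ by assembling the lemmas already established in this subsection, with the implication $(4)\Rightarrow(1)$ being the only one requiring genuine work. First I would dispose of the easy half: $(1)\Rightarrow(2)$ is immediate, since if $\CFs$ is $n$-exangulated it satisfies (EA1) in particular, and (EA1) for $\sfr|_{\F}$ is precisely the statement that $\sfr|_{\F}$-inflations (resp. deflations) are closed under composition; this also gives $(1)\Rightarrow(3)$. The equivalence $(2)\Leftrightarrow(4)$ and $(3)\Leftrightarrow(4)$ follow from Proposition~\ref{PropComposAndClosed} in one direction, and from Lemma~\ref{LemClosed} together with the converse direction I describe below; note Lemma~\ref{LemClosed} already tells us ``closed on the right'' and ``closed on the left'' coincide, so there is a single notion of closedness to work with.

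The substantive step is $(4)\Rightarrow(1)$, and within it the key point is to show that if $\F\se\E$ is closed then $\sfr|_{\F}$-inflations are closed under composition (and dually for deflations); once that is in hand, $\CFs$ satisfies (EA1), it satisfies (EA2) and (EA2$\op$) by Claim~\ref{ClaimPreExTr}, and $\sfr|_{\F}$ is an exact realization of $\F$ again by Claim~\ref{ClaimPreExTr}, so $\CFs$ is $n$-exangulated by definition. So the heart of the matter is: given $\sfr|_{\F}$-inflations $x\in\C(A,B)$ and $y\in\C(B,B\ppr)$, show $y\ci x$ is an $\sfr|_{\F}$-inflation. Since $\CEs$ is $n$-exangulated, (EA1) gives that $y\ci x$ is at least an $\sfr$-inflation, so there is an $\sfr$-distinguished $n$-exangle $_A\langle Z^{\mr},\tau\rangle_D$ with $d_Z^0 = y\ci x$. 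The problem is to upgrade this: to produce such an $n$-exangle whose extension $\tau$ lies in $\F(D,A)$. The mechanism should be the same one used in the proof of Proposition~\ref{PropComposAndClosed}: realize $x$ by an $\sfr|_{\F}$-distinguished $n$-exangle $_A\Xd_B$ and $y$ by $_B\langle Y^{\mr},\rho\rangle_{B\ppr}$ with $\rho\in\F$, apply Proposition~\ref{PropShiftedReal}(2) to the commutative square built from $d_X^0 = x$ and $d_Y^0 = y$ (against the identity on $A$) to obtain a good lift whose mapping cone $\langle M_f^{\mr},(d_X^0)\sas\rho\rangle$ is $\sfr$-distinguished; because $\rho\in\F(B\ppr,B)$ and $x$ is an $\sfr|_{\F}$-inflation, the extension $(d_X^0)\sas\rho = x\sas\rho$ lies in $\F$, so this mapping cone is in fact $\sfr|_{\F}$-distinguished, and reading off its outer morphisms exhibits $y\ci x = d_Y^0 \ci d_X^0$ as an $\sfr|_{\F}$-inflation.

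The main obstacle I anticipate is the bookkeeping in identifying the relevant $d^0$ of the mapping cone with the composite $y\ci x$, and in checking that the extension attached to that mapping cone genuinely lands in $\F$ rather than merely in $\E$; here Lemma~\ref{LemRightClosed} is the tool that lets one pull an $\E$-extension back into $\F$ once it becomes an $\F$-extension after pushout along an $\sfr|_{\F}$-inflation, and it will likely be needed to close the argument. The dual statement — that $\sfr|_{\F}$-deflations are closed under composition — follows either by the dual argument (using Proposition~\ref{PropShiftedReal} dualized and (EA2$\op$)) or, more cheaply, by invoking Lemma~\ref{LemClosed} to pass between the left- and right-closed formulations and then running Proposition~\ref{PropComposAndClosed} in reverse. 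In this way all four conditions are tied together, with Claim~\ref{ClaimPreExTr}, Proposition~\ref{PropComposAndClosed}, Lemma~\ref{LemRightClosed}, Lemma~\ref{LemClosed} and Proposition~\ref{PropShiftedReal} doing essentially all the work, so the proof should be short modulo the diagram-chasing already modelled in Proposition~\ref{PropComposAndClosed}.
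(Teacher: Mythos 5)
Your reduction of the proposition matches the paper's: (1) is equivalent to the conjunction of (2) and (3) by Claim~\ref{ClaimPreExTr}, either of (2),(3) implies (4) by Proposition~\ref{PropComposAndClosed}, and since (4) is self-dual by Lemma~\ref{LemClosed} everything comes down to $(4)\Rightarrow(2)$. You also set that step up correctly: $y\ci x$ is an $\sfr$-inflation by {\rm (EA1)} for $\CEs$, so it is realized by an $\sfr$-distinguished $n$-exangle ${}_A\langle Z^{\mr},\tau\rangle_D$ with $d_Z^0=y\ci x$, and the task is to show $\tau\in\F(D,A)$.

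However, the mechanism you give for this has a genuine gap. Proposition~\ref{PropShiftedReal}(2) must be applied to the square whose rows are the beginnings of $\Xd$ and of $\langle Z^{\mr},\tau\rangle$ (top $d_X^0=x$, bottom $d_Z^0=y\ci x$, identity on $A$ on the left, $y=d_Y^0$ on the right); it cannot take $\Yr$ as the second row, since $\Yr$ starts at $B$, not $A$. The mapping cone so obtained is $\langle \Mf,(d_X^0)\sas\tau\rangle$, not $\langle \Mf,(d_X^0)\sas\rho\rangle$ --- note $(d_X^0)\sas\rho$ is not even defined, since $\rho$ lies in $\F(Y^{n+1},B)$ while $(d_X^0)\sas$ acts on $\E(-,A)$ --- so your assertion that the cone's extension lies in $\F$ ``because $\rho\in\F$'' is exactly the point still to be proved, namely $(d_X^0)\sas\tau\in\F$. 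Moreover, even granting that the cone were $\sfr|_{\F}$-distinguished, its outer morphism is $d_{M_f}^0=\left[\bsm -d_X^1\\ d_Y^0\esm\right]$, not $y\ci x$, so ``reading off its outer morphisms'' does not exhibit $y\ci x$ as an $\sfr|_{\F}$-inflation; what is needed is that $\langle Z^{\mr},\tau\rangle$ itself is $\sfr|_{\F}$-distinguished, i.e.\ $\tau\in\F$. The missing ingredient is a second application of Proposition~\ref{PropShiftedReal}, to the square with top row $B\ov{d_{M_f}^0}{\lra}X^2\oplus Y^1$ (note $Z^1=Y^1$, as $d_Z^0$ has codomain $Y^1$), bottom row $B\ov{d_Y^0}{\lra}Y^1$, identity on $B$ and $[0\ 1]$ on the right: it yields a morphism of $n$-exangles $g^{\mr}\co\langle \Mf,(d_X^0)\sas\tau\rangle\to\Yr$ with $g^0=\id_B$, whence $(d_X^0)\sas\tau=(g^{n+1})\uas\rho\in\F(D,B)$. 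Only at this point does Lemma~\ref{LemRightClosed} (applied to the $\sfr|_{\F}$-inflation $d_X^0$) give $\tau\in\F(D,A)$, so that $\langle Z^{\mr},\tau\rangle$ is $\sfr|_{\F}$-distinguished and $y\ci x$ is an $\sfr|_{\F}$-inflation. A small additional slip: the deflation case $(4)\Rightarrow(3)$ follows from $(4)\Rightarrow(2)$ by the self-duality of (4), not by ``running Proposition~\ref{PropComposAndClosed} in reverse''.
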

\begin{proof}
{\rm (1)} holds if and only if both {\rm (2)} and {\rm (3)} hold, by Claim \ref{ClaimPreExTr}. Proposition \ref{PropComposAndClosed} means that {\rm (2)} or {\rm (3)} implies {\rm (4)}. Since {\rm (4)} is self-dual by Lemma \ref{LemClosed}, it remains to show $(4)\Rightarrow(2)$.

Let $_A\Xd_C, {}_B\Yr_F$ be any pair of $\sfr|_{\F}$-distinguished $n$-exanlges with $X^1=B$. Under the assumption of {\rm (4)}, let us show that $d_Y^0\ci d_X^0$ becomes an $\sfr|_{\F}$-inflation.
Since $d_Y^0\ci d_X^0$ is an $\sfr$-inflation, there is an $\sfr$-distinguished $n$-exangle $_A\langle Z^{\mr},\tau\rangle_D$ satisfying $d_Z^0=d_Y^0\ci d_X^0$. Similarly as in the proof of Proposition \ref{PropComposAndClosed}, we obtain by Proposition \ref{PropShiftedReal} a morphism of $n$-exangles $f^{\mr}\co\Xd\to\langle Z^{\mr},\tau\rangle$ which satisfies $f^0=\id_A,f^1=d_Y^0$ and makes $\langle M_f^{\mr},(d_X^0)\sas\tau\rangle$ an $\sfr$-distinguished $n$-exangle as follows.
\[ B\ov{\left[\bsm -d_X^1\\ d_Y^0\esm\right]}{\lra} X^2\oplus Y^1\to\cdots\to C\oplus Z^n\to D\ov{(d_X^0)\sas\tau}{\dra}. \]
Applying Proposition \ref{PropShiftedReal} to the following diagram,
\[
\xy
(-11,6)*+{B}="0";
(11,6)*+{X^2\oplus Y^1}="2";
(-11,-6)*+{B}="4";
(11,-6)*+{Y^1}="6";
{\ar^(0.4){\left[\bsm -d_X^1\\ d_Y^0\esm\right]} "0";"2"};
{\ar@{=} "0";"4"};
{\ar^{[0\ 1]} "2";"6"};
{\ar_{d_Y^0} "4";"6"};
{\ar@{}|\circlearrowright "0";"6"};
\endxy
\]
we obtain a morphism of $n$-exangles $g^{\mr}\co\langle M_f^{\mr},(d_X^0)\sas\tau\rangle\to\langle Y^{\mr},\rho\rangle$ satisfying $g^0=\id_B$ and $g^1=[0\ 1]$. In particular we have $(d_X^0)\sas\tau=(g^{n+1})\uas\rho\in\F(D,B)$. 
Since $d_X^0$ is an $\sfr|_{\F}$-inflation, Lemma \ref{LemRightClosed} shows $\tau\in\F(D,A)$.
\end{proof}

\begin{cor}\label{CorIntersection}
Let $\{\F_{\lam}\}_{\lam\in\Lambda}$ be a family of additive subfunctors of $\E$. If each $\F_{\lam}\se\E$ is closed, then so is their intersection $\un{\lam\in\Lambda}{\bigcap}\F_{\lam}\se\E$.
\end{cor}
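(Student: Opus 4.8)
The plan is to use Lemma~\ref{LemClosed} to reduce to showing that $\F=\un{\lam\in\Lambda}{\bigcap}\F_{\lam}$ is closed on the right. I would first record the routine preliminaries. Each $\F(C,A)=\un{\lam}{\bigcap}\F_{\lam}(C,A)$ is an intersection of abelian subgroups of $\E(C,A)$, hence an abelian subgroup, and $\F(c,a)=\E(c,a)|_{\F(C,A)}$ holds because it holds for every $\F_{\lam}$; thus $\F$ is an additive subfunctor of $\E$, and by Claim~\ref{ClaimPreExTr} the restriction $\sfr|_{\F}$ is an exact realization of $\F$. Moreover $\F(C,A)\se\F_{\lam}(C,A)$ for every $\lam$, so every $\sfr|_{\F}$-conflation is at the same time an $\sfr|_{\F_{\lam}}$-conflation for each $\lam$, and in particular an $\sfr$-distinguished $n$-exangle.

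Now fix an $\sfr|_{\F}$-conflation $X^{\mr}$, say with $\sfr(\del)=[X^{\mr}]$ for some $\del\in\F(X^{n+1},X^0)$, and fix $Q\in\C$; I must prove exactness of
\[ \F(Q,X^0)\ov{(d_X^0)\sas}{\lra}\F(Q,X^1)\ov{(d_X^1)\sas}{\lra}\F(Q,X^2). \]
The composite vanishes since $d_X^1\ci d_X^0=0$, so the content is the inclusion $\Ker\big((d_X^1)\sas|_{\F(Q,X^1)}\big)\se\Ima\big((d_X^0)\sas|_{\F(Q,X^0)}\big)$. Let $\thh\in\F(Q,X^1)$ satisfy $(d_X^1)\sas\thh=0$. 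Since $X^{\mr}$ is an $\sfr|_{\F_{\lam}}$-conflation and $\F_{\lam}$ is closed on the right, for each $\lam$ we may choose $\nu_{\lam}\in\F_{\lam}(Q,X^0)$ with $(d_X^0)\sas\nu_{\lam}=\thh$. These preimages need not agree across $\lam$, and reconciling them is the crux. Fixing $\lam_0\in\Lambda$, for any $\lam$ we have $(d_X^0)\sas(\nu_{\lam_0}-\nu_{\lam})=0$ in $\E(Q,X^1)$; by Lemma~\ref{LemOneExact}~{\rm (1)} there is $f\in\C(Q,X^{n+1})$ with $\nu_{\lam_0}-\nu_{\lam}=\del\ssh(f)=f\uas\del$, and since $\del\in\F(X^{n+1},X^0)\se\F_{\lam}(X^{n+1},X^0)$ and $\F_{\lam}$ is a subfunctor, $f\uas\del\in\F_{\lam}(Q,X^0)$. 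Hence $\nu_{\lam_0}=\nu_{\lam}+f\uas\del\in\F_{\lam}(Q,X^0)$ for every $\lam$, so $\nu_{\lam_0}\in\F(Q,X^0)$ and it maps to $\thh$, as required.

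I expect the patching step to be the only real obstacle: knowing $\thh\in\Ima\big((d_X^0)\sas|_{\F_{\lam}(Q,X^0)}\big)$ for every $\lam$ does not on its own produce a single element of $\F(Q,X^0)$ mapping to $\thh$, because the witnessing preimages vary with $\lam$; the resolution leans on the fact that $(d_X^0)\sas$ is highly non-injective on $\E(-,X^0)$ --- its kernel being the image of $\del\ssh$ by Lemma~\ref{LemOneExact} --- together with the observation that this kernel already sits inside $\F$ because $\del$ does. The trivial case $\Lambda=\emptyset$, where $\F=\E$ is closed by Corollary~\ref{CorEisClosed}, should be disposed of separately. As an alternative route one could instead apply Proposition~\ref{PropClosed}: show that a morphism is an $\sfr|_{\F}$-inflation if and only if it is an $\sfr|_{\F_{\lam}}$-inflation for every $\lam$ --- the nonobvious implication using Proposition~\ref{PropShiftedReal}~{\rm (1)} to see that, for a fixed $\sfr$-inflation, whether its realizing extension lies in $\F_{\lam}$ does not depend on the chosen distinguished $n$-exangle --- and then deduce closure under composition of $\sfr|_{\F}$-inflations from that of the $\sfr|_{\F_{\lam}}$-inflations.
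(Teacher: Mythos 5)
Your proof is correct, and it takes a different route from the paper. The paper disposes of the corollary in one line by checking condition {\rm (2)} of Proposition~\ref{PropClosed}: since each $\F_{\lam}$ is closed, $\sfr|_{\F_{\lam}}$-inflations compose, and rerunning the proof of $(4)\Rightarrow(2)$ there uniformly in $\lam$ (or using your observation, via Proposition~\ref{PropShiftedReal}~{\rm (1)}, that whether the realizing extension of a fixed $\sfr$-inflation lies in $\F_{\lam}$ is independent of the chosen realization) shows the single extension $\tau$ realizing the composite lies in every $\F_{\lam}$, hence in the intersection; this is exactly the alternative route you sketch at the end. Your main argument instead verifies condition {\rm (4)} directly, i.e.\ closedness on the right in the sense of Definition~\ref{DefClosed}, reducing via Lemma~\ref{LemClosed}, and the substantive step is your patching argument: the preimages $\nu_{\lam}\in\F_{\lam}(Q,X^0)$ of $\thh$ need not agree, but any two differ by an element of $\Ker\big((d_X^0)\sas\big)=\Ima\,\del\ssh$ by Lemma~\ref{LemOneExact}~{\rm (1)}, and this image already lies in every $\F_{\lam}$ because $\del\in\bigcap_{\lam}\F_{\lam}$ and each $\F_{\lam}$ is a subfunctor; hence a single $\nu_{\lam_0}$ works for all $\lam$. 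This is sound (the preliminary checks that the intersection is an additive subfunctor, and that every $\sfr|_{\F}$-conflation is an $\sfr|_{\F_{\lam}}$-conflation, are as routine as you say). What each approach buys: the paper's is shorter because it leans on the full equivalence in Proposition~\ref{PropClosed}, while yours works entirely at the level of extension groups, makes explicit the exact reason intersection preserves the exactness condition, and only uses Lemmas~\ref{LemOneExact} and \ref{LemClosed} rather than the $(4)\Rightarrow(2)$ machinery.
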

\begin{proof}
It can be easily confirmed that the intersection satisfies condition {\rm (2)} in Proposition \ref{PropClosed}.
\end{proof}

\begin{dfn}\label{DefItoF}
Let $\Ical\se\C$ be a full subcategory. Define subfunctors $\E_{\Ical}$ and $\E^{\Ical}$ of $\E$ by
\begin{eqnarray*}
\E_{\Ical}(C,A)&=&\{\del\in\E(C,A)\mid (\del\ssh)_I=0\ \text{for any}\ I\in\Ical\},\\
\E^{\Ical}(C,A)&=&\{\del\in\E(C,A)\mid \del\ush_I=0\ \text{for any}\ I\in\Ical\}.
\end{eqnarray*}
\end{dfn}

\begin{prop}\label{PropItoF}
For any full subcategory $\Ical\se\C$, these $\E_{\Ical}$ and $\E^{\Ical}$ are closed subfunctors of $\E$.
\end{prop}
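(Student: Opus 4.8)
The plan is to treat the two assertions separately: that $\E_{\Ical}$ and $\E^{\Ical}$ are additive subfunctors of $\E$, and that they are closed in the sense of Definition~\ref{DefClosed}. The first is routine. Indeed $\E_{\Ical}(C,A)$ is the intersection, over all $I\in\Ical$ and all $f\in\C(I,C)$, of the subgroups $\Ker\big(f\uas\co\E(C,A)\to\E(I,A)\big)$, hence a subgroup; and for $a\in\C(A,A\ppr)$, $c\in\C(C\ppr,C)$ and $\del\in\E_{\Ical}(C,A)$ one has $f\uas(c\uas a\sas\del)=a\sas\big((c\ci f)\uas\del\big)=0$ for every $f\in\C(I,C\ppr)$, so $\E(c,a)$ restricts to $\E_{\Ical}$; thus $\E_{\Ical}$, and dually $\E^{\Ical}$, is an additive subfunctor. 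The substance is closedness.

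For closedness I would invoke Lemma~\ref{LemClosed}, reducing to showing $\E_{\Ical}$ is closed on the right (and, separately, $\E^{\Ical}$ closed on the left). So let $X^{\mr}$ be an $\sfr|_{\E_{\Ical}}$-conflation; by definition it realizes some $\del\in\E_{\Ical}(X^{n+1},X^0)$, so $\langle X^{\mr},\del\rangle$ is a distinguished $n$-exangle. Fixing $D\in\C$, I must prove
\[ \E_{\Ical}(D,X^0)\ov{(d_X^0)\sas}{\lra}\E_{\Ical}(D,X^1)\ov{(d_X^1)\sas}{\lra}\E_{\Ical}(D,X^2) \]
is exact; since $d_X^1\ci d_X^0=0$ the composite vanishes, so only exactness in the middle is at issue. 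Given $\thh\in\E_{\Ical}(D,X^1)$ with $(d_X^1)\sas\thh=0$, Corollary~\ref{CorEisClosed} (applicable since $X^{\mr}$ is in particular an $\sfr$-conflation) yields $\nu\in\E(D,X^0)$ with $(d_X^0)\sas\nu=\thh$; the key point is that such $\nu$ necessarily lies in $\E_{\Ical}$. To see this, fix $I\in\Ical$ and $f\in\C(I,D)$: then $(d_X^0)\sas(f\uas\nu)=f\uas\thh=0$ because $\thh\in\E_{\Ical}$, so Lemma~\ref{LemOneExact}~(1) applied to $\langle X^{\mr},\del\rangle$ gives $q\in\C(I,X^{n+1})$ with $f\uas\nu=q\uas\del$; but $\del\in\E_{\Ical}$ forces $q\uas\del=0$, hence $f\uas\nu=0$. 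As $f$ was arbitrary, $\nu\in\E_{\Ical}(D,X^0)$, which proves exactness.

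For $\E^{\Ical}$ I would run the dual argument, using the second exact sequence of Corollary~\ref{CorEisClosed} and Lemma~\ref{LemOneExact}~(2) in place of (1): starting from $\thh\in\E^{\Ical}(X^n,E)$ with $(d_X^{n-1})\uas\thh=0$ one obtains $\nu\in\E(X^{n+1},E)$ with $(d_X^n)\uas\nu=\thh$, and for each $g\in\C(E,I)$ with $I\in\Ical$ the element $g\sas\nu$ is killed by $(d_X^n)\uas$, hence equals $k\sas\del$ for some $k\in\C(X^0,I)$ by Lemma~\ref{LemOneExact}~(2), which vanishes because $\del\in\E^{\Ical}$; so $\nu\in\E^{\Ical}(X^{n+1},E)$. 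I do not expect a genuine obstacle: the proof is a short diagram chase, and the only place the structure is really used is the tautology that an $\sfr|_{\F}$-conflation is, by construction, a realization of an $\F$-extension $\del$—it is this that lets Lemma~\ref{LemOneExact} convert the ``error term'' $\nu$ supplied by ambient exactness into an element of the subfunctor.
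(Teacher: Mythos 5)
Your proof is correct and takes essentially the same route as the paper: the subfunctor/additivity check is the same direct computation, and closedness is proved by lifting $\thh$ along $(d_X^0)\sas$ via Corollary~\ref{CorEisClosed} and then using Lemma~\ref{LemOneExact} together with $\del\in\E_{\Ical}$ to see that the lift $\nu$ lies in $\E_{\Ical}$. The only cosmetic difference is that the paper packages this last step as the injectivity of $(d_X^0)\sas\co\E(I,X^0)\to\E(I,X^1)$, whereas you factor $f\uas\nu$ through $\del\ssh$ and kill it there — the same fact unwound — and you spell out the dual argument for $\E^{\Ical}$ which the paper leaves to duality.
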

\begin{proof}
We only show for the statement for $\E_{\Ical}$. To show that $\E_{\Ical}$ is a subfunctor of $\E$, it suffices to show
\[ a\sas c\uas(\E_{\Ical}(C,A))\se\E_{\Ical}(C\ppr,A\ppr) \]
for any $a\in\C(A,A\ppr)$ and $c\in\C(C\ppr,C)$. Let $\del\in\E_{\Ical}(C,A)$ be any element. Then $a\sas c\uas\del\in\E(C\ppr,A\ppr)$ satisfies
\[ (a\sas c\uas\del)\ssh(f\ppr)=f^{\prime\ast}a\sas c\uas\del=a\sas(\del\ssh(c\ci f\ppr))=0 \]
for any $I\in\Ical$ and any $f\ppr\in\C(I,C\ppr)$. Thus $((a\sas c\uas\del)\ssh)_I=0$ holds for any $I\in\Ical$, which means $a\sas c\uas\del\in\E_{\Ical}(C\ppr,A\ppr)$.
Thus $\E_{\Ical}\se\E$ is a subfunctor. Moreover, since
\[ 0\ssh=0 \quad\text{and}\quad (\del-\del\ppr)\ssh=\del\ssh-\del\ppr\ssh \]
holds for $0\in\E(C,A)$ and any $\del,\del\ppr\in\E(C,A)$, we see that $\E_{\Ical}\se\E$ is additive.

For any $\sfr|_{\E_{\Ical}}$-distinguished $n$-exangle $\Xd$, let us show the exactness of
\[ \E_{\Ical}(-,X^0)\ov{(d_X^0)\sas}{\ltc}\E_{\Ical}(-,X^1)\ov{(d_X^1)\sas}{\ltc}\E_{\Ical}(-,X^2). \]
Remark that $\E(-,X^0)\ov{(d_X^0)\sas}{\ltc}\E(-,X^1)\ov{(d_X^1)\sas}{\ltc}\E(-,X^2)$ is exact. Thus for any $C\in\C$, if $\thh\in\E_{\Ical}(C,X^1)$ satisfies $(d_X^1)\sas\thh=0$, then there is $\nu\in\E(C,X^0)$ which gives $(d_X^0)\sas\nu=\thh$. It is enough to show $\nu\ssh(f)=0$ for any $I\in\Ical$ and any $f\in\C(I,C)$. Since $0\to\E(I,X^0)\ov{(d_X^0)\sas}{\lra}\E(I,X^1)$ is exact, this follows from the equation
\[ (d_X^0)\sas(\nu\ssh(f))=(d_X^0)\sas f\uas\nu=f\uas\thh=\thh\ssh(f)=0. \]
\end{proof}

\section{Typical cases}\label{section_Typical}
\subsection{Extriangulated categories}
In this subsection, we consider the case $n=1$.
Let $\C$ be an additive category, and let $\E\co\C\op\times\C\to\Ab$ be a biadditive functor.
\begin{lem}\label{Lem1EAandET}
For any $A,C\in\C$, let $X^{\mr},Y^{\mr}\in\CACt$ be any pair of objects. Assume that 
\begin{eqnarray}
&\C(C,-)\ov{\C(d_X^1,-)}{\Longrightarrow}\C(X^1,-)\ov{\C(d_X^0,-)}{\Longrightarrow}\E(A,-),&\label{ExXAC1}\\
&\C(-,A)\ov{\C(-,d_X^0)}{\Longrightarrow}\C(-,X^1)\ov{\C(-,d_X^1)}{\Longrightarrow}\E(-,C),&\label{ExXAC2}
\end{eqnarray}
are exact, and similarly for $Y^{\mr}$. Then for any morphism $f^{\mr}=(\id_A,f^1,\id_C)\in\CACt(X^{\mr},Y^{\mr})$, the following are equivalent.
\begin{enumerate}
\item $f^{\mr}$ is a homotopy equivalence in $\CACt$.
\item $f^{\mr}$ is an isomorphism in $\CACt$.
\item $f^1$ is an isomorphism in $\C$.
\end{enumerate}
Thus $X^{\mr},Y^{\mr}$ are homotopically equivalent in $\CACt$ if and only if they are equivalent in the sense of \cite[Definition 2.7]{NP}.
\end{lem}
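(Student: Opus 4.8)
The plan is to prove the cycle of implications $(3)\Rightarrow(2)\Rightarrow(1)\Rightarrow(3)$ and then deduce the final sentence by unwinding the definition of equivalence in \cite[Definition 2.7]{NP}. The implication $(2)\Rightarrow(1)$ is trivial since any isomorphism in $\CACt$ is in particular a homotopy equivalence. For $(3)\Rightarrow(2)$, suppose $f^1$ is an isomorphism in $\C$; then $g^{\mr}=(\id_A,(f^1)\iv,\id_C)$ is a morphism of complexes in the other direction (one checks the two square-commutativity conditions $d_Y^0=(f^1)\iv\circ d_X^0$ dualizes to $(f^1)\circ d_Y^0 = d_X^0$, which follows from $f^1\circ d_X^0 = d_Y^0$, and similarly $d_X^1 = d_Y^1\circ f^1$), and $f^{\mr}\circ g^{\mr}=\id$, $g^{\mr}\circ f^{\mr}=\id$ on the nose, so $f^{\mr}$ is an isomorphism in $\CACt$.

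The substantial implication is $(1)\Rightarrow(3)$. Here I would use the exactness hypotheses $(\ref{ExXAC1})$ and $(\ref{ExXAC2})$ together with the assumed homotopy inverse. Let $g^{\mr}=(\id_A,g^1,\id_C)\in\CACt(Y^{\mr},X^{\mr})$ be a homotopy inverse, with homotopies $g^{\mr}\circ f^{\mr}\sim\id_{X^{\mr}}$ and $f^{\mr}\circ g^{\mr}\sim\id_{Y^{\mr}}$. A homotopy in $\CACt$ between endomorphisms fixing the end terms consists of a single map $\vp^1\colon X^1\to X^0=A$ (and in degree $2$, $\vp^2\colon C\to X^1$) satisfying $\vp^1\circ d_X^0=0$, $d_X^1\circ\vp^2=0$, and $g^1\circ f^1-\id_{X^1}=d_X^0\circ\vp^1+\vp^2\circ d_X^1$. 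Now I apply $\C(-,X^1)$ and use that $\vp^1$ kills $d_X^0$: by exactness of $(\ref{ExXAC2})$ at $\C(-,X^1)$, the map $\vp^1$ (as an element of $\C(X^1,A)$ precomposed appropriately) factors — more directly, $\vp^1$ with $\vp^1\circ d_X^0=0$ means $\vp^1$ lies in the image of $\C(d_X^1,A)\colon\C(C,A)\to\C(X^1,A)$ is not quite what is needed; instead I use that $d_X^0$ is, up to the extension $\del$, a "weak kernel" so that $d_X^0\circ\vp^1$ and $\vp^2\circ d_X^1$ contribute nilpotent-type corrections. The cleanest route: show directly that $g^1\circ f^1$ and $f^1\circ g^1$ are isomorphisms. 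From $g^1\circ f^1 = \id_{X^1}+d_X^0\circ\vp^1+\vp^2\circ d_X^1$, compose on the left with $d_X^1$: since $d_X^1\circ d_X^0=0$ and $d_X^1\circ\vp^2=0$, we get $d_X^1\circ g^1\circ f^1 = d_X^1$, and compose on the right with $d_X^0$: since $\vp^1\circ d_X^0=0$, we get $g^1\circ f^1\circ d_X^0=d_X^0$. Similarly for $f^1\circ g^1$ using the other homotopy. Then I would argue that an endomorphism $e$ of $X^1$ with $e\circ d_X^0 = d_X^0$ and $d_X^1\circ e = d_X^1$, together with exactness of $(\ref{ExXAC1})$–$(\ref{ExXAC2})$, forces $e$ to be an isomorphism: indeed $\id - e$ kills $d_X^0$ on the right hence (by $(\ref{ExXAC2})$) factors through $d_X^1$, and is killed by $d_X^1$ on the left hence (by $(\ref{ExXAC1})$) factors through $d_X^0$; writing $\id-e = d_X^0\circ s$ and $\id - e = t\circ d_X^1$ one gets $(\id-e)^2 = d_X^0\circ s\circ t\circ d_X^1$, and $d_X^1\circ d_X^0 = 0$ gives $(\id - e)^2 = 0$, so $e = \id - (\id-e)$ is invertible with inverse $\id + (\id - e)$. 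Applying this to $e = g^1\circ f^1$ and $e = f^1\circ g^1$ shows both composites are isomorphisms, whence $f^1$ is an isomorphism. This nilpotence argument is the heart of the matter, and the main obstacle is bookkeeping the homotopy data correctly so that the mixed terms $d_X^0\circ\vp^1$ and $\vp^2\circ d_X^1$ can be separated as claimed — in particular checking that the factorizations through $d_X^0$ and $d_X^1$ really are available from $(\ref{ExXAC1})$ and $(\ref{ExXAC2})$ as stated.

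Finally, for the last sentence: by \cite[Definition 2.7]{NP}, two such $3$-term sequences $X^{\mr}$ and $Y^{\mr}$ with the same end terms are declared equivalent precisely when there is a morphism $f^{\mr}=(\id_A,f^1,\id_C)$ between them with $f^1$ an isomorphism. By the equivalence $(1)\Leftrightarrow(3)$ just established, this holds if and only if there is a homotopy equivalence $X^{\mr}\to Y^{\mr}$ in $\CACt$, i.e. exactly when $X^{\mr}$ and $Y^{\mr}$ are homotopically equivalent in $\CACt$. (One should note the relation in \cite{NP} is a priori given by a zig-zag, but since $(3)\Rightarrow(2)$ shows such an $f^{\mr}$ is already an isomorphism, the zig-zag collapses and matches our notion on the nose.)
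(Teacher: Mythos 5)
Your argument is correct in substance but proves the key implication $(1)\Rightarrow(3)$ by a genuinely different route. The paper first uses Claim~\ref{ClaimHtpyReduced}~{\rm (1)} to replace the homotopy $g^{\mr}\ci f^{\mr}\sim\id_{X^{\mr}}$ by one of the form $(0,\vp^2)$, and then writes down an explicit one-sided inverse $x=g^1+\vp^2\ci d_Y^1$ of $f^1$ (dually a right inverse). You keep the homotopy as it is, note that $e=g^1\ci f^1$ satisfies $e\ci d_X^0=d_X^0$ and $d_X^1\ci e=d_X^1$, use the exactness hypotheses to factor $\id-e$ both as $d_X^0\ci s$ and as $t\ci d_X^1$, and conclude that $\id-e$ is square-zero, so $e$ is invertible; repeating for $f^1\ci g^1$ gives that $f^1$ is an isomorphism. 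Both proofs consume the same exactness input (yours evaluated at $Z=X^1$ and $Z=Y^1$, the paper's at $Z=A$ inside Claim~\ref{ClaimHtpyReduced}), so neither is cheaper; yours trades the homotopy-reduction step for the nilpotency trick. The remaining items, $(3)\Rightarrow(2)$, $(2)\Rightarrow(1)$ and the identification with equivalence in the sense of \cite[Definition 2.7]{NP}, agree with the paper.

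Two small repairs are needed. First, the order of substitution in the square-zero computation matters: the correct pairing is $(\id-e)^2=(t\ci d_X^1)\ci(d_X^0\ci s)=t\ci(d_X^1\ci d_X^0)\ci s=0$, whereas the expression you wrote, $d_X^0\ci s\ci t\ci d_X^1$, has $s\ci t$ in the middle and is not visibly zero; indeed, expanding $(\id-e)^2$ directly from the homotopy gives exactly a term of the form $d_X^0\ci\vp^1\ci\vp^2\ci d_X^1$, which is why the factorizations supplied by exactness are genuinely needed and why the factors must be paired as above. Since you have both factorizations in hand, this is a one-line fix. Second, your citations are interchanged: the factorization $\id-e=t\ci d_X^1$ (a map out of $X^1$ killed by precomposition with $d_X^0$) comes from $(\ref{ExXAC1})$, while $\id-e=d_X^0\ci s$ (a map into $X^1$ killed by postcomposition with $d_X^1$) comes from $(\ref{ExXAC2})$.
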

\begin{proof}
$(2)\Leftrightarrow(3)$ is obvious. $(2)\Rightarrow(1)$ is also trivial.
Let us show that {\rm (1)} implies {\rm (3)}.
Suppose that $f^{\mr}$ has a homotopy inverse $g^{\mr}\in\CACt(Y^{\mr},X^{\mr})$. 

Let $g^{\mr}\ci f^{\mr}\un{\vp^{\mr}}{\sim}\id_{X^{\mr}}$ be a homotopy. By Claim~\ref{ClaimHtpyReduced}, we may assume that $\vp^{\mr}$ is of the form $\vp^{\mr}=(0,\vp^2)$. Then we have $\vp^2\ci d_X^1=\id-g^1\ci f^1$. Thus $x=g^1+\vp^2\ci d_Y^1$ satisfies
\[ x\ci f^1=g^1\ci f^1+\vp^2\ci d_X^1=\id, \]
and gives a left inverse of $f^1$ in $\C$. Similarly we can show that $f^1$ has a right inverse, which means it is an isomorphism.
\end{proof}

\begin{lem}\label{Lem1EAOcta}
Assume that $\CEs$ is a $1$-exangulated category, and suppose we are given distinguished $1$-exangles $A\ov{f}{\lra}B\ov{f\ppr}{\lra}D\ov{\del}{\dra}$
and $B\ov{g}{\lra}C\ov{g\ppr}{\lra}F\ov{\del\ppr}{\dra}$.
Remark that $h=g\ci f$ is an $\sfr$-inflation by {\rm (EA1)}, and thus there is also some distinguished $1$-exangle $A\ov{h}{\lra}C\ov{h\ppr}{\lra}E\ov{\del\pprr}{\dra}$.

Then, there exist $d\in\C(D,E)$ and $e\in\C(E,F)$ which satisfy the following conditions.
\begin{itemize}
\item[{\rm (i)}] $D\ov{d}{\lra}E\ov{e}{\lra}F\ov{f\ppr\sas\del\ppr}{\dra}$ is a distinguished $1$-exangle.
\item[{\rm (ii)}] $d\uas\del\pprr=\del$.
\item[{\rm (iii)}] $f\sas\del\pprr=e\uas\del\ppr$.
\end{itemize}
\end{lem}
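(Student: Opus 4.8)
The plan is to recover the classical octahedral axiom from the extriangulated structure, now phrased in terms of good lifts and mapping cones. First I would apply Proposition~\ref{PropShiftedReal} (or directly the axiom {\rm (EA2)}) to the commutative square
\[
\xy
(-6,6)*+{A}="0";
(6,6)*+{B}="2";
(-6,-6)*+{A}="4";
(6,-6)*+{C}="6";
{\ar^{f} "0";"2"};
{\ar@{=} "0";"4"};
{\ar^{g} "2";"6"};
{\ar_{h} "4";"6"};
{\ar@{}|\circlearrowright "0";"6"};
\endxy
\]
comparing the distinguished $1$-exangle $A\ov{f}{\lra}B\ov{f\ppr}{\lra}D\ov{\del}{\dra}$ with $A\ov{h}{\lra}C\ov{h\ppr}{\lra}E\ov{\del\pprr}{\dra}$. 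This produces a morphism of $1$-exangles $\vp^{\mr}=(\id_A,g,d)\co\langle X^{\mr},\del\rangle\to\langle Z^{\mr},\del\pprr\rangle$ for some $d\co D\to E$, and by part {\rm (2)} of that proposition the lift can be chosen to be a \emph{good} lift, so that the mapping cone $\langle M_{\vp}^{\mr},f\sas\del\pprr\rangle$ is a distinguished $1$-exangle. Here $M_{\vp}^{\mr}$ in degrees $n,n+1$ (i.e. $1,2$ since $n=1$) reads $B\oplus D\to C\oplus D\to E$; wait, I must be careful with the indexing for $n=1$: the mapping cone is $X^1\ov{d_{M_\vp}^0}{\lra}X^2\oplus Y^1\ov{d_{M_\vp}^1}{\lra}Y^2$, which concretely is $B\ov{[-f\ppr,\,g]^{t}}{\lra}D\oplus C\ov{[d,\,h\ppr]}{\lra}E$ with realization $f\sas\del\pprr$. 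The relation $a\sas\del=c\uas\rho$ for the morphism $\vp^{\mr}$ of extensions gives precisely {\rm (ii)}: $d\uas\del\pprr=\id_A{}_{\ast}\del=\del$.

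Next I would extract condition {\rm (i)} from this mapping cone $1$-exangle by Corollary~\ref{CorSummandOut}: the complex $B\ov{\left[\bsm -f\ppr\\ g\esm\right]}{\lra}D\oplus C\ov{[d\ h\ppr]}{\lra}E\ov{f\sas\del\pprr}{\dra}$ has the morphism $g\co B\to C$ appearing with an identity-like block after composing with the section-type map $D\oplus C$, so one can split off the contractible summand $B\ov{\id}{\lra}B$ (using that $B\ov{g}{\lra}C\ov{g\ppr}{\lra}F\ov{\del\ppr}{\dra}$ is distinguished, hence $g$ is an inflation with cokernel-type data $g\ppr$). More precisely, I would first rewrite the middle term using the distinguished $1$-exangle for $g$: there is an isomorphism in $\AE$ (in the sense of Corollary~\ref{CorDEAInv}) identifying the mapping cone with a coproduct of $B\ov{\id}{\lra}B\to0$ and a $1$-exangle of the form $D\ov{d}{\lra}E\ov{e}{\lra}F\ov{\eta}{\dra}$ for suitable $e\co E\to F$ and $\eta\in\E(F,D)$. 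Then Proposition~\ref{PropSummandOut} shows that $D\ov{d}{\lra}E\ov{e}{\lra}F\ov{\eta}{\dra}$ is itself distinguished; tracking the realization through the isomorphism shows $\eta=f\ppr\sas\del\ppr$, giving {\rm (i)}.

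Finally, condition {\rm (iii)} should fall out of comparing how $\del\ppr$ and $\del\pprr$ transform. I would apply Proposition~\ref{PropShiftedReal} once more (in its dual form, or via {\rm (R0)}) to the square relating $B\ov{g}{\lra}C\ov{g\ppr}{\lra}F\ov{\del\ppr}{\dra}$ and $A\ov{h}{\lra}C\ov{h\ppr}{\lra}E\ov{\del\pprr}{\dra}$ with the morphism $(f,\id_C)$, obtaining a morphism of $1$-exangles whose components include $h\ppr=e\ci g\ppr$ (after identifying $E$ appropriately) and the defining equality $f\sas\del\ppr = (g\ppr)\uas(\text{something})$; combined with the functoriality relation $e\uas\del\ppr$ and the already-established {\rm (i)}, {\rm (ii)}, one deduces $f\sas\del\pprr=e\uas\del\ppr$. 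The main obstacle I anticipate is the bookkeeping in the second step: extracting the clean $3$-term distinguished $1$-exangle $D\ov{d}{\lra}E\ov{e}{\lra}F$ from the mapping cone requires producing the right isomorphism in $\AE$ that isolates the contractible $B$-summand, and verifying that the realization really becomes $f\ppr\sas\del\ppr$ rather than merely something cohomologous to it — this is exactly the point where Corollary~\ref{CorSummandOut} and the precise form of $\del\oplus\rho$ from Definition~\ref{DefSumExtension} must be invoked carefully. The $n=1$ indexing in \texttt{DefMapCone} also needs a moment's attention since the generic formulas list three blocks $d_{M_f}^0, d_{M_f}^i, d_{M_f}^n$ but for $n=1$ only $d_{M_f}^0$ and $d_{M_f}^1=d_{M_f}^n$ survive.
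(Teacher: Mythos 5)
Your first step is exactly the paper's: Proposition~\ref{PropShiftedReal}\,(2) applied to the square $g\ci f=h\ci\id_A$ yields a good lift $(\id_A,g,d)$, hence the distinguished $1$-exangle $B\ov{u}{\lra}D\oplus C\ov{[d\ h\ppr]}{\lra}E\ov{f\sas\del\pprr}{\dra}$ with $u=\left[\bsm -f\ppr\\ g\esm\right]$, and the morphism-of-extensions condition gives {\rm (ii)}. The gap is in your second step. You claim this mapping cone is isomorphic in $\AE$ to the coproduct of $B\ov{\id}{\lra}B\to0$ with some exangle $D\ov{d}{\lra}E\ov{e}{\lra}F\ov{\eta}{\dra}$. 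That coproduct has underlying complex $B\oplus D\to B\oplus E\to F$, while the mapping cone is $B\to D\oplus C\to E$: the degree-$0$ terms differ ($B\oplus D$ versus $B$) and, worse, the degree-$2$ terms would have to satisfy $E\cong F$, which fails in general (if $g$ is an isomorphism then $F=0$, while $E\ne0$ whenever $\del\ne0$); indeed $F$ does not occur in the first mapping cone at all. Nor is Corollary~\ref{CorSummandOut} applicable to $B\to D\oplus C\to E$: it needs a differential between terms of the split form $X^0\oplus A\to X^1\oplus A$ with an identity block, which is not present here. So {\rm (i)} cannot be extracted from the first mapping cone alone.

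The missing idea is a \emph{second} application of Proposition~\ref{PropShiftedReal}\,(2) (i.e.\ of {\rm (EA2)}), comparing the first mapping cone $\langle B\ov{u}{\lra}D\oplus C\ov{[d\ h\ppr]}{\lra}E,\ f\sas\del\pprr\rangle$ with $\langle B\ov{g}{\lra}C\ov{g\ppr}{\lra}F,\ \del\ppr\rangle$ through the commutative square $[0\ 1]\ci u=g\ci\id_B$. The resulting lift $(\id_B,[0\ 1],e)$ is a morphism in $\AE$, so $e\uas\del\ppr=f\sas\del\pprr$, which is {\rm (iii)} with the same $e$ that will occur in {\rm (i)}; and its mapping cone $D\oplus C\ov{v}{\lra}E\oplus C\ov{[e\ g\ppr]}{\lra}F\ov{u\sas\del\ppr}{\dra}$, $v=\left[\bsm -d&-h\ppr\\ 0&1\esm\right]$, is distinguished. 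This complex \emph{does} have the identity block, so Corollary~\ref{CorSummandOut} strips off the $C$-summand and yields the distinguished $1$-exangle $D\ov{-d}{\lra}E\ov{e}{\lra}F\ov{-f\ppr\sas\del\ppr}{\dra}$ (since $[1\ 0]\ci u=-f\ppr$); an evident isomorphism together with Corollary~\ref{CorDEAInv} removes the signs and gives {\rm (i)}. Note also that your third step, even if carried out, only produces some morphism $(f,\id_C,e\pprr)$ with $f\sas\del\pprr=(e\pprr)\uas\del\ppr$ for a possibly different $e\pprr$, and nothing forces $e\pprr=e$; the lemma requires {\rm (i)}--{\rm (iii)} for a single pair $(d,e)$, which is exactly what the second mapping-cone step delivers.
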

\begin{proof}
This is an analog of \cite[3.5]{Hu}. By Proposition~\ref{PropShiftedReal} {\rm (2)}, there is $d\in\C(D,E)$ which satisfies $d\ci f\ppr=h\ppr\ci g$, $d\uas\del\pprr=\del\ppr$
and makes
\[ B\ov{u}{\lra}D\oplus C\ov{[d\ h\ppr]}{\lra}E\ov{f\sas\del\pprr}{\dra} \]
a distinguished $1$-exangle for $u=\begin{bmatrix}-f\ppr\\ g\end{bmatrix}$. Again by the same proposition applied to the following,
\[
\xy
(-24,6)*+{B}="0";
(-7,6)*+{D\oplus C}="2";
(8,6)*+{E}="4";
(20,6)*+{}="6";
(-24,-6)*+{B}="10";
(-7,-6)*+{C}="12";
(8,-6)*+{F}="14";
(20,-6)*+{}="16";
{\ar^(0.4){u} "0";"2"};
{\ar^(0.62){[d\ h\ppr]} "2";"4"};
{\ar@{-->}^{f\sas\del\pprr} "4";"6"};
{\ar@{=} "0";"10"};
{\ar^{[0\ 1]} "2";"12"};
{\ar_{g} "10";"12"};
{\ar_{g\ppr} "12";"14"};
{\ar@{-->}_{\del\ppr} "14";"16"};
{\ar@{}|\circlearrowright "0";"12"};
\endxy
\]
we obtain $e\in\C(E,F)$ which satisfies $e\ci [d\ h\ppr]=g\ppr\ci [0\ 1]$,
$e\uas\del\ppr=f\sas\del\pprr$
and makes
\begin{equation}\label{DCEC}
D\oplus C\ov{v}{\lra}E\oplus C\ov{[e\ g\ppr]}{\lra}F\ov{u\sas\del\ppr}{\dra}
\end{equation}
a distinguished $1$-exangle for $v=\left[\begin{array}{cc}-d&-h\ppr\\0&1\end{array}\right]$.
Thus Corollary \ref{CorSummandOut} shows that
\[ D\ov{-d}{\lra}E\ov{e}{\lra}F\ov{-f\ppr\sas\del\ppr}{\dra} \]
is a distinguished $1$-exangle. This is isomorphic to $D\ov{d}{\lra}E\ov{e}{\lra}F\ov{f\ppr\sas\del\ppr}{\dra}$, and thus Corollary \ref{CorDEAInv} can be applied.
\end{proof}

\begin{prop}\label{Prop1EAandET}
Let $\C$ and $\E$ be as before. Then, a triplet $\CEs$ is a $1$-exangulated category if and only if it is an extriangulated category.
\end{prop}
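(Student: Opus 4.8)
The plan is to prove the two implications separately by carefully unwinding both sets of axioms, using the $n=1$ specializations of the machinery already developed in Section~\ref{section_EA} together with the octahedron-type Lemma~\ref{Lem1EAOcta}. First I would recall the axioms of an extriangulated category from \cite{NP}: a biadditive $\E$, an additive realization of $\E$ taking values in equivalence classes of $3$-term complexes (ET1--ET3), plus the two axioms (ET4) and (ET4$\op$). The key preliminary observation, supplied by Lemma~\ref{Lem1EAandET}, is that for $3$-term complexes satisfying the exactness conditions of an $n$-exangle, the homotopy equivalence relation in $\Cbf^3_{(A,C)}$ coincides exactly with the equivalence relation of \cite[Definition~2.7]{NP}; this lets me freely translate ``$\sfr(\del)=[X^{\mr}]$'' between the two frameworks without ambiguity.

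For the direction ``$n$-exangulated $\Rightarrow$ extriangulated'', I would check the \cite{NP} axioms one at a time. Biadditivity of $\E$ and additivity of $\sfr$ are part of Definition~\ref{DefEACat} and Definition~\ref{DefReal}. The exactness condition built into the definition of $1$-exangle (Definition~\ref{DefEA}) gives precisely the long exact sequences in \cite{NP} connecting $\C(-,A)\to\C(-,B)\to\C(-,C)\to\E(-,A)\to\cdots$, and (R2) gives the behaviour of split extensions. The content is in deriving (ET3) and (ET4): (ET3) follows from (R0) (existence of lifts of morphisms of extensions) combined with Lemma~\ref{LemOneExact} essentially as in Proposition~\ref{PropShiftedReal}; and (ET4) is exactly the statement of Lemma~\ref{Lem1EAOcta}, whose proof already uses (EA1), (EA2), Corollary~\ref{CorSummandOut} and Proposition~\ref{PropShiftedReal}. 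So this direction is mostly a matter of matching up names, once Lemma~\ref{Lem1EAOcta} is in hand.

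For the converse ``extriangulated $\Rightarrow$ $1$-exangulated'', I start from an extriangulated $\CEs$ and must verify that $\sfr$ is an exact realization in the sense of Definition~\ref{DefReal} and that (EA1), (EA2), (EA2$\op$) hold. Here (R1) is the exactness built into the \cite{NP} definition, (R0) is \cite{NP}'s (ET3), and (R2) is the normalization of split conflations. Axiom (EA1) (composability of inflations, resp.\ deflations) is a known consequence of \cite[ET4]{NP} --- it is part of the basic theory of extriangulated categories. The genuinely new-looking axiom is (EA2): given distinguished $1$-exangles realizing $c\uas\rho$ and $\rho$, one must produce a \emph{good} lift $f^{\mr}$ of $(\id_A,c)$, i.e.\ a lift whose mapping cone $\langle M_f,(d_X^0)\sas\rho\rangle$ is again distinguished. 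The plan is to take any lift from (ET3), form its mapping cone, and identify $\langle M_f,(d_X^0)\sas\rho\rangle$ with the conflation produced by the octahedron axiom (ET4) applied to the composite inflation --- the mapping cone in the $n=1$ case is literally the middle object appearing in \cite[ET4]{NP}. One then uses Corollary~\ref{CorMapConeHomotopic} and Corollary~\ref{CorMapCone} to see that goodness is independent of the choice of lift and of representatives, so it suffices to check it for one convenient lift.

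The main obstacle I anticipate is the bookkeeping in (EA2): translating the mapping-cone construction of Definition~\ref{DefMapCone} in the case $n=1$ into the precise shape of the object and morphisms appearing in \cite[ET4]{NP}, and checking that the extension $(d_X^0)\sas\rho$ attached to the cone matches the one (ET4) prescribes, including signs. The dual axiom (EA2$\op$) is then handled by the dual of this argument together with (ET4$\op$), and the equivalence of (ET4) with (ET4$\op$) for extriangulated categories --- or, going the other way, the fact that our framework already builds in both (EA2) and (EA2$\op$) symmetrically --- closes the loop. Everything else (additivity, exactness of the Yoneda sequences, behaviour of direct sums via Proposition~\ref{PropSummandOut} and Remark~\ref{RemEACoprod}) is routine once the dictionary is set up.
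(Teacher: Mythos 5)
Your forward direction coincides with the paper's proof: the dictionary of Lemma~\ref{Lem1EAandET}, (ET2) from {\rm (R0)}, {\rm (R2)} and Proposition~\ref{PropSummandOut}, (ET3) from Proposition~\ref{PropShiftedReal}, and (ET4) from Lemma~\ref{Lem1EAOcta}. The converse also agrees with the paper through {\rm (R0)}, {\rm (R2)} (from (ET2)) and {\rm (EA1)} (from (ET4), \cite[Remark 2.16]{NP}); one small imprecision is that {\rm (R1)} and the identification of the two equivalence relations are not ``built into'' the definition of an extriangulated category --- they are consequences, namely \cite[Proposition 3.3]{NP} combined with Lemma~\ref{Lem1EAandET}.

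The gap is at {\rm (EA2)}. The paper does not re-derive it: it quotes the dual of \cite[Proposition 1.20]{LN}, which is verbatim the $n=1$ form of {\rm (EA2)} (existence of a lift $(\id_A,f^1,c)$ of $(\id_A,c)\co c\uas\rho\to\rho$ for which $X^1\to C\oplus Y^1\to D$ together with $(d_X^0)\sas\rho$ is an $\sfr$-triangle). Your replacement argument has two problems. First, ``take any lift from (ET3)'' does not suffice: Corollary~\ref{CorMapConeHomotopic} and Remark~\ref{RemEA2} only give that goodness is invariant under \emph{homotopy} of lifts (and, via Corollary~\ref{CorMapCone}, under change of representatives of $[X^{\mr}]$, $[Y^{\mr}]$); two lifts of $(\id_A,c)$ need not be homotopic in this generality, and an arbitrary lift need not be good --- the existence of a good one is precisely the content of the axiom. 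Second, the proposed identification of $\langle \Mf,(d_X^0)\sas\rho\rangle$ with ``the conflation produced by (ET4) applied to the composite inflation'' does not parse: (ET4) takes as input two composable inflations, and the data of {\rm (EA2)} --- two conflations out of the same object $A$ and a comparison morphism $c$ between their cones --- contains no such composite. What is actually needed is the good-lift/homotopy-pullback statement (the dual of \cite[Proposition 1.20]{LN}; compare \cite[Proposition 3.15]{NP}), whose derivation from (ET4)/(ET4$\op$) is a genuine argument and cannot be read off as the middle row of a single octahedron. So either cite that proposition, as the paper does, or supply its proof in full; as written the decisive step of the converse is missing. (Also, {\rm (EA2$\op$)} requires no ``equivalence of (ET4) with (ET4$\op$)'': both are axioms in \cite[Definition 2.12]{NP}, and {\rm (EA2$\op$)} follows from \cite[Proposition 1.20]{LN} itself.)
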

\begin{proof}
First, suppose $\CEs$ is a $1$-exangulated category, and show it is an extriangulated category. By duality, let us just confirm conditions {\rm (ET1),(ET2),(ET3),(ET4)} in \cite[Definition 2.12]{NP}.

{\rm (ET1)} is already assumed. By Lemma~\ref{Lem1EAandET}, the homotopy equivalence class $\sfr(\del)=[X^{\mr}]$ is equal to the equivalence class of $X^{\mr}$ in the sense of \cite[Definition 2.7]{NP} for any extension $\del$. Thus {\rm (ET2)} follows from {\rm (R0),(R2)} and Proposition~\ref{PropSummandOut}. {\rm (ET3)} is shown in Proposition~\ref{PropShiftedReal}. Lemma~\ref{Lem1EAOcta} shows {\rm (ET4)}.

Conversely, suppose $\CEs$ is an extriangulated category. By \cite[Proposition 3.3]{NP}, sequences $(\ref{ExXAC1})$ and $(\ref{ExXAC2})$ are exact for any $X^{\mr}$ realizing an extension $\del$. Thus the equivalence class of $X^{\mr}$ in the sense of \cite{NP} is equal to the homotopy equivalence class of $X^{\mr}$ in $\CACt$, by Lemma~\ref{Lem1EAOcta}.  Similarly as above, let us just confirm conditions {\rm (R0),(R1),(R2)} and {\rm (EA1),(EA2)}.

{\rm (R0),(R2)} follow from {\rm (ET2)}. {\rm (R1)} is shown in \cite[Proposition 3.3]{NP}. {\rm (EA1)} follows from {\rm (ET4)}, as stated in \cite[Remark 2.16]{NP}. {\rm (EA2)} follows from the dual of \cite[Proposition 1.20]{LN}.
\end{proof}

\subsection{$(n+2)$-angulated categories}

In this subsection, we consider the case where the additive category $\C$ is equipped with an automorphism $\Sig\co\C\ov{\cong}{\lra}\C$. Then $\Sig$ gives a biadditive functor $\E_{\Sig}=\C(-,\Sig-)\co\C\op\ti\C\to\Ab$, defined by the following.
\begin{itemize}
\item[{\rm (i)}] For any $A,C\in\C$, $\E_{\Sig}(C,A)=\C(C,\Sig A)$.
\item[{\rm (ii)}] For any $a\in\C(A,A\ppr)$ and $c\in\C(C\ppr,C)$, the map $\E_{\Sig}(c,a)\co\C(C,\Sig A)\to\C(C\ppr,\Sig A\ppr)$ sends $\del\in\C(C,\Sig A)$ to $c\uas a\sas\del=(\Sig a)\ci\del\ci c$.
\end{itemize}
The aim of this subsection is to show the equivalence of the following {\rm (I)} and {\rm (II)}.
\begin{itemize}
\item[{\rm (I)}] To give a class of $(n+2)$-$\Sigma$-sequences $\square$ which makes $(\C,\Sig,\square)$ an $(n+2)$-angulated category in the sense of \cite{GKO}.
\item[{\rm (II)}]  To give an exact realization $\sfr$ of $\E_{\Sig}$ which makes $(\C,\E_{\Sig},\sfr)$ an $n$-exangulated category.
\end{itemize}

First let us show that {\rm (I)} implies {\rm (II)}. 
Let $(\C,\Sig,\square)$ be an $(n+2)$-angulated category as in \cite{GKO}. We assume that $\Sig$ is an automorphism as above.
For each $\del\in\E_{\Sig}(C,A)$, complete it into an $(n+2)$-angle
\[ A\ov{d_X^0}{\lra}X^1\ov{d_X^1}{\lra}X^2\ov{d_X^2}{\lra}\cdots\ov{d_X^{n-1}}{\lra}X^n\ov{d_X^n}{\lra}C\ov{\del}{\lra}\Sig A \]
by {\rm (F1)\,(c)} and {\rm (F2)} in \cite{GKO}. Then define $\sfr_{\square}(\del)=[X^{\mr}]$ by using $X^{\mr}\in\CAC$ given by
\[ X^0\ov{d_X^0}{\lra}X^1\ov{d_X^1}{\lra}X^2\ov{d_X^2}{\lra}\cdots\ov{d_X^{n-1}}{\lra}X^n\ov{d_X^n}{\lra}X^{n+1}\quad(X^0=A,\, X^{n+1}=C). \]

\begin{lem}\label{LemWellDefReal}
For each ${}_A\del_C$, the above $\sfr_{\square}(\del)=[X^{\mr}]$ is well-defined.
\end{lem}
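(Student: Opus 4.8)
The strategy is to reduce to Proposition~\ref{PropCAC1}. Suppose $X^{\mr}$ and $Y^{\mr}$ are the truncations of two $(n+2)$-angles both realizing the same ${}_A\del_C$, say $A\ov{d_X^0}{\lra}X^1\to\cdots\to X^n\ov{d_X^n}{\lra}C\ov{\del}{\lra}\Sig A$ and the analogous one for $Y^{\mr}$. It is enough to show that $\Xd$ and $\Yd$ are $n$-exangles and that there are morphisms in $\CAC$ between $X^{\mr}$ and $Y^{\mr}$ in both directions; then Proposition~\ref{PropCAC1} forces any such morphism to be a homotopy equivalence in $\CAC$, so that $[X^{\mr}]=[Y^{\mr}]$ and $\sfr_{\square}(\del)$ does not depend on the chosen $(n+2)$-angle.

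For the first point, recall that applying $\C(Q,-)$ or $\C(-,Q)$ to an $(n+2)$-angle produces a long exact sequence of abelian groups, by the basic homological properties of $(n+2)$-angulated categories \cite{GKO}. Under the identification $\E_{\Sig}(-,A)=\C(-,\Sig A)$ (respectively $\E_{\Sig}(C,-)=\C(C,\Sig -)$) the connecting map of this sequence is exactly the map $\del\ssh$ (respectively $\del\ush$) of Definition~\ref{DefYoneda}, so the sequences $(\ref{exnat1})$ and $(\ref{exnat2})$ of Definition~\ref{DefEA} are exact; hence $\Xd$ and $\Yd$ are $n$-exangles.

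For the second point, I would build $f^{\mr}\in\CAC(X^{\mr},Y^{\mr})$ using axiom {\rm (F3)}. Rotating once with {\rm (F2)} brings the $X$-angle into the form $\Sig\iv C\ov{-\Sig\iv\del}{\lra}A\ov{d_X^0}{\lra}X^1\to\cdots\to X^n$ with connecting morphism $d_X^n\co X^n\to C$, and similarly for $Y^{\mr}$, the first morphisms $-\Sig\iv\del$ being the same for both. Applying {\rm (F3)} to the commutative square given by $\id_{\Sig\iv C}$ and $\id_A$ extends it to a morphism of these rotated $(n+2)$-angles; its components in the degrees $A,X^1,\ldots,X^n$, together with the identity on $C$ (forced by compatibility with the connecting morphisms), yield the required $f^{\mr}=(\id_A,f^1,\ldots,f^n,\id_C)\in\CAC(X^{\mr},Y^{\mr})$. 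Exchanging the roles of $X$ and $Y$ produces some $g^{\mr}\in\CAC(Y^{\mr},X^{\mr})$ as well.

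Combining the two points, Proposition~\ref{PropCAC1} applies and gives $[X^{\mr}]=[Y^{\mr}]$. The only place requiring care is bringing {\rm (F3)}, which as phrased in \cite{GKO} concerns a square on the first two terms of an $(n+2)$-angle, into a form that extends prescribed maps on the outer terms; this is exactly what the single rotation above achieves, and the remaining steps are a direct appeal to the exactness properties of $(n+2)$-angles and to Proposition~\ref{PropCAC1}.
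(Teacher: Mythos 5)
Your argument is essentially the paper's own proof: the paper likewise obtains morphisms $f^{\mr}\in\CAC(X^{\mr},Y^{\mr})$ and $g^{\mr}\in\CAC(Y^{\mr},X^{\mr})$ from axioms {\rm (F2)} and {\rm (F3)} of \cite{GKO}, notes that $\Xd$ and $\Yd$ are $n$-exangles by \cite[Proposition 2.5]{GKO} and its dual, and concludes $[X^{\mr}]=[Y^{\mr}]$ via Proposition~\ref{PropCAC1}. Your extra detail on how the rotation makes {\rm (F3)} applicable (and the harmless sign $(-1)^n$ you omitted) only fills in what the paper leaves implicit, so the proof is correct.
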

\begin{proof}
Let $A\ov{d_Y^0}{\lra}Y^1\ov{d_Y^1}{\lra}Y^2\ov{d_Y^2}{\lra}\cdots\ov{d_Y^{n-1}}{\lra}Y^n\ov{d_Y^n}{\lra}C\ov{\del}{\lra}\Sig A$ be another choice of $(n+2)$-angle, and let
$Y^{\mr}$ be the corresponding object in $\CAC$ given by
\[ Y^0\ov{d_Y^0}{\lra}Y^1\ov{d_Y^1}{\lra}Y^2\ov{d_Y^2}{\lra}\cdots\ov{d_Y^{n-1}}{\lra}Y^n\ov{d_Y^n}{\lra}Y^{n+1}\quad(Y^0=A,\, Y^{n+1}=C). \]
Let us show $[X^{\mr}]=[Y^{\mr}]$. By {\rm (F2),(F3)} in \cite{GKO}, there is a morphism $f^{\mr}\in\CAC(X^{\mr},Y^{\mr})$ as follows.
\[
\xy
(-34,6)*+{A}="-2";
(-20,6)*+{X^1}="0";
(-6,6)*+{X^2}="2";
(7,6)*+{\cdots}="4";
(20,6)*+{X^n}="6";
(34,6)*+{C}="8";
(48,6)*+{}="9";
(-34,-6)*+{A}="-12";
(-20,-6)*+{Y^1}="10";
(-6,-6)*+{Y^2}="12";
(7,-6)*+{\cdots}="14";
(20,-6)*+{Y^n}="16";
(34,-6)*+{C}="18";
(48,-6)*+{}="19";
{\ar^{d_X^0} "-2";"0"};
{\ar^{d_X^1} "0";"2"};
{\ar^{d_X^2} "2";"4"};
{\ar^{d_X^{n-1}} "4";"6"};
{\ar^{d_X^n} "6";"8"};
%
{\ar_{d_Y^0} "-12";"10"};
{\ar_{d_Y^1} "10";"12"};
{\ar_{d_Y^2} "12";"14"};
{\ar_{d_Y^{n-1}} "14";"16"};
{\ar_{d_Y^n} "16";"18"};
%
{\ar@{=} "-2";"-12"};
{\ar_{f^1} "0";"10"};
{\ar_{f^2} "2";"12"};
{\ar_{f^n} "6";"16"};
{\ar@{=} "8";"18"};
{\ar@{}|\circlearrowright "-2";"10"};
{\ar@{}|\circlearrowright "0";"12"};
{\ar@{}|\circlearrowright "2";"14"};
{\ar@{}|\circlearrowright "6";"18"};
\endxy
\]
Similarly, there is $g^{\mr}\in\CAC(Y^{\mr},X^{\mr})$.
Remark that by \cite[Proposition 2.5]{GKO} and its dual, sequences $(\ref{exnat1})$ and $(\ref{exnat2})$ are exact for $X^{\mr}$ and $Y^{\mr}$, which means $\Xd$ and $\Yd$ are $n$-exangles. Thus $f^{\mr}$ is an homotopy equivalence by Proposition~\ref{PropCAC1}.
\end{proof}

\begin{prop}\label{PropAtoEA}
With the above definition, $(\C,\E_{\Sig},\sfr_{\square})$ becomes an $n$-exangulated category.
\end{prop}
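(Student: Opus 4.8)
The plan is to verify, in turn, that $\sfr_{\square}$ is a realization of $\E_{\Sig}$ (condition {\rm (R0)}), that it is exact (conditions {\rm (R1)} and {\rm (R2)}), and finally that the triplet $(\C,\E_{\Sig},\sfr_{\square})$ satisfies {\rm (EA1)}, {\rm (EA2)} and {\rm (EA2$\op$)}; that $\sfr_{\square}$ is a well-defined correspondence is already Lemma~\ref{LemWellDefReal}. For {\rm (R0)} I would observe that, under the identification $\E_{\Sig}(C,A)=\C(C,\Sig A)$, a morphism of extensions $(a,c)\co\del\to\rho$ is nothing but a commutative square $(\Sig a)\ci\del=\rho\ci c$; rotating the $(n+2)$-angles that realize $\del$ and $\rho$ by means of {\rm (F2)} of \cite{GKO} places this square at the left-hand end of two $(n+2)$-angles, so {\rm (F3)} of \cite{GKO} extends it to a morphism of $(n+2)$-angles, and rotating back (using that $\Sig$ is an automorphism) and truncating to degrees $0,\ldots,n+1$ produces a lift $f^{\mr}=(a,f^1,\ldots,f^n,c)$. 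Condition {\rm (R1)} is exactly the exactness of $(\ref{exnat1})$ and $(\ref{exnat2})$ for an $(n+2)$-angle, which is \cite[Proposition 2.5]{GKO} and its dual --- the input already invoked in the proof of Lemma~\ref{LemWellDefReal}. Condition {\rm (R2)} follows from the trivial $(n+2)$-angle $A\ov{\id_A}{\lra}A\to0\to\cdots\to0\to\Sig A$ with zero connecting morphism supplied by {\rm (F1)\,(b)} of \cite{GKO}, together with its rotation, which realize ${}_A0_0$ and ${}_00_A$ in the required form.

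Condition {\rm (EA1)} I expect to be immediate: by {\rm (F1)\,(c)} every morphism of $\C$ can be completed to an $(n+2)$-angle, and the truncation of any $(n+2)$-angle to degrees $0,\ldots,n+1$ is, by the very definition of $\sfr_{\square}$, an $\sfr_{\square}$-conflation; hence every morphism is simultaneously an $\sfr_{\square}$-inflation (appearing as some $d_X^0$) and an $\sfr_{\square}$-deflation (appearing as some $d_X^n$), so the closure-under-composition requirement holds trivially.

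The substantive part is {\rm (EA2)}, and here I would appeal to the higher octahedral axiom {\rm (F4)} of \cite{GKO}. Given $\rho\in\E_{\Sig}(D,A)=\C(D,\Sig A)$, $c\in\C(C,D)$ and distinguished $n$-exangles ${}_A\langle X^{\mr},c\uas\rho\rangle_C$ and ${}_A\Yr_D$, I would first use Remark~\ref{RemEA2} (independence of {\rm (EA2)} from the chosen representatives) to reduce to the case that $X^{\mr}$ and $Y^{\mr}$ are literally the degree-$0,\ldots,n+1$ truncations of $(n+2)$-angles $A\to X^1\to\cdots\to C\ov{c\uas\rho}{\lra}\Sig A$ and $A\to Y^1\to\cdots\to D\ov{\rho}{\lra}\Sig A$. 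Choosing a lift $f^{\mr}$ of $(\id_A,c)$ by {\rm (R0)}, the pair $\langle\Mf,(d_X^0)\sas\rho\rangle$ belongs to $\AE$ by Proposition~\ref{PropMapCone_EA}; what has to be shown is that $f^{\mr}$ can be chosen so that the complex $\Mf$ of Definition~\ref{DefMapCone}, equipped with its canonical differentials, is an $(n+2)$-angle with connecting morphism $(d_X^0)\sas\rho=(\Sig d_X^0)\ci\rho$. That is precisely what {\rm (F4)} of \cite{GKO} delivers when applied to the morphism of $(n+2)$-angles determined by the square at $C\ov{c}{\lra}D$: it produces at once a good lift and its mapping cone realized as an $(n+2)$-angle. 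Identifying the resulting connecting morphism with $(d_X^0)\sas\rho$ I would then carry out by the same bookkeeping as in the case $n=1$ worked out in Lemma~\ref{Lem1EAOcta}, stripping off split direct summands with Corollary~\ref{CorSummandOut} and normalising with Corollary~\ref{CorDEAInv}. Finally {\rm (EA2$\op$)} follows dually --- equivalently, from the dual of {\rm (F4)}, or by applying {\rm (EA2)} to the opposite $(n+2)$-angulated category $(\C\op,\Sig\iv)$, whose associated biadditive functor is $\E_{\Sig\iv}$. The step I expect to be the main obstacle is exactly this last translation inside {\rm (EA2)}: reconciling the large commutative diagram of $(n+2)$-angles appearing in \cite{GKO}'s formulation of {\rm (F4)} with the explicit mapping-cone complex of Definition~\ref{DefMapCone}, and checking that the connecting morphism it produces is precisely $(d_X^0)\sas\rho$.
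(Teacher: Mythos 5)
Your proposal follows essentially the same route as the paper's proof: {\rm (R0)} via {\rm (F2)} and {\rm (F3)}, {\rm (R1)} via \cite[Proposition 2.5]{GKO} and its dual, {\rm (R2)} via {\rm (F1)(b)} and rotation, {\rm (EA1)} trivially since every morphism is an inflation and a deflation, and {\rm (EA2)} via {\rm (F4)} followed by stripping off the split summand $A\ov{\id}{\lra}A\to0\to\cdots\to\Sig A$ and rotating back to identify the mapping cone with connecting morphism $(d_X^0)\sas\rho=(\Sig d_X^0)\ci\rho$, with {\rm (EA2$\op$)} dual. The one step you should make explicit is that {\rm (F4)} applies to a commutative square between the \emph{first} morphisms of two $(n+2)$-angles, so before invoking it both angles must be rotated by {\rm (F2)} to bring the morphism of extensions to the front (as the square on $\Sig\iv c$ and $\id_A$), which is exactly where the paper's sign and isomorphism bookkeeping takes place.
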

\begin{proof}
Let us confirm the conditions.
{\rm (R0)} follows from {\rm (F2)} and {\rm (F3)}.
{\rm (R1)} follows from \cite[Proposition 2.5]{GKO} and its dual.
{\rm (R2)} follows from {\rm (F1)(b)} and {\rm (F2)}.
{\rm (EA1)} becomes trivial, since any morphism is both inflation and deflation by {\rm (F2)}.

Let us show {\rm (EA2)}. The following argument has been given in (the dual of) \cite[Lemma 4.1]{BT}.
Let $c\in\C(C,D)$ be a morphism, and let $_A\langle X^{\mr},\del=c\uas\rho\rangle_C,{}_A\langle Y^{\mr},\rho\rangle_D$ be $n$-exangles. By the definition of $\sfr_{\square}$ and Remark~\ref{RemEA2} {\rm (2)}, we may assume that they correspond to $(n+2)$-angles
\begin{eqnarray*}
&A\ov{d_X^0}{\lra}X^1\ov{d_X^1}{\lra}\cdots\ov{d_X^{n-1}}{\lra}X^n\ov{d_X^n}{\lra}C\ov{\del}{\lra}\Sig A,&\\
&A\ov{d_Y^0}{\lra}Y^1\ov{d_Y^1}{\lra}\cdots\ov{d_Y^{n-1}}{\lra}Y^n\ov{d_Y^n}{\lra}D\ov{\rho}{\lra}\Sig A.&
\end{eqnarray*}
By {\rm (F2)}, we can \lq rotate' them to obtain $(n+2)$-angles
\begin{eqnarray*}
&\Sig\iv C\ov{(-1)^n\Sig\iv\del}{\lra}A\ov{d_X^0}{\lra}X^1\ov{d_X^1}{\lra}\cdots\ov{d_X^{n-1}}{\lra}X^n\ov{d_X^n}{\lra}C,&\\
&\Sig\iv D\ov{(-1)^n\Sig\iv\rho}{\lra}A\ov{d_Y^0}{\lra}Y^1\ov{d_Y^1}{\lra}\cdots\ov{d_Y^{n-1}}{\lra}Y^n\ov{d_Y^n}{\lra}D.&
\end{eqnarray*}
By {\rm (F4)}, we obtain a morphism of $(n+2)$-$\Sig$-sequences
\[
\xy
(-38,6)*+{\Sig\iv C}="-2";
(-20,6)*+{A}="0";
(-6,6)*+{X^1}="2";
(7,6)*+{\cdots}="4";
(20,6)*+{X^n}="6";
(34,6)*+{C}="8";
(48,6)*+{}="9";
(-38,-6)*+{\Sig\iv D}="-12";
(-20,-6)*+{A}="10";
(-6,-6)*+{Y^1}="12";
(7,-6)*+{\cdots}="14";
(20,-6)*+{Y^n}="16";
(34,-6)*+{D}="18";
(48,-6)*+{}="19";
{\ar^(0.6){(-1)^n\Sig\iv\del} "-2";"0"};
{\ar^{d_X^0} "0";"2"};
{\ar^{d_X^1} "2";"4"};
{\ar^{d_X^{n-1}} "4";"6"};
{\ar^{d_X^n} "6";"8"};
%
{\ar_(0.6){(-1)^n\Sig\iv\rho} "-12";"10"};
{\ar_{d_Y^0} "10";"12"};
{\ar_{d_Y^1} "12";"14"};
{\ar_{d_Y^{n-1}} "14";"16"};
{\ar_{d_Y^n} "16";"18"};
%
{\ar_{\Sig\iv c} "-2";"-12"};
{\ar@{=} "0";"10"};
{\ar_{f^1} "2";"12"};
{\ar_{f^n} "6";"16"};
{\ar^{c} "8";"18"};
{\ar@{}|\circlearrowright "-2";"10"};
{\ar@{}|\circlearrowright "0";"12"};
{\ar@{}|\circlearrowright "2";"14"};
{\ar@{}|\circlearrowright "6";"18"};
\endxy
\]
which gives an $(n+2)$-angle
\begin{equation}\label{ADEA1}
A\oplus\Sig\iv D\ov{d^0}{\lra}X^1\oplus A\ov{d^1}{\lra}
X^2\oplus Y^1\ov{d^2}{\lra}\cdots%
\ov{d^{n-1}}{\lra}X^n\oplus Y^{n-1}\ov{d^n}{\lra}C\oplus Y^n%
\ov{d^{n+1}}{\lra}\Sig A\oplus D
\end{equation}
where
\begin{eqnarray*}
&d^0=\left[\begin{array}{cc}-d_X^0&0\\1&(-1)^n\Sig\iv\rho\end{array}\right],\quad%
d^i=\left[\begin{array}{cc}-d_X^i&0\\ f^i&d_Y^{i-1}\end{array}\right]\quad(1\le i\le n),&\\
&d^{n+1}=\left[\begin{array}{cc}(-1)^{n+1}\del&0\\ c&d_Y^n\end{array}\right].&
\end{eqnarray*}
Then the sequence of isomorphisms in $\C$
\[ \Big(\left[\begin{array}{cc}1&(-1)^n\Sig\iv\rho\\0&1\end{array}\right],\left[\begin{array}{cc}0&1\\1&d_X^0\end{array}\right],\id,\id,\ldots,\id,\left[\begin{array}{cc}1&(-1)^n\rho\\0&1\end{array}\right] \Big) \]
gives an isomorphism of $(n+2)$-sequences from $(\ref{ADEA1})$ to
\begin{equation}\label{ADEA2}
A\oplus\Sig\iv D\ov{e^0}{\lra}A\oplus X^1\ov{e^1}{\lra}
X^2\oplus Y^1\ov{d^2}{\lra}\cdots%
\ov{d^{n-1}}{\lra}X^n\oplus Y^{n-1}\ov{d^n}{\lra}C\oplus Y^n%
\ov{e^{n+1}}{\lra}\Sig A\oplus D,
\end{equation}
with
\[ e^0=\left[\begin{array}{cc}1&0\\0&(-1)^nd_X^0\ci\Sig\iv\rho\end{array}\right],\quad%
e^1=\left[\begin{array}{cc}0&-d_X^1\\0&f^1\end{array}\right],\quad%
e^{n+1}=\left[\begin{array}{cc}0&0\\ c&d_Y^n\end{array}\right]
 \]
and the same $d^2,\ldots,d^n$.
Thus $(\ref{ADEA2})$ belongs to $\square$. Since this is equal to the direct sum of
\begin{eqnarray}
&A\ov{\id_A}{\lra}A\to0\to\cdots\to0\to\Sig A\quad\text{and}&\nonumber\\
&\Sig\iv D\ov{q^0}{\lra}X^1\ov{q^1}{\lra}
X^2\oplus Y^1\ov{d^2}{\lra}\cdots%
\ov{d^{n-1}}{\lra}X^n\oplus Y^{n-1}\ov{d^n}{\lra}C\oplus Y^n%
\ov{q^{n+1}}{\lra}D\label{ADEA3}&
\end{eqnarray}
with
\[ q^0=(-1)^nd_X^0\ci\Sig\iv\rho,\quad q^1=\begin{bmatrix}-d_X^1\\ f^1\end{bmatrix},\quad q^{n+1}=[c\ d_Y^n], \]
we see that $(\ref{ADEA3})$ also belongs to $\square$ by {\rm (F1)(a)}. Rotating it by {\rm (F2)}, we obtain an $(n+2)$-angle
\[ X^1\ov{q^1}{\lra}X^2\oplus Y^1\ov{d^2}{\lra}\cdots%
\ov{d^{n-1}}{\lra}X^n\oplus Y^{n-1}\ov{d^n}{\lra}C\oplus Y^n%
\ov{q^{n+1}}{\lra}D\ov{(\Sig d_X^0)\ci\rho}{\lra}\Sig X^1. \]
By the definition of $\sfr_{\square}$, this shows that $f^{\mr}=(\id_A,f^1,\ldots,f^n,c)\co\Xd\to\Yr$ gives a distinguished $n$-exangle
\[ X^1\ov{q^1}{\lra}X^2\oplus Y^1\ov{d^2}{\lra}\cdots%
\ov{d^{n-1}}{\lra}X^n\oplus Y^{n-1}\ov{d^n}{\lra}C\oplus Y^n%
\ov{q^{n+1}}{\lra}D\ov{(d_X^0)\sas\rho}{\dra}, \]
that is what we wanted to show.
\end{proof}

\bigskip

Conversely, let us show that {\rm (II)} implies {\rm (I)}. Suppose we are given an exact realization of $\E_{\Sig}$ which makes $(\C,\E_{\Sig},\sfr)$ an $n$-exangulated category. Remark that any object in $\AE$
\[ X^0\ov{d_X^0}{\lra}X^1\ov{d_X^1}{\lra}\cdots \ov{d_X^n}{\lra}X^{n+1}\ov{\del}{\dra} \]
can be naturally regarded as an {\it $(n+2)$-$\Sigma$-sequence}
\begin{equation}\label{XSS}
X^0\ov{d_X^0}{\lra}X^1\ov{d_X^1}{\lra}\cdots \ov{d_X^n}{\lra}X^{n+1}\ov{\del}{\lra}\Sig X^0
\end{equation}
in the sense of \cite[Definition 2.1]{GKO}.
\begin{rem}\label{RemAEtoSS}
The above correspondence gives a fully faithful functor from $\AE$ to the category of $(n+2)$-$\Sig$-sequences. In this way, we may identify $\AE$ with the full subcategory of the category of $(n+2)$-$\Sig$-sequences, consisting of $(\ref{XSS})$ satisfying $d_X^{i+1}\ci d_X^i=0\ (0\le i\le n-1)$, $\del\ci d_X^n=0$ and $(\Sig d_X^0)\ci\del=0$. This subcategory is closed by isomorphisms, and by taking finite direct sums and summands.
\end{rem}

\begin{lem}\label{LemEAtoA}
For any $A\in\C$, let us denote $\id_{\Sig A}\in\C(\Sig A,\Sig A)$ by $\iota={}_A\iota_{\Sig A}\in\E_{\Sig}(\Sig A,A)$ when we regard it as an extension. For this extension, we have $\sfr(\iota)=[\O^{\mr}]$. Namely,
\[ A\ov{0}{\lra}0\ov{0}{\lra}\cdots\ov{0}{\lra}0\ov{0}{\lra}\Sig A\ov{\iota}{\dra} \]
is a distinguished $n$-exangle.
\end{lem}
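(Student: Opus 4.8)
The plan is to deduce the statement directly from Proposition~\ref{PropZeroEA}. That proposition shows that whenever an extension ${}_A\del_C$ satisfies that $\del\ssh\co\C(Q,C)\to\E(Q,A)$ and $\del\ush\co\C(A,Q)\to\E(C,Q)$ are monomorphic for every $Q\in\C$, then necessarily $\sfr(\del)=[\O^{\mr}]$ with $\O^{\mr}={}_A\O^{\mr}_C$. Applied to $\del=\iota\in\E_{\Sig}(\Sig A,A)$, this is precisely the assertion of the lemma, so the whole task reduces to verifying these two monomorphicity conditions for $\iota=\id_{\Sig A}$.

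First I would unwind the Yoneda maps of Definition~\ref{DefYoneda} using the explicit description of $\E_{\Sig}$. For $Q\in\C$ the map $(\iota\ssh)_Q\co\C(Q,\Sig A)\to\E_{\Sig}(Q,A)$ sends $f$ to $f\uas\iota$; by the defining formula $c\uas a\sas\del=(\Sig a)\ci\del\ci c$, taken with $a=\id_A$, $c=f$ and $\del=\iota=\id_{\Sig A}$, this equals $\id_{\Sig A}\ci\id_{\Sig A}\ci f=f$. Since $\E_{\Sig}(Q,A)=\C(Q,\Sig A)$ by definition, $(\iota\ssh)_Q$ is therefore the identity map of $\C(Q,\Sig A)$, in particular a monomorphism. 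Dually, $\iota\ush_Q\co\C(A,Q)\to\E_{\Sig}(\Sig A,Q)=\C(\Sig A,\Sig Q)$ sends $g$ to $g\sas\iota=(\Sig g)\ci\id_{\Sig A}\ci\id_{\Sig A}=\Sig g$; since $\Sig$ is an automorphism of $\C$, hence faithful, this map is injective.

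With both conditions verified, Proposition~\ref{PropZeroEA} yields $\sfr(\iota)=[\O^{\mr}]$ with $\O^{\mr}={}_A\O^{\mr}_{\Sig A}$, i.e. the displayed sequence $A\ov{0}{\lra}0\ov{0}{\lra}\cdots\ov{0}{\lra}0\ov{0}{\lra}\Sig A\ov{\iota}{\dra}$ is a distinguished $n$-exangle, as wanted. I do not anticipate any genuine obstacle here: the argument is essentially an unwinding of definitions, and the only point requiring a little care is the bookkeeping of variance — making sure that one pulls back along $f$ (so that $\iota\ci f=f$) when computing $\iota\ssh$ but pushes forward along $g$ (so that one obtains $\Sig g$) when computing $\iota\ush$, and that the two identifications $\E_{\Sig}(Q,A)=\C(Q,\Sig A)$ and $\E_{\Sig}(\Sig A,Q)=\C(\Sig A,\Sig Q)$ used above are exactly those built into the definition of $\E_{\Sig}$.
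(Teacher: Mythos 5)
Your proof is correct and follows exactly the paper's route: the paper's own proof is simply "this immediately follows from Proposition~\ref{PropZeroEA} applied to $\del=\iota$," and your verification that $\iota\ssh$ is the identity on $\C(Q,\Sig A)$ and $\iota\ush$ is $g\mapsto\Sig g$ (injective since $\Sig$ is an automorphism) just makes explicit the monomorphicity check the paper leaves to the reader.
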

\begin{proof}
This immediately follows from Proposition~\ref{PropZeroEA} applied to $\del=\iota$.
\end{proof}

\begin{prop}\label{PropEAtoA}
Define $\square_{\sfr}$ to be the class of $(n+2)$-$\Sig$-sequences obtained as $(\ref{XSS})$ from distinguished $n$-exangles. Then $(\C,\E_{\Sig},\square_{\sfr})$ becomes an $(n+2)$-angulated category.
\end{prop}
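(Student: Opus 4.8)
The plan is to verify the axioms (F1)--(F4) of \cite{GKO} for the triple $(\C,\Sig,\square_{\sfr})$, using throughout the identification of $\AE$ with a full subcategory of the category of $(n+2)$-$\Sig$-sequences given by Remark~\ref{RemAEtoSS}; recall that under this identification a morphism of $(n+2)$-$\Sig$-sequences between two objects of $\AE$ is precisely a morphism in $\AE$, so that $\square_{\sfr}$ consists exactly of those $(n+2)$-$\Sig$-sequences which, viewed in $\AE$, are distinguished $n$-exangles. Several axioms are then immediate: closure of $\square_{\sfr}$ under isomorphisms of $(n+2)$-$\Sig$-sequences is Corollary~\ref{CorDEAInv}~(2) ($\AE$ being closed under isomorphisms by Remark~\ref{RemAEtoSS}), and closure under finite direct sums and direct summands follows from Remark~\ref{RemAEtoSS} together with Proposition~\ref{PropSummandOut}; this is (F1)(a). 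The trivial $(n+2)$-$\Sig$-sequence $A\ov{\id_A}{\lra}A\to0\to\cdots\to0\to\Sig A$ is precisely $\sfr({}_A0_0)$, so (F1)(b) is (R2). Finally, given distinguished $n$-exangles $_A\Xd_C$, $_B\Yr_D$ and a commutative square relating $d_X^0$ and $d_Y^0$, Proposition~\ref{PropShiftedReal}~(1) produces a morphism in $\AE(\Xd,\Yr)$ extending it, which by the identification is a morphism of $(n+2)$-$\Sig$-sequences; this gives (F3).

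For (F1)(c) it must be shown that every $f\in\C(A,B)$ is an $\sfr$-inflation. View $\id_{\Sig A}$ as the extension $\iota={}_A\iota_{\Sig A}\in\E_{\Sig}(\Sig A,A)$, realized by ${}_A\O^{\mr}_{\Sig A}$ via Lemma~\ref{LemEAtoA}, and observe that $f\sas\iota=\Sig f\in\E_{\Sig}(\Sig A,B)$. Choose a distinguished $n$-exangle $_B\langle W^{\mr},\Sig f\rangle_{\Sig A}$ and apply (EA2$\op$) to the morphism of extensions $(f,\id_{\Sig A})\co\iota\to f\sas\iota$ (Remark~\ref{RemMorphExt}~(1)): the resulting good colift has a mapping cocone which is a distinguished $n$-exangle. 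Since the middle terms of ${}_A\O^{\mr}_{\Sig A}$ vanish, this mapping cocone is, up to signs on its differentials, the sequence $A\ov{f}{\lra}B\ov{d_W^0}{\lra}W^1\to\cdots\to W^n$; correcting the signs by a diagonal isomorphism in $\AE$ (Corollary~\ref{CorDEAInv}) exhibits it as a distinguished $n$-exangle, so $f$ is an inflation. Once (F2) is available, deflations coincide with inflations, so every morphism is also an $\sfr$-deflation.

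Axiom (F2), the rotation axiom, is handled by mapping cones. For the direction ``$\Xd$ distinguished $\Rightarrow$ its rotation is distinguished'': given a distinguished $_A\Xd_C$ one has $\del\uas\iota=\del$, so $(\id_A,\del)\co\del\to\iota$ is a morphism of extensions; applying (EA2) to it, with the distinguished $n$-exangles $_A\Xd_C$ and $_A\langle\O^{\mr},\iota\rangle_{\Sig A}$, yields a good lift $g^{\mr}\co\Xd\to\langle\O^{\mr},\iota\rangle$ whose mapping cone $\langle M_g^{\mr},(d_X^0)\sas\iota\rangle$ is distinguished. As the middle terms of $\O^{\mr}$ vanish, $M_g^{\mr}$ is the $(n+2)$-$\Sig$-sequence $X^1\to\cdots\to X^{n+1}\ov{\del}{\lra}\Sig A\ov{\Sig d_X^0}{\lra}\Sig X^1$ with the internal differentials $d_X^1,\dots,d_X^n$ negated, and a diagonal sign isomorphism in $\AE$ (Corollary~\ref{CorDEAInv}) identifies it with the rotation of $\Xd$, which is therefore distinguished. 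The converse direction is dual, using (EA2$\op$) together with Proposition~\ref{PropZeroEA} applied to the identity regarded as an extension in $\E_{\Sig}(C,\Sig\iv C)$.

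The remaining axiom (F4), the higher octahedral axiom, is the main obstacle. The plan is to imitate the proof of Lemma~\ref{Lem1EAOcta} (the case $n=1$): starting from distinguished $n$-exangles on $f\co A\to B$, on $g\co B\to C$, and, by (EA1), on $g\ci f$, one uses Proposition~\ref{PropShiftedReal}~(2) and (EA2) repeatedly to build the required connecting morphisms together with the information that the mapping cones occurring at each step are distinguished $n$-exangles, and then peels off trivial direct summands via Corollary~\ref{CorSummandOut} to read off the $(n+2)$-angle connecting the three given ones; the signs are managed throughout by Corollary~\ref{CorDEAInv}. The genuine difficulty, compared with Lemma~\ref{Lem1EAOcta}, is bookkeeping: the mapping cones now have $n+2$ terms, so the diagram chase must be organised degree by degree along the complexes, and one must keep careful track of exactly which summand is split off at each stage in order to produce the $(n+2)$-$\Sig$-sequence demanded by (F4).
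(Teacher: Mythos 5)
Your treatment of (F1)(a), (F1)(b), (F3), and the rotation argument for (F2) coincides with the paper's, and your route to (F1)(c) — applying (EA2$\op$) to $(f,\id_{\Sig A})\co\iota\to f\sas\iota$ and reading off the mapping cocone of the good lift from ${}_A\O^{\mr}_{\Sig A}$ to a realization of $\Sig f$ — is a harmless variant of the paper's argument (which instead regards $f$ as an extension in $\E_{\Sig}(A,\Sig\iv B)$ and rotates repeatedly). The problem is (F4).

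Axiom (F4) of \cite{GKO} is not an octahedral statement about a composable pair: it is the mapping-cone axiom. Given distinguished $n$-exangles $\Xd$, $\Yr$ and a commutative square with vertical maps $f^0\in\C(X^0,Y^0)$, $f^1\in\C(X^1,Y^1)$ (no identities, no composition $g\ci f$ anywhere), one must complete $(f^0,f^1)$ to a morphism $f^{\mr}$ of $(n+2)$-angles whose mapping cone is again an $(n+2)$-angle. Your sketch instead outlines a higher analogue of Lemma~\ref{Lem1EAOcta} for $f$, $g$, $g\ci f$; for $n\ge 2$ this is a different axiom, whose equivalence with the mapping-cone axiom is precisely the nontrivial content of \cite{BT} (which you neither state nor invoke), and whose correct formulation involves far more data than ``an $(n+2)$-angle connecting the three given ones'', because the cone of a morphism is now an $n$-term complex rather than a single object — so the difficulty is not mere bookkeeping, and no actual argument is given. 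What is needed (and what the paper does) is a direct verification of (F4): since (EA2) and Proposition~\ref{PropShiftedReal}~(2) only produce good lifts when the degree-$0$ component is an identity, one first enlarges the two exangles by trivial summands — add $Y^0\ov{\id}{\lra}Y^0\to0\to\cdots\to0\ov{0}{\dra}$ to $\Xd$ and $X^0\to0\to\cdots\to0\to\Sig X^0\ov{\iota}{\dra}$ to $\Yr$ (Proposition~\ref{PropSummandOut}) — then twists the second by the automorphism $\left[\begin{smallmatrix}-1&0\\ f^0&1\end{smallmatrix}\right]$ of $X^0\oplus Y^0$ (Corollary~\ref{CorDEAInv}), so both enlarged exangles start at $X^0\oplus Y^0$ and the degree-$0$ square becomes an identity; Proposition~\ref{PropShiftedReal}~(2) then yields a good lift $g^{\mr}$, and its distinguished mapping cone, read off componentwise, simultaneously produces the completion $f^{\mr}$ of $(f^0,f^1)$ and exhibits the mapping cone of $f^{\mr}$, as a $\Sig$-sequence, as an $(n+2)$-angle. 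Without this construction (or an equally concrete substitute, e.g.\ a full proof of the higher octahedral axiom of \cite{BT} together with their equivalence theorem), the verification of (F4), and hence of the proposition, is missing.
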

\begin{proof}
By Corollary \ref{CorDEAInv} and Remark~\ref{RemAEtoSS}, the class $\square_{\sfr}$ is closed by isomorphisms of $(n+2)$-$\Sig$-sequences. Thus we do not have to take any isomorphism closure. Let us confirm conditions {\rm (F1)},\ldots,{\rm (F4)} in \cite{GKO}.
{\rm (F1)(a)} follows from Proposition~\ref{PropSummandOut} and Remark~\ref{RemAEtoSS}. {\rm (F1)(b)} follows from {\rm (R2)}.

{\rm (F2)} Let $_A\Xd_C$ be any distinguished $n$-exangle. It suffices to show that we can rotate $\Xd$ in both directions to obtain distinguished $n$-exangles. As in Lemma~\ref{LemEAtoA}, the pair $_A\langle\O^{\mr},\iota\rangle_{\Sig A}$ is also a distinguished $n$-exangle. By {\rm (EA2)}, the morphism $(\id_A,\del)\co\del\to\iota$ has a good lift $f^{\mr}\co\Xd\to\langle\O^{\mr},\iota\rangle$, which should be as follows without any other possibility.
\[
\xy
(-43,12)*+{A}="0";
(-28,12)*+{X^1}="1";
(-13,12)*+{X^2}="2";
(0,12)*+{\cdots}="3";
(14,12)*+{X^n}="4";
(29,12)*+{C}="5";
(41,12)*+{}="6";
(-43,0)*+{A}="10";
(-28,0)*+{0}="11";
(-13,0)*+{0}="12";
(0,0)*+{\cdots}="13";
(4,0)*+{}="13.5";
(14,0)*+{0}="14";
(29,0)*+{\Sig A}="15";
(41,0)*+{}="16";
{\ar^{d_X^0} "0";"1"};
{\ar^{d_X^1} "1";"2"};
{\ar^{d_X^2} "2";"3"};
{\ar^{d_X^{n-1}} "3";"4"};
{\ar^{d_X^n} "4";"5"};
{\ar@{-->}^{\del} "5";"6"};
{\ar@{=} "0";"10"};
{\ar^{0} "1";"11"};
{\ar^{0} "2";"12"};
{\ar^{0} "4";"14"};
{\ar^{\del} "5";"15"};
{\ar_{0} "10";"11"};
{\ar_{0} "11";"12"};
{\ar_{0} "12";"13"};
{\ar_{0} "13";"14"};
{\ar_{0} "14";"15"};
{\ar@{-->}_{\iota} "15";"16"};
{\ar@{}|\circlearrowright "0";"11"};
{\ar@{}|\circlearrowright "1";"12"};
{\ar@{}|\circlearrowright "2";"13.5"};
{\ar@{}|\circlearrowright "4";"15"};
\endxy
\]
Its mapping cone induces a distinguished $n$-exangle
\begin{equation}\label{F2Dist1}
X^1\ov{-d_X^1}{\lra}X^2\ov{-d_X^2}{\lra}\cdots\to X^{n-1}\ov{-d_X^{n-1}}{\lra}X^n\ov{-d_X^n}{\lra}C\ov{\del}{\lra}\Sig A\ov{(d_X^0)\sas\iota}{\dra}.
\end{equation}
Remark that we have $(d_X^0)\sas\iota=(\Sig d_X^0)\ci\id_{\Sig A}=\Sig d_X^0$ by definition. Since
\[ \big((-1)^n,(-1)^{n-1},\ldots,1,-1,\id_C,\id_{\Sig A}\big) \]
gives an isomorphism from $(\ref{F2Dist1})$ to
\begin{equation}\label{F2Dist2}
X^1\ov{d_X^1}{\lra}X^2\ov{d_X^2}{\lra}\cdots\to X^{n-1}\ov{d_X^{n-1}}{\lra}X^n\ov{d_X^n}{\lra}C\ov{\del}{\lra}\Sig A\ov{(-1)^n\Sig d_X^0}{\dra},
\end{equation}
this $(\ref{F2Dist2})$ becomes a distinguished $n$-exangle by Corollary \ref{CorDEAInv}. Rotation to the opposite direction can be performed in a dual manner.

{\rm (F1)(c)} Any morphism $f\in\C(A,B)$ can be regarded as an extension $f\in\E_{\Sig}(A,\Sig\iv B)$, and then there exists some distinguished $n$-exangle
\[ \Sig\iv B\ov{d_X^0}{\lra}X^1\ov{d_X^1}{\lra}\cdots\to X^n\ov{d_X^n}{\lra}A\ov{f}{\dra}. \]
Applying {\rm (F2)} repeatedly, we obtain a distinguished $n$-exangle of the form
\[ A\ov{f}{\lra}B\to\Sig X^1\to \cdots\to \Sig X^n\dra. \]

{\rm (F3)} This follows from {\rm (F2)} and {\rm (R0)}, or from Proposition~\ref{PropShiftedReal}.

{\rm (F4)} The same argument on the axioms of triangulated category \cite{Hu} works. Suppose that we are given distinguished $n$-exangles $\Xd,\Yr$ and a commutative square
\begin{equation}\label{F4Sq}
\xy
(-6,6)*+{X^0}="0";
(6,6)*+{X^1}="2";
(-6,-6)*+{Y^0}="4";
(6,-6)*+{Y^1}="6";
{\ar^{d_X^0} "0";"2"};
{\ar_{f^0} "0";"4"};
{\ar^{f^1} "2";"6"};
{\ar_{d_Y^0} "4";"6"};
{\ar@{}|\circlearrowright "0";"6"};
\endxy
\end{equation}
in $\C$. 
Let us construct a morphism $f^{\mr}=(f^0,f^1,f^2,\ldots,f^{n+1})\co\Xd\to\Yr$ to fulfill the requirement of {\rm (F4)}.
Remark that
\begin{eqnarray*}
&Y^0\ov{\id_{Y^1}}{\lra}Y^0\to0\to\cdots\to0\ov{0}{\dra},&\\
&X^0\to0\to\cdots\to0\to\Sig X^0\ov{\iota}{\dra}&
\end{eqnarray*}
are distinguished $n$-exangles by {\rm (R2)} and Lemma~\ref{LemEAtoA}. Taking coproducts with $\Xd$ and $\Yr$, we obtain distinguished $n$-exangles
\begin{eqnarray}
&X^0\oplus Y^0\ov{d_X^0\oplus \id}{\lra}X^1\oplus Y^0\ov{[d_X^1\ 0]}{\lra}X^2\ov{d_X^2}{\lra}\cdots\ov{d_X^{n-1}}{\lra}X^n\ov{d_X^n}{\lra}X^{n+1}\ov{\mu}{\dra},&\label{XAdded}\\
&X^0\oplus Y^0\ov{[0\ d_Y^0]}{\lra}Y^1\ov{d_Y^1}{\lra}Y^2\ov{d_Y^2}{\lra}\cdots\ov{d_Y^{n-1}}{\lra}Y^n%
\ov{\Big[\raise1ex\hbox{\leavevmode\vtop{\baselineskip-8ex \lineskip1ex \ialign{#\crcr{$\scriptstyle{\ 0}$}\crcr{$\scriptstyle{d_Y^n}$}\crcr}}}\Big]}{\lra}%
\Sig X^0\oplus Y^{n+1}\ov{\nu}{\dra}&\label{YAdded}
\end{eqnarray}
by Proposition~\ref{PropSummandOut}, where $\mu\in\E_{\Sig}(X^{n+1},X^0\oplus Y^0)$ and $\nu\in\E_{\Sig}(\Sig X^0\oplus Y^{n+1},X^0\oplus Y^0)$ correspond to
\[ \left[\begin{array}{c}\del\\0\end{array}\right]\in\C(X^{n+1},\Sig X^0\oplus\Sig Y^0), \]
\[ \left[\begin{array}{cc}\id_{\Sig X^0}&0\\0&\rho\end{array}\right]\in\C(\Sig X^0\oplus Y^{n+1},\Sig X^0\oplus\Sig Y^0) \]
respectively, through the natural isomorphism
\begin{equation}\label{TrivIso}
\Sig (X^0\oplus Y^0)\cong \Sig X^0\oplus\Sig Y^0.
\end{equation}
Using the automorphism
\[ a=\left[\begin{array}{cc}-\id&0\\ f^0&\id\end{array}\right]\co X^0\oplus Y^0\ov{\cong}{\lra}X^0\oplus Y^0, \]
we can modify $(\ref{YAdded})$ to obtain a distinguished $n$-exangle
\begin{equation}\label{YModified}
X^0\oplus Y^0\ov{[d_Y^0\ci f^0\ d_Y^0]}{\lra}Y^1\ov{d_Y^1}{\lra}Y^2\ov{d_Y^2}{\lra}\cdots\ov{d_Y^{n-1}}{\lra}Y^n\ov{\Big[\raise1ex\hbox{\leavevmode\vtop{\baselineskip-8ex \lineskip1ex \ialign{#\crcr{$\scriptstyle{\ 0}$}\crcr{$\scriptstyle{d_Y^n}$}\crcr}}}\Big]}{\lra}\Sig X^0\oplus Y^{n+1}\ov{a\sas\nu}{\dra}
\end{equation}
by Corollary \ref{CorDEAInv} {\rm (1)}, where $a\sas\nu$ corresponds to
\[ \left[\begin{array}{cc}-1&0\\ \Sig f^0&\rho\end{array}\right]\in\C(\Sig X^0\oplus Y^{n+1},\Sig X^0\oplus \Sig Y^0) \]
through $(\ref{TrivIso})$.
By Proposition~\ref{PropShiftedReal} {\rm (2)} applied to the following commutative square,
\[
\xy
(-12,6)*+{X^0\oplus Y^0}="0";
(12,6)*+{X^1\oplus Y^0}="2";
(-12,-6)*+{X^0\oplus Y^0}="4";
(12,-6)*+{Y^1}="6";
{\ar^{d_X^0\oplus\id} "0";"2"};
{\ar@{=} "0";"4"};
{\ar^{[f^1\ d_Y^0]} "2";"6"};
{\ar_(0.58){[d_Y^0\ci f^0\ d_Y^0]} "4";"6"};
{\ar@{}|\circlearrowright "0";"6"};
\endxy
\]
we obtain a morphism
\[ g^{\mr}=\Big(\id,[f^1\ d_Y^0],f^2,\ldots,f^n,\begin{bmatrix}x\\ y\end{bmatrix}\Big) \]
from $(\ref{XAdded})$ to $(\ref{YModified})$, which makes $\langle M_g^{\mr},(d_X^0\oplus\id)\sas a\sas\nu\rangle$ a distinguished $n$-exangle. If we put $f^{n+1}=y$, then the equalities
\[ \begin{bmatrix}x\\ y\end{bmatrix}\ci d_X^n=\begin{bmatrix}0\\ d_Y^n\end{bmatrix}\ci f^n\quad\text{and}\quad \begin{bmatrix}x\\ y\end{bmatrix}\uas\!\! a\sas\nu=\mu \]
imply $x=-\del$ and $f^{n+1}\ci d_X^n=d_Y^n\ci f^n$, and
\[ (\Sig f^0)\ci\del=\rho\ci f^{n+1}, \]
which means $(f^0)\sas\del=(f^{n+1})\uas\rho$. Thus $f^{\mr}=(f^0,f^1,f^2,\ldots,f^{n+1})\co\Xd\to\Yr$ is a morphism. Moreover, the obtained distinguished $n$-exangle $\langle M_g^{\mr},(d_X^0\oplus\id)\sas a\sas\nu\rangle$ is of the form
\[
\xy
(-56,0)*+{X^1\oplus Y^0}="0";
(-33,0)*+{X^2\oplus Y^1}="2";
(-14,0)*+{\cdots}="4";
(6,0)*+{X^{n+1}\oplus Y^n}="6";
(36,0)*+{\Sig X^0\oplus Y^{n+1}}="8";
(56,0)*+{}="10";
{\ar^{d_{M_g}^0} "0";"2"};
{\ar^(0.6){d_{M_g}^1} "2";"4"};
{\ar^(0.38){d_{M_g}^{n-1}} "4";"6"};
{\ar^(0.48){d_{M_g}^n} "6";"8"};
{\ar@{-->}^(0.7){\tau} "8";"10"};
\endxy
\]
where
\[ d_{M_g}^i=\left[\begin{array}{cc}-d_X^{i+1}&0\\ f^{i+1}&d_Y^i\end{array}\right]\ \ (0\le i\le n-1),\quad d_{M_g}^n=\left[\begin{array}{cc}-\del&0\\ f^{n+1}&d_Y^n\end{array}\right], \]
and $\tau=(d_X^0\oplus\id)\sas a\sas\nu$ corresponds to
\[ \left[\begin{array}{cc}-\Sig d_X^0&0\\ \Sig f^0&\rho\end{array}\right]\in\C(\Sig X^0\oplus Y^{n+1},\Sig X^1\oplus\Sig Y^0) \]
through the isomorphism $\Sig(X^1\oplus Y^0)\cong\Sig X^1\oplus\Sig Y^0$.
This yields the desired $(n+2)$-angle.
\end{proof}

\subsection{$n$-exact categories}\label{subsection_n-exact}

In this subsection, we will see the relation with the notion of an {\it $n$-exact category} introduced in \cite{J}. We briefly recall its definition and related notions from \cite{J}.
\begin{dfn}\label{DefNExSeq}(cf. \cite[Definitions 2.2 and 2.4]{J})
Let $\C$ be an additive category, and let $X^{\mr}\in\CC$ be any object.
\begin{enumerate}
\item $X^{\mr}$ is called an {\it $n$-kernel sequence} if the following sequence of functors $\C\op\to\Ab$ is exact.
\[ 0\,\tc\,\C(-,X^0)\ov{\C(-,d_X^0)}{\ltc}\C(-,X^1)\ov{\C(-,d_X^1)}{\ltc}\cdots\ov{\C(-,d_X^n)}{\ltc}\C(-,X^{n+1}) \]
In particular $d_X^0$ is a monomorphism in $\C$.
\item $X^{\mr}$ is called an {\it $n$-cokernel sequence} if the following sequence of functors $\C\to\Ab$ is exact.
\[ 0\,\tc\,\C(X^{n+1},-)\ov{\C(d_X^n,-)}{\ltc}\C(X^n,-)\ov{\C(d_X^{n-1},-)}{\ltc}\cdots\ov{\C(d_X^0,-)}{\ltc}\C(X^0,-) \]
In particular $d_X^n$ is an epimorphism in $\C$.
\item $X^{\mr}$ is called an {\it $n$-exact sequence} if it is both $n$-kernel and $n$-cokernel sequence.
\end{enumerate}
Remark that $n$-kernel (respectively, $n$-cokernel, or $n$-exact) sequences are closed by homotopic equivalences in $\CC$.
\end{dfn}

The following can be shown easily.
\begin{prop}\label{PropNExSeq}
Let $\C$ be an additive category, and let $X^{\mr},Y^{\mr}\in\CC$ be any pair of $n$-exact sequences.
\begin{enumerate}
\item Let $k\in\{0,\ldots,n\}$ be any integer. For any commutative square
\[
\xy
(-7,6)*+{X^k}="0";
(7,6)*+{X^{k+1}}="2";
(-7,-6)*+{Y^k}="4";
(7,-6)*+{Y^{k+1}}="6";
{\ar^(0.46){d_X^k} "0";"2"};
{\ar_{a} "0";"4"};
{\ar^{b} "2";"6"};
{\ar_(0.46){d_Y^k} "4";"6"};
{\ar@{}|\circlearrowright "0";"6"};
\endxy
\]
in $\C$, there exists $f^{\mr}\in\CC(X^{\mr},Y^{\mr})$ satisfying $f^k=a$ and $f^{k+1}=b$. Moreover, such $f^{\mr}$ is unique up to homotopy. Especially, if both $a,b$ are isomorphisms, then $f^{\mr}$ becomes a homotopy equivalence in $\CC$.
\item Let $a\in\C(X^0,Y^0), c\in\C(X^{n+1},Y^{n+1})$ be any pair of morphisms. If there exists $f^{\mr}\in\CC(X^{\mr},Y^{\mr})$ satisfying $f^0=a$ and $f^{n+1}=c$, then such $f^{\mr}$ is unique up to homotopy in $\CC$.
\end{enumerate}
\end{prop}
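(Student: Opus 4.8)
The plan is to run the usual comparison/uniqueness argument for resolutions, with the two exactness conditions in Definition~\ref{DefNExSeq} playing the role of projectivity and injectivity. The two tools are: since $X^{\mr}$ is an $n$-cokernel sequence, the complex of abelian groups $0\to\C(X^{n+1},Q)\to\cdots\to\C(X^0,Q)$ (differentials $-\ci d_X^i$) is exact for every $Q\in\C$; dually, since $Y^{\mr}$ is an $n$-kernel sequence, $0\to\C(Q,Y^0)\to\cdots\to\C(Q,Y^{n+1})$ (differentials $d_Y^i\ci-$) is exact for every $Q\in\C$. I abbreviate these as $\C(X^{\bullet},Q)$ and $\C(Q,Y^{\bullet})$. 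In particular $-\ci d_X^n$ is monic on $\C(X^{n+1},Q)$ and $d_Y^0\ci-$ is monic on $\C(Q,Y^0)$.

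For existence in {\rm (1)} I would build $f^{\mr}$ outward from degree $k$ in both directions. \emph{Going up}, starting from $f^{k+1}=b$: if $f^{\mr}$ is defined and compatible with the differentials up to some degree $m$ with $k+1\le m\le n$, then $d_Y^m\ci f^m\co X^m\to Y^{m+1}$ is annihilated by precomposition with $d_X^{m-1}$ (for $m>k+1$ because $d_Y^m\ci f^m\ci d_X^{m-1}=d_Y^m\ci d_Y^{m-1}\ci f^{m-1}=0$, and for $m=k+1$ because $d_Y^{k+1}\ci b\ci d_X^k=d_Y^{k+1}\ci d_Y^k\ci a=0$ using the given commutative square). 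By exactness of $\C(X^{\bullet},Y^{m+1})$ at the spot $\C(X^m,Y^{m+1})$ it factors as $f^{m+1}\ci d_X^m$, which defines $f^{m+1}$; at $m=n$ the same step produces $f^{n+1}$. \emph{Going down}, starting from $f^k=a$: if $f^{\mr}$ is defined down to degree $m$ with $1\le m\le k$, then $f^m\ci d_X^{m-1}\co X^{m-1}\to Y^m$ is killed by postcomposition with $d_Y^m$ (for $m<k$ because $d_Y^m\ci f^m\ci d_X^{m-1}=f^{m+1}\ci d_X^m\ci d_X^{m-1}=0$, and for $m=k$ because $d_Y^k\ci a\ci d_X^{k-1}=b\ci d_X^k\ci d_X^{k-1}=0$). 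By exactness of $\C(X^{m-1},Y^{\bullet})$ at $\C(X^{m-1},Y^m)$ it factors as $d_Y^{m-1}\ci f^{m-1}$, defining $f^{m-1}$. The resulting $f^{\mr}$ lies in $\CC$ and has $f^k=a$, $f^{k+1}=b$; the phases $k=0$ and $k=n$ degenerate harmlessly (one of the two directions is empty).

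For uniqueness up to homotopy in {\rm (1)}, let $h^{\mr}=f^{\mr}-g^{\mr}$ be the difference of two such lifts, so $h^{\mr}$ is a chain map with $h^k=h^{k+1}=0$. I would construct a homotopy $\vp^{\mr}$ by starting with $\vp^{k+1}=0$ (together with $\vp^k=0$ and $\vp^{k+2}=0$ where these indices occur) and then extending downwards and upwards: at each step $\vp^j$ is required to satisfy $d_Y^{j-1}\ci\vp^j=h^j-\vp^{j+1}\ci d_X^j$ (going down) or $\vp^{j+1}\ci d_X^j=h^j-d_Y^{j-1}\ci\vp^j$ (going up), and a short computation using only that $h^{\mr}$ is a chain map and the homotopy relation at the previous step shows the right-hand side is annihilated by $d_Y^j\ci-$, respectively by $-\ci d_X^j$; hence it factors, by exactness of $\C(X^j,Y^{\bullet})$, respectively $\C(X^{\bullet},Y^j)$. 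The two boundary relations $h^0=\vp^1\ci d_X^0$ and $h^{n+1}=d_Y^n\ci\vp^{n+1}$ are then forced: composing $\vp^1\ci d_X^0-h^0$ with $d_Y^0$, and $d_Y^n\ci\vp^{n+1}-h^{n+1}$ with $d_X^n$ on the appropriate side, gives $0$, and these two maps are monic by the first paragraph, so both differences vanish. Thus $f^{\mr}\sim g^{\mr}$. If moreover $a,b$ are isomorphisms, applying the existence statement to the commutative square built from $a\iv,b\iv$ yields $g^{\mr}$ with $g^k=a\iv$, $g^{k+1}=b\iv$; then $g^{\mr}\ci f^{\mr}$ and $f^{\mr}\ci g^{\mr}$ are lifts of identity squares, hence homotopic to $\id_{X^{\mr}}$ and $\id_{Y^{\mr}}$ by the uniqueness just proved, so $f^{\mr}$ is a homotopy equivalence in $\CC$.

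Part {\rm (2)} is the same uniqueness argument with the two distinguished degrees at the ends instead of in the interior: if $f^{\mr},g^{\mr}$ both satisfy $f^0=g^0=a$ and $f^{n+1}=g^{n+1}=c$, then $h^{\mr}=f^{\mr}-g^{\mr}$ vanishes in degrees $0$ and $n+1$, and one builds the null-homotopy starting from $\vp^1=0$ and extending upwards exactly as above, with the top relation closing up automatically. I do not expect any conceptual obstacle here; the only points needing care are keeping track of which exactness axiom is used in which direction (the $n$-cokernel property whenever one factors through some $d_X^i$, the $n$-kernel property whenever one factors through some $d_Y^i$), and observing that the two boundary identities in the homotopy are deduced rather than chosen, via exactly the monomorphism clauses that distinguish $n$-kernel and $n$-cokernel sequences from mere exactness of the Hom-complexes in their interior.
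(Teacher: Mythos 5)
Your proof is correct and is exactly the standard comparison-of-resolutions argument that the paper declares straightforward and delegates to Jasso's Proposition 2.7 and Comparison Lemma 2.1: the two-directional construction from degree $k$, the inductive factorizations through the $n$-cokernel property of $X^{\mr}$ and the $n$-kernel property of $Y^{\mr}$, and the two forced boundary identities via the monomorphism clauses all check out, including the degenerate cases $k=0,n$. The only blemish is an index slip in the uniqueness step: in the going-up direction the right-hand side $h^j-d_Y^{j-1}\ci\vp^j$ must be shown to be annihilated by $-\ci d_X^{j-1}$ (not by $-\ci d_X^{j}$), which is what your computation actually establishes before invoking exactness of $\C(X^{\bullet},Y^j)$.
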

\begin{proof}
This is straightforward. (See \cite[Proposition 2.7]{J} for {\rm (1)}, and \cite[Comparison Lemma 2.1]{J} for {\rm (2)}.)
\end{proof}

In particular, the following holds in $\CAC$.
\begin{cor}\label{CorNExSeq}
Let $\C$ be an additive category, let $A,C\in\C$ be any pair of objects. For any pair of $n$-exact sequences $X^{\mr},Y^{\mr}\in\CAC$, we have
\[ |\KAC(X^{\mr},Y^{\mr})|\le1. \]
Thus if $\CAC(X^{\mr},Y^{\mr})\ne\emptyset$ and $\CAC(Y^{\mr},X^{\mr})\ne\emptyset$, then $X^{\mr}$ and $Y^{\mr}$ are homotopically equivalent in $\CAC$.
\end{cor}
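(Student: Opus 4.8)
The plan is to derive Corollary~\ref{CorNExSeq} directly from Proposition~\ref{PropNExSeq}~(2). First I would establish the bound $|\KAC(X^{\mr},Y^{\mr})|\le 1$. Let $f^{\mr},g^{\mr}\in\CAC(X^{\mr},Y^{\mr})$ be two morphisms. Since both have $0$th component $\id_A$ and $(n+1)$st component $\id_C$, they agree on the end terms, so by Proposition~\ref{PropNExSeq}~(2) applied with $a=\id_A$ and $c=\id_C$ they are homotopic in $\CC$. It remains to observe that the homotopy can be taken so that $f^{\mr}$ and $g^{\mr}$ become homotopic in $\CAC$, i.e. via a homotopy $\vp^{\mr}$ satisfying $\vp^1\ci d_X^0 = 0$ and $d_Y^n\ci\vp^{n+1}=0$ (equations~(\ref{vp1}) and~(\ref{vpn+1})). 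This is exactly what Claim~\ref{ClaimHtpyReduced}~(3) provides: since $X^{\mr}$ is an $n$-kernel sequence the sequence $\C(X^2,A)\to\C(X^1,A)\to\C(X^0,A)$ is exact, and since $Y^{\mr}$ is an $n$-cokernel sequence the sequence $\C(C,Y^{n-1})\to\C(C,Y^n)\to\C(C,Y^{n+1})$ is exact, so for $n\ge 2$ the homotopy can be modified to have $\vp^1=0$ and $\vp^{n+1}=0$; for $n=1$ one uses Claim~\ref{ClaimHtpyReduced}~(1) together with the dual part of~(2) (or argues directly as in Lemma~\ref{Lem1EAandET}). Hence $\und{f^{\mr}}=\und{g^{\mr}}$ in $\KAC$, giving the cardinality bound.

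For the second assertion, suppose $\CAC(X^{\mr},Y^{\mr})\ne\emptyset$ and $\CAC(Y^{\mr},X^{\mr})\ne\emptyset$; pick $f^{\mr}$ in the first set and $g^{\mr}$ in the second. Then $g^{\mr}\ci f^{\mr}\in\CAC(X^{\mr},X^{\mr})$ and $f^{\mr}\ci g^{\mr}\in\CAC(Y^{\mr},Y^{\mr})$. But $\id_{X^{\mr}}$ also lies in $\CAC(X^{\mr},X^{\mr})$, and by the bound just established $\CAC(X^{\mr},X^{\mr})$ has at most one element in $\KAC$, so $\und{g^{\mr}}\ci\und{f^{\mr}}=\und{\id_{X^{\mr}}}$; likewise $\und{f^{\mr}}\ci\und{g^{\mr}}=\und{\id_{Y^{\mr}}}$. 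Therefore $\und{f^{\mr}}$ is an isomorphism in $\KAC$, i.e. $f^{\mr}$ is a homotopy equivalence, so $X^{\mr}$ and $Y^{\mr}$ are homotopically equivalent in $\CAC$.

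The only genuinely delicate point is the reduction of the $\CC$-homotopy to a $\CAC$-homotopy, i.e. arranging $\vp^1=0=\vp^{n+1}$. Everything there is handled by Claim~\ref{ClaimHtpyReduced} once one checks that the exactness hypotheses of its parts~(1) and~(2) are met, which is immediate from the definitions of $n$-kernel and $n$-cokernel sequence (Definition~\ref{DefNExSeq}). For $n=1$ the clause in Claim~\ref{ClaimHtpyReduced}~(3) does not directly apply, but the argument of Lemma~\ref{Lem1EAandET} shows the same conclusion, so no difficulty remains. Thus the proof is essentially a two-line bookkeeping argument on top of Proposition~\ref{PropNExSeq}~(2) and Claim~\ref{ClaimHtpyReduced}.
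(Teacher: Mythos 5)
Your proof is correct and rests on the same foundation as the paper's, which simply cites Proposition~\ref{PropNExSeq}~(2). However, the step you single out as ``the only genuinely delicate point'' is not needed at all: for $f^{\mr},g^{\mr}\in\CAC(X^{\mr},Y^{\mr})$ one has $f^0=g^0=\id_A$ and $f^{n+1}=g^{n+1}=\id_C$, so for \emph{any} homotopy $\vp^{\mr}$ in $\CC$ the degree-$0$ and degree-$(n+1)$ homotopy identities already read $\vp^1\ci d_X^0=g^0-f^0=0$ and $d_Y^n\ci\vp^{n+1}=g^{n+1}-f^{n+1}=0$, which are precisely conditions~(\ref{vp1}) and~(\ref{vpn+1}); thus every $\CC$-homotopy between morphisms of $\CAC$ is automatically a $\CAC$-homotopy (the relation in Definition~\ref{DefSeq} is just the $\CC$-relation restricted to such morphisms). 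Consequently no appeal to Claim~\ref{ClaimHtpyReduced} is required --- that claim yields the strictly stronger normalization $\vp^1=0$, $\vp^{n+1}=0$, which you do not need --- and the case $n=1$ needs no separate treatment, which is why the paper can call the corollary immediate. Two minor slips in the superfluous step, both harmless here since $X^{\mr},Y^{\mr}$ are $n$-exact: the exactness of $\C(X^2,A)\to\C(X^1,A)\to\C(X^0,A)$ comes from $X^{\mr}$ being an $n$-\emph{cokernel} sequence, and that of $\C(C,Y^{n-1})\to\C(C,Y^n)\to\C(C,Y^{n+1})$ from $Y^{\mr}$ being an $n$-\emph{kernel} sequence, the reverse of what you wrote. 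Your deduction of the second assertion from the cardinality bound (composites agree with the identities in $\KAC$) is exactly the intended argument.
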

\begin{proof}
This is an immediate consequence of Proposition~\ref{PropNExSeq} {\rm (2)}.
\end{proof}

\begin{dfn}\label{DefLambda}
Let $\C$ be an additive category, and let $A,C\in\C$ be any pair of objects. Denote the class of all homotopic equivalence classes of $n$-exact sequences in $\CAC$ by $\LAC$. This is a subclass of $\Ob(\KAC)/\cong$.

For $[X^{\mr}],[Y^{\mr}]\in\LAC$, we write $[X^{\mr}]\le [Y^{\mr}]$ if $\CAC(X^{\mr},Y^{\mr})\ne\emptyset$. By Corollary \ref{CorNExSeq}, this relation makes $\LAC$ a poset (provided it forms a set).
\end{dfn}

\begin{cor}\label{CorIsolated}
For any $n$-exact sequence $X^{\mr}\in\CAC$, the following are equivalent.
\begin{enumerate}
\item $[X^{\mr}]$ is isolated, in the sense that
\[ [X^{\mr}]\le [X^{\prime\mr}]\ \ \text{or}\ \ [X^{\prime\mr}]\le [X^{\mr}]\ \ \Rightarrow\ \ [X^{\mr}]=[X^{\prime\mr}]. \]
holds in $\LAC$.
\item $X^{\mr}$ satisfies the following {\rm (I1)} and {\rm (I2)} for any $n$-exact sequence $Y^{\mr}\in\CC$.
\begin{itemize}
\item[{\rm (I1)}] If there is $f^{\mr}\in\CC(X^{\mr},Y^{\mr})$ in which $f^0=a$ and $f^{n+1}=c$ are isomorphisms, then $f^{\mr}$ is a homotopy equivalence in $\CC$.
\item[{\rm (I2)}] Dually, if there is $g^{\mr}\in\CC(Y^{\mr},X^{\mr})$ in which $g^0$ and $g^{n+1}$ are isomorphisms, then $g^{\mr}$ is a homotopy equivalence in $\CC$.
\end{itemize}
\end{enumerate}
\end{cor}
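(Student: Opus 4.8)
The plan is to treat the two implications separately, the main tools being Corollary~\ref{CorNExSeq} (between two $n$-exact sequences in $\CAC$ there is at most one morphism up to homotopy in $\CAC$) and Remark~\ref{RemHtpyEqDiff} (a morphism in $\CAC$ that is a homotopy equivalence in $\CC$ is already one in $\CAC$).

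For $(2)\Rightarrow(1)$, assume $X^{\mr}$ satisfies {\rm (I1)} and {\rm (I2)}, and let $X^{\prime\mr}\in\CAC$ be an $n$-exact sequence with $[X^{\mr}]\le[X^{\prime\mr}]$, i.e.\ with some $f^{\mr}\in\CAC(X^{\mr},X^{\prime\mr})$. Since $f^0=\id_A$ and $f^{n+1}=\id_C$ are isomorphisms, {\rm (I1)} applied with $Y^{\mr}=X^{\prime\mr}$ shows that $f^{\mr}$ is a homotopy equivalence in $\CC$, and then Remark~\ref{RemHtpyEqDiff} shows it is a homotopy equivalence in $\CAC$, so $[X^{\mr}]=[X^{\prime\mr}]$. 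The case $[X^{\prime\mr}]\le[X^{\mr}]$ is symmetric, using {\rm (I2)}. Hence $[X^{\mr}]$ is isolated.

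For $(1)\Rightarrow(2)$ I will establish {\rm (I1)}; the proof of {\rm (I2)} is dual. Let $Y^{\mr}\in\CC$ be an $n$-exact sequence and let $f^{\mr}\in\CC(X^{\mr},Y^{\mr})$ have $f^0=a$ and $f^{n+1}=c$ isomorphisms. First rigidify the end terms of $Y^{\mr}$: let $\wt{Y}^{\mr}$ be the complex with $\wt{Y}^0=A$, $\wt{Y}^{n+1}=C$, $\wt{Y}^i=Y^i$ for $1\le i\le n$, and with $d_{\wt Y}^0=d_Y^0\ci a$, $d_{\wt Y}^n=c\iv\ci d_Y^n$ and $d_{\wt Y}^i=d_Y^i$ otherwise. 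Then $u^{\mr}=(a,\id_{Y^1},\ldots,\id_{Y^n},c)$ is an isomorphism $\wt{Y}^{\mr}\ov{\cong}{\lra}Y^{\mr}$ in $\CC$, so $\wt{Y}^{\mr}$ is again an $n$-exact sequence and lies in $\CAC$. The composite $h^{\mr}=(u^{\mr})\iv\ci f^{\mr}\co X^{\mr}\to\wt{Y}^{\mr}$ has $h^0=\id_A$ and $h^{n+1}=\id_C$, hence $h^{\mr}\in\CAC(X^{\mr},\wt{Y}^{\mr})$, so $[X^{\mr}]\le[\wt{Y}^{\mr}]$ in $\LAC$. Since $[X^{\mr}]$ is isolated, $[X^{\mr}]=[\wt{Y}^{\mr}]$, so there is a homotopy equivalence $e^{\mr}\in\CAC(X^{\mr},\wt{Y}^{\mr})$. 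By Corollary~\ref{CorNExSeq} applied to the $n$-exact sequences $X^{\mr},\wt{Y}^{\mr}\in\CAC$ we have $|\KAC(X^{\mr},\wt{Y}^{\mr})|\le 1$, whence $\und{h^{\mr}}=\und{e^{\mr}}$ in $\KAC$, so $h^{\mr}$ is a homotopy equivalence in $\CAC$; in particular it is one in $\CC$, since a homotopy in $\CAC$ between morphisms with the same end components is also a homotopy in $\CC$. Thus $f^{\mr}=u^{\mr}\ci h^{\mr}$ is a composite of homotopy equivalences in $\CC$, hence itself a homotopy equivalence in $\CC$, proving {\rm (I1)}; dualizing (rigidifying $Y^{\mr}$ via morphisms $Y^{\mr}\to X^{\mr}$) gives {\rm (I2)}. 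I expect the only delicate point to be this passage between $\CAC$ and $\CC$ together with the (routine) check that $\wt{Y}^{\mr}$ is again $n$-exact; the rest is bookkeeping.
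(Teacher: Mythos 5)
Your proof is correct and follows essentially the same route as the paper: for $(1)\Rightarrow(2)$ the paper also rigidifies $Y^{\mr}$ to the sequence $A\ov{d_Y^0\ci a}{\lra}Y^1\to\cdots\to Y^n\ov{c\iv\ci d_Y^n}{\lra}C$ in $\CAC$ and uses isolatedness (your explicit appeal to Corollary~\ref{CorNExSeq} just spells out what the paper leaves implicit), and for $(2)\Rightarrow(1)$ it uses {\rm (I1)}, {\rm (I2)} together with Remark~\ref{RemHtpyEqDiff} exactly as you do.
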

\begin{proof}
Assume that $[X^{\mr}]$ is isolated in $\LAC$, and let us show {\rm (I1)}. Suppose that $Y^{\mr}\in\CC(X^{\mr},Y^{\mr})$ is an $n$-exact sequence, and let $f^{\mr}\in\CC(X^{\mr},Y^{\mr})$ be a morphism in which $f^0=a, f^{n+1}=c$ are isomorphisms. Then
\begin{equation}\label{exAYC}
A\ov{d_Y^0\ci a}{\lra}Y^1\ov{d_Y^1}{\lra}Y^2\to\cdots\ov{d_Y^{n-1}}{\lra}Y^n\ov{c\iv\ci d_Y^n}{\lra}C
\end{equation}
is an $n$-exact sequence in $\CAC$, with an isomorphism $(a,\id,\ldots,\id,c)$ to $Y^{\mr}$ in $\CC$. Since $(\id,f^1,\ldots,f^n,\id)$ gives a morphism from $X^{\mr}$ to $(\ref{exAYC})$ in $\CAC$, it becomes a homotopy equivalence by {\rm (1)}. As their composition, $f^{\mr}$ gives a homotopy equivalence in $\CC$. Similarly for {\rm (I2)}.

Conversely, assume that $X^{\mr}$ satisfies {\rm (I1)}, and suppose $[X^{\mr}]\le [X^{\prime\mr}]$ holds for some $n$-exact sequence $X^{\prime\mr}\in\CAC$. Then there is a morphism $f^{\mr}\in\CAC(X^{\mr},X^{\prime\mr})$, which becomes a homotopy equivalence in $\CAC$ by {\rm (I1)} and Remark~\ref{RemHtpyEqDiff}. This means $[X^{\mr}]=[X^{\prime\mr}]$. Similarly, {\rm (I2)} shows $[X^{\prime\mr}]\le [X^{\mr}]\Rightarrow [X^{\mr}]=[X^{\prime\mr}]$.
\end{proof}

\begin{dfn}\label{DefNPullBack}(\cite[(dual of) Definition 2.11]{J})
Let $Y^{\mr}\in\CC$ be any object. A commutative diagram in $\C$
\begin{equation}\label{Diag_NPB}
\xy
(-20,6)*+{X^1}="0";
(-6,6)*+{X^2}="2";
(7,6)*+{\cdots}="4";
(20,6)*+{X^n}="6";
(34,6)*+{X^{n+1}}="8";
(48,6)*+{}="9";
(-20,-6)*+{Y^1}="10";
(-6,-6)*+{Y^2}="12";
(7,-6)*+{\cdots}="14";
(20,-6)*+{Y^n}="16";
(34,-6)*+{Y^{n+1}}="18";
(48,-6)*+{}="19";
{\ar^{d_X^1} "0";"2"};
{\ar^{d_X^2} "2";"4"};
{\ar^{d_X^{n-1}} "4";"6"};
{\ar^{d_X^n} "6";"8"};
%
{\ar_{d_Y^1} "10";"12"};
{\ar_{d_Y^2} "12";"14"};
{\ar_{d_Y^{n-1}} "14";"16"};
{\ar_{d_Y^n} "16";"18"};
%
{\ar_{f^1} "0";"10"};
{\ar_{f^2} "2";"12"};
{\ar_{f^n} "6";"16"};
{\ar^{f^{n+1}} "8";"18"};
{\ar@{}|\circlearrowright "0";"12"};
{\ar@{}|\circlearrowright "2";"14"};
{\ar@{}|\circlearrowright "6";"18"};
\endxy
\end{equation}
is called an {\it $n$-pullback diagram} if
\begin{equation}\label{XMfY}
X^1\ov{d^0}{\lra}X^2\oplus Y^1\ov{d^1}{\lra}X^3\oplus Y^2\ov{d^2}{\lra}\cdots\ov{d^{n-1}}{\lra}X^{n+1}\oplus Y^n\ov{d^n}{\lra}Y^{n+1}
\end{equation}
is an $n$-kernel sequence, where $d^i$ are defined by
\begin{eqnarray*}
d^0&=& \left[\begin{array}{c}-d_X^1\\ f^1\end{array}\right],\\
d^i&=& \left[\begin{array}{cc}-d_X^{i+1}&0\\ f^{i+1}&d_Y^i\end{array}\right]\quad(1\le i\le n-1),\\
d^n&=& \left[\begin{array}{cc}f^{n+1}&d_Y^n\end{array}\right].
\end{eqnarray*}
An {\it $n$-pushout diagram} is defined dually.
\end{dfn}

\begin{rem}\label{RemNPullBack}
Let $(\ref{Diag_NPB})$ be an $n$-pullback diagram. If we put $X^0=Y^0$, then the exactness of
\[ 0\to\C(X^0,X^1)\ov{\C(X^0,d^0)}{\lra}\C(X^0,X^2\oplus Y^1)\ov{\C(X^0,d^1)}{\lra}\C(X^0,X^3\oplus Y^2) \]
gives a unique morphism $d_X^0\in\C(X^0,X^1)$ satisfying $f^1\ci d_X^0=d_Y^0$ and $d_X^1\ci d_X^0=0$. Then the sequence
\[ X^0\ov{d_X^0}{\lra}X^1\ov{d_X^1}{\lra}\cdots\ov{d_X^n}{\lra}X^{n+1} \]
gives an object $X^{\mr}\in\CC$, and $f^{\mr}=(\id,f^1,\ldots,f^{n+1})\in\CC(X^{\mr},Y^{\mr})$ becomes a morphism. Sequence $(\ref{XMfY})$ is nothing but the mapping cone $M^{\mr}_f$ (in Definition~\ref{DefMapCone}) of this morphism $f^{\mr}$.
\end{rem}

\begin{dfn}\label{Defcast}
Let $Y^{\mr}\in\CAC$ be any $n$-exact sequence, and denote its homotopic equivalence class in $\CAC$ by $[Y^{\mr}]$, as before. Let $c\in\C(C\ppr,C)$ be any morphism.

If there exists an $n$-exact sequence $X^{\mr}\in\CACp$ equipped with a morphism
\[ f^{\mr}=(\id_A,f^1,\ldots,f^n,f^{n+1}=c)\in\CC(X^{\mr},Y^{\mr}) \]
which makes $(\ref{Diag_NPB})$ an $n$-pullback diagram, then we define $c\uas [Y^{\mr}]$ to be
\begin{equation}\label{cYX}
c\uas [Y^{\mr}]=[X^{\mr}].
\end{equation}
Dually, for a morphism $a\in\C(A,A\ppr)$, the class $a\sas [Y^{\mr}]$ is defined by using an $n$-pushout diagram when it exists.
Well-definedness of this definition will be shown in Proposition~\ref{PropcastWellDef}.
\end{dfn}

\begin{lem}\label{LemNPBJasso}
Let $f^{\mr}=(\id_A,f^1,\ldots,f^n,f^{n+1}=c)\in\CC({}_AX^{\mr}_{C\ppr},{}_AY^{\mr}_C)$ be any morphism, which makes $(\ref{Diag_NPB})$ an $n$-pullback diagram. Then for any morphism
\[ g^{\mr}=(a,g^1,\ldots,g^n,c)\in\CC({}_{A\ppr}Z^{\mr}_{C\ppr},{}_AY^{\mr}_C), \]
there exists a morphism $h^{\mr}=(a,h^1,\ldots,h^n,\id)\in\CC(Z^{\mr},X^{\mr})$ and a homotopy $\vp^{\mr}=(0,\vp^2,\vp^3,\ldots,\vp^n,0)$ which gives $g^{\mr}\un{\vp^{\mr}}{\sim}f^{\mr}\ci h^{\mr}$. Moreover, such $h^{\mr}$ is unique up to homotopy.
\end{lem}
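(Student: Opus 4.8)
The plan is to use the single piece of structure the hypothesis provides: since $f^{\mr}$ makes $(\ref{Diag_NPB})$ an $n$-pullback diagram, the sequence $(\ref{XMfY})$ — which by Definition~\ref{DefMapCone} is precisely the mapping cone $\Mf$ — is an $n$-kernel sequence, i.e. the functorial sequence $\C(-,\Mf)$ is exact (and $d_{M_f}^0$ is a monomorphism in that sense). I would not build $h^{\mr}$ and $\vp^{\mr}$ separately; instead I package the pair $(h^j,\vp^j)$ into a single morphism $\om^j\co Z^j\to M_f^{j-1}$ and construct the $\om^j$ by descending induction on $j$, using only that exactness. Here $M_f^0=X^1$ and $M_f^j=X^{j+1}\oplus Y^j$ for $1\le j\le n$, so $\om^1$ is just a map $Z^1\to X^1$ (this is why $\vp^1=0$) while for $2\le j\le n$ I write $\om^j=\left[\begin{smallmatrix}h^j\\ \vp^j\end{smallmatrix}\right]$; I also fix the boundary datum $\om^{n+1}:=\left[\begin{smallmatrix}\id_{C\ppr}\\ 0\end{smallmatrix}\right]\co Z^{n+1}\to M_f^n$, which encodes $h^{n+1}=\id$ and $\vp^{n+1}=0$. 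Uniqueness is treated separately, via the comparison lemma.

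For the induction, for $j=n,n-1,\dots,1$ I set
\[ \theta^j:=\begin{bmatrix}0\\ g^j\end{bmatrix}-\om^{j+1}\ci d_Z^j\co Z^j\to M_f^j. \]
A short computation using $d_Z^{j+1}\ci d_Z^j=0$, $g^{j+1}\ci d_Z^j=d_Y^j\ci g^j$ and $g^{n+1}=c=f^{n+1}$ gives $d_{M_f}^j\ci\theta^j=0$ (at $j=n$ this unwinds to $\theta^n=\left[\begin{smallmatrix}-d_Z^n\\ g^n\end{smallmatrix}\right]$). By the exactness of $\Mf$ at $M_f^j$ there is $\om^j\co Z^j\to M_f^{j-1}$ with $d_{M_f}^{j-1}\ci\om^j=\theta^j$. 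Expanding this last identity componentwise against the explicit differentials of $\Mf$, the top rows give the chain-map equations $d_X^j\ci h^j=h^{j+1}\ci d_Z^j$ (for $1\le j\le n$, with $h^{n+1}=\id$) and the bottom rows give the homotopy equations $g^j-f^j\ci h^j=d_Y^{j-1}\ci\vp^j+\vp^{j+1}\ci d_Z^j$. It remains to check $h^1\ci d_Z^0=d_X^0\ci a$: precomposing $d_{M_f}^0\ci\om^1=\theta^1$ with $d_Z^0$ and using $d_Z^1\ci d_Z^0=0$, $g^1\ci d_Z^0=d_Y^0\ci a$ and $f^1\ci d_X^0=d_Y^0$ yields $d_{M_f}^0\ci(h^1\ci d_Z^0)=d_{M_f}^0\ci(d_X^0\ci a)$, whence the two agree because $d_{M_f}^0$ is a monomorphism in the functorial sense. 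Thus $h^{\mr}=(a,h^1,\dots,h^n,\id)$ is a chain map and $\vp^{\mr}=(0,\vp^2,\dots,\vp^n,0)$ a homotopy $g^{\mr}\un{\vp^{\mr}}{\sim}f^{\mr}\ci h^{\mr}$, as wanted.

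For uniqueness I would argue that if $h^{\mr},\tilde h^{\mr}$ are two solutions then $k^{\mr}:=h^{\mr}-\tilde h^{\mr}\co Z^{\mr}\to X^{\mr}$ is a chain map with $k^0=0=k^{n+1}$ whose composite $f^{\mr}\ci k^{\mr}$ is null-homotopic, and feeding $k^{\mr}$ together with such a null-homotopy into the same diagram chase against the $n$-kernel sequence $\Mf$ produces a null-homotopy of $k^{\mr}$; in the setting of Definition~\ref{Defcast}, where $X^{\mr}$ and $Z^{\mr}$ are $n$-exact sequences, this is immediate from Proposition~\ref{PropNExSeq}(2). The step I expect to demand the most care is purely bookkeeping: choosing the presentation of $(h^{\mr},\vp^{\mr})$ so that the two rows of $d_{M_f}^{j-1}\ci\om^j=\theta^j$ come out on the nose as the chain-map and homotopy relations (the minus signs in the differential of $\Mf$ are exactly what makes this happen), and keeping straight the two boundaries — the datum $\om^{n+1}=\left[\begin{smallmatrix}\id\\ 0\end{smallmatrix}\right]$ at the top and the monomorphism argument at degree $0$. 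One should also record that when $n=1$ the cone $\Mf$ has only three terms and the induction collapses to its single base step, which still goes through.
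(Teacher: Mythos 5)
Your proof is correct and is essentially the argument the paper intends: the paper's own proof is a one-line remark that the statement follows "in a straightforward way, only using the fact that $\Mf$ is an $n$-kernel sequence" (cf. the dual of Jasso's Proposition 2.13 and Remark~\ref{RemNPullBack}), and your descending induction with the packaged maps $\om^j$, the boundary datum $\om^{n+1}$, and the final monomorphism argument at degree $0$ is exactly that diagram chase carried out in detail, with the uniqueness sketch completable by the same method.
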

\begin{proof}
This is shown in a straightforward way, only using the fact that $M^{\mr}_f$ is an $n$-kernel sequence (cf. dual of \cite[Proposition 2.13]{J} and Remark~\ref{RemNPullBack}).
\end{proof}

\begin{prop}\label{PropcastWellDef}
For any $n$-exact sequence ${}_AY^{\mr}_C\in\CAC$ and any $c\in\C(C\ppr,C)$, the class $c\uas [Y^{\mr}]$ in $(\ref{cYX})$ is unique if it exists, only depending on the homotopic equivalence class $[Y^{\mr}]$.
\end{prop}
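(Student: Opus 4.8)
The plan is to reduce the statement to the fact (Corollary~\ref{CorNExSeq}) that between two $n$-exact sequences with the same end-terms there is at most one morphism up to homotopy, so that any two $n$-pullback data along $c$ produce homotopically equivalent sequences. Concretely, suppose $f^{\mr}\in\CC({}_AX^{\mr}_{C\ppr},{}_AY^{\mr}_C)$ and $f^{\prime\mr}\in\CC({}_AX^{\prime\mr}_{C\ppr},{}_AY^{\prime\mr}_C)$ are morphisms with $f^0=f^{\prime 0}=\id_A$ and $f^{n+1}=f^{\prime n+1}=c$, each making its square an $n$-pullback diagram, where $X^{\mr},X^{\prime\mr}$ are $n$-exact sequences lying in $\CACp$ and $Y^{\mr}\sim Y^{\prime\mr}$ in $\CAC$ are two representatives of $[Y^{\mr}]$. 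I want to conclude $[X^{\mr}]=[X^{\prime\mr}]$ in $\CACp$. Since $X^{\mr}$ and $X^{\prime\mr}$ are $n$-exact sequences in $\CACp$, by Corollary~\ref{CorNExSeq} it suffices to exhibit morphisms both in $\CACp(X^{\prime\mr},X^{\mr})$ and in $\CACp(X^{\mr},X^{\prime\mr})$.

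To build such morphisms, fix a homotopy equivalence $e^{\mr}\in\CAC(Y^{\mr},Y^{\prime\mr})$ --- so $e^0=\id_A$ and $e^{n+1}=\id_C$ --- together with a homotopy inverse $w^{\mr}\in\CAC(Y^{\prime\mr},Y^{\mr})$. Then $g^{\mr}=w^{\mr}\ci f^{\prime\mr}\in\CC(X^{\prime\mr},Y^{\mr})$ satisfies $g^0=\id_A$ and $g^{n+1}=\id_C\ci c=c$, so Lemma~\ref{LemNPBJasso}, applied to the $n$-pullback diagram carried by $f^{\mr}$ and to this $g^{\mr}$ (with $a=\id_A$), yields a morphism $h^{\mr}=(\id_A,h^1,\ldots,h^n,\id_{C\ppr})\in\CC(X^{\prime\mr},X^{\mr})$, which therefore belongs to $\CACp(X^{\prime\mr},X^{\mr})$. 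Running the symmetric argument --- now applying Lemma~\ref{LemNPBJasso} to the $n$-pullback diagram carried by $f^{\prime\mr}$ and to the morphism $e^{\mr}\ci f^{\mr}\in\CC(X^{\mr},Y^{\prime\mr})$ --- produces an element of $\CACp(X^{\mr},X^{\prime\mr})$. Both morphism sets are thus non-empty, and Corollary~\ref{CorNExSeq} gives $[X^{\mr}]=[X^{\prime\mr}]$. Taking $Y^{\prime\mr}=Y^{\mr}$ and $e^{\mr}=\id_{Y^{\mr}}$ recovers uniqueness of $c\uas[Y^{\mr}]$ for a fixed representative, and the general case yields its independence of the chosen representative of $[Y^{\mr}]$.

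I do not anticipate any real obstacle: essentially all of the content is packed into Lemma~\ref{LemNPBJasso} (the $n$-pullback comparison lemma, itself a reformulation of the dual of \cite[Proposition 2.13]{J}) and into the at-most-one-morphism property of Corollary~\ref{CorNExSeq}. The only points needing a little care are verifying that the composites $w^{\mr}\ci f^{\prime\mr}$ and $e^{\mr}\ci f^{\mr}$ have $(n+1)$-st component equal to $c$ --- this is precisely why one must take the homotopy equivalence $e^{\mr}$ in $\CAC$, where end-term components are identities, rather than merely in $\CC$ --- and noting that the morphisms delivered by Lemma~\ref{LemNPBJasso} automatically have identity components in degrees $0$ and $n+1$, so that they lie in $\CACp$ and Corollary~\ref{CorNExSeq} is applicable. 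Finally, I would note that since the proposition asserts uniqueness only conditionally on existence, nothing needs to be constructed; the argument merely compares two hypothetical $n$-pullback data.
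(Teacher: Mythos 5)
Your proof is correct, but for the main half---independence of the representative of $[Y^{\mr}]$---it takes a genuinely different route from the paper. The paper keeps the given pullback datum $f^{\mr}\co X^{\mr}\to Y^{\mr}$, composes with a homotopy equivalence $y^{\mr}\in\CAC(Y^{\mr},Y^{\prime\mr})$, and applies Proposition~\ref{PropMapCone} to the square with identity on $X^{\mr}$ to conclude that $M^{\mr}_{y\ci f}$ is homotopy equivalent to $M^{\mr}_f$, hence again an $n$-kernel sequence; thus the very same $X^{\mr}$, with $f^{\prime\mr}=y^{\mr}\ci f^{\mr}$, is an $n$-pullback datum over $Y^{\prime\mr}$ and $c\uas[Y^{\prime\mr}]=[X^{\mr}]$. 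You instead compare two independently given pullback data over $Y^{\mr}$ and $Y^{\prime\mr}$ by feeding $w^{\mr}\ci f^{\prime\mr}$ and $e^{\mr}\ci f^{\mr}$ into Lemma~\ref{LemNPBJasso} (in the two directions) and then invoking Corollary~\ref{CorNExSeq}; this is legitimate because Definition~\ref{Defcast} requires $X^{\mr}$ and $X^{\prime\mr}$ to be $n$-exact sequences and the lemma's lifts have identity components in degrees $0$ and $n+1$, exactly as you note, and it has the advantage of handling the fixed-$Y^{\mr}$ uniqueness (the case $e^{\mr}=\id$, which is also how the paper disposes of that part) and the comparison across representatives by one mechanism, with no appeal to Proposition~\ref{PropMapCone}. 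What the paper's construction buys in exchange is slightly more than the stated uniqueness: it shows that existence of an $n$-pullback datum transfers along a homotopy equivalence of the $Y$'s (the same $X^{\mr}$ serves), so the domain on which $c\uas$ is defined is itself a union of homotopy classes; your conditional reading (``nothing needs to be constructed'') does not yield this, but it is all that the literal statement asserts, and in the later applications existence over the relevant representatives is in any case supplied separately, e.g.\ by {\rm (E2$^{\prime\mathrm{op}}$)}.
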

\begin{proof}
Suppose that there are $X^{\mr}\in\CACp$ and $f^{\mr}\in\CC(X^{\mr},Y^{\mr})$ with the required properties in Definition~\ref{Defcast} to give $c\uas [Y^{\mr}]=[X^{\mr}]$. Lemma~\ref{LemNPBJasso} shows that $[X^{\mr}]$ is unique for $Y^{\mr}$. Let us show that it only depends on the class $[Y^{\mr}]$.
Assume that $Y^{\mr}$ and $Y^{\prime\mr}$ are homotopically equivalent in $\CAC$. Then $Y^{\prime\mr}$ is also an $n$-exact sequence. Take a homotopy equivalence $y^{\mr}\in\CAC(Y^{\mr},Y^{\prime\mr})$ in $\CAC$, and put $f^{\prime\mr}=y^{\mr}\circ f^{\mr}$. Proposition~\ref{PropMapCone} applied to
\[
\xy
(-7,6)*+{X^{\mr}}="0";
(7,6)*+{Y^{\mr}}="2";
(-7,-6)*+{X^{\mr}}="4";
(7,-6)*+{Y^{\prime\mr}}="6";
{\ar^{f^{\mr}} "0";"2"};
{\ar@{=} "0";"4"};
{\ar^{y^{\mr}} "2";"6"};
{\ar_{f^{\prime\mr}} "4";"6"};
{\ar@{}|\circlearrowright "0";"6"};
\endxy
\]
gives a homotopy equivalence between $M^{\mr}_f$ and $M^{\mr}_{f\ppr}$. In particular $M^{\mr}_{f\ppr}$ also becomes an $n$-kernel sequence. Thus $X^{\mr}\in\CACp$ and $f^{\prime\mr}\in\CC(X^{\mr},Y^{\prime\mr})$ satisfy the required properties to give $c\uas [Y^{\prime\mr}]=[X^{\mr}]$. This shows $c\uas [Y^{\prime\mr}]=[X^{\mr}]=c\uas [Y^{\mr}]$.
\end{proof}

The following is the definition of an $n$-exact category in \cite{J}. Later we will rephrase it in Definition~\ref{DefModifNExCat} (see Proposition~\ref{PropJassoEquiv}).

\begin{dfn}\label{DefNExCat}(\cite[Definition 4.2]{J})
Let $\C$ be an additive category, and let $\Xcal$ be a class of $n$-exact sequences in $\C$. The pair $\CX$ is called an {\it $n$-exact category} if it satisfies the following closedness {\rm (EC)} and conditions {\rm (E0),(E1),\ldots,(E2$\op$)}.

In the following, a morphism $a\in\C(A,B)$ is called an {\it $\Xcal$-admissible monomorphism} (respectively, an {\it $\Xcal$-admissible epimorphism}) if there is some $X^{\mr}\in\Xcal$ of the form $A\ov{a}{\lra}B\to X^2\to\cdots\to X^{n+1}$ $($resp. $X^0\to\cdots\to X^{n-1}\to A\ov{a}{\lra}B)$.
\begin{enumerate}
\item[{\rm (EC)}] The following holds for any morphism $f^{\mr}\in\CC(X^{\mr},Y^{\mr})$ between $n$-exact sequences $X^{\mr},Y^{\mr}$.
\begin{itemize}
\item[{\rm (i)}] If $f^k$ and $f^{k+1}$ are isomorphisms for some $k\in\{0,\ldots,n\}$, then $X^{\mr}\in\Xcal$ holds if and only if $Y^{\mr}\in\Xcal$.
\item[{\rm (ii)}] If $f^0$ and $f^{n+1}$ are isomorphisms, then $X^{\mr}\in\Xcal$ holds if and only if $Y^{\mr}\in\Xcal$.
\end{itemize}
\item[{\rm (E0)}] The sequence ${}_0\O^{\mr}_0\in\CC$ (see Proposition~\ref{PropZeroEA})
\[ 0\to0\to\cdots\to0 \]
belongs to $\Xcal$.
\item[{\rm (E1)}] $\Xcal$-admissible monomorphisms are closed by composition.
\item[{\rm (E1$\op$)}] Dually, $\Xcal$-admissible epimorphisms are closed by composition.
\item[{\rm (E2)}] For any $X^{\mr}\in\Xcal$, any $Y^0\in\C$ and any $f^0\in\C(X^0,Y^0)$, there is an $n$-pushout diagram in $\C$ as follows.
\[
\xy
(-20,6)*+{X^0}="0";
(-6,6)*+{X^1}="2";
(7,6)*+{X^2}="4";
(20,6)*+{\cdots}="6";
(34,6)*+{X^n}="8";
(48,6)*+{}="9";
(-20,-6)*+{Y^0}="10";
(-6,-6)*+{Y^1}="12";
(7,-6)*+{Y^2}="14";
(20,-6)*+{\cdots}="16";
(34,-6)*+{Y^n}="18";
(48,-6)*+{}="19";
{\ar^{d_X^0} "0";"2"};
{\ar^{d_X^1} "2";"4"};
{\ar^{d_X^2} "4";"6"};
{\ar^{d_X^{n-1}} "6";"8"};
%
{\ar_{d_Y^0} "10";"12"};
{\ar_{d_Y^1} "12";"14"};
{\ar_{d_Y^2} "14";"16"};
{\ar_{d_Y^{n-1}} "16";"18"};
%
{\ar_{f^0} "0";"10"};
{\ar^{f^1} "2";"12"};
{\ar^{f^2} "4";"14"};
{\ar^{f^n} "8";"18"};
{\ar@{}|\circlearrowright "0";"12"};
{\ar@{}|\circlearrowright "2";"14"};
{\ar@{}|\circlearrowright "6";"18"};
\endxy
\]
\item[{\rm (E2$\op$)}] Dually, for any $Y^{\mr}\in\Xcal$, any $X^{n+1}\in\C$ and any $f^{n+1}\in\C(X^{n+1},Y^{n+1})$, there is an $n$-pullback diagram in $\C$ as in $(\ref{Diag_NPB})$.
\end{enumerate}
\end{dfn}

The following is a consequence of being an $n$-exact category, shown in \cite{J}.
\begin{fact}\label{FactJasso}(\cite[dual of Proposition 4.8 $\mathrm{(iv)}\Rightarrow \mathrm{(ii)}$]{J}.)
Let $\CX$ be an $n$-exact category. For any ${}_AX^{\mr},{}_AY^{\mr}\in\Xcal$, if $f^{\mr}\in\CC(X^{\mr},Y^{\mr})$ satisfies $f^0=\id_A$, then $\Mf\in\Xcal$ holds. Dually for $f^{\mr}$ satisfying $f^{n+1}=\id$.
\end{fact}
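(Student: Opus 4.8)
The plan is to compare $\Mf$ with the canonical $n$-pullback produced by the axiom {\rm (E2$\op$)} and then transport admissibility along a homotopy equivalence. Note first that, by construction, the complex $\Mf$ is exactly the complex displayed in Definition~\ref{DefNPullBack} (see Remark~\ref{RemNPullBack}); so $\Mf\in\Xcal$ says precisely that $f^\mr$ is an $n$-pullback diagram whose mapping-cone complex is admissible.

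First I would apply {\rm (E2$\op$)} to $Y^\mr\in\Xcal$ and the morphism $f^{n+1}\co X^{n+1}\to Y^{n+1}$, obtaining an $n$-pullback diagram: an $n$-exact sequence $\wt{X}^\mr$ with $\wt{X}^0=A$, $\wt{X}^{n+1}=X^{n+1}$, together with a chain map $g^\mr\co\wt{X}^\mr\to Y^\mr$ satisfying $g^0=\id_A$ and $g^{n+1}=f^{n+1}$ whose mapping cone $\Mg$ is an $n$-kernel sequence. The first substantive input, which is the $n$-dimensional analogue of the statement that a pullback of an admissible conflation is admissible and that a bicartesian square yields a conflation, is that one may take $\wt{X}^\mr\in\Xcal$ and $\Mg\in\Xcal$; this lies at the heart of Jasso's Proposition~4.8 and is verified there by induction on $n$ using {\rm (E2)}, {\rm (E2$\op$)}.

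Next, since $f^\mr$ and the $n$-pullback map $g^\mr$ share their last component $f^{n+1}$, Lemma~\ref{LemNPBJasso} yields a chain map $h^\mr\co X^\mr\to\wt{X}^\mr$ with $h^0=\id_A$, $h^{n+1}=\id_{X^{n+1}}$ and $f^\mr\sim g^\mr\ci h^\mr$. Thus $h^\mr$ is a morphism in $\Cbf^{n+2}_{(A,X^{n+1})}$ between two $n$-exact sequences that lie in $\Xcal$ and have the same end terms. The second substantive point, an $n$-dimensional short five lemma, is that any such $h^\mr$ is automatically a homotopy equivalence: one produces a chain map $\wt{X}^\mr\to X^\mr$ fixing the end terms by applying Lemma~\ref{LemNPBJasso} in the other direction (viewing $X^\mr$, via {\rm (E2$\op$)} with $\id_{X^{n+1}}$, as an $n$-pullback of itself) and then invokes Corollary~\ref{CorNExSeq}. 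Granting this, Proposition~\ref{PropMapCone} applied to the strictly commutative square with top row $g^\mr\ci h^\mr$, bottom row $g^\mr$, left column $h^\mr$ and right column $\id_{Y^\mr}$ gives a homotopy equivalence of complexes $M^\mr_{g\ci h}\to\Mg$. Finally $\Mf$ is isomorphic to $M^\mr_{g\ci h}$ as a complex (homotopic chain maps with identity $0$-component have isomorphic mapping cones; cf. Corollary~\ref{CorMapConeHomotopic}), so $\Mf$ is homotopy equivalent to $\Mg\in\Xcal$; since $\Xcal$ is closed under homotopy equivalence (a consequence of {\rm (EC)}), we get $\Mf\in\Xcal$. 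The case $f^{n+1}=\id$ follows by dualizing throughout.

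The main obstacle is the two substantive inputs above -- that the $n$-pullback $\wt{X}^\mr$ and its cone $\Mg$ are admissible, and that a ladder of admissible $n$-exact sequences fixing the end terms is a homotopy equivalence. These are exactly what Jasso establishes in the work leading up to Proposition~4.8; for $n=1$ they reduce to the classical facts that pullbacks of conflations are conflations, that bicartesian squares give conflations, and the short five lemma. Everything else here is formal bookkeeping and transport along homotopy equivalences via {\rm (EC)}.
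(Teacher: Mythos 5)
There is a genuine problem: your argument is circular, or at best re-cites the source rather than proving anything. Note first that the paper itself offers no proof of Fact~\ref{FactJasso}; it is quoted from \cite{J} (dual of Proposition 4.8 (iv)$\Rightarrow$(ii)). Your proposal defers both of its ``substantive inputs'' back to that same Proposition 4.8: that the canonical $n$-pullback $\wt{X}^{\mr}$ produced by {\rm (E2$\op$)} lies in $\Xcal$, and, crucially, that $\Mg\in\Xcal$. The second of these is literally an instance of the statement you are supposed to prove (Fact~\ref{FactJasso} applied to the particular lift $g^{\mr}$, which has $g^0=\id_A$), and you give no independent argument for it beyond the assertion that Jasso verifies it ``by induction on $n$'' --- no such induction is available from the axioms in Definition~\ref{DefNExCat} as given. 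So what your proof actually contains is only the reduction of an arbitrary $f^{\mr}$ with $f^0=\id_A$ to the canonical pullback morphism; the mathematical content is still imported wholesale from the result being proved.

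Even granting that reduction, two steps fail as written. First, the backward map $\wt{X}^{\mr}\to X^{\mr}$: Lemma~\ref{LemNPBJasso} only factors morphisms into the \emph{target} of a given $n$-pullback diagram, so taking the pullback diagram to be $\id_{X^{\mr}}$ produces nothing new; to factor $g^{\mr}\co\wt{X}^{\mr}\to Y^{\mr}$ through $f^{\mr}$ you would need to know beforehand that $\Mf$ is an $n$-kernel sequence, i.e.\ essentially the conclusion. The ``$n$-dimensional short five lemma'' you invoke is, in this paper, Lemma~\ref{LemConverseJE} {\rm (1)}, whose proof uses {\rm (E2$^{\prime\mathrm{op}}$)(ii)}, i.e.\ Fact~\ref{FactJasso} itself. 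Second, the final appeal to ``$\Xcal$ is closed under homotopy equivalence (a consequence of {\rm (EC)})'' is unjustified: {\rm (EC)} only gives closure under weak isomorphisms, and the homotopy equivalence $\Mf\to\Mg$ you construct has only its last component invertible, so neither {\rm (EC)(i)} nor {\rm (EC)(ii)} applies; closure under arbitrary homotopy equivalences in $\CC$ is Lemma~\ref{LemConverseJE} {\rm (4)}, again obtained (via Proposition~\ref{PropJassoEquiv}) from axioms whose verification rests on Fact~\ref{FactJasso}. To have a genuine proof you would have to reproduce the actual content of Jasso's Proposition 4.8 --- in particular an independent argument that the cone of the canonical pullback morphism is admissible and that end-term-fixing comparisons between admissible sequences are homotopy equivalences --- which is precisely what the paper sidesteps by quoting the result.
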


In order to rephrase the definition of an $n$-exact category, let us consider the following conditions.
\begin{dfn}\label{DefModifNExCat}
Let $\C$ be an additive category, and let $\Xcal$ be a class of $n$-exact sequences in $\C$. Define conditions {\rm (EC$\ppr$),(E2$\ppr$),(E2$^{\prime\mathrm{op}}$)} and {\rm (EI)} as follows.
\begin{itemize}
\item[{\rm (EC$\ppr$)}] For any $A,C\in\C$,
\[ \{X\in\Xcal\mid X^0=A,X^{n+1}=C \}\se\Ob(\CAC) \]
is closed by homotopic equivalences in $\CAC$.
\item[{\rm (E2$\ppr$)}] The dual of the following {\rm (E2$^{\prime\mathrm{op}}$)}.
\item[$\ \ ${\rm (E2$^{\prime\mathrm{op}}$)}]
\begin{itemize}
\item[{\rm (i)}] For any $c\in\C(C\ppr,C)$ and any ${}_AY^{\mr}_C\in\Xcal$, there exists ${}_AX^{\mr}_{C\ppr}\in\Xcal$ equipped with a morphism $f^{\mr}\in\CC(X^{\mr},Y^{\mr})$ satisfying $f^0=\id_A$.
\item[{\rm (ii)}] For any ${}_AX^{\mr}_{C\ppr},{}_AY^{\mr}_C\in\Xcal$ and any $f^{\mr}\in\CC(X^{\mr},Y^{\mr})$ satisfying $f^0=\id_A$, we have $\Mf\in\Xcal$.
\end{itemize}
\item[{\rm (EI)}] $[X^{\mr}]\in\LAC$ is isolated, for any ${}_AX^{\mr}_C\in\Xcal$.
\end{itemize}
\end{dfn}

In Proposition~\ref{PropJassoEquiv}, we will see that conditions {\rm (EC),(E2),(E2$\op$)} in Definition~\ref{DefNExCat} can be replaced by the above conditions. First let us show the following.
\begin{lem}\label{LemConverseJE}
Let $\C$ be an additive category, and let $\Xcal$ be a class of $n$-exact sequences in $\C$. If $\CX$ satisfies {\rm (EC$\ppr$),(E2$\ppr$),(E2$^{\prime\mathrm{op}}$)}, then the following holds.
\begin{enumerate}
\item Let $A,C\in\C$ be any pair of objects. If ${}_AX^{\mr}_C,{}_AY^{\mr}_C\in\Xcal$, then any $f^{\mr}\in\CAC(X^{\mr},Y^{\mr})$ is a homotopy equivalence in $\CAC$.
\item  For any $c\in\C(C\ppr,C)$, the class $\Xcal$ is closed by $c\uas$. Namely, for any ${}_AY^{\mr}_C\in\Xcal$, there exists ${}_AX^{\mr}_{C\ppr}\in\Xcal$ which gives $c\uas [Y^{\mr}]=[X^{\mr}]$. Dually, $\Xcal$ is closed by $a\sas$ for any morphism $a$ in $\C$.
\item For any ${}_AX^{\mr}_C\in\Xcal$, any $a\in\C(A,A\ppr)$ and $c\in\C(C\ppr,C)$, we have $a\sas(c\uas [X^{\mr}])=c\uas(a\sas [X^{\mr}])$ in $\Lambda^{n+2}_{(A\ppr,C\ppr)}$.
\item $\Xcal$ is closed by homotopic equivalences in $\CC$.
\end{enumerate}
\end{lem}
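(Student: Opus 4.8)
The plan is to establish the four claims in order, the workhorses being the $n$-pullback/$n$-pushout calculus of Definition~\ref{DefNPullBack} and Definition~\ref{Defcast}, the factorization Lemma~\ref{LemNPBJasso} (and its dual), and the uniqueness bound $|\KAC(X^{\mr},Y^{\mr})|\le 1$ of Corollary~\ref{CorNExSeq}. For {\rm (1)}: given $f^{\mr}\in\CAC(X^{\mr},Y^{\mr})$ with $X^{\mr},Y^{\mr}\in\Xcal$, I would first invoke {\rm (E2$^{\prime\mathrm{op}}$)(ii)} for $f^{\mr}$ itself (legitimate since $f^0=\id_A$) to get $\Mf\in\Xcal$; then $\Mf$ is $n$-exact, hence an $n$-kernel sequence, so by Remark~\ref{RemNPullBack} $f^{\mr}$ gives an $n$-pullback diagram in the sense of Definition~\ref{DefNPullBack}. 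Applying Lemma~\ref{LemNPBJasso} with $g^{\mr}=\id_{Y^{\mr}}$ produces $h^{\mr}\in\CC(Y^{\mr},X^{\mr})$ with $h^0=\id_A$, $h^{n+1}=\id_C$ and $\id_{Y^{\mr}}\sim f^{\mr}\ci h^{\mr}$; thus $h^{\mr}\in\CAC(Y^{\mr},X^{\mr})$. Since $X^{\mr},Y^{\mr}$ are $n$-exact sequences in $\CAC$, Corollary~\ref{CorNExSeq} forces $\und{h^{\mr}}\ci\und{f^{\mr}}=\id_{\und{X^{\mr}}}$, which together with $\und{f^{\mr}}\ci\und{h^{\mr}}=\id_{\und{Y^{\mr}}}$ shows $f^{\mr}$ is a homotopy equivalence in $\CAC$.

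For {\rm (2)}, given $_AY^{\mr}_C\in\Xcal$ and $c\in\C(C\ppr,C)$, I would take $_AX^{\mr}_{C\ppr}\in\Xcal$ and $f^{\mr}\co X^{\mr}\to Y^{\mr}$ with $f^0=\id_A$, $f^{n+1}=c$ from {\rm (E2$^{\prime\mathrm{op}}$)(i)}, note $\Mf\in\Xcal$ by {\rm (E2$^{\prime\mathrm{op}}$)(ii)} so that $f^{\mr}$ realizes an $n$-pullback diagram, and conclude $c\uas[Y^{\mr}]=[X^{\mr}]$ from Definition~\ref{Defcast} and Proposition~\ref{PropcastWellDef}; the dual argument with {\rm (E2$\ppr$)} handles $a\sas$. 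For {\rm (3)}, using {\rm (2)} I would fix representatives in $\Xcal$ of $c\uas[X^{\mr}]=[P^{\mr}]$, $a\sas[P^{\mr}]=[Q^{\mr}]$, $a\sas[X^{\mr}]=[R^{\mr}]$, $c\uas[R^{\mr}]=[S^{\mr}]$, together with the structure morphisms $p^{\mr}\co P^{\mr}\to X^{\mr}$, $s^{\mr}\co S^{\mr}\to R^{\mr}$ (realizing $n$-pullbacks) and $q^{\mr}\co P^{\mr}\to Q^{\mr}$, $r^{\mr}\co X^{\mr}\to R^{\mr}$ (realizing $n$-pushouts). Factoring $r^{\mr}\ci p^{\mr}$ through $s^{\mr}$ by Lemma~\ref{LemNPBJasso} (both have degree-$(n{+}1)$ part $c$) gives $t^{\mr}\co P^{\mr}\to S^{\mr}$ with $t^0=a$, $t^{n+1}=\id$; factoring $t^{\mr}$ through $q^{\mr}$ by the dual of Lemma~\ref{LemNPBJasso} (both have degree-$0$ part $a$) gives $k^{\mr}\in\Cbf^{n+2}_{(A\ppr,C\ppr)}(Q^{\mr},S^{\mr})$, which is a homotopy equivalence by {\rm (1)}; hence $a\sas(c\uas[X^{\mr}])=[Q^{\mr}]=[S^{\mr}]=c\uas(a\sas[X^{\mr}])$.

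For {\rm (4)}, let $f^{\mr}\co X^{\mr}\to Y^{\mr}$ be a homotopy equivalence in $\CC$ with $X^{\mr}\in\Xcal$ and homotopy inverse $g^{\mr}$ (so $Y^{\mr}$ is automatically $n$-exact). I would put $W^{\mr}:=(f^0)\sas[X^{\mr}]$ and $V^{\mr}:=(g^{n+1})\uas[W^{\mr}]$, both in $\Xcal$ by {\rm (2)}, with $V^0=Y^0$ and $V^{n+1}=Y^{n+1}$; write $\al^{\mr}\co X^{\mr}\to W^{\mr}$ for the $n$-pushout morphism ($\al^0=f^0$) and $\be^{\mr}\co V^{\mr}\to W^{\mr}$ for the $n$-pullback morphism ($\be^{n+1}=g^{n+1}$). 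The dual of Lemma~\ref{LemNPBJasso} applied to $f^{\mr}$ along $\al^{\mr}$ yields $\eta^{\mr}\co W^{\mr}\to Y^{\mr}$ with $\eta^0=\id$, $\eta^{n+1}=f^{n+1}$, and Lemma~\ref{LemNPBJasso} applied to $\al^{\mr}\ci g^{\mr}$ along $\be^{\mr}$ yields $\lam^{\mr}\co Y^{\mr}\to V^{\mr}$ with $\lam^{n+1}=\id$. Correcting the remaining end-components of $\eta^{\mr}\ci\be^{\mr}$ and of $\lam^{\mr}$ by the homotopies witnessing $f^{\mr}\ci g^{\mr}\sim\id_{Y^{\mr}}$ and $g^{\mr}\ci f^{\mr}\sim\id_{X^{\mr}}$, I would obtain $\mu^{\mr}\in\Cbf^{n+2}_{(Y^0,Y^{n+1})}(V^{\mr},Y^{\mr})$ and $\nu^{\mr}\in\Cbf^{n+2}_{(Y^0,Y^{n+1})}(Y^{\mr},V^{\mr})$; Corollary~\ref{CorNExSeq} then makes $V^{\mr}$ and $Y^{\mr}$ homotopically equivalent in $\Cbf^{n+2}_{(Y^0,Y^{n+1})}$, and {\rm (EC$\ppr$)} gives $Y^{\mr}\in\Xcal$.

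The hard part will be {\rm (4)}: one must keep all the degree conventions straight so that every use of Lemma~\ref{LemNPBJasso} and its dual is legitimate, and --- crucially --- the comparison maps $\mu^{\mr},\nu^{\mr}$ have to be the identity on \emph{both} end-terms, which forces the homotopy-correction trick to be carried out twice, once at each end (using $\vp^{n+1}$ and $\vp^1$ from a homotopy $f^{\mr}\ci g^{\mr}\un{\vp^{\mr}}{\sim}\id_{Y^{\mr}}$, and similarly for $g^{\mr}\ci f^{\mr}$). The remaining parts are routine manipulations layered on top of Lemma~\ref{LemNPBJasso}, Corollary~\ref{CorNExSeq}, and the axioms {\rm (EC$\ppr$), (E2$\ppr$), (E2$^{\prime\mathrm{op}}$)}.
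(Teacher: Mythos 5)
Your proposal is correct and follows essentially the same route as the paper: {\rm (E2$^{\prime\mathrm{op}}$)} puts the mapping cone $\Mf$ in $\Xcal$, Lemma~\ref{LemNPBJasso} and its dual together with Corollary~\ref{CorNExSeq} supply the comparison morphisms and their uniqueness, the end-components are repaired using the homotopy $f^{\mr}\ci g^{\mr}\sim\id_{Y^{\mr}}$ exactly as in the paper, and {\rm (EC$\ppr$)} concludes {\rm (4)}. The only deviations are cosmetic: in {\rm (1)} you obtain the one-sided inverse by applying Lemma~\ref{LemNPBJasso} to $\id_{Y^{\mr}}$ rather than via the paper's explicit contracting homotopy of $\Mf$, and in {\rm (4)} you reach the comparison object by the single route $(g^{n+1})\uas\big((f^0)\sas[X^{\mr}]\big)$ and the composite $\eta^{\mr}\ci\be^{\mr}$, which lets you bypass the paper's appeal to part {\rm (3)}.
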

\begin{proof}
{\rm (1)} By {\rm (E2$^{\prime\mathrm{op}}$)}, we have $\Mf\in\Xcal$. In particular $\Mf$ is $n$-exact. Since $d_{M_f}^n\co C\oplus Y^n\to C$ has a section $\begin{bmatrix}1\\0\end{bmatrix}\co C\to C\oplus Y^n$, we can construct a homotopy
\[ 0\un{\vp^{\mr}}{\sim}\id_{\Mf}\co\Mf\to\Mf \]
satisfying $\vp^{n+1}=\begin{bmatrix}1\\0\end{bmatrix}$ (cf. dual of \cite[Proposition 2.6]{J}.) If we write $\vp^k$ as
\[ \vp^1=[p^2\ q^1]\co X^2\oplus Y^1\to X^1, \]
\[ \vp^k=\left[\begin{array}{cc}p^{k+1}&q^k\\ r^{k+1}&s^k\end{array}\right]\co X^{k+1}\oplus Y^k\to X^k\oplus Y^{k-1}\quad(1\le k\le n), \]
then by definition they satisfy
\[ \vp^1\ci d_{M_f}^0=\id\ \ \text{and}\ \ d_{M_f}^{k-1}\ci\vp^k+\vp^{k+1}\ci d_{M_f}^k=\id\quad (1\le k\le n). \]
In particular we have
\begin{eqnarray*}
&d_{M_f}^0\ci q^1+\begin{bmatrix}q^2\\ s^2\end{bmatrix}\ci d_Y^1=\begin{bmatrix}0\\1\end{bmatrix},&\\
&q^{k+1}\ci d_Y^k=d_X^k\ci q^k\quad(1\le k\le n-1),&\\
&d_Y^n=d_X^n\ci q^n.&
\end{eqnarray*}
Then the monomorphicity of $d_{M_f}^0$ and the equality
\[ d_{M_f}^0\ci q^1 \ci d_Y^0=\begin{bmatrix}0\\1\end{bmatrix}\ci d_Y^0-\begin{bmatrix}q^2\\ s^2\end{bmatrix}\ci d_Y^1\ci d_Y^0=d_{M_f}^0\ci d_X^0  \]
shows $q^1\ci d_Y^0=d_X^0$. Thus $q^{\mr}=(\id,q^1,\ldots,q^n,\id)$ gives a morphism $q^{\mr}\in\CAC(Y^{\mr},X^{\mr})$. Corollary \ref{CorNExSeq} shows that $f^{\mr}$ is a homotopy equivalence in $\CAC$.

{\rm (2)} This follows immediately from {\rm (E2$^{\prime\mathrm{op}}$)} and the definition of $c\uas [Y^{\mr}]$. Dually for the closedness by $a\sas$.

{\rm (3)} By {\rm (2)}, there are ${}_{A\ppr}Y^{\mr}_{C\ppr},{}_{A\ppr}Z^{\mr}_{C\ppr}\in\Xcal$ which give
\[ a\sas(c\uas [X^{\mr}])=[Y^{\mr}]\quad\text{and}\quad c\uas(a\sas [X^{\mr}])=[Z^{\mr}]. \]
By Lemma~\ref{LemNPBJasso} and its dual, we find a morphism $f^{\mr}\in\Cbf^{n+2}_{(A\ppr,C\ppr)}(Y^{\mr},Z^{\mr})$. By {\rm (1)} this becomes a homotopy equivalence in $\Cbf^{n+2}_{(A\ppr,C\ppr)}$, and thus $[Y^{\mr}]=[Z^{\mr}]$ holds.

{\rm (4)} Let $f^{\mr}=(a,f^1,\ldots,f^n,c)\in\CC({}_AX^{\mr}_C,{}_BY^{\mr}_D)$ be a homotopy equivalence in $\CC$, with a homotopy inverse $g^{\mr}=(b,g^1,\ldots,g^n,d)$. Assume $X^{\mr}\in\Xcal$, and let us show $Y^{\mr}\in\Xcal$. Existence of a homotopy equivalence implies that $Y^{\mr}$ is also an $n$-exact sequence. By {\rm (2)}, there are ${}_BU^{\mr}_C,{}_AV^{\mr}_D\in\Xcal$ which give
\[ a\sas[X^{\mr}]=[U^{\mr}]\quad\text{and}\quad d\uas [X^{\mr}]=[V^{\mr}]. \]
By {\rm (2)} and {\rm (3)}, there is ${}_BZ^{\mr}_D\in\Xcal$ which gives $[Z^{\mr}]=d\uas [U^{\mr}]=a\sas [V^{\mr}]$.
Remark that there are morphisms $u^{\mr}\in\CC(X^{\mr},U^{\mr})$ and $v^{\mr}\in\CC(V^{\mr},X^{\mr})$ satisfying $u^0=a,u^{n+1}=\id_C$ and $v^0=\id_A,v^{n+1}=d$. Applying Lemma~\ref{LemNPBJasso} to $u^{\mr}\ci g^{\mr}\in\CC(Y^{\mr},U^{\mr})$, we obtain some $y^{\mr}\in\CC(Y^{\mr},Z^{\mr})$ satisfying $y^0=a\ci b$ and $y^{n+1}=\id_D$. Since $f^{\mr}\ci g^{\mr}\sim\id_{Y^{\mr}}$ by assumption, there is $\vp^1\in\C(Y^1,B)$ which gives $\vp^1\ci d_Y^0=\id_B-a\ci b$. Modifying $y^{\mr}$, we obtain a morphism
\[ (\id_B,y^1+d_Z^0\ci\vp^1,y^2,\ldots,y^n,\id_D)\in\Cbf^{n+2}_{(B,D)}(Y^{\mr},Z^{\mr}). \]
Similarly, the dual of Lemma~\ref{LemNPBJasso} applied to $f^{\mr}\ci v^{\mr}$ gives $z^{\mr}\in\CC(Z^{\mr},Y^{\mr})$ satisfying $z^0=\id_B$ and $z^{n+1}=c\ci d$, and thus we obtain
\[ (\id_B,z^1,\ldots,z^{n-1},z^n+\vp^{n+1}\ci d_Z^n,\id_D)\in\Cbf^{n+2}_{(B,D)}(Z^{\mr},Y^{\mr}). \]
By Corollary \ref{CorNExSeq}, it follows $[Y^{\mr}]=[Z^{\mr}]$. Thus {\rm (EC$\ppr$)} shows $Y^{\mr}\in\Xcal$.
\end{proof}

\begin{prop}\label{PropJassoEquiv}
Let $\C$ be an additive category, and let $\Xcal$ be a class of $n$-exact sequences in $\C$. Assume that $\CX$ satisfies {\rm (E0),(E1),(E1$\op$)}. Then the following are equivalent.
\begin{enumerate}
\item $\CX$ is an $n$-exact category.
\item $\CX$ satisfies conditions {\rm (EC$\ppr$),(E2$\ppr$),(E2$^{\prime\mathrm{op}}$)} and {\rm (EI)} in Definition~\ref{DefModifNExCat}.
\end{enumerate}
\end{prop}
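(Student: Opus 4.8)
The plan is to prove the two implications separately, relying throughout on the dictionary between $\sfr$-distinguished $n$-exangles and objects of $\Xcal$ that is set up by Definitions~\ref{DefNExSeq}--\ref{DefModifNExCat} and Lemma~\ref{LemConverseJE}.

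\textbf{$(1)\Rightarrow(2)$.} Assume $\CX$ is an $n$-exact category in the sense of Definition~\ref{DefNExCat}. Condition {\rm (EC$\ppr$)} is the ``fixed end-term'' special case of {\rm (EC)(i)}: a homotopy equivalence $f^{\mr}$ in $\CAC$ has $f^0=\id_A$ and $f^{n+1}=\id_C$ isomorphisms, and induces (by Remark~\ref{RemHtpyEqDiff} and Claim~\ref{ClaimDEA}) an honest morphism of $n$-exact sequences, so {\rm (EC)(i)} gives $X^{\mr}\in\Xcal\Leftrightarrow Y^{\mr}\in\Xcal$. For {\rm (E2$^{\prime\mathrm{op}}$)(i)} I would, given $c\in\C(C\ppr,C)$ and ${}_AY^{\mr}_C\in\Xcal$, apply {\rm (E2$\op$)} to $Y^{\mr}$ and the morphism $c=f^{n+1}\in\C(C\ppr,C)$ to obtain an $n$-pullback diagram $(\ref{Diag_NPB})$; by Remark~\ref{RemNPullBack} this completes to $X^{\mr}\in\CACp$ with a morphism $f^{\mr}\in\CC(X^{\mr},Y^{\mr})$, $f^0=\id_A$, and by \cite[dual of Proposition 4.8]{J} $X^{\mr}$ lies in $\Xcal$. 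Part {\rm (ii)} of {\rm (E2$^{\prime\mathrm{op}}$)} is exactly Fact~\ref{FactJasso}. For {\rm (EI)}: by Corollary~\ref{CorIsolated} it suffices to verify {\rm (I1),(I2)}, and for an $n$-exact sequence $Y^{\mr}\in\CC$ receiving $f^{\mr}$ from ${}_AX^{\mr}_C\in\Xcal$ with $f^0,f^{n+1}$ isomorphisms, {\rm (EC)(ii)} places $Y^{\mr}$ in $\Xcal$, whence Fact~\ref{FactJasso} applied to the factorisation through the re-indexed sequence $(\ref{exAYC})$ forces $f^{\mr}$ to be a homotopy equivalence in $\CC$ (the argument in the proof of Corollary~\ref{CorIsolated} goes through verbatim once both sequences are known to be in $\Xcal$). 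This gives all four conditions of Definition~\ref{DefModifNExCat}.

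\textbf{$(2)\Rightarrow(1)$.} Now assume {\rm (EC$\ppr$),(E2$\ppr$),(E2$^{\prime\mathrm{op}}$),(EI)} together with the standing {\rm (E0),(E1),(E1$\op$)}. Condition {\rm (E0)} is assumed. For {\rm (E2$\op$)}: given ${}_AY^{\mr}_C\in\Xcal$ and $f^{n+1}\in\C(X^{n+1},Y^{n+1})$, set $C\ppr=X^{n+1}$, $c=f^{n+1}$, and use {\rm (E2$^{\prime\mathrm{op}}$)(i)} to produce ${}_AX^{\mr}_{C\ppr}\in\Xcal$ with $g^{\mr}\in\CC(X^{\mr},Y^{\mr})$, $g^0=\id_A$; then {\rm (E2$^{\prime\mathrm{op}}$)(ii)} says $\Mg\in\Xcal$, so $\Mg$ is an $n$-kernel sequence, which by Definition~\ref{DefNPullBack} is precisely the statement that the square formed by $(g^1,\ldots,g^{n+1})$ is an $n$-pullback diagram. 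But I must match the \emph{prescribed} bottom map $f^{n+1}$; here I use Lemma~\ref{LemNPBJasso}-type uniqueness: the restriction $g^{n+1}$ need only agree with $f^{n+1}$ up to the freedom in {\rm (E2$^{\prime\mathrm{op}}$)(i)}, and since $c$ was taken to be $f^{n+1}$ exactly, this is automatic. Condition {\rm (E2)} is dual. The real work is {\rm (EC)}. For {\rm (EC)(ii)}: let $f^{\mr}\in\CC(X^{\mr},Y^{\mr})$ between $n$-exact sequences with $f^0,f^{n+1}$ isomorphisms; replacing $Y^{\mr}$ by the re-indexed $(\ref{exAYC})$ (permissible for membership in $\Xcal$ by Lemma~\ref{LemConverseJE}(4), which is proved from our hypotheses) reduces to $f^0=\id_A$, $f^{n+1}=\id_C$, i.e. $f^{\mr}\in\CAC(X^{\mr},Y^{\mr})$; if $X^{\mr}\in\Xcal$ then by {\rm (E2$^{\prime\mathrm{op}}$)(i)} there is $Z^{\mr}\in\Xcal$ with a morphism in $\CAC$ to $Y^{\mr}$, and {\rm (EI)} applied via Corollary~\ref{CorIsolated} forces $[X^{\mr}]=[Y^{\mr}]$ in $\LAC$, whence {\rm (EC$\ppr$)} gives $Y^{\mr}\in\Xcal$; the converse direction is symmetric using {\rm (E2$\ppr$)}. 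Then {\rm (EC)(i)} for an interior index $k$ follows from {\rm (EC)(ii)} by the standard trick: the commutative square at position $k$ together with $n$-pushout/$n$-pullback (supplied by {\rm (E2),(E2$\op$)}, now established) produces a zig-zag of morphisms of $n$-exact sequences with isomorphisms at the end terms connecting $X^{\mr}$ and $Y^{\mr}$, so repeated application of {\rm (EC)(ii)} (and Lemma~\ref{LemConverseJE}(4)) transports membership in $\Xcal$.

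\textbf{Main obstacle.} The delicate point is {\rm (EC)}, specifically showing that the weak ``fixed-end-term'' closedness {\rm (EC$\ppr$)} plus the isolation hypothesis {\rm (EI)} upgrade to the full {\rm (EC)} with arbitrary isomorphisms at the ends and at interior positions. The interior case {\rm (EC)(i)} in particular requires knowing that {\rm (E2),(E2$\op$)} already hold, so the implications must be sequenced carefully: first derive {\rm (E2),(E2$\op$)} from {\rm (E2$\ppr$),(E2$^{\prime\mathrm{op}}$)}, then Lemma~\ref{LemConverseJE}(4) on homotopy-equivalence closedness in $\CC$, and only then {\rm (EC)}. Everything else is bookkeeping with mapping cones and the comparison results Corollary~\ref{CorNExSeq} and Corollary~\ref{CorIsolated}.
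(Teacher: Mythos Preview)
Your overall strategy matches the paper's and is correct. Two points of divergence are worth noting.

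First, a small slip: {\rm (EC$\ppr$)} is the special case of {\rm (EC)(ii)}, not {\rm (EC)(i)}. A homotopy equivalence in $\CAC$ has $f^0=\id_A$ and $f^{n+1}=\id_C$, which are isomorphisms at the \emph{end} terms; this is exactly the hypothesis of {\rm (EC)(ii)}. Your own description of the argument is right, only the label is off.

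Second, and more substantively, your treatment of {\rm (EC)(i)} in the direction $(2)\Rightarrow(1)$ is over-engineered. You propose to deduce {\rm (EC)(i)} from {\rm (EC)(ii)} via an unspecified zig-zag built from $n$-pushouts and $n$-pullbacks. The paper instead dispatches {\rm (EC)(i)} in one line, and \emph{independently} of {\rm (EC)(ii)}: Proposition~\ref{PropNExSeq}(1) says that any morphism $f^{\mr}$ between $n$-exact sequences with $f^k,f^{k+1}$ isomorphisms is itself a homotopy equivalence in $\CC$ (uniqueness up to homotopy plus invertibility of the pair). Then Lemma~\ref{LemConverseJE}(4), which you already have under the hypotheses of (2), immediately transports membership in $\Xcal$. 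So {\rm (EC)(i)} is in fact \emph{easier} than {\rm (EC)(ii)} here, and your ``main obstacle'' paragraph overstates the sequencing constraints: there is no need to first establish {\rm (E2),(E2$\op$)} or {\rm (EC)(ii)} before tackling {\rm (EC)(i)}. Your zig-zag idea is not obviously wrong, but it is vague as stated (what exactly is being pushed out or pulled back when only an interior pair $f^k,f^{k+1}$ is assumed invertible?), and it is unnecessary once you observe that Proposition~\ref{PropNExSeq}(1) already delivers the homotopy equivalence.

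The remaining pieces---deriving {\rm (E2$\op$)} from {\rm (E2$^{\prime\mathrm{op}}$)}, and {\rm (EC)(ii)} from {\rm (EI)} via Corollary~\ref{CorIsolated} and Lemma~\ref{LemConverseJE}(4)---are handled as in the paper.
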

\begin{proof}
$(1)\Rightarrow(2)$.
{\rm (EC$\ppr$)} is a particular case of {\rm (EC)(ii)}. {\rm (E2$^{\prime\mathrm{op}}$)(i)} follows from {\rm (E2$\op$)} and Remark~\ref{RemNPullBack}. {\rm (E2$^{\prime\mathrm{op}}$)(ii)} is given in Fact \ref{FactJasso}. Dually for {\rm (E2$\ppr$)}.
Thus we can apply Lemma~\ref{LemConverseJE} to $\CX$. Then {\rm (EI)} follows from {\rm (EC)(ii)} and Remark~\ref{RemHtpyEqDiff}.

$(2)\Rightarrow(1)$.
Let $\CX$ be as in {\rm (2)}. Let us confirm conditions in Definition~\ref{DefNExCat}. {\rm (EC)(i)} follows from Proposition~\ref{PropNExSeq} {\rm (1)} and Lemma~\ref{LemConverseJE} {\rm (4)}. To show {\rm (EC)(ii)}, let $f^{\mr}\in\CC(X^{\mr},Y^{\mr})$ be a morphism between $n$-exact sequences $X^{\mr},Y^{\mr}$ in which $f^0$ and $f^{n+1}$ are isomorphisms. If one of $X^{\mr},Y^{\mr}$ belongs to $\Xcal$, then {\rm (EI)} implies that $f^{\mr}$ is a homotopy equivalence in $\CC$, by Corollary \ref{CorIsolated}. Thus the other also belongs to $\Xcal$ by Lemma~\ref{LemConverseJE} {\rm (4)}. {\rm (E2$\op$)} follows from {\rm (E2$^{\prime\mathrm{op}}$)}. Dually for {\rm (E2)}.
\end{proof}

We proceed to show that each $n$-exact category is $n$-exangulated. By the equivalence shown in Proposition~\ref{PropJassoEquiv}, we may use conditions {\rm (EC$\ppr$)},{\rm (E2$\ppr$)},{\rm (E2$^{\prime\mathrm{op}}$)} and consequently Lemma~\ref{LemConverseJE}. Indeed, we can avoid using {\rm (EI)} (see Remark~\ref{RemAvoid}). We begin by defining the bifunctor $\E$ similarly to the usual Yoneda extension functor. The procedure follows very closely the classical case of exact categories (see e.g. \cite{FS}).
\begin{dfn}\label{YonedaExt}
Let $(\mathscr{C}, \mathcal{X})$ be an $n$-exact category. For $A,C \in\C$, let $\E(C,A)$ be the subclass of $\LAC$ consisting of all $[X^{\mr}]$ such that $X^{\mr}\in\Xcal$. This is well-defined by {\rm (EC$\ppr$)}. From now on we assume that $\E(C,A)$ is a set for all $A,C \in \mathscr{C}$. We consider the assignment $(C,A) \mapsto \E(C,A)$ as a functor $$\E\co\C\op\ti\C\to\Sets$$ by defining $\E(c,a)[X^{\mr}] = a\sas(c\uas [X^{\mr}])$ for all  $(c,a)\in \C(C\ppr,C) \times \C(A,A\ppr)$ and ${}_AX^{\mr}_C\in\Xcal$. That $\E$ is well-defined is shown in Lemma~\ref{YonedaExtWell}.
\end{dfn}
\begin{rem}\label{YonedaOnMorphisms}
To compute the functor $\E$, it is useful to note that
\[ c\uas [X^{\mr}]=[Y^{\mr}] \]
holds for ${}_AX^{\mr}, {}_AY^{\mr}\in\Xcal$ if and only if there is $f^{\mr}\in\CC(Y^{\mr},X^{\mr})$ such that $f^0=\id_A$ and $f^{n+1}=c$. This follows from {\rm (E2$^{\prime\mathrm{op}}$)} (or alternatively from \cite[Proposition 4.8]{J}). Dually,
\[ a\sas [X^{\mr}]=[Y^{\mr}] \]
holds for $X^{\mr}_C,Y^{\mr}_C\in\Xcal$ if and only if there is $f^{\mr}\in\CC(X^{\mr},Y^{\mr})$ such that $f^0=a$ and $f^{n+1}=\id_C$.
\end{rem}

\begin{lem}\label{YonedaExtWell}
The functor $$\E\co\C\op\ti\C\to\Sets$$ in Definition~\ref{YonedaExt} is well-defined.
\end{lem}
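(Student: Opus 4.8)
The plan is to verify the three conditions required for $\E\co\C\op\ti\C\to\Sets$ to be a functor: that $\E(C,A)$ is a set for each pair of objects, that $\E(c,a)$ is a well-defined map of sets for each morphism $(c,a)$ of $\C\op\ti\C$, and that $\E$ preserves identities and composition. Since $\CX$ is assumed to be an $n$-exact category, by Proposition~\ref{PropJassoEquiv} it satisfies {\rm (EC$\ppr$)}, {\rm (E2$\ppr$)} and {\rm (E2$^{\prime\mathrm{op}}$)}, so Lemma~\ref{LemConverseJE}, Proposition~\ref{PropcastWellDef} and Remark~\ref{YonedaOnMorphisms} are available throughout.

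First, by {\rm (EC$\ppr$)} the collection $\{X^{\mr}\in\Xcal\mid X^0=A,\ X^{n+1}=C\}$ is closed under homotopy equivalence in $\CAC$, so $\E(C,A)$ is a well-defined subclass of $\LAC$; we have assumed it is a set. Next, fix a morphism $(c,a)\in\C(C\ppr,C)\ti\C(A,A\ppr)$ of $\C\op\ti\C$ and $[X^{\mr}]\in\E(C,A)$ with representative ${}_AX^{\mr}_C\in\Xcal$. By Lemma~\ref{LemConverseJE}~{\rm (2)}, $\Xcal$ is closed under $c\uas$, so $c\uas[X^{\mr}]=[Z^{\mr}]$ for some ${}_AZ^{\mr}_{C\ppr}\in\Xcal$, and by Proposition~\ref{PropcastWellDef} this class depends only on $[X^{\mr}]$. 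Applying the dual assertions to $a$ and $[Z^{\mr}]$ yields $a\sas[Z^{\mr}]=[W^{\mr}]$ for some ${}_{A\ppr}W^{\mr}_{C\ppr}\in\Xcal$, depending only on $[Z^{\mr}]$ and hence only on $[X^{\mr}]$. Thus $\E(c,a)[X^{\mr}]=a\sas(c\uas[X^{\mr}])$ is a well-defined element of $\E(C\ppr,A\ppr)$.

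It remains to check functoriality. For identities, taking $Y^{\mr}=X^{\mr}$ and $f^{\mr}=\id_{X^{\mr}}$ in Remark~\ref{YonedaOnMorphisms} gives $\id_C\uas[X^{\mr}]=[X^{\mr}]$, and dually $\id_A\sas[X^{\mr}]=[X^{\mr}]$, so $\E(\id_C,\id_A)=\id$. For composition, let $(c',a')\in\C(C\pprr,C\ppr)\ti\C(A\ppr,A\pprr)$ be a further morphism of $\C\op\ti\C$, whose composite with $(c,a)$ is $(c\ci c',a'\ci a)$. I would first record the functoriality of pullback and pushout: $(c\ci c')\uas[X^{\mr}]=(c')\uas(c\uas[X^{\mr}])$ and $(a'\ci a)\sas[X^{\mr}]=(a')\sas(a\sas[X^{\mr}])$. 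Each follows from Remark~\ref{YonedaOnMorphisms}: choosing $f^{\mr}\in\CC(Z^{\mr},X^{\mr})$ and $g^{\mr}\in\CC(Y^{\mr},Z^{\mr})$ realizing $c\uas[X^{\mr}]=[Z^{\mr}]$ and $c'\uas[Z^{\mr}]=[Y^{\mr}]$ (so $f^0=g^0=\id$, $f^{n+1}=c$, $g^{n+1}=c'$), the composite $f^{\mr}\ci g^{\mr}$ witnesses $(c\ci c')\uas[X^{\mr}]=[Y^{\mr}]$, and dually for pushout. Combining these with the interchange law $(c')\uas(a\sas(-))=a\sas((c')\uas(-))$ from Lemma~\ref{LemConverseJE}~{\rm (3)}, we get
\begin{align*}
\E(c',a')\big(\E(c,a)[X^{\mr}]\big)
&=(a')\sas\big((c')\uas\big(a\sas(c\uas[X^{\mr}])\big)\big)\\
&=(a')\sas\big(a\sas\big((c')\uas(c\uas[X^{\mr}])\big)\big)\\
&=(a'\ci a)\sas\big((c\ci c')\uas[X^{\mr}]\big)=\E(c\ci c',a'\ci a)[X^{\mr}],
\end{align*}
so $\E$ is a functor.

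The main point of friction should be the bookkeeping in the last paragraph: keeping track of the variance convention of $\C\op\ti\C$, and verifying that the morphisms of complexes supplied by Remark~\ref{YonedaOnMorphisms} compose as claimed. Ultimately this rests on the uniqueness up to homotopy of lifts over $n$-pullback and $n$-pushout diagrams (Lemma~\ref{LemNPBJasso}) together with Lemma~\ref{LemConverseJE}, so no essentially new argument is needed beyond what has already been established.
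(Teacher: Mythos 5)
Your proposal is correct and follows essentially the same route as the paper: well-definedness of $\E(c,a)$ via Lemma~\ref{LemConverseJE}~(2), identities via Remark~\ref{YonedaOnMorphisms}, the composition laws $(c\ci c')\uas=(c')\uas c\uas$ and $(a'\ci a)\sas=(a')\sas a\sas$ by composing the lifts supplied by Remark~\ref{YonedaOnMorphisms}, and the interchange from Lemma~\ref{LemConverseJE}~(3). No gaps.
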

\begin{proof}
First note that $\E(c,a)\co\E(C,A)\to\E(C\ppr,A\ppr)$ is a well-defined map by Lemma~\ref{LemConverseJE} {\rm (2)}.

Considering the identity morphism on ${}_AX^{\mr}_C\in\Xcal$ and Remark~\ref{YonedaOnMorphisms} we find $(\id_C)\uas [X^{\mr}]=[X^{\mr}]$ and $(\id_A)\sas [X^{\mr}]=[X^{\mr}]$ and so $\E(\id_C,\id_A)=\id_{\E(C,A)}$.

Now let ${}_AX^{\mr},{}_AY^{\mr},{}_AZ^{\mr}\in\Xcal$ and suppose that $c\uas [X^{\mr}]=[Y^{\mr}]$ and $d\uas [Y^{\mr}]=[Z^{\mr}]$. By Remark~\ref{YonedaOnMorphisms} there are $f^{\mr}\co Y^{\mr} \to X^{\mr}$ and $g^{\mr}\co Z^{\mr}\to Y^{\mr}$, with $f^0=\id_A$, $f^{n+1}=c$, $g^0=\id_A$, $g^{n+1}=d$. By considering $f^{\mr}\ci g^{\mr}\co Z^{\mr}\to X^{\mr}$ we find that $(cd)\uas=d\uas c\uas$. Similarly $(ab)\sas=a\sas b\sas$. By Lemma~\ref{LemConverseJE} {\rm (3)} it follows that
\[ \E((d,b)\circ(c,a))=\E(cd,ba)=(cd)\uas(ba)\sas=d\uas c\uas b\sas a \sas=d\uas b\sas c\uas a\sas=\E(d,b) \circ \E(c,a). \]
\end{proof}

Next we want to endow $\E(C,A)$ with the structure of an abelian group. As for exact categories this is done using the Baer sum.
\begin{rem}\label{RemAddX}
In an $n$-exact category $(\C,\Xcal)$,
\[ X^{\mr},Y^{\mr}\in\Xcal\ \Rightarrow\ X^{\mr}\oplus Y^{\mr}\in\Xcal \]
holds. This has been shown in \cite[Proposition 4.6]{J}. We also remark that if $(\C,\Xcal)$ satisfies {\rm (EC$\ppr$),(E0),(E1),(E1$\op$),(E2$\ppr$),(E2$^{\prime\mathrm{op}}$)}, then the same proof as in \cite[Lemma 4.5, Proposition 4.6]{J} works, because of Lemma~\ref{LemConverseJE}.
\end{rem}

\begin{dfn}\label{BaerSum}
Let ${}_AX^{\mr}_C, {}_AY^{\mr}_C\in\Xcal$. As in Remark~\ref{RemAddX}, the direct sum $X^{\mr}\oplus Y^{\mr}\in\Xcal$. Moreover, $[X^{\mr}\oplus Y^{\mr}]$ only depends on $[X^{\mr}]$ and $[Y^{\mr}]$ so we may define
\[ [X^{\mr}]\oplus [Y^{\mr}]=[X^{\mr}\oplus Y^{\mr}]. \]
Finally define the Baer sum of $[X^{\mr}]$ and $[Y^{\mr}]$ to be
\[ [X^{\mr}] + [Y^{\mr}] = (\Delta_C)\uas (\nabla_A)\sas([X^{\mr}] \oplus [Y^{\mr}]) \in \E(C,A). \]
\end{dfn}
\begin{rem}\label{BaerAssociative}
By Lemma~\ref{LemConverseJE} {\rm (3)}, we also have $$[X^{\mr}]+[Y^{\mr}]=(\nabla_A)\sas(\Delta_C)\uas ([X^{\mr}]\oplus [Y^{\mr}]).$$ Using this together with $X^{\mr}\oplus(Y^{\mr}\oplus Z^{\mr})=(X^{\mr}\oplus Y^{\mr})\oplus Z^{\mr}$ one easily checks that
\[ ([X^{\mr}] + [Y^{\mr}])+[Z^{\mr}] = [X^{\mr}] + ([Y^{\mr}]+[Z^{\mr}]). \]
\end{rem}

To show that $\E(C,A)$ with the Baer sum is an abelian group, we will use the following result.
\begin{lem}\label{Factorization}
Let ${}_AX^{\mr}_C, {}_{A\ppr}Y^{\mr}_{C\ppr}\in\Xcal$ and $f^{\mr}\co X^{\mr}\to Y^{\mr}$ with $f^0=a$, $f^{n+1}=c$. Then
\[ a\sas[X^{\mr}]=c\uas[Y^{\mr}]. \]
\end{lem}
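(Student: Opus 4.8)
The plan is to reduce the statement to the functoriality of pullback and pushout that we have already established, by factoring $f^{\mr}$ through a suitable ``universal'' $n$-exact sequence obtained by pulling back along $c$ (equivalently, pushing out along $a$). First I would choose, using Lemma~\ref{LemConverseJE} {\rm (2)}, an $n$-exact sequence ${}_AZ^{\mr}_{C\ppr}\in\Xcal$ representing $c\uas[X^{\mr}]$, together with a morphism $p^{\mr}\co Z^{\mr}\to X^{\mr}$ satisfying $p^0=\id_A$ and $p^{n+1}=c$, which exists by Remark~\ref{YonedaOnMorphisms}. Dually, choose ${}_{A\ppr}W^{\mr}_C\in\Xcal$ representing $a\sas[X^{\mr}]$ together with $q^{\mr}\co X^{\mr}\to W^{\mr}$ with $q^0=a$, $q^{n+1}=\id_C$. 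The goal then becomes to produce ${}_{A\ppr}V^{\mr}_{C\ppr}\in\Xcal$ with $[V^{\mr}]=a\sas[Z^{\mr}]=c\uas[W^{\mr}]$, because Lemma~\ref{LemConverseJE} {\rm (3)} already gives $a\sas(c\uas[X^{\mr}])=c\uas(a\sas[X^{\mr}])$, and we will have identified both sides with $[V^{\mr}]$.

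The key step is to show $a\sas[Z^{\mr}]=c\uas[Y^{\mr}]$ and, dually, $c\uas[W^{\mr}]=a\sas[Y^{\mr}]$. For the first, consider the composite $q^{\mr}\ci p^{\mr}\co Z^{\mr}\to W^{\mr}$; its degree-$0$ component is $a$ and its degree-$(n+1)$ component is $c$. Hmm—actually the cleanest route is different: I would instead apply Lemma~\ref{LemNPBJasso} (and its dual). Since $p^{\mr}\co Z^{\mr}\to X^{\mr}$ realizes a pullback along $c$ and $f^{\mr}\co X^{\mr}\to Y^{\mr}$ has $f^{n+1}=c$ applied after... wait, the orientations must be lined up. Let me restructure: given $f^{\mr}\co X^{\mr}\to Y^{\mr}$ with $f^0=a$, factor it as $X^{\mr}\to U^{\mr}\to Y^{\mr}$, where $U^{\mr}$ represents $a\sas[X^{\mr}]$ with the first map having degree-$0$ component $a$ and degree-$(n+1)$ component $\id_C$ (Remark~\ref{YonedaOnMorphisms}), and the second map $U^{\mr}\to Y^{\mr}$ then necessarily has degree-$0$ component $\id_{A\ppr}$ and degree-$(n+1)$ component $c$; such a map exists by the dual of Lemma~\ref{LemNPBJasso} applied to the composite $X^{\mr}\to Y^{\mr}$ and the pushout square defining $U^{\mr}$. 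But a map $U^{\mr}\to Y^{\mr}$ with $f^0=\id$ and $f^{n+1}=c$ exhibits $c\uas[Y^{\mr}]=[U^{\mr}]=a\sas[X^{\mr}]$ by Remark~\ref{YonedaOnMorphisms}, which is exactly the claim.

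So concretely the steps are: (1) invoke Lemma~\ref{LemConverseJE} {\rm (2)} to get $U^{\mr}\in\Xcal$ and $g^{\mr}\co X^{\mr}\to U^{\mr}$ with $g^0=a$, $g^{n+1}=\id_C$, realizing a pushout, so $[U^{\mr}]=a\sas[X^{\mr}]$; (2) apply the dual of Lemma~\ref{LemNPBJasso} to $g^{\mr}$ together with the given $f^{\mr}\co X^{\mr}\to Y^{\mr}$ (whose degree-$0$ component $a$ factors as $\id_{A\ppr}\ci a$ through $g^0$) to obtain $h^{\mr}\co U^{\mr}\to Y^{\mr}$, unique up to homotopy, with $h^0=\id_{A\ppr}$ and $h^{n+1}=f^{n+1}=c$; (3) apply Remark~\ref{YonedaOnMorphisms} to $h^{\mr}$ to conclude $c\uas[Y^{\mr}]=[U^{\mr}]$; (4) combine: $a\sas[X^{\mr}]=[U^{\mr}]=c\uas[Y^{\mr}]$. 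The main obstacle is verifying in step (2) that the dual of Lemma~\ref{LemNPBJasso} applies with exactly the stated degree components—one must check that an $n$-pushout diagram for $g^{\mr}$ has the universal property that factors $f^{\mr}$ appropriately, and that the resulting $h^{\mr}$ genuinely has $h^{n+1}=c$ rather than merely $h^{n+1}$ homotopic to $c$ in a way that could shift the class; here $h^{n+1}$ is literally determined by the factorization, so this is fine, but it requires care with which variable is held fixed. If one prefers to avoid Lemma~\ref{LemNPBJasso} entirely, an alternative is to pull $Y^{\mr}$ back along $c$ to get $Y^{\prime\mr}$ with $Y^{\prime\mr}\to Y^{\mr}$ of type $(\id,\ldots,c)$, push $X^{\mr}$ out along $a$ to get $U^{\mr}$, construct a map $X^{\mr}\to Y^{\prime\mr}$ of type $(a,\ldots,\id)$ lifting $f^{\mr}$ using the pullback universal property, and then use Lemma~\ref{LemConverseJE} {\rm (1)} to identify $[U^{\mr}]=[Y^{\prime\mr}]$ since both lie in $\CACp$ with the $(A\ppr,C\ppr)$-end terms and there are comparison maps in both directions; this second route trades the cone/cocone bookkeeping for an application of uniqueness of $n$-exact sequences, and either works.
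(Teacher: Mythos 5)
Your final steps (1)--(4) are correct and essentially reproduce the paper's proof: the paper also factors $f^{\mr}$ through an intermediate sequence ${}_{A\ppr}Z^{\mr}_C$ with maps of type $(a,\ldots,\id_C)$ and $(\id_{A\ppr},\ldots,c)$ obtained from Lemma~\ref{LemNPBJasso}, and then reads off both classes via Remark~\ref{YonedaOnMorphisms}. The only difference is that you realize $Z^{\mr}$ as the pushout of $X^{\mr}$ along $a$ (using the dual of Lemma~\ref{LemNPBJasso}), while the paper uses the pullback of $Y^{\mr}$ along $c$; these are mirror images of the same argument, so your proposal is fine.
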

\begin{proof}
By Lemma~\ref{LemNPBJasso} and Remark~\ref{YonedaOnMorphisms}, there are ${}_{A\ppr}Z^{\mr}_C\in\Xcal$ and morphisms $g^{\mr}\co X^{\mr}\to Z^{\mr}$, $h^{\mr}\co Z^{\mr}\to Y^{\mr}$ satisfying $g^0=a$, $g^{n+1}=\id_C$, $h^0=\id_{A\ppr}$ and $h^{n+1}=c$ (see also \cite[Proposition 4.9]{J}). Hence $a\sas[X^{\mr}]=[Z^{\mr}]=c\uas[Y^{\mr}]$.
\end{proof}

The following lemma is analogous to \cite[Proposition 6.10]{FS} and has a similar proof, which we include for the sake of completeness.
\begin{lem}\label{Biadditive}
Let ${}_AX^{\mr}_C,{}_{A}Y^{\mr}_C\in\Xcal$ and $a,b\in\C(A,A\ppr)$. Then the following statements hold.
\begin{enumerate}
\item $\left[\begin{array}{cc}a&0 \\0&b\end{array}\right]\sas([X^{\mr}]\oplus [Y^{\mr}])=a\sas [X^{\mr}]\oplus b\sas[Y^{\mr}]$
\item $(a+b)\sas[X^{\mr}]=a\sas [X^{\mr}]+b\sas [X^{\mr}]$
\item $a\sas([X^{\mr}]+[Y^{\mr}])=a\sas[X^{\mr}]+a\sas [Y^{\mr}]$
\end{enumerate}
\end{lem}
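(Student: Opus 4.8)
The plan is to deduce all three statements from the functoriality properties established in Lemma~\ref{YonedaExtWell} together with Lemma~\ref{Factorization}, following the classical template for the biadditivity of the Yoneda $\Ext$-functor. The key observation is that $\del\oplus\rho$ (in the sense appropriate to $n$-exact sequences, i.e.\ termwise direct sum) realizes the class $[X^{\mr}\oplus Y^{\mr}]$, so all the maps $p\sas$, $j\sas$, $\Delta\uas$, $\nabla\sas$ are controlled by morphisms of complexes in $\CC$ with prescribed end terms.

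For \textbf{(1)}, I would compose the coproduct inclusions $j_{A}\co A\to A\ppr$, wait — rather, I should be careful: here $a\co A\to A\ppr$ and $b\co A\to A\ppr$, so the diagonal matrix sends $A\oplus A$ to $A\ppr\oplus A\ppr$. Observe that the pushout of $X^{\mr}\oplus Y^{\mr}$ along $\left[\begin{smallmatrix}a&0\\0&b\end{smallmatrix}\right]$ can be computed componentwise because pushouts (in the sense of $n$-pushout diagrams, equivalently mapping cones by Remark~\ref{RemNPullBack}) commute with finite direct sums; concretely, if $u^{\mr}\co X^{\mr}\to U^{\mr}$ realizes $a\sas[X^{\mr}]=[U^{\mr}]$ and $v^{\mr}\co Y^{\mr}\to V^{\mr}$ realizes $b\sas[Y^{\mr}]=[V^{\mr}]$, then $u^{\mr}\oplus v^{\mr}\co X^{\mr}\oplus Y^{\mr}\to U^{\mr}\oplus V^{\mr}$ has degree-$0$ part $\left[\begin{smallmatrix}a&0\\0&b\end{smallmatrix}\right]$ and degree-$(n+1)$ part the identity, and $U^{\mr}\oplus V^{\mr}\in\Xcal$ by Remark~\ref{RemAddX}. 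Then Remark~\ref{YonedaOnMorphisms} gives $\left[\begin{smallmatrix}a&0\\0&b\end{smallmatrix}\right]\sas([X^{\mr}]\oplus[Y^{\mr}])=[U^{\mr}\oplus V^{\mr}]=a\sas[X^{\mr}]\oplus b\sas[Y^{\mr}]$.

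For \textbf{(2)}, the standard identity is $(a+b)\sas = \nabla_{A\ppr}\sas\circ\left[\begin{smallmatrix}a&0\\0&b\end{smallmatrix}\right]\sas\circ(\Delta_A)\sas$, which follows from $a+b = \nabla_{A\ppr}\circ\left[\begin{smallmatrix}a&0\\0&b\end{smallmatrix}\right]\circ\Delta_A$ in $\C$ and functoriality of $a\mapsto a\sas$ proved in Lemma~\ref{YonedaExtWell}. Applying this to $[X^{\mr}]$, using $(\Delta_A)\sas[X^{\mr}] = ?$ — here I need that $(\Delta_A)\sas[X^{\mr}]$ relates to $[X^{\mr}]\oplus[X^{\mr}]$ after pulling back along $\Delta_C$; more precisely the cleanest route is: $(a+b)\sas[X^{\mr}] = \nabla_{A\ppr}\sas\left[\begin{smallmatrix}a&0\\0&b\end{smallmatrix}\right]\sas(\Delta_A)\sas[X^{\mr}]$, and then I use Lemma~\ref{Factorization} applied to the diagonal morphism $X^{\mr}\to X^{\mr}\oplus X^{\mr}$ (with end terms $\Delta_A$ and $\Delta_C$) to rewrite $(\Delta_A)\sas[X^{\mr}] = (\Delta_C)\uas([X^{\mr}]\oplus[X^{\mr}])$. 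Combining with part (1) and the definition of the Baer sum (plus Remark~\ref{BaerAssociative} for commuting $\uas$ past $\sas$) yields $(a+b)\sas[X^{\mr}] = \nabla_{A\ppr}\sas(\Delta_C)\uas(a\sas[X^{\mr}]\oplus b\sas[X^{\mr}]) = a\sas[X^{\mr}]+b\sas[X^{\mr}]$. For \textbf{(3)}, similarly write $a\sas([X^{\mr}]+[Y^{\mr}]) = a\sas(\Delta_C)\uas(\nabla_A)\sas([X^{\mr}]\oplus[Y^{\mr}])$, commute $a\sas$ past $(\Delta_C)\uas$ using Lemma~\ref{LemConverseJE}~(3), use $(a\nabla_A) = (\nabla_{A\ppr})\left[\begin{smallmatrix}a&0\\0&a\end{smallmatrix}\right]$ and part (1), then recognize the result as $a\sas[X^{\mr}]+a\sas[Y^{\mr}]$.

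The main obstacle I anticipate is bookkeeping: making sure that the various "obvious" identities such as $(\Delta_A)\sas[X^{\mr}] = (\Delta_C)\uas([X^{\mr}]\oplus[X^{\mr}])$ are justified — these require exhibiting an explicit morphism of complexes with the right end terms and invoking Lemma~\ref{Factorization}, and one must check that the domain/codomain classes actually lie in $\E$, i.e.\ that the relevant complexes belong to $\Xcal$ (handled by Remark~\ref{RemAddX} and closure of $\Xcal$ under $a\sas$, $c\uas$ from Lemma~\ref{LemConverseJE}~(2)). A secondary subtlety is the interchange of $\sas$ and $\uas$ operations, which is exactly Lemma~\ref{LemConverseJE}~(3) and must be cited at each step rather than taken for granted. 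None of this is conceptually hard, but the argument only reads cleanly if every pullback/pushout manipulation is phrased in terms of morphisms in $\CC$ with controlled end terms so that Remark~\ref{YonedaOnMorphisms} and Lemma~\ref{Factorization} apply verbatim.
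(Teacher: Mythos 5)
Your proposal is correct and follows essentially the same route as the paper: part (1) via the termwise diagonal morphism of complexes and Remark~\ref{YonedaOnMorphisms}, part (2) via Lemma~\ref{Factorization} applied to the diagonal $X^{\mr}\to X^{\mr}\oplus X^{\mr}$ giving $(\Delta_A)\sas[X^{\mr}]=(\Delta_C)\uas([X^{\mr}]\oplus[X^{\mr}])$, and parts (2),(3) completed by functoriality (Lemma~\ref{YonedaExtWell}) and the interchange of $\sas$ and $\uas$ from Lemma~\ref{LemConverseJE}~(3).
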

\begin{proof}
{\rm (1)} Write $a\sas [X^{\mr}]=[Z^{\mr}]$ and $b\sas [Y^{\mr}]=[W^{\mr}]$. Then there are $f^{\mr}\in\CC(X^{\mr},Z^{\mr})$ and $g^{\mr}\in\CC(Y^{\mr},W^{\mr})$ such that $f^0 = a$, $f^{n+1}=\id_C$, $g^0=b$ and $g^{n+1}=\id_C$ by Remark~\ref{YonedaOnMorphisms}. Considering $h^{\mr}\in\CC(X^{\mr}\oplus Y^{\mr},Z^{\mr}\oplus W^{\mr})$ given by
\[ h^k=\left[\begin{array}{cc}f^k&0 \\0&g^k\end{array}\right] \]
we find that 
\[ \left[\begin{array}{cc}a&0 \\0&b\end{array}\right]\sas([X^{\mr}]\oplus [Y^{\mr}])=[Z^{\mr}]\oplus [W^{\mr}]=a\sas [X^{\mr}]\oplus b\sas [Y^{\mr}]. \]

{\rm (2)} Consider $\Delta^{\mr}\in\CC(X^{\mr},X^{\mr} \oplus X^{\mr})$ defined by $\Delta^k=\Delta_{X^k}$. By Lemma~\ref{Factorization}, we get $(\Delta_A)\sas [X^{\mr}]=(\Delta_C)\uas([X^{\mr}]\oplus [X^{\mr}])$. Now by {\rm (1)}
\[\begin{split}
(a+b)\sas[X^{\mr}] &= 
\left(\nabla_{A\ppr}
\left[\begin{array}{cc}a&0 \\0&b\end{array}\right]
\Delta_A\right)
\sas[X^{\mr}]
= (\nabla_{A\ppr})\sas
\left[\begin{array}{cc}a&0 \\0&b\end{array}\right]\sas
(\Delta_A)\sas[X^{\mr}] \\ &= 
(\nabla_{A\ppr})\sas
\left[\begin{array}{cc}a&0 \\0&b\end{array}\right]\sas
(\Delta_C)\uas([X^{\mr}] \oplus [X^{\mr}]) \\ &=
(\nabla_{A\ppr})\sas (\Delta_C)\uas
\left[\begin{array}{cc}a&0 \\0&b\end{array}\right]\sas
([X^{\mr}] \oplus [X^{\mr}]) \\ &=
(\nabla_{A\ppr})\sas (\Delta_C)\uas
(a\sas [X^{\mr}]\oplus b\sas[X^{\mr}]) \\ &=
a\sas [X^{\mr}] + b\sas [X^{\mr}].
\end{split}\]

{\rm (3)} Using {\rm (1)} we compute
\[\begin{split}
a\sas([X^{\mr}] + [Y^{\mr}]) &= a\sas (\nabla_{A})\sas (\Delta_C)\uas ([X^{\mr}]\oplus [Y^{\mr}]) \\&= 
(\nabla_{A\ppr})\sas\left[\begin{array}{cc}a&0 \\0&a\end{array}\right]\sas(\Delta_C)\uas ([X^{\mr}]\oplus [Y^{\mr}])\\ &=
(\nabla_{A\ppr})\sas(\Delta_C)\uas\left[\begin{array}{cc}a&0 \\0&a\end{array}\right] \sas([X^{\mr}]\oplus [Y^{\mr}]) \\ &= 
(\nabla_{A\ppr})\sas (\Delta_C)\uas (a\sas [X^{\mr}]\oplus a\sas[Y^{\mr}]) \\ &=
a\sas [X^{\mr}] + a\sas [Y^{\mr}].
\end{split}\]
\end{proof}

\begin{prop}\label{YonedaBiadd}
For all $C,A\in\C$ the Baer sum defines the structure of an abelian group on $\E(C,A)$. This enhances the functor $\E$ in Definition~\ref{YonedaExt} to a biadditive functor
$$\E\co\C\op\ti\C\to\Ab.$$
\end{prop}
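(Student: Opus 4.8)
The plan is to verify, in turn, that the Baer sum makes $\E(C,A)$ into a nonempty abelian group, and then that the structure maps $\E(c,a)$ respect this group structure; biadditivity is then a formality. Associativity is already recorded in Remark~\ref{BaerAssociative}, so the first real point is commutativity. For this I would consider the isomorphism $\sig^{\mr}\co X^{\mr}\oplus Y^{\mr}\ov{\cong}{\lra}Y^{\mr}\oplus X^{\mr}$ in $\CC$ whose degree-$i$ component is the transposition of the two summands; its end components $\sig^0$ and $\sig^{n+1}$ are the transposition automorphisms of $A\oplus A$ and $C\oplus C$, and they satisfy $\nabla_A\ci\sig^0=\nabla_A$ and $\sig^{n+1}\ci\Delta_C=\Delta_C$. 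By Remark~\ref{RemAddX} both $X^{\mr}\oplus Y^{\mr}$ and $Y^{\mr}\oplus X^{\mr}$ lie in $\Xcal$, so Lemma~\ref{Factorization} applied to $\sig^{\mr}$ gives $(\sig^0)\sas([X^{\mr}]\oplus[Y^{\mr}])=(\sig^{n+1})\uas([Y^{\mr}]\oplus[X^{\mr}])$ in $\E(C\oplus C,A\oplus A)$. Applying $(\Delta_C)\uas(\nabla_A)\sas$ to both sides and using the two displayed identities together with the functoriality of $\sas$ and $\uas$ (Lemma~\ref{YonedaExtWell}) yields $[X^{\mr}]+[Y^{\mr}]=[Y^{\mr}]+[X^{\mr}]$.

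Next, nonemptiness, the neutral element, and inverses. By {\rm (E0)} the zero complex $0\to0\to\cdots\to0$ lies in $\Xcal$, giving an element of $\E(0,0)$; since $\Xcal$ is closed under $c\uas$ and $a\sas$ for arbitrary morphisms of $\C$ (Lemma~\ref{LemConverseJE} {\rm (2)}), pulling back along the unique morphism $C\to0$ and then pushing out along the unique morphism $0\to A$ produces an element of $\E(C,A)$, so $\E(C,A)\ne\emptyset$. The zero endomorphism $0_A\co A\to A$ factors as $A\to0\to A$, so $(0_A)\sas$ factors through $\E(C,0)$. I would then check that $\E(C,0)$ is a singleton: for ${}_0Z^{\mr}_C\in\Xcal$ the $n$-cokernel condition forces $d_Z^1$ to be a split monomorphism, and cancelling split summands shows $Z^{\mr}$ is homotopically equivalent in $\Cbf^{n+2}_{(0,C)}$ to the fixed contractible sequence $0\to0\to\cdots\to0\to C\ov{\id_C}{\lra}C$, which therefore lies in $\Xcal$ by {\rm (EC$\ppr$)}; hence all such classes coincide (cf.\ Remark~\ref{RemHtpyEqDiff} and Lemma~\ref{LemConverseJE} {\rm (1)}). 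Granting this, $(0_A)\sas[X^{\mr}]$ is independent of the choice of $[X^{\mr}]\in\E(C,A)$; denote it by $0_{CA}$. Applying Lemma~\ref{Biadditive} {\rm (2)} with $a=\id_A$, $b=0_A$ gives $[X^{\mr}]+0_{CA}=[X^{\mr}]$, so $0_{CA}$ is a two-sided neutral element, and with $a=\id_A$, $b=-\id_A$ it gives $0_{CA}=[X^{\mr}]+(-\id_A)\sas[X^{\mr}]$, so $(-\id_A)\sas[X^{\mr}]$ is an inverse of $[X^{\mr}]$. Thus $(\E(C,A),+)$ is an abelian group.

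Finally, to see that $\E$ takes values in $\Ab$: for fixed $C$, the assignment $a\mapsto\E(C,a)$ is additive by Lemma~\ref{Biadditive} {\rm (2)}, and each $\E(C,a)$ is a group homomorphism by Lemma~\ref{Biadditive} {\rm (3)}. Applying the same lemma to the opposite $n$-exact category $(\C\op,\Xcal\op)$ (the axioms of \cite{J} being self-dual), whose associated bifunctor sends $(C,A)$ to $\E(A,C)$, gives the dual statements: $c\mapsto\E(c,A)$ is additive and each $\E(c,A)$ is a group homomorphism. Since $\E(c,a)=\E(C\ppr,a)\ci\E(c,A)=\E(c,A\ppr)\ci\E(C,a)$ by the functoriality of Lemma~\ref{YonedaExtWell}, each $\E(c,a)$ is a homomorphism and $\E$ is biadditive. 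The one step requiring genuine (though short) work beyond the cited results is the identification of the neutral element — equivalently, the triviality of $\E(C,0)$ — everything else being a formal consequence of Lemma~\ref{Biadditive}, Lemma~\ref{Factorization}, and Remark~\ref{BaerAssociative}.
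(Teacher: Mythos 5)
Your argument is correct in substance and follows the paper's strategy at every main step: commutativity via the swap isomorphism together with Lemma~\ref{Factorization}, neutral element and inverses via Lemma~\ref{Biadditive}~(2) applied to $\id_A+0_A$ and $\id_A+(-\id_A)$, and biadditivity from Lemma~\ref{Biadditive} and its dual. The one place you genuinely diverge is the identification of the neutral element: the paper exhibits the explicit sequence $N^{\mr}\colon A\ov{\id_A}{\lra}A\to0\to\cdots\to0\to C\ov{\id_C}{\lra}C$, which lies in $\Xcal$ by {\rm (E0)} and Lemma~\ref{LemConverseJE}~(4), and checks $0\sas[X^{\mr}]=[N^{\mr}]$ directly via Remark~\ref{YonedaOnMorphisms} using the morphism with components $f^{n+1}=\id_C$, $f^n=d_X^n$, $f^k=0$ otherwise; you instead show $0\sas[X^{\mr}]$ is independent of $[X^{\mr}]$ by factoring through $\E(C,0)$ and proving $\E(C,0)$ is a singleton. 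Your route works, and it has the small advantage of treating $n=1$ uniformly (the paper defers $n=1$ because the shape of the neutral sequence differs there, cf.\ Remark~\ref{RemForTheCasen=1}), but the phrase ``cancelling split summands'' needs repair: $\C$ is only additive, so the complementary idempotent of the split monomorphism $d_Z^1$ need not split and you cannot literally strip off direct summands. The singleton claim is nevertheless true: for ${}_0Z^{\mr}_C$ $n$-exact, repeated use of the exactness of the $n$-cokernel sequence produces, step by step, a section of $d_Z^n$ (equivalently a contracting homotopy; this is the dual of \cite[Proposition 2.6]{J}), and then morphisms in both directions between $Z^{\mr}$ and $0\to\cdots\to0\to C\ov{\id_C}{\lra}C$ combined with Corollary~\ref{CorNExSeq} and {\rm (EC$\ppr$)} give the homotopy equivalence in $\Cbf^{n+2}_{(0,C)}$ and membership in $\Xcal$. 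Alternatively, you can bypass the singleton detour entirely by computing $0\sas[X^{\mr}]$ directly as the paper does. The remaining points --- nonemptiness via {\rm (E0)} and Lemma~\ref{LemConverseJE}~(2), the commutativity computation, and passing to the opposite category for the dual half of biadditivity --- are fine.
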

\begin{proof}
Since this is well-known for $n=1$, we assume $n\ge 2$.
Let ${}_AX^{\mr}_{C}, {}_AY^{\mr}_{C}\in\Xcal$ and consider the canonical isomorphism $t\co X^{\mr}\oplus Y^{\mr}\to Y^{\mr}\oplus X^{\mr}$. By Lemma~\ref{Factorization} we get 
\[ (t^0)\sas ([X^{\mr}] \oplus [Y^{\mr}]) = (t^{n+1})\uas ([Y^{\mr}] \oplus [X^{\mr}]) \]
Now
\[\begin{split}
[X^{\mr}]+[Y^{\mr}] &= (\Delta_C )\uas(\nabla_A)\sas ([X^{\mr}] \oplus [Y^{\mr}]) = (\Delta_C )\uas(\nabla_A t^0)\sas ([X^{\mr}] \oplus [Y^{\mr}]) \\ &=
(\Delta_C )\uas(\nabla_A)\sas(t^0)\sas ([X^{\mr}] \oplus [Y^{\mr}]) = (\Delta_C )\uas(\nabla_A)\sas(t^{n+1})\uas ([Y^{\mr}] \oplus [X^{\mr}]) \\&= 
(\Delta_C )\uas(t^{n+1})\uas(\nabla_A)\sas ([Y^{\mr}] \oplus [X^{\mr}]) \\&= 
(t^{n+1} \Delta_C )\uas(\nabla_A)\sas ([Y^{\mr}] \oplus [X^{\mr}]) \\ &= 
(\Delta_C )\uas(\nabla_A)\sas ([Y^{\mr}] \oplus [X^{\mr}]) = [Y^{\mr}] + [X^{\mr}].
\end{split}\]
Together with Remark~\ref{BaerAssociative} this shows that $\E(C,A)$ is an abelian semigroup.

Let ${}_AX^{\mr}_C\in \Xcal$ and $N^{\mr}$ be the complex
\begin{equation}\label{neutralEl}
A \ov{\id_{A}}{\lra}A\to0\to\cdots\to0\to C \ov{\id_{C}}{\lra}C
\end{equation}
It follows from {\rm (E0)} and Lemma~\ref{LemConverseJE} {\rm (4)} (or alternatively from \cite[Remark 4.7]{J}) that $N^{\mr}\in \Xcal$. By considering $f^{\mr}\in\CC(X^{\mr},N^{\mr})$ defined by $f^{n+1}=\id_C$, $f^{n}= d_X^n$ and $f^k = 0$ for $k <n$, we find that $0 \sas [X^{\mr}]=[N^{\mr}]$. By Lemma~\ref{Biadditive}, we have $[N^{\mr}]+[X^{\mr}]=0 \sas [X^{\mr}]+1 \sas [X^{\mr}] = [X^{\mr}]$, and so $[N^{\mr}]$ is the neutral element in $\E(C,A)$. Similarly, $(-1)\sas [X^{\mr}]$ is the inverse of $[X^{\mr}]$. Hence $\E(C,A)$ is an abelian group. From Lemma~\ref{Biadditive} and its dual it follows that 
$$\E\co\C\op\ti\C\to\Ab$$
is well-defined and biadditive.
\end{proof}

\begin{rem}\label{RemForTheCasen=1}
As in the above proof, the element $0\in\E(C,A)$ is given by the sequence (\ref{neutralEl}) if $n\ge 2$. If $n=1$, it is given by $A\ov{\Big[\raise1ex\hbox{\leavevmode\vtop{\baselineskip-8ex \lineskip1ex \ialign{#\crcr{$\scriptstyle{1}$}\crcr{$\scriptstyle{0}$}\crcr}}}\Big]}{\lra}A\oplus C\ov{[0\ 1]}{\lra}C$.
\end{rem}

\begin{prop}\label{nExactIsnExangulated}
Let $(\mathscr{C}, \mathcal{X})$ be an $n$-exact category such that $\E(C,A)$ is a set for all $A,C  \in \mathscr{C}$. For all $\del \in \E(C,A)$, set $\sfr(\del) = [X^{\mr}]$, where $\del  = [X^{\mr}]$. Then $(\mathscr{C},\E,\sfr)$ is $n$-exangulated.
\end{prop}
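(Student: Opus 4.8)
The plan is to verify the defining conditions of an $n$-exangulated category (Definition~\ref{DefEACat}). Biadditivity of $\E$ is Proposition~\ref{YonedaBiadd}. Since $\E(C,A)$ is, by construction, precisely the set of homotopic equivalence classes $[X^{\mr}]$ of $n$-exact sequences ${}_AX^{\mr}_C$ lying in $\Xcal$, the assignment $\sfr(\del)=[X^{\mr}]$ (where $\del=[X^{\mr}]$) is tautologically well defined and assigns to each extension a homotopic equivalence class in $\CAC$. It remains to check that $\sfr$ is an exact realization, i.e.\ {\rm (R0),(R1),(R2)}, and then {\rm (EA1),(EA2),(EA2$\op$)}. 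The main tools are Remark~\ref{YonedaOnMorphisms} (a chain map between $n$-exact sequences in $\Xcal$ with end-components $a,\id$ or $\id,c$ witnesses $a\sas$ respectively $c\uas$), Lemma~\ref{LemConverseJE}, Lemma~\ref{LemNPBJasso} and its dual, Fact~\ref{FactJasso} (in an $n$-exact category, $f^0=\id$ forces $\Mf\in\Xcal$), and the explicit description of the neutral elements of $\E$ coming from the proof of Proposition~\ref{YonedaBiadd} and Remark~\ref{RemForTheCasen=1}.

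For {\rm (R0)}: given $(a,c)\co\del\to\rho$ with $\sfr(\del)=[X^{\mr}]$, $\sfr(\rho)=[Y^{\mr}]$, Lemma~\ref{LemConverseJE} produces $n$-exact sequences in $\Xcal$ realizing $a\sas\del$ and $c\uas\rho$ together with comparison chain maps out of $X^{\mr}$ (with end-components $a,\id$) and into $Y^{\mr}$ (with end-components $\id,c$); since $a\sas\del=c\uas\rho$ these two realizations lie in the same class, hence are homotopy equivalent in $\CAC$, and composing through such a homotopy equivalence gives a chain map $X^{\mr}\to Y^{\mr}$ of the form $(a,\ast,\dots,\ast,c)$, i.e.\ a lift of $(a,c)$. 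For {\rm (R2)}: by the proof of Proposition~\ref{YonedaBiadd} (and Remark~\ref{RemForTheCasen=1} when $n=1$) the zero element of $\E(0,A)$ is the class of $A\ov{\id_A}{\lra}A\to0\to\cdots\to0$, and dually for $\E(A,0)$, which is exactly what {\rm (R2)} requires. For {\rm (EA1)}: since $\Xcal$ is closed under homotopy equivalence in $\CAC$ (condition {\rm (EC$\ppr$)}), the $\sfr$-conflations are precisely the $n$-exact sequences lying in $\Xcal$, so $\sfr$-inflations (respectively $\sfr$-deflations) coincide with the $\Xcal$-admissible monomorphisms (respectively epimorphisms) of Definition~\ref{DefNExCat}; hence {\rm (EA1)} is nothing but {\rm (E1)} together with {\rm (E1$\op$)}.

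For {\rm (R1)}: fix $X^{\mr}\in\Xcal$ and $\del=[X^{\mr}]$. First, $\langle X^{\mr},\del\rangle$ lies in $\AE$: the chain map $(\id_{X^0},d_X^0,0,\dots,0,\id_{X^n},d_X^n)$ from the split realization of $0\in\E(X^n,X^0)$ to $X^{\mr}$ gives, via Remark~\ref{YonedaOnMorphisms}, the relation $(d_X^n)\uas\del=0$, and dually $(d_X^0)\sas\del=0$. Next, exactness of $(\ref{exnat1})$ at $\C(-,X^i)$ for $1\le i\le n$ and of $(\ref{exnat2})$ at $\C(X^i,-)$ for $1\le i\le n$ is just the statement that $X^{\mr}$ is an $n$-exact sequence. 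The only remaining spot is exactness of $(\ref{exnat1})$ at $\C(-,X^{n+1})$: if $f\in\C(Q,X^{n+1})$ satisfies $f\uas\del=0$, write $0\in\E(Q,X^0)$ as the class of the explicit split sequence $N^{\mr}$ of Remark~\ref{RemForTheCasen=1}; applying Remark~\ref{YonedaOnMorphisms} to $f\uas[X^{\mr}]=[N^{\mr}]$ gives a chain map $N^{\mr}\to X^{\mr}$ whose commutative square in degree $n$ reads $f=d_X^n\ci g^n$ for its degree-$n$ component $g^n$, so $f\in\Ima\,\C(Q,d_X^n)$; exactness of $(\ref{exnat2})$ at $\C(X^0,-)$ is dual. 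Thus each $\sfr(\del)=[X^{\mr}]$ is an $n$-exangle and $\sfr$ is an exact realization.

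For {\rm (EA2)}: let ${}_A\langle X^{\mr},c\uas\rho\rangle_C$ and ${}_A\langle Y^{\mr},\rho\rangle_D$ be distinguished $n$-exangles, so $X^{\mr},Y^{\mr}\in\Xcal$ and $[X^{\mr}]=c\uas[Y^{\mr}]$. By Remark~\ref{YonedaOnMorphisms} there is a chain map $f^{\mr}\co X^{\mr}\to Y^{\mr}$ with $f^0=\id_A$ and $f^{n+1}=c$; since $(f^0)\sas(c\uas\rho)=(f^{n+1})\uas\rho$, it is a lift of $(\id_A,c)$. By Fact~\ref{FactJasso}, $\Mf\in\Xcal$. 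It remains to identify $[\Mf]$ with $(d_X^0)\sas\rho$; to this end define $\iota^{\mr}\co Y^{\mr}\to\Mf$ by $\iota^0=d_X^0$, $\iota^i=\left[\bsm 0\\ \id_{Y^i}\esm\right]$ for $1\le i\le n$, and $\iota^{n+1}=\id_D$; a direct check (using $d_X^1\ci d_X^0=0$ and $f^1\ci d_X^0=d_Y^0$) shows $\iota^{\mr}$ is a chain map, and since $\iota^0=d_X^0$ and $\iota^{n+1}=\id_D$, Remark~\ref{YonedaOnMorphisms} gives $(d_X^0)\sas[Y^{\mr}]=[\Mf]$, i.e.\ $\langle\Mf,(d_X^0)\sas\rho\rangle$ is a distinguished $n$-exangle. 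Hence $f^{\mr}$ is a good lift, proving {\rm (EA2)}; {\rm (EA2$\op$)} follows dually, and therefore $(\C,\E,\sfr)$ is $n$-exangulated. The step I expect to be the main obstacle is this last one: realizing the mapping cone of the good lift as exactly the extension $(d_X^0)\sas\rho$ requires exhibiting the concrete comparison morphism $\iota^{\mr}$ (and, similarly for {\rm (R1)}, the comparison maps into $X^{\mr}$ from split sequences); everything else is a direct translation of the $n$-exact axioms through Remark~\ref{YonedaOnMorphisms} and Lemma~\ref{LemConverseJE}.
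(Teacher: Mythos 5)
Your proposal is correct and follows essentially the same route as the paper's proof: biadditivity via Proposition~\ref{YonedaBiadd}, the axioms (R0), (R1), (R2), (EA1) checked through Remark~\ref{YonedaOnMorphisms}, Lemma~\ref{LemConverseJE} and the explicit zero element, and (EA2) verified by the same comparison morphism $Y^{\mr}\to M^{\mr}_f$ with components $(d_X^0,\left[\bsm 0\\ \id\esm\right],\ldots,\id_D)$ identifying $[\Mf]$ with $(d_X^0)\sas\rho$. The only differences are cosmetic (e.g.\ your explicit check that $\langle X^{\mr},\del\rangle\in\AE$, which already follows from the exactness conditions).
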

\begin{proof}
Similarly as before, we only deal with the case $n\ge 2$. As for the case $n=1$, a similar proof to the one below works, if we take Remark~\ref{RemForTheCasen=1} into account. The case $n=1$ also follows from Proposition~\ref{Prop1EAandET} and \cite[Example 2.13]{NP}.

By Proposition~\ref{YonedaBiadd} we know that $\E\co\C\op\ti\C\to\Ab$ is a biadditive functor. So it remains to check the conditions  {\rm (R0)},{\rm (R1)},{\rm (R2)} and also {\rm (EA1)},{\rm (EA2)},{\rm (EA2)$\op$}.

{\rm (R0)} Let $(a,c):{}_A\delta_{C} \to {}_{A\ppr}\rho_{C\ppr}$ be a morphism of extensions where $\delta = [X^{\mr}]$ and $\rho  =[Y^{\mr}]$. Then $a \sas [X^{\mr}] = c \uas [Y^{\mr}] = [Z^{\mr}]$ for some ${}_{A\ppr}Z^{\mr}_{C} \in \Xcal$ and so there are morphisms $f^{\mr} \co X^{\mr} \to Z^{\mr}$, $g^{\mr} \co Z^{\mr} \to Y^{\mr}$ satisfying $f^0=a$, $f^{n+1}=\id_C$, $g^0=\id_{A\ppr}$ and $g^{n+1}=c$. The composition $g^{\mr} \circ f^{\mr} \co X^{\mr} \to Y^{\mr}$ is a lift of $(a,c)$.

{\rm (R1)} Let $X^{\mr} \in \Xcal$ and $\delta = [X^{\mr}]$. We need to check that $\langle X^{\mr}, \delta \rangle$ is an $n$-exangle. Since $X^{\mr}$ is $n$-exact it is enough to check that $$\C(Y,X^n)\ov{\C(Y,d_X^n)}{\lra}\C(Y,X^{n+1})\ov{\del\ssh}{\lra}\E(Y,X^0)$$ and $$\C(X^{1},Y)\ov{\C(d_X^0,Y)}{\lra}\C(X^0,Y)\ov{\del\ush}{\lra}\E(X^{n+1},Y)$$ are exact for all $Y \in \C$. We only check the first case as the second is similar.

Let $f \co Y \to X^{n+1}$. Then $\del\ssh(f)  = f\uas [X^{\mr}]$ is zero if and only if there is a commutative diagram of the form
\[
\xy
(-43,12)*+{X^0}="0";
(-28,12)*+{X^0}="1";
(-13,12)*+{0}="2";
(0,12)*+{\cdots}="3";
(14,12)*+{0}="4";
(29,12)*+{Y}="5";
(43,12)*+{Y}="6";
(-43,0)*+{X^0}="10";
(-28,0)*+{X^1}="11";
(-13,0)*+{X^2}="12";
(0,0)*+{\cdots}="13";
(14,0)*+{X^{n-1}}="14";
(29,0)*+{X^n}="15";
(43,0)*+{X^{n+1}}="16";
{\ar@{=} "0";"1"};
{\ar^{} "1";"2"};
{\ar^{} "2";"3"};
{\ar^{} "3";"4"};
{\ar^{} "4";"5"};
{\ar@{=} "5";"6"};
{\ar@{=} "0";"10"};
{\ar^{} "1";"11"};
{\ar^{} "2";"12"};
{\ar^{} "4";"14"};
{\ar^{} "5";"15"};
{\ar^{f} "6";"16"};
{\ar_{d_X^0} "10";"11"};
{\ar_{d_X^1} "11";"12"};
{\ar_{} "12";"13"};
{\ar_{} "13";"14"};
{\ar_{d_X^{n-1}} "14";"15"};
{\ar_{d_X^{n}} "15";"16"};
{\ar@{}|\circlearrowright "0";"11"};
{\ar@{}|\circlearrowright "1";"12"};
{\ar@{}|\circlearrowright "4";"15"};
{\ar@{}|\circlearrowright "5";"16"};
\endxy
\]
Evidently, this is equivalent to $f = d_X^n\ci g$ for some $g \co Y \to X^n$, i.e., $f$ is in the image of $\C(Y,d_X^n)$. 

{\rm (R2)} immediately follows from the description of $0 \in \E(0,A)$ and $0 \in \E(A,0)$.

{\rm (EA1)} follows from {\rm (E1)} and {\rm (E1$\op$)}.

{\rm (EA2)} Let ${}_AY^{\mr}_{C}  \in \Xcal$ and $c \in \C(C',C)$. Let ${}_AX^{\mr}_{C\ppr} \in \Xcal$ such that $c\uas [Y^{\mr}] = [X^{\mr}]$. Then there is $f^{\mr}\in\CC(X^{\mr},Y^{\mr})$ such that by $f^{0}=\id_A$, $f^{n+1}= c$. We claim that this is a good lift of $(\id_A,c)$. First of all $M^{\mr}_f \in \Xcal$ by {\rm (E2$^{\prime\mathrm{op}}$)}. Next, existence of the morphism $g^{\mr}\in\CC(Y^{\mr},M^{\mr}_f)$ given by $g^0 = d^0_X$, $g^{n+1} = \id_C$ and 
\[
g^k = \left[\begin{array}{cc}0 \\\id_{Y^k}\end{array}\right]
\]
for all other $k$ shows $(d^0_X)\sas [Y^{\mr}] = [M^{\mr}_f]$. The dual case {\rm (EA2$\op$)} is similar. 
\end{proof}
\begin{rem}\label{RemAvoid}
Every $n$-exangulated category $(\mathscr{C},\E,\sfr)$ coming from an $n$-exact category $(\mathscr{C}, \mathcal{X})$ as in Proposition~\ref{nExactIsnExangulated} satisfies the condition that all inflations are monomorphisms and all deflations are epimorphisms. In fact, the arguments so far show that if $(\C,\Xcal)$ satisfies conditions {\rm (EC$\ppr$),(E0),(E1),(E1$\op$),(E2$\ppr$),(E2$^{\prime\mathrm{op}}$)}, then it gives an $n$-exangulated category of this type. Next we will show the converse of this (Proposition~\ref{PropNEAtoNEx}).
\end{rem}

\begin{lem}\label{LemNEAtoNEx}
Let $\CEs$ be an $n$-exangulated category. Assume that any $\sfr$-inflation is monomorphic, and any $\sfr$-deflation is epimorphic in $\C$. Note that this is equivalent to assuming that any $\sfr$-conflation is $n$-exact.
If we denote the class of all $\sfr$-conflations by $\Xcal$, then we have the following.
\begin{enumerate}
\item For any $n$-exangle ${}_A\Yd_C$ and any $c\in\C(C\ppr,C)$, if we put $\sfr(c\uas\del)=[X^{\mr}]$, then any lift $f^{\mr}\in\CC(X^{\mr},Y^{\mr})$ of $(\id_A,c)\co\del\to c\uas\del$ satisfies $M_f^{\mr}\in\Xcal$. In particular,
\[
\xy
(-20,6)*+{X^1}="0";
(-6,6)*+{X^2}="2";
(7,6)*+{\cdots}="4";
(20,6)*+{X^n}="6";
(34,6)*+{X^{n+1}}="8";
(48,6)*+{}="9";
(-20,-6)*+{Y^1}="10";
(-6,-6)*+{Y^2}="12";
(7,-6)*+{\cdots}="14";
(20,-6)*+{Y^n}="16";
(34,-6)*+{Y^{n+1}}="18";
(48,-6)*+{}="19";
{\ar^{d_X^1} "0";"2"};
{\ar^{d_X^2} "2";"4"};
{\ar^{d_X^{n-1}} "4";"6"};
{\ar^{d_X^n} "6";"8"};
%
{\ar_{d_Y^1} "10";"12"};
{\ar_{d_Y^2} "12";"14"};
{\ar_{d_Y^{n-1}} "14";"16"};
{\ar_{d_Y^n} "16";"18"};
%
{\ar_{f^1} "0";"10"};
{\ar_{f^2} "2";"12"};
{\ar_{f^n} "6";"16"};
{\ar^{f^{n+1}} "8";"18"};
{\ar@{}|\circlearrowright "0";"12"};
{\ar@{}|\circlearrowright "2";"14"};
{\ar@{}|\circlearrowright "6";"18"};
\endxy
\]
becomes an $n$-pullback diagram in $\C$.
\item For any pair of distinguished $n$-exangles ${}_A\Xd_C,{}_B\Yr_D$, we have 
\[ \CC(X^{\mr},Y^{\mr})=\AE(\Xd,\Yr). \]
\item If $\del,\del\ppr\in\E(C,A)$ satisfies $\sfr(\del)=\sfr(\del\ppr)$, then $\del=\del\ppr$ holds. Thus for any $A,C\in\C$, the realization $\sfr$ gives the following bijective correspondence.
\[ \E(C,A)\ov{\text{bij.}}{\lra}\frac{\{ X^{\mr}\in\Xcal\mid X^0=A,X^{n+1}=C\}}{(\text{homotopic equivalence in}\ \CAC)}. \]
\end{enumerate}
\end{lem}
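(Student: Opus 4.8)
The plan is to prove the three statements in order, as each feeds into the next. For (1), start by recalling that $(d_X^0)\sas\del = 0$ and $(d_X^n)\uas\del = 0$ since $\Xd$ is an $n$-exangle, and that $M_f^{\mr}$ together with $(d_X^0)\sas(c\uas\del)$ lies in $\AE$ by Proposition~\ref{PropMapCone_EA}. The key observation is that $(d_X^0)\sas(c\uas\del) = c\uas(d_X^0)\sas\del = 0$, and also that the terminal differential of $M_f^{\mr}$ is $[f^{n+1}\ d_Y^n] = [c\ d_Y^n]\co X^{n+1}\oplus Y^n\to Y^{n+1}$; by hypothesis $d_Y^n$ is a deflation, hence $[c\ d_Y^n]$ is a deflation (it factors a deflation through a split epimorphism, or one argues directly via (EA1) and the split $n$-exangle on $X^{n+1}$). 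More carefully, one should use (EA2): since $c\uas\del$ is realized by $X^{\mr}$ and $\del$ by $Y^{\mr}$, the morphism $(\id_A,c)$ has a good lift $f'^{\mr}$, and by Remark~\ref{RemEA2}(1) any lift homotopic to a good lift is good; then one must check that all lifts of $(\id_A,c)$ are homotopic, which follows from Proposition~\ref{PropShiftedReal} together with the exactness properties of $n$-exangles (Lemma~\ref{LemOneExact} and Corollary~\ref{CorEisClosed}), or more directly from uniqueness of lifts up to homotopy in this setting. Once $M_f^{\mr}$ is a distinguished $n$-exangle with zero extension, it is in particular a conflation, so $M_f^{\mr}\in\Xcal$; and since conflations are $n$-exact by hypothesis, $M_f^{\mr}$ is an $n$-kernel sequence, which is exactly the statement that the displayed square is an $n$-pullback diagram (using Remark~\ref{RemNPullBack} to identify $M_f^{\mr}$ with the mapping cone of $f^{\mr}\co X^{\mr}\to Y^{\mr}$, noting $d_X^0$ is determined by $f^1\ci d_X^0 = d_Y^0$).

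For (2), the inclusion $\AE(\Xd,\Yr)\subseteq\CC(X^{\mr},Y^{\mr})$ is trivial. For the reverse, take any $f^{\mr}\in\CC(X^{\mr},Y^{\mr})$; one must show $(f^0)\sas\del = (f^{n+1})\uas\rho$. Here the point is that both $\del$ and $\rho$ are determined by their realizations: apply (R0)-style reasoning backwards. Concretely, $(f^0)\sas\del$ and $(f^{n+1})\uas\rho$ are extensions in $\E(X^{n+1}, Y^0)$; one shows both are realized by a common complex built from $f^{\mr}$. Using that $d_X^1\ci d_X^0 = 0$ together with the exactness of $\C(-,Y^0)\circ(d_X^0)\sas$-type sequences from Lemma~\ref{LemOneExact}, one produces the needed equality. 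Alternatively, and perhaps more cleanly, use that $f^{\mr}$ induces a morphism of the associated long exact sequences $\Yfr_Q$ of Proposition~\ref{PropAEComplex}; the compatibility $(f^0)\sas\del\ssh = (f^{n+1})\uas\rho\ssh$ on the level of natural transformations, evaluated appropriately, forces $(f^0)\sas\del = (f^{n+1})\uas\rho$ once one knows $\del$ is detected by $\del\ssh$ applied to $\id_C$ — which is automatic since $\del\ssh(\id_{X^{n+1}}) = \del$ when we take $Q = X^{n+1}$ and chase the identity. I expect this to require a short but slightly delicate diagram chase.

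For (3), suppose $\sfr(\del) = \sfr(\del\ppr) = [X^{\mr}]$ in $\CAC$, so both $\del,\del\ppr\in\E(C,A)$ are realized by the same complex $X^{\mr}$ (after possibly replacing by a homotopy-equivalent representative, which is harmless by Proposition~\ref{PropDEAInv}). The identity morphism $\id_{X^{\mr}}\in\CAC(X^{\mr},X^{\mr})$ then gives, via part (2) applied to the distinguished $n$-exangles $\langle X^{\mr},\del\rangle$ and $\langle X^{\mr},\del\ppr\rangle$, the equality $(\id_A)\sas\del = (\id_C)\uas\del\ppr$, i.e. $\del = \del\ppr$. This immediately yields injectivity of $\sfr$, and surjectivity onto homotopy equivalence classes of conflations with the prescribed end-terms is built into the definition of $\Xcal$ (every conflation realizes some extension), so $\sfr$ gives the claimed bijection. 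The main obstacle in the whole lemma is part (1): specifically, establishing that the lift produced is good without circularity — one must either invoke the existence of good lifts from (EA2) and then transfer goodness along a homotopy between lifts (requiring a uniqueness-of-lifts argument), or verify directly from the explicit form of $M_f^{\mr}$ that its terminal map is a deflation and its extension vanishes, hence it is a split — equivalently $n$-exact — $n$-exangle. The homotopy-uniqueness of lifts between $n$-exangles (which does hold here because the relevant $\Hom$-complexes are exact by the defining conditions of an $n$-exangle, cf. Proposition~\ref{PropCAC1} and Claim~\ref{ClaimHtpyReduced}) is the technical heart.
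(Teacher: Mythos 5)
Your strategy for part (1) — take a good lift from {\rm (EA2)}, show the given lift is homotopic to it, and transfer goodness via Remark~\ref{RemEA2}~(1) — is indeed the paper's route, but your justification of the crucial homotopy-uniqueness step is wrong, and this is exactly the point where the lemma's extra hypothesis must enter. You assert that any two lifts of $(\id_A,c)$ are homotopic ``because the relevant Hom-complexes are exact by the defining conditions of an $n$-exangle''. That exactness is not enough: in the comparison argument one chooses $\vp^{n+1}$ so that $g^n-f^n=d_Y^{n-1}\ci\vp^n+\vp^{n+1}\ci d_X^n$, and the remaining condition $d_Y^n\ci\vp^{n+1}=g^{n+1}-f^{n+1}=0$ only follows after cancelling $d_X^n$, i.e.\ it needs $d_X^n$ to be an epimorphism. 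This is precisely the hypothesis that conflations are $n$-exact sequences, and the paper invokes it through the comparison lemma, Proposition~\ref{PropNExSeq}~(2); in a general $n$-exangulated category (e.g.\ a triangulated category viewed as $1$-exangulated) lifts are \emph{not} unique up to homotopy, which is why good lifts are postulated in {\rm (EA2)} rather than automatic. Neither Proposition~\ref{PropCAC1} and Claim~\ref{ClaimHtpyReduced}, nor your other suggestion (Proposition~\ref{PropShiftedReal} plus Lemma~\ref{LemOneExact} and Corollary~\ref{CorEisClosed}) supplies this uniqueness. Your proposed ``direct'' alternative is also broken: the extension attached to $\Mf$ is $(d_X^0)\sas\del$, not $(d_X^0)\sas(c\uas\del)$, so it does not vanish in general and $\Mf$ is not a split $n$-exangle; what goodness actually gives is that $\langle\Mf,(d_X^0)\sas\del\rangle$ is distinguished, hence $\Mf\in\Xcal$, and then $n$-exactness of conflations yields the $n$-pullback statement (that last deduction in your write-up is fine).

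For part (2) you have no working argument. The route via Proposition~\ref{PropAEComplex} is circular: $\Yfr_Q(f^{\mr})$ is a morphism of complexes in its last square only when $f^{\mr}$ already satisfies $(f^0)\sas\del=(f^{n+1})\uas\rho$, which is the very statement to be proved; and the ``(R0)-style reasoning backwards'' sketch never produces a concrete comparison. The paper's argument is short but genuinely different: for an arbitrary $f^{\mr}\in\CC(X^{\mr},Y^{\mr})$, Proposition~\ref{PropShiftedReal}~(1) gives some $g^{\mr}\in\AE(\Xd,\Yr)$ with $g^0=f^0$ and $g^1=f^1$; since $X^{\mr}$ and $Y^{\mr}$ are $n$-exact sequences (the hypothesis again), Proposition~\ref{PropNExSeq}~(1) forces $f^{\mr}\sim g^{\mr}$, and Proposition~\ref{PropAE1}~(1) then puts $f^{\mr}$ in $\AE(\Xd,\Yr)$. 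Your deduction of (3) from (2) via $\id_{X^{\mr}}$ agrees with the paper, but it rests on the unproved part (2), so the proposal as it stands has genuine gaps in both (1) and (2).
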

\begin{proof}
{\rm (1)} By {\rm (EA2)}, there is a good lift $g^{\mr}$ of $(\id_A,c)$, which makes $\langle M^{\mr}_{g},(d_X^0)\sas\del\rangle$ a distinguished $n$-exangle by definition. Since $f^{\mr}\sim g^{\mr}$ holds by Proposition~\ref{PropNExSeq} {\rm (2)}, it follows that $f^{\mr}$ is also a good lift by Remark~\ref{RemEA2} {\rm (1)}, and thus $\Mf\in\Xcal$.

{\rm (2)} It suffices to show $\CC(X^{\mr},Y^{\mr})\se\AE(\Xd,\Yr)$. Let $f^{\mr}\in\CC(X^{\mr},Y^{\mr})$ be any morphism. By Proposition~\ref{PropShiftedReal}, there is some $g^{\mr}\in\AE(\Xd,\Yr)$ satisfying $g^0=f^0$ and $g^1=f^1$. Since $X^{\mr}$ and $Y^{\mr}$ are $n$-exact sequences, Proposition~\ref{PropNExSeq} {\rm (1)} shows $f^{\mr}\sim g^{\mr}$. Thus Proposition~\ref{PropAE1} shows $f^{\mr}\in\AE(\Xd,\Yr)$.

{\rm (3)} This immediately follows from {\rm (2)}. Indeed, $\id_{X^{\mr}}\in\CC(X^{\mr},X^{\mr})$ should give a morphism $\id_{X^{\mr}}\in\AE(\Xd,\langle X^{\mr},\del\ppr\rangle)$, which in particular satisfies $\del=\del\ppr$.
\end{proof}

\begin{prop}\label{PropNEAtoNEx}
Let $\CEs$ be an $n$-exangulated category, in which any $\sfr$-conflation is monomorphic and any $\sfr$-deflation is epimorphic. Let $\Xcal$ be the class of all $\sfr$-conflations, as in Lemma~\ref{LemNEAtoNEx}. Then, the following holds.
\begin{enumerate}
\item The pair $\CX$ satisfies conditions {\rm (EC$\ppr$),(E0),(E1),(E1$\op$),(E2$\ppr$),(E2$^{\prime\mathrm{op}}$)}.
\item Moreover, if $\CEs$ satisfies the following conditions {\rm (a),(b)} for any pair of morphisms $A\ov{a}{\lra}B\ov{b}{\lra}C$ in $\C$, then $\CX$ also satisfies {\rm (EI)}, and thus becomes an $n$-exact category in the sense of \cite{J} by Proposition~\ref{PropJassoEquiv}.
\begin{itemize}
\item[{\rm (a)}] If $b\ci a$ is an $\sfr$-inflation, then so is $a$.
\item[{\rm (b)}] If $b\ci a$ is an $\sfr$-deflation, then so is $b$.
\end{itemize}
\end{enumerate}
\end{prop}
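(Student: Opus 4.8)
The plan is to verify each of the six conditions listed in part (1) directly from the axioms of an $n$-exangulated category, using the structural results already established in Sections~\ref{section_EA} and~\ref{section_Fund} together with Lemma~\ref{LemNEAtoNEx}, and then to obtain part (2) by checking the isolatedness condition (EI). First I would record the easy items. Condition (E0) is immediate from (R2): the sequence ${}_0\O^{\mr}_0$ realizes $0\in\E(0,0)$, hence is an $\sfr$-conflation. Conditions (E1) and (E1$\op$) are precisely (EA1), after noting that for objects of $\CAC$ an $\Xcal$-admissible monomorphism in the sense of Definition~\ref{DefNExCat} is exactly an $\sfr$-inflation, and similarly for epimorphisms. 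Condition (EC$\ppr$) follows from Proposition~\ref{PropDEAInv} together with Lemma~\ref{LemNEAtoNEx}~(3): a homotopy equivalence in $\CAC$ between $n$-exact sequences preserves the property of being an $\sfr$-conflation, since by Lemma~\ref{LemNEAtoNEx}~(2) such a homotopy equivalence is automatically a morphism in $\AE$ between the associated distinguished $n$-exangles, and then Proposition~\ref{PropDEAInv} (or Corollary~\ref{CorDEAInv}~(2)) applies.

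Next I would treat the pullback/pushout axioms. For (E2$^{\prime\mathrm{op}}$)(i), given ${}_AY^{\mr}_C\in\Xcal$ realizing $\del\in\E(C,A)$ and a morphism $c\in\C(C\ppr,C)$, set $\sfr(c\uas\del)=[X^{\mr}]$ and take a lift $f^{\mr}$ of $(\id_A,c)\co c\uas\del\to\del$ by (R0); this $f^{\mr}$ has $f^0=\id_A$, so $X^{\mr}\in\CACp$ and ${}_AX^{\mr}_{C\ppr}\in\Xcal$ is the required conflation. For (E2$^{\prime\mathrm{op}}$)(ii), given ${}_AX^{\mr}_{C\ppr},{}_AY^{\mr}_C\in\Xcal$ and $f^{\mr}\in\CC(X^{\mr},Y^{\mr})$ with $f^0=\id_A$, I note that by Lemma~\ref{LemNEAtoNEx}~(2) $f^{\mr}$ is a morphism of distinguished $n$-exangles, and that $f^{\mr}$ must be a lift of $(\id_A, f^{n+1})\co\del\to\rho$ where $\del,\rho$ are the respective extensions; moreover $(f^{n+1})\uas\rho = (f^0)\sas\del = \del = (d_X^{n+1})^{\ast}$-type data forces $\sfr((f^{n+1})\uas\rho)=[X^{\mr}]$. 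Hence by Lemma~\ref{LemNEAtoNEx}~(1) the mapping cone $\Mf$ is an $n$-pullback diagram, so $\Mf\in\Xcal$ — which is exactly what (E2$^{\prime\mathrm{op}}$)(ii) asserts, once one observes (as in Remark~\ref{RemNPullBack} and Definition~\ref{DefMapCone}) that the mapping cone sequence is a conflation whenever it is an $n$-kernel sequence realizing $(d_X^0)\sas\rho$, and here it is realized as a distinguished $n$-exangle by (EA2) applied to a good lift homotopic to $f^{\mr}$ (Remark~\ref{RemEA2}~(1)). Condition (E2$\ppr$) is dual, using (EA2$\op$).

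For part (2), assuming (a) and (b), I would verify (EI): for ${}_AX^{\mr}_C\in\Xcal$ I must show $[X^{\mr}]\in\LAC$ is isolated. By Corollary~\ref{CorIsolated} it suffices to check (I1) and (I2). For (I1), suppose $Y^{\mr}\in\CC$ is an $n$-exact sequence and $f^{\mr}\in\CC(X^{\mr},Y^{\mr})$ has $f^0=a$ and $f^{n+1}=c$ isomorphisms. Using Corollary~\ref{CorDEAInv}~(1) I may replace $Y^{\mr}$ by the conflation $A\to Y^1\to\cdots\to Y^n\to C$ obtained by rescaling end terms (note $Y^{\mr}$ itself need not a priori be a conflation — this is where (a),(b) enter: one must first promote $Y^{\mr}$ to a conflation). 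Concretely, $d_Y^0 = f^1\ci d_X^0 \ci a\iv$ is an $\sfr$-inflation composed with an isomorphism and similarly $d_Y^n$ is an $\sfr$-deflation after rescaling, but to see $Y^{\mr}$ realizes some extension I build the conflation from $d_X^0$ as above and use that $d_Y^0$ factors appropriately; the hypotheses (a) and (b) guarantee that the relevant intermediate morphisms in a factorization of a conflation's differentials are again inflations/deflations, so that $Y^{\mr}$ is forced to be a conflation. Then $f^{\mr}$ becomes a morphism of distinguished $n$-exangles by Lemma~\ref{LemNEAtoNEx}~(2), necessarily a homotopy equivalence in $\CC$ (e.g. via Proposition~\ref{PropCAC1} after reducing end terms to identities, or via Proposition~\ref{PropNExSeq}~(1) and the fact that both are $n$-exact with comparable morphisms both ways), giving (I1); (I2) is dual. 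Finally, Proposition~\ref{PropJassoEquiv} converts (EC$\ppr$),(E2$\ppr$),(E2$^{\prime\mathrm{op}}$),(EI) plus (E0),(E1),(E1$\op$) into the statement that $\CX$ is $n$-exact.

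I expect the main obstacle to be part (2): the step where one must show, using only (a) and (b), that an arbitrary $n$-exact sequence $Y^{\mr}$ admitting an end-term isomorphism to a conflation is itself a conflation (equivalently, that its class lies in $\LAC$ at a point comparable to $[X^{\mr}]$). The conditions (a) and (b) are exactly designed to let one "split off" the isomorphism factors from the differentials $d_Y^0$ and $d_Y^n$ and recognize the remaining morphisms as an inflation and a deflation; assembling these into a genuine conflation structure on $Y^{\mr}$, and checking it agrees with the given one up to homotopy in $\CAC$, is the delicate bookkeeping. The rest of the argument is a fairly mechanical unwinding of definitions against the already-proved Propositions~\ref{PropDEAInv}, \ref{PropShiftedReal}, \ref{PropMapCone_EA}, Corollaries~\ref{CorDEAInv}, \ref{CorMapCone}, and Lemma~\ref{LemNEAtoNEx}.
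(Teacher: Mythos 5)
Your part (1) is essentially the paper's argument and is fine: (E0) from (R2), (E1),(E1$\op$) from (EA1), (EC$\ppr$) from the very definition of $\sfr$ (its values are homotopy equivalence classes in $\CAC$), (E2$^{\prime\mathrm{op}}$)(i) from (R0) and functoriality of $\E$, and (E2$^{\prime\mathrm{op}}$)(ii) from Lemma~\ref{LemNEAtoNEx}~(1),(2) exactly as you say. The gap is in part (2), at precisely the step you yourself flag as the main obstacle: showing that the $n$-exact sequence $Y^{\mr}$ receiving a morphism from the conflation $X^{\mr}$ with invertible end components is itself an $\sfr$-conflation. The one concrete argument you offer there is not valid: writing $d_Y^0=f^1\ci d_X^0\ci a\iv$ does not exhibit $d_Y^0$ as ``an $\sfr$-inflation composed with an isomorphism,'' because $f^1$ is an arbitrary morphism, and neither (a) nor (b) says that a composite of an inflation with an arbitrary map is an inflation --- they only allow cancellation (if a composite is an inflation/deflation, then the first/last factor is). So the claim that (a),(b) ``guarantee that the relevant intermediate morphisms in a factorization are again inflations/deflations'' is not substantiated, and the plan of ``building the conflation from $d_X^0$'' does not produce anything comparable to $Y^{\mr}$. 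Also note a minor slip: Corollary~\ref{CorDEAInv}~(1) may be applied to $X^{\mr}$ (the distinguished side) to normalize $f^0=\id_A$, $f^{n+1}=\id_C$, not to $Y^{\mr}$, which is not yet known to be distinguished.

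The missing argument (the paper's) uses condition (b) at the deflation end only, and a comparison of conflations sharing their last differential. After normalizing $f^0=\id_A$, $f^{n+1}=\id_C$ via Corollary~\ref{CorDEAInv}, the equality $d_Y^n\ci f^n=d_X^n$ together with (b) shows that $d_Y^n$ is an $\sfr$-deflation; choose a conflation $Z^{\mr}\in\Xcal$ whose last differential is $d_Y^n\co Y^n\to Y^{n+1}$. Both $Y^{\mr}$ and $Z^{\mr}$ are $n$-exact sequences agreeing (by identities) in degrees $n$ and $n+1$, so Proposition~\ref{PropNExSeq}~(1) completes this square to a homotopy equivalence $Z^{\mr}\to Y^{\mr}$ in $\CC$, and Lemma~\ref{LemConverseJE}~(4) --- available because part (1) supplies (EC$\ppr$),(E2$\ppr$),(E2$^{\prime\mathrm{op}}$) --- gives $Y^{\mr}\in\Xcal$. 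Only then does your final step go through: $f^{\mr}\in\CAC(X^{\mr},Y^{\mr})$ between two elements of $\Xcal$ is a homotopy equivalence (Lemma~\ref{LemConverseJE}~(1) or Proposition~\ref{PropCAC1}), which is (I1); (I2) is dual using (a). In particular, one never needs, and cannot directly obtain, that $d_Y^0$ is an inflation at this stage. With this replacement your outline becomes the paper's proof; as written, part (2) is not proved.
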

\begin{proof}
{\rm (1)} {\rm (EC$\ppr$)} is obvious from the definition of $\Xcal$. {\rm (E0)} follows from {\rm (R2)}. {\rm (E1)} and {\rm (E1$\op$)} follow from {\rm (EA1)}. {\rm (E2$^{\prime\mathrm{op}}$)(i)} follows from the functoriality of $\E$ and {\rm (R0)}. {\rm (E2$^{\prime\mathrm{op}}$)(ii)} follows from Lemma~\ref{LemNEAtoNEx} {\rm (1),(2)}. Dually for {\rm (E2$\ppr$)}.

{\rm (2)} By Corollary \ref{CorIsolated}, it suffices to show that any $X^{\mr}\in\Xcal$ satisfies {\rm (I1)} and {\rm (I2)} in Corollary \ref{CorIsolated}. Since {\rm (I2)} is dual to {\rm (I1)}, we only show that {\rm (b)} implies {\rm (I1)}. For ${}_AX^{\mr}_C\in\Xcal$, let $f^{\mr}\in\CC(X^{\mr},Y^{\mr})$ be any morphism to an $n$-exact sequence $Y^{\mr}$, in which $f^0$ and $f^{n+1}$ are isomorphisms in $\C$. Modifying $X^{\mr}$ using isomorphisms $f^0$ and $f^{n+1}$ by Corollary \ref{CorDEAInv}, we may assume $f^0=\id_A$ and $f^{n+1}=\id_C$ from the beginning.

By the equality $d_Y^n\ci f^n=d_X^n$, condition {\rm (b)} implies that $d_Y^n$ is an $\sfr$-deflation. Thus there is $Z^{\mr}\in\Xcal$ of the form
\[ Z^0\ov{d_Z^0}{\lra}Z^1\ov{d_Z^1}{\lra}\cdots\ov{d_Z^{n-2}}{\lra}Z^{n-1}\ov{d_Z^{n-1}}{\lra}Y^n\ov{d_Y^n}{\lra}Y^{n+1}. \]
Since both $Y^{\mr}$ and $Z^{\mr}$ are $n$-exact sequences, the commutative square in $\C$
\[
\xy
(-20,6)*+{Z^0}="0";
(-6,6)*+{Z^1}="2";
(7,6)*+{\cdots}="4";
(20,6)*+{Z^{n-1}}="6";
(34,6)*+{Y^n}="8";
(48,6)*+{Y^{n+1}}="9";
(-20,-6)*+{Y^0}="10";
(-6,-6)*+{Y^1}="12";
(7,-6)*+{\cdots}="14";
(20,-6)*+{Y^{n-1}}="16";
(34,-6)*+{Y^n}="18";
(48,-6)*+{Y^{n+1}}="19";
{\ar^{d_Z^0} "0";"2"};
{\ar^{d_Z^1} "2";"4"};
{\ar^(0.4){d_Z^{n-2}} "4";"6"};
{\ar^{d_Z^{n-1}} "6";"8"};
{\ar^(0.46){d_Y^n} "8";"9"};
{\ar_{d_Y^0} "10";"12"};
{\ar_{d_Y^1} "12";"14"};
{\ar_(0.4){d_Y^{n-2}} "14";"16"};
{\ar_{d_Y^{n-1}} "16";"18"};
{\ar_(0.46){d_Y^n} "18";"19"};
{\ar@{=} "8";"18"};
{\ar@{=} "9";"19"};
{\ar@{}|\circlearrowright "8";"19"};
\endxy
\]
can be completed into a homotopy equivalence $Z^{\mr}\to Y^{\mr}$ by Proposition~\ref{PropNExSeq} {\rm (1)}. Thus $Z^{\mr}\in\Xcal$ implies $Y^{\mr}\in\Xcal$ by Lemma~\ref{LemConverseJE} {\rm (4)}.
\end{proof}

\begin{rem}
Let $\CEs$ be an $n$-exangulated category, and let $\F\se\E$ be a closed subfunctor. Trivially, if any $\sfr$-inflation is monomorphic (respectively, if any $\sfr$-deflation is epimorphic), then so is any $\sfr|_{\F}$-inflation (resp. $\sfr|_{\F}$-deflation).

Let $\Xcal_{\sfr}$ and $\Xcal_{\sfr|_{\F}}$ be the classes of all $\sfr$-conflations and $\sfr|_{\F}$-conflations respectively, as in Lemma \ref{LemNEAtoNEx}. If $(\C,\Xcal_{\sfr})$ moreover satisfies condition {\rm (EI)}, then so does $(\C,\Xcal_{\sfr|_{\F}})$. By the arguments so far, this means that any relative theory for an $n$-exact category induces an $n$-exact category.
\end{rem}

\begin{rem}\label{RemRelativeNor}
Let $(\C,\Sig,\square)$ be an $(n+2)$-angulated category, and regard it as an $n$-exangulated category through Proposition \ref{PropAtoEA}. Then by Proposition \ref{PropClosed}, any closed subfunctor $\F\se\E_{\Sig}$ gives an $n$-exangulated category $(\C,\F,\sfr_{\square}|_{\F})$, which is not $n$-exact unless $\F=0$. Indeed, if $d_X^0$ is monomorphic in $\C$ for $\del\in\F(C,A)$ with $\sfr_{\square}(\del)=[X^{\mr}]$, then $d_X^0$ should be a split monomorphism, which implies $\del=0$.

It is not $(n+2)$-angulated either, in general. Especially for the closed subfunctor $\F=\E_{\Sig}^{\Ical}$ associated with a full subcategory $\Ical\se\C$ as in Definition \ref{DefItoF}, the resulting $n$-exangulated category is not $(n+2)$-angulated unless $\Ical=0$. In fact if $\Ical\ne0$, any object $0\ne I\in\Ical$ satisfies $\E_{\Sig}^{\Ical}(C,I)=0$ for any $C\in \C$, which cannot happen in an $(n+2)$-angulated category. Similarly for $(\E_{\Sig})_{\Ical}$.
\end{rem}

\section{$n$-cluster tilting subcategories as $n$-exangulated categories}\label{section_nCT}
Throughout this section, let $\CEs$ be a $1$-exangulated category, or equivalently, an extriangulated category.
Assume that it has enough projectives and injectives in the sense of \cite[Definition 1.12]{LN}, and let $\Pcal\se\C$ (respectively, $\Ical\se\C$) denote the full subcategory of projectives (resp. injectives). We denote the ideal quotients $\C/\Pcal$ and $\C/\Ical$ by $\und{\C}$ and $\ovl{\C}$, respectively. For a morphism $f$ in $\C$, its images in $\und{\C}$ and $\ovl{\C}$ are denoted by $\und{f}$ and $\ovl{f}$. We simply call a $1$-exangle in $\CEs$ an $\sfr$-{\it triangle}, as in \cite{LN}.

Remark that $\E\co\C\op\ti\C\to\Ab$ induces $\E\co\und{\C}\op\ti\ovl{\C}\to\Ab$, which we denote by the same symbol. From this reason, for any morphism $\ovl{a}\in\ovl{\C}(A,A\ppr)$, we also use the notation $\ovl{a}\sas$ for the homomorphisms $a\sas\co\E(C,A)\to\E(C,A\ppr)$. Similarly for $\und{a}\uas$.

\bigskip

In the rest, we assume the following.
\begin{assumption}\label{Assump}
Each object $A\in\C$ is assigned the following data {\rm (i)} and {\rm (ii)}.
\begin{itemize}
\item[{\rm (i)}] A pair $(\Sig A,\iota^A)$ of an object $\Sig A\in\C$ and an extension $\iota^A\in\E(\Sig A,A)$, for which $\sfr(\iota^A)=[A\to I\to\Sig A]$ satisfies $I\in\Ical$.
\item[{\rm (ii)}] A pair $(\Om A,\om_A)$ of an object $\Om A\in\C$ and an extension $\om_A\in\E(A,\Om A)$, for which $\sfr(\om_A)=[\Om A\to P\to A]$ satisfies $P\in\Pcal$.
\end{itemize}
\end{assumption}

\subsection{Higher extensions in an extriangulated category}

Under Assumption~\ref{Assump}, we have the following.

\begin{prop}\label{PropSigmaOmega}
Let $\CEs$ be as above. The following holds.
\begin{enumerate}
\item An additive functor $\Sig\co\ovl{\C}\to\ovl{\C}$ is given by the following.
\begin{itemize}
\item[-] For each $A\in\C$, associate $\Sig A\in\C$.
\item[-] For any $\ovl{a}\in\ovl{\C}(A,A\ppr)$, the morphism $\Sig\ovl{a}$ is defined by $\Sig\ovl{a}=\ovl{s}$, where $s\in\C(\Sig A,\Sig A\ppr)$ is a morphism satisfying $\ovl{a}\sas\iota^A=s\uas\iota^{A\ppr}$. Such $s$ always exists, and this $\Sig\ovl{a}$ does note depend on the choice of $s$.
\end{itemize}
\item Similarly, functor $\Om\co\und{\C}\to\und{\C}$ is given by the following.
\begin{itemize}
\item[-] For each $C\in\C$, associate $\Om C\in\C$.
\item[-] For any $\und{c}\in\und{\C}(C,C\ppr)$, the morphism $\Om\und{c}$ is defined by $\Om\und{c}=\und{t}$, where $t\in\C(\Om C,\Om C\ppr)$ is a morphism satisfying $\und{c}\uas\om_{C\ppr}=t\sas\om_C$. Similarly as in {\rm (1)}, this $\Omega\und{c}$ is uniquely determined by $\und{c}$.
\end{itemize}
\end{enumerate}
Up to natural isomorphisms, these functors are unique independently of the choice of data assigned in Assumption~\ref{Assump}.
\end{prop}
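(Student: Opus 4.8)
The plan is to construct and analyze $\Sig$ in detail; the functor $\Om$ is then obtained by a formal dualization, interchanging $\Ical$ with $\Pcal$ and the exact sequences $(\ref{exnat1})$ and $(\ref{exnat2})$. Fix $\ovl{a}\in\ovl{\C}(A,A\ppr)$, choose a representative $a\in\C(A,A\ppr)$, and write the $\sfr$-triangles of Assumption~\ref{Assump} as $A\ov{x}{\lra}I\ov{y}{\lra}\Sig A\ov{\iota^A}{\dra}$ and $A\ppr\ov{x\ppr}{\lra}I\ppr\ov{y\ppr}{\lra}\Sig A\ppr\ov{\iota^{A\ppr}}{\dra}$ with $I,I\ppr\in\Ical$. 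The first step is to exhibit $s\in\C(\Sig A,\Sig A\ppr)$ with $a\sas\iota^A=s\uas\iota^{A\ppr}$. Since $I\ppr$ is injective, $\E(\Sig A,I\ppr)=0$, so the exact sequence $(\ref{exnat2})$ attached to the $\sfr$-triangle realizing $\iota^A$, evaluated at $I\ppr$, shows that $-\ci x\co\C(I,I\ppr)\to\C(A,I\ppr)$ is surjective; hence $x\ppr\ci a$ lifts to some $b\in\C(I,I\ppr)$ with $b\ci x=x\ppr\ci a$. Applying Proposition~\ref{PropShiftedReal}\,{\rm (1)} to this commutative square produces a morphism of $\sfr$-triangles $f^{\mr}=(a,b,s)$, and by Definition~\ref{DefAE}\,{\rm (2)} this is precisely the identity $a\sas\iota^A=s\uas\iota^{A\ppr}$.

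Second, I would check that $\ovl{s}$ is unambiguous. If $s,s\ppr$ both satisfy this identity then $(\iota^{A\ppr})\ssh(s-s\ppr)=(s-s\ppr)\uas\iota^{A\ppr}=0$, so by the exactness of $(\ref{exnat1})$ for $\iota^{A\ppr}$ the difference $s-s\ppr$ factors through $y\ppr$, hence through $I\ppr\in\Ical$, giving $\ovl{s}=\ovl{s\ppr}$. Likewise, if $a-a_1$ factors through some $J\in\Ical$, say $a-a_1=v\ci u$, then $u\sas\iota^A\in\E(\Sig A,J)=0$, whence $a\sas\iota^A=(a_1)\sas\iota^A$ and any $s$ valid for $a$ is valid for $a_1$. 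Thus $\Sig\ovl{a}:=\ovl{s}$ depends only on $\ovl{a}$. Functoriality and additivity are then immediate: $s=\id_{\Sig A}$ realizes $\Sig\ovl{\id_A}$; if $s$ realizes $\Sig\ovl{a}$ and $s\ppr$ realizes $\Sig\ovl{a\ppr}$ then, as $a\sas$ commutes with $c\uas$, $s\ppr\ci s$ realizes $\Sig(\ovl{a\ppr}\ci\ovl{a})$; and by biadditivity of $\E$, if $s,t$ realize $\Sig\ovl{a},\Sig\ovl{b}$ then $s+t$ realizes $\Sig(\ovl{a}+\ovl{b})$. Hence $\Sig$ is a well-defined additive endofunctor of $\ovl{\C}$, and $\Om$ is handled dually.

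For uniqueness, suppose a second choice assigns to each $A$ a pair $(\Sig\ppr A,\iota^{\prime A})$ with associated functor $\Sig\ppr$. Running the first step with $a=\id_A$, using the $\sfr$-triangle of $\iota^A$ as source and that of $\iota^{\prime A}$ as target, yields $s_A\in\C(\Sig A,\Sig\ppr A)$ with $\iota^A=s_A\uas\iota^{\prime A}$, and symmetrically $s\ppr_A\in\C(\Sig\ppr A,\Sig A)$ with $\iota^{\prime A}=(s\ppr_A)\uas\iota^A$; then $(s\ppr_A\ci s_A)\uas\iota^A=\iota^A$, so the argument above gives $\ovl{s\ppr_A\ci s_A}=\ovl{\id_{\Sig A}}$ and, by symmetry, $\ovl{s_A\ci s\ppr_A}=\ovl{\id_{\Sig\ppr A}}$; thus each $\ovl{s_A}$ is an isomorphism in $\ovl{\C}$. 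Naturality follows by the same template: if $\sig$ realizes $\Sig\ovl{a}$ and $\sig\ppr$ realizes $\Sig\ppr\ovl{a}$, then $(s_{A\ppr}\ci\sig)\uas\iota^{\prime A\ppr}=(\sig\ppr\ci s_A)\uas\iota^{\prime A\ppr}$, so $s_{A\ppr}\ci\sig-\sig\ppr\ci s_A$ factors through the injective in the $\sfr$-triangle of $\iota^{\prime A\ppr}$, i.e.\ $\ovl{s_{A\ppr}}\ci\Sig\ovl{a}=\Sig\ppr\ovl{a}\ci\ovl{s_A}$. Hence $\{\ovl{s_A}\}$ is a natural isomorphism $\Sig\ov{\cong}{\lra}\Sig\ppr$, and the dual argument treats $\Om$.

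I do not expect a genuine obstacle here: each step is a short diagram chase once the right instance of $(\ref{exnat1})$ or $(\ref{exnat2})$ is invoked. The two points needing care are that $\E$ already descends to a bifunctor $\und{\C}\op\ti\ovl{\C}\to\Ab$ — which is what makes $a\sas\iota^A$ and $\Sig\ovl{a}=\ovl{s}$ well-defined on morphism classes — and that injectivity must be used in both of its guises, the vanishing $\E(-,I)=0$ and the surjectivity of $-\ci x$, so that existence of $s$, triviality of its ambiguity, and the naturality square are handled uniformly.
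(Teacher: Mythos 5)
Your proof is correct, and it follows essentially the same route the paper intends: the paper's own proof is just the remark that this is "straightforward, shown in the same way as in [Ha] or [IY]", i.e.\ the standard dimension-shift argument, which is exactly what you carry out (existence of $s$ from injectivity of $I\ppr$ and the exact sequence $(\ref{exnat2})$ plus a lift of the square, well-definedness of $\ovl{s}$ and independence of the representative $a$ via $\E(-,\Ical)=0$, and uniqueness up to natural isomorphism by comparing two choices of $(\Sig A,\iota^A)$). No gaps; the dual case for $\Om$ is indeed formal.
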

\begin{proof}
This is straightforward, shown in the same way as in \cite{Ha} or \cite{IY}.
\end{proof}

\begin{rem}\label{RemBar}
For any $A\in\C$, the $\sfr$-triangle $A\to I\to \Sig A\ov{\iota^A}{\dra}$ gives an exact sequence $\C(-,I)\to\C(-,\Sig A)\ov{\iota^A\ssh}{\lra}\E(-,A)\to0$.
Thus for any $\del\in\E(C,A)$, there exists $d\in\C(C,\Sig A)$ satisfying $d\uas\iota^A=\del$. Moreover $\ovl{d}\in\ovl{\C}(C,\Sig A)$ is unique, independently of a choice of such $d$.

If we denote this $\ovl{d}$ by $\ovl{\del}$, this gives a well-defined homomorphism
\begin{equation}\label{defbar}
\ovl{(\ )}\co\E(C,A)\to\ovl{\C}(C,\Sig A)\ ;\ \del\mapsto\ovl{\del}.
\end{equation}
\end{rem}

\begin{dfn}\label{DefHighExt}
Let $i\ge 1$ be any integer. Define a biadditive functor $\E^i\co\C\op\ti\C\to\Ab$ to be the composition of
\[ \C\op\ti\C\to\und{\C}\op\ti\ovl{\C}\overset{\Id\ti\Sig^{i-1}}{\lra}\und{\C}\op\ti\ovl{\C}\ov{\E}{\lra}\Ab, \]
where $\Sig^{i-1}$ is the $(i-1)$-times iteration of the endofunctor $\Sig$.

Dually, define $\Ep^{i}\co\C\op\ti\C\to\Ab$ to be the composition of
\[ \C\op\ti\C\to\und{\C}\op\ti\ovl{\C}\ov{\Om^{i-1}\ti\Id}{\lra}\und{\C}\op\ti\ovl{\C}\ov{\E}{\lra}\Ab. \]
By definition, for any $a\in\C(A,A\ppr)$ and $c\in\C(C,C\ppr)$ we have
\[ \E^i(c,a)=(\Sig^{i-1}\ovl{a})\sas c\uas\co\E(C\ppr,\Sig^{i-1}A)\to\E(C,\Sig^{i-1}A\ppr), \]
\[ \Ep^{i}(c,a)=(\Om^{i-1}\und{c})\uas a\sas\co\E(\Om^{i-1}C\ppr,A)\to\E(\Om^{i-1}C,A\ppr). \]
Also remark that $\E^1=\Ep^{1}=\E$ holds.
\end{dfn}

\begin{rem}
We will give a natural isomorphism $\Ep^{i}\ov{\cong}{\Longrightarrow}\E^i$ in the following (Definition~\ref{Defvp}), which ensures the validity of dual arguments for $\E^i$.
\end{rem}

\begin{dfn}\label{DefBar}
Let $i\ge 1$ be any integer. If we apply $(\ref{defbar})$ for $C$ and $\Sig^{i-1}A$, we obtain a homomorphism
\[ \ovl{(\ )}\co\E^i(C,A)\to\ovl{\C}(C,\Sig^i A)\ ;\ \del\mapsto \ovl{\del}. \]
Dually, a homomorphism
\[ \und{(\ )}\co\Ep^{i}(C,A)\to\und{\C}(\Om^iC,A) \]
is defined by $\und{\tau}=\und{e}$ for any $\tau\in\Ep^{i}(C,A)$, where $e\in\C(\Om^iC,A)$ is a morphism satisfying $e\sas(\om_{\Om^{i-1}C})=\tau$.
\end{dfn}

The above homomorphisms are natural on $\C\op\ti\C$, as follows.
\begin{prop}\label{PropFlat}
Let $i\ge 1$ be any integer.
For any $a\in\C(A,A\ppr)$ and $c\in\C(C,C\ppr)$,
\[
\xy
(-12,6)*+{\E^i(C\ppr,A)}="0";
(12,6)*+{\ovl{\C}(C\ppr,\Sig^i A)}="2";
(-12,-6)*+{\E^i(C,A\ppr)}="4";
(12,-6)*+{\ovl{\C}(C,\Sig^i A\ppr)}="6";
{\ar^(0.46){\ovl{(\ )}} "0";"2"};
{\ar_{\E^i(c,a)} "0";"4"};
{\ar^{(\Sig^i\ovl{a})\ci-\ci\ovl{c}} "2";"6"};
{\ar_(0.46){\ovl{(\ )}} "4";"6"};
{\ar@{}|\circlearrowright "0";"6"};
\endxy
\]
is commutative. Dually for $\und{(\ )}$.
\end{prop}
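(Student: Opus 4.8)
The plan is to reduce the statement to the case $i=1$ and then to unwind the definitions of $\Sig$, of the extensions $\iota^{(-)}$, and of the homomorphism $(\ref{defbar})$. First I would put $B=\Sig^{i-1}A$ and $B\ppr=\Sig^{i-1}A\ppr$, so that $\E^i(C\ppr,A)=\E(C\ppr,B)$, $\E^i(C,A\ppr)=\E(C,B\ppr)$, the left-hand vertical map is $\E^i(c,a)=(\Sig^{i-1}\ovl{a})\sas c\uas$, and — because $\Sig$ is a functor — $\Sig^i\ovl{a}=\Sig(\Sig^{i-1}\ovl{a})$ while $\Sig^i A\ppr=\Sig B\ppr$. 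It therefore suffices to prove, for all $B,B\ppr\in\C$, all $\ovl{b}\in\ovl{\C}(B,B\ppr)$, all $c\in\C(C,C\ppr)$ and all $\del\in\E(C\ppr,B)$, the identity
\[ \ovl{\ovl{b}\sas c\uas\del}=(\Sig\ovl{b})\ci\ovl{\del}\ci\ovl{c}\qquad\text{in }\ovl{\C}(C,\Sig B\ppr). \]
The proposition's statement for $\ovl{a}\in\C(A,A\ppr)$ is the case $B=A$, $\ovl{b}=\ovl{a}$; allowing $\ovl{b}$ to be an arbitrary morphism of $\ovl{\C}$ is precisely what is needed so that the reduced statement can be applied with $\ovl{b}=\Sig^{i-1}\ovl{a}$, and the computation below never requires $\ovl{b}$ to have a distinguished lift.

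For the chase I would pick $d\in\C(C\ppr,\Sig B)$ with $d\uas\iota^B=\del$, so that $\ovl{\del}=\ovl{d}$ by definition of $(\ref{defbar})$, and choose any lift $b\in\C(B,B\ppr)$ of $\ovl{b}$. Using that $a\sas$ and $c\uas$ commute, $\ovl{b}\sas c\uas\del=b\sas(d\ci c)\uas\iota^B=(d\ci c)\uas(b\sas\iota^B)$. By the description of $\Sig$ in Proposition~\ref{PropSigmaOmega} there is $s\in\C(\Sig B,\Sig B\ppr)$ with $b\sas\iota^B=s\uas\iota^{B\ppr}$ and $\Sig\ovl{b}=\ovl{s}$; substituting and composing once more gives $\ovl{b}\sas c\uas\del=(s\ci d\ci c)\uas\iota^{B\ppr}$ with $s\ci d\ci c\in\C(C,\Sig B\ppr)$. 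By the defining property of $(\ref{defbar})$ this says exactly $\ovl{\ovl{b}\sas c\uas\del}=\ovl{s\ci d\ci c}=\ovl{s}\ci\ovl{d}\ci\ovl{c}=(\Sig\ovl{b})\ci\ovl{\del}\ci\ovl{c}$, as desired. Independence of the chosen $d$, $b$ and $s$ and the fact that everything descends to the additive quotient are guaranteed by Remark~\ref{RemBar} and Proposition~\ref{PropSigmaOmega}. The dual statement for $\und{(\ )}\co\Ep^{i}(C,A)\to\und{\C}(\Om^iC,A)$ I would prove by the formally dual computation: reduce to $i=1$ via $\Om^i=\Om\ci\Om^{i-1}$, choose $e\in\C(\Om C,A)$ with $e\sas\om_C=\tau$, and invoke the description of $\Om$ in Proposition~\ref{PropSigmaOmega} together with the bifunctoriality of $\E$.

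I do not expect a genuine obstacle here: the content is a bookkeeping exercise in the definitions. The only delicate points are to phrase the $i=1$ reduction for an arbitrary morphism of $\ovl{\C}$ rather than one lifted from $\C$, and to keep careful track of which overline is in play at each stage — the quotient functor $\C\to\ovl{\C}$, the functor $\Sig$ applied to a residue class, and the map $(\ref{defbar})$ on $\E$-extensions — since all three are written with the same notation.
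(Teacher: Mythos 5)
Your proof is correct and follows essentially the same route as the paper: reduce to $i=1$ by renaming $\Sig^{i-1}A^{(\prime)}$ as the end-terms, then compute $(s\ci d\ci c)\uas\iota^{A\ppr}=a\sas c\uas\del$ from the defining relations $d\uas\iota^A=\del$ and $a\sas\iota^A=s\uas\iota^{A\ppr}$ of $\ovl{\del}$ and $\Sig\ovl{a}$. Your extra remark about stating the $i=1$ case for an arbitrary morphism of $\ovl{\C}$ (and its lift $b$) only makes explicit what the paper's reduction does implicitly, so there is no substantive difference.
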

\begin{proof}
Replacing $\Sig^{i-1}A^{(\prime)}$ by $A^{(\prime)}$, we may assume $i=1$ from the beginning. Let $\del\in\E(C\ppr,A)$ be any element. By definition, $\Sig\ovl{a}=\ovl{s}$ and $\ovl{\del}=\ovl{d}$ are given by morphisms $s$ and $d$ satisfying
\[ a\sas\iota^A=s\uas\iota^{A\ppr}\quad\text{and}\quad d\uas\iota^A=\del \]
respectively. Thus we have $(s\ci d\ci c)\uas\iota^{A\ppr}=a\sas c\uas\del$, which means $\ovl{a\sas c\uas\del}=\ovl{s\ci d\ci c}=(\Sig\ovl{a})\ci\ovl{\del}\ci\ovl{c}$.
\end{proof}

\begin{rem}
Definition~\ref{DefBar} does not mean that {\bf any} representative $d$ of $\ovl{\delta}$ satisfies $d\uas\iota^A=\del$.
In fact, the surjective homomorphism $\iota^A\ssh\co\C(C,\Sig A)\to \E(C,A)$ only induces a well-defined homomorphism $\und{\C}(C,\Sig A)\to\E(C,A)$, but not $\ovl{\C}(C,\Sig A)\to\E(C,A)$ in general. Dually for $\und{(\ )}$.
\end{rem}
However, for a pair of objects of the form $(\Om C,\Sig A)$, the following holds.
\begin{prop}\label{PropfTFAE}
Let $A,C\in\C$ be any pair of objects. For any $f\in\C(\Om C,\Sig A)$, the following are equivalent.
\begin{enumerate}
\item $\ovl{f}=0$.
\item $\om_C\ush(f)=0$.
\item $\und{f}=0$.
\item $\iota^A\ssh(f)=0$.
\end{enumerate}
In particular, we have $\ovl{\C}(\Om C,\Sig A)=\und{\C}(\Om C,\Sig A)$. Consequently
\[ \ovl{(\ )}\co\E(\Om C,A)\ov{\cong}{\lra}\ovl{\C}(\Om C,\Sig A)\quad\text{and}\quad
\und{(\ )}\co\E(C,\Sig A)\ov{\cong}{\lra}\und{\C}(\Om C,\Sig A) \]
are isomorphisms, whose inverses are induced from $\iota^A\ssh$ and $\om_C\ush$ respectively.
\end{prop}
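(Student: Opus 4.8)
The plan is to establish the cycle of implications $(1)\Rightarrow(2)\Rightarrow(3)\Rightarrow(4)\Rightarrow(1)$, after which the final assertions follow formally. The two crucial inputs are the distinguished $\sfr$-triangles defining $\Sig A$ and $\Om C$, namely $A\ov{x}{\lra}I\ov{y}{\lra}\Sig A\ov{\iota^A}{\dra}$ with $I\in\Ical$ and $\Om C\ov{u}{\lra}P\ov{v}{\lra}C\ov{\om_C}{\dra}$ with $P\in\Pcal$, together with the long exact sequences in $\C(-,-)$ and $\E(-,-)$ attached to them (Definition~\ref{DefEA}, (R1), and Lemma~\ref{LemOneExact}). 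First I would unwind the four conditions: $\ovl{f}=0$ means $f$ factors through $\Ical$, i.e. through some injective; $\und{f}=0$ means $f$ factors through $\Pcal$; $\om_C\ush(f)=f\sas\om_C=0$ in $\E(C,\Sig A)$; and $\iota^A\ssh(f)=f\uas\iota^A=0$ in $\E(\Om C,A)$.

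For $(1)\Rightarrow(2)$: if $f$ factors through an injective object, then since $\om_C\in\E(C,\Om C)$ and $\E(C,-)$ sends the relevant factorization through an injective to zero — more precisely, using that $\E(C,J)$ receives $\om_C$ via $f\sas$, and that any morphism $\Om C\to J$ with $J$ injective composed appropriately kills $\om_C$ because $\E(C,J)=0$ is false in general, so instead I factor $f=j\ci g$ with $j\co J\to\Sig A$, $g\co\Om C\to J$, $J\in\Ical$, and note $f\sas\om_C=j\sas g\sas\om_C$; now $g\sas\om_C\in\E(C,J)$, and applying the first part of the long exact sequence of the $\sfr$-triangle $\Om C\ov{u}{\lra}P\ov{v}{\lra}C\ov{\om_C}{\dra}$, namely exactness of $\C(P,J)\ov{\C(u,J)}{\lra}\C(\Om C,J)\ov{(\om_C)\ush}{\lra}\E(C,J)$ together with injectivity of $J$ (so $\C(u,J)$ is surjective), gives $g\sas\om_C=0$, hence $f\sas\om_C=0$. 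For $(2)\Rightarrow(3)$: $f\sas\om_C=0$ and exactness of $\C(\Om C,\Sig A)\ov{(\om_C)\ush}{\lra}\E(C,\Sig A)$ preceded by $\C(P,\Sig A)\ov{\C(u,\Sig A)}{\lra}\C(\Om C,\Sig A)$ shows $f=w\ci u$ for some $w\co P\to\Sig A$, and since $P\in\Pcal$ this says exactly $\und{f}=0$. The implications $(3)\Rightarrow(4)\Rightarrow(1)$ are dual: $(3)\Rightarrow(4)$ uses that $f$ factoring through a projective $P$ kills $\iota^A$ when pulled back, via the dual long exact sequence in $\E(-,A)$ attached to $A\ov{x}{\lra}I\ov{y}{\lra}\Sig A\ov{\iota^A}{\dra}$ and projectivity of $P$; and $(4)\Rightarrow(1)$ uses exactness of $\C(I,-)$-type sequence: $f\uas\iota^A=0$ with exactness of $\C(\Om C,I)\ov{\C(\Om C,y)}{\lra}\C(\Om C,\Sig A)\ov{(\iota^A)\ssh}{\lra}\E(\Om C,A)$ forces $f=y\ci f'$ for some $f'\co\Om C\to I$, and $I\in\Ical$ gives $\ovl{f}=0$.

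Once the four conditions are proven equivalent, the equality $\ovl{\C}(\Om C,\Sig A)=\und{\C}(\Om C,\Sig A)$ is immediate since $(1)\EQ(3)$ says the subgroup of morphisms factoring through injectives coincides with the subgroup factoring through projectives, so the two quotients of $\C(\Om C,\Sig A)$ agree. For the isomorphism statements: the surjective homomorphism $(\iota^A)\ssh\co\C(\Om C,\Sig A)\to\E(\Om C,A)$ (surjective by the long exact sequence and $I\in\Ical$, giving $\E(\Om C,I)=0$) has kernel, by $(4)\EQ(1)$, exactly the morphisms factoring through $\Ical$; hence it descends to an isomorphism $\ovl{\C}(\Om C,\Sig A)\ov{\cong}{\lra}\E(\Om C,A)$, and composing with $\ovl{(\ )}$ which is its two-sided inverse by construction in Remark~\ref{RemBar} identifies $\ovl{(\ )}\co\E(\Om C,A)\to\ovl{\C}(\Om C,\Sig A)$ as an isomorphism. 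Dually for $\und{(\ )}$ using $(\om_C)\ush$ and $P\in\Pcal$.

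I expect the main obstacle to be bookkeeping the correct segments of the long exact sequences — specifically making sure that at each step one has surjectivity of the relevant $\C(u,-)$ or $\C(-,y)$ map coming from projectivity/injectivity, and that $\E(C,J)$ or $\E(\Om C,I)$ vanishing is invoked only where $J\in\Ical$ or $I\in\Ical$, $P\in\Pcal$ genuinely applies. There is no deep difficulty; the argument is entirely a diagram chase through (R1) and the definition of enough projectives/injectives, but one must be careful that the objects $\Om C$ and $\Sig A$ play asymmetric roles (one is a syzygy, the other a cosyzygy) so the "projective" and "injective" hypotheses enter at complementary ends of the sequences.
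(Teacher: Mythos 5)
Your proof is correct and takes essentially the same route as the paper: the cycle $(1)\Rightarrow(2)\Rightarrow(3)\Rightarrow(4)\Rightarrow(1)$ chased through the defining conflations $\Om C\to P\to C$ and $A\to I\to\Sig A$, with projectivity/injectivity supplying the factorizations, and the final isomorphisms obtained from the surjectivity of $\iota^A\ssh$ and $\om_C\ush$ together with the identification of their kernels. One small remark: your aside that ``$\E(C,J)=0$ is false in general'' is mistaken --- for $J\in\Ical$ one does have $\E(C,J)=0$ (the standard characterization of injectives in an extriangulated category), which reduces $(1)\Rightarrow(2)$ to the paper's one-line observation $\om_C\ush(f)=\ovl{f}\sas\om_C$; your workaround via surjectivity of $\C(u,J)$ along the inflation $u$ is nevertheless valid.
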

\begin{proof}
$(1)\Rightarrow(2)$ is obvious, since we have $\om_C\ush(f)=\ovl{f}\sas\om_C$.
By assumption, $\sfr(\om_C)=[\Om C\ov{p}{\lra}P\to C]$ satisfies $P\in\Pcal$. If $\om_C\ush(f)=0$ then $f$ factors through $P$, and thus $\und{f}=0$. This shows $(2)\Rightarrow (3)$. Dually for $(3)\Rightarrow (4)$, and $(4)\Rightarrow (1)$. Thus we have the equivalence of the four conditions.
The latter part is straightforward.
\end{proof}

\begin{dfn}\label{Defvp}
By Proposition~\ref{PropfTFAE}, we have a sequence of isomorphisms
\begin{equation}\label{SeqForvp}
\E(\Om C,A)\un{\cong}{\ov{\ovl{(\ )}}{\lra}}\ovl{\C}(\Om C,\Sig A)=\und{\C}(\Om C,\Sig A)\un{\cong}{\ov{\und{(\ )}}{\lla}}\E(C,\Sig A)
\end{equation}
which is natural in $A,C\in\C$ by Proposition~\ref{PropFlat}. Compose $(\ref{SeqForvp})$ to define
\[ \vt_{C,A}\co\E(\Om C,A)\ov{\cong}{\lra}\E(C,\Sig A), \]
and obtain a natural isomorphism $\vt\co\Ep^{2}\ov{\cong}{\Longrightarrow}\E^2$ of functors $\C\op\ti\C\to\Ab$.
More generally, for any integer $i\ge 1$, define the natural isomorphism
\[ \vt^{i-1}\co\Ep^{i}\ov{\cong}{\Longrightarrow}\E^i \]
by the $(i-1)$-times iteration of $\vt$. (For $i=1$, we put $\vt^0=\id_{\E}$.) Namely, $\vt_{C,A}^{i-1}$ is the composition of
\[ \E(\Om^{i-1}C,A)\ov{\vt_{\Om^{i-2}C,A}}{\lra}\E(\Om^{i-2}C,\Sig A)\ov{\vt_{\Om^{i-3}C,\Sig A}}{\lra}\cdots\ov{\vt_{C,\Sig^{i-2}A}}{\lra}\E(C,\Sig^{i-1}A) \]
for any $A,C\in\C$.
\end{dfn}

\begin{rem}\label{Remvp}
By definition, $\vt_{C,A}$ is a unique isomorphism which makes the following diagram commutative.
\begin{equation}\label{vpcomm}
\xy
(0,6)*+{\C(\Om C,\Sig A)}="0";
(0,-10)*+{}="1";
(-14,-8)*+{\E(\Om C,A)}="2";
(14,-8)*+{\E(C,\Sig A)}="4";
{\ar_{\iota^A\ssh} "0";"2"};
{\ar^{\om_C\ush} "0";"4"};
{\ar^{\cong}_{\vt_{C,A}} "2";"4"};
{\ar@{}|\circlearrowright "0";"1"};
\endxy
\end{equation}
\end{rem}

\subsection{The cup product and long exact sequences}

\begin{dfn}\label{DefCupProd}
Let $i,j\ge1$ be integers. For any $A,C,X\in\C$, we define the {\it cup product} maps as follows.
\begin{enumerate}
\item Define $\cup\co\E^i(X,A)\ti\E^j(C,X)\to\E^{i+j}(C,A)$ by
\[ \del\cup\thh=\E^j(C,\ovl{\del})(\thh) \]
for any $(\del,\thh)\in\E^i(X,A)\ti\E^j(C,X)$. Here $\ovl{\del}\in\ovl{\C}(X,\Sig^iA)$ is the morphism defined in Definition~\ref{DefBar}.
\item Dually, define $\cupp\co\Ep^{i}(X,A)\ti\Ep^{j}(C,X)\to\Ep^{i+j}(C,A)$ by
\[ \del\cupp\thh=\Ep^{i}(\und{\thh},A)(\del) \]
for any $(\del,\thh)\in\Ep^{i}(X,A)\ti\Ep^{j}(C,X)$.
\end{enumerate}
\end{dfn}

\begin{rem}
In the case of a triangulated category $(\C,[1],\triangle)$, the above {\rm (1)} is just saying $\del\cup\thh=\del[j]\ci\thh$ for $(\del,\thh)\in\C(X,A[i])\ti\C(C,X[j])$.
\end{rem}

\begin{rem}\label{RemCupProd}
Let $\del\in\E(C,A)$ be any extension.
By the definition of the cup product and the homomorphisms in Definition~\ref{DefBar}, the following diagrams become commutative for any $X\in\C$.
\[
\xy
(-12,6)*+{\C(A,\Sig X)}="0";
(12,6)*+{\E(C,\Sig X)}="2";
(-12,-6)*+{\E(A,X)}="4";
(12,-6)*+{\E^2(C,X)}="6";
{\ar^{\del\ush} "0";"2"};
{\ar_{\iota^X\ssh} "0";"4"};
{\ar@{=} "2";"6"};
{\ar_(0.44){-\cup\del} "4";"6"};
{\ar@{}|\circlearrowright "0";"6"};
\endxy
\quad,\quad
\xy
(-12,6)*+{\C(\Om X,C)}="0";
(12,6)*+{\E(\Om X,A)}="2";
(-12,-6)*+{\E(X,C)}="4";
(12,-6)*+{\Ep^{2}(X,A)}="6";
{\ar^{\del\ssh} "0";"2"};
{\ar_{\om_X\ush} "0";"4"};
{\ar@{=} "2";"6"};
{\ar_(0.48){\del\cupp-} "4";"6"};
{\ar@{}|\circlearrowright "0";"6"};
\endxy
\]

\end{rem}

The following proposition ensures dual arguments for the cup product.
\begin{prop}\label{PropCupProdDual}
For any $i,j\ge1$ and any $A,C,X\in\C$,
\begin{equation}\label{DiagMCPtoshow}
\xy
(-18,7)*+{\Ep^{i}(X,A)\ti\Ep^{j}(C,X)}="0";
(18,7)*+{\Ep^{i+j}(C,A)}="2";
(-18,-7)*+{\E^i(X,A)\ti\E^j(C,X)}="4";
(18,-7)*+{\E^{i+j}(C,A)}="6";
{\ar^(0.6){\cupp} "0";"2"};
{\ar_{\vt^{i-1}\ti\vt^{j-1}}^{\cong} "0";"4"};
{\ar^{\vt^{i+j-1}}_{\cong} "2";"6"};
{\ar_(0.6){\cup} "4";"6"};
{\ar@{}|\circlearrowright "0";"6"};
\endxy
\end{equation}
is commutative.
\end{prop}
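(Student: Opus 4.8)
The plan is to verify the commutativity of (\ref{DiagMCPtoshow}) by treating the case $i=1$ (with $j$ arbitrary) directly, and then reducing the general case to it by induction on $i$, using the associativity of the two cup products together with the recursive description of $\vt^{k}$ from Definition~\ref{Defvp}.

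First I would treat $i=1$. Given $\del\in\Ep^1(X,A)=\E(X,A)$ and $\thh\in\Ep^j(C,X)$, pick $d\in\C(X,\Sig A)$ with $\iota^A\ssh(d)=\del$ (so $\ovl{\del}=\ovl{d}$) and $h\in\C(\Om^j C,X)$ with $h\sas\om_{\Om^{j-1}C}=\thh$ (so $\und{\thh}=\und{h}$). Unwinding the two definitions, $\del\cupp\thh=(\und{h})\uas\del=(d\ci h)\uas\iota^A=\iota^A\ssh(d\ci h)$, while $\del\cup(\vt^{j-1}\thh)=(\Sig^{j-1}\ovl{\del})\sas(\vt^{j-1}\thh)$. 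Now I would use the decomposition $\vt^{j}_{C,A}=\vt^{j-1}_{C,\Sig A}\circ\vt_{\Om^{j-1}C,A}$ coming from Definition~\ref{Defvp}: the defining square (\ref{vpcomm}) of $\vt$ gives $\vt_{\Om^{j-1}C,A}(\iota^A\ssh(d\ci h))=\om_{\Om^{j-1}C}\ush(d\ci h)=d\sas(h\sas\om_{\Om^{j-1}C})=\ovl{\del}\sas\thh$, and then naturality of $\vt^{j-1}$ in the second variable (Definition~\ref{Defvp}, Proposition~\ref{PropFlat}) yields $\vt^{j}_{C,A}(\del\cupp\thh)=\vt^{j-1}_{C,\Sig A}(\ovl{\del}\sas\thh)=(\Sig^{j-1}\ovl{\del})\sas(\vt^{j-1}\thh)$, which is exactly the case $i=1$.

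For the inductive step, suppose $i\ge2$ and that the statement holds with $i$ replaced by $i-1$ and arbitrary second index. Regarding $\del\in\Ep^i(X,A)$ as an element of $\Ep^{i-1}(\Om X,A)$, one has $\del=\del\cupp\om_X$ since $\und{\om_X}=\id_{\Om X}$; hence by associativity of $\cupp$, $\del\cupp\thh=\del\cupp(\om_X\cupp\thh)$, which is a cup of type $(i-1,j+1)$. Applying the induction hypothesis to it and the already-proven case $i=1$ to $\om_X\cupp\thh$ gives $\vt^{i+j-1}(\del\cupp\thh)=(\vt^{i-2}\del)\cup(\om_X\cup(\vt^{j-1}\thh))$, which by associativity of $\cup$ equals $((\vt^{i-2}\del)\cup\om_X)\cup(\vt^{j-1}\thh)$. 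It remains to identify $(\vt^{i-2}\del)\cup\om_X$ with $\vt^{i-1}\del$: writing $\vt^{i-1}_{X,A}=\vt_{X,\Sig^{i-2}A}\circ\vt^{i-2}_{\Om X,A}$, choosing $e\in\C(\Om X,\Sig^{i-1}A)$ with $\iota^{\Sig^{i-2}A}\ssh(e)=\vt^{i-2}\del$ (so $\ovl{e}=\ovl{\vt^{i-2}\del}$), and applying (\ref{vpcomm}) once more shows that both sides equal $e\sas\om_X$, completing the step.

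The individual computations are short; the delicate point is the bookkeeping of the syzygy and cosyzygy shifts. One must keep straight that $\Ep^i(X,A)$, $\Ep^{i-1}(\Om X,A)$ and $\Ep^1(\Om^{i-1}X,A)$ are literally the same abelian group but carry different $\vt$-maps, and track over which objects each cup product, each $\vt^{k}$, and each application of $\ovl{(\ )}$ and $\und{(\ )}$ is formed. One also uses freely the associativity of both $\cup$ and $\cupp$ (with the appropriate objects), which I would establish or cite before this proof; this is the only external ingredient, and checking it is where most of the remaining bookkeeping lives.
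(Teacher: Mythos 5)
Your proof is correct, but it follows a genuinely different route from the paper. The paper first verifies the case $i=j=1$ by choosing representatives $d,e$ with $\ovl{d}=\ovl{\del}$, $\und{e}=\und{\thh}$ and checking that both routes land on $\ovl{d\ci e}=\und{d\ci e}$ under the identification $\ovl{\C}(\Om C,\Sig A)=\und{\C}(\Om C,\Sig A)$ of Proposition~\ref{PropfTFAE}; the general case is then obtained with no induction and no associativity, by applying that case at the shifted objects $\Sig^{i-1}A$ and $\Om^{j-1}C$ and pasting two naturality squares, one for $\vt^{j-1}$ in the second variable (handling $\cup$) and its dual for $\vt^{i-1}$ in the first variable (handling $\cupp$). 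You instead prove the case $i=1$, arbitrary $j$, directly -- which is essentially the paper's base computation combined with the second-variable naturality square -- and then remove the first index by induction, using $\del=\del\cupp\om_X$, associativity of both products, and the identity $(\vt^{i-2}\del)\cup\om_X=\vt^{i-1}\del$ obtained from $(\ref{vpcomm})$; all of these steps check out, including the bookkeeping of which group each element lives in and which $\vt$-map is applied. What your approach costs is the extra prerequisite you flag: associativity of $\cup$ (which the paper proves only afterwards, in Proposition~\ref{PropCup}, from Proposition~\ref{PropFlat} alone) and of $\cupp$ (which the paper never states). Both are short direct computations from Proposition~\ref{PropFlat} and its dual and do not use the present proposition, so there is no circularity -- but you must prove $\cupp$-associativity by that direct dual computation, not by transporting $\cup$-associativity through $\vt$, since that transport is exactly the statement being proved. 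What the paper's pasting argument buys is symmetry in the two variables and independence from any associativity; what yours buys is that the general case follows mechanically once the degree-one case and the multiplicative formalism are in place.
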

\begin{proof}
If $i=j=1$, it suffices to show the commutativity of the following.
\begin{equation}\label{CommEEECC}
\xy
(-28,6)*+{\E(X,A)\ti\E(C,X)}="0";
(3,6)*+{\E(\Om C,A)}="2";
(29,6)*+{\ovl{\C}(\Om C,\Sig A)}="4";
(-28,-6)*+{\E(X,A)\ti\E(C,X)}="10";
(3,-6)*+{\E(C,\Sig A)}="12";
(29,-6)*+{\und{\C}(\Om C,\Sig A)}="14";
{\ar^(0.6){\cupp} "0";"2"};
{\ar^(0.44){\ovl{(\ )}}_(0.44){\cong} "2";"4"};
{\ar@{=} "0";"10"};
{\ar@{=} "4";"14"};
{\ar_(0.6){\cup} "10";"12"};
{\ar_(0.44){\und{(\ )}}^(0.44){\cong} "12";"14"};
{\ar@{}|\circlearrowright "0";"14"};
\endxy
\end{equation}
For any element $(\del,\thh)\in\E(X,A)\ti\E(C,X)$, take $d\in\C(X,\Sig A), e\in\C(\Om C,X)$ satisfying $\ovl{d}=\ovl{\del}$ and $\und{e}=\und{\thh}$. Then Proposition~\ref{PropFlat} shows $\ovl{\del\cupp\thh}=\ovl{e\uas\del}=\ovl{\del}\ci\ovl{e}=\ovl{d\ci e}$. Dually we have $\und{\del\cup\thh}=\und{d\sas\thh}=\und{d}\ci\und{\thh}=\und{d\ci e}$. As in Proposition~\ref{PropfTFAE}, this shows the commutativity of $(\ref{CommEEECC})$.

Let $i,j\ge 1$ be arbitrary pair of integers. By the above argument applied to $\Sig^{i-1}A$ and $\Om^{j-1}C$,
\begin{equation}\label{DiagMCPcenter}
\xy
(-36,7)*+{\E(X,\Sig^{i-1}A)\ti\E(\Om^{j-1}C,X)}="0";
(6,7)*+{\E(\Om^jC,\Sig^{i-1}A)}="2";
(36,7)*+{\Ep^{2}(\Om^{j-1}C,\Sig^{i-1}A)}="4";
(-36,-7)*+{\E(X,\Sig^{i-1}A)\ti\E(\Om^{j-1}C,X)}="10";
(6,-7)*+{\E(\Om^{j-1}C,\Sig^iA)}="12";
(36,-7)*+{\E^2(\Om^{j-1}C,\Sig^{i-1}A)}="14";
{\ar^(0.62){\cupp} "0";"2"};
{\ar@{=} "2";"4"};
{\ar@{=} "0";"10"};
{\ar^{\vt_{\Om^{j-1}C,\Sig^{i-1}A}}_{\cong} "4";"14"};
{\ar_(0.62){\cup} "10";"12"};
{\ar@{=} "12";"14"};
{\ar@{}|\circlearrowright "0";"14"};
\endxy
\end{equation}
is commutative.
Remark that the naturality of $\vt^{j-1}$ shows the commutativity of
\[
\xy
(-16,6)*+{\Ep^{j}(C,X)}="0";
(16,6)*+{\Ep^{j}(C,\Sig^iA)}="2";
(-16,-6)*+{\E^j(C,X)}="4";
(16,-6)*+{\E^j(C,\Sig^iA)}="6";
{\ar^{\Ep^{j}(C,d)} "0";"2"};
{\ar_{\vt^{j-1}_{C,X}}^{\cong} "0";"4"};
{\ar^{\vt^{j-1}_{C,\Sig^iA}}_{\cong} "2";"6"};
{\ar_{\E^j(C,d)} "4";"6"};
{\ar@{}|\circlearrowright "0";"6"};
\endxy
\]
for any $d\in\C(X,\Sig^iA)$. Applying this to $\ovl{d}=\ovl{\del}$ for each $\del\in\E^i(X,A)$, we obtain the commutativity of the following.
\begin{equation}\label{DiagMCP}
\xy
(-22,6)*+{\E(X,\Sig^{i-1}A)\ti\E(\Om^{j-1}C,X)}="0";
(22,6)*+{\E(\Om^{j-1}C,\Sig^iA)}="2";
(-22,-6)*+{\E^i(X,A)\ti\E^j(C,X)}="4";
(22,-6)*+{\E^{i+j}(C,A)}="6";
{\ar^(0.62){\cup} "0";"2"};
{\ar_{\id\ti\vt^{j-1}_{C,X}}^{\cong} "0";"4"};
{\ar^{\vt^{j-1}_{C,\Sig^iA}}_{\cong} "2";"6"};
{\ar_(0.56){\cup} "4";"6"};
{\ar@{}|\circlearrowright "0";"6"};
\endxy
\end{equation}
Dually, we can show that the following diagram is commutative.
\begin{equation}\label{DiagMCPdual}
\xy
(-22,6)*+{\Ep^{i}(X,A)\ti\Ep^{j}(C,X)}="0";
(22,6)*+{\Ep^{i+j}(C,A)}="2";
(-22,-6)*+{\E(X,\Sig^{i-1}A)\ti\E(\Om^{j-1}C,X)}="4";
(22,-6)*+{\E(\Om^jC,\Sig^{i-1}A)}="6";
{\ar^(0.56){\cupp} "0";"2"};
{\ar_{\vt^{i-1}_{X,A}\ti\id}^{\cong} "0";"4"};
{\ar^{\vt^{i-1}_{\Om^jC,A}}_{\cong} "2";"6"};
{\ar_(0.62){\cupp} "4";"6"};
{\ar@{}|\circlearrowright "0";"6"};
\endxy
\end{equation}
Pasting $(\ref{DiagMCPcenter}),(\ref{DiagMCP}),(\ref{DiagMCPdual})$ together, we obtain $(\ref{DiagMCPtoshow})$.
\end{proof}

By the above reason, we mainly use $\E$ and $\cup$ in the rest, but seldom $\Ep$ and $\cupp$ unless needed. Cup product is natural, in the following sense.
\begin{prop}\label{PropCupProdNat}
Let $i,j\ge 1$ be integers. For any $a\in\C(A,A\ppr), c\in\C(C,C\ppr)$ and $x\in\C(X,Y)$, the following diagrams are commutative.
\begin{enumerate}
\item $\xy
(-18,7)*+{\E^i(X,A)\ti\E^j(C\ppr,X)}="0";
(18,7)*+{\E^{i+j}(C\ppr,A)}="2";
(-18,-7)*+{\E^i(X,A\ppr)\ti\E^j(C,A\ppr)}="4";
(18,-7)*+{\E^{i+j}(C,A)}="6";
{\ar^(0.6){\cup} "0";"2"};
{\ar_{\E^i(X,a)\ti\E^j(c,X)} "0";"4"};
{\ar^{\E^{i+j}(c,a)} "2";"6"};
{\ar_(0.6){\cup} "4";"6"};
{\ar@{}|\circlearrowright "0";"6"};
\endxy
$
\item $\xy
(-23,7)*+{\E^i(Y,A)\ti\E^j(C,X)}="0";
(23,7)*+{\E^i(X,A)\ti\E^j(C,X)}="2";
(-23,-7)*+{\E^i(Y,A)\ti\E^j(C,Y)}="4";
(23,-7)*+{\E^{i+j}(C,A)}="6";
{\ar^{\E^i(x,A)\ti\id} "0";"2"};
{\ar_{\id\ti\E^j(C,x)} "0";"4"};
{\ar^{\cup} "2";"6"};
{\ar_(0.56){\cup} "4";"6"};
{\ar@{}|\circlearrowright "0";"6"};
\endxy
$
\end{enumerate}
\end{prop}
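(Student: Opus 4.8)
The plan is to verify both commutativities by a direct diagram chase, reducing everything to the functoriality of the biadditive functor $\E^j$ together with the naturality of the bar map $\ovl{(\ )}$ recorded in Proposition~\ref{PropFlat}. Throughout I use the defining formula $\del\cup\thh=\E^j(C,\ovl{\del})(\thh)$ and the identification $\E^{i+j}(C,A)=\E^j(C,\Sig^iA)$ coming from Definition~\ref{DefHighExt}; note that under this identification $\E^{i+j}(c,a)$ becomes $\E^j(c,\Sig^i\ovl{a})$, since $\E^{i+j}(c,a)=(\Sig^{i+j-1}\ovl{a})\sas\circ c\uas=(\Sig^{j-1}\ovl{\Sig^i a})\sas\circ c\uas=\E^j(c,\Sig^i\ovl{a})$, using $\ovl{\Sig^i a}=\Sig^i\ovl{a}$. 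This rewriting is the one nontrivial piece of bookkeeping and should be stated before the chases.

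For statement~(2), fix $x\in\C(X,Y)$, $\del\in\E^i(Y,A)$ and $\thh\in\E^j(C,X)$. Here $\E^i(x,A)\del=x\uas\del$ by Definition~\ref{DefHighExt}, so Proposition~\ref{PropFlat} gives $\ovl{\E^i(x,A)\del}=\ovl{\del}\circ\ovl{x}$ in $\ovl{\C}(X,\Sig^iA)$. Using that $\E^j(C,-)$ is a functor on $\ovl{\C}$ and that $\E^j(C,x)$ depends only on $\ovl{x}$, one computes
\[(\E^i(x,A)\del)\cup\thh=\E^j\bigl(C,\ovl{\del}\circ\ovl{x}\bigr)(\thh)=\E^j(C,\ovl{\del})\bigl(\E^j(C,x)\thh\bigr)=\del\cup\bigl(\E^j(C,x)\thh\bigr),\]
which is precisely the commutativity of the second diagram.

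For statement~(1), fix $a\in\C(A,A\ppr)$ and $c\in\C(C,C\ppr)$ (with the variances as in the statement), and take $\del\in\E^i(X,A)$, $\thh\in\E^j(C\ppr,X)$. By Proposition~\ref{PropFlat} we have $\ovl{\E^i(X,a)\del}=(\Sig^i\ovl{a})\circ\ovl{\del}$, so bifunctoriality of $\E^j$ yields
\[(\E^i(X,a)\del)\cup(\E^j(c,X)\thh)=\E^j\bigl(C,(\Sig^i\ovl{a})\circ\ovl{\del}\bigr)\bigl(\E^j(c,X)\thh\bigr)=\E^j\bigl(c,(\Sig^i\ovl{a})\circ\ovl{\del}\bigr)(\thh).\]
On the other hand, by the identification recalled above and again bifunctoriality of $\E^j$,
\[\E^{i+j}(c,a)(\del\cup\thh)=\E^j(c,\Sig^i\ovl{a})\bigl(\E^j(C\ppr,\ovl{\del})\thh\bigr)=\E^j\bigl(c,(\Sig^i\ovl{a})\circ\ovl{\del}\bigr)(\thh).\]
The two expressions coincide, which is the commutativity of the first diagram. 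The dual assertions for $\cupp$ and $\Ep^{i}$ either follow dually or, via Proposition~\ref{PropCupProdDual}, from the statements just proved.

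I do not expect any genuine obstacle: the content of the proposition is a bookkeeping exercise in the variances of $\E^i$, $\E^j$, $\E^{i+j}$ and in the covariant and contravariant naturality squares of $\ovl{(\ )}$. The \emph{one} place where a careless treatment would go astray is the identification $\E^{i+j}(C,A)=\E^j(C,\Sig^iA)$ and the corresponding rewriting of $\E^{i+j}(c,a)$ as $\E^j(c,\Sig^i\ovl{a})$; once this is in place the whole statement collapses to functoriality of $\E^j$ applied to $\ovl{\del}$ and the bar-naturality identity $\ovl{\E^i(c,a)\del}=(\Sig^i\ovl{a})\circ\ovl{\del}\circ\ovl{c}$.
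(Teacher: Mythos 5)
Your proof is correct and follows essentially the same route as the paper: both parts reduce to the bar-naturality identity of Proposition~\ref{PropFlat} together with bifunctoriality of $\E^j$ on $\und{\C}\op\ti\ovl{\C}$, and your explicit rewriting $\E^{i+j}(c,a)=\E^j(c,\Sig^i\ovl{a})$ is exactly the bookkeeping the paper uses implicitly when it writes $\E^j(c,(\Sig^i\ovl{a})\ci\ovl{\del})=\E^{i+j}(c,a)\ci\E^j(C\ppr,\ovl{\del})$. No gaps.
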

\begin{proof}
{\rm (1)} For any $(\del,\thh)\in\E^i(X,A)\ti\E^j(C\ppr,X)$, we have $\ovl{\E^i(X,a)(\del)}=(\Sig^i\ovl{a})\ci\ovl{\del}$ by Proposition~\ref{PropFlat}, and thus
\begin{eqnarray*}
\big(\E^i(X,a)(\del)\big)\cup\big(\E^j(c,X)(\thh)\big)
&=&\E^j(C,\ovl{\E^i(X,a)(\del)})\ci\E^j(c,X)(\thh)\\
&=&\E^j(c,(\Sig^i\ovl{a})\ci\ovl{\del})(\thh)\ =\ \E^{i+j}(c,a)\ci\E^j(C\ppr,\ovl{\del})(\thh)\\
&=&\E^{i+j}(c,a)(\del\cup\thh).
\end{eqnarray*}

{\rm (2)} For any $(\rho,\thh)\in\E^i(Y,A)\ti\E^j(C,X)$, we have $\ovl{\E^i(x,A)(\rho)}=\ovl{\rho}\ci\ovl{x}$ by Proposition~\ref{PropFlat}, and thus
\[
\big(\E^i(x,A)(\rho)\big)\cup\thh=\E^j(C,\ovl{\rho}\ci\ovl{x})(\thh)=\E^j(C,\ovl{\rho})\ci\E^j(C,\ovl{x})(\thh)=\rho\cup\big(\E^j(C,x)(\thh)\big). \]
\end{proof}

\begin{prop}\label{PropCup}
The cup product is associative. Namely for $i,j,k\ge1$,
\[ (\del\cup\thh)\cup\tau=\del\cup(\thh\cup\tau) \]
holds for any $\del\in\E^i(Y,A), \thh\in\E^j(X,Y)$ and $\tau\in\E^k(C,X)$.
\end{prop}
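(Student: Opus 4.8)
The plan is to reduce associativity of the cup product to a statement about composition of morphisms in $\ovl{\C}$, using the homomorphism $\ovl{(\ )}$ and its compatibility with $\cup$ recorded in Proposition~\ref{PropFlat} and Definition~\ref{DefCupProd}. Since $\E^i(Y,A)=\E(Y,\Sig^{i-1}A)$ and similarly for the other arguments, and since $\Sig^{i-1}$, $\Sig^{j-1}$, $\Sig^{k-1}$ only shift the objects appearing, I would first reduce to the case $i=j=k=1$ by replacing $A$, $Y$, $X$ by $\Sig^{i-1}A$, $\Sig^{j-1}Y$, $\Sig^{k-1}X$ throughout; one checks that under these substitutions the two iterated cup products on the left and right are literally the two iterated cup products for the shifted data, using the definition of $\E^{i+j}$ and $\E^{i+j+k}$ together with $\Sig^{i-1}\Sig^{j-1}=\Sig^{i+j-2}$ up to the canonical identification. (Alternatively one can invoke Proposition~\ref{PropCupProdDual} to move freely between $\E^{\bullet}$ and $\Ep^{\bullet}$, but the direct reduction is cleaner here.)

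Next, for the case $i=j=k=1$, I would compute both sides by applying the homomorphism $\ovl{(\ )}\co\E^3(C,A)\to\ovl{\C}(C,\Sig^3 A)$, which is well-defined and natural by Remark~\ref{RemBar} and Proposition~\ref{PropFlat}. For $\del\in\E(Y,A)$, $\thh\in\E(X,Y)$, $\tau\in\E(C,X)$, choose representatives $d\in\C(Y,\Sig A)$, $e\in\C(X,\Sig Y)$, $f\in\C(C,\Sig X)$ with $\ovl{d}=\ovl{\del}$, $\ovl{e}=\ovl{\thh}$, $\ovl{f}=\ovl{\tau}$. Unwinding $\del\cup\thh=\E(X,\ovl{\del})(\thh)=d\sas\thh$ and applying Proposition~\ref{PropFlat} gives $\ovl{\del\cup\thh}=\ovl{d\sas\thh}=(\Sig\ovl{d})\ci\ovl{\thh}=(\Sig\ovl{\del})\ci\ovl{\thh}$, hence $\ovl{\del\cup\thh}=(\Sig\ovl{d})\ci\ovl{e}$. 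Then $(\del\cup\thh)\cup\tau=\E(C,\ovl{\del\cup\thh})(\tau)=(\ovl{\del\cup\thh})\sas\tau$, and a second application of Proposition~\ref{PropFlat} yields $\ovl{(\del\cup\thh)\cup\tau}=(\Sig\ovl{\del\cup\thh})\ci\ovl{\tau}=(\Sig^2\ovl{d})\ci(\Sig\ovl{e})\ci\ovl{f}$. Symmetrically, $\ovl{\thh\cup\tau}=(\Sig\ovl{e})\ci\ovl{f}$ and $\ovl{\del\cup(\thh\cup\tau)}=(\Sig\ovl{\del})\ci\ovl{\thh\cup\tau}=(\Sig^2\ovl{d})\ci(\Sig\ovl{e})\ci\ovl{f}$. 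The two expressions in $\ovl{\C}(C,\Sig^3 A)$ agree on the nose because $\Sig$ is a functor, so $\Sig^2\ovl{d}\ci\Sig\ovl{e}\ci\ovl{f}$ is unambiguous.

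Finally I would promote the equality $\ovl{(\del\cup\thh)\cup\tau}=\ovl{\del\cup(\thh\cup\tau)}$ in $\ovl{\C}(C,\Sig^3 A)$ to an equality in $\E^3(C,A)$. For this I need the homomorphism $\ovl{(\ )}\co\E^3(C,A)\to\ovl{\C}(C,\Sig^3 A)$ to be injective on the relevant elements, which by the construction in Definition~\ref{DefBar} it need not be in general — but here both sides lie in the image of $\E(C,\Sig^2 A)$ applied via a representative of $\ovl{\thh\cup\tau}$ respectively $\ovl{\tau}$ through honest morphisms, and more importantly both $(\del\cup\thh)\cup\tau$ and $\del\cup(\thh\cup\tau)$ are of the form $g\sas(\text{something})$ for $g$ a morphism into $\Sig^2A$ with the same class $\ovl{g}=(\Sig^2\ovl{d})\ci(\Sig\ovl{e})$; so I would instead argue directly at the level of representatives, picking at each of the two outer cup products the \emph{same} choice of morphism representative (legitimate since the cup product $\E(C,\ovl{\del\cup\thh})(\tau)$ depends only on the class $\ovl{\del\cup\thh}\in\ovl{\C}$, by Definition~\ref{DefCupProd}(1) and the fact that $\E(C,-)$ is a functor on $\ovl{\C}$). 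Concretely: $(\del\cup\thh)\cup\tau=\E(C,\ovl{\del\cup\thh})(\tau)$ and $\del\cup(\thh\cup\tau)=\E(C,\ovl{\del})\big(\E(X,\ovl{\thh})(\tau)\big)=\E(C,\ovl{\del}\ci\ovl{\thh})(\tau)$ by functoriality, while $\ovl{\del\cup\thh}=(\Sig\ovl{\del})\ci\ovl{\thh}$ as computed above; wait — $\ovl{\del}\in\ovl{\C}(Y,\Sig A)$ and $\ovl{\thh}\in\ovl{\C}(X,\Sig Y)$ so the correct composite is $(\Sig\ovl{\del})\ci\ovl{\thh}\in\ovl{\C}(X,\Sig^2 A)$, and likewise $\ovl{\del\cup\thh}$ equals this, so the two descriptions of $\E(C,-)(\tau)$ coincide. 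I expect the main obstacle to be purely bookkeeping: keeping the objects $\Sig^kA$ and the iterates $\Sig$ applied to morphisms straight so that the associativity of composition in $\ovl{\C}$ is applied to the right composite, and handling the general $i,j,k$ reduction so that $\Sig^{i-1}$-twisted data genuinely recovers $\E^{i+j+k}$; there is no conceptual difficulty once the naturality square of Proposition~\ref{PropFlat} is in hand.
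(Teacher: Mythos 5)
Your final computation in the case $i=j=k=1$ is exactly the paper's argument: by Proposition~\ref{PropFlat} one has $\ovl{\del\cup\thh}=(\Sig\ovl{\del})\ci\ovl{\thh}$, and then functoriality of $\E(C,-)$ on $\ovl{\C}$ gives $(\del\cup\thh)\cup\tau=\big((\Sig\ovl{\del})\ci\ovl{\thh}\big)\sas\tau=(\Sig\ovl{\del})\sas\big(\ovl{\thh}\sas\tau\big)=\del\cup(\thh\cup\tau)$. You were also right to abandon the idea of merely comparing $\ovl{(\del\cup\thh)\cup\tau}$ with $\ovl{\del\cup(\thh\cup\tau)}$, since $\ovl{(\ )}$ need not be injective. (Two small slips: the inner cup should be $\E(C,\ovl{\thh})(\tau)$, not $\E(X,\ovl{\thh})(\tau)$, and the composite $\ovl{\del}\ci\ovl{\thh}$ must be $(\Sig\ovl{\del})\ci\ovl{\thh}$ --- which you correct yourself.)

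The genuine problem is the proposed reduction to $i=j=k=1$. Viewing $\del\in\E^i(Y,A)$ as an element of $\E(Y,\Sig^{i-1}A)$ and $\thh\in\E^j(X,Y)$ as an element of $\E(X,\Sig^{j-1}Y)$ does not produce data to which the degree-one cup product applies: after the re-indexing, $\del$ still sits over the object $Y$ while $\thh$ sits over $\Sig^{j-1}Y$, so the ``iterated cup products for the shifted data'' are not even defined, and certainly not ``literally'' the original ones. Repairing this would require replacing $\del$ by an extension representing $\Sig^{j-1}\ovl{\del}$, i.e.\ applying $\Sig$ to extensions via a lift along $\ovl{(\ )}$ (possible by the surjectivity noted in Remark~\ref{RemBar}) and checking independence of the choices --- none of which appears in your sketch. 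Fortunately the reduction is unnecessary: your final computation goes through verbatim for arbitrary $i,j,k$ once you write $\Sig^j\ovl{\del}$ in place of $\Sig\ovl{\del}$ and $\E^k(C,-)$ in place of $\E(C,-)$, using that $\E^k(C,\Sig^j\ovl{\del})=\E^{j+k}(C,\ovl{\del})$ holds directly from Definition~\ref{DefHighExt}; this is precisely how the paper proves the statement, with no case reduction at all.
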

\begin{proof}
By Proposition~\ref{PropFlat}, we have $\ovl{\E^j(X,\ovl{\del})(\thh)}=(\Sig^j\ovl{\del})\ci\ovl{\thh}$. Thus it follows
\begin{eqnarray*}
(\del\cup\thh)\cup\tau&=&\E^k(C,\ovl{\del\cup\thh})(\tau)\ =\ \E^k(C,\ovl{\E^j(X,\ovl{\del})(\thh)})(\tau)\\
&=&\E^k(C,(\Sig^j\ovl{\del})\ci\ovl{\thh})(\tau)\ =\ \E^k(C,\Sig^j\ovl{\del})\ci\E^k(C,\ovl{\thh})(\tau)\\
&=&\E^{j+k}(C,\ovl{\del})\big(\E^k(C,\ovl{\thh})(\tau)\big)\ =\ \E^{j+k}(C,\ovl{\del})(\thh\cup\tau)\ =\ \del\cup(\thh\cup\tau).
\end{eqnarray*}
\end{proof}

\begin{prop}\label{PropLongExact}
For any $\sfr$-triangle $A\ov{x}{\lra}B\ov{y}{\lra}C\ov{\del}{\dra}$, we have the following long exact sequences.
\begin{enumerate}
\item $\C(C,-)\ov{\C(y,-)}{\Longrightarrow}\C(B,-)\ov{\C(x,-)}{\Longrightarrow}\C(A,-)\ov{\del\ush}{\Longrightarrow}\E(C,-)\ov{\E(y,-)}{\Longrightarrow}\cdots\\\hspace{-0.7cm}\cdots\ov{-\cup\del}{\Longrightarrow}\E^i(C,-)\ov{\E^i(y,-)}{\Longrightarrow}\E^i(B,-)\ov{\E^i(x,-)}{\Longrightarrow}\E^i(A,-)\ov{-\cup\del}{\Longrightarrow}\E^{i+1}(C,-)\ov{\E^{i+1}(y,-)}{\Longrightarrow}\cdots$.
\medskip
\item $\C(-,A)\ov{\C(-,x)}{\Longrightarrow}\C(-,B)\ov{\C(-,y)}{\Longrightarrow}\C(-,C)\ov{\del\ssh}{\Longrightarrow}\E(-,A)\ov{\E(-,x)}{\Longrightarrow}\cdots\\\hspace{-0.7cm}\cdots\ov{\del\cup-}{\Longrightarrow}\E^i(-,A)\ov{\E^i(-,x)}{\Longrightarrow}\E^i(-,B)\ov{\E^i(-,y)}{\Longrightarrow}\E^i(-,C)\ov{\del\cup-}{\Longrightarrow}\E^{i+1}(-,A)\ov{\E^{i+1}(-,x)}{\Longrightarrow}\cdots$.
\end{enumerate}
\end{prop}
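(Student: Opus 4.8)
The strategy is to bootstrap from the case $i=1$ by dimension shifting, so I would begin by recording the two \emph{base sequences}. For any $\sfr$-triangle $A\ov{x}{\lra}B\ov{y}{\lra}C\ov{\del}{\dra}$ the sequence of covariant functors
\[ \C(C,-)\ov{\C(y,-)}{\Longrightarrow}\C(B,-)\ov{\C(x,-)}{\Longrightarrow}\C(A,-)\ov{\del\ush}{\Longrightarrow}\E(C,-)\ov{\E(y,-)}{\Longrightarrow}\E(B,-)\ov{\E(x,-)}{\Longrightarrow}\E(A,-) \]
is exact at $\C(B,-)$ and $\C(A,-)$ by the exactness of $(\ref{exnat2})$ in Definition~\ref{DefEA}, at $\E(C,-)$ by Lemma~\ref{LemOneExact}~(2), and at $\E(B,-)$ by Corollary~\ref{CorEisClosed}; the contravariant analogue is obtained dually from $(\ref{exnat1})$, Lemma~\ref{LemOneExact}~(1) and Corollary~\ref{CorEisClosed}. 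I would also note the two elementary identities $x\sas\del=0$ and $y\uas\del=0$, which hold because $\Xd$ is an $\E$-attached complex (Definition~\ref{DefAE}).

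For part (1) I would prove by induction on $i\ge1$ that the sequence in (1), truncated just after $\E^i(A,-)$, is exact at every interior term; the case $i=1$ is the base sequence. Fix $Q\in\C$. The inductive step rests on three observations. First, unwinding Definition~\ref{DefHighExt}, for every $D\in\C$ one has $\E^{i+1}(D,Q)=\E(D,\Sig^{i}Q)=\E^{i}(D,\Sig Q)$, and under this identification $\E^{i+1}(x,Q)$ and $\E^{i+1}(y,Q)$ become $\E^{i}(x,\Sig Q)$ and $\E^{i}(y,\Sig Q)$; in particular the segment $\E^{i+1}(C,Q)\to\E^{i+1}(B,Q)\to\E^{i+1}(A,Q)$ is the base sequence for $\mathfrak T$ evaluated at $\Sig^{i}Q$, hence exact at $\E^{i+1}(B,Q)$ by Corollary~\ref{CorEisClosed}. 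Second, the Yoneda map $(\iota^{\Sig^{i-1}Q})\ssh\co\C(A,\Sig^{i}Q)\to\E(A,\Sig^{i-1}Q)=\E^{i}(A,Q)$ is surjective, since for the injective $I$ appearing in the $\sfr$-triangle $\Sig^{i-1}Q\to I\to\Sig^{i}Q\ov{\iota^{\Sig^{i-1}Q}}{\dra}$ (Assumption~\ref{Assump}) one has $\E(A,I)=0$, so applying Lemma~\ref{LemOneExact}~(1) to that triangle gives the surjectivity. Third, by Remark~\ref{RemCupProd} the connecting map satisfies $(-\cup\del)\ci(\iota^{\Sig^{i-1}Q})\ssh=\del\ush_{\Sig^{i}Q}\co\C(A,\Sig^{i}Q)\to\E(C,\Sig^{i}Q)=\E^{i+1}(C,Q)$.

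Putting these together, surjectivity of $(\iota^{\Sig^{i-1}Q})\ssh$ gives $\Ima(-\cup\del)=\Ima(\del\ush_{\Sig^{i}Q})$, so exactness at $\E^{i+1}(C,Q)$ reduces to exactness of the base sequence for $\mathfrak T$ at $\Sig^{i}Q$ at the term $\E(C,\Sig^{i}Q)$ (Lemma~\ref{LemOneExact}~(2)). It then remains to check exactness at $\E^{i}(A,-)$, which is no longer terminal: if $\mu=\E^{i}(x,Q)(\nu)$ with $\nu\in\E^{i}(B,Q)$ then $\mu\cup\del=\nu\cup(x\sas\del)=0$ by Proposition~\ref{PropCupProdNat}~(2) and $x\sas\del=0$; conversely, if $\mu\cup\del=0$ I would pick $g\in\C(A,\Sig^{i}Q)$ with $(\iota^{\Sig^{i-1}Q})\ssh(g)=\mu$, note $\del\ush_{\Sig^{i}Q}(g)=0$, write $g=g\ppr\ci x$ using exactness of the base sequence at $\C(A,\Sig^{i}Q)$, and conclude $\mu=\E^{i}(x,Q)\big((\iota^{\Sig^{i-1}Q})\ssh(g\ppr)\big)$ by naturality of $(\iota^{\Sig^{i-1}Q})\ssh$. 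This completes the induction for (1).

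Part (2) I would deduce by duality: applying part (1) to the opposite $n$-exangulated category $(\C\op,\E\op,\sfr\op)$, in which projectives and injectives — hence $\Om$ and $\Sig$ — are interchanged, one gets the analogue of (2) with $\Ep^{i}$ in place of $\E^{i}$ and $\cupp$ in place of $\cup$; transporting this along the natural isomorphism $\vt^{i-1}\co\Ep^{i}\ov{\cong}{\Longrightarrow}\E^{i}$ of Definition~\ref{Defvp}, and using Proposition~\ref{PropCupProdDual} to match $\cupp$ with $\cup$, yields (2) in the stated form. The main obstacle is not the exactness itself — once the setup is in place it is a routine diagram chase over the base sequences — but the bookkeeping: correctly identifying each connecting homomorphism with a cup product of the prescribed variance (the content already assembled in Remark~\ref{RemCupProd}, Proposition~\ref{PropCupProdNat} and Proposition~\ref{PropCupProdDual}), and verifying that the degree-shift identity $\E^{i+1}(-,Q)=\E^{i}(-,\Sig Q)$ is compatible with the entire sequence of maps.
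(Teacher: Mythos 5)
Your proposal is correct and follows essentially the same route as the paper's proof: the six-term exact sequences of the underlying extriangulated structure as the base case, dimension shifting along the $\sfr$-triangle $\Sig^{i-1}Q\to I\to\Sig^{i}Q$ together with Remark~\ref{RemCupProd} to identify the connecting maps with $-\cup\del$, and, for (2), reduction to $\Ep^{i}$ and $\cupp$ via the natural isomorphism $\vt^{i-1}$ and Proposition~\ref{PropCupProdDual}. The differences are cosmetic: you run the shift as an element-wise chase organized by induction where the paper assembles a commutative diagram with exact rows and columns, and you justify the base sequence from Lemma~\ref{LemOneExact} and Corollary~\ref{CorEisClosed} where the paper cites \cite{NP}.
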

\begin{proof}
This is essentially shown in \cite[Proposition 5.2]{LN}. We briefly review its proof, to make it clear that the connecting homomorphism is indeed given by the cup product.

{\rm (1)} Remark that we already know that
\begin{equation}\label{ExSeqCCC}
\C(C,-)\ov{\C(y,-)}{\Longrightarrow}\C(B,-)\ov{\C(x,-)}{\Longrightarrow}\C(A,-)\ov{\del\ush}{\Longrightarrow}\E(C,-)\ov{\E(y,-)}{\Longrightarrow}\E(B,-)\ov{\E(x,-)}{\Longrightarrow}\E(A,-)
\end{equation}
is exact (\cite[Corollary 3.12]{NP}). Let $X\in\C$ be any object, and take $\sfr(\iota^X)=[X\to I\to\Sig X]$ with $I\in\Ical$. Applying the above sequence to $\Sig X$, we see that
\[ \C(B,\Sig X)\ov{\C(x,\Sig X)}{\lra}\C(A,\Sig X)\ov{\del\ush}{\lra}\E(C,\Sig X)\ov{\E(y,\Sig X)}{\lra}\E(B,\Sig X) \]
is exact. By Remark~\ref{RemCupProd}, we have a commutative diagram
\[
\xy
(-40,14)*+{\C(B,I)}="-10";
(-12,14)*+{\C(A,I)}="-12";
(6,14)*+{0}="-14";
(-40,0)*+{\C(B,\Sig X)}="0";
(-12,0)*+{\C(A,\Sig X)}="2";
(12,0)*+{\E(C,\Sig X)}="4";
(40,0)*+{\E(B,\Sig X)}="6";
(-40,-14)*+{\E(B,X)}="10";
(-12,-14)*+{\E(A,X)}="12";
(12,-14)*+{\E^2(C,X)}="14";
(40,-14)*+{\E^2(B,X)}="16";
(-40,-25)*+{0}="20";
(-12,-25)*+{0}="22";
{\ar^{\C(x,I)} "-10";"-12"};
{\ar^{} "-12";"-14"};
{\ar_{} "-10";"0"};
{\ar_{} "-12";"2"};
{\ar^{\C(x,\Sig X)} "0";"2"};
{\ar^{\del\ush} "2";"4"};
{\ar^{\E(y,\Sig X)} "4";"6"};
{\ar_{\iota^X\ssh} "0";"10"};
{\ar_{\iota^X\ssh} "2";"12"};
{\ar@{=} "4";"14"};
{\ar@{=} "6";"16"};
{\ar_{\E(x,X)} "10";"12"};
{\ar_{-\cup\del} "12";"14"};
{\ar_{\E^2(y,X)} "14";"16"};
{\ar_{} "10";"20"};
{\ar_{} "12";"22"};
{\ar@{}|\circlearrowright "-10";"2"};
{\ar@{}|\circlearrowright "0";"12"};
{\ar@{}|\circlearrowright "2";"14"};
{\ar@{}|\circlearrowright "4";"16"};
\endxy
\]
in which the upper two rows and the left two columns are exact. Thus the third row also becomes exact at $\E(A,X)$ and $\E^2(C,X)$. Together with $(\ref{ExSeqCCC})$, it follows that
\[ \E(C,X)\ov{\E(y,X)}{\Longrightarrow}\E(B,X)\ov{\E(x,X)}{\Longrightarrow}\E(A,X)\ov{-\cup\del}{\Longrightarrow}\E^2(C,X)\ov{\E^2(y,X)}{\Longrightarrow}\E^2(B,X) \]
is exact. Replacing $X$ by $\Sig^{i-1}X$, we obtain the exactness of
\[ \E^i(C,X)\ov{\E^i(y,X)}{\Longrightarrow}\E^i(B,X)\ov{\E^i(x,X)}{\Longrightarrow}\E^i(A,X)\ov{-\cup\del}{\Longrightarrow}\E^{i+1}(C,X)\!\ov{\E^{i+1}(y,X)}{\Longrightarrow}\!\E^{i+1}(B,X) \]
for any $i\ge 1$ and $X\in\C$.

{\rm (2)} Similarly as in {\rm (1)}, we already know the exactness of
\[ \C(-,A)\ov{\C(-,x)}{\Longrightarrow}\C(-,B)\ov{\C(-,y)}{\Longrightarrow}\C(-,C)\ov{\del\ssh}{\Longrightarrow}\E(-,A)\ov{\E(-,x)}{\Longrightarrow}\E(-,B)\ov{\E(-,y)}{\Longrightarrow}\E(-,C). \]
Thus it is enough to show the exactness of
\[ \E^i(X,A)\ov{\E^i(X,x)}{\Longrightarrow}\E^i(X,B)\ov{\E^i(X,y)}{\Longrightarrow}\E^i(X,C)\ov{\del\cup-}{\Longrightarrow}\E^{i+1}(X,A)\!\ov{\E^{i+1}(X,x)}{\Longrightarrow}\!\E^{i+1}(X,B) \]
for any $i\ge 1$ and $X\in\C$. By the naturality of $\vt$ and by Proposition~\ref{PropCupProdDual}, this is isomorphic to
\[ \Ep^i(X,A)\ov{\Ep^i(X,x)}{\Longrightarrow}\Ep^i(X,B)\ov{\Ep^i(X,y)}{\Longrightarrow}\Ep^i(X,C)\ov{\del\cupp-}{\Longrightarrow}\Ep^{i+1}(X,A)\!\ov{\Ep^{i+1}(X,x)}{\Longrightarrow}\!\Ep^{i+1}(X,B), \]
whose exactness can be shown dually to {\rm (1)}.
\end{proof}

\subsection{From $n$-cluster tilting subcategories to $n$-exangulated categories}

Let $n$ be a positive integer, as before.
\begin{dfn}\label{DefnCluster}
Let $\Tcal\se\C$ be a full additive subcategory closed by isomorphisms and direct summands. Such $\Tcal$ is called an {\it $n$-cluster tilting subcategory} of $\C$, if it satisfies the following conditions.
\begin{enumerate}
\item $\Tcal\se\C$ is functorially finite.
\item For any $C\in\C$, the following are equivalent.
\begin{itemize}
\item[{\rm (i)}] $C\in\Tcal$.
\item[{\rm (ii)}] $\E^i(C,\Tcal)=0$ for any $1\le i\le n-1$.
\item[{\rm (iii)}] $\E^i(\Tcal,C)=0$ for any $1\le i\le n-1$.
\end{itemize}
\end{enumerate}
For $n=1$, this is just saying $\Tcal=\C$.
\end{dfn}

In the rest, we mainly deal with the case $n\ge 2$.
\begin{rem}\label{RemnCluster}
For an $n$-cluster tilting subcategory $\Tcal\se\C$, the following holds.
\begin{enumerate}
\item $\Ical,\Pcal\se\Tcal$.
\item For any $C\in\C$, there is a right $\Tcal$-approximation $g_C\co T_C\to C$, which is at the same time an $\sfr$-deflation. Indeed, for any right $\Tcal$-approximation $g\co T\to C$ and an $\sfr$-deflation $p\co P\to C$ satisfying $P\in \Pcal$, the morphism $[g\ p]\co T\oplus P\to C$ has the desired property. (See \cite[Corollary 3.16]{NP}.)
\item Dually, any $A\in\C$ admits a left $\Tcal$-approximation which is also an $\sfr$-inflation.
\end{enumerate}
\end{rem}

Consider the following conditions (Conditions~\ref{CondnCluster} and \ref{CondWIC}).
\begin{cond}\label{CondnCluster}
Let $\Tcal\se\C$ be an $n$-cluster tilting subcategory.
\begin{itemize}
\item[{\rm (c1)}] If $C\in\C$ satisfies $\E^{n-1}(\Tcal,C)=0$, then there is $\sfr$-triangle
\begin{equation}\label{OPC}
D\ov{q}{\lra}P\to C\dra\quad(P\in\Pcal)
\end{equation}
(thus gives $D\cong\Om C$ in $\und{\C}$) for which,
\[ \C(T,q)\co\C(T,D)\to\C(T,P) \]
is injective for any $T\in\Tcal$.
\item[{\rm (c2)}]Dually, if $A\in\C$ satisfies $\E^{n-1}(A,\Tcal)=0$, then there is $\sfr$-triangle
\[ A\to I\ov{j}{\lra}S\dra\quad(I\in\Ical) \]
for which
\[ \C(j,T)\co\C(S,T)\to\C(I,T) \]
is injective for any $T\in\Tcal$.
\end{itemize}
\end{cond}

\begin{ex}\label{Ex_CondnCluster}
$\ \ $
\begin{enumerate}
\item If $\C$ is an exact category with enough projectives and injectives, then Condition~\ref{CondnCluster} is trivially satisfied by any $n$-cluster tilting subcategory $\Tcal\se\C$.
\item If $(\C,[1],\triangle)$ is a triangulated category and if $\Tcal\se\C$ is an $n$-cluster tilting subcategory satisfying $\Tcal [n]=\Tcal$, then Condition~\ref{CondnCluster} is satisfied. Indeed, {\rm (c1),(c2)} are only saying $\Tcal[n-1]^{\perp}\se\Tcal[-1]^{\perp}$ and ${}^{\perp}\Tcal[n-1]\se {}^{\perp}\Tcal[-1]$.
\end{enumerate}
\end{ex}

\begin{cond}\label{CondWIC}(\cite[Condition 5.8]{NP})
Let $\CEs$ be an extriangulated category as before. Consider the following condition.
\begin{enumerate}
\item Let $f\in\mathscr{C}(A,B),\, g\in\mathscr{C}(B,C)$ be any pair of morphisms. If $g\circ f$ is an $\sfr$-inflation, then so is $f$.
\item Let $f\in\mathscr{C}(A,B),\, g\in\mathscr{C}(B,C)$ be any pair of morphisms. If $g\circ f$ is a $\sfr$-deflation, then so is $g$.
\end{enumerate}
\end{cond}

\begin{ex}\label{Ex_CondWIC}
$\ \ $
\begin{enumerate}
\item If $\C$ is an exact category, then Condition~\ref{CondWIC} is equivalent to that $\C$ is weakly idempotent complete (\cite[Proposition 7.6]{B}).
\item If $(\C,[1],\triangle)$ is a triangulated category, then Condition~\ref{CondWIC} is always satisfied.
\end{enumerate}
\end{ex}

Let $\Tcal\se\C$ be an $n$-cluster tilting subcategory, in the rest.
Assuming Conditions \ref{CondnCluster} and \ref{CondWIC}, let us construct $\sfr^n$ which makes $(\Tcal,\E^n,\sfr^n)$ into an $n$-exangulated category. This will give a unification of \cite[Theorem 4.14]{J} and \cite[Theorem 1]{GKO}. In the rest, for $A,C\in\Tcal$ we abbreviately denote $\Cbf_{(\Tcal;A,C)}^{n+2}$ in Definition~\ref{DefSeq} by $\TAC$, to distinguish it from $\Cbf_{(\C;A,C)}^{n+2}$. Namely, $\TAC$ is the full subcategory of $\Cbf_{(\C;A,C)}^{n+2}$, consisting of $X^{\mr}$ satisfying $X^i\in\Tcal$ for any $1\le i\le n$.

\begin{prop}\label{PropABCvanish}
Let $\Tcal\se\C$ be any $n$-cluster tilting subcategory satisfying Condition~\ref{CondnCluster}. Let $C\in\C$ be an object satisfying $\E^{n-1}(\Tcal,C)=0$. Then for any $\sfr$-triangle $A\ov{x}{\lra}B\ov{y}{\lra}C\ov{\del}{\dra}$,
\[ 0\to\C(T,A)\ov{\C(T,x)}{\lra}\C(T,B)\ov{\C(T,y)}{\lra}\C(T,C)\ov{\del\ssh}{\lra}\E(T,A) \]
is exact for any $T\in\Tcal$.
Dually for an object $A\in\C$ satisfying $\E^{n-1}(A,\Tcal)=0$.
\end{prop}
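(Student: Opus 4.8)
The statement has two components: exactness of the displayed four–term sequence at $\C(T,B)$ and at $\C(T,C)$, and injectivity of $\C(T,x)$. The first component is immediate: evaluating Proposition~\ref{PropLongExact}\,(2) at $T$ shows that $\C(T,A)\ov{\C(T,x)}{\lra}\C(T,B)\ov{\C(T,y)}{\lra}\C(T,C)\ov{\del\ssh}{\lra}\E(T,A)$ is exact at its two middle terms. So the plan focuses entirely on proving that $\C(T,x)$ is monomorphic, and this is the only step that uses Condition~\ref{CondnCluster}\,(c1).

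First I would use the hypothesis $\E^{n-1}(\Tcal,C)=0$: by Condition~\ref{CondnCluster}\,(c1) there is an $\sfr$-triangle $D\ov{q}{\lra}P\ov{r}{\lra}C\ov{\om}{\dra}$ with $P\in\Pcal$ such that $\C(T,q)\co\C(T,D)\to\C(T,P)$ is injective for every $T\in\Tcal$. Next I would link this triangle to the given one $A\ov{x}{\lra}B\ov{y}{\lra}C\ov{\del}{\dra}$ by a morphism of $\sfr$-triangles whose component on $C$ is $\id_C$. Since $P$ is projective and $y$ is an $\sfr$-deflation, $r$ lifts along $y$, i.e.\ there is $t\in\C(P,B)$ with $y\ci t=r$; thus the square
\[
\xy
(-8,6)*+{P}="0";
(8,6)*+{C}="1";
(-8,-6)*+{B}="2";
(8,-6)*+{C}="3";
{\ar^{r} "0";"1"};
{\ar_{t} "0";"2"};
{\ar@{=} "1";"3"};
{\ar_{y} "2";"3"};
{\ar@{}|\circlearrowright "0";"3"};
\endxy
\]
commutes in $\C$. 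Applying the dual of Proposition~\ref{PropShiftedReal}\,(2) to this square, with the distinguished $\sfr$-triangles $\langle(D\ov{q}{\lra}P\ov{r}{\lra}C),\om\rangle$ and $\langle(A\ov{x}{\lra}B\ov{y}{\lra}C),\del\rangle$, produces a morphism $f^{\mr}=(s,t,\id_C)$ between them (for some $s\in\C(D,A)$) that can be chosen so that its mapping cocone is a distinguished $\sfr$-triangle. By the dual of Definition~\ref{DefMapCone} this cocone has the form
\[ D\lra P\oplus A\lra B\ov{\eta}{\dra} \]
for some $\eta\in\E(B,D)$, with the first map given by the components $q$ and $-s$ and the second map given by the components $t$ and $x$ (the composite vanishes because $x\ci s=t\ci q$, and the precise shape of $\eta$ is irrelevant).

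Finally I would apply $\C(T,-)$ to this distinguished $\sfr$-triangle. As an $n$-exangle it gives, by Proposition~\ref{PropLongExact}\,(2) at $T$, exactness of
\[ \C(T,D)\lra\C(T,P)\oplus\C(T,A)\lra\C(T,B) \]
at the middle term, where the first map has components $\C(T,q)$ and $-\C(T,s)$ and the second sends $(p,\al)$ to $t\ci p+x\ci\al$. Given $\al\in\C(T,A)$ with $x\ci\al=0$, the element $(0,\al)$ lies in the kernel of the second map, hence is the image of some $b\in\C(T,D)$; comparing first components gives $q\ci b=0$, so $b=0$ by the injectivity of $\C(T,q)$ from Condition~\ref{CondnCluster}\,(c1), and therefore $(0,\al)$ is the image of $0$, i.e.\ $\al=0$. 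Thus $\C(T,x)$ is monomorphic. The dual statement, for $A\in\C$ with $\E^{n-1}(A,\Tcal)=0$, is obtained by running the same argument in $\C\op$: Condition~\ref{CondnCluster}\,(c2) supplies an $\sfr$-triangle $A\to I\ov{j}{\lra}S\dra$ with $I\in\Ical$ and $\C(j,T)$ injective for all $T\in\Tcal$; injectivity of $I$ lets one extend the inflation $A\to I$ over the $\sfr$-inflation $x\co A\to B$; and forming the mapping cone of the resulting morphism of $\sfr$-triangles reduces the claim to the injectivity of $\C(j,T)$.

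The step I expect to require the most care is the middle one --- verifying that the dual of Proposition~\ref{PropShiftedReal}\,(2) applies with the two distinguished $\sfr$-triangles in the roles indicated, and that the mapping cocone it yields is exactly the $\sfr$-triangle $D\to P\oplus A\to B\dra$ with the stated structure maps. The rest is formal manipulation with exact sequences of $\sfr$-triangles together with the single injectivity input of Condition~\ref{CondnCluster}.
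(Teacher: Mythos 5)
Your proposal is correct and takes essentially the same route as the paper: both reduce the statement to the injectivity of $\C(T,x)$, produce from the {\rm (c1)}-triangle $D\ov{q}{\lra}P\to C$ and the given $\sfr$-triangle a conflation $D\to P\oplus A\to B$, and conclude by applying $\C(T,-)$ and the injectivity of $\C(T,q)$. The only cosmetic difference is in how that conflation is obtained and exploited: the paper gets it as the middle column of the diagram from [NP, Proposition 3.15], whose middle row $A\to N\to P$ splits because $P\in\Pcal$ (so $N\cong A\oplus P$), and finishes with the snake lemma, whereas you build it directly as the mapping cocone of a good lift furnished by the dual of Proposition~\ref{PropShiftedReal}~{\rm (2)} (after lifting $r$ through the deflation $y$ by projectivity of $P$) and finish with an element chase.
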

\begin{proof}
It suffices to show that $\C(T,x)$ is injective for any $T\in\Tcal$.
Let $(\ref{OPC})$ be $\sfr$-triangle which satisfies the requirement of {\rm (c1)}. By \cite[Proposition 3.15]{NP}, we obtain the following commutative diagram made of conflations in $\CEs$.
\[
\xy
(-7,21)*+{D}="-12";
(7,21)*+{D}="-14";
(-21,7)*+{A}="0";
(-7,7)*+{N}="2";
(7,7)*+{P}="4";
(-21,-7)*+{A}="10";
(-7,-7)*+{B}="12";
(7,-7)*+{C}="14";
{\ar@{=} "-12";"-14"};
{\ar_{} "-12";"2"};
{\ar^{} "-14";"4"};
{\ar^{a} "0";"2"};
{\ar^{b} "2";"4"};
{\ar@{=} "0";"10"};
{\ar_{c} "2";"12"};
{\ar^{} "4";"14"};
{\ar_{x} "10";"12"};
{\ar_{y} "12";"14"};
{\ar@{}|\circlearrowright "-12";"4"};
{\ar@{}|\circlearrowright "0";"12"};
{\ar@{}|\circlearrowright "2";"14"};
\endxy
\]
Since $P\in\Pcal$, the middle row splits. Applying snake lemma to the following diagram,
\[
\xy
(-42,7)*+{0}="0";
(-24,7)*+{\Ccal(T,A)}="2";
(0,7)*+{\Ccal(T,N)}="4";
(24,7)*+{\Ccal(T,P)}="6";
(42,7)*+{0}="8";
(-42,-7)*+{0}="10";
(-24,-7)*+{\Ccal(T,B)}="12";
(0,-7)*+{\Ccal(T,B)}="14";
(24,-7)*+{0}="16";
{\ar^{} "0";"2"};
{\ar^{\C(T,a)} "2";"4"};
{\ar^{\C(T,b)} "4";"6"};
{\ar^{} "6";"8"};
{\ar_{\C(T,x)} "2";"12"};
{\ar^{\C(T,c)} "4";"14"};
{\ar^{} "6";"16"};
{\ar_{} "10";"12"};
{\ar@{=} "12";"14"};
{\ar_{} "14";"16"};
{\ar@{}|\circlearrowright "2";"14"};
{\ar@{}|\circlearrowright "4";"16"};
\endxy
\]
we see that $\C(T,x)$ is injective.
\end{proof}

\begin{cor}
Condition {\rm (c1)} does not depend on the choice of $(\ref{OPC})$. Similarly for {\rm (c2)}.
\end{cor}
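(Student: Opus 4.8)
The plan is to deduce the corollary directly from Proposition~\ref{PropABCvanish}, observing that it establishes exactly the required choice-independence. First I would fix an $n$-cluster tilting subcategory $\Tcal\se\C$ satisfying Condition~\ref{CondnCluster} together with an object $C\in\C$ satisfying $\E^{n-1}(\Tcal,C)=0$. The assertion of (c1) whose independence of choices is in question is precisely the injectivity of $\C(T,q)$ for every $T\in\Tcal$, where $q$ is the left-hand morphism of an $\sfr$-triangle $D\ov{q}{\lra}P\to C\dra$ with $P\in\Pcal$; so it suffices to show that \emph{every} such $\sfr$-triangle enjoys this property.

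Given any such triangle, say $D\ov{q}{\lra}P\ov{p}{\lra}C\ov{\eta}{\dra}$ with $P\in\Pcal$, I would apply Proposition~\ref{PropABCvanish} with $A=D$, $B=P$ and $x=q$. Its hypothesis $\E^{n-1}(\Tcal,C)=0$ is exactly what we have assumed, so the proposition yields that
\[ 0\to\C(T,D)\ov{\C(T,q)}{\lra}\C(T,P)\ov{\C(T,p)}{\lra}\C(T,C)\ov{\eta\ssh}{\lra}\E(T,D) \]
is exact for every $T\in\Tcal$. In particular $\C(T,q)$ is injective for all $T\in\Tcal$, which is the property demanded in (c1). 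Since the $\sfr$-triangle was an arbitrary one with $P\in\Pcal$, this shows that (c1) holds for every such choice, proving the first assertion. The statement for (c2) then follows by the dual argument, invoking the dual part of Proposition~\ref{PropABCvanish}.

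There is no real obstacle here: the corollary is essentially a reformulation of Proposition~\ref{PropABCvanish}, and I would keep the proof to a couple of lines. The only point worth a remark is that there is no circularity, even though the proof of Proposition~\ref{PropABCvanish} itself uses the particular $\sfr$-triangle $(\ref{OPC})$ guaranteed by (c1); once that proposition is established, its conclusion applies uniformly to \emph{all} $\sfr$-triangles over $C$, not merely to the distinguished one, and it is this uniformity that gives the claimed choice-independence.
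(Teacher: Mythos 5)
Your proof is correct and is exactly the paper's argument: the paper also deduces the corollary immediately from Proposition~\ref{PropABCvanish} and its dual, since that proposition applies to \emph{every} $\sfr$-triangle ending at $C$ and hence gives the injectivity of $\C(T,q)$ for any choice of $(\ref{OPC})$. Your remark on the absence of circularity is also apt.
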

\begin{proof}
This immediately follows from Proposition~\ref{PropABCvanish} and its dual.
\end{proof}

\begin{dfn}\label{DefsDec}
Let $\Tcal\se\C$ be an $n$-cluster tilting subcategory. Let $\del\in\E^n(C,A)$ be any element, with $A,C\in\Tcal$. We say that an object $\Xd\in\AE^{n+2}_{(\Tcal,\E^n)}$ (see Definition~\ref{DefAE})
\begin{equation}\label{sdec1}
A\ov{d_X^0}{\lra}X^1\ov{d_X^1}{\lra}\cdots\ov{d_X^{n-1}}{\lra}X^n\ov{d_X^n}{\lra}C\ov{\del}{\dra}\quad(X^0=A,X^{n+1}=C,X^i\in\Tcal)
\end{equation}
is {\it $\sfr$-decomposable} if it admits $\sfr$-triangles
\begin{equation}\label{sdec2}
\begin{cases}
\ A\ov{d_X^0}{\lra}X^1\ov{e^1}{\lra}M^1\ov{\del_{(1)}}{\dra},&\\
\ M^i\ov{m^i}{\lra}X^{i+1}\ov{e^{i+1}}{\lra}M^{i+1}\ov{\del_{(i+1)}}{\dra}&(i=1,\ldots,n-2),\\
\ M^{n-1}\ov{m^{n-1}}{\lra}X^n\ov{d_X^n}{\lra}C\ov{\del_{(n)}}{\dra}
\end{cases}
\end{equation}
satisfying $d_X^i=m^i\ci e^i$ for $1\le i\le n-1$ and $\del=\del_{(1)}\cup\cdots\cup\del_{(n)}$. We call $(\ref{sdec2})$ an {\it $\sfr$-decomposition} of $(\ref{sdec1})$.
Remark that this definition is self-dual, by virtue of Definition~\ref{Defvp} and Proposition~\ref{PropCupProdDual}.
\end{dfn}

\begin{prop}\label{PropsDecAdded}
Let $\Tcal\se\C$ be any $n$-cluster tilting subcategory. Let $A,B\in\Tcal$ be any pair of objects. For any $f\in\Tcal(A,B)$, the following are equivalent.
\begin{enumerate}
\item $f$ is an $\sfr$-inflation.
\item There is some $\sfr$-decomposable object $\Xd\in\AE^{n+2}_{(\Tcal,\E^n)}$ which is of the form
$A\ov{f}{\lra}B\ov{d_X^1}{\lra}X^2\ov{d_X^2}{\lra}\cdots\ov{d_X^n}{\lra}X^{n+1}\ov{\del}{\dra}$.
\end{enumerate}
Dually for $\sfr$-deflations.
\end{prop}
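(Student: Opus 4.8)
The plan is to prove both implications, the converse being the substantive one. First I would handle $(2)\Rightarrow(1)$: suppose $\langle X^{\mr},\del\rangle$ is $\sfr$-decomposable with $d_X^0=f$ and $\sfr$-decomposition as in $(\ref{sdec2})$. The very first $\sfr$-triangle $A\ov{f}{\lra}X^1\ov{e^1}{\lra}M^1\ov{\del_{(1)}}{\dra}$ exhibits $f$ as an $\sfr$-inflation by definition (Definition~\ref{Def223}), so this direction is immediate and needs only that $X^1=B$.

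For $(1)\Rightarrow(2)$, the idea is to build the $\sfr$-decomposition by repeatedly taking right $\Tcal$-approximations that are simultaneously $\sfr$-deflations (Remark~\ref{RemnCluster}~(2)), dualizing the usual construction of $n$-exact sequences from short exact sequences. Start with $f\co A\to B$ an $\sfr$-inflation, so there is an $\sfr$-triangle $A\ov{f}{\lra}B\ov{e^1}{\lra}M^1\ov{\del_{(1)}}{\dra}$. Now $M^1\in\C$ need not lie in $\Tcal$, so take a right $\Tcal$-approximation $X^2\to M^1$ which is an $\sfr$-deflation; its cocone gives an $\sfr$-triangle $M^2\ov{m^2}{\lra}X^2\ov{e^2}{\lra}M^1\ov{\del_{(2)}}{\dra}$ — wait, the indexing in Definition~\ref{DefsDec} runs the other way, so more precisely I take an $\sfr$-triangle $M^i\ov{m^i}{\lra}X^{i+1}\ov{e^{i+1}}{\lra}M^{i+1}\ov{\del_{(i+1)}}{\dra}$ with $X^{i+1}\in\Tcal$ by choosing $e^{i+1}$ to be a right $\Tcal$-approximation of $M^{i+1}$ that is an $\sfr$-deflation, proceeding inductively on $i$ from $1$ to $n-2$. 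After $n-1$ steps one has $M^{n-1}\in\C$, and the crucial point is that the "defect" object $M^{n-1}$ has become projective-like relative to $\Tcal$: because each $e^{i+1}$ was a right $\Tcal$-approximation, dimension shifting along the long exact sequences of Proposition~\ref{PropLongExact} shows $\E^j(\Tcal,M^{n-1})$ vanishes for the relevant range, which via Definition~\ref{DefnCluster}~(2)(iii) (or rather a relative version using Condition~\ref{CondnCluster}~(c1)) lets one close off: the last $\sfr$-triangle $M^{n-1}\ov{m^{n-1}}{\lra}X^n\ov{d_X^n}{\lra}C\ov{\del_{(n)}}{\dra}$ is obtained by taking a right $\Tcal$-approximation of $M^{n-1}$ — here Condition~\ref{CondnCluster} guarantees the needed injectivity/vanishing so that the resulting $C$ lies in $\Tcal$ and $X^n\in\Tcal$. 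Then set $d_X^i=m^i\ci e^i$, define $\del=\del_{(1)}\cup\cdots\cup\del_{(n)}\in\E^n(C,A)$ using the cup product (Definition~\ref{DefCupProd}), and one checks $\langle X^{\mr},\del\rangle$ is an object of $\AE^{n+2}_{(\Tcal,\E^n)}$ — i.e. the composites vanish and $(d_X^0)\sas\del=0$, $(d_X^n)\uas\del=0$ — which follows from associativity of $\cup$ (Proposition~\ref{PropCup}), Remark~\ref{RemCupProd}, and $d_X^{i+1}\ci d_X^i=m^{i+1}\ci e^{i+1}\ci m^i\ci e^i=0$ since $e^{i+1}\ci m^i=0$ in each $\sfr$-triangle.

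The main obstacle I anticipate is verifying that the construction actually terminates inside $\Tcal$ at step $n$ — that is, that the object $C$ produced by the final approximation lies in $\Tcal$, not merely in $\C$. This is exactly where Condition~\ref{CondnCluster} and Condition~\ref{CondWIC} enter: after $n-1$ approximation steps the defect satisfies $\E^{n-1}(\Tcal,-)$-vanishing (by iterated dimension shift through Proposition~\ref{PropLongExact}, using $\Ical,\Pcal\se\Tcal$ from Remark~\ref{RemnCluster}~(1) to control the syzygies), and then {\rm (c1)} furnishes the final $\sfr$-triangle with the injectivity property, while Condition~\ref{CondWIC} is needed to argue that the right-$\Tcal$-approximation deflations compose correctly and that certain morphisms built along the way are genuine $\sfr$-inflations/deflations. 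I would organize the vanishing bookkeeping as a separate lemma: "if $e\co T\to M$ is a right $\Tcal$-approximation which is an $\sfr$-deflation with cocone $M'$, then $\E^j(\Tcal,M')\cong\E^{j+1}(\Tcal,M)$ for $j\ge1$ and $\C(\Tcal,M')\to\C(\Tcal,T)$ captures $\E^0$-level information", proved from Proposition~\ref{PropLongExact} together with Proposition~\ref{PropABCvanish}. Granting that lemma, the induction and the final closing-off via Condition~\ref{CondnCluster} are routine, and the dual statement for $\sfr$-deflations follows by the self-duality of Definition~\ref{DefsDec} noted there (using $\vt$ and Proposition~\ref{PropCupProdDual}) applied to {\rm (c2)} and the dual of Condition~\ref{CondWIC}.
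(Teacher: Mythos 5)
Your direction $(2)\Rightarrow(1)$ is fine, but the substantive direction $(1)\Rightarrow(2)$ has a genuine orientation error that derails the construction. To extend $A\ov{f}{\lra}B\ov{e^1}{\lra}M^1$ to the \emph{right}, you must coresolve the cone $M^1$ by \emph{left} $\Tcal$-approximations: take $m^i\co M^i\to X^{i+1}$ a left $\Tcal$-approximation which is an $\sfr$-inflation (Remark~\ref{RemnCluster}~(3)) and let $M^{i+1}$ be its cone. Your plan uses right $\Tcal$-approximations/$\sfr$-deflations, which produce triangles \emph{ending} at the known object (e.g.\ a right approximation of $M^{n-1}$ yields a triangle $C\ppr\to X^n\to M^{n-1}$, not the required $M^{n-1}\to X^n\to C$); your attempted fix, choosing $e^{i+1}$ to be a right approximation of $M^{i+1}$, is circular, since $M^{i+1}$ is the object being constructed as the cone. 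Correspondingly, the vanishing you would need is on the other side: with left approximations one gets $\E(M^i,\Tcal)=0$ for $i\ge2$ and $\E^k(M^i,T)\cong\E^{k-1}(M^{i-1},T)$, whence $\E^k(M^n,\Tcal)=0$ for all $1\le k\le n-1$, and then $M^n\in\Tcal$ follows from condition (ii) of Definition~\ref{DefnCluster}~(2) (not (iii), and not from Condition~\ref{CondnCluster}~(c1)).

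This also exposes a second problem: the statement assumes only that $\Tcal$ is $n$-cluster tilting, with no appeal to Condition~\ref{CondnCluster} or Condition~\ref{CondWIC}, and indeed neither is needed — the termination of the construction inside $\Tcal$ is automatic from the dimension-shift vanishing plus the defining property of $n$-cluster tilting, so a proof that "closes off via (c1)" and uses (WIC) to control compositions would at best prove the proposition under extra hypotheses. Once the approximations are taken on the correct side, the rest of your outline is essentially the paper's argument: set $d_X^i=m^i\ci e^i$, $\del=\del_{(1)}\cup\cdots\cup\del_{(n)}$, and verify $(d_X^0)\sas\del=0$ and $(d_X^n)\uas\del=0$ from $f\sas\del_{(1)}=0$, $(e^n)\uas\del_{(n)}=0$ and the compatibility of the cup product with $(-)\sas$ and $(-)\uas$ (Proposition~\ref{PropCupProdNat}).
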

\begin{proof}
Suppose that $f$ is an $\sfr$-inflation. By definition there is some $\sfr$-triangle $A\ov{f}{\lra}B\ov{e^1}{\lra}M^1\ov{\del_{(1)}}{\dra}$. As in Remark~\ref{RemnCluster} {\rm (2)}, inductively we can take $\sfr$-triangles
\[ M^{i-1}\ov{m^{i-1}}{\lra}X^i\ov{e^i}{\lra}M^i\ov{\del_{(i)}}{\dra} \]
for $2\le i\le n$, in which $X^i$ belongs to $\Tcal$ and $m^{i-1}$ is a left $\Tcal$-approximation. Then for any $2\le i\le n$, we have $\E(M^i,\Tcal)=0$ and
\[ \E^k(M^i,T)\cong\E^{k-1}(M^{i-1},T)\quad(2\le k\le n-1) \]
for any $T\in\Tcal$. This shows
\[ \E^k(M^n,T)\cong\E^{k-1}(M^{n-1},T)\cong\cdots\cong\E(M^{n-k+1},T)=0 \]
for any $T\in\Tcal$ and any $1\le k\le n-1$, which means $M^n\in\Tcal$. Since $\del=\del_{(1)}\cup\cdots\cup\del_{(n)}\in\E^n(M^n,A)$ satisfies
\begin{eqnarray*}
&\E^n(M^n,f)(\del)=(f\sas\del_{(1)})\cup\cdots\cup\del_{(n)}=0,&\\
&\E^n(e^n,A)(\del)=\del_{(1)}\cup\cdots\cup((e^n)\uas\del_{(n)})=0&
\end{eqnarray*}
by Proposition~\ref{PropCupProdNat}, the sequence
\[ A\ov{f}{\lra}B\ov{m^1\ci e^1}{\lra}X^2\ov{m^2\ci e^2}{\lra}\cdots\ov{m^{n-1}\ci e^{n-1}}{\lra}X^n\ov{e^n}{\lra}M^n\ov{\del}{\dra} \]
gives an $\sfr$-decomposable object in $\AE^{n+2}_{(\Tcal,\E^n)}$.
The converse is trivial.
\end{proof}

\begin{lem}\label{LemsDecExist}
Let $\Tcal\se\C$ be any $n$-cluster tilting subcategory, with $n\ge 2$. Let $\ell\in\{2,\ldots,n\}$ be any integer. Let $A,M\in\C$ be arbitrary pair of objects, and let $\del\in\E^{\ell}(M,A)=\E(M,\Sig^{\ell-1}A)$ be any element with an $\sfr$-triangle
\[ \Sig^{\ell-1}A\ov{a}{\lra}B\ov{b}{\lra}M\ov{\del}{\dra}. \]
Then there exist $\sfr$-triangles
\[ \Sig^{\ell-2}A\ov{a\ppr}{\lra}B\ppr\ov{b\ppr}{\lra}M\ppr\ov{\del\ppr}{\dra}%
\quad\text{and}\quad%
M\ppr\ov{m}{\lra}X\ov{e}{\lra}M\ov{\thh}{\dra} \]
with the following properties.
\begin{itemize}
\item[{\rm (i)}] $X\in\Tcal$.
\item[{\rm (ii)}] $\del=\del\ppr\cup\thh$.
\item[{\rm (iii)}] $\E(\Tcal,B\ppr)=0$.
\item[{\rm (iv)}] For $2\le k\le n-1$, we have
\[ \E^k(T,B\ppr)=0\ \Leftrightarrow\ \E^{k-1}(T,B)=0 \]
for any $T\in\Tcal$.
\end{itemize}
\end{lem}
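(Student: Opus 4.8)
The plan is to peel one layer off the given $\sfr$-triangle in two moves: first replace the middle term $B$ by an object of $\Tcal$, and then pull back along the chosen injective copresentation of $\Sig^{\ell-2}A$; the vanishing conditions {\rm (iii),(iv)} will then drop out of two applications of the long exact sequence of Proposition~\ref{PropLongExact}~{\rm (2)}.

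For the first move, I would use Remark~\ref{RemnCluster}~{\rm (2)} to choose a right $\Tcal$-approximation $g_B\co T_B\to B$ which is at the same time an $\sfr$-deflation, and extend it to an $\sfr$-triangle $K\ov{k}{\lra}T_B\ov{g_B}{\lra}B\ov{\eta}{\dra}$. Since $g_B$ is an $\sfr$-deflation, so is $e:=b\ci g_B\co T_B\to M$ by {\rm (EA1)}; I set $X:=T_B\in\Tcal$ and complete $e$ to an $\sfr$-triangle $M'\ov{m}{\lra}X\ov{e}{\lra}M\ov{\thh}{\dra}$. Applying the dual of the octahedral Lemma~\ref{Lem1EAOcta} to the composable $\sfr$-deflations $T_B\ov{g_B}{\lra}B\ov{b}{\lra}M$ then produces an $\sfr$-triangle $K\ov{\mu}{\lra}M'\ov{\nu}{\lra}\Sig^{\ell-1}A\ov{\eta\uas a}{\dra}$ together with the relation $\nu\sas\thh=\del$; in particular $\nu$ is an $\sfr$-deflation with $\CoCone(\nu)\cong K$.

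For the second move, let $\Sig^{\ell-2}A\ov{u}{\lra}I_0\ov{p_0}{\lra}\Sig^{\ell-1}A\ov{\iota}{\dra}$, with $I_0\in\Ical$, be the $\sfr$-triangle realizing $\iota:=\iota^{\Sig^{\ell-2}A}\in\E(\Sig^{\ell-1}A,\Sig^{\ell-2}A)$ from Assumption~\ref{Assump}. Realizing $\del':=\nu\uas\iota\in\E(M',\Sig^{\ell-2}A)$ gives the required $\sfr$-triangle $\Sig^{\ell-2}A\ov{a'}{\lra}B'\ov{b'}{\lra}M'\ov{\del'}{\dra}$; lifting the morphism of extensions $(\id,\nu)\co\del'\to\iota$ to a morphism of $\sfr$-triangles, feeding it into {\rm (EA2)}, and simplifying the resulting $\sfr$-triangle $B'\lra M'\oplus I_0\ov{[\nu\ p_0]}{\lra}\Sig^{\ell-1}A\dra$ by one further octahedron (splitting off the split summand of $[\nu\ p_0]$, as in the argument of Corollary~\ref{CorSummandOut}) yields an $\sfr$-triangle $K\lra B'\lra I_0\dra$. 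This already gives {\rm (i)}, and {\rm (ii)} follows since $\ovl{\del'}=\ovl{\nu}$ by Definition~\ref{DefBar}, so that $\del'\cup\thh=\ovl{\nu}\sas\thh=\nu\sas\thh=\del$.

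It remains to chase long exact sequences. From $K\ov{k}{\lra}T_B\ov{g_B}{\lra}B\ov{\eta}{\dra}$ and Proposition~\ref{PropLongExact}~{\rm (2)}: because $g_B$ is a right $\Tcal$-approximation, $\C(T,g_B)$ is epic for $T\in\Tcal$, forcing $\E(T,K)=0$; and because $\E^i(\Tcal,T_B)=0$ for $1\le i\le n-1$ (as $T_B\in\Tcal$), the connecting maps give $\E^{k-1}(T,B)\ov{\cong}{\lra}\E^k(T,K)$ for $2\le k\le n-1$. From $K\lra B'\lra I_0\dra$ and the same proposition, using $\E^i(\Tcal,I_0)=0$ for all $i\ge1$ (as $I_0\in\Ical$), one gets $\E(T,B')=0$ and $\E^k(T,K)\cong\E^k(T,B')$ for $2\le k\le n-1$. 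Combining yields {\rm (iii)} and {\rm (iv)}. The main obstacle is the bookkeeping in the second move: verifying that the two $\sfr$-triangles through $B'$, namely $\Sig^{\ell-2}A\to B'\to M'$ and $K\to B'\to I_0$, really arise from a single morphism of $\sfr$-triangles, so that the extension of the former is exactly $\nu\uas\iota$ (needed for {\rm (ii)}) while the third term of the latter is exactly $K$ (needed for {\rm (iii),(iv)}); keeping the variances straight in the octahedral identity $\nu\sas\thh=\del$ is the other delicate point.
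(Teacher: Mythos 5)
Your construction coincides with the paper's proof in its first half: the paper also starts from an $\sfr$-triangle $D\to X\ov{x}{\lra}B\ov{\mu}{\dra}$ with $x$ a right $\Tcal$-approximation (your $K\to T_B\ov{g_B}{\lra}B$), applies the dual of Lemma~\ref{Lem1EAOcta} to the composable deflations $x$ and $b$ to get $M\ppr\ov{m}{\lra}X\ov{e}{\lra}M\ov{\thh}{\dra}$ and $D\to M\ppr\ov{p}{\lra}\Sig^{\ell-1}A\ov{a\uas\mu}{\dra}$ with $p\sas\thh=\del$ (your $\nu$ is this $p$; note the extension there is $a\uas\eta$, not ``$\eta\uas a$''), and then sets $\del\ppr=p\uas\iota^{\Sig^{\ell-2}A}$, getting (ii) exactly as you do from $\ovl{\del\ppr}=\ovl{p}$. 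The divergence is in (iii),(iv): the paper never produces your auxiliary triangle $K\to B\ppr\to I_0\dra$; instead it compares the long exact sequences of $D\to M\ppr\to\Sig^{\ell-1}A$ and of $\Sig^{\ell-2}A\to B\ppr\to M\ppr$ through the surjection $\iota^{\Sig^{\ell-2}A}\ssh\co\C(T,\Sig^{\ell-1}A)\to\E(T,\Sig^{\ell-2}A)$, obtaining $\E(T,B\ppr)=0$ and $\E^k(T,B\ppr)=0\Leftrightarrow\E^k(T,D)=0$ directly, which together with $\E^{k-1}(T,B)\cong\E^k(T,D)$ gives (iv). Your route, via $\E^\bullet(T,K)\cong\E^\bullet(T,B\ppr)$, is a legitimate alternative and even yields honest isomorphisms rather than equivalences of vanishing.

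The one step you should tighten is the existence of the $\sfr$-triangle $K\to B\ppr\to I_0\dra$. The citation of Corollary~\ref{CorSummandOut} does not fit: that corollary removes a common direct summand appearing in two adjacent terms with an identity component of the differential, whereas $[\nu\ p_0]\co M\ppr\oplus I_0\to\Sig^{\ell-1}A$ has no such split piece (and in an extriangulated category you cannot simply rotate the mapping-cone triangle as one would in the triangulated case). The statement is nevertheless true and can be proved with the tools at hand: factor $[\nu\ p_0]=[\,\id\ p_0\,]\ci(\nu\oplus\id_{I_0})$, observe that $\nu\oplus\id_{I_0}$ is a deflation with $\CoCone\cong K$ (direct sum of $K\to M\ppr\ov{\nu}{\lra}\Sig^{\ell-1}A\dra$ with a split triangle) and that $[\,\id\ p_0\,]$ is a split deflation with $\CoCone\cong I_0$, and apply the dual of Lemma~\ref{Lem1EAOcta} to these composable deflations, choosing as the realization of the composite exactly your mapping-cone triangle $B\ppr\to M\ppr\oplus I_0\ov{[\nu\ p_0]}{\lra}\Sig^{\ell-1}A\ov{(a\ppr)\sas\iota}{\dra}$; the octahedron then outputs $K\to B\ppr\to I_0\dra$. (Alternatively, simply adopt the paper's diagram chase and avoid the auxiliary triangle altogether.) Finally, in your use of $\E^i(T,I_0)=0$ for all $i\ge1$ you should record the one-line reason that cosyzygies of injectives are again injective: the defining $\sfr$-triangle $I\to I\ppr\to\Sig I\ov{\iota^I}{\dra}$ splits because $\E(\Sig I,I)=0$.
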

\begin{proof}
As in Remark~\ref{RemnCluster} {\rm (2)}, there is an $\sfr$-triangle $D\to X\ov{x}{\lra}B\ov{\mu}{\dra}$ with $X\in\Tcal$, in which $x$ is a right $\Tcal$-approximation. Remark that this implies $\E(\Tcal,D)=0$ and
\begin{equation}\label{IsoEBD}
\E^{k-1}(T,B)\cong\E^k(T,D)
\end{equation}
for any $T\in\Tcal$ and any $2\le k\le n-1$.
By {\rm (ET4$\op$)} (i.e., the dual of Lemma~\ref{Lem1EAOcta}), we obtain a diagram made of $\sfr$-triangles
\[
\xy
(-21,8)*+{D}="0";
(-7,8)*+{{}^{\exists}M\ppr}="2";
(7,8)*+{\Sig^{\ell-1}A}="4";
(22,8)*+{}="6";
(-21,-7)*+{D}="10";
(-7,-7)*+{X}="12";
(7,-7)*+{B}="14";
(22,-7)*+{}="16";
(-7,-21)*+{M}="22";
(7,-21)*+{M}="24";
(-7,-34)*+{}="32";
(7,-34)*+{}="34";
{\ar^(0.4){{}^{\exists}d} "0";"2"};
{\ar^(0.4){{}^{\exists}p} "2";"4"};
{\ar@{-->}^(0.6){a\uas\mu} "4";"6"};
{\ar@{=} "0";"10"};
{\ar_{{}^{\exists}m} "2";"12"};
{\ar^{a} "4";"14"};
{\ar_{} "10";"12"};
{\ar_{x} "12";"14"};
{\ar@{-->}^{\mu} "14";"16"};
{\ar_{e=b\ci x} "12";"22"};
{\ar^{b} "14";"24"};
{\ar@{=} "22";"24"};
{\ar@{-->}_{{}^{\exists}\thh} "22";"32"};
{\ar@{-->}^{\del} "24";"34"};
{\ar@{}|\circlearrowright "0";"12"};
{\ar@{}|\circlearrowright "2";"14"};
{\ar@{}|\circlearrowright "12";"24"};
\endxy
\]
satisfying $p\sas\thh=\del$. Using an $\sfr$-triangle
\[ \Sig^{\ell-2}A\to I\to\Sig^{\ell-1}A\ov{\iota^{\Sig^{\ell-2}A}}{\dra}\quad({}^{\exists}I\in\Ical) \]
taken as in Assumption~\ref{Assump}, put $\del\ppr=p\uas(\iota^{\Sig^{\ell-2}A})\in\E(M\ppr,\Sig^{\ell-2}A)=\E^{\ell-1}(M\ppr,A)$. By definition, this means $\ovl{\del\ppr}=\ovl{p}$ (Remark~\ref{RemBar} and Definition~\ref{DefBar}). Thus $\del\ppr$ satisfies
\[ \del\ppr\cup\thh=\E(C,\ovl{\del\ppr})(\thh)=\ovl{p}\sas\thh=\del. \]
Realize $\del\ppr$, to obtain an $\sfr$-triangle
\[ \Sig^{\ell-2}A\ov{a\ppr}{\lra}B\ppr\ov{b\ppr}{\lra}M\ppr\ov{\del\ppr}{\dra}. \]
It remains to confirm {\rm (iii)} and {\rm (iv)}. Let $T\in\Tcal$ be any object. Remark that
\[
\xy
(-48,7)*+{\C(T,M\ppr)}="-2";
(-24,7)*+{\C(T,\Sig^{\ell-1}A)}="0";
(0,7)*+{\E(T,D)}="2";
(20,7)*+{\E(T,M\ppr)}="4";
(46,7)*+{\E(T,\Sig^{\ell-1}A)}="6";
(-48,-7)*+{\C(T,M\ppr)}="-12";
(-24,-7)*+{\E(T,\Sig^{\ell-2}A)}="10";
(0,-7)*+{\E(T,B\ppr)}="12";
(20,-7)*+{\E(T,M\ppr)}="14";
(46,-7)*+{\E^2(T,\Sig^{\ell-2}A)}="16";
{\ar^(0.44){p\ci-} "-2";"0"};
{\ar^(0.58){(a\uas\mu)\ssh} "0";"2"};
{\ar^{d\sas} "2";"4"};
{\ar^(0.46){p\sas} "4";"6"};
{\ar@{=} "-2";"-12"};
{\ar^{\iota^{\Sig^{\ell-2}A}\ssh} "0";"10"};
{\ar@{=} "4";"14"};
{\ar@{=} "6";"16"};
{\ar_(0.44){\del\ppr\ssh} "-12";"10"};
{\ar_(0.58){a\ppr\sas} "10";"12"};
{\ar_{b\ppr\sas} "12";"14"};
{\ar_(0.44){\del\ppr\cup-} "14";"16"};
{\ar@{}|\circlearrowright "-2";"10"};
{\ar@{}|\circlearrowright "4";"16"};
\endxy
\]
is commutative, where the two rows are exact. Since $\E(T,D)=0$, we see that $p\ci-$ is surjective and $p\sas$ is injective in the top row. Since $\iota^{\Sig^{\ell-2}A}\ssh\co\C(T,\Sig^{\ell-1}A)\to\E(T,\Sig^{\ell-2}A)$ is surjective, this shows that $\del\ppr\ssh$ is surjective and $\del\ppr\cup-$ is injective in the bottom row. This means $\E(T,B\ppr)=0$ for any $T\in\Tcal$, and thus {\rm (iii)} follows.

Similarly for $k\ge2$, we have the following commutative diagram,
\[
\xy
(-58,7)*+{\E^{k-1}(T,M\ppr)}="-2";
(-24,7)*+{\E^{k-1}(T,\Sig^{\ell-1}A)}="0";
(1,7)*+{\E^k(T,D)}="2";
(21,7)*+{\E^k(T,M\ppr)}="4";
(50,7)*+{\E^k(T,\Sig^{\ell-1}A)}="6";
(-58,-7)*+{\E^{k-1}(T,M\ppr)}="-12";
(-24,-7)*+{\E^{k}(T,\Sig^{\ell-2}A)}="10";
(1,-7)*+{\E^k(T,B\ppr)}="12";
(21,-7)*+{\E^k(T,M\ppr)}="14";
(50,-7)*+{\E^{k+1}(T,\Sig^{\ell-2}A)}="16";
{\ar^(0.46){\E^{k-1}(T,p)} "-2";"0"};
{\ar "0";"2"};
{\ar "2";"4"};
{\ar^(0.46){\E^k(T,p)} "4";"6"};
{\ar@{=} "-2";"-12"};
{\ar@{=} "0";"10"};
{\ar@{=} "4";"14"};
{\ar@{=} "6";"16"};
{\ar_(0.46){\del\ppr\cup-} "-12";"10"};
{\ar "10";"12"};
{\ar "12";"14"};
{\ar_(0.44){\del\ppr\cup-} "14";"16"};
{\ar@{}|\circlearrowright "-2";"10"};
{\ar@{}|\circlearrowright "4";"16"};
\endxy
\]
in which two rows are exact. Thus the same argument shows
\[ \E^k(T,B\ppr)=0\ \Leftrightarrow\ \E^k(T,D)=0. \]
Together with $(\ref{IsoEBD})$, we obtain {\rm (iv)}.
\end{proof}

\begin{prop}\label{PropsDecExist}
Let $\Tcal\se\C$ be any $n$-cluster tilting subcategory. Then for any $A,C\in\C$ and any $\del\in\E^n(C,A)$, there are $\sfr$-triangles
\begin{equation}\label{sdec3}
\begin{cases}
\ A\ov{d_X^0}{\lra}X^1\ov{e^1}{\lra}M^1\ov{\del_{(1)}}{\dra},&\\
\ M^i\ov{m^i}{\lra}X^{i+1}\ov{e^{i+1}}{\lra}M^{i+1}\ov{\del_{(i+1)}}{\dra}&(i=1,\ldots,n-2),\\
\ M^{n-1}\ov{m^{n-1}}{\lra}X^n\ov{d_X^n}{\lra}C\ov{\del_{(n)}}{\dra}
\end{cases}
\end{equation}
which satisfy $X^i\in\Tcal$ for any $1\le i\le n$ and $\del=\del_{(1)}\cup\cdots\cup\del_{(n)}$.

In particular when $A,C\in\Tcal$, then ${}_AX^{\mr}_C\in\TAC$ with $d_X^i$ defined by $d_X^i=m^i\ci e^i$ for $1\le i\le n-1$, gives an $\sfr$-decomposable object $\Xd\in\AE^{n+2}_{(\Tcal,\E^n)}$.
\end{prop}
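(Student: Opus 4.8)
The plan is to construct the required $\sfr$-triangles by iterating Lemma~\ref{LemsDecExist} exactly $n-1$ times; I assume $n\ge 2$, the case $n=1$ being trivial since then $\Tcal=\C$ and $\Xd$ is just an $\sfr$-triangle realizing $\del$. Since $\E^n(C,A)=\E(C,\Sig^{n-1}A)$, first I would realize $\del$ by an $\sfr$-triangle $\Sig^{n-1}A\lra B^{(0)}\lra M^{(0)}\ov{\del^{(0)}}{\dra}$ with $M^{(0)}=C$ and $\del^{(0)}=\del$. Applying Lemma~\ref{LemsDecExist} with $\ell=n$ produces a residual $\sfr$-triangle $\Sig^{n-2}A\lra B^{(1)}\lra M^{(1)}\ov{\del^{(1)}}{\dra}$ with $\del^{(1)}\in\E^{n-1}(M^{(1)},A)$, a slice $\sfr$-triangle $M^{(1)}\lra X^{(1)}\lra M^{(0)}\ov{\thh^{(1)}}{\dra}$ with $X^{(1)}\in\Tcal$, and the identity $\del^{(0)}=\del^{(1)}\cup\thh^{(1)}$. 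As the hypothesis of Lemma~\ref{LemsDecExist} imposes no condition on the middle term of the input $\sfr$-triangle (only $\ell\in\{2,\dots,n\}$), I can feed $\del^{(1)}$ back in with $\ell=n-1$, and so on. After $n-1$ steps one obtains, for $k=1,\dots,n-1$, residual $\sfr$-triangles $\Sig^{n-k-1}A\lra B^{(k)}\lra M^{(k)}\ov{\del^{(k)}}{\dra}$ with $\del^{(k)}\in\E^{n-k}(M^{(k)},A)$, slice $\sfr$-triangles $M^{(k)}\lra X^{(k)}\lra M^{(k-1)}\ov{\thh^{(k)}}{\dra}$ with $X^{(k)}\in\Tcal$, and $\del^{(k-1)}=\del^{(k)}\cup\thh^{(k)}$; in particular the last residual triangle reads $A\lra B^{(n-1)}\lra M^{(n-1)}\ov{\del^{(n-1)}}{\dra}$ with $\del^{(n-1)}\in\E(M^{(n-1)},A)$.

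The main obstacle is to show that this last term $B^{(n-1)}$ actually lies in $\Tcal$; this is what makes $X^1:=B^{(n-1)}$ an admissible middle object of the decomposition. For this I would prove by induction on $k$ that $\E^i(\Tcal,B^{(k)})=0$ for all $1\le i\le k$. The base case $k=1$ is precisely condition {\rm (iii)} of Lemma~\ref{LemsDecExist}. For the inductive step from $k$ to $k+1$ (with $k+1\le n-1$), condition {\rm (iii)} again gives $\E^1(\Tcal,B^{(k+1)})=0$, while for $2\le i\le k+1$ condition {\rm (iv)} (applicable since $i\le n-1$) rewrites the vanishing of $\E^i(\Tcal,B^{(k+1)})$ as that of $\E^{i-1}(\Tcal,B^{(k)})$, which holds by the inductive hypothesis because $1\le i-1\le k$. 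Taking $k=n-1$ yields $\E^i(\Tcal,B^{(n-1)})=0$ for all $1\le i\le n-1$, so $B^{(n-1)}\in\Tcal$ by the $n$-cluster tilting characterization in Definition~\ref{DefnCluster}, implication {\rm (iii)}$\Rightarrow${\rm (i)}.

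It then remains to relabel and check the bookkeeping. I would set $X^1=B^{(n-1)}$, $M^i=M^{(n-i)}$ for $1\le i\le n-1$ (with $M^{(0)}=C$), $X^{i+1}=X^{(n-i)}$ for $1\le i\le n-1$, $\del_{(1)}=\del^{(n-1)}$, $\del_{(i+1)}=\thh^{(n-i)}$ for $1\le i\le n-1$, and read off $e^1$, $m^i$, $e^{i+1}$, $d_X^0$, $d_X^n$ from the corresponding $\sfr$-triangles, putting $d_X^i=m^i\ci e^i$ for $1\le i\le n-1$. With this relabelling the triangles are exactly those displayed in $(\ref{sdec3})$, all $X^i$ lie in $\Tcal$, and repeated use of associativity of the cup product (Proposition~\ref{PropCup}) together with $\del^{(k-1)}=\del^{(k)}\cup\thh^{(k)}$ gives $\del=\del_{(1)}\cup\cdots\cup\del_{(n)}$. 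Finally, from $e^1\ci d_X^0=0$, $e^{i+1}\ci m^i=0$ and $d_X^n\ci m^{n-1}=0$ one checks that $X^{\mr}$ is a complex, and $(d_X^0)\sas\del=0$, $(d_X^n)\uas\del=0$ follow by transporting the outermost factor of $\del=\del_{(1)}\cup\cdots\cup\del_{(n)}$ through $d_X^0$, respectively $d_X^n$, via Proposition~\ref{PropCupProdNat} using $(d_X^0)\sas\del_{(1)}=0$ and $(d_X^n)\uas\del_{(n)}=0$; hence $\Xd\in\AE^{n+2}_{(\Tcal,\E^n)}$ and it is $\sfr$-decomposable by construction. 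When moreover $A,C\in\Tcal$, we have $X^0=A$, $X^{n+1}=C$ and $X^i\in\Tcal$ for $1\le i\le n$, so ${}_AX^{\mr}_C\in\TAC$, which is the ``in particular'' assertion.
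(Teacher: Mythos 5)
Your proof is correct and follows essentially the same route as the paper: iterate Lemma~\ref{LemsDecExist} from $\ell=n$ down to $\ell=2$, use conditions (iii) and (iv) to deduce $\E^k(\Tcal,B^{(n-1)})=0$ for $1\le k\le n-1$ and hence $B^{(n-1)}\in\Tcal$, and assemble the cup-product factorization. Your explicit induction and the final verification that $\Xd\in\AE^{n+2}_{(\Tcal,\E^n)}$ just spell out what the paper leaves implicit (its ``latter part is obvious''), with only a reversal of indexing.
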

\begin{proof}
Let $\Sig^{n-1}A\ov{a^n}{\lra}B^n\ov{b^n}{\lra}C\ov{\del}{\dra}$ be an $\sfr$-triangle realizing $\del$. By Lemma~\ref{LemsDecExist}, we obtain $\sfr$-triangles
\[ \Sig^{n-2}A\ov{a^{n-1}}{\lra}B^{n-1}\ov{b^{n-1}}{\lra}M^{n-1}\ov{\del^{n-1}}{\dra}%
\ \ \text{and}\ \ %
M^{n-1}\ov{m^{n-1}}{\lra}X^n\ov{e^n}{\lra}C\ov{\del_{(n)}}{\dra} \]
satisfying $\del=\del^{n-1}\cup\del_{(n)}$, $X^n\in\Tcal$, $\E(\Tcal,B^{n-1})=0$ and
\[ \E^k(\Tcal,B^{n-1})=0\ \ \Leftrightarrow\ \ \E^{k-1}(\Tcal,B^n)=0 \]
for $2\le k\le n-1$.

Inductively, we obtain $\sfr$-triangles
\[ \Sig^{i-1}A\ov{a^i}{\lra}B^i\ov{b^i}{\lra}M^i\ov{\del^i}{\dra}%
\ \ \text{and}\ \ %
M^i\ov{m^i}{\lra}X^{i+1}\ov{e^{i+1}}{\lra}M^{i+1}\ov{\del_{(i+1)}}{\dra} \]
for each $1\le i\le n-2$, satisfying $\del^{i+1}=\del^i\cup\del_{(i+1)}$, $X^{i+1}\in\Tcal$, $\E(\Tcal,B^i)=0$ and
\[ \E^k(\Tcal,B^i)=0\ \ \Leftrightarrow\ \ \E^{k-1}(\Tcal,B^{i+1})=0 \]
for $2\le k\le n-1$.

Then $B^1$ satisfies $\E^k(\Tcal,B^1)=0$ for any $1\le k\le n-1$, and thus $B^1\in\Tcal$ holds. Moreover, we have
\[ \del=\del^{n-1}\cup\del_{(n)}=\del^{n-2}\cup\del_{(n-1)}\cup\del_{(n)}=\cdots=\del^1\cup\del_{(2)}\cup\cdots\cup\del_{(n)}. \]
Re-naming the $\sfr$-triangle $A\ov{a^1}{\lra}B^1\ov{b^1}{\lra}M^1\ov{\del^1}{\dra}$ as $A\ov{d_X^0}{\lra}X^1\ov{e^1}{\lra}M^1\ov{\del_{(1)}}{\dra}$ and putting $d_X^n=e^n$, we obtain $(\ref{sdec3})$. The latter part is obvious.
\end{proof}

\begin{lem}\label{LemMvanish}
Let $\Tcal\se\C$ be any $n$-cluster tilting subcategory. For any $\sfr$-decomposable object ${}_A\Xd_C\in\AE^{n+2}_{(\Tcal,\E^n)}$, its $\sfr$-decomposition $(\ref{sdec2})$ satisfies the following.
\begin{enumerate}
\item For any $1\le i\le n-1$,
\[ \E^k(\Tcal,M^i)=0 \]
holds for $k\in\{1,\ldots,n-1\}\setminus\{ n-i\}$. As for the terms with $i+k=n$, we have a sequence of homomorphisms
\[ 
\E(T,M^{n-1})\ov{\del_{(n-1)}\cup-}{\un{\cong}{\lra}}\cdots\ov{\del_{(2)}\cup-}{\un{\cong}{\lra}}\E^{n-1}(T,M^1)\ov{\del_{(1)}\cup-}{\lra}\E^n(T,A) \]
for any $T\in\Tcal$, in which
\begin{itemize}
\item $\del_{(i)}\cup-$ are isomorphisms for $2\le i\le n-1$,
\item $\del_{(1)}\cup-$ is injective.
\end{itemize}
In particular, their composition
\[ (\del_{(1)}\cup\cdots\cup\del_{(n-1)})\cup-\co\E(T,M^{n-1})\to\E^n(T,A) \]
is injective.
\item Dually, for any $1\le i\le n-1$,
\[ \E^k(M^i,\Tcal)=0 \]
holds for $k\in\{1,\ldots,n-1\}\setminus\{ i\}$.
\end{enumerate}
\end{lem}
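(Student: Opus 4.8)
The plan is to prove Lemma~\ref{LemMvanish} by analyzing how the extension groups $\E^k(T,M^i)$ propagate through the $\sfr$-triangles in the $\sfr$-decomposition $(\ref{sdec2})$, using the long exact sequences of Proposition~\ref{PropLongExact} together with the defining vanishing properties of the $n$-cluster tilting subcategory $\Tcal$ and the extra data coming from Lemma~\ref{LemsDecExist}. Since the statement is explicitly self-dual (Definition~\ref{DefsDec}), I would only prove {\rm (1)} and then invoke duality (via Definition~\ref{Defvp} and Proposition~\ref{PropCupProdDual}) for {\rm (2)}.

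First I would recall that, by Proposition~\ref{PropsDecExist} and its proof (which produces the $\sfr$-decomposition via iterated applications of Lemma~\ref{LemsDecExist}), the intermediate objects $B^i$ in the construction satisfy $\E(\Tcal,B^i)=0$ and $\E^k(\Tcal,B^i)=0 \Leftrightarrow \E^{k-1}(\Tcal,B^{i+1})=0$ for $2\le k\le n-1$, with $B^n$ being the realization of $\del\in\E^n(C,A)=\E(M^n,\Sig^{n-1}A)$. The key observation is that $M^i$ is linked to these $B$'s: in the notation of Lemma~\ref{LemsDecExist}, the object called $M'$ there (which becomes $M^i$) fits into an $\sfr$-triangle $\Sig^{\ell-2}A\to B'\to M'\dra$ with $\E(\Tcal,B')=0$, and it also satisfies (from the octahedral diagram in that proof) an $\sfr$-triangle $D\to M'\to \Sig^{\ell-1}A\dra$ with $\E(\Tcal,D)=0$ and $\E^{k-1}(T,B)\cong\E^k(T,D)$. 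So I would run the long exact sequence of Proposition~\ref{PropLongExact}~{\rm (1)} on the $\sfr$-triangle $M^i\ov{m^i}{\lra}X^{i+1}\ov{e^{i+1}}{\lra}M^{i+1}\ov{\del_{(i+1)}}{\dra}$, using $X^{i+1}\in\Tcal$ (so $\E^k(\Tcal,X^{i+1})=0$ for $1\le k\le n-1$ by Definition~\ref{DefnCluster}), to get
\[ \E^{k-1}(T,M^{i+1})\ov{\del_{(i+1)}\cup-}{\lra}\E^k(T,M^i)\lra \E^k(T,X^{i+1})=0 \]
for $2\le k\le n-1$ and dually an injection $\E(T,M^i)\hookrightarrow\E(T,M^i)$ corrections on the low end; more precisely the connecting maps $\del_{(i+1)}\cup-\co\E^{k-1}(T,M^{i+1})\to\E^k(T,M^i)$ are surjective when $\E^k(\Tcal,X^{i+1})=0$ and injective when $\E^{k-1}(\Tcal,X^{i+1})=0$. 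Combining the two gives that $\del_{(i+1)}\cup-$ is an isomorphism $\E^{k-1}(T,M^{i+1})\xrightarrow{\cong}\E^k(T,M^i)$ for all $k$ in the relevant range, \emph{except} at the boundary where one of $X^{i+1},M^{i+1}$-vanishing conditions fails.

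Then I would chase indices: starting from $M^n=C\in\Tcal$ (when $A,C\in\Tcal$) or more generally from the fact that $\E^k(\Tcal,C)$-behavior is controlled, and using that $M^1$ appears in the $\sfr$-triangle $A\ov{d_X^0}{\lra}X^1\ov{e^1}{\lra}M^1\ov{\del_{(1)}}{\dra}$ with $X^1\in\Tcal$ and $A\in\Tcal$, so that $\del_{(1)}\cup-\co\E^{n-1}(T,M^1)\to\E^n(T,A)$ is injective (the long exact sequence gives $\E^{n-1}(T,X^1)=0\to\E^{n-1}(T,M^1)\xrightarrow{\del_{(1)}\cup-}\E^n(T,A)$). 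Walking the chain $\E(T,M^{n-1})\xrightarrow{\cong}\E^2(T,M^{n-2})\xrightarrow{\cong}\cdots\xrightarrow{\cong}\E^{n-1}(T,M^1)\xrightarrow{\del_{(1)}\cup-}\E^n(T,A)$, I get the displayed sequence in {\rm (1)}, with the composition $(\del_{(1)}\cup\cdots\cup\del_{(n-1)})\cup-$ injective by associativity of $\cup$ (Proposition~\ref{PropCup}). For the assertion $\E^k(\Tcal,M^i)=0$ for $k\in\{1,\dots,n-1\}\setminus\{n-i\}$, I would argue by induction on $i$ (or equivalently on $n-i$): for each such $k$, either $\E^k(\Tcal,X^{i+1})=0$ and $\E^{k-1}(\Tcal,M^{i+1})=0$ by induction hypothesis, forcing $\E^k(\Tcal,M^i)=0$ from the exact sequence; the single surviving degree $k=n-i$ is exactly where the chain of isomorphisms passes through, so it need not vanish.

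The main obstacle I anticipate is the bookkeeping of degrees and the careful tracking of \emph{which} vanishing statements are available at the induction step — in particular making sure the base case ($M^{n-1}$, or equivalently $M^n=C$) is correctly set up, and verifying that the "off-diagonal" vanishing $\E^k(\Tcal,M^i)=0$ for $k\ne n-i$ really propagates without gaps through all $n-1$ stages. A secondary technical point is cleanly extracting, from the proof of Lemma~\ref{LemsDecExist}, the precise auxiliary $\sfr$-triangles and isomorphisms $\E^{k-1}(T,B)\cong\E^k(T,D)$ that feed the induction; this is essentially already done there, so it should be a matter of citing it correctly rather than reproving it. Everything else is a routine diagram chase using Proposition~\ref{PropLongExact}, Proposition~\ref{PropCupProdNat}, and the definition of $n$-cluster tilting.
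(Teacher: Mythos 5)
Your overall strategy is the paper's own: apply the long exact sequences of Proposition~\ref{PropLongExact} to the $\sfr$-triangles of the decomposition, use $\E^k(T,X^i)=0$ and $\E^k(T,A)=\E^k(T,C)=0$ for $1\le k\le n-1$, obtain the diagonal chain of isomorphisms, get injectivity of $\del_{(1)}\cup-$ from $\E^{n-1}(T,X^1)=0$, and identify the composite with $(\del_{(1)}\cup\cdots\cup\del_{(n-1)})\cup-$ via Proposition~\ref{PropCup}. Two minor slips: the sequence you display comes from Proposition~\ref{PropLongExact}~(2), not (1); and the detour through Lemma~\ref{LemsDecExist} and the objects $B^i$ is not legitimate here, because the lemma concerns an arbitrary $\sfr$-decomposition of an arbitrary $\sfr$-decomposable object, not only those produced by the construction in Proposition~\ref{PropsDecExist} --- fortunately your core argument never actually uses that data.

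The genuine gap is in your inductive proof of the vanishing $\E^k(T,M^i)=0$. Your induction runs only from the $C$-end: from $M^i\to X^{i+1}\to M^{i+1}$ you deduce $\E^k(T,M^i)=0$ from $\E^{k-1}(T,M^{i+1})=0$. This works for $k\ge 2$ (then $k-1\in\{1,\ldots,n-2\}$ and $k-1\ne n-i-1$), but for $k=1$ --- needed whenever $i\le n-2$ --- there is no group $\E^0(T,M^{i+1})$ to invoke: the relevant segment of the long exact sequence is $\C(T,X^{i+1})\to\C(T,M^{i+1})\ov{(\del_{(i+1)})\ssh}{\lra}\E(T,M^i)\to\E(T,X^{i+1})=0$, so $\E(T,M^i)$ is a quotient of $\C(T,M^{i+1})$, and its vanishing amounts to surjectivity of $\C(T,X^{i+1})\to\C(T,M^{i+1})$, which is not in your induction hypothesis and is essentially what is being proved. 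The remedy, and what the paper does, is to chase in both directions: for $k<n-i$ use the triangles $A\to X^1\to M^1$ and $M^{j-1}\to X^j\to M^j$ with $j\le i$ to get $\E^k(T,M^i)\cong\E^{k+1}(T,M^{i-1})\cong\cdots\cong\E^{k+i}(T,A)=0$, while your $C$-side chase handles $k>n-i$ via $\E^k(T,M^i)\cong\E^{k-(n-i)}(T,C)=0$. With that adjustment (and part (2) by duality, as you say) your proof coincides with the paper's.
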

\begin{proof}
{\rm (1)} Since $X^i\in\Tcal$, we have isomorphisms
\begin{eqnarray*}
\del_{(1)}\cup-&\co&\E^k(T,M^1)\ov{\cong}{\lra}\E^{k+1}(T,A),\\
\del_{(i+1)}\cup-&\co&\E^k(T,M^{i+1})\ov{\cong}{\lra}\E^{k+1}(T,M^i)\ \ (1\le i\le n-2),\\
\del_{(n)}\cup-&\co&\E^k(T,C)\ov{\cong}{\lra}\E^{k+1}(T,M^{n-1})
\end{eqnarray*}
for any $1\le k\le n-2$ and any $T\in\Tcal$. Thus the former half of the statement follows from
\[ \E^k(T,A)=0\ \ \text{and}\ \ \E^k(T,C)=0\quad(1\le k\le n-1). \]
The latter part is also straightforward.
{\rm (2)} is dual to {\rm (1)}.
\end{proof}

\begin{prop}\label{PropDecompEx}
Let $\Tcal\se\C$ be any $n$-cluster tilting subcategory satisfying Condition~\ref{CondnCluster}. Then any $\sfr$-decomposable object ${}_A\Xd_C\in\AE^{n+2}_{(\Tcal,\E^n)}$ is an $n$-exangle.
\end{prop}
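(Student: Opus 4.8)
The plan is to verify directly that the two functor sequences $(\ref{exnat1})$ and $(\ref{exnat2})$ associated to $\Xd$ are exact, working from the $\sfr$-decomposition $(\ref{sdec2})$. By the self-duality of Definition~\ref{DefsDec} (via $\vt$ and Proposition~\ref{PropCupProdDual}) it suffices to prove exactness of one of them, say
\[ \C(-,A)\ov{\C(-,d_X^0)}{\ltc}\C(-,X^1)\ov{\C(-,d_X^1)}{\ltc}\cdots\ov{\C(-,d_X^n)}{\ltc}\C(-,X^{n+1})\ov{\del\ssh}{\ltc}\E^n(-,A), \]
evaluated at an arbitrary $T\in\Tcal$ (since $\Tcal$ generates, these $\Hom$ functors on $\Tcal$ are determined by their values on objects of $\Tcal$; but in fact we just need exactness as sequences of functors on $\Tcal$, so testing on objects $T\in\Tcal$ is exactly what is required). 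The key tool is Proposition~\ref{PropABCvanish}: since each $M^i$ satisfies the relevant vanishing by Lemma~\ref{LemMvanish}, and $A,C\in\Tcal$, each of the $\sfr$-triangles in $(\ref{sdec2})$ gives a short exact sequence on $\C(T,-)$ of the form
\[ 0\to\C(T,M^{i-1})\ov{\C(T,m^{i-1})}{\lra}\C(T,X^i)\ov{\C(T,e^i)}{\lra}\C(T,M^i)\ov{(\del_{(i)})\ssh}{\lra}\E(T,M^{i-1}) \]
with the leftmost map injective (here $M^0 := A$, $M^n := C$, and one reads the first and last triangle appropriately). Using Condition~\ref{CondnCluster}(c1) is what supplies the injectivity at the initial stage; this is exactly the content of Proposition~\ref{PropABCvanish} applied to objects with $\E^{n-1}(\Tcal,-)=0$.

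Concretely, I would proceed as follows. First, record that $d_X^i = m^i\ci e^i$ for $1\le i\le n-1$, so $\Ker\C(T,d_X^i) = \Ker\C(T,e^i)$ because $\C(T,m^i)$ is injective, and $\Ima\C(T,d_X^{i-1}) = \Ima\C(T,m^{i-1})$ because $\C(T,e^{i-1})$ is surjective onto $\C(T,M^{i-1})$ after one truncates (more precisely $\Ima\C(T,d_X^{i-1}) = \C(T,m^{i-1})(\Ima\C(T,e^{i-1})) = \C(T,m^{i-1})(\C(T,M^{i-1}))$ since $e^{i-1}$ composed with the approximation property — actually the surjectivity of $\C(T,e^{i-1})$ onto $\C(T,M^{i-1})$ follows from $\E(T,X^{i-1}) \to$ no — rather from exactness of the $\sfr$-triangle together with $\E(T,M^{i-2})$-vanishing when $i-2\ne 0$; at the very first step one uses that $\C(T,e^1)$ is surjective because $\E(T,A) $ need not vanish, so instead I truncate differently). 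Let me restate: the clean way is to splice the short exact sequences $0\to\C(T,M^{i-1})\to\C(T,X^i)\to\C(T,m^i)$-image $\to 0$ obtained from injectivity of $\C(T,m^i)$ and surjectivity of $\C(T,e^i)$ onto $\C(T,M^i)$. Then exactness of the long sequence at $\C(T,X^i)$ is the standard splicing argument: $\Ker\C(T,d_X^i) = \C(T,e^i)^{-1}(\Ker\C(T,m^i)) = \C(T,e^i)^{-1}(0) = \Ima\C(T,m^{i-1}\ci e^{i-1}) = \Ima\C(T,d_X^{i-1})$.

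For the two ends of the sequence: exactness at $\C(-,A)$ (i.e.\ injectivity of $\C(T,d_X^0)$) is immediate from the first $\sfr$-triangle $A\to X^1\to M^1\dra$ together with Proposition~\ref{PropABCvanish} (using $A\in\Tcal$ and hence $\E^{n-1}(\Tcal,A)=0$, since $A\in\Tcal$ forces all $\E^i(\Tcal,A)=0$ for $1\le i\le n-1$). Exactness at $\C(-,X^{n+1})=\C(-,C)$, namely $\Ima\C(T,d_X^n) = \Ker(\del\ssh)$, is where I expect the real work: one must identify $\del\ssh\co\C(T,C)\to\E^n(T,A)$ with the composite connecting map coming from iterating the cup-product long exact sequences of Proposition~\ref{PropLongExact} along the decomposition $\del = \del_{(1)}\cup\cdots\cup\del_{(n)}$. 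The plan is to show by descending induction on the triangles $(\ref{sdec2})$ that the relevant connecting maps agree: the last triangle gives $\Ker((\del_{(n)})\ssh\co\C(T,C)\to\E(T,M^{n-1})) = \Ima\C(T,d_X^n)$, and then one pushes forward by $\del_{(1)}\cup\cdots\cup\del_{(n-1)}\cup-$, using that this composite map $\E(T,M^{n-1})\to\E^n(T,A)$ is injective (the final assertion of Lemma~\ref{LemMvanish}(1)), so that $\Ker(\del\ssh) = \Ker((\del_{(n)})\ssh)$ by naturality of the cup product (Proposition~\ref{PropCupProdNat}) and the compatibility $\del\ssh = ((\del_{(1)}\cup\cdots\cup\del_{(n-1)})\cup-)\ci(\del_{(n)})\ssh$, which itself should follow from the diagrams in Remark~\ref{RemCupProd} applied repeatedly, or directly from Proposition~\ref{PropLongExact}. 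The main obstacle is precisely this last identification — making rigorous that the single map $\del\ssh$ on $\C(T,C)$ factors as the injective map $(\del_{(1)}\cup\cdots\cup\del_{(n-1)})\cup-$ post-composed with $(\del_{(n)})\ssh$ — so that the kernel computation for the whole sequence reduces to the kernel computation for a single $\sfr$-triangle, which is classical. Everything else is bookkeeping with the splicing of short exact sequences guaranteed by Proposition~\ref{PropABCvanish} and Lemma~\ref{LemMvanish}.
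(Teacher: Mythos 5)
Your proposal follows the paper's proof essentially step for step: reduce by the self-duality of Definition~\ref{DefsDec} to one of the two sequences, evaluate at $T\in\Tcal$, use Lemma~\ref{LemMvanish} together with Proposition~\ref{PropABCvanish} to turn the $\sfr$-triangles of the decomposition into (short) exact $\C(T,-)$-sequences, splice these for exactness at the interior terms, and treat the last term by factoring $\del\ssh$ as $(\del_{(n)})\ssh$ followed by the injection $(\del_{(1)}\cup\cdots\cup\del_{(n-1)})\cup-$, which is exactly what the paper does (the factorization you flag as the ``real work'' is precisely Proposition~\ref{PropCupProdNat}\,(1) applied to $f\uas(\del_{(1)}\cup\cdots\cup\del_{(n)})$, and is the content hidden in the paper's phrase ``composing with the injection'').

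One side-claim in your argument is wrong, though harmless: you assert exactness at $\C(-,A)$, i.e.\ injectivity of $\C(T,d_X^0)$, and justify it by Proposition~\ref{PropABCvanish} ``using $\E^{n-1}(\Tcal,A)=0$''. That proposition requires the vanishing $\E^{n-1}(\Tcal,-)=0$ on the \emph{cone} $M^1$ of the triangle $A\to X^1\to M^1$, not on $A$, and Lemma~\ref{LemMvanish} precisely excludes $k=n-1$ for $M^1$, so this injectivity is not available; indeed it is false in general (in the $(n+2)$-angulated case the decomposable $n$-exangle $A\to 0\to\cdots\to 0\to\Sigma^nA$ has $\C(T,d_X^0)$ far from injective). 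Fortunately Definition~\ref{DefEA} only demands exactness at $\C(-,X^1),\ldots,\C(-,X^{n+1})$, not at the end term $\C(-,X^0)$, which is why the paper never proves (and does not need) this injectivity; once you drop that claim, your argument coincides with the paper's.
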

\begin{proof}
By the duality, let us only show the exactness of
\begin{equation}\label{ExTTT}
\Tcal(T,A)\ov{\Tcal(T,d_X^0)}{\lra}\Tcal(T,X^1)\ov{\Tcal(T,d_X^1)}{\lra}\cdots\ov{\Tcal(T,d_X^n)}{\lra}\Tcal(T,C)\ov{\del\ssh}{\lra}\E^n(T,A)
\end{equation}
for any $T\in\Tcal$.

Let $(\ref{sdec2})$ be an $\sfr$-decomposition of $\Xd$. By Lemma~\ref{LemMvanish}, we have $\E^{n-1}(T,M^i)=0$ for any $2\le i\le n-1$, and $\E(T,M^i)=0$ for any $1\le i\le n-2$. By Proposition~\ref{PropABCvanish}, we obtain short exact sequences
\[ 0\to\C(T,M^i)\ov{\C(T,m^i)}{\lra}\C(T,X^{i+1})\ov{\C(T,e^{i+1})}{\lra}\C(T,M^{i+1})\to0\quad(1\le i\le n-2). \]
Since the sequences
\begin{eqnarray*}
&\C(T,A)\ov{\C(T,d_X^0)}{\lra}\C(T,X^1)\ov{\C(T,e^1)}{\lra}\C(T,M^1)\to0,&\\
&0\to\C(T,M^{n-1})\ov{\C(T,m^{n-1})}{\lra}\C(T,X^n)\ov{\C(T,d_X^n)}{\lra}\C(T,C)\ov{(\del_{(n)})\ssh}{\lra}\E(T,M^{n-1})&
\end{eqnarray*}
are also exact by $\E(T,A)=0$ and $\E^{n-1}(T,C)=0$, we obtain an exact sequence
\[
\Tcal(T,A)\ov{\Tcal(T,d_X^0)}{\lra}\Tcal(T,X^1)\ov{\Tcal(T,d_X^1)}{\lra}\cdots\ov{\Tcal(T,d_X^n)}{\lra}\Tcal(T,C)\ov{(\del_{(n)})\ssh}{\lra}\E(T,M^{n-1}).
\]
Composing with the injection $(\del_{(1)}\cup\cdots\cup\del_{(n-1)})\cup-\co\E(T,M^{n-1})\to\E^n(T,A)$, we obtain $(\ref{ExTTT})$.
\end{proof}

\begin{prop}\label{PropDecompExAdded}
Let $\Tcal\se\C$ be any $n$-cluster tilting subcategory satisfying Condition~\ref{CondnCluster}. Let ${}_A\Xd_C,{}_B\Yr_D\in\AE^{n+2}_{(\Tcal,\E^n)}$ be any pair of $\sfr$-decomposable objects. Then for any pair of morphisms $a\in\C(A,B),c\in\C(C,D)$ satisfying
\[ \E^n(C,a)(\del)=\E^n(c,B)(\rho), \]
there exists a morphism $f^{\mr}\in \Cbf_{\Tcal}^{n+2}(X^{\mr},Y^{\mr})$ satisfying $f^0=a$ and $f^{n+1}=c$.
\end{prop}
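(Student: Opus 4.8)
The plan is to construct $f^{\mr}$ degree by degree from the given $\sfr$-decompositions of ${}_A\Xd_C$ and ${}_B\Yr_D$, transporting the compatibility $\E^n(C,a)(\del)=\E^n(c,B)(\rho)$ along the two associated chains of $\sfr$-triangles. (For $n=1$ the statement is immediate from axiom $\mathrm{(R0)}$ for $\C$, so assume $n\ge 2$.) First I would fix $\sfr$-decompositions as in Definition~\ref{DefsDec} and name the intermediate objects $M^0=A,M^1,\dots,M^{n-1},M^n=C$ and $N^0=B,N^1,\dots,N^{n-1},N^n=D$, so that one has $\sfr$-triangles $M^{i-1}\xrightarrow{m^{i-1}}X^i\xrightarrow{e^i}M^i\overset{\del_{(i)}}{\dashrightarrow}$ and $N^{i-1}\xrightarrow{n^{i-1}}Y^i\xrightarrow{\bar e^i}N^i\overset{\rho_{(i)}}{\dashrightarrow}$ for $1\le i\le n$, with $m^0=d_X^0$, $e^n=d_X^n$, $d_X^i=m^ie^i$, $\del=\del_{(1)}\cup\cdots\cup\del_{(n)}$, and symmetrically for $Y^{\mr}$. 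For $0\le j\le n$ set $\eta_X^j=\del_{(j+1)}\cup\cdots\cup\del_{(n)}\in\E^{n-j}(C,M^j)$ (so $\eta_X^0=\del$, $\eta_X^n=\id_C$) and likewise $\eta_Y^j\in\E^{n-j}(D,N^j)$. I would then build, by induction on $j$, morphisms $g^j\in\C(M^j,N^j)$ and $f^j\in\C(X^j,Y^j)$ (for $1\le j\le n$) with $g^0=a$, $f^jm^{j-1}=n^{j-1}g^{j-1}$, $\bar e^jf^j=g^je^j$, subject to the invariant
\[ \E^{n-j}(\id_C,g^j)(\eta_X^j)=\E^{n-j}(c,N^j)(\eta_Y^j)\quad\text{in}\quad\E^{n-j}(C,N^j), \]
whose $j=0$ instance is exactly the hypothesis. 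At $j=n$ this invariant reads $g^n=c$; setting $f^0=a$ and $f^{n+1}=c$, the relations $f^jm^{j-1}=n^{j-1}g^{j-1}$ and $\bar e^jf^j=g^je^j$ assemble, using $d_X^i=m^ie^i$, into the desired $f^{\mr}\in\Cbf^{n+2}_{\Tcal}(X^{\mr},Y^{\mr})$.

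In the inductive step, given $g^j$ satisfying the invariant, associativity and bifunctoriality of $\cup$ (Propositions~\ref{PropCup} and~\ref{PropCupProdNat}) rewrite it as
\[ \bigl(\E^1(M^{j+1},g^j)(\del_{(j+1)})\bigr)\cup\eta_X^{j+1}=\rho_{(j+1)}\cup\bigl(\E^{n-j-1}(c,N^{j+1})(\eta_Y^{j+1})\bigr). \]
I would first find $g^{j+1}\in\C(M^{j+1},N^{j+1})$ making $(g^j,g^{j+1})\colon\del_{(j+1)}\to\rho_{(j+1)}$ a morphism of extensions, i.e.\ $\E^1(M^{j+1},g^j)(\del_{(j+1)})=\E^1(g^{j+1},N^j)(\rho_{(j+1)})$; by the long exact sequence of $N^j\to Y^{j+1}\to N^{j+1}\overset{\rho_{(j+1)}}{\dashrightarrow}$ (Proposition~\ref{PropLongExact}) this reduces to the vanishing of the obstruction class $(n^j)_*\bigl(\E^1(M^{j+1},g^j)(\del_{(j+1)})\bigr)\in\E(M^{j+1},Y^{j+1})$, which holds for $1\le j\le n-2$ by Lemma~\ref{LemMvanish}(2) since $X^{j+1},Y^{j+1}\in\Tcal$. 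Substituting this identity into the display above and applying Proposition~\ref{PropCupProdNat}(2) gives $\rho_{(j+1)}\cup\bigl(\E^{n-j-1}(\id_C,g^{j+1})(\eta_X^{j+1})\bigr)=\rho_{(j+1)}\cup\bigl(\E^{n-j-1}(c,N^{j+1})(\eta_Y^{j+1})\bigr)$, and since $\rho_{(j+1)}\cup-$ is injective on $\E^{n-j-1}(C,N^{j+1})$ (again by Proposition~\ref{PropLongExact} for the same $\sfr$-triangle together with $\E^{n-j-1}(C,Y^{j+1})=0$), the invariant holds at $j+1$. Finally $f^{j+1}$ is obtained by lifting $(g^j,g^{j+1})$ along the two realizing $\sfr$-triangles using axiom $\mathrm{(R0)}$ for the extriangulated category $\C$ (Proposition~\ref{Prop1EAandET}). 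The endpoint $j=n-1$ is the same except that one simply puts $g^n:=c$, the invariant at $n-1$ being exactly the assertion that $(g^{n-1},c)\colon\del_{(n)}\to\rho_{(n)}$ is a morphism of extensions.

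The main obstacle will be the two boundary cases $j=0$ (and, symmetrically, $j=n-1$ if one runs the dual construction from the right), where the easy vanishing $\E^1(M^{j+1},\Tcal)=0$ of Lemma~\ref{LemMvanish}(2) is not available and the hypothesis must genuinely be invoked. At $j=0$ the obstruction to the existence of $g^1$ is the class $(d_Y^0)_*\bigl(a_*\del_{(1)}\bigr)\in\E(M^1,Y^1)$, and I would kill it not by vanishing of the whole group but by cupping it on the right with $\eta_X^1$: this sends it to $(d_Y^0)_*(a_*\del)=(d_Y^0)_*(c^*\rho)$, which vanishes because $\rho=\rho_{(1)}\cup\eta_Y^1$ and $(d_Y^0)_*\rho_{(1)}=0$; and the map $-\cup\eta_X^1\colon\E^1(M^1,Y^1)\to\E^n(C,Y^1)$ is injective because, by associativity of $\cup$, it factors as a composite of maps $-\cup\del_{(k)}$ each of which is injective since $\E^{k-1}(X^k,Y^1)=0$. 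Beyond this point I expect the work to be mostly the careful bookkeeping of which argument slot each of $a,c,g^j,g^{j+1}$ acts in, and of the precise index ranges on which the groups $\E^k(\Tcal,\Tcal)$, $\E^k(M^i,\Tcal)$ and $\E^k(\Tcal,M^i)$ vanish (so that the obstruction classes die and the relevant cup maps are injective), rather than any further conceptual difficulty.
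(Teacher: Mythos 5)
Your proposal is correct and takes essentially the same route as the paper's proof: both fix the two $\sfr$-decompositions, inductively build compatible morphisms $g^j$ (the paper's $\ell^j$) and $f^j$ between the decompositions using the vanishing statements of Lemma~\ref{LemMvanish} and the cup-product identities of Proposition~\ref{PropCupProdNat}, and conclude by showing $(g^{n-1},c)\colon\del_{(n)}\to\rho_{(n)}$ is a morphism of extensions and lifting it to get $f^n$. The only differences are organizational — the paper obtains $f^1$ directly from the $n$-exangle exactness of $\Xd$ and invokes the hypothesis once more at the end via the injectivity of $(\rho_{(1)}\cup\cdots\cup\rho_{(n-1)})\cup-$, whereas you kill the initial obstruction with the injective right-cup map $-\cup\eta_X^1$ and then propagate an explicit invariant step by step — but the substance and the key lemmas are the same.
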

\begin{proof}
Let $(\ref{sdec2})$ be an $\sfr$-decomposition of $\Xd$, and let
\begin{equation}\label{sdecofY}
\begin{cases}
\ B\ov{d_Y^0}{\lra}Y^1\ov{t^1}{\lra}N^1\ov{\rho_{(1)}}{\dra}&\\
\ N^i\ov{s^i}{\lra}Y^{i+1}\ov{t^{i+1}}{\lra}N^{i+1}\ov{\rho_{(i+1)}}{\dra}&(i=1,\ldots,n-1)\\
\ N^{n-1}\ov{s^{n-1}}{\lra}Y^n\ov{d_Y^n}{\lra}D\ov{\rho_{(n)}}{\dra}
\end{cases}
\end{equation}
be an $\sfr$-decomposition of $\Yr$.
Since $\Tcal(X^1,Y^1)\ov{-\ci d_X^0}{\lra}\Tcal(A,Y^1)\to\E^n(C,Y^1)$ is exact by Proposition~\ref{PropDecompEx}, there is $f^1\in\Tcal(X^1,Y^1)$ which gives $f^1\ci d_X^0=d_Y^0\ci a$. Then we obtain $\ell^1\in\Tcal(M^1,N^1)$ which satisfies
\[ t^1\ci f^1=\ell^1\ci e^1\ \ \text{and}\ \ (\ell^1)\uas(\rho_{(1)})=a\sas(\del_{(1)}). \]
Since $\C(X^i,Y^i)\ov{-\ci m^{i-1}}{\lra}\C(M^{i-1},Y^i)$ is surjective for $i\ge 2$, inductively we obtain $\ell^i\in\C(M^i,N^i)$ and $f^i\in\Tcal(X^i,Y^i)$ for any $1\le i\le n-1$, satisfying
\[ f^i\ci m^{i-1}=s^{i-1}\ci\ell^{i-1},\ \ \ \ell^i\ci e^i=t^i\ci f^i,\ \ \ (\ell^i)\uas(\rho_{(i)})=(\ell^{i-1})\sas(\del_{(i)}) \]
for all $2\le i\le n-1$.
Then since $(\ell^{n-1})\sas(\del_{(n)})\in\E(C,N^{n-1})$ satisfies
\begin{eqnarray*}
(\rho_{(1)}\cup\cdots\cup\rho_{(n-1)})\cup((\ell^{n-1})\sas(\del_{(n)}))%
&=&((a\sas\del_{(1)})\cup\cdots\cup\del_{(n-1)})\cup\del_{(n)}\\
&=&\E^n(C,a)(\del)\ =\ \E^n(c,B)(\rho)\\
&=& (\rho_{(1)}\cup\cdots\cup\rho_{(n-1)})\cup(c\uas\rho_{(n)})
\end{eqnarray*}
by Proposition~\ref{PropCupProdNat}, it follows $(\ell^{n-1})\sas(\del_{(n)})=c\uas\rho_{(n)}$ from the injectivity of
\[ (\rho_{(1)}\cup\cdots\cup\rho_{(n-1)})\cup-\co\E(C,N^{n-1})\to\E^n(C,B). \]
Thus there exists $f^n\in\Tcal(X^n,Y^n)$ which makes
\[
\xy
(-18,6)*+{M^{n-1}}="0";
(0,6)*+{X^n}="2";
(18,6)*+{C}="4";
(-18,-6)*+{N^{n-1}}="10";
(0,-6)*+{Y^n}="12";
(18,-6)*+{D}="14";
{\ar^(0.56){m^{n-1}} "0";"2"};
{\ar^{d_X^n} "2";"4"};
{\ar_{\ell^{n-1}} "0";"10"};
{\ar^{f^n} "2";"12"};
{\ar^{c} "4";"14"};
{\ar_{s^{n-1}} "10";"12"};
{\ar_{d_Y^n} "12";"14"};
{\ar@{}|\circlearrowright "0";"12"};
{\ar@{}|\circlearrowright "2";"14"};
\endxy
\]
commutative. Then $f^{\mr}=(a,f^1,\ldots,f^n,c)\in\Cbf_{\Tcal}^{n+2}(X^{\mr},Y^{\mr})$ gives a morphism. 
\end{proof}

\begin{cor}\label{CorDecompEx}
Let $\Tcal\se\C$ be any $n$-cluster tilting subcategory satisfying Condition~\ref{CondnCluster}. For any $\del\in\E^n(C,A)$, an object $X^{\mr}\in\TAC$ which gives $\sfr$-decomposable $\Xd\in\AE^{n+2}_{(\Tcal,\E^n)}$, is unique up to homotopical equivalence in $\TAC$. Namely, if $\Yd\in\AE^{n+2}_{(\Tcal,\E^n)}$ is also $\sfr$-decomposable, then $Y^{\mr}$ is homotopically equivalent to $X^{\mr}$ in $\TAC$.
\end{cor}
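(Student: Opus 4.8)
The plan is to exhibit mutually inverse morphisms between $X^{\mr}$ and $Y^{\mr}$ inside $\TAC$ and then appeal to the general rigidity statement for $n$-exangles, Proposition~\ref{PropCAC1}. The first thing I would record is a change-of-ground remark: since $\Tcal\se\C$ is a full additive subcategory closed under direct summands, $\Tcal$ is itself an additive category and $\E^n$ restricts to a biadditive functor $\Tcal\op\ti\Tcal\to\Ab$; consequently every construction of Section~\ref{section_EA} (Definitions~\ref{DefAE} and~\ref{DefEA}, and Proposition~\ref{PropCAC1} in particular) applies verbatim with $(\C,\E)$ replaced by $(\Tcal,\E^n)$ and with $\CAC$ replaced by $\TAC$. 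Moreover $\TAC$ is literally the full subcategory of $\Cbf^{n+2}_{(\C;A,C)}$ on those $X^{\mr}$ with $X^i\in\Tcal$ for $1\le i\le n$, and since $A,C\in\Tcal$ as well, every term $X^i$ $(0\le i\le n+1)$ lies in $\Tcal$; fullness of $\Tcal$ then shows that the homotopy relation on $\TAC$ coincides with the one inherited from $\Cbf^{n+2}_{(\C;A,C)}$.

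Next I would set up the two morphisms. By Proposition~\ref{PropDecompEx} both $\Xd$ and $\Yd$ are $n$-exangles in $\AE^{n+2}_{(\Tcal,\E^n)}$. Applying Proposition~\ref{PropDecompExAdded} with $a=\id_A$ and $c=\id_C$ --- the hypothesis there reads $\E^n(C,\id_A)(\del)=\E^n(\id_C,A)(\del)$, which is the trivial identity $\del=\del$ --- produces a morphism $f^{\mr}\in\Cbf^{n+2}_{\Tcal}(X^{\mr},Y^{\mr})$ with $f^0=\id_A$ and $f^{n+1}=\id_C$, i.e.\ $f^{\mr}\in\TAC(X^{\mr},Y^{\mr})$. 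Interchanging $\Xd$ and $\Yd$ and running the same argument yields a morphism in $\TAC(Y^{\mr},X^{\mr})$, so this hom-set is non-empty.

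Finally I would conclude by Proposition~\ref{PropCAC1}, read in the category $\Tcal$ with the functor $\E^n$ and the extension $\del\in\E^n(C,A)$: since $\Xd$ and $\Yd$ are $n$-exangles, $f^{\mr}\in\TAC(X^{\mr},Y^{\mr})$, and $\TAC(Y^{\mr},X^{\mr})\ne\emptyset$, the morphism $f^{\mr}$ is a homotopy equivalence in $\TAC$; hence $Y^{\mr}$ is homotopically equivalent to $X^{\mr}$ in $\TAC$, as claimed. No step here involves a genuine difficulty; the only point that must be checked with a little care is the change-of-ground remark of the first paragraph, namely that passing from the ambient $\C$ to the subcategory $\Tcal$ is legitimate. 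This is precisely what makes Proposition~\ref{PropDecompExAdded} (whose output morphisms already lie in $\Cbf^{n+2}_{\Tcal}$) and Proposition~\ref{PropCAC1} (whose homotopies have all their components among morphisms of $\Tcal$, hence stay in $\TAC$) directly applicable, so once it is in place the argument is entirely formal.
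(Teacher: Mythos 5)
Your proposal is correct and follows essentially the same route as the paper: apply Proposition~\ref{PropDecompExAdded} (with $a=\id_A$, $c=\id_C$, in both directions) to see that $\TAC(X^{\mr},Y^{\mr})$ and $\TAC(Y^{\mr},X^{\mr})$ are non-empty, then invoke Proposition~\ref{PropCAC1} for the pair of $n$-exangles $\Xd$, $\Yd$ in $(\Tcal,\E^n)$. Your additional change-of-ground remark is a sound and welcome explicit justification of a point the paper leaves implicit.
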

\begin{proof}
By Proposition~\ref{PropDecompExAdded}, we have $\TAC(X^{\mr},Y^{\mr})\ne\emptyset$ and $\TAC(Y^{\mr},X^{\mr})\ne\emptyset$. Thus Proposition~\ref{PropCAC1} shows that $X^{\mr}$ and $Y^{\mr}$ are homotopically equivalent in $\TAC$.
\end{proof}

\begin{cor}\label{CorShowEA2}
Let $\Tcal\se\C$ be any $n$-cluster tilting subcategory satisfying Condition~\ref{CondnCluster}. Let $\Xd,\Yr\in\AE^{n+2}_{(\Tcal,\E^n)}$ be any pair of $\sfr$-decomposable objects, with $\sfr$-decompositions $(\ref{sdec2})$ and $(\ref{sdecofY})$, respectively.

Suppose $A=B$. Then for any $c\in\C(C,D)$ satisfying $\E^n(c,A)(\rho)=\del$, there exists $f^{\mr}\in\Cbf^{n+2}_{\Tcal}(X^{\mr},Y^{\mr})$ with $f^0=\id_A$ and $f^{n+1}=c$, whose mapping cone $\Mf$ gives $\sfr$-decomposable object $\langle \Mf,\E^n(D,d_X^0)(\rho)\rangle\in\AE^{n+2}_{(\Tcal,\E^n)}$.
\end{cor}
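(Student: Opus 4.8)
The plan is to construct, together with the morphism $f^{\mr}$, an $\sfr$-decomposition of $\Mf$ obtained by interleaving the two given $\sfr$-decompositions $(\ref{sdec2})$ and $(\ref{sdecofY})$. Throughout write $\eta=(d_X^0)\sas\rho\in\E^n(D,X^1)$ and use the uniform notation $M^0=A=N^0$, $m^0=d_X^0$, $s^0=d_Y^0$, $M^n=C$, $N^n=D$, $e^n=d_X^n$, $t^n=d_Y^n$, so that $(\ref{sdec2})$ and $(\ref{sdecofY})$ become the families of $\sfr$-triangles
\[ M^k\ov{m^k}{\lra}X^{k+1}\ov{e^{k+1}}{\lra}M^{k+1}\overset{\del_{(k+1)}}{\dra}\quad\text{and}\quad N^k\ov{s^k}{\lra}Y^{k+1}\ov{t^{k+1}}{\lra}N^{k+1}\overset{\rho_{(k+1)}}{\dra}\qquad(0\le k\le n-1), \]
with $d_X^k=m^ke^k$, $d_Y^k=s^kt^k$ for $1\le k\le n-1$, and $\del=\del_{(1)}\cup\cdots\cup\del_{(n)}$, $\rho=\rho_{(1)}\cup\cdots\cup\rho_{(n)}$.

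First I would apply Proposition~\ref{PropDecompExAdded} with $a=\id_A$ — the hypothesis $\E^n(C,\id_A)(\del)=\del=\E^n(c,A)(\rho)$ being exactly $c\uas\rho=\del$ — to obtain $f^{\mr}=(\id_A,f^1,\dots,f^n,c)\in\Cbf^{n+2}_{\Tcal}(X^{\mr},Y^{\mr})$ together with morphisms $\ell^k\in\C(M^k,N^k)$; setting $\ell^0=\id_A$ and $\ell^n=c$, the proof of that proposition provides $f^{k+1}m^k=s^k\ell^k$ ($0\le k\le n-1$), $\ell^ke^k=t^kf^k$ ($1\le k\le n$), and — the relations that drive the gluing below — $(\ell^{k-1})\sas\del_{(k)}=(\ell^{k})\uas\rho_{(k)}$ for $1\le k\le n$, so in particular $(\ell^{n-1})\sas\del_{(n)}=c\uas\rho_{(n)}$. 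By Proposition~\ref{PropMapCone_EA}, $\langle\Mf,\eta\rangle\in\AE^{n+2}_{(\Tcal,\E^n)}$, and it remains to produce an $\sfr$-decomposition of it.

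Next I would splice the constituent $\sfr$-triangles into such a decomposition; this is an octahedral (ET4) argument for the extriangulated category $\CEs$, of the same shape as the classical ones (cf.\ the proof of Proposition~\ref{PropEAtoA} and \cite{Hu}). One builds, by induction on $k$, $\sfr$-triangles
\[ (*_k)\colon\quad \tilde M^k\ov{\tilde m^k}{\lra}X^{k+2}\oplus Y^{k+1}\ov{\tilde e^{k+1}}{\lra}\tilde M^{k+1}\overset{\eta_{(k+1)}}{\dra}\qquad(0\le k\le n-1), \]
with $\tilde M^0:=X^1$, $\tilde m^0=d_{M_f}^0$, $\tilde M^n:=D$, $\tilde e^n=d_{M_f}^n$ (recall $X^{n+1}=C$), together with auxiliary "connecting" $\sfr$-triangles $N^k\to\tilde M^k\to M^{k+1}\overset{(\ell^k)\sas\del_{(k+1)}}{\dra}$ realizing $\tilde M^k$ as an extension of $M^{k+1}$ by $N^k$ (for $k=0$ this is the first $\sfr$-triangle $A\ov{d_X^0}{\to}X^1\ov{e^1}{\to}M^1\overset{\del_{(1)}}{\dra}$). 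At the inductive step one feeds the connecting triangle from the previous stage, the direct-sum $\sfr$-triangle $M^k\oplus Y^k\ov{m^k\oplus\id}{\to}X^{k+1}\oplus Y^k\ov{[e^{k+1}\ 0]}{\to}M^{k+1}\dra$ (obtained via Proposition~\ref{PropSummandOut}), and the morphism $(\ell^{k-1},f^k,\ell^k)$ into the octahedral lemma Lemma~\ref{Lem1EAOcta}, splicing them — twisted appropriately by $(\ell^{k-1},f^k,\ell^k)$ — to get $(*_{k-1})$ with the factorization $\tilde m^{k-1}\tilde e^{k-1}=d_{M_f}^{k-1}$ built in, the next connecting triangle, and the relations $d\uas\eta_{(k)}=(m^{k-1})\sas\rho_{(k)}$ and $\left[\bsm-e^k\\ f^k\esm\right]\sas\eta_{(k)}=e\uas(\text{a lift of }\del_{(k+1)})$ supplied by Lemma~\ref{Lem1EAOcta}\,(ii),(iii). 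The bottom step is the degenerate one: $(*_{n-1})$ is just the cocone $\sfr$-triangle $\tilde M^{n-1}\ov{\tilde m^{n-1}}{\to}C\oplus Y^n\ov{[c\ d_Y^n]}{\to}D\dra$ of the $\sfr$-deflation $[c\ d_Y^n]=[c\ \id_D]\circ\mathrm{diag}(\id_C,d_Y^n)$, whose cocone is identified with $\tilde M^{n-1}$ by comparing the connecting triangle $N^{n-1}\to\tilde M^{n-1}\to C\overset{(\ell^{n-1})\sas\del_{(n)}}{\dra}$ with the pullback $\sfr$-triangle of $d_Y^n$ along $c$, using $(\ell^{n-1})\sas\del_{(n)}=c\uas\rho_{(n)}$.

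Finally one must check $\eta=\eta_{(1)}\cup\cdots\cup\eta_{(n)}$. Starting from $\eta=(d_X^0)\sas(\rho_{(1)}\cup\cdots\cup\rho_{(n)})=\big((d_X^0)\sas\rho_{(1)}\big)\cup\rho_{(2)}\cup\cdots\cup\rho_{(n)}$ (Proposition~\ref{PropCupProdNat}\,(1) and Proposition~\ref{PropCup}), one substitutes the octahedral relations of the previous paragraph to rewrite each successive tail $\cup\rho_{(j)}$ in terms of $\eta_{(j)}$, matching the two sides at every stage via naturality of the cup product together with the injectivity of the iterated cup maps $(\del_{(1)}\cup\cdots\cup\del_{(j)})\cup-$ from Lemma~\ref{LemMvanish} — which is where Condition~\ref{CondnCluster} enters, through Proposition~\ref{PropDecompEx} applied to the $\sfr$-decomposable objects $\langle X^{\mr},\del\rangle$ and $\langle Y^{\mr},\rho\rangle$. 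The $\sfr$-triangles $(*_0),\dots,(*_{n-1})$, the factorizations $d_{M_f}^k=\tilde m^k\tilde e^k$, and this identity together exhibit $\langle\Mf,\eta\rangle$ as $\sfr$-decomposable. The hard part is the bookkeeping in the splicing step — aligning the two interleaved families of $\sfr$-triangles, tracking signs and the twists by the $\ell^k$, and fitting the octahedral outputs and the partial cup products into one coherent decomposition; by contrast the existence of each individual map, $\sfr$-triangle and octahedral splice is supplied by results already established for $\CEs$ and by Condition~\ref{CondnCluster}.
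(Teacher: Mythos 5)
Your overall shape (splice the two $\sfr$-decompositions by octahedra, then verify the cup-product identity by naturality) matches the paper, but there is a genuine gap in the order of construction. You first fix $f^{\mr}$ (and the $\ell^k$) by invoking Proposition~\ref{PropDecompExAdded}, and only afterwards try to build an $\sfr$-decomposition whose differentials are the $d_{M_f}^k$ of that pre-chosen $f^{\mr}$. But whether $\langle\Mf,(d_X^0)\sas\rho\rangle$ is decomposable depends on the choice of lift: already for $n=1$ (where $\Tcal=\C$ and the corollary is exactly {\rm (EA2)} for $\C$) an arbitrary lift of $(\id_A,c)$ is in general not a good lift, so its mapping cone need not be an $\sfr$-triangle. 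The octahedral axiom does not let you prescribe the connecting morphisms, so at your inductive step there is no reason the splice of the connecting triangle, the direct-sum triangle and $(\ell^{k-1},f^k,\ell^k)$ outputs a triangle whose inflation is $\bigl[\begin{smallmatrix}-d_X^k\\ f^k\end{smallmatrix}\bigr]$ with your previously fixed $f^k$; likewise at the bottom step, comparing with the pullback of $d_Y^n$ along $c$ identifies the cocone object up to isomorphism but does not show that $\bigl[\begin{smallmatrix}-d_X^n\\ f^n\end{smallmatrix}\bigr]$ itself is the inflation of an $\sfr$-triangle onto $D$ with the right class (that would say your fixed $f^n$ is good).

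The paper resolves this by constructing $f^{\mr}$ \emph{together with} the decomposition rather than in advance: from the $\ell^i$ (with $(\ell^{i-1})\sas\del_{(i)}=(\ell^i)\uas\rho_{(i)}$) it realizes $\tau_{(i)}=(\ell^{i-1})\sas\del_{(i)}$ by auxiliary $\sfr$-triangles $N^{i-1}\ov{u^{i-1}}{\lra}L^i\ov{v^i}{\lra}M^i$, takes lifts $x^i\in\C(X^i,L^i)$ of $(\ell^{i-1},\id_{M^i})$, extracts $y^i\in\C(L^i,Y^i)$ from {\rm (ET4$\op$)} diagrams (and, crucially, takes a \emph{good} lift of $(\id_{N^{n-1}},c)$ in the extriangulated category $\C$ for $y^n$), and only then defines $f^i:=y^i\ci x^i$. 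With this choice the triangles $L^i\to X^{i+1}\oplus Y^i\to L^{i+1}\ov{\partial_{(i+1)}}{\dra}$ have differentials literally equal to the $d_{M_f}^k$, and the identity $\partial_{(2)}\cup\cdots\cup\partial_{(n)}\cup(u^{n-1}\sas\rho_{(n)})=(d_X^0)\sas\rho$ follows directly from $(u^i)\uas\partial_{(i+1)}=u^{i-1}\sas\rho_{(i)}$ and Proposition~\ref{PropCupProdNat} (no injectivity from Lemma~\ref{LemMvanish} is needed at this point, contrary to your sketch). To repair your argument you would have to redo it in this order; as written, the step ``with the factorization $\tilde m^{k-1}\tilde e^{k-1}=d_{M_f}^{k-1}$ built in'' is unjustified.
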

\begin{proof}
As in the proof of Proposition~\ref{PropDecompExAdded}, we have some sequence of morphisms $\{\ell^i\in\C(M^i,N^i)\}_{1\le i\le n-1}$ in $\C$ satisfying
\begin{eqnarray*}
\del_{(1)}&=&(\ell^1)\uas\rho_{(1)},\\
(\ell^{i-1})\sas\del_{(i)}&=&(\ell^i)\uas\rho_{(i)}\quad(2\le i\le n-1),\\
(\ell^{n-1})\sas\del_{(n)}&=&c\uas\rho_{(n)}.
\end{eqnarray*}
For a convenience of the description, put
\begin{eqnarray*}
&M^0=N^0=A,\ M^n=C,\ N^n=D,&\\
&m^0=d_X^0,\ e^n=d_X^n,\ s^0=d_Y^0,\ t^n=d_Y^n,&
\end{eqnarray*}
and $\ell^0=\id_A$, $\ell^n=c$. The above equations can be written uniformly as
\[ (\ell^{i-1})\sas\del_{(i)}=(\ell^i)\uas\rho_{(i)}\quad(1\le i\le n). \]
For each $1\le i\le n$, put $\tau_{(i)}=(\ell^{i-1})\sas\del_{(i)}\big(=(\ell^i)\uas\rho_{(i)}\big)$  and realize it to obtain an $\sfr$-triangle
\begin{equation}\label{Realtau}
N^{i-1}\ov{u^{i-1}}{\lra}L^i\ov{v^i}{\lra}M^i\ov{\tau_{(i)}}{\dra}.
\end{equation}
For $i=1$, we choose $(\ref{Realtau})$ as $A\ov{d_X^0}{\lra}X^1\ov{e^1}{\lra}M^1\ov{\del_{(1)}}{\dra}$.

In the sequel, we construct morphisms $x^i\in\C(X^i,L^i)$ and $y^i\in\C(L^i,Y^i)$ for $1\le i\le n$. Firstly, let us give $x^i$. We just put $x^1=\id_{X^1}$ for $i=1$. For $2\le i\le n$, as a lift of $(\ell^{i-1},\id_{M^i})\co\del_{(i)}\to\tau_{(i)}$, take a morphism of $\sfr$-triangles
\begin{equation}\label{strMorphMXM}
\xy
(-16,6)*+{M^{i-1}}="0";
(0,6)*+{X^i}="2";
(16,6)*+{M^i}="4";
(30,6)*+{}="6";
(-16,-6)*+{N^{i-1}}="10";
(0,-6)*+{L^i}="12";
(16,-6)*+{M^i}="14";
(30,-6)*+{}="16";
{\ar^(0.56){m^{i-1}} "0";"2"};
{\ar^{e^i} "2";"4"};
{\ar@{-->}^{\del_{(i)}} "4";"6"};
{\ar_{\ell^{i-1}} "0";"10"};
{\ar^{x^i} "2";"12"};
{\ar@{=} "4";"14"};
{\ar_{u^{i-1}} "10";"12"};
{\ar_{v^i} "12";"14"};
{\ar@{-->}_{\tau_{(i)}} "14";"16"};
{\ar@{}|\circlearrowright "0";"12"};
{\ar@{}|\circlearrowright "2";"14"};
\endxy
\end{equation}
which makes
\[ M^{i-1}\ov{\Big[\raise1ex\hbox{\leavevmode\vtop{\baselineskip-8ex \lineskip1ex \ialign{#\crcr{$\ \scriptstyle{m^{i-1}}$}\crcr{$\scriptstyle{-\ell^{i-1}}$}\crcr}}}\Big]}{\lra}X^i\oplus N^{i-1}\ov{[x^i\ u^{i-1}]}{\lra}L^i\ov{(v^i)\uas\del_{(i)}}{\dra} \]
an $\sfr$-triangle. Secondly, let us give $y^i$. For $i=n$, we take a good lift  of $(\id_{N^{n-1}},c)$
\begin{equation}\label{GoodLiftAdd}
\xy
(-16,6)*+{N^{n-1}}="0";
(0,6)*+{L^n}="2";
(16,6)*+{C}="4";
(30,6)*+{}="6";
(-16,-6)*+{N^{n-1}}="10";
(0,-6)*+{Y^n}="12";
(16,-6)*+{D}="14";
(30,-6)*+{}="16";
{\ar^(0.56){u^{n-1}} "0";"2"};
{\ar^{v^n} "2";"4"};
{\ar@{-->}^{\tau_{(n)}} "4";"6"};
{\ar@{=} "0";"10"};
{\ar^{y^n} "2";"12"};
{\ar^{c} "4";"14"};
{\ar_{s^{n-1}} "10";"12"};
{\ar_{d_Y^n} "12";"14"};
{\ar@{-->}_{\rho_{(n)}} "14";"16"};
{\ar@{}|\circlearrowright "0";"12"};
{\ar@{}|\circlearrowright "2";"14"};
\endxy
\end{equation}
so that
\begin{equation}\label{sdecMn}
L^n\ov{\Big[\raise1.5ex\hbox{\leavevmode\vtop{\baselineskip-8ex \lineskip1ex \ialign{#\crcr{$\ \scriptstyle{-v^n}$}\crcr{$\scriptstyle{\ y^n}$}\crcr}}}\Big]}{\lra}C\oplus Y^n\ov{[c\ d_Y^n]}{\lra}D\ov{(u^{n-1})\sas\rho_{(n)}}{\dra}
\end{equation}
becomes an $\sfr$-triangle. For $1\le i\le n-1$, remark that
\[ N^{i-1}\ov{\Big[\raise1.4ex\hbox{\leavevmode\vtop{\baselineskip-8ex \lineskip1ex \ialign{#\crcr{\ $\scriptstyle{0}$\ }\crcr{$\scriptstyle{s^{i-1}}$}\crcr}}}\Big]}{\lra}X^{i+1}\oplus Y^i\ov{\id\oplus t^i}{\lra}X^{i+1}\oplus N^i\ov{(p^i)\uas\rho_{(i)}}{\dra} \]
is an $\sfr$-triangle, where $p^i=[0\ 1]\co X^{i+1}\oplus N^i\to N^i$ is the projection. If we put $r^{i+1}=[x^{i+1}\ u^i\ci t^i]$, then by {\rm (ET4$\op$)} (i.e., the dual of Lemma~\ref{Lem1EAOcta}) we obtain a diagram made of $\sfr$-triangles
\begin{equation}\label{TempsNLM}
\xy
(-36,8)*+{N^{i-1}}="0";
(-12,8)*+{{}^{\exists}L\ppr}="2";
(12,8)*+{M^i}="4";
(36,8)*+{}="6";
(-36,-8)*+{N^{i-1}}="10";
(-12,-8)*+{X^{i+1}\oplus Y^i}="12";
(12,-8)*+{X^{i+1}\oplus N^i}="14";
(36,-8)*+{}="16";
(-12,-24)*+{L^{i+1}}="22";
(12,-24)*+{L^{i+1}}="24";
(-12,-39)*+{}="32";
(12,-39)*+{}="34";
{\ar^{{}^{\exists}u\ppr} "0";"2"};
{\ar^{{}^{\exists}v\ppr} "2";"4"};
{\ar@{-->}^(0.7){(p^i)\uas\rho_{(i)}} "4";"6"};
(20,12)*+{\Big[\ \ \ \Big]};
(20,14.4)*+{\scriptstyle{m^i}};
(20,10.8)*+{\scriptstyle{-\ell^i}};
(23.8,13.6)*+{{}\uas};
(16,0)*+{\Big[\ \ \ \Big]};
(16,2.4)*+{\scriptstyle{m^i}};
(16,-1.2)*+{\scriptstyle{-\ell^i}};
{\ar@{=} "0";"10"};
{\ar_{{}^{\exists}q\ppr} "2";"12"};
{\ar^{} "4";"14"};
{\ar_{} "10";"12"};
{\ar_{\id\oplus t^i} "12";"14"};
{\ar@{-->}^(0.64){(p^i)\uas\rho_{(i)}} "14";"16"};
(-26,-12.42)*+{\Big[\ \ \ \ \Big]};
(-26,-11)*+{\scriptstyle{0}};
(-26,-14)*+{\scriptstyle{s^{i-1}}};
{\ar_{r^{i+1}} "12";"22"};
{\ar^{[x^{i+1}\ u^i]} "14";"24"};
{\ar@{=} "22";"24"};
{\ar@{-->}_{{}^{\exists}\partial\ppr} "22";"32"};
{\ar@{-->}^{(v^{i+1})\uas\del_{(i+1)}} "24";"34"};
{\ar@{}|\circlearrowright "0";"12"};
{\ar@{}|\circlearrowright "2";"14"};
{\ar@{}|\circlearrowright "12";"24"};
\endxy
\end{equation}
satisfying $v\ppr\sas\partial\ppr=(v^{i+1})\uas\del_{(i+1)}$ and $[x^{i+1}\ u^i]\uas\partial\ppr=u\ppr\sas(p^i)\uas\rho_{(i)}$. Since
\[ \Big[\raise1ex\hbox{\leavevmode\vtop{\baselineskip-8ex \lineskip1ex \ialign{#\crcr{$\ \scriptstyle{m^i}$}\crcr{$\scriptstyle{-\ell^i}$}\crcr}}}\Big]\uas (p^i)\uas\rho_{(i)}=-(\ell^i)\uas\rho_{(i)}=-\tau_{(i)}, \]
there is an isomorphism $j\co L\ppr\ov{\cong}{\lra}L^i$ which makes
\[
\xy
(-16,0)*+{N^{i-1}}="0";
(3,0)*+{}="1";
(0,8)*+{L\ppr}="2";
(0,-8)*+{L^i}="4";
(-3,0)*+{}="5";
(16,0)*+{M^i}="6";
{\ar^{u\ppr} "0";"2"};
{\ar^{v\ppr} "2";"6"};
{\ar_{u^{i-1}} "0";"4"};
{\ar_{-v^i} "4";"6"};
{\ar^{j}_{\cong} "2";"4"};
{\ar@{}|\circlearrowright "0";"1"};
{\ar@{}|\circlearrowright "5";"6"};
\endxy
\]
commutative in $\C$. Composing with this isomorphism, if we put $q^i=q\ppr\ci j\iv$ and $\partial_{(i+1)}=j\sas\partial\ppr$, we obtain a diagram made of $\sfr$-triangles
\begin{equation}\label{ObtsNLM}
\xy
(-36,8)*+{N^{i-1}}="0";
(-12,8)*+{L^i}="2";
(12,8)*+{M^i}="4";
(36,8)*+{}="6";
(-36,-8)*+{N^{i-1}}="10";
(-12,-8)*+{X^{i+1}\oplus Y^i}="12";
(12,-8)*+{X^{i+1}\oplus N^i}="14";
(36,-8)*+{}="16";
(-12,-24)*+{L^{i+1}}="22";
(12,-24)*+{L^{i+1}}="24";
(-12,-39)*+{}="32";
(12,-39)*+{}="34";
{\ar^{u^{i-1}} "0";"2"};
{\ar^{-v^i} "2";"4"};
{\ar@{-->}^{-\tau_{(i)}} "4";"6"};
(16,0)*+{\Big[\ \ \ \Big]};
(16,2.4)*+{\scriptstyle{m^i}};
(16,-1.2)*+{\scriptstyle{-\ell^i}};
{\ar@{=} "0";"10"};
{\ar_{q^i} "2";"12"};
{\ar^{} "4";"14"};
{\ar_{} "10";"12"};
{\ar_{\id\oplus t^i} "12";"14"};
{\ar@{-->}^(0.64){(p^i)\uas\rho_{(i)}} "14";"16"};
(-26,-12.42)*+{\Big[\ \ \ \ \Big]};
(-26,-11)*+{\scriptstyle{0}};
(-26,-14)*+{\scriptstyle{s^{i-1}}};
{\ar_{r^{i+1}} "12";"22"};
{\ar^{[x^{i+1}\ u^i]} "14";"24"};
{\ar@{=} "22";"24"};
{\ar@{-->}_{\partial_{(i+1)}} "22";"32"};
{\ar@{-->}^{(v^{i+1})\uas\del_{(i+1)}} "24";"34"};
{\ar@{}|\circlearrowright "0";"12"};
{\ar@{}|\circlearrowright "2";"14"};
{\ar@{}|\circlearrowright "12";"24"};
\endxy
\end{equation}
satisfying $-v^i\sas\partial_{(i+1)}=(v^{i+1})\uas\del_{(i+1)}$ and $[x^{i+1}\ u^i]\uas\partial_{(i+1)}=u^{i-1}\sas(p^i)\uas\rho_{(i)}$. In particular
\begin{equation}\label{Relofdelandrho}
(u^i)\uas\partial_{(i+1)}=u^{i-1}\sas\rho_{(i)}
\end{equation}
holds, and
\begin{equation}\label{sdecMi}
L^i\ov{q^i}{\lra}X^{i+1}\oplus Y^i\ov{r^{i+1}}{\lra}L^{i+1}\ov{\partial_{(i+1)}}{\dra}
\end{equation}
becomes an $\sfr$-triangle, for any $1\le i\le n-1$. By the commutativity of $(\ref{ObtsNLM})$, morphism $q^i$ is of the form
\[ q^i=\begin{bmatrix}-m^i\ci v^i\\ y^i\end{bmatrix} \]
for some $y^i\in\C(L^i,Y^i)$, which satisfies
\begin{equation}\label{propertyofy}
y^i\ci u^{i-1}=s^{i-1}\quad\text{and}\quad t^i\ci y^i=\ell^i\ci v^i.
\end{equation}

Thus we have obtained $x^i\in\C(X^i,L^i)$ and $y^i\in\C(L^i,Y^i)$ for $1\le i\le n$. If we put $f^i=y^i\ci x^i\in\C(X^i,Y^i)$, they form a morphism
\[ f^{\mr}=(\id_A,f^1,\ldots,f^n,c)\in\Cbf^{n+2}_{\Tcal}(X^{\mr},Y^{\mr}) \]
by the commutativity of $(\ref{strMorphMXM})$, (\ref{GoodLiftAdd}) and $(\ref{propertyofy})$. Let us show the $\sfr$-decomposability of $\langle \Mf,\E^n(D,d_X^0)(\rho)\rangle$.
By the argument so far, we have $\sfr$-triangles
\begin{equation}\label{sdecMC}
\begin{cases}
\ X^1\ov{q^1}{\lra}X^2\oplus Y^1\ov{r^2}{\lra}L^2\ov{\partial_{(2)}}{\dra},&\\
\ L^i\ov{q^i}{\lra}X^{i+1}\oplus Y^i\ov{r^{i+1}}{\lra}L^{i+1}\ov{\partial_{(i+1)}}{\dra}&(i=2,\ldots,n-1),\\
\ L^n\ov{\Big[\raise1.5ex\hbox{\leavevmode\vtop{\baselineskip-8ex \lineskip1ex \ialign{#\crcr{$\ \scriptstyle{-v^n}$}\crcr{$\scriptstyle{\ y^n}$}\crcr}}}\Big]}{\lra}C\oplus Y^n\ov{[c\ d^n_Y]}{\lra}D\ov{u^{n-1}\sas\rho_{(n)}}{\dra}
\end{cases}
\end{equation}
as in $(\ref{sdecMi})$ and $(\ref{sdecMn})$. Commutativity of the related diagrams shows
\[ q^1=\begin{bmatrix}-m^1\ci e^1\\ y^1\end{bmatrix}=\begin{bmatrix}-d_X^1\\ f^1\end{bmatrix}=d_{M_f}^0, \]
\[ q^i\ci r^i=\left[\begin{array}{cc}-m^i\ci e^i&0\\ y^i\ci x^i&s^{i-1}\ci t^{i-1}\end{array}\right]=\left[\begin{array}{cc}-d_X^i&0\\ f^i&d_Y^{i-1}\end{array}\right]=d_{M_f}^{i-1}\quad(2\le i\le n), \]
\[ \begin{bmatrix}-v^n\\ y^n\end{bmatrix}\ci r^n=\left[\begin{array}{cc}-e^n&0\\ f^n&s^{n-1}\ci t^{n-1}\end{array}\right]=\left[\begin{array}{cc}-d_X^n&0\\ f^n&d_Y^{n-1}\end{array}\right]=d_{M_f}^n. \]
Moreover by Proposition~\ref{PropCupProdNat}, equality $(\ref{Relofdelandrho})$ shows
\begin{eqnarray*}
\partial_{(2)}\cup\partial_{(3)}\cup\cdots\cup\partial_{(n)}\cup u^{n-1}\sas\rho_{(n)}&=&\partial_{(2)}\cup\partial_{(3)}\cup\cdots\cup\big((u^{n-1})\uas\partial_{(n)}\big)\cup \rho_{(n)}\\
&=&\cdots\ =\big( u^0\sas\rho_{(1)}\big)\cup\rho_{(2)}\cup\cdots\cup\rho_{(n)}\\
&=&\E^n(D,d_X^0)(\rho)
\end{eqnarray*}
since $u^0=d_X^0$. Thus 
$(\ref{sdecMC})$ gives an $\sfr$-decomposition of $\langle \Mf,\E^n(D,d_X^0)(\rho)\rangle$.
\end{proof}

Let us show the converse of Corollary \ref{CorDecompEx}, under the assumption of Condition~\ref{CondWIC}.
\begin{rem}\label{RemsDecHtpyInv}
Let $\CEs$ be an extriangulated category, and let
\[ A\ov{x}{\lra}B\ov{y}{\lra}C\ov{\del}{\dra},\quad A\ov{x\ppr}{\lra}B\ppr\ov{y\ppr}{\lra}C\ppr\ov{\del\ppr}{\dra} \]
be $\sfr$-triangles. By {\rm (EA1)} and by Proposition~\ref{PropShiftedReal},
\begin{itemize}
\item any morphism $(\id_A,c)\co\del\to\del\ppr$
\end{itemize}
or
\begin{itemize}
\item any morphism $b\in\C(B,B\ppr)$ satisfying $b\ci x=x\ppr$
\end{itemize}
can be completed into a morphism of $\sfr$-triangles
\[
\xy
(-12,6)*+{A}="0";
(0,6)*+{B}="2";
(12,6)*+{C}="4";
(24,6)*+{}="6";
(-12,-6)*+{A}="10";
(0,-6)*+{B\ppr}="12";
(12,-6)*+{C\ppr}="14";
(24,-6)*+{}="16";
{\ar^{x} "0";"2"};
{\ar^{y} "2";"4"};
{\ar@{-->}^{\del} "4";"6"};
{\ar@{=} "0";"10"};
{\ar^{b} "2";"12"};
{\ar^{c} "4";"14"};
{\ar_{x\ppr} "10";"12"};
{\ar_{y\ppr} "12";"14"};
{\ar@{-->}_{\del\ppr} "14";"16"};
{\ar@{}|\circlearrowright "0";"12"};
{\ar@{}|\circlearrowright "2";"14"};
\endxy
\]
which gives an $\sfr$-triangle $B\ov{\Big[\raise1ex\hbox{\leavevmode\vtop{\baselineskip-8ex \lineskip1ex \ialign{#\crcr{$\scriptstyle{-y}$}\crcr{$\scriptstyle{\ b}$}\crcr}}}\Big]}{\lra}C\oplus B\ppr\ov{[c\ y\ppr]}{\lra}C\ppr\ov{x\sas\del\ppr}{\dra}$.
If $\del\ppr$ moreover satisfies $x\sas\del\ppr=0$, this means that
\[ 0\to B\ov{\Big[\raise1ex\hbox{\leavevmode\vtop{\baselineskip-8ex \lineskip1ex \ialign{#\crcr{$\scriptstyle{-y}$}\crcr{$\scriptstyle{\ b}$}\crcr}}}\Big]}{\lra}C\oplus B\ppr\ov{[c\ y\ppr]}{\lra}C\ppr\to0 \]
is a split short exact sequence in $\C$.
\end{rem}

\begin{prop}\label{PropsDecHtpyInv}
Assume that an extriangulated category $\CEs$ satisfies Condition~\ref{CondWIC}, and that its $n$-cluster tilting subcategory $\Tcal\se\C$ satisfies Condition~\ref{CondnCluster}.

Let ${}_A\Xd_C\in\AE^{n+2}_{(\Tcal,\E^n)}$ be an $\sfr$-decomposable object. If $Y^{\mr}\in\TAC$ is homotopically equivalent to $X^{\mr}$ in $\TAC$, then $\Yd$ is also $\sfr$-decomposable.
\end{prop}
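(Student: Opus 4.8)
The plan is to transport the $\sfr$-decomposition of $\Xd$ along a homotopy equivalence. Fix a homotopy equivalence $f^{\mr}\co X^{\mr}\to Y^{\mr}$ in $\TAC$ together with a homotopy inverse $g^{\mr}\co Y^{\mr}\to X^{\mr}$ in $\TAC$, so that $f^0=g^0=\id_A$ and $f^{n+1}=g^{n+1}=\id_C$ (cf.\ Claim~\ref{ClaimDEA}~{\rm (3)}). Write a fixed $\sfr$-decomposition $(\ref{sdec2})$ of $\Xd$ uniformly as $\sfr$-triangles $M^{i-1}\ov{m^{i-1}}{\lra}X^i\ov{e^i}{\lra}M^i\ov{\del_{(i)}}{\dra}$ for $1\le i\le n$, where $M^0=A$, $M^n=C$, $m^0=d_X^0$, $e^n=d_X^n$, so that $d_X^i=m^i\ci e^i$ and $\del=\del_{(1)}\cup\cdots\cup\del_{(n)}$; the goal is to construct $\sfr$-triangles $N^{i-1}\ov{s^{i-1}}{\lra}Y^i\ov{t^i}{\lra}N^i\ov{\rho_{(i)}}{\dra}$ with $N^0=A$, $N^n=C$, $s^0=d_Y^0$, $t^n=d_Y^n$, $d_Y^i=s^i\ci t^i$ and $\rho=\rho_{(1)}\cup\cdots\cup\rho_{(n)}$.

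Condition~\ref{CondWIC} enters immediately. From the chain-map identity $d_X^0=g^1\ci d_Y^0$ and the fact that $d_X^0$ is an $\sfr$-inflation, Condition~\ref{CondWIC}~{\rm (1)} shows $d_Y^0$ is an $\sfr$-inflation; dually, $d_X^n=d_Y^n\ci f^n$ together with Condition~\ref{CondWIC}~{\rm (2)} shows $d_Y^n$ is an $\sfr$-deflation. The same device is used at every later stage to keep the intermediate maps $s^i$ among the $\sfr$-inflations.

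Next I would run an induction on $i$ that produces, at stage $i$: the $i$-th $\sfr$-triangle displayed above, comparison morphisms $\ell^i\co M^i\to N^i$ and $k^i\co N^i\to M^i$, and a map $s^i$ with $d_Y^i=s^i\ci t^i$, subject to $\del_{(i)}=(\ell^i)\uas\rho_{(i)}$, $\rho_{(i)}=(k^i)\uas\del_{(i)}$, and commutativity of the squares making $(\ell^{i-1},f^i,\ell^i)$ and $(k^i,g^i,k^{i-1})$ morphisms of $\sfr$-triangles. The engine is Remark~\ref{RemsDecHtpyInv}: feeding it the $i$-th $\sfr$-triangle of $X^{\mr}$ and the compatibility $f^i\ci m^{i-1}=s^{i-1}\ci\ell^{i-1}$ yields $\ell^i$, the deflation $t^i$, and an $\sfr$-triangle
\[ X^i\ov{v}{\lra}M^i\oplus Y^i\ov{[\ell^i\ t^i]}{\lra}N^i\ov{(m^{i-1})\sas\rho_{(i)}}{\dra},\qquad v=\begin{bmatrix}-e^i\\ f^i\end{bmatrix}; \]
since the $g$-side comparison provides $\rho_{(i)}=(k^i)\uas\del_{(i)}$ and $(m^{i-1})\sas\del_{(i)}=0$, the extension $(m^{i-1})\sas\rho_{(i)}$ vanishes, so this is a \emph{split} short exact sequence in $\C$. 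Consequently $N^i\oplus X^i\cong M^i\oplus Y^i$ with $X^i,Y^i\in\Tcal$; since $\E^k(\Tcal,T)=0=\E^k(T,\Tcal)$ for $T\in\Tcal$ and $1\le k\le n-1$, this stable isomorphism transports the vanishing statements of Lemma~\ref{LemMvanish} from $M^i$ to $N^i$. These, with Proposition~\ref{PropsDecAdded}, Proposition~\ref{PropSummandOut}, {\rm (EA1)} and the freedom to alter $s^i$ without disturbing $d_Y^i=s^i\ci t^i$, let me check that $s^i$ is an $\sfr$-inflation compatible with $m^i$, so the induction continues to stage $i+1$. At $i=n$ the split exact sequence yields $N^n\cong C\oplus K$ for some $K\in\C$, and weak idempotent completeness (Condition~\ref{CondWIC}) forces $K=0$, after which $t^n=d_Y^n$. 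Finally $\rho=\rho_{(1)}\cup\cdots\cup\rho_{(n)}$ follows from the relations $\del_{(i)}=(\ell^i)\uas\rho_{(i)}$ by the naturality of the cup product (Proposition~\ref{PropCupProdNat}), and the constructed $\sfr$-triangles constitute an $\sfr$-decomposition of $\Yd$.

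The main obstacle is the inductive step. A priori the deflations $e^i$ need not be epimorphisms, so the comparison squares need not commute on the nose; this is overcome by exploiting the ambiguity in $s^i$ --- two choices differ by a map killed by $t^i$, and via the split sequence above such maps are parametrised by maps $M^i\to Y^{i+1}$ killed by $e^i$, which is precisely enough freedom to force $s^i\ci\ell^i=f^{i+1}\ci m^i$. With that compatibility in hand, $s^i$ and the terminal map are exhibited as composites of $\sfr$-inflations with split monomorphisms, so that {\rm (EA1)}, Corollary~\ref{CorDEAInv} and Condition~\ref{CondWIC} apply to keep them $\sfr$-inflations and to strip off the spurious summand $K$ at the last stage. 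Verifying all of these compatibilities simultaneously, rather than any single one of them, is the real work.
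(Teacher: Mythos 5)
Your overall strategy (induct along the decomposition of $X^{\mr}$, transport it to $Y^{\mr}$ via comparison maps, use Condition~\ref{CondWIC} to recognize inflations, and finish with naturality of the cup product) is the same as the paper's, but the two steps that carry all the difficulty are not actually established. First, the inductive engine does not work as stated. Remark~\ref{RemsDecHtpyInv} only produces a mapping-cone $\sfr$-triangle for a morphism of $\sfr$-triangles whose first components coincide (or are the identity); you invoke it for a would-be morphism $(\ell^{i-1},f^i,\ell^i)$ between triangles with \emph{different} first objects $M^{i-1}$ and $N^{i-1}$, and moreover the target triangle $N^{i-1}\to Y^i\to N^i$ is precisely what has to be constructed, so the argument is circular. (The displayed extension $(m^{i-1})\sas\rho_{(i)}$ is not even well-typed, since $\rho_{(i)}\in\E(N^i,N^{i-1})$ while $m^{i-1}$ starts at $M^{i-1}$; similarly the compatibility should read $(\ell^{i-1})\sas\del_{(i)}=(\ell^i)\uas\rho_{(i)}$, and the final identity must be $\del=\rho_{(1)}\cup\cdots\cup\rho_{(n)}$, there is no separate $\rho$.) The stage-$0$ trick $d_X^0=g^1\ci d_Y^0$ has no direct analogue later, because $f^{\mr},g^{\mr}$ relate $X^i$ and $Y^i$ but not the intermediate cones. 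What is needed, and what the paper does, is to realize the pushed-forward extension $\ell^{i-1}\sas\del_{(i)}$ by an auxiliary triangle $N^{i-1}\ov{w^{i-1}}{\lra}Z^i\to M^i$, prove $Z^i\in\Tcal$ (this uses the inductive hypothesis that $\ell^{i-1}$ induces isomorphisms on $\E^k(-,T)$ and $\E^k(T,-)$ for $1\le k\le n-1$, together with the long exact sequences of Proposition~\ref{PropLongExact}), then use the $n$-exangle exactness of $\Yd$ and Proposition~\ref{PropABCvanish} to manufacture $y^i\co Y^i\to Z^i$ with $y^i\ci s^{i-1}=w^{i-1}$; only then does Condition~\ref{CondWIC} show $s^{i-1}$ is an $\sfr$-inflation and allow the induction to proceed. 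None of this comparison-and-vanishing bookkeeping (the conditions $(B_i)$, $(C_i)$ of the paper's Claim~\ref{ClaimDecHtpyInv}) appears in your proposal, and without it the claim that ``the same device keeps the $s^i$ inflations'' is unsupported.

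Second, the terminal step is wrong as written. From a split sequence at $i=n$ you would at best get $C\oplus Y^n\cong X^n\oplus N^n$; this does not yield $N^n\cong C\oplus K$, and weak idempotent completeness does not permit cancelling summands, so ``$K=0$'' is unjustified. The genuine content here is to show that $N^{n-1}\ov{s^{n-1}}{\lra}Y^n\ov{d_Y^n}{\lra}C\ov{\rho_{(n)}}{\dra}$ is an $\sfr$-triangle, which the paper proves by comparing with the realization $N^{n-1}\to Z^n\to C$ of $\rho_{(n)}=\ell^{n-1}\sas\del_{(n)}$, building $v^n\co Z^n\to Y^n$, and showing $v^n$ is an isomorphism by a five-lemma argument on $\Tcal(T,-)$ for all $T\in\Tcal$ (using the injectivity statement of Lemma~\ref{LemMvanish} and an exact sequence extracted from the inductive data and the $n$-exangle property of $\Yd$), followed by Yoneda. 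Until you supply substitutes for these two arguments, the proof has essential gaps.
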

\begin{proof}
Since the case $n=1$ is trivial, we assume $n\ge 2$.
Let $(\ref{sdec2})$ be an $\sfr$-decomposition of $\Xd$, and let $f^{\mr}\in\TAC(X^{\mr},Y^{\mr})$, $g^{\mr}\in\TAC(Y^{\mr},X^{\mr})$ be morphisms which give a homotopical equivalence. Remark that $\Xd$ is an $n$-exangle by Proposition~\ref{PropDecompEx}. 
Since $Y^{\mr}$ is homotopically equivalent to $X^{\mr}$ in $\TAC$, the pair $\Yd$ is also an $n$-exangle.

Let us first show the following.
\begin{claim}\label{ClaimDecHtpyInv}
For any $0\le i\le n-1$, there is a $6$-tuple $\Rfr^i=(N^i,s^i,t^i,\ell^i,q^i,\rho_{(i)})$ of
\begin{itemize}
\item[-] object $N^i\in\C$,
\item[-] morphisms $s^i\in\C(N^i,Y^{i+1})$, $t^i\in\C(Y^i,N^i)$, $\ell^i\in\C(M^i,N^i)$, $q^i\in\C(N^i,M^i)$
\item[-] extension $\rho_{(i)}\in\E(N^i,N^{i-1})$
\end{itemize}
satisfying the following properties for all $0\le i\le n-1$ (in which we put $M^0=A$, $N^{-1}=M^{-1}=0$ and $m^0=d_X^0$, $e^0=\id_A, \del_{(0)}=0$ for the convenience of the description).
\begin{itemize}
\item[$(A_i)$] $N^{i-1}\ov{s^{i-1}}{\lra}Y^i\ov{t^i}{\lra}N^i\ov{\rho_{(i)}}{\dra}$ is an $\sfr$-triangle.
\item[$(B_i)$] For any $T\in\Tcal$,
\[ \E^k(\ell^i,T)\co\E^k(N^i,T)\to\E^k(M^i,T)\ \ \text{and}\ \ \E^k(T,\ell^i)\co\E^k(T,M^i)\to\E^k(T,N^i) \]
are isomorphisms for all $1\le k\le n-1$.
\item[$(C_i)$] The following are morphisms of $\sfr$-triangles.
\[
\xy
(-14,6)*+{M^{i-1}}="0";
(0,6)*+{X^i}="2";
(12,6)*+{M^i}="4";
(24,6)*+{}="6";
(-14,-6)*+{N^{i-1}}="10";
(0,-6)*+{Y^i}="12";
(12,-6)*+{N^i}="14";
(24,-6)*+{}="16";
{\ar^(0.6){m^{i-1}} "0";"2"};
{\ar^{e^i} "2";"4"};
{\ar@{-->}^{\del_{(i)}} "4";"6"};
{\ar_{\ell^{i-1}} "0";"10"};
{\ar_{f^i} "2";"12"};
{\ar^{\ell^i} "4";"14"};
{\ar_(0.56){s^{i-1}} "10";"12"};
{\ar_{t^i} "12";"14"};
{\ar@{-->}_{\rho_{(i)}} "14";"16"};
{\ar@{}|\circlearrowright "0";"12"};
{\ar@{}|\circlearrowright "2";"14"};
\endxy\ \ ,\ \ 
\xy
(-14,6)*+{N^{i-1}}="0";
(0,6)*+{Y^i}="2";
(12,6)*+{N^i}="4";
(24,6)*+{}="6";
(-14,-6)*+{M^{i-1}}="10";
(0,-6)*+{X^i}="12";
(12,-6)*+{M^i}="14";
(24,-6)*+{}="16";
{\ar^(0.6){s^{i-1}} "0";"2"};
{\ar^{t^i} "2";"4"};
{\ar@{-->}^{\rho_{(i)}} "4";"6"};
{\ar_{q^{i-1}} "0";"10"};
{\ar_{g^i} "2";"12"};
{\ar^{q^i} "4";"14"};
{\ar_(0.56){m^{i-1}} "10";"12"};
{\ar_{e^i} "12";"14"};
{\ar@{-->}_{\del_{(i)}} "14";"16"};
{\ar@{}|\circlearrowright "0";"12"};
{\ar@{}|\circlearrowright "2";"14"};
\endxy
\]
\item[$(D_i)$] $d_Y^i=s^i\ci t^i$ holds.
\end{itemize}
\end{claim}
\begin{proof}
We show this by an induction on $i$. For $i=0$, $\Rfr^0=(A,d_Y^0,\id_A,\id_A,\id_A,0)$ trivially satisfies conditions $(A_0),(B_0),(C_0),(D_0)$.

For $1\le i\le n-1$, suppose that we have obtained $\Rfr^0,\Rfr^1,\ldots,\Rfr^{i-1}$ satisfying $(A_j),(B_j),(C_j),(D_j)$ for all $0\le j\le i-1$. Let us construct $\Rfr^i$. Realize $\ell^{i-1}\sas\del_{(i)}$, to obtain an $\sfr$-triangle
\begin{equation}\label{TrNZM}
N^{i-1}\ov{w^{i-1}}{\lra}Z^i\ov{z^i}{\lra}M^i\ov{\ell^{i-1}\sas\del_{(i)}}{\dra}.
\end{equation}
We claim $Z^i\in\Tcal$. Indeed when $i=1$, we take $(\ref{TrNZM})$ as $A\ov{d_X^0}{\lra}X^1\ov{e^1}{\lra}M^1\ov{\del_{(1)}}{\dra}$. When $2\le i\le n-1$, remark that
\begin{equation}\label{ExXM}
\C(X^i,T)\ov{-\ci m^{i-1}}{\lra}\C(M^{i-1},T)\to0
\end{equation}
is exact for any $T\in\Tcal$ as in (the dual of) the proof of Proposition~\ref{PropDecompEx}. Take a lift of $(\ell^{i-1},\id)\co\del_{(i)}\to\ell^{i-1}\sas\del_{(i)}$
\begin{equation}\label{DiagPDHI1}
\xy
(-14,6)*+{M^{i-1}}="0";
(0,6)*+{X^i}="2";
(12,6)*+{M^i}="4";
(24,6)*+{}="6";
(-14,-6)*+{N^{i-1}}="10";
(0,-6)*+{Z^i}="12";
(12,-6)*+{M^i}="14";
(24,-6)*+{}="16";
{\ar^(0.6){m^{i-1}} "0";"2"};
{\ar^{e^i} "2";"4"};
{\ar@{-->}^{\del_{(i)}} "4";"6"};
{\ar_{\ell^{i-1}} "0";"10"};
{\ar_{x^i} "2";"12"};
{\ar@{=} "4";"14"};
{\ar_(0.56){w^{i-1}} "10";"12"};
{\ar_{z^i} "12";"14"};
{\ar@{-->}_(0.62){\ell^{i-1}\sas\del_{(i)}} "14";"16"};
{\ar@{}|\circlearrowright "0";"12"};
{\ar@{}|\circlearrowright "2";"14"};
\endxy
\end{equation}
which gives the following $\sfr$-triangle, where we put $u^{i-1}=\begin{bmatrix}m^{i-1}\\ -\ell^{i-1}\end{bmatrix}$.
\begin{equation}\label{TrPDHI1}
M^{i-1}\ov{u^{i-1}}{\lra}X^i\oplus N^{i-1}\ov{[x^i\ w^{i-1}]}{\lra}Z^i\ov{(z^i)\uas\del_{(i)}}{\dra}
\end{equation}
Let $T\in\Tcal$ be any object. By the exactness of $(\ref{ExXM})$,
\[ \C(X^i\oplus N^{i-1},T)\ov{-\ci u^{i-1}}{\lra}\C(M^{i-1},T) \]
is also surjective. By property $(B_{i-1})$ and $X^i\in\Tcal$, we see that
\[ \E^k(u^{i-1},T)\co\E^k(X^i\oplus N^{i-1},T)\to\E^k(M^{i-1},T) \]
is an isomorphism for any $1\le k\le n-1$. Then the long exact sequence in Proposition~\ref{PropLongExact} obtained from $(\ref{TrPDHI1})$ shows $\E^k(Z^i,\Tcal)=0$ for $1\le k\le n-1$, which means $Z^i\in\Tcal$.

Thus for any $1\le i\le n-1$, we have shown $Z^i\in\Tcal$. Let us give a morphism $y^i\in\Tcal(Y^i,Z^i)$ as follows. When $2\le i\le n-1$, then
\[ \Tcal(Y^i,Z^i)\ov{-\ci d_Y^{i-1}}{\lra}\Tcal(Y^{i-1},Z^i)\ov{-\ci d_Y^{i-2}}{\lra}\Tcal(Y^{i-2},Z^i) \]
becomes exact since $\Yd$ is an $n$-exangle. By $(w^{i-1}\ci t^{i-1})\ci d_Y^{i-2}=w^{i-1}\ci(t^{i-1}\ci s^{i-2})\ci t^{i-2}=0$, there is a morphism $y^i\in\Tcal(Y^i,Z^i)$ satisfying $y^i\ci d_Y^{i-1}=w^{i-1}\ci t^{i-1}$. For any $T\in\Tcal$, since
\[ \E^{n-1}(N^{i-2},T)\cong\E^{n-1}(M^{i-2},T)=0 \]
by $(B_{i-2})$ and Lemma~\ref{LemMvanish} {\rm (2)}, the $\sfr$-triangle in $(A_{i-1})$ gives an exact sequence
\begin{equation}\label{---}
0\to\C(N^{i-1},T)\ov{-\ci t^{i-1}}{\lra}\C(Y^{i-1},T)
\end{equation}
by Proposition~\ref{PropABCvanish}. When $i=1$, we just put $y^1=g^1$. Sequence $(\ref{---})$ is equal to $0\to\C(A,T)\ov{\id}{\lra}\C(A,T)$, which is obviously exact also in this case.

Thus in any case, we have morphism $y^i\in\C(Y^i,Z^i)$ and exact sequence $(\ref{---})$. Applying $(\ref{---})$ to $T=X^i,Y^i,Z^i$ respectively, we obtain
\begin{eqnarray}
&g^i\ci s^{i-1}=m^{i-1}\ci q^{i-1},&\label{CancEqX}\\
&d_Y^i\ci s^{i-1}=0,&\label{CancEqY}\\
&y^i\ci s^{i-1}=w^{i-1}&\label{CancEqZ}
\end{eqnarray}
from the equalities $g^i\ci d_Y^{i-1}=d_X^{i-1}\ci g^{i-1}$, $d_Y^i\ci d_Y^{i-1}=0$ and $y^i\ci d_Y^{i-1}=w^{i-1}\ci t^{i-1}$. Similarly, the exactness of
$0\to\C(M^{i-1},T)\ov{-\ci e^{i-1}}{\lra}\C(X^{i-1},T)$
and the equality $f^i\ci d_X^{i-1}=d_Y^{i-1}\ci f^{i-1}$ show
\begin{equation}\label{Eqfmsl}
f^i\ci m^{i-1}=s^{i-1}\ci\ell^{i-1}.
\end{equation}

Since $w^{i-1}$ is an $\sfr$-inflation, $(\ref{CancEqZ})$ implies that so is $s^{i-1}$ by Condition~\ref{CondWIC}. Thus we can complete it into some $\sfr$-triangle
\begin{equation}\label{NYN}
N^{i-1}\ov{s^{i-1}}{\lra}Y^i\ov{t^i}{\lra}N^i\ov{\rho_{(i)}}{\dra}
\end{equation}
which shows $(A_i)$. Let us give a morphism from $(\ref{TrNZM})$ to it.
The exactness of
\[ \C(Z^i,Y^i)\ov{-\ci [x^i\ w^{i-1}]}{\lra}\C(X^i\oplus N^{i-1},Y^i)\ov{-\ci u^{i-1}}{\lra}\C(M^{i-1},Y^i) \]
induced from $(\ref{TrPDHI1})$ and the equality
\[ [f^i\ s^{i-1}]\ci u^{i-1}=f^i\ci m^{i-1}-s^{i-1}\ci \ell^{i-1}=0 \]
give $v^i\in\C(Z^i,Y^i)$ satisfying $v^i\ci x^i=f^i$ and $v^i\ci w^{i-1}=s^{i-1}$. As in Remark~\ref{RemsDecHtpyInv}, we can find a morphism $\ell^i\in\C(M^i,N^i)$ which gives the morphism of $\sfr$-triangles
\begin{equation}\label{DiagPDHI2}
\xy
(-14,6)*+{N^{i-1}}="0";
(0,6)*+{Z^i}="2";
(12,6)*+{M^i}="4";
(24,6)*+{}="6";
(-14,-6)*+{N^{i-1}}="10";
(0,-6)*+{Y^i}="12";
(12,-6)*+{N^i}="14";
(24,-6)*+{}="16";
{\ar^(0.6){w^{i-1}} "0";"2"};
{\ar^{z^i} "2";"4"};
{\ar@{-->}^(0.66){\ell^{i-1}\sas\del_{(i)}} "4";"6"};
{\ar@{=} "0";"10"};
{\ar^{v^i} "2";"12"};
{\ar^{\ell^i} "4";"14"};
{\ar_{s^{i-1}} "10";"12"};
{\ar_{t^i} "12";"14"};
{\ar@{-->}_{\rho_{(i)}} "14";"16"};
{\ar@{}|\circlearrowright "0";"12"};
{\ar@{}|\circlearrowright "2";"14"};
\endxy
\end{equation}
with a short exact sequence
\[ 0\to Z^i\ov{\Big[\raise1ex\hbox{\leavevmode\vtop{\baselineskip-8ex \lineskip0.4ex \ialign{#\crcr{$\scriptstyle{-z^i}$}\crcr{$\scriptstyle{\ v^i}$}\crcr}}}\Big]}{\lra}M^i\oplus Y^i\ov{[\ell^i\ t^i]}{\lra}N^i\to0 \]
since $w^{i-1}\sas\rho_{(i)}=y^i\sas s^{i-1}\sas\rho_{(i)}=0$ by $(\ref{CancEqZ})$. Since $Y^i,Z^i\in\Tcal$, this $\ell^i$ satisfies $(B_i)$.
Composition of $(\ref{DiagPDHI1})$ and $(\ref{DiagPDHI2})$ gives the left morphism of $\sfr$-triangles in $(C_i)$. Also by $(\ref{CancEqX})$, there is $q^i\in\C(N^i,M^i)$ which gives the right morphism of $\sfr$-triangles in $(C_i)$.
By $(\ref{CancEqY})$ and the exactness of
\[ \C(N^i,Y^{i+1})\ov{-\ci t^i}{\lra}\C(Y^i,Y^{i+1})\ov{-\ci s^{i-1}}{\lra}\C(N^{i-1},Y^{i+1}) \]
induced from $(\ref{NYN})$, we obtain $s^i\in\C(N^i,Y^{i+1})$ satisfying $d_Y^i=s^i\ci t^i$ which shows $(D_i)$. This completes the proof of Claim~\ref{ClaimDecHtpyInv}.
\end{proof}
By Claim~\ref{ClaimDecHtpyInv}, we have obtained
\[
\xy
(-54,0)*+{A}="-4";
(-40,0)*+{Y^1}="-2";
(-20,0)*+{Y^2}="0";
(-0.4,0)*+{\cdots}="2";
(4,0)*+{\cdots}="3";
(24,0)*+{Y^{n-1}}="4";
(44,0)*+{Y^n}="6";
(58,0)*+{C}="8";
(-30,-10)*+{N^1}="14";
(-30,2)*+{}="15";
(-10,10)*+{N^2}="10";
(-10,-3)*+{}="11";
(14,-10)*+{N^{n-2}}="16";
(14,2)*+{}="17";
(34,10)*+{N^{n-1}}="12";
(34,-3)*+{}="13";
(-21,-19)*+{}="24";
(-1,19)*+{}="20";
(23,-19)*+{}="26";
(43,19)*+{}="22";
{\ar^{d_Y^0} "-4";"-2"};
{\ar^{d_Y^1} "-2";"0"};
{\ar_{d_Y^2} "0";"2"};
{\ar^{d_Y^{n-2}} "3";"4"};
{\ar_{d_Y^{n-1}} "4";"6"};
{\ar^{d_Y^n} "6";"8"};
{\ar_{t^1} "-2";"14"};
{\ar_{s^1} "14";"0"};
{\ar^{t^2} "0";"10"};
{\ar^{s^2} "10";"2"};
{\ar_{t^{n-2}} "3";"16"};
{\ar_{s^{n-2}} "16";"4"};
{\ar^{t^{n-1}} "4";"12"};
{\ar^{s^{n-1}} "12";"6"};
{\ar@{-->}_{\rho_{(1)}} "14";"24"};
{\ar@{-->}^{\rho_{(2)}} "10";"20"};
{\ar@{-->}_{\rho_{(n-2)}} "16";"26"};
{\ar@{-->}^{\rho_{(n-1)}} "12";"22"};
{\ar@{}|\circlearrowright "14";"15"};
{\ar@{}|\circlearrowright "10";"11"};
{\ar@{}|\circlearrowright "16";"17"};
{\ar@{}|\circlearrowright "12";"13"};
\endxy
\]
with the above-mentioned properties. Put $\rho_{(n)}=\ell^{n-1}\sas\del_{(n)}$, and realize it to obtain an $\sfr$-triangle
\begin{equation}\label{NZC}
N^{n-1}\ov{w^{n-1}}{\lra}Z^n\ov{z^n}{\lra}C\ov{\rho_{(n)}}{\dra}.
\end{equation}
Remark that $(C_i)$ for $1\le i\le n-1$ shows
\begin{eqnarray*}
\del&=&\del_{(1)}\cup\del_{(2)}\cup\cdots\cup\del_{(n)}%
\ =\ ((\ell^1)\uas\rho_{(1)})\cup\del_{(2)}\cup\cdots\cup\del_{(n)}\\
&=&\rho_{(1)}\cup(\ell^1\sas\del_{(2)})\cup\cdots\cup\del_{(n)}%
\ =\ \rho_{(1)}\cup((\ell^2)\uas\rho_{(2)})\cup\cdots\cup\del_{(n)}\\
&=&\cdots\ =\ \rho_{(1)}\cup\cdots\cup\rho_{(n-1)}\cup(\ell^{n-1}\sas\del_{(n)})\ =\ \rho_{(1)}\cup\cdots\cup\rho_{(n-1)}\cup\rho_{(n)}
\end{eqnarray*}
by Proposition~\ref{PropCupProdNat}.
Thus, to give an $\sfr$-decomposition of $\Yd$, it remains to show that
\[ N^{n-1}\ov{s^{n-1}}{\lra}Y^n\ov{d_Y^n}{\lra}C\ov{\rho_{(n)}}{\dra} \]
is an $\sfr$-triangle. Namely, it is enough to show that this is isomorphic to $(\ref{NZC})$.

If we put $r^i=q^i\ci\ell^i$ for $0\le i\le n-1$, then property $(C_i)$ shows $r^{i-1}\sas\del_{(i)}=(r^i)\uas\del_{(i)}$ for any $1\le i\le n-1$, which implies
\begin{eqnarray*}
\del_{(1)}\cup\cdots\cup\del_{(n-1)}\cup(r^{n-1}\sas\del_{(n)})&=&\!\!\cdots\ =((r^0)\sas\del_{(1)})\cup\cdots\cup\del_{(n-1)}\cup\del_{(n)}\\
&=&\del_{(1)}\cup\cdots\cup\del_{(n-1)}\cup\del_{(n)}\ =\ \del
\end{eqnarray*}
by Proposition~\ref{PropCupProdNat}. 
Since $(\del_{(1)}\cup\cdots\cup\del_{(n-1)})\cup-\co\E(C,M^{n-1})\to\E^n(C,A)$ is injective by Lemma~\ref{LemMvanish}, the above equality implies $\del_{(n)}=r^{n-1}\sas\del_{(n)}=q^{n-1}\sas\rho_{(n)}$. By the dual of Remark~\ref{RemsDecHtpyInv}, we can find a morphism of $\sfr$-triangles
\[
\xy
(-14,6)*+{M^{n-1}}="0";
(0,6)*+{X^n}="2";
(12,6)*+{C}="4";
(24,6)*+{}="6";
(-14,-6)*+{N^{n-1}}="10";
(0,-6)*+{Z^n}="12";
(12,-6)*+{C}="14";
(24,-6)*+{}="16";
{\ar^(0.6){m^{n-1}} "0";"2"};
{\ar^{d_X^n} "2";"4"};
{\ar@{-->}^(0.66){\del_{(n)}} "4";"6"};
{\ar_{\ell^{n-1}} "0";"10"};
{\ar_{x^n} "2";"12"};
{\ar@{=} "4";"14"};
{\ar_(0.56){w^{n-1}} "10";"12"};
{\ar_{z^n} "12";"14"};
{\ar@{-->}_{\rho_{(n)}} "14";"16"};
{\ar@{}|\circlearrowright "0";"12"};
{\ar@{}|\circlearrowright "2";"14"};
\endxy
\]
which gives a split short exact sequence
\[ 0\to M^{n-1}\ov{\Big[\raise1ex\hbox{\leavevmode\vtop{\baselineskip-8ex \lineskip0.4ex \ialign{#\crcr{$\scriptstyle{\ m^{n-1}}$}\crcr{$\scriptstyle{-\ell^{n-1}}$}\crcr}}}\Big]}{\lra}X^n\oplus N^{n-1}\ov{[x^n\ w^{n-1}]}{\lra}Z^n\to0 \]
since $(z^n)\uas\del_{(n)}=q^{n-1}\sas(z^n)\uas\rho_{(n)}=0$. Thus $(B_{n-1})$ and $X^n\in\Tcal$ shows $Z^n\in\Tcal$.
Remark that $\E^{n-1}(M^{n-2},\Tcal)=0$ implies the exactness of $0\to\C(M^{n-1},Y^n)\ov{-\ci e^{n-1}}{\lra}\C(X^{n-1},Y^n)$ by Proposition~\ref{PropABCvanish}, and thus the equality $f^n\ci d_X^{n-1}=d_Y^{n-1}\ci f^{n-1}$ shows the commutativity of the following diagram.
\[
\xy
(-14,6)*+{M^{n-1}}="0";
(0,6)*+{X^n}="2";
(12,6)*+{C}="4";
(24,6)*+{}="6";
(-14,-6)*+{N^{n-1}}="10";
(0,-6)*+{Y^n}="12";
(12,-6)*+{C}="14";
(24,-6)*+{}="16";
{\ar^(0.6){m^{n-1}} "0";"2"};
{\ar^{d_X^n} "2";"4"};
{\ar_{\ell^{n-1}} "0";"10"};
{\ar_{f^n} "2";"12"};
{\ar@{=} "4";"14"};
{\ar_(0.56){s^{n-1}} "10";"12"};
{\ar_{d_Y^n} "12";"14"};
{\ar@{}|\circlearrowright "0";"12"};
{\ar@{}|\circlearrowright "2";"14"};
\endxy
\]
This gives rise to a (unique) morphism $v^n\in\C(Z^n,Y^n)$ satisfying $v^n\ci w^{n-1}=s^{n-1}$, $v^n\ci x^n=f^n$ and then $d_Y^n\ci v^n=z^n$ by the universality of the cokernel.
In particular,
\begin{equation}\label{NZCC}
\xy
(-14,6)*+{N^{n-1}}="0";
(0,6)*+{Z^n}="2";
(12,6)*+{C}="4";
(24,6)*+{}="6";
(-14,-6)*+{N^{n-1}}="10";
(0,-6)*+{Y^n}="12";
(12,-6)*+{C}="14";
(24,-6)*+{}="16";
{\ar^(0.6){w^{n-1}} "0";"2"};
{\ar^{z^n} "2";"4"};
{\ar@{=} "0";"10"};
{\ar_{v^n} "2";"12"};
{\ar@{=} "4";"14"};
{\ar_(0.56){s^{n-1}} "10";"12"};
{\ar_{d_Y^n} "12";"14"};
{\ar@{}|\circlearrowright "0";"12"};
{\ar@{}|\circlearrowright "2";"14"};
\endxy
\end{equation}
is commutative. It remains to show that this $v^n$ is an isomorphism.

Similarly as above, by the dual of Remark~\ref{RemsDecHtpyInv}, we find a good lift $(q^{n-1},d,\id)$ of $(q^{n-1},\id)\co\rho_{(n)}\to\del_{(n)}$ which gives a split short exact sequence
\[ 0\to N^{n-1}\ov{\Big[\raise1ex\hbox{\leavevmode\vtop{\baselineskip-8ex \lineskip0.4ex \ialign{#\crcr{$\scriptstyle{\ w^{n-1}}$}\crcr{$\scriptstyle{-q^{n-1}}$}\crcr}}}\Big]}{\lra}Z^n\oplus M^{n-1}\ov{[d\ m^{n-1}]}{\lra}X^n\to0 \]
since $(d_X^n)\uas\rho_{(n)}=(x^n)\uas(z^n)\uas\rho_{(n)}=0$. For any $T\in\Tcal$, since $X^n,Z^n\in\Tcal$, this shows in particular that
\[ \E^k(T,q^{n-1})\co\E^k(T,N^{n-1})\to\E^k(T,M^{n-1}) \]
is an isomorphism for any $1\le k\le n-1$. Thus if we denote the composition of
\[ \E(T,N^{n-1})\ov{\E(T,q^{n-1})}{\lra}\E(T,M^{n-1})\ov{(\del_{(1)}\cup\cdots\cup\del_{(n-1)})\cup-}{\lra}\E^n(T,A) \]
by $\iota$, this is an injective homomorphism which makes
\begin{equation}\label{CETN}
\xy
(-14,6)*+{\Tcal(T,C)}="0";
(14,6)*+{\E(T,N^{n-1})}="2";
(-14,-6)*+{\Tcal(T,C)}="4";
(14,-6)*+{\E^n(T,A)}="6";
{\ar^(0.46){(\rho_{(n)})\ssh} "0";"2"};
{\ar@{=} "0";"4"};
{\ar^{\iota} "2";"6"};
{\ar_{\del\ssh} "4";"6"};
{\ar@{}|\circlearrowright "0";"6"};
\endxy
\end{equation}
commutative.

We claim that
\begin{equation}\label{ExactSaigo}
\Tcal(T,Y^{n-2})\ov{d_Y^{n-2}\ci-}{\lra}\Tcal(T,Y^{n-1})\ov{t^{n-1}\ci-}{\lra}\C(T,N^{n-1})\to0
\end{equation}
is exact. If $n=2$, this is nothing but the sequence $\Tcal(T,A)\ov{d_Y^0}{\lra}\Tcal(T,Y^1)\ov{t^1\ci-}{\lra}\C(T,N^1)\to0$ induced from $(A_1)$, which is exact since $\E(T,A)=0$. If $n\ge 3$, then $(B_{n-3})$ and $(B_{n-2})$ show $\E(T,N^{n-3})=\E(T,N^{n-2})=0$, which implies that
\begin{eqnarray*}
&
\Tcal(T,Y^{n-2})\ov{t^{n-2}\ci-}{\lra}\C(T,N^{n-2})\to 0&\\
&\C(T,N^{n-2})\ov{s^{n-2}\ci-}{\lra}\Tcal(T,Y^{n-1})\ov{t^{n-1}\ci-}{\lra}\C(T,N^{n-1})\to 0&
\end{eqnarray*}
are exact. This shows the exactness of $(\ref{ExactSaigo})$.

Thus $(\ref{ExactSaigo})$ is exact in any case.
Since $\Yd\in\AE_{(\Tcal,\E^n)}^{n+2}$ is an $n$-exangle, it follows that
\[ 0\to\C(T,N^{n-1})\ov{s^{n-1}\ci-}{\lra}\Tcal(T,Y^n)\ov{d_Y^n\ci-}{\lra}\Tcal(T,C)\ov{\del\ssh}{\lra}\E^n(T,A) \]
is exact. Thus the commutativity of $(\ref{NZCC})$ and $(\ref{CETN})$ gives the following morphism of exact sequences
\[
\xy
(-40,7)*+{\C(T,N^{n-1})}="0";
(-12,7)*+{\Tcal(T,Z^n)}="2";
(12,7)*+{\Tcal(T,C)}="4";
(40,7)*+{\E(T,N^{n-1})}="6";
(-58,-7)*+{0}="-12";
(-40,-7)*+{\C(T,N^{n-1})}="10";
(-12,-7)*+{\Tcal(T,Y^n)}="12";
(12,-7)*+{\Tcal(T,C)}="14";
(40,-7)*+{\E^n(T,A)}="16";
{\ar^(0.52){w^{n-1}\ci-} "0";"2"};
{\ar^{z^n\ci-} "2";"4"};
{\ar^(0.44){(\rho_{(n)})\ssh} "4";"6"};
{\ar@{=} "0";"10"};
{\ar^{v^n\ci-} "2";"12"};
{\ar@{=} "4";"14"};
{\ar^{\iota} "6";"16"};
{\ar_{} "-12";"10"};
{\ar_{s^{n-1}\ci-} "10";"12"};
{\ar_{d_Y^n\ci-} "12";"14"};
{\ar_{\del\ssh} "14";"16"};
{\ar@{}|\circlearrowright "0";"12"};
{\ar@{}|\circlearrowright "2";"14"};
{\ar@{}|\circlearrowright "4";"16"};
\endxy
\]
for any $T\in\Tcal$. The middle $\Tcal(T,v^n)\co\Tcal(T,Z^n)\to\Tcal(T,Y^n)$ becomes an isomorphism by the five lemma. Yoneda lemma shows that $v^n\co Z^n\to Y^n$ is an isomorphism.
\end{proof}

Let $\CEs$ be an extriangulated category satisfying Assumption~\ref{Assump}, as usual in this section. The arguments so far show the following.
\begin{thm}
Assume that an extriangulated category $\CEs$ satisfies Condition~\ref{CondWIC}, and that its $n$-cluster tilting subcategory $\Tcal\se\C$ satisfies Condition~\ref{CondnCluster}.

For any $A,C\in\Tcal$ and any $\del\in\E^n(C,A)$, define
\begin{equation}\label{sfrn}
\sfr^n(\del)=[X^{\mr}]
\end{equation}
to be the homotopic equivalence class of $X^{\mr}$ in $\TAC$, using $X^{\mr}$ which gives an $\sfr$-decomposable object $\Xd\in\AE^{n+2}_{(\Tcal,\E^n)}$.

Then $(\Tcal,\E^n,\sfr^n)$ becomes an $n$-exangulated category.
\end{thm}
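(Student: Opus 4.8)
The plan is to verify the axioms of Definition~\ref{DefEACat} for the triplet $(\Tcal,\E^n,\sfr^n)$, relying on the machinery built in this subsection. The first task is to confirm that $\sfr^n$ is a well-defined exact realization. Well-definedness of $\sfr^n(\del)=[X^{\mr}]$ follows from Proposition~\ref{PropsDecExist} (existence of an $\sfr$-decomposable $\Xd$) together with Corollary~\ref{CorDecompEx} (uniqueness of $X^{\mr}$ up to homotopy equivalence in $\TAC$). For \textrm{(R0)}, given a morphism of $\E^n$-extensions $(a,c)\co\del\to\rho$, Proposition~\ref{PropDecompExAdded} directly produces a lift $f^{\mr}\in\Cbf^{n+2}_{\Tcal}(X^{\mr},Y^{\mr})$ with $f^0=a$, $f^{n+1}=c$, which is precisely what \textrm{(R0)} asks for. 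For \textrm{(R1)}, we must see that each $\sfr$-decomposable $\Xd$ is an $n$-exangle: this is exactly Proposition~\ref{PropDecompEx}. For \textrm{(R2)}, one observes that the trivial $\sfr$-decomposition of the zero extension ${}_A0_0$ consists of the split $\sfr$-triangles appearing in Assumption-type data, giving $\sfr^n({}_A0_0)=[A\xrightarrow{\id}A\to0\to\cdots\to0]$ and dually; this is a direct check. Finally, \textrm{(R1)} independence of representatives is guaranteed by Proposition~\ref{PropEAInv} as noted after Definition~\ref{DefReal}. One should also note here that, by Proposition~\ref{PropsDecHtpyInv}, $\sfr$-decomposability is itself invariant under homotopy equivalence in $\TAC$, so the class $[X^{\mr}]$ genuinely detects the $\sfr$-decomposable objects.

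Next comes \textrm{(EA1)}: the composition of two $\sfr^n$-inflations is an $\sfr^n$-inflation, and dually for deflations. By Proposition~\ref{PropsDecAdded}, a morphism $f\in\Tcal(A,B)$ is an $\sfr^n$-inflation if and only if it is an $\sfr$-inflation in the ambient extriangulated category $\CEs$. Since $\CEs$ satisfies \textrm{(EA1)} (being $1$-exangulated, this is \textrm{(ET1)}/closedness of inflations under composition), the composite of two $\sfr$-inflations is again an $\sfr$-inflation, hence an $\sfr^n$-inflation; the deflation case is dual. So \textrm{(EA1)} reduces immediately to the corresponding property for $\CEs$.

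The substantive axiom is \textrm{(EA2)} (and its dual \textrm{(EA2$\op$)}, which follows by the self-duality of the whole setup recorded in Definition~\ref{DefsDec}, Proposition~\ref{PropCupProdDual} and Definition~\ref{Defvp}). Given $\rho\in\E^n(D,A)$ and $c\in\C(C,D)$, together with $\sfr^n$-distinguished $n$-exangles ${}_A\langle X^{\mr},c\uas\rho\rangle_C$ and ${}_A\langle Y^{\mr},\rho\rangle_D$, we must exhibit a good lift of $(\id_A,c)$, i.e.\ a morphism $f^{\mr}\co\Xd\to\Yr$ whose mapping cone $\langle\Mf,(d_X^0)\sas\rho\rangle$ is again $\sfr^n$-distinguished. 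This is supplied almost verbatim by Corollary~\ref{CorShowEA2}: it constructs $f^{\mr}\in\Cbf^{n+2}_{\Tcal}(X^{\mr},Y^{\mr})$ with $f^0=\id_A$, $f^{n+1}=c$, and produces an explicit $\sfr$-decomposition $(\ref{sdecMC})$ of $\langle\Mf,\E^n(D,d_X^0)(\rho)\rangle$, which is precisely the statement that the mapping cone is $\sfr^n$-distinguished. The only remaining point is bookkeeping: Remark~\ref{RemEA2} (via Corollaries~\ref{CorMapConeHomotopic} and \ref{CorMapCone}) ensures the good-lift condition is independent of the chosen representatives $X^{\mr}$, $Y^{\mr}$ in their homotopy classes, so applying Corollary~\ref{CorShowEA2} to one choice suffices.

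The main obstacle is not in the present theorem at all but is already discharged: it is the delicate homotopy-theoretic argument of Proposition~\ref{PropsDecHtpyInv}, which secures that ``$\sfr$-decomposable'' is a homotopy-invariant property in $\TAC$ and therefore that $\sfr^n$ is well-defined and that all the ``up to representative'' clauses make sense. Granting that, and granting Corollaries~\ref{CorDecompEx} and \ref{CorShowEA2}, the proof of the theorem is a short assembly: check \textrm{(R0)}--\textrm{(R2)} from Propositions~\ref{PropDecompExAdded}, \ref{PropDecompEx} and the description of $0$; check \textrm{(EA1)} from Proposition~\ref{PropsDecAdded} and \textrm{(EA1)} for $\CEs$; check \textrm{(EA2)} from Corollary~\ref{CorShowEA2}; and obtain \textrm{(EA2$\op$)} by duality. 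I would present it in exactly that order, keeping each verification to a sentence or two with the precise citation. For the case $n=1$ one simply has $\Tcal=\C$, $\E^1=\E$ and $\sfr^1=\sfr$, so the statement is trivially the hypothesis; the argument above is written for $n\ge2$.
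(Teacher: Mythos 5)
Your proposal is correct and follows essentially the same route as the paper: well-definedness of $\sfr^n$ from Proposition~\ref{PropsDecExist}, Corollary~\ref{CorDecompEx} and Proposition~\ref{PropsDecHtpyInv}, then {\rm (R0)}, {\rm (R1)} from Propositions~\ref{PropDecompExAdded} and \ref{PropDecompEx}, {\rm (R2)} by direct inspection, {\rm (EA1)} via Proposition~\ref{PropsDecAdded} and {\rm (EA1)} for $\CEs$, {\rm (EA2)} from Corollary~\ref{CorShowEA2}, and {\rm (EA2$\op$)} by duality. The paper's proof is exactly this assembly of citations, so nothing further is needed.
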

\begin{proof}
Remark that $(\ref{sfrn})$ is well-defined, by Proposition~\ref{PropsDecExist}, Corollary \ref{CorDecompEx} and Proposition~\ref{PropsDecHtpyInv}.

{\rm (R0)} and {\rm (R1)} are shown in Propositions~\ref{PropDecompExAdded} and \ref{PropDecompEx}, respectively. {\rm (R2)} is obvious. {\rm (EA1)} follows from Proposition~\ref{PropsDecAdded}. {\rm (EA2)} follows from Corollary \ref{CorShowEA2}. Dually for {\rm (EA2$\op$).}
\end{proof}

\section{Examples}\label{section_Examples}
I this section we provide explicit examples of $n$-exangulated categories $\C$. First we consider three known sources of $n$-exangulated categories, namely $n$-abelian, $n$-exact and $(n+2)$-angulated categories. Then we consider a class of examples, which are not of this form. In each case we will give concrete examples for $n=2$ by providing Auslander-Reiten quivers of the relevant categories (see \cite{ASS} for background on Auslander-Reiten quivers).

Most of our examples will be given by a finite dimensional algebra $A$ over a field $k$. More precisely $\C$ will be given as a subcategory of the category of finitely generated right $A$-modules $\modu A$ or its bounded derived category $\Dbdd(\modu A)$. For explicit examples we will describe $A$ as the path algebra of a quiver with relations. For these we follow the conventions in \cite{ASS}.

\subsection{Examples of $n$-abelian and $n$-exact categories}
In \cite{J} it is shown that any $n$-cluster tilting subcategory $\C$ of an abelian respectively exact category is $n$-abelian respectively $n$-exact. Hence by Proposition~\ref{nExactIsnExangulated}, $\C$ may be considered as an $n$-exangulated category (provided that the extensions groups are sets). 

To obtain an $n$-abelian example suppose that there is an $n$-cluster tilting subcategory $\C \subseteq \modu A$. Then $\C$ is $n$-abelian and so $n$-exangulated. There are many known examples of such algebras $A$ and subcategories $\C$ (see for example \cite{IO}, \cite{HI1}, \cite{HI2}, \cite{JK2} \cite{P2}, \cite{V}). To be explicit let $A$ be the path algebra of the quiver
\[
\xy 
(0,0)*+{1}="1";
(10,0)*+{2}="2";
(20,0)*+{3}="3";
(30,0)*+{4}="4";
{\ar^{a} "1";"2"};
{\ar^{b} "2";"3"};
{\ar^{c} "3";"4"};
\endxy
\quad\mbox{with relation}\quad
abc = 0.
\]
Then $\modu A$ has a unique $2$-cluster tilting subcategory $\C$ consisting of all direct sums of projective and injective modules. Below is the Auslander-Reiten quiver of $\modu A$ with the indcomposables in $\C$ labelled $C_i$, where $1 \le i \le 6$.

\[
\xy 
(0,0)*+{C_1}="1";
(6,9)*+{C_2}="2";
(12,18)*+{C_3}="3";
(12,0)*+{\mr}="11";
(18,9)*+{\mr}="12";
(24,18)*+{C_4}="13";
(24,0)*+{\mr}="21";
(30,9)*+{C_5}="22";
(36,0)*+{C_6}="23";
{\ar^{} "1";"2"};
{\ar^{} "2";"3"};
{\ar^{} "2";"11"};
{\ar^{} "3";"12"};
{\ar^{} "11";"12"};
{\ar^{} "12";"13"};
{\ar^{} "12";"21"};
{\ar^{} "13";"22"};
{\ar^{} "21";"22"};
{\ar^{} "22";"23"};
\endxy
\]
There are $2$-exangles
\[ C_1\ov{}{\lra}C_2\ov{}{\lra}C_4\ov{}{\lra}C_5\ov{}{\dra}, \]
\[ C_1\ov{}{\lra}C_3\ov{}{\lra}C_4\ov{}{\lra}C_6\ov{}{\dra}, \]
\[ C_2\ov{}{\lra}C_3\ov{}{\lra}C_5\ov{}{\lra}C_6\ov{}{\dra}. \]

To obtain an $n$-exact example we may choose $A$ to be $n$-complete in the sense of \cite{I2}. In particular, this means that there is a distinguished tilting module $T \in \modu A$ such that the exact category $T^{\perp_{\ge1}} = \{X \in \modu A \mid \Ext_A^i(T,X) = 0, \, i \ge 1\}$ has a certain $n$-cluster tilting subcategory $\C$. Hence $\C$ is $n$-exact and therefore $n$-exangulated. Since $n$-complete algebras are closed under taking higher Auslander algebras by \cite{I2} and tensor products by \cite{P1} they provides a rich source of examples. For instance, let $B$ be the path algebra of the quiver
\[
\xy 
(0,0)*+{1}="1";
(10,0)*+{2.}="2";
{\ar^{a} "1";"2"};
\endxy
\]
Then $B$ is $1$-complete and so by \cite{P1}, $A = B \otimes_k B$ is $2$-complete. Below is the Auslander-Reiten quiver of $\modu A$ with the indcomposables in $\C$ labelled $C_i$, where $1 \le i \le 5$. The tilting module $T$ equals $C_1 \oplus C_2 \oplus C_3 \oplus C_4$ and $T^{\perp_{\ge1}}$ contains precisely one indecomposable not in $\C$, which is labelled $\circ$.

\[
\xy 
(0,9)*+{C_1}="1";
(9,0)*+{\mr}="2";
(9,18)*+{\mr}="3";
(18,9)*+{\mr}="4";
(27,0)*+{\mr}="5";
(27,9)*+{C_2}="6";
(27,18)*+{\mr}="7";
(36,9)*+{\circ}="8";
(45,0)*+{C_3}="9";
(45,18)*+{C_4}="10";
(54,9)*+{C_5}="11";
{\ar^{} "1";"2"};
{\ar^{} "1";"3"};
{\ar^{} "2";"4"};
{\ar^{} "3";"4"};
{\ar^{} "4";"5"};
{\ar^{} "4";"6"};
{\ar^{} "4";"7"};
{\ar^{} "5";"8"};
{\ar^{} "6";"8"};
{\ar^{} "7";"8"};
{\ar^{} "8";"9"};
{\ar^{} "8";"10"};
{\ar^{} "9";"11"};
{\ar^{} "10";"11"};
\endxy
\]
There is a $2$-exangle 
\[ C_1\ov{}{\lra}C_2\ov{}{\lra}C_3 \oplus C_4\ov{}{\lra}C_5\ov{}{\dra}. \]

For another source of $n$-exact examples we may consider a Geigle-Lenzing complete intersection $(R, \mathbb{L})$ as introduced in \cite{HIMO}. This is a certain kind of commutative ring $R$ graded by an abelian group $\mathbb{L}$. To $(R, \mathbb{L})$ is associated a non-commutative projective scheme $\mathbb{X}$ in the sense of \cite{AZ} and an abelian category $\coh \mathbb{X}$ of coherent sheaves on $\mathbb{X}$. Moreover, there is an extension closed subcategory $\vect \mathbb{X} \subseteq \coh \mathbb{X}$, of vector bundles, which is thus exact. For detailed definitions see \cite{HIMO}. In \cite[Theorem 1.3.12]{HIMO}, a sufficient condition is given for when $\vect \mathbb{X}$ admits an $n$-cluster tilting subcategory $\C$, which is therefore $n$-exact and so $n$-exangulated. Similarly, $n$-cluster tilting subcategories of the category of $\mathbb{L}$-graded maximal Cohen-Macaulay $R$-modules are also studied in \cite{HIMO} providing further examples (see for instance \cite[Example 4.4.14]{HIMO}).

\subsection{Examples of $(n+2)$-angulated categories}\label{Examplen+2angle}
Next we consider $(n+2)$-angulated categories $\C$, which by Proposition~\ref{PropAtoEA} can be considered as $n$-exangulated. To obtain such a category we may take a triangulated category $\T$ and $\C \subseteq \T$ an $n$-cluster tilting subcategory such that $\C[n] = \C$ (see \cite[Theorem 1.]{GKO}).

A standard example is to take $\T = \Dbdd(\modu A)$, where $A$ is a so-called $n$-representation finite algebra, meaning that $A$ has global dimension $n$ and admits an $n$-cluster tilting subcategory $\C_0 \subseteq \modu A$, with finitely many indecomposables. Then
\[
\C = \add\{\C_0[nt] \mid t \in \mathbb{Z}\} \subseteq \Dbdd(\modu A)
\] 
is $n$-cluster tilting and satisfies $\C[n] = \C$. To be explicit let $n= 2$ and $A$ be the path algebra of the quiver
\[
\xy 
(0,0)*+{1}="1";
(10,0)*+{2}="2";
(20,0)*+{3}="3";
{\ar^{a} "1";"2"};
{\ar^{b} "2";"3"};
\endxy
\quad\mbox{with relation}\quad
ab = 0.
\]
Then $A$ has global dimension $2$ and $\modu A$ admits a unique $2$-cluster tilting subcategory $\C_0$ consisting of all direct sums of projective and injective modules. Since $A$ is derived equivalent to the path algebra of the same quiver without relations it is easy to describe $\Dbdd(\modu A)$. Below is the Auslander-Reiten quiver of $\Dbdd(\modu A)$ with indecomposables in $\C$ labelled $C_i$ where $i \in \mathbb{Z}$. The indecomposables in $\C_0$ are $C_0$, $C_1$, $C_2$, $C_3$ and for all $i$ we have $C_i[2t] = C_{i+4t}$.
\[
\xy 
(-6,9)*+{\cdots}="d";
(0,18)*+{\mr}="0";
(0,0)*+{C_{-2}}="1";
(6,9)*+{\mr}="2";
(12,18)*+{C_{-1}}="3";
(12,0)*+{\mr}="11";
(18,9)*+{\mr}="12";
(24,18)*+{\mr}="13";
(24,0)*+{C_0}="21";
(30,9)*+{\mr}="22";
(36,18)*+{C_1}="23";
(36,0)*+{\mr}="31";
(42,9)*+{\mr}="32";
(48,18)*+{\mr}="33";
(48,0)*+{C_2}="41";
(54,9)*+{\mr}="42";
(60,18)*+{C_3}="43";
(60,0)*+{\mr}="51";
(66,9)*+{\mr}="52";
(72,18)*+{\mr}="53";
(72,0)*+{C_4}="61";
(78,9)*+{\mr}="62";
(84,18)*+{C_5}="63";
(84,0)*+{\mr}="71";
(90,9)*+{\mr}="72";
(96,18)*+{\mr}="73";
(96,0)*+{C_6}="81";
(102,9)*+{\mr}="82";
(108,18)*+{C_7}="83";
(108,0)*+{\mr}="91";
(114,9)*+{\cdots}="dd";
{\ar^{} "0";"2"};
{\ar^{} "1";"2"};
{\ar^{} "2";"3"};
{\ar^{} "2";"11"};
{\ar^{} "3";"12"};
{\ar^{} "11";"12"};
{\ar^{} "12";"13"};
{\ar^{} "12";"21"};
{\ar^{} "13";"22"};
{\ar^{} "21";"22"};
{\ar^{} "22";"23"};
{\ar^{} "22";"31"};
{\ar^{} "23";"32"};
{\ar^{} "31";"32"};
{\ar^{} "32";"33"};
{\ar^{} "32";"41"};
{\ar^{} "33";"42"};
{\ar^{} "41";"42"};
{\ar^{} "42";"43"};
{\ar^{} "42";"51"};
{\ar^{} "43";"52"};
{\ar^{} "51";"52"};
{\ar^{} "52";"53"};
{\ar^{} "52";"61"};
{\ar^{} "53";"62"};
{\ar^{} "61";"62"};
{\ar^{} "62";"63"};
{\ar^{} "62";"71"};
{\ar^{} "63";"72"};
{\ar^{} "71";"72"};
{\ar^{} "72";"73"};
{\ar^{} "72";"81"};
{\ar^{} "73";"82"};
{\ar^{} "81";"82"};
{\ar^{} "82";"83"};
{\ar^{} "82";"91"};
\endxy
\]
There are $2$-exangles
\[ C_{i}\ov{}{\lra}C_{i+1}\ov{}{\lra}C_{i+2}\ov{}{\lra}C_{i+3}\ov{}{\dra}\]
for all $i \in \mathbb{Z}$.

Another source of examples comes from \cite{K}. Let $\Dsing(A)$ be the singularity category of $A$, i.e., the Verdier quotient of $\Dbdd(\modu A)$ by the perfect derived category of $A$. Assume that there is an $n\mathbb{Z}$-cluster tilting subcategory $\C_0$ of $\modu A$ (meaning that $\C_0$ is $n$-cluster tilting and $\Ext_A^i(\C_0,\C_0) = 0$ for all $i \not \in n\mathbb{Z}$). Then it is shown in \cite{K} that 
\[
\C = \add\{\C_0[nt] \mid t \in \mathbb{Z}\} \subseteq \Dsing(A)
\] 
is $n$-cluster tilting. Since $\C[n] =\C$, it follows that $\C$ is $(n+2)$-angulated and hence $n$-exangulated.

\subsection{Examples which are neither $n$-exact nor $(n+2)$-angulated}
In Remark~\ref{RemRelativeNor} it was shown how applying relative theory to $(n+2)$-angulated categories can give $n$-exangulated categories that are neither $n$-exact nor $(n+2)$-angulated. In this section we provide another source of such examples. To this end let $\T$ be a triangulated category, and $\Tcal\se\T$ be an $n$-cluster tilting subcategory satisfying $\Tcal[n-1]^{\perp}\se\Tcal[-1]^{\perp}$ and ${}^{\perp}\Tcal[n-1]\se {}^{\perp}\Tcal[-1]$. Recall that the triplet $(\Tcal,\Ext^n,\sfr^n)$ becomes an $n$-exangulated category, as seen in Example~\ref{Ex_CondnCluster}. 

Our aim is to produce an $n$-exangulated subcategory of $\Tcal$. First, let us introduce some notation: for full subcategories $\Xcal,\Ycal\se\T$, let $\Xcal\ast\Ycal$ denote the full subcategory of $\T$ consisting of objects $C\in\T$ which admit distinguished triangles $X\to C\to Y\to X[1]$ in $\T$ for some $X\in\Xcal,Y\in\Ycal$.
\begin{lem}\label{LemForExample}
Let $\T$ and $\Tcal$ be as above. Let $\Ecal\se\T$ be an extension-closed subcategory, and put $\Tcal\ppr=\Ecal\cap\Tcal$. Assume that $\Tcal\ppr$ satisfies
\begin{equation}\label{CondForTprime}
\Tcal\ppr [i]\ast\Tcal\ppr\se\Tcal\ppr_{[0,i]}\quad(1\le i< n),
\end{equation}
where we put $\Tcal\ppr_{[0,i]}=\Tcal\ppr\ast\Tcal\ppr[1]\ast\cdots\ast\Tcal\ppr[i]$.
Then $\Tcal\ppr$ is an extension-closed subcategory of $(\Tcal,\Ext^n,\sfr^n)$ in the sense of Definition~\ref{DefnExtClosed}, and has an induced $n$-exangulated structure.
\end{lem}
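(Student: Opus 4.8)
The plan is to verify the two clauses of Definition~\ref{DefnExtClosed} directly, and then invoke Proposition~\ref{PropnExtClosed}. Recall that $(\Tcal,\Ext^n,\sfr^n)$ is $n$-exangulated by the Theorem of the previous section (its instance for triangulated categories, Example~\ref{Ex_CondnCluster}(2) together with Example~\ref{Ex_CondWIC}(2)), so that its distinguished $n$-exangles are exactly the homotopy equivalence classes of $X^{\mr}\in\TAC$ which give $\sfr$-decomposable objects in $\AE^{n+2}_{(\Tcal,\Ext^n)}$; here the relevant $\sfr$-triangles are the distinguished triangles of $\T$. So for $A,C\in\Tcal\ppr$ and $\del\in\Ext^n(C,A)$ I must produce a distinguished $n$-exangle $\Xd$ with all middle terms $X^i\in\Tcal\ppr$ ($1\le i\le n$). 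By Proposition~\ref{PropsDecExist} there is an $\sfr$-decomposition consisting of distinguished triangles $A\to X^1\to M^1\to$, $M^i\to X^{i+1}\to M^{i+1}\to$ ($1\le i\le n-2$), $M^{n-1}\to X^n\to C\to$ with $X^i\in\Tcal$; the issue is to modify the construction so that the $X^i$ land in $\Tcal\ppr=\Ecal\cap\Tcal$.

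First I would observe that the triangle realizing $\del$ can be taken inside $\Ecal$: since $A\in\Ecal$ and the cosyzygy/syzygy objects $\Sig^{j}A$ need not be in $\Ecal$, one instead works with a triangle $\Sig^{n-1}A\to B^n\to C\to$ realizing $\del\in\Ext^1(C,\Sig^{n-1}A)$, and then dismantles it as in Lemma~\ref{LemsDecExist}/Proposition~\ref{PropsDecExist}. The key point is that at each stage the new object $X^{i}$ is obtained as a right $\Tcal$-approximation term in a triangle $D\to X^i\to B^i\to$, and I want to replace this by a right $\Tcal\ppr$-approximation, which exists because $\Tcal\ppr$ is a summand-closed subcategory whose relevant approximation properties follow from functorial finiteness of $\Tcal$ together with the hypothesis~\eqref{CondForTprime}. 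Concretely, running the dismantling from the $C$-end: we have $A,C\in\Tcal\ppr\se\Ecal$, and by an induction downward on the length I want to show each truncation $M^{i}$ lies in an iterated extension $\Tcal\ppr_{[0,n-i-1]}$ of shifts of $\Tcal\ppr$, so that a right $\Tcal\ppr$-approximation of $M^i$ has cone again in $\Tcal\ppr_{[0,\ast]}$; the hypothesis $\Tcal\ppr[i]\ast\Tcal\ppr\se\Tcal\ppr_{[0,i]}$ is exactly what is needed to keep these iterated extensions within the prescribed range when one glues the triangle $M^{i}\to X^{i+1}\to M^{i+1}\to$. Extension-closedness of $\Ecal$ guarantees that the middle terms of all these triangles, and in particular all the $X^i$, remain in $\Ecal$; being also in $\Tcal$ by construction, they lie in $\Tcal\ppr$.

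The main obstacle, and the step I would spend the most care on, is precisely the bookkeeping in~\eqref{CondForTprime}: I need to show, by induction, that the syzygy-type objects $M^i$ produced in the proof of Proposition~\ref{PropsDecExist} can be chosen to lie in $\Tcal\ppr[1]\ast\cdots\ast\Tcal\ppr[n-i]$ (or a similar interval), and that a right $\Tcal\ppr$-approximation of such an $M^i$ has cone in the next interval down. The inductive step uses the octahedral axiom to splice the approximation triangle of $M^i$ with the given triangle $M^{i-1}\to\cdots$, and~\eqref{CondForTprime} is invoked with $i$ replaced by the current interval length to absorb the extra $\Tcal\ppr$-factor; one must check the boundary cases $i=1$ and $i=n-1$ separately, and verify that when the interval length reaches $n$ one actually gets the desired statement $M^{n}\in\Tcal$ (indeed $M^n\cong A$-shift lands in $\Tcal\ppr$). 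Once all middle terms are in $\Tcal\ppr$, the resulting $X^{\mr}$ is $\sfr$-decomposable with middle terms in $\Tcal\ppr$, hence gives a distinguished $n$-exangle of $(\Tcal,\Ext^n,\sfr^n)$ whose middle terms lie in $\Tcal\ppr$; this is exactly the condition in Definition~\ref{DefnExtClosed}. Finally, Proposition~\ref{PropnExtClosed} applies: $(\Tcal\ppr,\Ext^n|_{\Tcal\ppr},\tfr)$ satisfies (R0)–(R2) and (EA2), (EA2$\op$), and (EA1) holds because composability of inflations/deflations in $\Tcal\ppr$ follows from that in $\Tcal$ together with the fact (from Proposition~\ref{PropsDecAdded} and~\eqref{CondForTprime}) that the relevant conflations can be kept inside $\Ecal$. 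Thus $\Tcal\ppr$ inherits an $n$-exangulated structure, completing the proof.
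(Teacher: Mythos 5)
There is a genuine gap in the way you produce the distinguished $n$-exangles with middle terms in $\Tcal\ppr$. Your mechanism is to rerun Lemma~\ref{LemsDecExist}/Proposition~\ref{PropsDecExist} with right $\Tcal\ppr$-approximations in place of right $\Tcal$-approximations, and to place the middle terms in $\Ecal$ by extension-closedness. Neither step works as stated. First, the approximation argument needs the vanishing $\E(\Tcal,D)=0$ for the cocone $D$ of the approximation, i.e.\ vanishing against \emph{all} of $\Tcal$, because it is the $n$-cluster-tilting property that ultimately forces the first term into $\Tcal$ (via $\E^k(\Tcal,B^1)=0$ for $1\le k\le n-1$ in the proof of Proposition~\ref{PropsDecExist}); a right $\Tcal\ppr$-approximation only yields $\E(\Tcal\ppr,D)=0$, so you lose membership in $\Tcal$ (let alone $\Tcal\ppr$) of the end terms, and you have not established that such approximations exist or that their cocones stay in the intervals $\Tcal\ppr_{[0,j]}$. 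Second, extension-closedness of $\Ecal$ cannot be applied to the triangles $M^{i}\to X^{i+1}\to M^{i+1}$, since the objects $M^i$ are extensions involving shifts such as $A[n-1]$ and need not lie in $\Ecal$. The paper's proof avoids approximations entirely: from $A,C\in\Tcal\ppr$ the cocone $B^n$ of $C\ov{\del}{\lra}A[n]$ lies in $\Tcal\ppr[n-1]\ast\Tcal\ppr\se\Tcal\ppr_{[0,n-1]}$ by one application of (\ref{CondForTprime}), and then the very definition of $\Tcal\ppr_{[0,i-1]}$ furnishes triangles $B^{i-1}\to X^i\to B^i\to B^{i-1}[1]$ with $X^i\in\Tcal\ppr$ and $B^{i-1}\in\Tcal\ppr_{[0,i-2]}$, which are spliced by octahedra into an $\sfr^n$-decomposition with all $X^i\in\Tcal\ppr$ (the first term being $X^1\cong B^1\in\Tcal\ppr$ automatically, since $B^0=0$).

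The second gap is {\rm (EA1)}. Proposition~\ref{PropnExtClosed} only gives {\rm (R0)}--{\rm (R2)}, {\rm (EA2)}, {\rm (EA2$\op$)}; {\rm (EA1)} must be checked, and your justification --- that composability of deflations in $\Tcal\ppr$ ``follows from that in $\Tcal$'' because the conflations ``can be kept inside $\Ecal$'' --- is not an argument. Being a $\tfr$-deflation requires a conflation ending in the given composite with \emph{all} terms in $\Tcal\ppr$; knowing the composite is an $\sfr^n$-deflation in $\Tcal$ gives no control over its cocone, and Proposition~\ref{PropsDecAdded} concerns $\Tcal$, not $\Tcal\ppr$. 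This is exactly where the paper uses the equivalence of (\ref{CondForTprime}) with extension-closedness of $\Tcal\ppr_{[0,i]}$ in $\T$ (the first step of its proof, which your proposal never establishes): the cocones $M,N\in\Tcal\ppr_{[0,n-1]}$ of the two deflations are glued by an octahedron into a cocone $L^n$ of the composite, extension-closedness of $\Tcal\ppr_{[0,n-1]}$ gives $L^n\in\Tcal\ppr_{[0,n-1]}$, and the dismantling construction then produces a $\tfr$-distinguished $n$-exangle ending precisely in $d_Y^n\ci d_X^n$. Without an argument of this kind your proof is incomplete.
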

\begin{proof}
As a first step, we show that $(\ref{CondForTprime})$ is equivalent to the extension-closedness of $\Tcal\ppr_{[0,i]}$ in $\T$ for any $1\le i<n$. Since the converse is trivial, we only show that $(\ref{CondForTprime})$ implies the extension-closedness of $\Tcal\ppr_{[0,i]}\se\T$ by an induction on $i$. If $i=1$, this is obvious. For $i>1$, suppose that $\Tcal\ppr_{[0,i-1]}\se\T$ has been shown to be extension-closed. Then
\begin{eqnarray*}
\Tcal\ppr_{[0,i]}\ast\Tcal\ppr_{[0,i]}&=&\Tcal\ppr_{[0,i-1]}\ast(\Tcal\ppr [i]\ast\Tcal\ppr)\ast\Tcal\ppr_{[0,i-1]}[1]\ \se\ \Tcal\ppr_{[0,i-1]}\ast\Tcal\ppr_{[0,i]}\ast\Tcal\ppr_{[0,i-1]}[1]\\
&\se&\Tcal\ppr_{[0,i-1]}\ast(\Tcal\ppr_{[0,i-1]}\ast\Tcal\ppr_{[0,i-1]}[1])\ast\Tcal\ppr_{[0,i-1]}[1]\ = \Tcal\ppr_{[0,i-1]}\ast\Tcal\ppr_{[0,i-1]}[1]\\
&\se&\Tcal\ppr_{[0,i-1]}\ast\Tcal\ppr_{[0,i-1]}\ast\Tcal\ppr [i]\ = \Tcal\ppr_{[0,i-1]}\ast\Tcal\ppr [i]\ =\ \Tcal\ppr_{[0,i]}
\end{eqnarray*}
shows that $\Tcal\ppr_{[0,i]}\se\T$ is also extension-closed.

Secondly, let us refine the proof of Proposition \ref{PropsDecExist} in the current situation, to construct an $n$-exangle out of a distinguished triangle. Suppose we are given a distinguished triangle
\[ E[-1]\ov{a^{n-1}}{\lra}B^n\ov{b^n}{\lra}C\ov{\del}{\lra}E \]
in $\T$ satisfying $B^n\in\Tcal\ppr_{[0,n-1]}$. Then there is a distinguished triangle
\[ B^{n-1}\to X^n\ov{x^n}{\lra} B^n\to B^{n-1}[1] \]
with $X^n\in\Tcal\ppr$ and $B^{n-1}\in\Tcal\ppr_{[0,n-2]}$. By the octahedron axiom, we obtain the following.
\[
\xy
(-27,23)*+{B^{n-1}}="0";
(-9,23)*+{M^{n-1}}="2";
(9,23)*+{E[-1]}="4";
(27,23)*+{B^{n-1}[1]}="6";
(-27,8)*+{B^{n-1}}="10";
(-9,8)*+{X^n}="12";
(9,8)*+{B^n}="14";
(27,8)*+{B^{n-1}[1]}="16";
(-9,-7)*+{C}="22";
(9,-7)*+{C}="24";
(27,-7)*+{M^{n-1}[1]}="26";
(-9,-22)*+{M^{n-1}[1]}="32";
(9,-22)*+{E}="34";
{\ar^{b^{n-1}} "0";"2"};
{\ar^{\del^{n-1}} "2";"4"};
{\ar^{} "4";"6"};
{\ar@{=} "0";"10"};
{\ar^{m^{n-1}} "2";"12"};
{\ar^{a^{n-1}} "4";"14"};
{\ar@{=} "6";"16"};
{\ar_{} "10";"12"};
{\ar_{x^n} "12";"14"};
{\ar_{} "14";"16"};
{\ar_{e^n} "12";"22"};
{\ar^{b^n} "14";"24"};
{\ar^{} "16";"26"};
{\ar@{=} "22";"24"};
{\ar_{} "24";"26"};
{\ar_{\del_{(n)}} "22";"32"};
{\ar^{\del} "24";"34"};
{\ar_(0.6){\del^{n-1}[1]} "32";"34"};
{\ar@{}|\circlearrowright "0";"12"};
{\ar@{}|\circlearrowright "2";"14"};
{\ar@{}|\circlearrowright "4";"16"};
{\ar@{}|\circlearrowright "12";"24"};
{\ar@{}|\circlearrowright "14";"26"};
{\ar@{}|\circlearrowright "22";"34"};
\endxy
\]
Inductively we obtain distinguished triangles
\begin{eqnarray*}
&E[i-n-1]\ov{a^{i-1}}{\lra}B^i\ov{b^i}{\lra}M^i\ov{\del^i}{\lra}E[i-n],&\\
&M^{i-1}\ov{m^{i-1}}{\lra}X^i\ov{e^i}{\lra}M^i\ov{\del_{(i)}}{\lra}M^{i-1}[1],%
\quad B^{i-1}\to X^i\ov{x^i}{\lra}B^i\to B^{i-1}[1]&
\end{eqnarray*}
for $1\le i<n$, which satisfy $X^i\in\Tcal\ppr$, $B^i\in\Tcal\ppr_{[0,i-1]}$, $B^0=0$, $x^1=\id_{B^1}$, $\delta^0=\id_{E[-n]}$ and $\del^i=\del^{i-1}[1]\ci\del_{(i)}$.

We use this construction to show the extension-closedness of $\Tcal\ppr$ in $(\Tcal,\Ext^n,\sfr^n)$. Let $\E\ppr$ be the restriction of $\Ext^n$ to $\Tcal\ppr$. Take any pair of objects $A,C\in\Tcal\ppr$, and any element $\del\in\E\ppr(C,A)=\T(C,A[n])$. Complete $\del$ into a distinguished triangle
\[ A[n-1]\ov{a^{n-1}}{\lra}B^n\ov{b^n}{\lra}C\ov{\del}{\lra}A[n] \]
in $\T$. Since $B^n\in\Tcal\ppr [n-1]\ast\Tcal\ppr\se\Tcal\ppr_{[0,n-1]}$, we may apply the above construction to obtain distinguished triangles
\[ M^{i-1}\ov{m^{i-1}}{\lra}X^i\ov{e^i}{\lra}M^i\ov{\del_{(i)}}{\lra}M^{i-1}[1] \]
with $X^i\in\Tcal\ppr$ for all $1\le i<n$, which satisfy $M^0=A$ and
\[ \del=\del_{(1)}[n-1]\ci\del_{(2)}[n-2]\ci\cdots\ci\del_{(n)}. \]
This gives a $\sfr^n$-distinguished $n$-exangle in $(\Tcal,\Ext^n,\sfr^n)$
\[ A\to X^1\to X^2\to\cdots\to X^n\to C\ov{\del}{\dra} \]
satisfying $X^i\in\Tcal\ppr$ for any $1\le i<n$. Thus $\Tcal\ppr$ is extension-closed in $(\Tcal,\Ext^n,\sfr^n)$.
As in Proposition \ref{PropnExtClosed}, define $\tfr$ by $\tfr(\del)=[X^{\mr}]$ using the above $X^{\mr}$. This gives an exact realization of $\E\ppr$.

It remains to show that $(\Tcal\ppr,\E\ppr,\tfr)$ satisfies {\rm (EA1)}.
By duality, we only show that $\tfr$-deflations are closed by compositions.
Let $\Xd$ and $\Yr$ be any pair of $\tfr$-distinguished $n$-exangles satisfying $X^n=C$, $X^{n+1}=Y^n=D$, $Y^{n+1}=E$, and let us show $d_Y^n\ci d_X^n$ is a $\tfr$-deflation. Remark that these $n$-exangles yield distinguished triangles in $\T$
\[ M\to C\ov{d_X^n}{\lra}D\to M[1],\quad N\to D\ov{d_Y^n}{\lra}E\to N[1] \]
satisfying $M,N\in\Tcal\ppr_{[0,n-1]}$. Complete $d_Y^n\ci d_X^n$ into a distinguished triangle in $\T$
\begin{equation}\label{DistTrLCE}
L^n\ov{\ell^n}{\lra}C\ov{d_Y^n\ci d_X^n}{\lra}E\ov{\lam^n}{\lra}L^n[1].
\end{equation}
By the octahedron axiom, we also have a distinguished triangle $M\to L^n\to N\to M[1]$.
Since $\Tcal\ppr_{[0,n-1]}\se\T$ is extension-closed as shown in the first step, it follows $L^n\in\Tcal\ppr_{[0,n-1]}$. If we apply the construction in the second step to $(\ref{DistTrLCE})$, it gives the following sequence of distinguished triangles
\[ L^{i-1}\ov{u^{i-1}}{\lra}Z^i\ov{v^i}{\lra}L^i\ov{\lam_{(i)}}{\lra}L^{i-1}[1] \]
with $Z^i\in\Tcal\ppr$ and $L^i\in\Tcal\ppr_{[0,i-1]}$ for $1\le i\le n$.
\[
\xy
(-54,0)*+{L^1}="-4";
(-40,0)*+{Z^2}="-2";
(-20,0)*+{Z^3}="0";
(-0.4,0)*+{\cdots}="2";
(4,0)*+{\cdots}="3";
(24,0)*+{Z^n}="4";
(44,0)*+{C}="6";
(58,0)*+{E}="8";
(-30,-10)*+{L^2}="14";
(-30,2)*+{}="15";
(-10,10)*+{L^3}="10";
(-10,-3)*+{}="11";
(14,-10)*+{L^{n-1}}="16";
(14,2)*+{}="17";
(34,10)*+{L^n}="12";
(34,-3)*+{}="13";
(-21,-19)*+{}="24";
(-1,19)*+{}="20";
(23,-19)*+{}="26";
(43,19)*+{}="22";
{\ar^{u^1} "-4";"-2"};
{\ar^{u^2\ci v^2} "-2";"0"};
{\ar_{u^3\ci v^3} "0";"2"};
{\ar^{u^{n-1}\ci v^{n-1}} "3";"4"};
{\ar_{u^n\ci v^n} "4";"6"};
{\ar^{d_Y^n\ci d_X^n} "6";"8"};
{\ar_{v^2} "-2";"14"};
{\ar_{u^2} "14";"0"};
{\ar^{v^3} "0";"10"};
{\ar^{u^3} "10";"2"};
{\ar_{v^{n-1}} "3";"16"};
{\ar_{u^{n-1}} "16";"4"};
{\ar^{v^n} "4";"12"};
{\ar^{u^n} "12";"6"};
{\ar_{\lam_{(2)}} "14";"24"};
{\ar^{\lam_{(3)}} "10";"20"};
{\ar_{\lam_{(n-1)}} "16";"26"};
{\ar^{\lam_{(n)}} "12";"22"};
{\ar@{}|\circlearrowright "14";"15"};
{\ar@{}|\circlearrowright "10";"11"};
{\ar@{}|\circlearrowright "16";"17"};
{\ar@{}|\circlearrowright "12";"13"};
\endxy
\]
This constitutes a $\tfr$-distinguished $n$-exangle
\[ L^1\to Z^2\to \cdots Z^n\to C\ov{d_Y^n\ci d_X^n}{\lra} E\ov{\thh}{\dra} \]
for $\thh=\lam_{(2)}[n-1]\ci\lam_{(3)}[n-2]\ci\cdots\ci\lam_{(n)}[1]\ci\lam^n\in\T(E,L^1[n])=\E\ppr(E,L^1)$.
In particular, $d_Y^n\ci d_X^n$ is a deflation.
\end{proof}

Next we specialize our setting. Let $A$ be a finite dimensional $k$-algebra of global dimension $n$ such that $\modu A$ admits an $n$-cluster tilting subcategory $\Ccal_0$. Set $\T = \Dbdd(\modu A)$. 
Then 
\[
\Tcal = \add\{\Ccal_0[nt] \mid t \in \Zbb\} \subseteq \Dbdd(\modu A)
\]
is $n$-cluster tilting such that $\Tcal[n] = \Tcal$ and so $\Tcal$ is $n$-exangulated. Let
\[
\Ecal = \Dtstr(\modu A) : = \{X \in \Dbdd(\modu A) \mid \Hom_{\T}(A[j],X) = 0, \, j < 0\}
\]
and
\[
\Ttstr := \Tcal \cap \Ecal= \add\{\Ccal_0[nt] \mid t \ge 0\}.
\]
Then $\Ecal$ is extension closed. 

\begin{prop}\label{halfDerived}
In the notation above $\Ttstr$ is an extension closed subcategory of $(\Tcal,\Ext^n,\sfr^n)$ with induced $n$-exangulated structure. 
\end{prop}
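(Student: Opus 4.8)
The plan is to invoke Lemma~\ref{LemForExample} with $\Ecal=\Dtstr(\modu A)$ and $\Tcal\ppr=\Ttstr$. The only hypothesis that is not immediate is the containment $(\ref{CondForTprime})$, namely $\Ttstr[i]\ast\Ttstr\se\Ttstr_{[0,i]}$ for all $1\le i<n$; everything else (that $\Ecal$ is extension-closed, that $\Tcal$ is $n$-cluster tilting with $\Tcal[n]=\Tcal$, and that $(\Tcal,\Ext^n,\sfr^n)$ is $n$-exangulated) is either standard or already recorded in Example~\ref{Ex_CondnCluster} and the surrounding discussion. So the real content is a $t$-structure computation inside $\Dbdd(\modu A)$.

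First I would record the basic vanishing facts. Since $\gldim A=n$, for any $X\in\modu A$ the complex $X$ has cohomology concentrated in degree $0$ and $\Hom_{\T}(Y,X[j])=0$ for $j\notin\{0,\dots,n\}$ and $Y\in\modu A$; more generally objects of $\Ttstr$ live in non-negative degrees, and an object $M\in\Ccal_0[ns]$ together with $N\in\Ccal_0[nt]$ with $t\ge s$ satisfies $\Hom_{\T}(M,N[j])=0$ unless $0\le j\le n$ together with the $n$-cluster tilting vanishing $\Ext^i_A(\Ccal_0,\Ccal_0)=0$ for $0<i<n$, which forces the only possibly nonzero degrees to be $j\in\{0\}\cup\{n\}$ shifted appropriately. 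The key point is that $\Ttstr[i]\se\Dtstr[-i]$, i.e.\ its objects are concentrated in degrees $\ge i$, while $\Ttstr\se\Dtstr$; a distinguished triangle $U\to C\to V\to U[1]$ with $U\in\Ttstr[i]$, $V\in\Ttstr$, $1\le i<n$, then forces $C$ to have cohomology only in degrees $0,\dots,i$ after we use that the connecting map lands in $\Hom(V,U[1])$ which vanishes outside a controlled range. Pinning this down is exactly the verification that $C\in\Ttstr_{[0,i]}=\Ttstr\ast\Ttstr[1]\ast\cdots\ast\Ttstr[i]$.

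To make that last step precise I would argue by induction on $i$, peeling off $t$-structure truncations. Given $C$ in the triangle with $U\in\Ttstr[i]$ and $V\in\Ttstr$, truncate: the canonical triangle for the standard $t$-structure gives $\tau_{\le 0}C\to C\to\tau_{\ge 1}C$; using $\Hom_{\T}(A[j],U)=0$ for $j<i$ and $\Hom_{\T}(A[j],V)=0$ for $j<0$ one sees $\tau_{\le 0}C=\tau_{\ge 0}\tau_{\le 0}C$ sits in $\modu A$, and in fact the $0$-th cohomology of $C$ agrees with that of $V$, which lies in $\Ccal_0$. One then checks that the cocone of $C\to H^0(C)$ lies in $\Ttstr[1]\ast\cdots\ast\Ttstr[i]$ by applying the induction hypothesis to the induced triangle among the shifted-by-one pieces; here the $n$-cluster tilting property of $\Ccal_0$ is what guarantees the intermediate truncations again lie in $\add\{\Ccal_0[nt]\}$ rather than merely in $\Dbdd(\modu A)$, since $\Ccal_0$ is closed under the relevant syzygy/cosyzygy operations within the range $0<i<n$. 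I expect this inductive truncation bookkeeping — keeping track simultaneously of the $t$-structure degrees and of membership in $\add\{\Ccal_0[nt]\}$ — to be the main obstacle; once $(\ref{CondForTprime})$ is established, Lemma~\ref{LemForExample} applies verbatim and yields that $\Ttstr$ is extension-closed in $(\Tcal,\Ext^n,\sfr^n)$ with the induced $n$-exangulated structure, which is the assertion of Proposition~\ref{halfDerived}.
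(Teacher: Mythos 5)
You correctly reduce the proposition to Lemma~\ref{LemForExample} and identify the containment $(\ref{CondForTprime})$ as the only real content; this is indeed how the paper proceeds. The gap is in your proposed verification of $(\ref{CondForTprime})$ by standard $t$-structure truncations. For $C$ in a triangle $U\to C\to V\to U[1]$ with $U\in\Ttstr[i]$, $V\in\Ttstr$, the truncation triangle realizes $H^0(C)\cong H^0(V)\in\Ccal_0$ as the \emph{quotient}, i.e.\ $C\in(\tau^{\le-1}C)\ast H^0(C)$, whereas the target $\Ttstr\ast\Ttstr[1]\ast\cdots\ast\Ttstr[i]$ requires an object of $\Ttstr$ as the \emph{first} factor: a morphism from an object of $\Ttstr$ into $C$ whose cone lies in $\Ttstr[1]\ast\cdots\ast\Ttstr[i]$. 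Since $\ast$ is not commutative, peeling off $H^0$ from the bottom does not produce a filtration of the required shape, and you give no commutation argument. Moreover the intended induction does not close: the cocone $\tau^{\le-1}C$ sits in a triangle with $U\in\Ttstr[i]$ and $\tau^{\le-1}V\in\Ttstr[n]\subseteq\Ttstr$, so what you face is an instance of the same containment for the \emph{same} $i$, not for $i-1$, and the claimed membership of $\tau^{\le-1}C$ in $\Ttstr[1]\ast\cdots\ast\Ttstr[i]$ is unjustified.

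More fundamentally, the assertion that the cluster-tilting property forces the intermediate truncations to lie in $\add\{\Ccal_0[nt]\}$ because ``$\Ccal_0$ is closed under the relevant syzygy/cosyzygy operations'' is not true: the cohomologies of $C$ need not lie in $\Ccal_0$ at all. For example, when $i=n-1$ the long exact sequence gives $H^{-(n-1)}(C)\cong\Cok(V_1\to U_0)$ for modules $U_0,V_1\in\Ccal_0$, and $\Ccal_0$ is not closed under cokernels. The factors of the desired filtration are simply not cohomology objects. The paper's mechanism is different and uses contravariant finiteness of $\Ttstr$, which your sketch omits entirely: one resolves $X\in\Ttstr[i]\ast\Ttstr$ by right $\Ttstr$-approximations, obtaining triangles $X_{j+1}\to T_j\to X_j\to X_{j+1}[1]$ with $T_j\in\Ttstr$ and $\Hom_{\T}(\Ttstr,X_j[1])=0$, and then proves that $X_i\in\Tcal$ by checking $\Hom_{\T}(\Tcal,X_i[k])=0$ for all $1\le k\le n-1$, using the approximation property, the fact that $X$ lies in the extension-closed subcategory $(\Ttstr)^{\perp_{[1,n-1-i]}}$, and the global dimension bound for the negative shifts $\Ccal_0[nt]$, $t\le-1$; then $X_i\in\Tcal\cap\Ecal=\Ttstr$ and $X\in T_0\ast T_1[1]\ast\cdots\ast T_{i-1}[i-1]\ast X_i[i]$. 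This Ext-vanishing computation is the essential step, and the truncation bookkeeping you propose cannot substitute for it.
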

\begin{proof}
Without loss of generality assume that $n \ge 2$.
We verify that Lemma~\ref{LemForExample} can be applied. Since $\Tcal$ is contravariantly finite so is $\Ttstr$. Let $X \in \Ecal$ and let $f: T_0 \to X$ be a right $\Ttstr$-approximation. Then there is a triangle
\[ X_1\ov{}{\lra}T_0\ov{f}{\lra}X\ov{}{\lra}X_1[1] \]
with $X_1 \in \Ecal[-1]$. 
Using that $f$ is a right $\Ttstr$-approximation together with $\Hom_{\T}(\Ttstr,\Ttstr[1]) = 0$ we find that $\Hom_{\T}(\Ttstr,X_1[1]) = 0$. In particular, $A \in \Ttstr$ gives $\Hom_{\T}(A,X_1[1]) = 0$ and so $X_1 \in \Ecal$. 
Iterating we obtain triangles
\[ X_{j+1}\ov{}{\lra}T_j\ov{}{\lra}X_j\ov{}{\lra}X_{j+1}[1] \]
with $X_{j} \in \Ecal$ and $T_{j} \in \Ttstr$ and $\Hom_{\T}(\Ttstr,X_{j}[1]) = 0$. Now assume that $X \in \Ttstr[i] * \Ttstr$ for some $1\le i \le n-1$. We need to show that $X \in \Ttstr * \cdots * \Ttstr[i]$. By the above it is enough to show that $X_{i} \in \Ttstr$. Now consider the category
\[
(\Ttstr)^{\perp_{[1,n-1-i]}} := \{X\in \Dbdd(\modu A) \mid \Hom_{\T}(\Ttstr,X[j]) = 0 \ \text{for}\  1 \le j \le n-1-i\}.
\]
It is extension closed and contains both $\Ttstr$ and $\Ttstr[i]$. So in particular $X \in (\Ttstr)^{\perp_{[1,n-1-i]}}$. Now let $T \in \Ttstr$. Applying $\Hom_{\T}(T,-)$ to the above triangles we find
\[
\Hom_{\T}(T,X_i[k]) \cong \Hom_{\T}(T,X_{i-1}[k-1]) \cong \cdots \cong \Hom_{\T}(T,X[k-i]) = 0 
\]
for each $i+1 \le k \le n-1$ and 
\[
\Hom_{\T}(T,X_i[k]) \cong \Hom_{\T}(T,X_{i-1}[k-1]) \cong \cdots \cong \Hom_{\T}(T,X_{i-k+1}[1]) = 0 
\]
for each $1 \le k \le i$. On the other hand, for $t \le -1$ and $k \ge 1$ we get
\[
\Hom_{\T}(\Ccal_0[nt],X_i[k]) = 0
\]
since the global dimension of $A$ is $n$ and $X_i \in \Ecal$. Hence $X_i \in \Tcal^{\perp_{[1,n-1]}} = \Tcal$ and so $X_i \in \Ecal \cap \Tcal = \Ttstr$.
\end{proof}

Proposition~\ref{halfDerived} can be applied to any $n$-representation finite algebra. Note that the $n$-exangulated category $\Ttstr$ is not $(n+2)$-angulated since $\Ttstr$ is not closed under $[n]$. Moreover, it is not $n$-exact since $\Ccal_0, \Ccal_0[n] \in \Ttstr$ implies that $C \to 0$ is an inflation for any $C \in \Ccal_0$ but not a monomorphism if $C \neq 0$. To obtain an explicit example we may take $A$ to be the path algebra of the quiver
\[
\xy 
(0,0)*+{1}="1";
(10,0)*+{2}="2";
(20,0)*+{3}="3";
{\ar^{a} "1";"2"};
{\ar^{b} "2";"3"};
\endxy
\quad\mbox{with relation}\quad
ab = 0
\]
as in subsection~\ref{Examplen+2angle}. Then $\modu A$ admits a unique $n$-cluster tilting subcategory $\Ccal_0$ and so our construction applies. Below is the Auslander-Reiten quiver of $\Dbdd(\modu A)$ with the indecomposables in $\Ttstr$ labelled $C_i$ for $i \ge 0$, and the other indecomposables in $\Ecal$ labelled $\circ$.

\[
\xy 
(-6,9)*+{\cdots}="d";
(0,18)*+{\mr}="0";
(0,0)*+{\mr}="1";
(6,9)*+{\mr}="2";
(12,18)*+{\mr}="3";
(12,0)*+{\mr}="11";
(18,9)*+{\mr}="12";
(24,18)*+{\mr}="13";
(24,0)*+{C_0}="21";
(30,9)*+{\circ}="22";
(36,18)*+{C_1}="23";
(36,0)*+{\circ}="31";
(42,9)*+{\circ}="32";
(48,18)*+{\circ}="33";
(48,0)*+{C_2}="41";
(54,9)*+{\circ}="42";
(60,18)*+{C_3}="43";
(60,0)*+{\circ}="51";
(66,9)*+{\circ}="52";
(72,18)*+{\circ}="53";
(72,0)*+{C_4}="61";
(78,9)*+{\circ}="62";
(84,18)*+{C_5}="63";
(84,0)*+{\circ}="71";
(90,9)*+{\circ}="72";
(96,18)*+{\circ}="73";
(96,0)*+{C_6}="81";
(102,9)*+{\circ}="82";
(108,18)*+{C_7}="83";
(108,0)*+{\circ}="91";
(114,9)*+{\cdots}="dd";
{\ar^{} "0";"2"};
{\ar^{} "1";"2"};
{\ar^{} "2";"3"};
{\ar^{} "2";"11"};
{\ar^{} "3";"12"};
{\ar^{} "11";"12"};
{\ar^{} "12";"13"};
{\ar^{} "12";"21"};
{\ar^{} "13";"22"};
{\ar^{} "21";"22"};
{\ar^{} "22";"23"};
{\ar^{} "22";"31"};
{\ar^{} "23";"32"};
{\ar^{} "31";"32"};
{\ar^{} "32";"33"};
{\ar^{} "32";"41"};
{\ar^{} "33";"42"};
{\ar^{} "41";"42"};
{\ar^{} "42";"43"};
{\ar^{} "42";"51"};
{\ar^{} "43";"52"};
{\ar^{} "51";"52"};
{\ar^{} "52";"53"};
{\ar^{} "52";"61"};
{\ar^{} "53";"62"};
{\ar^{} "61";"62"};
{\ar^{} "62";"63"};
{\ar^{} "62";"71"};
{\ar^{} "63";"72"};
{\ar^{} "71";"72"};
{\ar^{} "72";"73"};
{\ar^{} "72";"81"};
{\ar^{} "73";"82"};
{\ar^{} "81";"82"};
{\ar^{} "82";"83"};
{\ar^{} "82";"91"};
\endxy
\]
There are $2$-exangles
\[ C_{i}\ov{}{\lra}C_{i+1}\ov{}{\lra}C_{i+2}\ov{}{\lra}C_{i+3}\ov{}{\dra}\]
for all $i \ge 0$.

\end{document}